\newtheorem{thm}{Theorem}[section]
\newtheorem*{thm*}{Theorem}
\newtheorem{lem}[thm]{Lemma}
\newtheorem{prop}[thm]{Proposition}
\newtheorem{cor}[thm]{Corollary}
\theoremstyle{definition}
\newtheorem{rmk}[thm]{Remark}
\newtheorem{defn}[thm]{Definition}
\newtheorem{ex}[thm]{Example}
\newtheorem{conj}[thm]{Conjecture}
\newtheorem{construction}[thm]{Construction}
\numberwithin{equation}{section}
\newcommand{\overbar}[1]{\mkern 1.5mu\overline{\mkern-1.5mu#1\mkern-1.5mu}\mkern 1.5mu}
\newcommand{\ob}{\overbar}
\newcommand{\mr}{\mathrm}
\newcommand{\wt}{\widetilde}
\newcommand{\Orb}{\operatorname{Orb}}
\newcommand{\dOrb}{\operatorname{∂Orb}}
\newcommand{\diag}{\operatorname{diag}}
\newcommand{\Int}{\operatorname{Int}}
\newcommand{\Aut}{\operatorname{Aut}}
\newcommand{\Hom}{\operatorname{Hom}}
\newcommand{\End}{\operatorname{End}}
\newcommand{\Lie}{\operatorname{Lie}}
\newcommand{\Spec}{\operatorname{Spec}}
\newcommand{\Spf}{\operatorname{Spf}}
\newcommand{\colim}{\operatorname{colim}}
\newcommand{\GL}{\operatorname{GL}}
\newcommand{\tensor}{\otimes}
\newcommand{\iso}{\cong}
\newcommand{\lr}{\longrightarrow}
\newcommand{\mbA}{\mathbb{A}}
\newcommand{\mbC}{\mathbb{C}}
\newcommand{\mbF}{\mathbb{F}}
\newcommand{\mbG}{\mathbb{G}}
\newcommand{\mbL}{\mathbb{L}}
\newcommand{\mbM}{\mathbb{M}}
\newcommand{\mbP}{\mathbb{P}}
\newcommand{\mbQ}{\mathbb{Q}}
\newcommand{\mbR}{\mathbb{R}}
\newcommand{\mbX}{\mathbb{X}}
\newcommand{\mbY}{\mathbb{Y}}
\newcommand{\mbZ}{\mathbb{Z}}
\newcommand{\bF}{\mathbf{F}}
\newcommand{\bM}{\mathbf{M}}
\newcommand{\bN}{\mathbf{N}}
\newcommand{\bV}{\mathbf{V}}
\newcommand{\bm}{\mathbf{m}}
\newcommand{\bs}{\mathbf{s}}
\newcommand{\bt}{\mathbf{t}}
\newcommand{\bu}{\mathbf{u}}
\newcommand{\bv}{\mathbf{v}}
\newcommand{\mcB}{\mathcal{B}}
\newcommand{\mcC}{\mathcal{C}}
\newcommand{\mcD}{\mathcal{D}}
\newcommand{\mcE}{\mathcal{E}}
\newcommand{\mcF}{\mathcal{F}}
\newcommand{\mcI}{\mathcal{I}}
\newcommand{\mcK}{\mathcal{K}}
\newcommand{\mcL}{\mathcal{L}}
\newcommand{\mcM}{\mathcal{M}}
\newcommand{\mcO}{\mathcal{O}}
\newcommand{\mcP}{\mathcal{P}}
\newcommand{\mcQ}{\mathcal{Q}}
\newcommand{\mcT}{\mathcal{T}}
\newcommand{\mcW}{\mathcal{W}}
\newcommand{\mcZ}{\mathcal{Z}}
\newcommand{\mfG}{\mathfrak{G}}
\newcommand{\mfm}{\mathfrak{m}}
\newcommand{\mfp}{\mathfrak{p}}
\long\def\[#1]#2{ \begin{#1}#2\end{#1}}
\def\m{\@tut\@gobble\@tutt}
\def\@tut#1\@tutt#2{
\@ifnextchar,{\@tut #1 & #2\\\expandafter\expandafter\expandafter\@gobble\expandafter\@gobble\@gobble\@tutt}{
\@ifnextchar.{\begin{pmatrix}#1 &#2\end{pmatrix}\@gobble}{\@tut #1 & #2\@tutt}
}
}
\def\bm{\@tuut\@gobble\@tuutt}
\def\@tuut#1\@tuutt#2{
\@ifnextchar,{\@tuut #1 & #2\cr\expandafter\expandafter\expandafter\@gobble\expandafter\@gobble\@gobble\@tuutt}{
\@ifnextchar.{\bordermatrix{#1 &#2\cr}\@gobble}{\@tuut #1 & #2\@tuutt}
}
}
\newcommand{\del}{\operatorname{\partial Orb}}
\newcommand{\bOm}{\breve{\Omega}}
\newcommand{\bOmega}{\breve{\Omega}}
\begin{document}

\title{Arithmetic Transfer for inner forms of $GL_{2n}$}
\author{Qirui Li}
\author{Andreas Mihatsch}
\date{\today}

\begin{abstract}
We formulate Guo--Jacquet type fundamental lemma conjectures and arithmetic transfer conjectures for inner forms of $GL_{2n}$. Our main results confirm these conjectures for division algebras of invariant $1/4$ and $3/4$.
\end{abstract}

\maketitle

\setcounter{tocdepth}{1}
\tableofcontents

\newcommand{\Inv}{\mr{Inv}}
\newcommand{\quand}{\quad \text{and}\quad}
\newcommand{\Par}{\mr{Par}}
\newcommand{\Iw}{\mr{Iw}}
\newcommand{\simto}{\overset{\sim}{\to}}
\newcommand{\simlr}{\overset{\sim}{\lr}}

\section{Introduction}

\subsection{Background}
\label{ss:global}
Even though our paper will be purely local in nature, we begin by describing its global motivation. Let $F$ be a totally real number field, let $K/F$ be a quadratic CM extension, and let $D/K$ be a central simple algebra (CSA) of degree $2n$. Let $*:D\to D$ be an involution of the second kind. Recall that this means that $(xy)^* = y^*x^*$ and that $*$ restricts to the complex conjugation on $K$. We further assume that $*$ is positive in the sense that $\mr{trd}_{D/\mbQ}(x^*x) > 0$ for all $x\neq 0$. Let $β\in D^\times$ be a skew-hermitian element, meaning that $β^* = -β$. Then the algebraic group (over $\mbQ$)
\begin{equation}\label{eq:G}
G := \{x\in D^{\mr{op}, \times} \mid \text{$x^*βx = ν(x)β$ for some $ν(x) \in \mbG_m$}\}
\end{equation}
is an inner form of a unitary similitude group in $2n$ variables and the above data can be completed into a PEL type Shimura datum $(G, X_G)$. The resulting Shimura variety $\mr{Sh}_G$ can be described as a moduli space of polarized abelian varieties with $D$-action and level structure.

For example, we may take $(D, *)$ as the matrix algebra $M_{2n}(K)$ with transpose conjugation $x^* = \ob{x}^t$. Then $β \in GL_{2n}(K)$ can be any skew-hermitian matrix and $G$ is the corresponding unitary similitude group $GU(K^{2n}, β)$.

Coming back to the general situation, we assume that the signatures of $β$ are $(2n-1, 1)$ at a unique archimedean place and $(2n, 0)$ at all others. Then $\mr{Sh}_G$ is of dimension $2n-1$ and our interest lies in algebraic cycles in the arithmetic middle dimension $n-1 = \lfloor (2n-1)/2\rfloor$. By work of Li--Liu \cite{LL1, LL2} for $D = M_{2n}(K)$, it is known in many cases that the height pairings of such cycles are related to the leading terms of the Taylor expansions of certain $L$-functions. This relation can be understood as a generalization of the Gross--Zagier formula \cite{GZ, YZZ} to higher dimensions. It is moreover parallel to the arithmetic Gan--Gross--Prasad conjectures \cite{GGP}, and it also represents an instance of the Beilinson--Bloch height conjectures \cite{Beilinson, Bloch}.

We are thus lead to the problem of constructing algebraic cycles in arithmetic middle dimension on $\mr{Sh}_G$, and of relating their height pairings to the Taylor expansions of $L$-functions. One construction of such cycles, going back to W. Zhang, is given by imposing additional quadratic multiplication. The resulting cycles differ from the ones in \cite{LL1, LL2} which are instead closely related to Kudla--Rapoport divisors. We next describe this construction in more detail:

Let $E/F$ be a totally real quadratic extension and let $E\hookrightarrow D$ be an $F$-linear embedding such that $x^* = x$ for all $x\in E$. The centralizer $C = \mr{Cent}_D(E)$ then has center $EK$ and is preserved by $*$. If we further choose these data such that $β$ lies in $C^\times$, then we can define the algebraic group (over $\mbQ$)
\begin{equation}\label{eq:H}
H := \{x\in C^{\mr{op}, \times} \mid \text{$x^*βx = ν(x)β$ for some $ν(x) \in \mbG_m$}\}.
\end{equation}
This is an inner form of a unitary similitude group in $n$ variables for the quadratic extension $EK/E$. One may always find a PEL type Shimura datum $X_H$ for $H$ such that $(H, X_H)\to (G, X_G)$ is a morphism of Shimura data. Then we obtain a closed immersion $\mr{Sh}_H \hookrightarrow \mr{Sh}_G$ of Shimura varieties. Moreover, $\mr{Sh}_H$ is of the desired middle dimension $n-1$.

The existence of the Beilinson--Bloch height pairing is still conjectural. It is expected, however, that it decomposes into a sum of local heights whose non-archimedean terms are closely related to intersection numbers on integral models. For this reason, we consider integral models $\wt{\mr{Sh}_H}\hookrightarrow \wt{\mr{Sh}_G}$ of the given Shimura varieties. Our interest then lies in the intersection numbers $I(f) = \langle \wt{\mr{Sh}_H},\, f*\wt{\mr{Sh}_H} \rangle$ for varying Hecke operators $f\in C^\infty_c(G(\mbA_f))$, and their relations with leading terms of $L$-functions.

In this context, there is a relative trace formula (RTF) comparison approach due to Leslie--Xiao--Zhang \cite{LXZ} that goes back to work of Guo \cite{Guo} and Friedberg--Jacquet \cite{FJ}: One RTF is formulated for the given pair of groups $(G,\, H)$, the other one is formulated for the pair $(G',\, H')$ defined as $(GL_{2n},\, GL_n\times GL_n)$. The significance of the pair $(G',\, H')$ is that the $L$-functions of interest occur on the spectral side of its RTF. The problem thus becomes to relate the intersection numbers $I(f)$ with $(H'\times H')(\mbA)$-orbital integrals on $G'(\mbA)$.

This relation is made precise by factoring the global orbital integral into local ones, and analogously decomposing the global intersection number into place-by-place contributions. The latter relies on Rapoport--Zink (RZ) uniformization. One then, finally, arrives at a purely local question, namely that of expressing intersection numbers on moduli spaces of $p$-divisible groups (RZ spaces) in terms of local orbital integrals.

These ideas have recently lead to the discovery of new arithmetic fundamental lemma (AFL) identities: The linear AFL of the first author \cite{Li}, the variants from his joint work with B. Howard \cite{HL} and with the second author \cite{LM}, and the AFL for unitary groups of Leslie--Xiao--Zhang \cite{LXZ}. The terminology ``AFL'' here refers to the fact that all these are identities of intersection numbers on RZ spaces with \emph{good reduction} and central derivatives of local orbital integrals for \emph{spherical} Hecke functions. Put differently, they concern the places of good reduction of $\mr{Sh}_H$ and $\mr{Sh}_G$.

We mention that the above ideas were first proposed by W. Zhang in the context of the unitary arithmetic Gan--Gross--Prasad conjecture \cite{Z}. He deduced an AFL for unitary groups that he later proved with contributions of the second author and Z. Zhang \cite{Z19, M_loc_const, MZ, ZZ}. We refer to his ICM report \cite{Z_survey_18} for a survey.

Arithmetic transfer (AT) identities extend the realm of AFL identities in the sense that they express intersection numbers on RZ spaces with \emph{bad reduction} in terms of orbital integrals. Their role in the global setting is similar to that of AFL identities, but for the places of bad reduction of the Shimura varieties in question. This idea was first studied systematically by Rapoport--Smithling--Zhang \cite{RSZ1, RSZ2, RSZ3} in the context of the unitary arithmetic Gan--Gross--Prasad conjecture. Z. Zhang \cite{ZZ} generalized and solved one of their cases by proving AT identities in arbitrary dimension for maximal parahoric level for unramified quadratic extensions. His result has found global applications in the work of Disegni--Zhang \cite{DZ} who prove a $p$-adic variant of the arithmetic Gan--Gross--Prasad conjecture. Another application of AT in the global setting can be found in the work of C. Qiu \cite{Qiu} who proved AT identities for all places and level structures to show an analog of the Gross--Zagier formula over function fields.

\subsection{Arithmetic Transfer for Central Simple Algebras}

Consider again the above intersection numbers $I(f) = \langle \wt{\mr{Sh}_H},\, f*\wt{\mr{Sh}_H}\rangle$. The AT identities in the present paper provide an expression for the contributions to $I(f)$ from places of $F$ that are split in $K$ and inert in $E$. The completions of $G$ and $H$ at such places are essentially \emph{inner} forms of general linear groups. Correspondingly, we consider an intersection problem of EL type RZ spaces for central simple algebras (CSA). It is worth singling out two special cases:

If the CSAs in question split, then the resulting moduli spaces are Lubin--Tate spaces which leads to the intersection problem of the linear AFL mentioned above. If, however, the Hasse invariants of the CSAs in question are $1/2n$ and $1/n$, then the intersection problem is formulated for Drinfeld's half spaces and the cycles arise from Drinfeld's ``basic construction'' \cite[\S3]{Drinfeld}. This situation comes up when the two Shimura varieties have $p$-adic uniformization \cite[\S6.40]{RZ1}. 

We will now give a precise formulation of our AT conjecture and our results.

\subsection{The Fundamental Lemma Conjecture}

Let $F$ be a $p$-adic local field,\footnote{We allow local function fields in the main text with one exception: Section \ref{ss:displays} relies on the theory of $O_F$-displays from \cite{ACZ} which has only been developed for $p$-adic local fields.} let $E/F$ be an unramified quadratic field extension and let $η:F^\times \to \{\pm 1\}$ be the corresponding quadratic character. Let $G' = GL_{2n}(F)$ with subgroup $H' = GL_n(F\times F)$. For a test function $f'\in C^{\infty}_c(G')$, an element $γ\in G'$ that is regular semi-simple with respect to the $(H'\times H')$-action, and for a complex parameter $s\in \mbC$, we consider Guo's \cite{Guo} orbital integral
\begin{equation}\label{eq:orb_int_gamma_intro}
\Orb(γ, f', s) = \Omega(γ, s)\int_{\frac{H'\times H'}{(H'\times H')_γ}} f'(h_1^{-1} γ h_2) |h_1h_2|^s η(h_2)\ dh_1 dh_2.
\end{equation}
Here, the so-called transfer factor $\Omega(γ, s) \in \pm q^{\mbZ s}$ is defined in a way that ensures that $\Orb(γ, f', s)$ only depends on the double coset $H'γH'$; detailed definitions will be given in \S\ref{s:FL}. Our interest lies in the central value and the first derivative which we denote by
$$\Orb(γ, f') := \Orb(γ, f', 0),\qquad \del (γ, f') = \left.\frac{d}{ds}\right\vert_{s = 0} \Orb(γ, f', s).$$
Let $D/F$ be a CSA of degree $2n$ and let $E\subset D$ be a fixed embedding. The centralizer $C = \mr{Cent}_D(E)$ is again a CSA but over $E$. The Hasse invariants of $D$ and $C$ are related by $\mr{inv}_E(C) = 2\,\mr{inv}_F(D)$ (combine \cite[Corollary 9.1]{Draxl} with \cite[Proposition XIII.7]{Serre_lcft}). Define $(G, H) = (D^{\mr{op},\times}, C^{\mr{op},\times})$. The reason for passing to the opposite algebra here is that $D^{\mr{op}}$ is isomorphic to the ring of left endomorphisms of $D$ as left $D$-module. Given a test function $f\in C^\infty_c(G)$ and an element $g\in G$ that is regular semi-simple with respect to the $(H\times H)$-action, we consider the orbital integral
\begin{equation}\label{eq:orb_int_g_intro}
\Orb(g, f) := \int_{\frac{H\times H}{(H\times H)_g}} f(h_1^{-1} g h_2) dh_1 dh_2.
\end{equation}
It evidently only depends on the double coset $HgH$. Let $O_D\subseteq D$ be a maximal order such that $O_C := C\cap O_D$ is a maximal order of $C$ and put $f_D = 1_{O_D^\times}$. One can show that all such orders $O_D$ form a single $H$-conjugation orbit (Lemma \ref{lem:order_uniqueness}), so the orbital integrals $\Orb(g, f_D)$ do not depend on the choice of $O_D$. In dependence on the order $\ell$ of $D$ in the Brauer group of $F$, we will define a specific parahoric subgroup $K'\subseteq GL_{2n}(O_F)$. It corresponds to a parabolic of $(2n/\ell \times 2n/\ell)$-block upper triangular matrices and is chosen compatibly with the subgroup $H'\subseteq G'$. Let $f_D'^\circ = \mr{vol}(K'\cap H')^{-2} \cdot 1_{K'}$ and define $f'_D$ as a certain $H'$-translate of $f_D'^\circ$, see Definition \ref{def:test_function} for details. The orbital integrals $O(γ, f'_D, s)$ have a simple functional equation with respect to $s \longleftrightarrow -s$ (Proposition \ref{prop:functional_equation}).

Two regular semi-simple elements $γ\in G'$ and $g\in G$ are said to match if the invariants (in the sense of geometric invariant theory) of the orbits $H'γH'$ and $HgH$ agree. This notion defines an injection $[G_{\mr{rs}}]\hookrightarrow [G'_{\mr{rs}}]$ of the regular semi-simple orbits of $G$ into those of $G'$ which allows to compare orbital integrals on the two groups. Assuming all Haar measures are chosen compatibly, we conjecture that $f'_D$ is a smooth transfer of $f_D$ in the following sense:
\begin{conj}[Fundamental Lemma, \protect{Conjecture \ref{conj:FL}}]\label{conj:FL_intro}
Let $γ\in G'$ be an element that is regular semi-simple with respect to the $(H'\times H')$-action. Then
\begin{equation}\label{eq:FL_intro}
\Orb(γ, f'_D) = \begin{cases} \Orb(g, f_D) & \text{if there exists an element $g\in G$ that matches $γ$}\\
0 & \text{if there is no such $g$.}\end{cases}
\end{equation}
\end{conj}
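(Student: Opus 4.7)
The plan is to attack Conjecture \ref{conj:FL_intro} by reducing the identity of orbital integrals to an orbit-by-orbit comparison of explicit lattice counts. Since the matching of regular semi-simple orbits is defined via GIT invariants, the case when $\gamma$ has no matching $g$ requires showing the vanishing $\Orb(\gamma, f'_D) = 0$; this typically follows by showing that the support condition $h_1^{-1}\gamma h_2 \in K'$ (after $H'$-translation) forces $\gamma$ to lie in the image of the matching map. It then remains to prove $\Orb(\gamma, f'_D) = \Orb(g, f_D)$ whenever $g$ matches $\gamma$.

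For the orbit-by-orbit identity, I would first give both sides a lattice-theoretic interpretation. On the $G$-side, $f_D = 1_{O_D^\times}$ translates $\Orb(g, f_D)$ into a count of $O_C$-lattice chains in the $C\text{-}C$-bimodule $D$ that are stable under left and right multiplication by $g$ in an appropriate sense; this uses the uniqueness (up to $H$-conjugation) of the maximal orders noted in Lemma \ref{lem:order_uniqueness}. On the $G'$-side, the parahoric $K'$ of block type $(2n/\ell,\ldots, 2n/\ell)$ is the stabilizer of a periodic lattice chain in $F^{2n}$; after the $H'$-translation defining $f'_D$, the integral $\Orb(\gamma, f'_D, s)$ becomes a weighted count (with weights $|h_1 h_2|^s \eta(h_2)$ times the transfer factor $\Omega(\gamma, s)$) of such lattice chains. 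The key combinatorial step is to construct a bijection between the two sets of configurations that is compatible with the invariants, so that the weight functions on both sides match at $s=0$ by design of $\Omega$.

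The main obstacle is that even for the split case $D = M_{2n}(F)$, where the conjecture becomes a parahoric variant of Guo's fundamental lemma, a full proof in general $n$ is not known; one has to handle combinatorial lattice sums of significant complexity. A natural line of attack is thus two-fold: either one reduces to a generating-function identity that can be checked by a density argument combined with the existing split results, or one performs the lattice count directly when the combinatorics of $\ell$ and $n$ are under control. This is what the paper accomplishes for $\ell = 4$, where the block structure $(n/2, n/2, n/2, n/2)$ is rigid enough to permit an inductive computation. Finally, an auxiliary difficulty is the careful compatibility of Haar measures on $H$, $H'$, and their centralizers, on which the sharp form of the identity depends; this needs to be nailed down before any explicit matching can be declared correct.
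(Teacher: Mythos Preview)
Your plan contains a significant misconception about how the vanishing in \eqref{eq:FL_intro} is achieved. You propose that the support condition $h_1^{-1}\gamma h_2 \in K'$ forces $\gamma$ to admit a match. This is false: for hyperbolic $\gamma$ (i.e.\ $L_\gamma \cong F\times F$) with invariant $(T-\alpha)(T-\beta)$ and $v(\alpha),v(\beta)>0$, the set $\mcL(\gamma)$ of adapted lattice chains is nonempty (Proposition \ref{prop:hyperbolic}), yet when $D$ is a division algebra such $\gamma$ need not match any $g\in G$. The vanishing $\Orb(\gamma,f'_D)=0$ comes from \emph{cancellation}: the factor $(X+1)$ with $X=-q^{-2s}$ in \eqref{eq:Iw_orb_int_hyper} vanishes at $s=0$. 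For non-split $L$ the functional equation (Proposition \ref{prop:functional_equation}) with sign $\varepsilon_D(\gamma)$ handles many cases, but not all --- see Remark \ref{rmk:sign_matching} and row 5 of Table \ref{table:matching}, where $\varepsilon_D(\gamma)=+1$ with no matching $g$.

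The paper's actual route for $D$ a degree-$4$ division algebra (so $n=2$, $\ell=4$, and $K'$ is the \emph{Iwahori}, not a $(n/2,\ldots,n/2)$-block parahoric) is indirect and does not construct a bijection of lattice configurations. The key device is a \emph{germ expansion} (Proposition \ref{prop:germ_exp}): the Iwahori orbital integral splits as a principal germ $P(w,s)$ plus $i(L)$ times a unipotent germ $U(w,s)$, where $U$ depends only on the numerical invariants $(r,d)$ and \emph{not} on the quadratic algebra $L$ (Proposition \ref{prop:germ_independence}). One determines $U$ from the hyperbolic case --- where the full orbital integral is known by Levi reduction (Proposition \ref{prop:hyperbolic}) --- and substitutes back for field $L$, combining with the explicit principal germ of Proposition \ref{prop:principal_germ}. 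No direct lattice bijection between the $G$- and $G'$-sides is ever built; the identity emerges from this triangulation through the split case.
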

The theorem about existence of smooth transfer of C. Zhang and H. Xue \cite{C_Zhang, Xue1} already states that there exists \emph{some} test function $f'\in C^\infty_c(G')$ such that $\Orb(γ, f')$ is given by the right hand side of \eqref{eq:FL_intro} for all $γ$. The new aspect of Conjecture \ref{conj:FL_intro} is that it provides $f'_D$ as a specific candidate. The choice of $f'_D$ is motivated by Z. Zhang's transfer result \cite{ZZ}. We explain this in more detail in Remark \ref{rmk:defn_test_function}.

Conjecture \ref{conj:FL_intro} specializes to the Guo--Jacquet fundamental lemma (FL) from \cite{Guo} if $D = M_{2n}(F)$. By work of N. Hultberg and the second author \cite[Theorem A]{HM}, Conjecture \ref{conj:FL_intro} is also known if the Hasse invariant of $D$ is $1/2$: The proof in this case is by reduction to the base change FL and to the Guo--Jacquet FL. Our main result on Conjecture \ref{conj:FL_intro} in the present paper is as follows.

\begin{thm}[\protect{Theorem \ref{thm:FL}}]\label{thm:FL_intro}
Assume that $D$ is a division algebra of degree $4$. Then Conjecture \ref{conj:FL_intro} holds.
\end{thm}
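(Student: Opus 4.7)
The plan is to prove Theorem \ref{thm:FL_intro} by an explicit comparison of the two sides of \eqref{eq:FL_intro}, exploiting the concrete cyclic-algebra description available for a degree-four division algebra. Since $\ell = 4$, the parahoric $K' \subseteq GL_4(O_F)$ is the Iwahori subgroup and $f'_D$ is a specified $H'$-translate of the rescaled Iwahori characteristic function. On the division-algebra side, pick an unramified degree-four extension $L/F$ containing $E$ and a uniformizer $\Pi \in D$ with $\Pi^4 = \varpi_F$ and $\Pi \ell \Pi^{-1} = \tau(\ell)$ for $\ell \in L$, where $\tau$ generates $\Gal(L/F)$. Then $D = \bigoplus_{i=0}^3 L\Pi^i$ and $C = \mr{Cent}_D(E) = L \oplus L\Pi^2$ is the quaternion division algebra over $E$, with maximal orders $O_D = \bigoplus_{i=0}^3 O_L \Pi^i$ and $O_C = O_L \oplus O_L \Pi^2$.

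The first step is to combine the known matching $[G_{\mr{rs}}] \hookrightarrow [G'_{\mr{rs}}]$ with an explicit set of orbit representatives on each side. Writing $g \in G$ as $g = c_0 + c_1 \Pi$ with $c_0, c_1 \in C$ makes the $(H \times H)$-action visible as ordinary left and right multiplication on $D$, and produces natural invariants (the reduced characteristic polynomial together with refined invariants involving both $c_0$ and $c_1$). The analogous parametrization on $G'$, via $2\times 2$ block matrices in $M_4(F) \cong M_2(M_2(F))$, is standard from the Guo--Jacquet setting. At this point one reads off the local Brauer-group condition that characterises which orbits on $G'$ transfer to $G$.

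The second step is to reduce both orbital integrals to finite counts. Since $G$ is compact modulo a discrete central piece, $\Orb(g, f_D)$ is essentially the number of $(O_C^\times \times O_C^\times)$-representatives for the orbit through $g$ inside $O_D^\times$. For the Iwahori-level integral $\Orb(\gamma, f'_D)$ one uses the lattice-chain description of $GL_4(F)/K'$: the integrand becomes a sum over pairs of complete periodic lattice flags in $F^4$ compatible with $\gamma$ and with the decomposition $F^4 = F^2 \oplus F^2$ underlying $H'$. The transfer factor $\Omega(\gamma, s)$ contributes valuation and sign corrections that must be tracked carefully throughout.

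The main obstacle is the final comparison: finding a change of variables that bridges the Iwahori-level lattice sum on the $G'$-side with the filtration of $O_D$ by powers of $\Pi$ on the $G$-side. This change of variables is motivated by Z. Zhang's transfer in \cite{ZZ} and by the specific $H'$-translate chosen in Definition \ref{def:test_function}. Once it is found, equality for matched orbits and vanishing for non-matched orbits should fall out of the same decomposition --- the vanishing case typically corresponding to a parity cancellation among the blocks of the lattice flag. The delicate part is that the Haar measure, transfer-factor and sign normalizations must be lined up exactly so that the resulting identity reproduces $\Orb(g, f_D)$ without any residual factor; this is the technical heart of the argument and where the low rank $n = 2$ is essential in keeping the combinatorics tractable.
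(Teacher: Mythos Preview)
Your proposal sketches a direct-matching strategy: reduce both sides to finite lattice or coset counts and then find a bijective ``change of variables'' between them. The paper's proof (Theorem~\ref{thm:FL}) takes a genuinely different and more indirect route, and your proposal leaves the decisive step unspecified.

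Concretely, the paper proceeds as follows. First, the $D$-side is trivial: since $D$ is a division algebra, $\Orb(g,f_D)$ is either $0$ or $1$ depending only on the parity of $v_D(g)$ (Proposition~\ref{prop:orb_int_division}); there is no lattice combinatorics to match. Second, on the $G'$-side the paper does \emph{not} attempt a direct count of Iwahori flags. Instead it introduces a germ expansion (Proposition~\ref{prop:germ_exp}) that writes $\Orb(\gamma,f'_\Iw,s)$ as a principal germ $P(L,r,d,s)$ plus $i(L)$ times a unipotent germ $U(r,d,s)$. The crucial structural fact is Proposition~\ref{prop:germ_independence}: the unipotent germ is independent of the quadratic algebra $L$. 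This allows the paper to pin down $U$ by specializing to \emph{hyperbolic} orbits ($L\cong F\times F$), where a Levi reduction collapses everything to $GL_2$ orbital integrals (Proposition~\ref{prop:hyperbolic}). Vanishing for even $r$ and for split $L$ is obtained essentially for free from the functional equation (Proposition~\ref{prop:functional_equation}) and the factor $(X+1)$ in the hyperbolic formula. The one nontrivial case, $L$ ramified and $r$ odd, then follows by substituting the now-known $U(r,-1/2,0)$ and the explicitly computed principal germ for ramified $L$ back into the germ expansion; the answer collapses to $1$.

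Against this, your plan has a genuine gap: the ``change of variables'' that is supposed to bridge the Iwahori lattice sum with the $\Pi$-adic filtration on $O_D$ is never identified, and it is not clear one exists in any natural sense --- on the $D$-side there is essentially a single maximal order and no flag combinatorics to match. The paper's mechanism for producing the identity is not a bijection but a transfer of information across quadratic extensions via the $L$-independence of the unipotent germ; without an analogue of that step (or of the functional-equation and hyperbolic-reduction arguments that feed into it), your outline does not yet contain the idea that makes the computation close.
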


Note that if $D$ is a division algebra of degree $2n$, then $\Orb(g, f_D)$ is either $0$ or an integer divisor of $n$ (Proposition \ref{prop:orb_int_division}). In the case of Theorem \ref{thm:FL_intro} for example, $\Orb(γ, f'_D)$ is either $0$ or $1$ (Theorem \ref{thm:FL}).

\subsection{The Arithmetic Transfer Conjecture}
Denote by $\breve F$ the completion of a maximal unramified extension of $F$ and fix an embedding $E\subset \breve F$. Choose an isomorphism $\breve F\tensor_F C^{\mr{op}} \iso M_n(\breve F \times \breve F)$ that restricts to $(\mr{id}, τ) \tensor 1_n$ on the center $E$ of $C$, where $τ:E\to E$ is the Galois conjugation. Consider the conjugacy class of the ``Drinfeld type'' minuscule cocharacter
$$μ_H:\mbG_m \lr H_{\breve F} \iso (GL_n\times GL_n)_{\breve F},\quad t \longmapsto ((t, \ldots, t, 1),\, (t, \ldots, t)).$$
Let $μ_G: \mbG_m \to G_{\breve F}$ denote its composition with $H \to G$. Then every element $b\in B(H, μ_H)$ defines a morphism
\begin{equation}\label{eq:loc_Shdat}
(H,\, b,\, μ_H) \lr (G,\, b,\, μ_G)
\end{equation}
of local Shimura data in the sense of \cite{RV}. We denote by $H_b \to G_b$ the automorphism groups of the $H$-isocrystal (resp. $G$-isocrystal) defined by $b$. In analogy with our definition of $f_D$ as the characteristic function of $O_D^\times$, we consider integral models $\mcM_C$ and $\mcM_D$ for the local Shimura varieties for \eqref{eq:loc_Shdat} at levels $O_C^\times$ and $O_D^\times$. Their definition is as follows.

The elements of $B(H, μ_H)$ are in bijection with isogeny classes of pairs $(\mbY, ι)$ where $\mbY$ is a strict $O_F$-module of height $2n^2$ and dimension $n$ over the residue field $\mbF$ of $\breve F$, and where $ι:O_C\to \End(\mbY)$ is an $O_C$-action. The set $B(G, μ_G)$ is similarly in bijection with pairs $(\mbX, κ)$, where $\mbX/\mbF$ is a strict $O_F$-module of height $4n^2$ and dimension $2n$, and where $κ:O_D\to \End(\mbX)$ is an $O_D$-action. Under these bijections, the map $B(H, μ_H)\to B(G, μ_G)$ corresponds to the Serre tensor construction
\begin{equation}\label{eq:Serre_intro}
(\mbY, ι) \longmapsto (O_D\tensor_{O_C} \mbY,\,\, κ(x) := x\tensor \mr{id}_\mbY).
\end{equation}
Let $\mcM_C$ and $\mcM_D$ be the RZ spaces for $(\mbY, ι)$ and $(\mbX, κ) = O_D\tensor_{O_C} (\mbY, ι)$. These are certain EL type moduli spaces of $p$-divisible groups with $O_C$-action resp. $O_D$-action. They are formal schemes over $\Spf O_{\breve F}$ that are regular with semi-stable reduction and such that $\dim \mcM_D = 2\dim \mcM_C = 2n$. Furthermore, the groups $H_b$ and $G_b$ act from the right on $\mcM_C$ resp. $\mcM_D$ and there is a closed immersion
$$\mcM_C \lhook\joinrel\longrightarrow \mcM_D$$
that is equivariant with respect to $H_b\to G_b$. This closed immersion can be defined by a Serre tensor construction as in \eqref{eq:Serre_intro}.

\begin{defn}\label{def:int_intro}
Let $g\in G_b$ be a regular semi-simple element and let $Γ\subseteq H_b \cap g^{-1}H_b g$ be a free, discrete subgroup of covolume $1$. Define the intersection number
$$\Int(g) = \langle Γ\backslash \mcM_C,\,\, Γ\backslash g\cdot \mcM_C\rangle_{Γ\backslash \mcM_D} \in \mbZ.$$
\end{defn}
Taking the quotient by $Γ$ in this definition is the natural analog of taking the quotient by the stabilizer in the orbital integrals \eqref{eq:orb_int_gamma_intro} and \eqref{eq:orb_int_g_intro}. The restriction to regular semi-simple $g$ ensures that $Γ\backslash (\mcM_C\cap g\cdot \mcM_C)$ is a proper scheme over $\Spec O_{\breve F}$ with empty generic fiber. The definition is moreover independent of $Γ$.

The quantity $\Int(g)$ only depends on the $(H_b\times H_b)$-orbit of $g$, so we can use orbit matching to view it as a function on a subset of the $(H'\times H')$-orbits in $G'$. In this context, there is the following uniqueness and vanishing result.

\begin{prop}[\protect{\ref{prop:char_isogeny_class}}]\label{prop:b_intro}
\begin{enumerate}[wide, labelindent=0pt, labelwidth=!, label=(\arabic*), topsep=2pt, itemsep=2pt]
\item Let $γ\in G'$ be a regular semi-simple element. There exists at most one isogeny class $b\in B(H, μ_H)$ such that there exists an element $g\in G_b$ that matches $γ$.

\item Assume that such a pair $(b, g)$ exists. Then the sign of the functional equation of $\Orb(γ, f'_D, s)$ is negative, and in particular $\Orb(γ, f'_D) = 0$.
\end{enumerate}
\end{prop}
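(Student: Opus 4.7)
Both parts reduce to invariant-theoretic analysis of the matching map. For part (1), the strategy is to recover the Brauer class of $D_b^{\mr{op}}$, and hence $b$, from the invariants of $\gamma$. For part (2), we establish a functional equation $\Orb(\gamma, f'_D, s) = \epsilon(\gamma) \cdot \Orb(\gamma, f'_D, -s)$ whose sign $\epsilon(\gamma) \in \{\pm 1\}$ is an $\eta$-value on an invariant of $\gamma$, and verify that the existence of a matching $G_b$ forces $\epsilon(\gamma) = -1$.

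For part (1), suppose $g \in G_b$ matches $\gamma$. Since $g$ is $(H_b \times H_b)$-regular semisimple, it generates a maximal étale $F$-subalgebra $L := F[g] \subseteq D_b^{\mr{op}}$ of dimension $2n$, whose reduced characteristic polynomial equals that of $\gamma$. The remaining GIT invariants of $\gamma$ record how $L$ interacts with the $E$-subalgebra inside $C^{\mr{op}} \subseteq D_b^{\mr{op}}$. By the local theory of embeddings of étale algebras into central simple algebras, these data impose splitting conditions that determine $\mr{inv}_F(D_b^{\mr{op}}) \in \mbQ/\mbZ$ uniquely. Since $b$ is in turn determined by the Brauer class together with the minuscule cocharacter $\mu_H$, via the Newton slopes of $(\mbY, \iota)$, this pins down $b$.

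For part (2), the functional equation arises from a change of variables in \eqref{eq:orb_int_gamma_intro} combined with the symmetry properties of the transfer factor $\Omega(\gamma, s)$. The resulting sign $\epsilon(\gamma)$ can be computed explicitly as $\eta$ applied to a polynomial invariant built from the coefficients of $\gamma$. The key observation is that this same $\eta$-invariant detects the local obstruction identified in part (1): the matching criterion for existence of $b \in B(H, \mu_H)$ is equivalent to a specific nontrivial value of the $\eta$-invariant, which forces $\epsilon(\gamma) = -1$. Setting $s = 0$ in the functional equation then yields $\Orb(\gamma, f'_D) = 0$.

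The main obstacle lies in part (2). The challenge is to pin down the precise form of $\epsilon(\gamma)$, tracking contributions from the $\eta(h_2)$ factor in \eqref{eq:orb_int_gamma_intro}, from the transfer factor $\Omega(\gamma, s)$, and from the $H'$-translate used in defining $f'_D$, and then to match it with the Brauer-class invariant of part (1). This identification is essentially a local reciprocity statement linking Brauer group data to the $\eta$-character, and it demands a coordinated choice of normalizations throughout the setup.
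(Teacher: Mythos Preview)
Your plan has the right architecture---invariant-theoretic uniqueness for (1), functional equation plus sign computation for (2)---but part (1) rests on a misconception that would prevent the argument from going through.

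You write that the goal is to determine $\mr{inv}_F(D_b^{\mr{op}}) \in \mbQ/\mbZ$. But $D_b$ is not a central simple algebra over $F$: it is a semi-simple algebra whose center is the product of the local fields governing the slope decomposition of $\bN_b$, so there is no single Hasse invariant to recover. Relatedly, $L := F[g]$ need not be an étale algebra of dimension $2n$, and matching is not via the reduced characteristic polynomial of $g$ but via the degree-$n$ invariant $\Inv(g;T)$. The paper's argument (Proposition~\ref{prop:char_isogeny_class}) works at the level of isocrystals rather than endomorphism algebras: one forms $P = D \otimes_F B_\delta$, where $B_\delta$ is the universal quaternion algebra attached to $\delta = \Inv(\gamma)$, and observes that a matching $g \in G_b$ is the same as a faithful $P$-action on $\bN_b$. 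Decomposing along the idempotents of $L_\delta$, each $L_i$-factor $\bN_i$ is forced to be isoclinic with slope determined by the Hasse invariant $\rho_i$ of $P_i$, which depends only on $\delta$ and $D$. This fixes the Newton point and hence $b$.

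For part (2), the paper does establish a functional equation (Proposition~\ref{prop:functional_equation}) with sign $\varepsilon_D(\delta) = n\lambda + \sum_i \beta_i$, but the identification $\varepsilon_D(\delta) = -1$ is not a reciprocity statement in the sense you suggest. It is a dimension count: from $\rho_i = \beta_i + n_i\lambda = d_i/4n$ where $d_i = \dim \bN_i$, summing over $i$ gives $\varepsilon_D(\delta) = \sum_i d_i/4n = 2n/4n = 1/2$ in $\tfrac12\mbZ/\mbZ$. Also note that the implication only goes one way: existence of $(b,g)$ forces $\varepsilon_D(\delta) = -1$, but the converse fails (see Table~\ref{table:matching}), so your phrase ``equivalent to a specific nontrivial value of the $\eta$-invariant'' overstates the situation.
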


The second statement already hints that $\Int(g)$ might be related to the first derivative $\del (γ, f'_D)$ for matching $g$ and $γ$. This is made precise by the AT Conjecture in its explicit form:

\begin{conj}[ATC - Explicit Form]\label{conj:AT_intro}
There exists a correction function $f'_{\mr{corr}}\in C^\infty_c(G')$ such that for every regular semi-simple $γ\in G'_{\mr{rs}}$,
\begin{equation}\label{eq:ATC_explicit_intro}
\del(γ, f'_D) + \Orb(γ, f'_{\mr{corr}}) = \begin{cases} 2\,\Int(g)\log(q) & \text{\begin{varwidth}{\textwidth}if there exists some $b\in B(H, μ_H)$\\and some $g\in G_b$ that matches $γ$\end{varwidth}}\\[8pt]
0 & \text{otherwise.}
\end{cases}
\end{equation}
\end{conj}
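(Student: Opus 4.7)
The plan is to establish the identity by computing both sides of \eqref{eq:ATC_explicit_intro} as explicitly as possible and then to produce $f'_{\mr{corr}}$ via a smooth transfer argument. Concretely, define $\varphi$ on $[G'_{\mr{rs}}]$ by $\varphi(\gamma) = 2\Int(g)\log q - \del(\gamma, f'_D)$ when $\gamma$ matches some $g \in G_b$, and $\varphi(\gamma) = -\del(\gamma, f'_D)$ otherwise. Conjecture \ref{conj:AT_intro} is then the assertion that $\varphi = \Orb(\cdot, f'_{\mr{corr}})$ for some $f'_{\mr{corr}} \in C^\infty_c(G')$, and the smooth transfer theorem of C.~Zhang--H.~Xue \cite{C_Zhang, Xue1} will reduce this to verifying that $\varphi$ has compact support and arises from a CSA-side function on matching orbits.

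For the analytic side, I would expand the transfer factor $\Omega(\gamma, s)$ together with $|h_1 h_2|^s = q^{-s v(h_1 h_2)}$ around $s = 0$, so that $\del(\gamma, f'_D)$ decomposes into a ``principal'' piece that is itself an ordinary orbital integral of a specific test function and a ``transcendental'' piece carrying the $\log q$ coming from $s$-differentiation. Using the explicit parahoric structure of $f'_D$ (Definition \ref{def:test_function}) and Proposition \ref{prop:b_intro}(2) (which guarantees vanishing of $\Orb(\gamma, f'_D, 0)$ precisely on the matching locus), this yields a combinatorial formula for $\del(\gamma, f'_D)$ indexed by invariants of the $H' \times H'$-orbit.

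For the geometric side, use Rapoport--Zink uniformization to stratify $\mcM_D$ by its irreducible components and compute $\Int(g)$ as a sum of local contributions from $\Gamma \backslash (\mcM_C \cap g \cdot \mcM_C)$. The Serre tensor construction \eqref{eq:Serre_intro} furnishes a crucial compatibility between the parahoric defining $f'_D$ on $G'$ and the closed immersion $\mcM_C \hookrightarrow \mcM_D$, which should make the $\log q$ terms on the two sides align stratum-by-stratum. For division algebras of degree $4$ (the setting of Theorem \ref{thm:FL_intro}), $\mcM_D$ has an especially tractable semi-stable reduction related to Drinfeld's basic construction in the basic case, so explicit local equations are available and the sum can be carried out.

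The main obstacle will be the geometric step: controlling $\Int(g)$ when $\mcM_C$ and $g \cdot \mcM_C$ intersect non-transversally, in particular along the singular locus of the special fiber of $\mcM_D$. This requires a deformation-theoretic analysis modeled on \cite{Z19, ZZ, MZ} but adapted to the inner-form setting, and it is the place where one expects restrictions on the degree or invariant of $D$ to enter. A secondary obstacle is verifying that $\varphi$ has compact support and is locally constant in a strong enough sense to feed into the transfer theorem; this amounts to uniform bounds on $\Int(g)$ and $\del(\gamma, f'_D)$ as $\gamma$ escapes a compact locus, which in turn reduces to controlling the asymptotics of both sides under the action of the centre of $H'$.
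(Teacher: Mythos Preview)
This statement is a \emph{conjecture}, and the paper does not prove it in general. What the paper proves is the special case where $D$ is a division algebra of degree $4$ (Theorem~\ref{thm:main_intro}, established as Theorems~\ref{thm:main_1_4} and~\ref{thm:main_3_4}). Your proposal reads as an outline for the general case, and its central reduction is flawed.

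The main gap is the use of smooth transfer. You define $\varphi(\gamma)$ and want $f'_{\mr{corr}}$ with $\Orb(\gamma,f'_{\mr{corr}})=\varphi(\gamma)$ for \emph{all} regular semi-simple $\gamma$. The Zhang--Xue transfer theorem does not say this: it produces, from a CSA-side function $f$, an $f'$ whose orbital integrals match $\Orb(g,f)$ on orbits that match and \emph{vanish} on orbits that do not. Even if you arranged $\varphi$ to come from some $f\in C^\infty_c(G)$ on matching orbits, transfer would give you zero on non-matching orbits, whereas your $\varphi$ equals $-\del(\gamma,f'_D)$ there, which is generally nonzero. The density principle from \cite{Xue2} goes in the opposite direction (uniqueness, not existence) and does not help either. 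So there is no mechanism in your plan that actually constructs $f'_{\mr{corr}}$.

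The paper's approach in degree $4$ is different and more concrete: it does not abstractly produce $f'_{\mr{corr}}$ but \emph{guesses} it explicitly, namely $f'_{\mr{corr}}=-4q\log(q)\,f'_{\Par}$ for $\lambda=1/4$ and $f'_{\mr{corr}}=0$ for $\lambda=3/4$, and then verifies the identity by computing both sides in closed form as functions of the numerical invariant $(L,r,d)$. On the analytic side this is the germ expansion for $\Orb(\gamma,f'_{\Iw},s)$ and the resulting formula for $\del(\gamma,f'_D)$ (Proposition~\ref{prop:derivative_teaser}); on the geometric side it is the surface intersection formula (Proposition~\ref{prop:intro_int_formula}), the conormal degree computation, the multiplicity function $m(g,\Lambda)$ on the Bruhat--Tits tree, and (for $\lambda=3/4$) a delicate Cartier-theoretic and display-theoretic analysis of embedded components. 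None of this is ``stratum-by-stratum matching of $\log q$ terms'' in your sense, and the appearance of the parahoric correction term is not predicted by any compatibility principle but emerges from the $0$-dimensional embedded components of $\mcI(g)$.
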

Conjecture \ref{conj:AT_intro} builds on our FL Conjecture. There is also a weaker form that postulates the existence of a test function $f'\in C^\infty_c(G')$ whose orbital integral derivatives $\del (γ, f')$ agree with the intersection numbers $\Int(g)$ on the nose. We formulate and compare such variants in \S\ref{ss:ATC}.

The AT Conjecture is currently known in some low dimensional cases: Consider first the case of $D = M_{2n}(F)$. Then $f'_D = 1_{GL_{2n}(O_F)}$ and it is conjectured that one may take $f'_{\mr{corr}} = 0$. This is precisely the linear AFL conjecture from \cite{LM}. By the main results of \cite{Li_GL4} and \cite{LM}, it is known to hold whenever the connected part of $\mbY$ has height $\leq 4$.

Assume next that the Hasse invariant of $D$ is $1/2$. Then it is again expected that one may take $f'_{\mr{corr}} = 0$. The main result of \cite{HM} states that Conjecture \ref{conj:AT_intro} for such $D$ follows from the linear AFL.

The main result of the present paper, to be formulated in the next section, states that Conjecture \ref{conj:AT_intro} holds for division algebras of degree $4$. Here, $f'_\mr{corr}$ can be chosen from the Iwahori Hecke algebra. We speculate that this is a general phenomenon meaning that $f'_\mr{corr}$ can always be chosen as a linear combination of indicator functions of standard parahoric subgroups. In this context, we mention related work of He--Shi--Yang \cite{HSY} and He--Li--Shi--Yang \cite{HLSY}: There, the authors prove intersection number identities for Kudla--Rapoport divisors in the presence of bad reduction. Their result involves a unique characterization of certain occurring correction terms. It would be interesting to know if similar ideas apply in the context of Conjecture \ref{conj:AT_intro}.

\subsection{Invariant $1/4$ and $3/4$}
Assume from now on that $n = 2$ and that $D$ denotes a central division algebra (CDA) of Hasse invariant $λ\in \{1/4,\, 3/4\}$. Then $C$ is a quaternion division algebra over $E$. The test function $f_D'\in C^\infty_c(G')$ is an $H'$-translate of a scalar multiple $f'_\Iw$ of the characteristic function of an Iwahori in $G' = GL_4(F)$. We will also consider a test function $f'_\Par$ that is the characteristic function of a $(2\times 2)$-block parahoric in $GL_4(F)$.

The set $B(H, μ_H)$ is a singleton in this situation and the moduli space $\mcM_C$ is Drinfeld's half plane \cite{Drinfeld}. If $λ = 1/4$, then $\mcM_D$ is the four-dimensional Drinfeld half space. If $λ = 3/4$ however, then no explicit description of $\mcM_D$ is known. The two groups $H_b$ and $G_b$ are given by
$$H_b \iso GL_2(E),\qquad G_b \iso \begin{cases}
GL_4(F) & \text{if $λ = 1/4$}\\
GL_2(B) & \text{if $λ = 3/4$.}
\end{cases}$$
Here, $B/F$ denotes a quaternion division algebra.
\begin{thm}[\protect{Theorem \ref{thm:main_ATC}}]\label{thm:main_intro}
The AT conjecture holds for $D$. More precisely, let $f'_{\mr{corr}}$ be given by
\begin{equation}\label{eq:corr_function_intro}
f'_{\mr{corr}} = \begin{cases}
-4q\log(q)\cdot f'_\Par & \text{if $λ = 1/4$}\\
0 & \text{if $λ = 3/4$.}\end{cases}
\end{equation}
Then, for every regular semi-simple $γ\in G'_{\mr{rs}}$,
\begin{equation}\label{eq:AT_intro}
\del(γ, f'_D) + \Orb(γ, f'_{\mr{corr}}) = \begin{cases} 2\,\Int(g)\log(q) & \text{if there exists a $g\in G_b$ that matches $γ$}\\
0 & \text{otherwise.}
\end{cases}
\end{equation}
\end{thm}
Why and how it is that the two cases differ by a parahoric type orbital integral is a mystery to us. The difference only emerges during the proof of Theorem \ref{thm:main_intro} and is encoded in the $0$-dimensional embedded components of the intersection locus.

\subsection{Key Aspects}
\label{ss:aspects}

Our proof of Theorem \ref{thm:main_intro} is by determining explicitly and comparing both sides in \eqref{eq:AT_intro}. Key aspects are as follows:
\begin{enumerate}[wide, labelindent=0pt, labelwidth=!, label=(\arabic*), topsep=2pt, itemsep=2pt]
\item Concerning the orbital integral side, we combine three techniques to determine all occurring $\Orb(γ, f'_\Par)$, $\Orb(γ, f'_\Iw)$ and $\del(γ, f'_\Iw)$. First, we work out these orbital integrals for all hyperbolic orbits. In this situation, the computation can be reduced to a Levi subgroup and hence to $GL_2$. Second, we establish a germ expansion principle (Proposition \ref{prop:germ_exp}) that allows to write each orbital integral as a linear combination of a principal germ and a unipotent germ. The principal germ can be described explicitly in all situations. Third and finally, we use the results for hyperbolic orbits and the linear relations amongst the various germs to also determine the remaining orbital integrals. A summary of the final results can be found in \S\ref{s:main_analytic}. In particular, Proposition \ref{prop:derivative_teaser} gives a formula for the derivatives $\del(γ, f'_D)$ when $D$ is a division algebra of degree $4$. The basis for all mentioned results is a combinatorial expression for $\Orb(γ, f'_D)$ in terms of lattices, see \eqref{eq:orb_int_combinatorial}.

\item Concerning the intersection-theoretic side, we first prove a general formula for intersection numbers of surfaces in a $4$-dimensional space:
\end{enumerate}
\begin{prop}[see Corollary \protect{\ref{cor:intersection_simplified}}]\label{prop:intro_int_formula}
Let $X$ be a regular $4$-dimensional formal scheme that is locally formally of finite type over $\Spf O_{\breve F}$. Let $Y_1,\,Y_2\subseteq X$ be two regular $2$-dimensional closed formal subschemes. Assume that $Z = Y_1\cap Y_2$ is a proper scheme over $\Spec O_{\breve F}$ with empty generic fiber and of dimension $\leq 1$. Let $Z^{\mr{pure}},\, Z^{\mr{art}}\subseteq Z$ be its purely $1$-dimensional locus and the artinian subscheme of $0$-dimensional embedded components. Then
\begin{equation}\label{eq:intro_int_formula}
\langle Y_1, Y_2\rangle_X = \mr{len} (\mcO_{Z^{\mr{art}}})  - \deg (\det \mcC_1 \vert_{Z^{\mr{pure}}}) - \langle Z^{\mr{pure}}, Z^{\mr{pure}}\rangle_{Y_2}.
\end{equation}
Here, $\mcC_1$ is the conormal bundle of $Y_1\subseteq X$ and the intersection number on the very right is that of divisors on $Y_2$.
\end{prop}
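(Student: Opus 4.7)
The plan is to express the intersection number as an Euler characteristic of a Koszul-type complex on $Y_2$, and then to isolate the contributions from the pure $1$-dimensional divisor $Z^{\mr{pure}}$ and from the embedded artinian locus.

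First I would use that $Y_1 \hookrightarrow X$ is a regular closed immersion of codimension $2$, so near the proper support of $Z$ the conormal bundle $\mcC_1$ extends to a rank-$2$ locally free sheaf $\mcE$ on a neighborhood of $Y_1$ in $X$, giving a Koszul resolution $\wedge^\bullet \mcE \twoheadrightarrow \mcO_{Y_1}$. Setting $\mcF := \mcE|_{Y_2}$, the intersection number equals $\chi(K)$ for $K := [\det\mcF \to \mcF \to \mcO_{Y_2}]$; this is well-defined since the cohomology of $K$ is supported on the proper scheme $Z$. Because $Y_2$ is regular of dimension $2$ and $Z^{\mr{pure}}$ has pure codimension $1$, the ideal $\mcL := \mcI_{Z^{\mr{pure}}} \subseteq \mcO_{Y_2}$ is invertible; and since $\mcI_Z \subseteq \mcL$, the differential $d_1$ factors as $\mcF \xrightarrow{\tilde d_1} \mcL \hookrightarrow \mcO_{Y_2}$. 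Defining the subcomplex $K' := [\det\mcF \to \mcF \to \mcL]$, the short exact sequence $0 \to K' \to K \to \mcO_{Z^{\mr{pure}}} \to 0$ yields $\chi(K) = \chi(K') + \chi(\mcO_{Z^{\mr{pure}}})$.

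The main step, and the principal subtlety, is to compare $K'$ with the honest Koszul complex $\wt K := [\det\mcF \otimes \mcL^{-1} \to \mcF \to \mcL]$ associated to $\tilde d_1$. Tensoring the inclusion $\mcL \hookrightarrow \mcO_{Y_2}$ with $\mcL^{-1}$ gives an inclusion $\mcO_{Y_2} \hookrightarrow \mcL^{-1}$ whose cokernel is the normal bundle $\mcN := \mcL^{-1}|_{Z^{\mr{pure}}}$ of $Z^{\mr{pure}}$ in $Y_2$. This induces an injection $K' \hookrightarrow \wt K$ which is the identity in degrees $0, -1$ and whose cokernel sits in degree $-2$ and is the line bundle $\det\mcC_1|_{Z^{\mr{pure}}} \otimes \mcN$ on $Z^{\mr{pure}}$ (using $\det\mcF|_{Z^{\mr{pure}}} = \det\mcC_1|_{Z^{\mr{pure}}}$ since $\mcE|_{Y_1} = \mcC_1$). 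One checks local compatibility of the differentials by writing $\bar f_i = g \cdot a_i$ with $g$ a local generator of $\mcL$: the differential of $K'$ is then $g$ times the differential of $\wt K$, matching the factor $g$ that represents the inclusion $\mcO_{Y_2} \hookrightarrow \mcL^{-1}$. Thus $\chi(\wt K) = \chi(K') + \chi(\det\mcC_1|_{Z^{\mr{pure}}} \otimes \mcN)$. To compute $\chi(\wt K)$ I work locally: with the same factorization $\bar f_i = g a_i$, the map $\tilde d_1$ becomes $(a_1, a_2)$, and since $V(a_1, a_2)$ is the $0$-dimensional locus carrying the embedded components of $Z$, $(a_1, a_2)$ is a regular sequence in the $2$-dimensional regular local ring $\mcO_{Y_2, x}$. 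Therefore $\wt K$ is a resolution of $H^0(\wt K) = \mcL/\mcI_Z$, which, by the convention of the paper, is $\mcO_{Z^{\mr{art}}}$, and hence $\chi(\wt K) = \mr{len}(\mcO_{Z^{\mr{art}}})$.

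Finally, Riemann--Roch for a line bundle on the proper $1$-dimensional scheme $Z^{\mr{pure}}$ gives
$$\chi(\det\mcC_1|_{Z^{\mr{pure}}} \otimes \mcN) = \chi(\mcO_{Z^{\mr{pure}}}) + \deg(\det\mcC_1|_{Z^{\mr{pure}}}) + \deg\mcN,$$
and the standard identity $\deg\mcN = \langle Z^{\mr{pure}}, Z^{\mr{pure}}\rangle_{Y_2}$ for the self-intersection of a Cartier divisor closes the chain of identities and produces the desired formula. The essential insight is that $K'$ fails to be a genuine Koszul complex only by a line-bundle twist in degree $-2$, and this twist accounts for the normal-bundle (equivalently, self-intersection) contribution appearing on the right-hand side of \eqref{eq:intro_int_formula}.
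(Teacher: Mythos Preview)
Your argument is correct and is essentially a repackaging of the paper's proof. The paper computes the individual Tor sheaves directly: it shows $T_2=0$ and constructs a canonical isomorphism $T_1\cong(\det\mcC_1)\vert_D\otimes\mcO_{Y_2}(D)\vert_D$ by writing down the local generator $c_{f_1,f_2,g}=(f'_2,-f'_1)$ of $\ker(\bar f_1,\bar f_2)/\mathrm{im}(\bar f_2,-\bar f_1)$ and checking that it transforms correctly under change of $(f_1,f_2)$ and $g$. Your short exact sequence $0\to K'\to\wt K\to(\det\mcC_1\vert_D\otimes\mcN)[2]\to 0$ is exactly this identification read off at the level of complexes: the induced long exact sequence in cohomology gives $T_2=0$ and $T_1\cong\det\mcC_1\vert_D\otimes\mcN$ in one stroke, and your $\mcN=\mcL^{-1}\vert_D$ is the paper's $\mcO_{Y_2}(D)\vert_D$.

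One small point of care: you assume a global rank-$2$ bundle $\mcE$ on a neighborhood of $Y_1$ in $X$ with $\mcE\vert_{Y_1}=\mcC_1$ and a Koszul resolution $\wedge^\bullet\mcE\twoheadrightarrow\mcO_{Y_1}$. Such an $\mcE$ need not exist globally for an arbitrary regular codimension-$2$ immersion. This is not a real obstacle, since your complex manipulations can be carried out locally and yield the global identifications $T_2=0$ and $T_1\cong\det\mcC_1\vert_D\otimes\mcN$ of Tor sheaves (which are independent of choices); but then one must check, as the paper does, that the local isomorphisms glue. Alternatively one can simply invoke additivity of Euler characteristics over an open cover. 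Either way, the substance of the two arguments is the same local factorization $\bar f_i=ga_i$ together with regularity of $(a_1,a_2)$.
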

\begin{enumerate}[wide, resume, labelindent=0pt, labelwidth=!, label=(\arabic*), topsep=2pt, itemsep=2pt]
\item In order to compute the occurring intersection numbers $\Int(g)$, we determine the three quantities on the right hand side of \eqref{eq:intro_int_formula}. The most important input here is Drinfeld's theorem \cite{Drinfeld} which provides an explicit description for $\mcM_C$ and, if $λ = 1/4$, for $\mcM_D$. This in particular enables us to compute the degree of the conormal bundle:
\end{enumerate}
\begin{prop}[see Propositions \protect{\ref{prop:conormal_1_4} and \ref{prop:conormal_3/4}}]\label{prop:intro_conormal}
Let $P\subseteq \mbF\tensor_{O_{\breve F}} \mcM_C$ be an irreducible component of the special fiber of $\mcM_C$. Let $\mcC$ be the conormal bundle of $\mcM_C\subseteq \mcM_D$. Then, for both Hasse invariants $λ\in \{1/4,\, 3/4\}$,
$$\deg(\mcC\vert_P) = q^2 - 1.$$
\end{prop}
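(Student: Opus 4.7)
The plan is to identify the conormal sheaf $\mcC$ by deformation theory in a way that is manifestly independent of the Brauer invariant of $D$, and then to compute its restriction to $P$ using Drinfeld's description of $\mcM_C$. Let $\mcY$ denote the universal $O_C$-module on $\mcM_C$. Since the embedding $\mcM_C \hookrightarrow \mcM_D$ arises from the Serre tensor $\mcX = O_D \otimes_{O_C} \mcY$, Grothendieck--Messing and the theory of $O_F$-displays identify
\[
T_{\mcM_C} \cong \Hom_{O_C}(\omega_\mcY, \Lie \mcY), \qquad T_{\mcM_D}\vert_{\mcM_C} \cong \Hom_{O_C}(\omega_\mcY, O_D \otimes_{O_C} \Lie \mcY),
\]
where the second uses $\omega_\mcX = O_D \otimes_{O_C} \omega_\mcY$, $\Lie \mcX = O_D \otimes_{O_C} \Lie \mcY$, and the $(O_D, O_C)$-bimodule adjunction $\Hom_{O_D}(O_D \otimes_{O_C} -,\, -) = \Hom_{O_C}(-,\,-)$. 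The inclusion $T_{\mcM_C} \hookrightarrow T_{\mcM_D}\vert_{\mcM_C}$ is induced by $\Lie \mcY \hookrightarrow O_D \otimes_{O_C} \Lie \mcY$, so the normal sheaf is
\[
N_{\mcM_C/\mcM_D} \cong \Hom_{O_C}\bigl(\omega_\mcY,\, \overline{O_D} \otimes_{O_C} \Lie \mcY\bigr), \qquad \overline{O_D} := O_D/O_C.
\]
Picking a uniformizer $\Pi \in O_D$ normalizing $O_C$, the quotient $\overline{O_D}$ is isomorphic to $O_C$ equipped with the $\tau$-twist of its right $O_C$-action, where $\tau(x) = \Pi^{-1} x \Pi$ is the $E/F$-Galois conjugation extended to $O_C$. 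The key structural observation is that this bimodule depends only on $(C, E/F)$ and is \emph{independent} of the Brauer class of $D$: the Hasse invariant enters only through $\Pi^2 \in O_F^\times$, which drops out of $\overline{O_D}$. Hence the formula for $\mcC = N^\vee$ is literally the same in both cases $\lambda = 1/4$ and $\lambda = 3/4$.

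The remaining step is to evaluate the degree of this Hom bundle on $P \cong \mbP^1_\mbF$ using Drinfeld's moduli interpretation of $\mcM_C$. The component $P$ corresponds to a vertex $v$ of the Bruhat--Tits tree of $PGL_2(E)$, and the universal $O_C$-module $\mcY$ restricted to $P$ is the explicit $O_C$-stable modification of a fixed module prescribed by Drinfeld's construction. Decomposing $\omega_\mcY\vert_P$ and $\Lie \mcY\vert_P$ into $O_E$-eigenspaces for the two embeddings $O_E \hookrightarrow O_{\breve F}$ breaks each into line bundles on $\mbP^1$ of degree $0$ or $-1$ (controlled by the signature of $\mu_H$). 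The $\tau$-twist in the formula for $N$ exchanges the two eigenspaces and intertwines them with Frobenius pullback by the residue extension $\mbF_{q^2} = O_E/\pi_E$ over $\mbF_q$, which multiplies the degree of each line bundle by $q^2$. A direct bookkeeping of signs and eigenspaces then yields $\deg(N\vert_P) = -(q^2-1)$, hence $\deg(\mcC\vert_P) = q^2 - 1$; the factor $q^2$ records the Frobenius twist, while the $-1$ comes from the Drinfeld-type signature. For $\lambda = 1/4$ the answer can additionally be checked against Drinfeld's explicit model $\mcM_D \cong \hat{\Omega}^4_F$, where $\mcM_C \hookrightarrow \mcM_D$ is realized by the basic construction and $\mcI/\mcI^2$ admits a local coordinate description.

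The main obstacle I anticipate is the accurate identification of $\omega_\mcY\vert_P$ and $\Lie \mcY\vert_P$ as decomposed line-bundle sums on $P$, including the Frobenius structure under the $\tau$-twist. This amounts to tracking the Hodge filtration of the universal $O_C$-display through Drinfeld's moduli interpretation for the quaternionic case, paying careful attention to how the $E/F$-Galois conjugation intertwines the two $O_E$-eigenspaces. Once these bundles are pinned down on a single $\mbP^1$-component, the degree computation reduces to a short calculation producing $q^2 - 1$, and the $\lambda = 3/4$ case then follows with no extra work from the bimodule-independence of the deformation-theoretic formula for $\mcC$.
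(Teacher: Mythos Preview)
Your deformation-theoretic identification of the normal bundle is on the right track and matches in spirit the paper's approach to the $\lambda=3/4$ case (Proposition~\ref{prop:conormal_3/4}). But the central claim --- that the $(O_C,O_C)$-bimodule $\overline{O_D}=O_D/O_C$ is independent of the Hasse invariant of $D$ --- is false, and the justification you give (``the Hasse invariant enters only through $\Pi^2\in O_F^\times$'') is incorrect on its face: $\Pi^2=\varpi$ is a uniformizer of $C$, not a unit of $O_F$. What actually depends on $\lambda$ is the conjugation automorphism $\tau=\Pi^{-1}(\,\cdot\,)\Pi$ of $O_C$: one finds $\tau\vert_{F_4}=\sigma^{-1}$ for $\lambda=1/4$ and $\tau\vert_{F_4}=\sigma$ for $\lambda=3/4$. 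These differ by $\sigma^2=\mathrm{Ad}(\varpi)$, which is inner in $C^\times$ but \emph{not} in $O_C^\times$, so the two bimodules ${}^{\tau}O_C$ are genuinely non-isomorphic. Concretely, if you decompose $\overline{O_D}\otimes_{O_C}\Lie Y$ into $O_{F_4}$-eigenspaces, the pieces in degrees $1$ and $3$ are $\Pi\otimes(\Lie Y)_2$ and $\Pi\otimes(\Lie Y)_0$ for $\lambda=1/4$, but swapped for $\lambda=3/4$; the induced $\varpi$-maps between them are $\varpi_{20}$ versus $\varpi_{02}$, which are not mutually inverse. Hence your formula for $N_{\mcM_C/\mcM_D}$ does depend on $\lambda$.

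The paper does not claim the conormal bundles are isomorphic; it proves only that their \emph{determinants} agree when restricted to the special fiber. This requires the exact sequence $0\to\mcK\to\mcE\to\mcI/\mcI^2\to 0$ of Proposition~\ref{prop:conormal_3/4} together with the identification \eqref{eq:determinant_key} of $\mcK\vert_{V(\pi)}$ in terms of $\ker(\varpi)$, which is the step that makes the $\lambda$-dependence cancel. Your second step is also not how the $\lambda=1/4$ degree is obtained: there is no Frobenius pullback multiplying degrees by $q^2$. Instead (Proposition~\ref{prop:conormal_1_4}) one writes down an explicit generator of $\det\mcC$ on the open chart $\breve U_{E,\Lambda}$ and tracks its zeros at the superspecial points; the divisor is supported on the $\mbF_{q^2}$-rational points of $P_\Lambda\cong\mbP^1$ minus the two coordinate points, giving $(q^2+1)-2=q^2-1$.
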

Set $\mcI(g) = \mcM_C \cap g \cdot \mcM_C$. It is left to describe $\mcI(g)^{\mr{pure}}$ and $\mcI(g)^{\mr{art}}$, where the notation is meant in the sense of Proposition \ref{prop:intro_int_formula}.
\begin{enumerate}[wide, resume, labelindent=0pt, labelwidth=!, label=(\arabic*), topsep=2pt, itemsep=2pt]
\item The description of $\mcI(g)^{\mr{pure}}$ is in terms of the Bruhat--Tits stratification of the special fiber of $\mcM_C$: Each of its irreducible components is isomorphic to $\mbP^1$. Restricting to a fixed connected component $\mcM_C^0$ of $\mcM_C$, the dual graph of its special fiber is the Bruhat--Tits tree $\mcB$ for $PGL_{2,E}$. This is a $(q^2+1)$-regular tree whose vertices are in bijection with homothety classes of $O_E$-lattices $Λ\subseteq E^2$. Thus we may write
\begin{equation}\label{eq:intro_I_pure}
\mcM_C^0\cap \mcI(g)^{\mr{pure}} = \sum_{Λ\in \mr{Vert}(\mcB)} m(g, Λ) [P_Λ].
\end{equation}
One of our main auxiliary results (Theorem \ref{thm:classification_multiplicity_function}) describes the coefficient function $Λ\mapsto m(g, Λ)$ in terms of $g$.

\item The artinian locus $\mcI(g)^{\mr{art}}$ is where the two cases $λ = 1/4$ and $λ = 3/4$ are substantially different. If $λ = 1/4$, then $\mcM^0_C\cap \mcI(g)^{\mr{art}}$ consists of at most a single point of length $1$. If $λ = 3/4$, however, then every stratum $P_Λ$ that occurs in \eqref{eq:intro_I_pure} contributes artinian embedded components of total length $q$. The individual lengths and positions of these components depend on $g$ and $Λ$, see Table \ref{table:cases_I} for all possibilities. The total length of $Γ\backslash \mcI(g)^{\mr{art}}$ will, however, always match the orbital integral $4q\Orb(γ, f'_\Par)$ that makes up the difference of the two analytic sides for $λ = 1/4$ and $λ = 3/4$ in \eqref{eq:AT_intro}.

\item As mentioned before, there is no explicit description of $\mcM_D$ when $λ = 3/4$ which makes the computation of $\mcI(g)$ trickier than in the case $λ = 1/4$. We use a mix of Dieudonné theory, Cartier theory and display theory, as well as the previous results for the case of invariant $1/4$, to achieve a precise description, see the results in \S\ref{ss:inter_set_theoretic} and \S\ref{ss:superspecial} as well as Proposition \ref{prop:structure_3_4}.
\end{enumerate}

\subsection{Open Directions}

There is, of course, the question of how to prove Conjectures \ref{conj:FL} and \ref{conj:AT_intro} resp. the linear AFL \cite{LM} in general. We mention here three further problems of interest.

\begin{enumerate}[wide, labelindent=0pt, labelwidth=!, label=(\arabic*), topsep=2pt, itemsep=2pt]
\item The AT conjecture of the present article concerns inner forms of $GL_{2n}$ and their moduli spaces. An equally interesting question would be to study moduli spaces for $GL_{2n}$ (Lubin--Tate spaces), but with parahoric level structure. The global motivation from \S\ref{ss:global} applies verbatim to that situation.

\item Another question would be for an extension of our results to the biquadratic situation in the following sense: Instead of a single quadratic extension $E/F$, one considers two such extensions $E_1, E_2/F$, fixes embeddings $E_1, E_2\to D$, and defines intersection numbers from the corresponding cycles on $\mcM_D$. For the linear AFL, such a biquadratic generalization has been formulated by B. Howard and the first author \cite{HL}.

\item Finally, there is the problem of relating the local quantities of the present article with global intersection numbers and $L$-functions. We hope to return to this question in the future.
\end{enumerate}

\subsection{Layout of the paper}

We now give an overview of the contents of this paper. The paper consists of three parts.

In Part 1, we first give the group-theoretic setup (invariant theory, matching). We next define the orbital integrals of interest and formulate our FL conjecture. Then we introduce the moduli spaces of strict $O_F$-modules in question and the intersection numbers $\mr{Int}(g)$. We state and compare three variants of the AT conjecture.

In Part 2, we consider the case $G' = GL_4(F)$ and determine the quantities $\Orb(γ, f'_\Par)$, $\Orb(γ, f'_\Iw)$ and $\del (γ, f'_\Iw)$. A summary of our results can be found in \S\ref{s:main_analytic}, the main one being the proof of the FL conjecture for $f'_\Iw$. The further contents of Part 2 have been described at the beginning of \S\ref{ss:aspects}. 

In Part 3, we consider the case of a CDA $D/F$ of degree $4$ and compute the intersection numbers $\Int(g)$. Our main result is the proof of the AT conjecture in this situation. There are four main sections: In \S\ref{s:surface_intersection}, we prove the intersection number formula in Proposition \ref{prop:intro_int_formula}. In \S\ref{s:multiplicity_functions}, we study the functions $m(g,Λ)$ on $\mcB$ that will later describe the multiplicities of curves in the intersection locus, see \eqref{eq:intro_I_pure}. Then, in \S\ref{s:1_4}, we prove the AT conjecture for $λ = 1/4$. Because of Drinfeld's description of both $\mcM_C$ and $\mcM_D$ in this situation, this does not involve any $π$-divisible groups at all. Finally, in \S\ref{s:3_4}, we extend these results to $λ = 3/4$.

\subsection{Acknowledgements}

We are grateful to Michael Rapoport and Wei Zhang for their continued interest in this project and comments on an earlier version of our text. We furthermore thank Johannes Anschütz and Mingjia Zhang for helpful discussions. We also thank the referee for a careful reading of the article and suggestions for improvement.

\part{The Arithmetic Transfer Conjecture}

\section{Invariants}
\label{s:invariants}

Let $F$ be a field, let $E/F$ be an étale quadratic extension and let $D/F$ be a CSA of degree $2n$. Assume that there exists, and fix, an $F$-algebra embedding $E\to D$ that makes $D$ into a free $E$-module. (The latter condition is only relevant if $E\iso F\times F$.) Since $E/F$ is étale, $E\tensor_FE \iso E\times E$. So the left and right multiplication actions of $E$ on $D$ provide an eigenspace decomposition $D = D_+ \oplus D_-$ into $E$-linear and $E$-conjugate linear elements. That is, $D_+ = \mr{Cent}_D(E)$ which is the $E$-algebra $C$ from the introduction. Note that $\dim_F(D_+) = \dim_F(D_-)$. We denote the two components of an element $g\in D$ by $g_+$ and $g_-$.

Write $G = D^\times$ and $H = D_+^\times$ in the following. We consider the right-action
\begin{equation}\label{eq:group_action}
(H\times H) \times G \lr G,\quad (h_1,h_2)\cdot g = h_1^{-1}gh_2.
\end{equation}
An element $g\in G$ is called regular semi-simple if its $(H\times H)$-orbit is Zariski closed and if its stabilizer is of the minimal possible dimension. We denote these elements by $G_{\mr{rs}}$. The regular semi-simple orbits have been classified by Jacquet--Rallis \cite{JR} and Guo \cite[\S1]{Guo}. We work with the variant of their results that best suits our purposes.

\begin{defn}\label{def:invariant}
Let $g\in G$ be an element such that also $g_+$ lies in $G$. Then we define $z_g = g_+^{-1}g_-$ which lies in $D_-$. It can be thought of as the normalized conjugate-linear part of $g$. It is easily checked after base change to $\ob{F}$ that the reduced characteristic polynomial\footnote{Recall that the reduced characteristic polynomial of an element $x\in D$ is defined by $\mr{charred}_{D/F}(x; T) := \mr{char}_{D\tensor_F \ob F/ \ob F}(α(x\tensor 1); T)$ where $\ob F/F$ is an algebraic closure and $α:D\tensor_F \ob F\simto M_{2n}(\ob F)$ any choice of $\ob F$-algebra isomorphism.} of $z_g^2$ is a square, see \eqref{eq:matrix_example_central} below. We define the invariant of $g$ as its unique monic square root:
\begin{equation}\label{eq:def_invariant}
\Inv(g; T) = \mr{charred}_{D/F}\left(z_g^2;\ T\right)^{1/2} \in F[T].
\end{equation}
This is a monic polynomial of degree $n$. It satisfies $\Inv(g; 1) \neq 0$ because $1 + g_+^{-1}g_-$ and $1-g_+^{-1}g_-$ are both invertible, so $g_+^{-1}g_-$ does not have eigenvalues $\pm 1$. Indeed, $1+g_+^{-1}g_- = g_+^{-1}g$ is invertible by assumption. Let $ξ\in E^\times$ satisfy $\ob{ξ} = -ξ$. The identity $1-g_+^{-1}g_- = ξ(1 + g_+^{-1}g_-)ξ^{-1}$ implies that $1-g_+^{-1}g_-$ is invertible as well.
\end{defn}

It is clear by definition that $\Inv(g; T)$ only depends on the orbit $HgH$. The next lemma provides a converse for regular semi-simple elements.

\begin{lem}[Guo \protect{\cite[\S1]{Guo}}]\label{lem:invariant}
An element $g\in G$ is regular semi-simple if and only if both $g_+$, $g_-$ belong to $G$ and if the invariant $\Inv(g;T)$ is a separable polynomial. Moreover, two regular semi-simple elements $g_1$, $g_2\in G$ lie in the same $(H\times H)$-orbit if and only if their invariants agree.\qed
\end{lem}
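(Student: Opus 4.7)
The plan is to reduce the lemma to the split case over a separable closure $\ov{F}$, perform an explicit orbit classification in matrix coordinates, and then descend via Hilbert~90. Fix an isomorphism $D\otimes_F \ov{F} \iso M_{2n}(\ov{F})$ compatible with some identification $E\otimes_F \ov{F} \iso \ov{F}\times \ov{F}$, chosen so that the two idempotents of $E$ project onto complementary $n$-dimensional summands. In block form, an element of $D\otimes_F\ov{F}$ is then a matrix $g = \begin{pmatrix} P & Q \\ R & S \end{pmatrix}$ with $g_+ = \mr{diag}(P, S)$ and $g_- = \begin{pmatrix} 0 & Q \\ R & 0 \end{pmatrix}$, and $H = GL_n \times GL_n$ sits inside $G$ as the block-diagonal subgroup. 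Invertibility of $g_+$ (resp.~$g_-$) corresponds to invertibility of $P, S$ (resp.~$Q, R$), and a direct computation yields
$$(g_+^{-1} g_-)^2 = \mr{diag}(P^{-1} Q S^{-1} R,\; S^{-1} R P^{-1} Q).$$
The two diagonal blocks share their characteristic polynomial (as $XY$ and $YX$ always do), which both explains why the reduced characteristic polynomial is a square and gives $\Inv(g; T) = \det(T\cdot I_n - P^{-1} Q S^{-1} R)$.

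Next, I would carry out an orbit analysis on the open locus where $g_+, g_- \in G$. Using the left factor of $H$ one can normalize $P = S = I_n$, bringing $g$ to $\begin{pmatrix} I_n & X \\ Y & I_n\end{pmatrix}$ with $X = P^{-1}Q$ and $Y = S^{-1}R$. The subgroup of $H\times H$ preserving this shape is the diagonal copy of $H$, acting by $(a, d)\cdot(X, Y) = (a^{-1} X d,\, d^{-1} Y a)$. A further normalization to $X = I_n$ then reduces the residual action to simultaneous conjugation of $Y$ by the diagonal $GL_n\subset H$. So on this open stratum, the $(H\times H)$-orbits over $\ov{F}$ are classified bijectively by $GL_n$-conjugacy classes of matrices $Y$ with $\det(T\cdot I_n - Y) = \Inv(g; T)$, and the stabilizer of $g$ in $H\times H$ is (conjugate to) $\{(a, a) : a\in Z_{GL_n}(Y)\}$.

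Under this dictionary, regular semi-simplicity of $g$ is equivalent to $Y$ being a regular semi-simple matrix: the stabilizer $Z_{GL_n}(Y)$ has dimension $\geq n$ with equality iff $Y$ is regular, and the conjugacy class of $Y$ is Zariski closed iff $Y$ is semi-simple. Both hold simultaneously iff $Y$ has distinct eigenvalues, i.e., iff $\Inv(g; T)$ is separable. Moreover, one checks directly that elements with non-invertible $g_+$ or $g_-$ have strictly larger stabilizer (for instance, when $g_+ = 0$ the relation $h_1 = g h_2 g^{-1}$ shows that the stabilizer projects isomorphically onto either factor of $H\times H$, yielding dimension $2n^2$), hence such elements cannot be regular semi-simple. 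Since all the relevant conditions are preserved by base change between $F$ and $\ov{F}$, this establishes the first assertion over $F$.

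For the matching claim, the ``only if'' direction is immediate from the fact that $\Inv$ is built from the reduced characteristic polynomial, which is manifestly $(H\times H)$-invariant. For the ``if'' direction, two regular semi-simple $g_1, g_2\in G$ with matching invariants lie in a common $(H\times H)$-orbit over $\ov{F}$ by the split classification, and the Galois cocycle obstructing descent of a conjugator to $F$ takes values in $\mr{Stab}_{H\times H}(g_1)$. At a regular semi-simple point this stabilizer is, as an $F$-group, an induced torus of the form $\mr{Res}_{L/F} \mbG_m$ with $L = F[T]/(\Inv(g_1; T))$; by Hilbert~90 its $H^1$ vanishes, so the cocycle is trivial and $g_1, g_2$ are already $(H\times H)$-conjugate over $F$. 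The main subtlety I anticipate is the Galois-equivariant identification of $\mr{Stab}_{H\times H}(g_1)$ with this induced torus (one has to check that the normalization used to read off $Y$ can be done compatibly with the $F$-rational structure, possibly after enlarging to a finite unramified extension), but once that bookkeeping is carried out, the cohomological vanishing is classical.
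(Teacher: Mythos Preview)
Your overall strategy—pass to $\ov{F}$, reduce the $(H\times H)$-action on the open locus $\{g_+,g_-\in G\}$ to $GL_n$-conjugation of a single matrix, then descend via $H^1(F,\Res_{L/F}\mbG_m)=0$—is sound and is essentially how Guo argues in \cite[\S1]{Guo}. The paper itself does not give a proof but simply cites that reference.

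There is, however, a genuine error in one step. You write that ``elements with non-invertible $g_+$ or $g_-$ have strictly larger stabilizer'', and use this to exclude them from the regular semi-simple locus. This is false. Take $n=1$ and $g=\left(\begin{smallmatrix}1&1\\0&1\end{smallmatrix}\right)$: here $g_+=I$ is invertible while $g_-$ is not, yet the stabilizer condition $h_1^{-1}gh_2=g$ forces $a_1=a_2=d_1=d_2$, so the stabilizer is one-dimensional—exactly the minimal dimension. More generally, whenever $g_+$ is invertible and $Y$ has $0$ as a \emph{simple} eigenvalue, your own dictionary identifies the stabilizer with the centralizer $Z_{GL_n}(Y)$, which is an $n$-dimensional torus. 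So the stabilizer dimension alone cannot exclude these boundary elements.

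What actually fails for such $g$ is the \emph{closed orbit} condition. In the $n=1$ example the orbit is $\{\left(\begin{smallmatrix}x&y\\0&z\end{smallmatrix}\right):xz\neq 0,\ y\neq 0\}$, whose closure in $GL_2$ contains all invertible diagonal matrices. For general $n$, if $Y=\mr{diag}(0,Y')$ in a suitable basis, the one-parameter family
\[
\bigl(\mr{diag}(s,I_{n-1}),I_n\bigr)^{-1}\,g\,\bigl(\mr{diag}(s,I_{n-1}),I_n\bigr)
\;=\;
\begin{pmatrix} I_n & \mr{diag}(s^{-1},I_{n-1})\\ \mr{diag}(0,Y') & I_n\end{pmatrix}
\]
stays in $G$ (its determinant is $\det(I_{n-1}-Y')\neq 0$) but limits, as $s\to\infty$, to an element whose upper-right block is singular—hence outside the orbit, which lies entirely in the locus where that block is invertible. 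A symmetric degeneration handles the case where $g_+$ fails to be invertible. Once you replace the incorrect stabilizer claim with this non-closedness argument, the rest of your proof goes through.
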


\begin{ex}\label{ex:invariants}
Consider the split quadratic extension $F\times F$, the CSA $M_{2n}(F)$ and the embedding $ι:F\times F\to D$ that is given by $(a,b)\mapsto \diag(a1_n, b1_n)$. A $(2\times 2)$-block matrix $g = \left(\begin{smallmatrix} v & w \\ x & y\end{smallmatrix}\right)$ with blocks $v$, $w$, $x$, $y\in M_n(F)$ can only be regular semi-simple with respect to $ι$ if all its four blocks are invertible. In this case, we have
\begin{equation}\label{eq:matrix_example_central}
z_g^2 = \begin{pmatrix}
v^{-1}wy^{-1}x & \\ & y^{-1}xv^{-1}w
\end{pmatrix}.
\end{equation}
The invariant of $g$ is hence $\Inv(g; T) = \mr{char}(v^{-1}wy^{-1}x; T)$. Moreover, if $g$ is regular semi-simple and if $w\in GL_n(F)$ is any element with $\mr{char}(w; T) = \Inv(g; T)$, then
\begin{equation}\label{eq:simple_representative}
HgH = H\begin{pmatrix}
1 & w \\ 1 & 1
\end{pmatrix}H.
\end{equation}
\end{ex}

\begin{rmk}\label{rmk:comparison_Li}
A slightly different definition of invariant is given in \cite[Definition 1.1]{Li}. It is defined for every $g\in G$ and again a monic polynomial of degree $n$; let us call it $\Inv'(g;T)$. Assuming that $g_+$ lies in $G$, the two definitions are related by the Moebius transformation
\begin{equation}\label{eq:rel_invariants}
\Inv'(g; T) = T^n\, \Inv(g; 1)^{-1}\, \Inv\left(g; \frac{T-1}{T}\right).
\end{equation}
\end{rmk}

\begin{defn}\label{def:notation_rs}
Let $g\in G_{\mr{rs}}$ be regular semi-simple. We denote by $B_g\subseteq D$ the $F$-subalgebra that is generated by $E$ and $g^{-1}Eg$. We denote by $L_g \subset B_g$ its center. Up to isomorphism, these objects as well as $z_g$ only depend on the orbit $HgH$ because
\begin{equation}\label{eq:conjugacy_orbit}
B_{(h_1,h_2) g} = h_2^{-1}B_gh_2,\quad L_{(h_1,h_2) g} = h_2^{-1}L_gh_2,\quad z_{(h_1,h_2) g} = h_2^{-1}z_gh_2.
\end{equation}
\end{defn}

The next proposition summarizes their most important properties.
\begin{prop}\label{prop:quaternion_algebra}
Let $g\in G_{\mr{rs}}$ be a regular semi-simple element.
\begin{enumerate}[wide, labelindent=0pt, labelwidth=!, label=(\arabic*)]
\item The square $z_g^2$ lies in $L_g$. In fact, $L_g$ equals $F[z_g^2]$ and is, in particular, an étale $F$-algebra of degree $n$ that is isomorphic to $F[T]/(\Inv(g; T))$.
\item The composite $EL_g$ of $E$ and $L_g$ in $D$ is isomorphic to $E\tensor_F L_g$ and, in particular, an étale quadratic $L_g$-algebra.
\item The algebra $B_g$ equals $E L_g[z_g]$ and is, in particular, a quaternion algebra over $L_g$. It coincides with the centralizer $\mr{Cent}_D(L_g).$
\end{enumerate}
\end{prop}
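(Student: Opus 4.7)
My plan is to prove the three claims by first showing $B_g = E[z_g]$ and then analyzing this algebra via the twisted commutation of $z_g$ with $E$.

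First, decompose $g = g_+(1+z_g)$, which uses invertibility of $g_+$ (part of the regular semi-simple hypothesis). For $\alpha \in E$, a direct computation gives
$$g^{-1}\alpha g = (1+z_g)^{-1}\alpha(1+z_g) = \alpha + (\bar\alpha - \alpha)(1+z_g)^{-1}z_g,$$
using the relation $\alpha z_g = z_g\bar\alpha$, which follows from $z_g = g_+^{-1}g_- \in D_-$. Choosing $\alpha$ with $E = F[\alpha]$, so that $\bar\alpha - \alpha$ is a unit, I deduce that $(1+z_g)^{-1}z_g$, hence $z_g$, lies in $B_g$. Together with the inclusions $E, z_g \in B_g$, this gives $B_g = E[z_g]$. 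Squaring the commutation yields $z_g^2\alpha = \alpha z_g^2$, so $z_g^2 \in D_+$.

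Next, I would compute the center of $E[z_g]$ directly. Using $z_g\alpha = \bar\alpha z_g$, every element of $E[z_g]$ can be written uniquely as $\sum_k e_k z_g^k$ with $e_k \in E$. Such an element commutes with every $\alpha \in E$ iff $e_k = 0$ for odd $k$, and commutes additionally with $z_g$ iff $e_k \in F$ for the remaining (even) $k$. Thus $L_g = \mr{center}(E[z_g]) = F[z_g^2]$. To identify this algebra with $F[T]/\Inv(g;T)$, note that $\Inv(g;T)^2$ is by definition the reduced characteristic polynomial of $z_g^2$, so the minimal polynomial of $z_g^2$ divides $\Inv(g;T)^2$. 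Passing to $\bar F$, the decomposition $D = D_+\oplus D_-$ gives $z_g$ the block form $\begin{pmatrix} 0 & W \\ X & 0\end{pmatrix}$ in $D_{\bar F}\iso M_{2n}(\bar F)$, so $z_g^2 = \diag(WX, XW)$ with $WX$ and $XW$ similar. Hence the minimal polynomial of $z_g^2$ equals that of $WX$, which coincides with $\mr{char}(WX;T) = \Inv(g;T)$ because separability of $\Inv$ forces $WX$ to be a cyclic $n\times n$ matrix. This proves (1), with $\dim_F L_g = n$.

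For (2), the surjective multiplication map $E\otimes_F L_g \to EL_g$ sends the $2n$-dimensional étale source onto a commutative subalgebra of $D$. The same block analysis over $\bar F$ exhibits $2n$ distinct joint minimal idempotents of $E\otimes \bar F$ and $L_g\otimes \bar F$ in $M_{2n}(\bar F)$, yielding $\dim_F EL_g = 2n$; hence the map is an isomorphism and $EL_g$ is étale quadratic over $L_g$. For (3), since $z_g^2 \in L_g$ and $z_g \in D_-\setminus 0$, while $EL_g \subseteq D_+$, one has $B_g = E[z_g] = EL_g + EL_g \cdot z_g$ with the sum direct (else $z_g \in EL_g \subseteq D_+$, contradicting $z_g \in D_-\setminus 0$). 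So $B_g = EL_g[z_g]$ has $F$-dimension $4n$, i.e., $L_g$-rank $4$. On the other hand, the double centralizer theorem yields $\dim_F \mr{Cent}_D(L_g) = 4n^2/n = 4n$, and since $B_g \subseteq \mr{Cent}_D(L_g)$ trivially, equality holds. This exhibits $B_g$ as a CSA of degree $2$ over the étale $L_g$, i.e., a quaternion algebra. The main technical obstacle is the dimension identification $\dim_F F[z_g^2] = n$, which is where separability of $\Inv(g;T)$ enters in an essential way: without it the minimal polynomial of $z_g^2$ could properly divide $\Inv(g;T)^2$ through larger Jordan blocks, and the whole structure theory would collapse. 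The remaining arguments are formal commutator calculations combined with standard CSA theory.
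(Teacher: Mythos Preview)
Your approach is essentially the paper's---both reduce the key dimension questions to the block form of $z_g$ over $\bar F$---but there are two gaps in your execution.

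First, the assertion that every element of $E[z_g]$ can be written \emph{uniquely} as $\sum_k e_k z_g^k$ is unjustified at that stage; it is essentially the basis claim that the paper isolates and proves by base change. You can sidestep it: the decomposition $D = D_+ \oplus D_-$ gives $E[z_g] = E[z_g^2] \oplus E[z_g^2]\,z_g$ directly (since $E[z_g^2]\subseteq D_+$ and $E[z_g^2]\,z_g\subseteq D_-$), so commuting with $E$ forces membership in $E[z_g^2]$; then writing $a = a_0 + \zeta a_1$ with $a_i \in F[z_g^2]$ and computing $[z_g,a] = (\bar\zeta - \zeta)\,a_1 z_g$ gives $a_1 = 0$. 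No uniqueness is needed. (The same $D_\pm$-argument fixes your directness claim for $EL_g \oplus EL_g\,z_g$: the stated reason ``else $z_g\in EL_g$'' does not follow from a nontrivial intersection.)

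Second, the double centralizer formula $\dim_F \mr{Cent}_D(L_g) = 4n^2/n$ fails for general \'etale subalgebras: embedding $F\times F$ in $M_3(F)$ via $(a,b)\mapsto \diag(a,a,b)$ gives a centralizer of dimension $5\neq 9/2$. The formula requires $D$ to be free over $L_g$. The paper supplies exactly this missing step: your own result $\dim_F EL_g = 2n = \deg D$ makes $EL_g$ a \emph{maximal} \'etale subalgebra, over which $D$ is then automatically free (the $2n$ idempotents over $\bar F$ must each have rank $1$), hence free over $L_g$. Once you insert this sentence, your argument for $B_g = \mr{Cent}_D(L_g)$ goes through.
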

\begin{proof}
This is a special case of \cite[Proposition 2.5.4]{HL}. Since the argument is short and instructive, and since our notation is slightly different from that in \cite{HL}, we include a proof for convenience.

Choose an $F$-algebra generator $ζ\in E$ to write $B_g = F[ζ, g^{-1}ζg]$. We make this choice with $\mr{tr}_{E/F}(ζ) = 1$, i.e. $\ob{ζ} = 1 - ζ$. Then we obtain (put $z = z_g$)
\begin{equation}\label{eq:get_to_linear}
\begin{aligned}
g^{-1}ζg \ & = (1 + z)^{-1} ζ (1 + z)\\
& = ζ + \frac{z}{1+z}(1-2ζ).
\end{aligned}
\end{equation}
It always holds that $1-2ζ \in E^\times$: This is clear if $E$ is a field and can be verified directly if $E \iso F\times F$. The fraction $z/(1+z)$ is a Moebius transformation that is defined at $z$. The inverse Moebius transformation is then defined at $z/(1+z)$, so we obtain $B_g = F[ζ, z]$. It is evident from definitions that $z^2$ commutes with both $ζ$ and $z$, and hence lies in the center $L_g$ of $B_g$.

\emph{Claim: The elements $1, z, \ldots, z^{2n-1}, ζ, ζz, \ldots, ζz^{2n-1}$ form an $F$-vector space basis of $B_g$.} This may be shown after base change to $\ob{F}$ which puts us into the situation of Example \ref{ex:invariants}. Then we may assume that $g$ is given as a block matrix $g = \left(\begin{smallmatrix} 1 & w \\ 1 & 1\end{smallmatrix}\right)$ as in \eqref{eq:simple_representative}. In this specific case we have
\begin{equation}
z_g = \begin{pmatrix}
 & w \\ 1 &
\end{pmatrix} \quand B_g = (F\times F)[z_g].
\end{equation}
The claim then follows from the fact that the characteristic polynomial of $w$, which equals $\Inv(g; T)$, is separable by Lemma \ref{lem:invariant} and hence agrees with the minimal polynomial. The identities $L_g = F[z_g^2]$ as well as $EL_g \iso E\tensor_FL_g$ and $B_g = EL_g[z_g]$ all follow from the claim.

It remains to show the statement $B_g = \mr{Cent}_D(L_g)$. We have already seen that $EL_g$ is an étale $F$-algebra of degree $2n = (\dim_F D)^{1/2}$. The only possibility for $D$ is then to be free as $EL_g$-module. This implies that $D$ is free as $L_g$-module so the centralizer $\mr{Cent}_D(L_g)$ is a quaternion algebra over $L_g$. It also contains $B_g$, however, and hence equals $B_g$.
\end{proof}

We call a polynomial $δ\in F[T]$ regular semi-simple if it is monic, separable and satisfies $δ(0)δ(1) \neq 0$. Example \ref{ex:invariants} shows that the regular semi-simple polynomials of degree $n$ are in bijection with the regular semi-simple $GL_n(F\times F)$-double cosets on $GL_{2n}(F)$. The following construction is taken from \cite[Proposition 2.5.6]{HL}.

\begin{defn}\label{def:universal_quaternion}
Let $δ\in F[T]$ be regular semi-simple. We define the two $F$-algebras
$$L_δ := F[z^2]/(δ(z^2))\quad \text{and}\quad  B_{δ} := (E\tensor_F L_δ)[z]$$
with commutator relation $(a\tensor b)z = z(\ob{a}\tensor b)$ for $a\in E$, $b\in L_δ$. Note that if $g\in G_{\mr{rs}}$ is a regular semi-simple element, then $B_g \iso B_δ$ by Proposition \ref{prop:quaternion_algebra}. We call $B_δ$ the universal quaternion algebra for invariant $δ$ because it detects orbits of invariant $δ$ in the following sense.
\end{defn}

\begin{cor}\label{cor:universal_quaternion}
Let $δ\in F[T]$ be regular semi-simple of degree $n$. The following three conditions are equivalent.
\begin{enumerate}[wide, labelindent=0pt, labelwidth=!, label=(\arabic*)]
\item There exists an element $g\in G_{\mr{rs}}$ of invariant $δ$.
\item There exists an $F$-algebra embedding $B_δ \to D$.
\item The identity $[B_δ] = [L_δ\tensor_F D]$ holds in the Brauer group of $L_δ$.
\end{enumerate}
\end{cor}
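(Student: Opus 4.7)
The implication (1)$\Rightarrow$(2) is immediate: if $g\in G_{\mr{rs}}$ has invariant $\delta$, then by Proposition \ref{prop:quaternion_algebra}(iii) the subalgebra $B_g \subseteq D$ is isomorphic to $B_\delta$, giving the desired embedding. For (2)$\Rightarrow$(1), I would identify $B_\delta$ with its image in $D$ and set $g := 1+z$ for the distinguished generator $z$ of Definition \ref{def:universal_quaternion}. The commutation relation $(a\otimes b)z = z(\bar{a}\otimes b)$ shows $z\in D_-$, hence $g_+=1$, $g_-=z$, and $z_g=z$. Since $L_\delta = F[z^2]$ embeds as an étale $F$-subalgebra of degree $n$ in the CSA $D$ of reduced degree $2n$, the reduced characteristic polynomial of $z^2$ on $D$ equals the square of its minimal polynomial, so $\Inv(g;T)=\delta(T)$. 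The conditions $\delta(0)\neq 0$ and $\delta(1)\neq 0$ force $z$ and $1+z = g$ to be invertible in $D$, so $g, g_+, g_- \in G$; the separability of $\delta$ and Lemma \ref{lem:invariant} then certify $g\in G_{\mr{rs}}$.

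For (2)$\Rightarrow$(3), an embedding $B_\delta\hookrightarrow D$ restricts to $L_\delta = Z(B_\delta)\hookrightarrow D$ as an étale commutative subalgebra. The double centralizer theorem gives $\dim_F\mr{Cent}_D(L_\delta) = 4n^2/n = 4n = \dim_F B_\delta$, so $B_\delta = \mr{Cent}_D(L_\delta)$. Decomposing $L_\delta = \prod_i L_i$ along its primitive idempotents, the classical Brauer identity $[\mr{Cent}_D(L_i)] = [L_i\otimes_F D]$ on each factor assembles to the required equality $[B_\delta] = [L_\delta\otimes_F D]$ in $\Br(L_\delta) = \prod_i\Br(L_i)$.

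The converse (3)$\Rightarrow$(2) is the substantive step. Assuming first that $L_\delta$ is a field, condition (3) forces the reduced index of $L_\delta\otimes_F D$ to divide $2 = (\text{reduced deg of }D)/[L_\delta:F]$, since $[L_\delta\otimes_F D] = [B_\delta]$ is quaternion. A classical embedding criterion for fields into CSAs then produces an $F$-embedding $L_\delta\hookrightarrow D$. Its centralizer $\mr{Cent}_D(L_\delta)$ is an $L_\delta$-quaternion algebra in the Brauer class $[L_\delta\otimes_F D] = [B_\delta]$, hence $L_\delta$-isomorphic to $B_\delta$, yielding the composite embedding $B_\delta\hookrightarrow\mr{Cent}_D(L_\delta)\hookrightarrow D$. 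When $L_\delta$ splits as a product of fields, one applies the argument factor by factor and assembles the pieces via a compatible system of orthogonal idempotents in $D$. I expect this assembly step — together with the invocation of the embedding criterion, essentially the content of \cite[Proposition 2.5.6]{HL} — to be the main technical obstacle; the rest of the argument is direct once the key geometric element $g = 1+z$ is in hand.
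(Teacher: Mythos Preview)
Your proof is essentially along the same lines as the paper's, but there is one genuine gap and one point where the paper's route is cleaner.

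\textbf{Gap in (2)$\Rightarrow$(1).} When you identify $B_\delta$ with its image in $D$ and write $z\in D_-$, you are implicitly assuming that the copy of $E$ inside $B_\delta$ lands on the \emph{fixed} embedding $E\to D$ that defines the decomposition $D=D_+\oplus D_-$. An arbitrary $F$-algebra embedding $B_\delta\hookrightarrow D$ has no reason to respect this. The paper fills this gap with Skolem--Noether: since $D$ is free over $\iota(E\otimes_F L_\delta)$ (forced by the dimension count $\dim_F(E\otimes_F L_\delta)=2n=\deg D$), it is free over $\iota(E)$, and then Skolem--Noether lets you conjugate $\iota$ so that $\iota|_E$ coincides with the given $E\to D$. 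Only after this adjustment does $z\in D_-$ follow. The same freeness over $E\otimes_F L_\delta$ is what justifies your claim that $\mathrm{charred}_{D/F}(z^2)=\delta^2$; étale of degree $n$ inside a degree-$2n$ CSA is not by itself enough when $L_\delta$ is not a field.

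\textbf{Comparison for (3)$\Rightarrow$(2).} Your approach (embed $L_\delta$, then identify the centralizer with $B_\delta$) is correct in the field case via the index criterion you cite, and you rightly flag the product case as the crux. The paper sidesteps the assembly difficulty by embedding not $L_\delta$ but a quadratic étale extension $M/L_\delta$ that splits $B_\delta$. The point is that $[M\otimes_F D]=[M\otimes_{L_\delta}B_\delta]=0$, so each field factor $M_i$ splits the underlying division algebra $D_0$ of $D$; this forces $\deg D_0\mid [M_i:F]=2[L_i:F]$, which is exactly the divisibility needed to block-diagonally embed $M=\prod M_i\hookrightarrow D$. The centralizer of $L_\delta$ is then automatically $B_\delta$. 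This trick with $M$ is what makes the assembly step painless; your factor-by-factor approach would need to establish the same divisibility $\deg D_0\mid 2[L_i:F]$ by hand, which amounts to rediscovering the same argument.
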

\begin{proof}
Assume that (1) holds and let $g\in G_{\mr{rs}}$ be such that $\Inv(g;T) = δ(T)$. Then Proposition \ref{prop:quaternion_algebra} states that $B_g\iso B_δ$, so (2) holds. Conversely, assume that there exists an embedding $ι:B_δ\to D$. Then $ι(E\tensor_F L_δ) \subset D$ is a commutative $F$-subalgebra of $F$-dimension $2n = [D:F]$. It is hence a maximal commutative subalgebra, so $D$ is a free $ι(E\tensor_F L_δ)$-module. In particular, $D$ is free both as $ι(E)$-module and as $ι(L_δ)$-module. It then follows from the Skolem--Noether Theorem that the given embedding $E\to D$ and $ι\vert_E:E\to D$ are conjugate. We may hence find $ι$ such that $ι\vert_E$ agrees with the given embedding of $E$. Then $g = 1 + ι(z)$ has the property that $g_+ = 1$ and $g_- = ι(z)$, and consequently that $z_g^2 = ι(z^2)$. Since $D$ is free over $ι(L_δ)$, it holds that
$$\mr{charred}_{D/F}(z^2; T) = \mr{char}_{L_δ/F}(z^2; T)^2$$
and hence that $\Inv(g; T) = δ(T)$. This shows that (2) implies (1).

We now prove the equivalence of (2) and (3) which holds more generally. Let $B$ be a quaternion algebra over an étale $F$-algebra $L$ of degree $n$. We claim the equivalence of
\begin{enumerate}[wide, labelindent=0pt, labelwidth=!, label=(\arabic*)]
\item[(2)] There exists an embedding $ι:B\to D$.
\item[(3)] It holds that $[B] = [L\tensor_FD]$ in the Brauer group of $L$.
\end{enumerate}
Assume that there exists an embedding $ι:B\to D$. Then $D$ is necessarily free as $ι(L)$-module and $ι(B) = \mr{Cent}_D(ι(L))$ for dimension reasons. The identity $[B] = [L\tensor_FD]$ follows from (a mild extension of) the centralizer theorem \cite[Theorem 9.6]{Draxl}. This shows that (2) implies (3).

Assume conversely that $[B] = [L\tensor_FD]$ holds. Let $M/L$ be a quadratic étale extension that splits $B$. Then it also holds that $M\tensor_F D \iso M_{2n}(M)$. Let
\begin{equation}
L = \prod_{i\in I} L_i,\quad M = \prod_{i\in I}M_i,\quad B = \prod_{i\in I}B_i
\end{equation}
denote the factorizations of $L$, $M$ and $B$ that correspond to the idempotents of $L$. Also pick an isomorphism $D \iso M_m(D_0)$ for a central division algebra (CDA) $D_0$. Each factor $M_i$ splits $D_0$, so $d = \dim_F(D_0)^{1/2}$ divides $\dim_F(M_i) = 2[L_i:F]$ by \cite[Corollary 9.4]{Draxl}. For every $i$, we define $D_i = M_{[M_i:F]/d}(D_0)$. By \cite[Corollary 9.3]{Draxl}, there exists an $F$-algebra embedding $M_i\to D_i$. Note that $\sum_{i\in I} [M_i:F]/d = m$, so we can form a block diagonal embedding
$$ι:M = \prod_{i\in I} M_i \hookrightarrow \prod_{i\in I} D_i \hookrightarrow D.$$
It follows from $[M:F] = \dim_F(D)^{1/2}$ that $D$ is free as $ι(M)$-module and hence also free over $ι(L)$. The centralizer $\mr{Cent}_D(ι(L))$ satisfies the identity $[\mr{Cent}_D(ι(L))] = [L\tensor_FD]$ and is hence isomorphic to $B$. Any choice of such an isomorphism defines an embedding $B\to D$. This shows that (3) implies (2).
\end{proof}

We will also require a definition of invariant for semi-simple $F$-algebras. Assume in what follows that $D/F$ is a finite-dimensional semi-simple $F$-algebra with center $Z$. Write $Z = \prod_{i\in I} Z_i$ as a product of fields and also decompose $D$ accordingly, $D = \prod_{i\in I} D_i$. We assume that $D$ is of total degree $2n$ in the sense that
\begin{equation}\label{eq:degree_ss}
2n = \sum_{i\in I} [Z_i:F]\cdot [D_i:Z_i].
\end{equation}
Finally, we assume the existence of, and fix, an embedding $E\to D$ such that each component $D_i$ becomes a free $E$-module. Then there is an eigenspace decomposition $D = D_+\oplus D_-$ as before and we continue to write $g = g_+ + g_-$ for the corresponding decomposition of elements $g\in D$. We also write $g_i$ for the $i$-th component of $g$.
\begin{defn}\label{def:invariant_not_central}
Let $g\in G$ be an element with $g_+\in G$. The invariant of $g$ is defined as
\begin{equation}\label{eq:invariant_ss}
\Inv(g; T) = \prod_{i\in I} \Inv(g_i; T) \in F[T].
\end{equation}
It is a monic polynomial of degree $n$. We call $g$ regular semi-simple if $\Inv(g;T)$ is regular semi-simple in the same sense as before.
\end{defn}

For example, the element $1 + z\in B_δ$ from Definition \ref{def:universal_quaternion} has invariant $\Inv(1+z; T) = δ$ with respect to the embedding $E\to B_δ$ that comes by construction.

Definition \ref{def:notation_rs} and the statements of Proposition \ref{prop:quaternion_algebra} apply and remain true without change for semi-simple $D$. In Corollary \ref{cor:universal_quaternion}, the equivalence of (1) and (2) remains true as well.

\section{Fundamental Lemma}
\label{s:FL}

\newcommand{\bdot}{\,\cdot\,}

\subsection{Setting}
\label{ss:setting}

We maintain the following setting throughout the paper.
\begin{enumerate}[wide, labelindent=0pt, labelwidth=!, label=(\arabic*), topsep=2pt, itemsep=2pt]
\item We denote by $F$ a non-archimedean local field with ring of integers $O_F$, uniformizer $π$ and residue cardinality $q$. We let $E/F$ be an unramified quadratic field extension with ring of integers $O_E$.

\item We let $K = F\times F$ denote the split quadratic extension of $F$. We view it as a subring of $M_{2n}(F)$ by the diagonal embedding $(a, b)\mapsto \diag(a1_n, b1_n)$ and we define
$$G' = GL_{2n}(F),\quad H' = GL_n(K).$$
Given $γ\in M_{2n}(F)$, we write $γ = γ_+ + γ_-$ for its decomposition into $K$-linear and conjugate-linear components. Whenever we speak of regular semi-simple elements of $G'$ or of their invariants, then this is meant with respect to the $(H'\times H')$-action. In fact, this is precisely the setting from Example \ref{ex:invariants}, albeit in different terminology.

\item We denote by $D$ a CSA of degree $2n$ over $F$. We fix an embedding $E\to D$ and use the the notations $D = D_+ \oplus D_-$ as well as $g = g_+ + g_-$ like before. We interchangeably write $C = D_+$, and define
$$G = D^\times,\quad H = C^\times.$$
(We will switch to opposed CSAs in \S4.) Whenever we speak of regular semi-simple elements of $G$ or of their invariants, then this is meant with respect to the $(H\times H)$-action.

\item Our normalization of the Hasse invariant is as follows. Let $F_{2n}/F$ be an unramified field extension of degree $2n$ with Frobenius $σ$. Then there is a unique integer $0\leq r < 2n$ such that $D$ is isomorphic to the cyclic $F$-algebra
$$F_{2n}[Π]/(Π^{2n} = π^r,\ Πa = σ(a)Π\ \text{for $a\in F_{2n}$}).$$
The Hasse invariant of $D$ is defined as $r/2n \in \mbQ/\mbZ$. For $λ\in \mbQ/\mbZ$, we write $D_λ$ to denote a CDA of Hasse invariant $λ$ over $F$.
\end{enumerate}

\subsection{Orbital Integrals}

Let $η:F^\times \to \{\pm 1\}$ be the non-trivial unramified quadratic character. In this section, we define and compare two kinds of orbital integrals. The first kind are $η$-twisted orbital integrals on $[G'] = H'\backslash G'/H'$. The second kind are orbital integrals on $[G] = H\backslash G/H$. In fact, the orbital integrals will only be defined on the regular semi-simple orbits $[G'_{\mr{rs}}]$ and $[G_{\mr{rs}}]$.

\subsubsection{Orbital integrals on $[G']$}
Given $γ\in G'$, we denote its stabilizer by
\begin{equation}\label{eq:stab_gamma}
(H'\times H')_γ := \{(h_1,h_2)\mid h_1^{-1}γh_2 = γ\}.
\end{equation}
The stabilizer of a regular semi-simple element $γ$ is isomorphic to the torus $L_γ^\times$, where $L_γ = L[z_γ^2]$ is the étale $F$-algebra of degree $n$ from Proposition \ref{prop:quaternion_algebra} (1). Indeed, we may rewrite \eqref{eq:stab_gamma} as
$$(H'\times H')_γ = \{(γhγ^{-1}, h) \mid h \in H' \cap γ^{-1}H'γ\}.$$
The intersection $H'\cap γ^{-1}H'γ$ is by definition the centralizer of $B_γ = F[K \cup γ^{-1}Kγ]$ in $G'$ (see Definition \ref{def:notation_rs}). Since $[L_γ:F] = n$ and since $B_γ/L_γ$ is a quaternion algebra, this centralizer equals the units of $L_γ \subset B_γ$.

We endow $(H'\times H')_γ$ with the Haar measures such that $O_{L_γ}^\times$ has volume $1$. We also normalize the Haar measure on $H'\times H'$ such that a maximal compact subgroup has volume $1$.

Let $|\cdot|:F^\times \to \mbR,\ x\mapsto q^{-v(x)}$ be the normalized absolute value on $F$. We define $η$ and $|\cdot |$ on $H'$ in the following way,
\begin{equation}\label{eq:def_characters}
η,\ |\cdot|:H'\lr \mbR,\quad η(\mr{diag}(a,b)) = η(\det(ab^{-1})),\quad |\mr{diag}(a,b)| = |\det(ab^{-1})|.
\end{equation}
\begin{defn}\label{def:orb_int_analytic}
For $γ\in G'_{\mr{rs}}$, a test function $f'\in C^\infty_c(G')$ and $s\in \mbC$, we define the orbital integral
\begin{equation}\label{eq:def_orb_int_GL}
O(γ,f',s) := \int_{\frac{H'\times H'}{(H'\times H')_γ}} f'(h_1^{-1} γ h_2) |h_1h_2|^s η(h_2)\ dh_1 dh_2.
\end{equation}
\end{defn}
The support of the integrand in \eqref{eq:def_orb_int_GL} is compact because the $(H'\times H')$-orbit of a regular semi-simple element is Zariski closed. This ensures convergence, and the resulting expression $O(γ, f', s)$ lies in $\mbC[q^s, q^{-s}]$. However, as a function of $γ$, the orbital integral does not yet descend to the orbit space $[G'_{\mr{rs}}]$ because it transforms by the character $η(\bdot)|\cdot|^s$ under the $(H'\times H')$-action. We next modify it in the simplest possible way that makes it $H'\times H'$-invariant.

\begin{defn}\label{def:transfer_factor}
Let $s\in \mbC$. Define the transfer factor $\Omega(\bdot, s) : G'_{\mr{rs}} \to \pm q^{\mbZ s}$ by
$$Ω\left(\left(\begin{smallmatrix} a & b \\ c & d\end{smallmatrix}\right),s\right)=η(\det(cd^{-1}))\cdot|\det(b^{-1}c)|^s.$$
\end{defn}
It satisfies $\Omega(h_1^{-1}γ h_2, s) = |h_1 h_2|^s \eta(h_2)\Omega(\gamma, s)$, so we can modify and rewrite \eqref{eq:def_orb_int_GL} as
\begin{equation}\label{eq:def_orb_int_GL_omega}
\Orb(γ, f', s) := \Omega(\gamma,s)\cdot O(γ,f',s) = \int_{\frac{H'\times H'}{(H'\times H')_γ}} f'(h_1^{-1} γ h_2) \cdot\Omega(h_1^{-1} γ h_2, s)\ dh_1 dh_2.
\end{equation}
Then $\Orb(γ, f', s)$ is $(H'\times H')$-invariant and descends to the orbit space $[G'_{\mr{rs}}]$. Note that we still have
\begin{equation}\label{eq:orb_int_trafo}
\Orb(γ, (h_1, h_2)^*(f'), s) = |h_1h_2|^{-s}η(h_2)\Orb(γ, f', s)
\end{equation}
for all $(h_1, h_2) \in H'\times H'$. We will mostly be interested in the central value and the central derivative of $\Orb(γ, f', s)$ which we denote by
\begin{equation}\label{eq:def_orb_int_central}
\Orb(γ, f') := \Orb(γ, f', 0),\quad\quad \del(γ, f') := \left.\frac{d}{ds}\right\vert_{s = 0}\Orb(γ,f',s).
\end{equation}

\subsubsection{Orbital integrals on $[G]$}
The definition of orbital integrals on $[G_{\mr{rs}}]$ is more straightforward because it does not involve any characters. Given $g\in G$, we denote its stabilizer by
$$(H\times H)_g = \{(h_1,h_2)\mid h_1^{-1}gh_2 = g\}.$$
As before, the stabilizer of a regular semi-simple element $g$ is isomorphic to the torus $L_g^\times$. Again we endow $H\times H$ and $(H\times H)_g$ with the Haar measures such that a maximal compact subgroup has volume $1$. Note that all maximal compact subgroups of $H\times H$ are conjugate, so this Haar measure is well-defined.

\begin{defn}\label{def:orb_int_geometric}
For $g\in G$ regular semi-simple and $f\in C^\infty_c(G)$, we define the orbital integral
\begin{equation}\label{geo-orb}
\Orb(g,f) = \int_{\frac{H\times H}{(H\times H)_g}} f(h_1^{-1} g h_2)\ dh_1 dh_2.
\end{equation}
\end{defn}
This function evidently descends to $[G_{\mr{rs}}]$.

\subsubsection{Transfer of orbital integrals}
We now compare orbital integrals on $[G'_{\mr{rs}}]$ and $[G_{\mr{rs}}]$.

\begin{defn}[\protect{\cite{Xue1}}]\label{def:matching}
(1) Two regular semi-simple elements $γ\in G'_{\mr{rs}}$ and $g\in G_{\mr{rs}}$ (resp. their orbits) are said to match if $\Inv(γ) = \Inv(g)$. Note that in this case also $L_γ \iso L_g$ so that we have chosen compatible Haar measures on the stabilizers $(H'\times H')_γ$ and $(H\times H)_g$.

\noindent (2) A test function $f'\in C^\infty_c(G')$ is called a transfer of $f\in C^\infty_c(G)$ if, for all $γ\in G'_{\mr{rs}}$,
\begin{equation}
\Orb(γ, f') = \begin{cases} \Orb(g, f) & \text{if there is a matching $g\in G_{\mr{rs}}$}\\
0 & \text{otherwise.}\end{cases}
\end{equation}
\end{defn}
Transfers in this sense exist by a result of C. Zhang \cite{C_Zhang}, also see \cite[Proposition 2.9]{Xue1}.

We note that Definition \ref{def:matching} is analogous to that of transfer in the context of the Jacquet--Rallis relative trace formula comparison, see e.g. \cite[\S2.4]{Zhang_FT} or \cite[Definition 2.2]{RSZ2}. However, a difference in our setting is that the matching relation does not yield a partitioning of $[G'_{\mr{rs}}]$ into sets of the form $[G_{\mr{rs}}]$. We illustrate this for $n = 2$:

\begin{ex}\label{ex:matching_n_equal_2}
Assume that $G' = GL_4(F)$ and let $γ\in G'_{\mr{rs}}$ be a regular semi-simple element with invariant $δ$. Our aim is to describe all possibilities for the CSA $D$ such that there exists a matching element $g\in G$. To this end, set $L_δ = F[z^2]/(δ(z^2))$ and let $B_δ = (E\tensor_F L_δ)[z]$ be the universal quaternion algebra for invariant $δ$ (with respect to $E/F$) that was constructed in Definition \ref{def:universal_quaternion}. By Corollary \ref{cor:universal_quaternion}, there exists an element $g\in G_{\mr{rs}}$ of invariant $δ$ if and only if there exists an embedding $B_δ\to D$. The following lists all the possibilities for this situation, each of which can occur.

\begin{table}[h!]
\centering
\def\arraystretch{1.3}
\begin{tabular}{|c|c|c|c|c|}
\hline
\multirow{2}{*}{$L_δ$} & \multirow{2}{*}{$B_δ$} & $D$ s.th. there is some $g\in G$ & $ε_0(δ)$ & $ε_{1/4}(δ)$\\
& & with $\Inv(g) = δ$ & $ε_{1/2}(δ)$ & $ε_{3/4}(δ)$\\
\hline
Field & $M_2(L)$ & $M_4(F)$ and $M_2(D_{1/2})$ & $+$ & $-$ \\
\hline
Field & Division & $D_{1/4}$ and $D_{3/4}$ & $-$ & $+$\\
\hline
$F\times F$ & $M_2(F)\times M_2(F)$ & $M_4(F)$ & $+$ & $-$\\
\hline
$F\times F$ & $D_{1/2}\times D_{1/2}$ & $M_2(D_{1/2})$ & $+$ & $-$\\
\hline
$F \times F$ & $M_2(F)\times D_{1/2}$ & none & $-$ & $+$\\
\hline
\end{tabular}
\medskip
\caption{Matching to $[G_{\mr{rs}}]$ for $n = 2$. Here, $D_λ$ denotes a CDA of Hasse invariant $λ$ over $F$. Moreover, $ε_λ(δ)$ denotes $ε_D(δ)$ for $D$ a CSA of degree $4$ and Hasse invariant $λ$. These are the signs of the functional equation for $f'_D$ and will be defined in \S\ref{ss:functional_eqn}.}
\label{table:matching}
\end{table}
\end{ex}

\subsection{The Fundamental Lemma Conjecture}
\label{ss:FL}
Recall that $C = D_+ = \mr{Cent}_D(E)$.

\begin{lem}\label{lem:order_uniqueness}
Let $O_1, O_2\subseteq D$ be two maximal orders that have the property that  $O_1\cap C$ and $O_2 \cap C$ are maximal orders in $C$. Then $O_1 \cap C = O_2\cap C$ implies $O_1 = O_2$. In particular, the maximal orders $O \subset D$ such that $O\cap C$ is also a maximal order form a single $C^\times$-conjugation orbit.
\end{lem}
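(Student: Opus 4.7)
The plan is to prove the uniqueness statement first (that $O_1\cap C = O_2\cap C$ implies $O_1 = O_2$), from which the $C^\times$-transitivity follows by a short conjugation argument. For uniqueness, my strategy exploits the $E$-bimodule decomposition $D = C\oplus D_-$ from Section \ref{s:invariants}.

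First I would observe that since $E/F$ is unramified, the idempotents of $E\tensor_F E$ giving the decomposition $D = C\oplus D_-$ are integral, i.e.\ lie in $O_E\tensor_{O_F}O_E$. Hence any order $O\subseteq D$ containing $O_E$ decomposes as $O = (O\cap C)\oplus (O\cap D_-)$. Applying this to $O_1,\,O_2$ (which contain $O_E\subseteq O_C$), I reduce to showing that the $O_C$-sub-bimodules $M_i := O_i\cap D_-$ coincide.

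Next I would produce a controlled generator of $D_-$. By Skolem--Noether applied to the two embeddings $e\mapsto e$ and $e\mapsto \ob e$ of $E$ into $D$, there exists $\pi\in D^\times$ with $\pi e\pi^{-1} = \ob e$ for all $e\in E$, hence $\pi\in D_-$ and (by dimension) $D_- = C\pi = \pi C$. Modifying $\pi$ by an element of $C^\times$ and using that maximal orders of $C$ form a single $C^\times$-conjugacy class, I arrange that $\pi O_C\pi^{-1} = O_C$. Writing $\sigma := \mr{Inn}(\pi)\vert_C$, the translate $\pi^{-1}M_i\subseteq C$ is then a full-rank $O_C$-bimodule. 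By the classification of full two-sided fractional ideals of a maximal order in a CSA over a local field (powers of the radical $\mf p = J(O_C)$), I get $M_i = \mf p^{a_i}\pi$ for a unique integer $a_i$. Since $\sigma^2 = \mr{Inn}(\pi^2)$ also preserves $O_C$, the set $O_C\pi^2$ is a full two-sided $O_C$-ideal, equal to $\mf p^v$ for some $v\in\mbZ$, and the condition $M_i M_i\subseteq O_C$ translates (via $\pi\mf p^{a_i} = \mf p^{a_i}\pi$) to $\mf p^{2a_i + v}\subseteq O_C$, i.e.\ $a_i\geq \lceil -v/2\rceil$.

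Maximality of $O_i$ pins $a_i$ to this smallest allowed value uniquely, so $M_1 = M_2$ and $O_1 = O_2$. For the ``in particular'' statement, given two maximal orders $O,\,O'$ of $D$ with maximal intersections in $C$, I use $C^\times$-transitivity on maximal orders of $C$ to find $h\in C^\times$ with $h(O\cap C)h^{-1} = O'\cap C$; then $hOh^{-1}$ is a maximal order of $D$ with the same intersection with $C$ as $O'$, and the uniqueness just proved forces $hOh^{-1} = O'$. The main obstacle I anticipate is the careful handling of $\pi$---arranging that it normalizes $O_C$, and verifying that $O_C\pi^2$ is a genuinely two-sided ideal---both of which are controlled by the $C^\times$-transitivity on maximal orders of $C$ and the fact that $\sigma^2$ consequently preserves $O_C$.
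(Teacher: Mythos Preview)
Your proof is correct and takes a genuinely different route from the paper's. The paper proceeds via an explicit lattice description: it writes $D\cong M_m(Q)$ for a suitable division algebra $Q$, identifies maximal orders of $D$ with $O_Q^{\mr{op}}$-lattices $\Lambda\subset Q^m$ up to homothety, and then classifies those that are stable under $R=O_E\tensor_{O_F}O_Q^{\mr{op}}$. This forces a case split on the parity of $\ell=\deg(Q)$: for odd $\ell$ the ring $R$ is a maximal order in a skew field and the $R$-lattices form a single $C^\times$-orbit outright; for even $\ell$ the authors decompose $Q^m=V_0\times V_1$ along idempotents of $R$, show that $\mr{Stab}_C(\Lambda)$ is maximal iff $\Lambda_0=\Lambda_1\Pi$ or $\Lambda_1=\Lambda_0\Pi$, and observe that the two resulting $C^\times$-orbits are swapped by $\Pi$.

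Your argument instead exploits the $\mbZ/2$-grading $D=C\oplus D_-$ directly and is uniform in $\ell$. Once you arrange $\pi\in D_-$ normalizing $O_C$, the key observation that every order containing $O_C$ is of the form $O_C\oplus\mfp^a\pi$ (via the classification of two-sided fractional ideals of a maximal order over a local field) totally orders all such orders by inclusion; uniqueness of the maximal one is then immediate, and you need not even compute the minimal admissible $a$. What you trade away is the extra concrete information the paper's proof yields as a byproduct (e.g.\ the statement in Example \ref{ex:max_order_D} that for odd $\ell$ \emph{every} maximal order containing $O_E$ already intersects $C$ maximally), which the lattice picture makes transparent but your bimodule argument does not address.
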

\begin{proof}
The second statement follows directly from the first one because all maximal orders in $C$ are $C^\times$-conjugate. We focus on the first statement from now on.

Let $O_C = O_1 \cap C = O_2 \cap C$. Note that $O_E \subset O_C$ because it is the ring of integers of the center of $C$. Choose a suitable skew-field $Q$ and an isomorphism $D \iso M_m(Q)$. Let $Π\in Q$ be a uniformizer. Recall that $Q$ has a unique maximal order $O_Q$ and that the maximal orders in $D$ are precisely the subrings of the form $O_Λ = \End_{O^{\mr{op}}_Q}(Λ)$ where $Λ\subset Q^m$ is an $O_Q^{\mr{op}}$-lattice. Moreover, $O_Λ = O_{Λ'}$ if and only if $Λ' \in Λ Π^{\mbZ}$. In this way, classifying maximal orders in $D$ that contain $O_E$ is equivalent to classifying $R := O_E\tensor_{O_F} O_Q^{\mr{op}}$-stable lattices in $Q^m$, up to scaling by $Π^\mbZ$.

There are two cases for the ring $R$, depending on the parity of the degree $\ell = 2n/m$ of $Q$ over $F$. Let $M/F$ be an unramified field extension of degree $\ell$. With a suitable choice of $Π$ and for a suitable generator $τ\in \mr{Gal}(L/F)$, we may find a presentation of $O_Q$ as
\begin{equation}\label{eq:max_order_presentation}
O^{\mr{op}}_Q \iso O_M[Π],\quad Π^\ell = π,\quad Πa = τ(a)Π\ \text{for all $a\in M$.}
\end{equation}
If $\ell$ is odd, then $O_E\tensor_{O_F}O_M$ is the maximal order in an unramified field extension of $F$ of degree $2\ell$. Then $R \iso (O_E\tensor_{O_F}O_M) [Π]$ is again of the form \eqref{eq:max_order_presentation} and hence the maximal order in the skew field $E\tensor_FQ^{\mr{op}}$. The $Q^{\mr{op}}$ vector space $Q^m$ is isomorphic to $(E\tensor_FQ^{\mr{op}})^{m/2}$ as $(E\tensor_FQ^{\mr{op}})$-module, hence its $R$-stable lattices form a single orbit under $GL_R(Q^m) = C^\times$. This shows that any two maximal orders in $D$ that contain $O_E$ are $C^\times$-conjugate. In particular, we have proven the lemma whenever $\ell$ is odd.

The situation is a little different when $\ell$ is even. To simplify notation in the following, we make the further assumption that $E$ is contained in $M$. (This is meant with respect to the inclusions $M\subseteq Q \subseteq D$.) Starting from the presentation \eqref{eq:max_order_presentation} again, we then obtain
\begin{equation}\label{eq:order_l_even}
\begin{aligned} R &\ \overset{\sim}{\lr} (O_M[Π^2]\times O_M[Π^2])[Π], \quad Π(a, b) = (τ(b), τ(a))Π\ \ \text{for all $a,b\in O_M[Π^2]$.}\\
a\tensor m&\ \longmapsto (am, \bar{a}m)\end{aligned}
\end{equation}
Here, we have extended $τ$ to $M[Π^2]$ by $τ(Π^2) = Π^2$. The ring $O_M[Π^2]$ is the maximal order in the skew field $M[Π^2]$ and we can decompose $Q^m \iso V_0\times V_1$ as $M[Π^2] \times M[Π^2]$-module. The operator $Π$ is homogeneous of degree $1$ with respect to that decomposition, so $V_0$ and $V_1$ are both of dimension $m$ over $M[Π^2]$. Moreover, every $R$-stable lattice $Λ\subset Q^m$ is of the form
$Λ = Λ_0\times Λ_1$, where $Λ_i\subset V_i$ is an $O_M[Π^2]$-stable lattice and where $Λ_iΠ \subset Λ_{i+1}$ for both $i = 0,1$. Conversely, the direct sum of any such pair $(Λ_0, Λ_1)$ is an $R$-stable lattice in $Q^m$.

After this general description, we now prove the statement of the lemma. The centralizer $C$ acts diagonally on $V_0\times V_1$. Given an $R$-stable lattice $Λ = Λ_0\times Λ_1$, we find that
$$\mr{Stab}_C(Λ) = \mr{Stab}_C(Λ_0)\cap \mr{Stab}_C(Λ_1).$$
Moreover, since the $C$-action commutes with $Π$, we may write $\mr{Stab}_C(Λ_1) = \mr{Stab}_C(Λ_1Π)$. We see that $\mr{Stab}_C(Λ)$ is a maximal order in $C$ if and only if $Λ_0 \in Λ_1Π \cdot Π^{2\mbZ}$, which holds if and only if
\begin{equation}\label{eq:lattice_relation_max_order}
Λ_0 = Λ_1Π\quad \text{or}\quad Λ_1 = Λ_0Π.
\end{equation}
The set of $O_M[Π^2]$-lattices in $V_0$ form a single $C^\times$-orbit. Thus, the set of $R$-stable lattices $Λ$ such that $\mr{Stab}_C(Λ)$ is a maximal order form two $C^\times$-orbits that are distinguished by \eqref{eq:lattice_relation_max_order}. However, they are interchanged by multiplication by $Π\in Q^{\mr{op}}$ and, in particular, define the same $C^\times$-conjugation orbit of maximal orders in $C$ and $D$. The proof of the lemma is now complete.
\end{proof}

\begin{ex}\label{ex:max_order_D}
One byproduct of the above proof is the following statement: Assume that $\ell$ is odd and that $O\subset D$ is a maximal order that contains $O_E$. Then the intersection $O\cap C$ is a maximal order in $C$.

Consider, for example, an embedding $E\to M_{2n}(F)$. If $O_E\subset \End(Λ)$ for some $O_F$-lattice $Λ\subset F^{2n}$, then $Λ$ is an $O_E$-lattice and $\End_{O_F}(Λ)\cap \End_E(V) = \End_{O_E}(Λ)$ is a maximal order.

The statement does not hold true if $\ell$ is even: Let $Q = D_{1/2}$ be a quaternion division algebra over $F$ with uniformizer $Π$ and let $E\to Q$ be a fixed embedding. Let $D = M_2(Q)$ and let $E\to D$ be the diagonal embedding. The centralizer $C = \mr{Cent}_D(E)$ is then simply $M_2(E)$. Both the maximal orders $O_1 = M_2(O_Q)$ and $O_2 = \diag(Π, 1)^{-1}O_1\diag(Π,1)$ contain $O_E$. However, they do not both intersect $C$ in a maximal order:
$$O_1 \cap M_2(E) = M_2(O_E),\quad O_2 \cap M_2(E) = \begin{pmatrix}
O_E & O_E \\ (π) & O_E
\end{pmatrix}.$$
\end{ex}

Fix some maximal order $O_D\subset D$ such that $O_C = O_D\cap C$ is a maximal order in $C$ and consider the indicator function $f_D = 1_{O_D^\times}$. Lemma \ref{lem:order_uniqueness} implies that the orbital integrals $O(g, f_D)$ are independent of the choice of $O_D$. The purpose of the next definition is to provide a (conjectural) transfer $f'_D$ of $f_D$ in the sense of Definition \ref{def:matching}.

\begin{defn}\label{def:lattice_chains}
Let $λ = k/\ell$, with $(k,\ell) = 1$, be the Hasse invariant of $D$. Let $\mcL$ be the set of $O_K$-stable lattice chains in $F^{2n}$ that have the form
\begin{equation}\label{eq:lattice_chain}
Λ_\bullet = \big[Λ_0 \supset Λ_1 \supset \ldots \supset Λ_{\ell-1} \supset Λ_\ell = πΛ_0\big]
\end{equation}
and that furthermore satisfy the following property:
\begin{itemize}[wide, labelindent=0pt, labelwidth=!]
\item If $\ell$ is odd, then we demand that each quotient $Λ_i/Λ_{i+1}$ is a free $O_K/(π)$-module of rank $n/\ell$.
\item If $\ell$ is even, then we instead require $Λ_i/Λ_{i+1}$ is an $O_F/(π)$-vector space of dimension $2n/\ell$ and that the $O_K/(π)$-action on $Λ_i/Λ_{i+1}$ factors through the first projection $O_K = O_F\times O_F \to O_F$ if $i$ is even, and through the second projection if $i$ is odd.
\end{itemize}
\end{defn}

The stabilizer in $G'$ of a lattice chain $Λ_\bullet \in \mcL$ is by definition the subgroup
$$\mr{Stab}_{G'}(Λ_\bullet) = \{γ\in G' \mid γ Λ_i = Λ_i\text{ for all $i$}\}.$$
The group $H'$ acts transitively on $\mcL$ by translation, so these stabilizers form a single $H'$-conjugation orbit. However, because of the character $η(\bdot)|\cdot|^s$ in the definition of orbital integrals on $G'$, we have to be more specific about our desired test function than in the case of $G$.

\begin{defn}\label{def:test_function}
Pick any lattice chain $Λ^{\mr{std}}_\bullet\in \mcL$ such that $Λ^{\mr{std}}_0 = O_F^{2n}$ and define
\begin{equation}
f_D'^\circ = \mr{vol}(\mr{Stab}_{H'}(Λ^{\mr{std}}_\bullet))^{-2} 1_{\mr{Stab}_{G'}(Λ^{\mr{std}}_\bullet)}.
\end{equation}
Any two choices for $Λ^{\mr{std}}_\bullet$ differ by $\mr{Stab}(O_F^{2n}) \cap H' = GL_n(O_F)\times GL_n(O_F)$, so $f_D'^\circ$ is defined up to conjugation by $GL_n(O_F)\times GL_n(O_F)$. Also note that if $G = GL_{2n}(F)$, then $\ell = 1$ and the only possible standard chain is $O_F^{2n} \supset πO_F^{2n}$ --- we recover $f_D'^\circ = 1_{GL_{2n}(O_F)}$ as in Guo's case. Let $h_1\in H'$ be any element with $|h_1|^{-s} = q^{-2ns/\ell}$ and define $f'_D \in C^\infty_c(G')$ by
\begin{equation}
f_D'(γ) := \begin{cases}
f_D'^\circ & \text{if $\ell$ is odd}\\
f_D'^\circ(h_1^{-1} γ) & \text{if $\ell$ is even.}
\end{cases}
\end{equation}
By \eqref{eq:orb_int_trafo}, the orbital integrals of $f_D'^\circ$ and $f_D'$ are related by
\begin{equation}\label{eq:norm_orb_int}
\Orb(γ, f_D', s) = \Orb(γ, f_D'^\circ, s)\cdot \begin{cases}
1 & \text{if $\ell$ is odd}\\
q^{-2ns/\ell} & \text{if $\ell$ is even}
\end{cases}
\end{equation}
and, in particular, have the same central value. The advantage of the normalized function $f_D'$ is that its functional equation is completely symmetric, cf. Proposition \ref{prop:functional_equation} below. Examples for $Λ_\bullet^{\mr{std}}$ and $f_D'^\circ$ when $n = 2$ and $\ell \in \{2, 4\}$ can be found at the beginning of \S\ref{s:main_analytic}.
\end{defn}

\begin{conj}[Fundamental Lemma for CSAs]\label{conj:FL}
The function $f'_D$ is a transfer of $f_D$ in the sense of Definition \ref{def:matching}. That is, for regular semi-simple $γ\in G'_{\mr{rs}}$,
\begin{equation}\label{eq:conj_FL}
\Orb(γ, f'_D) = \begin{cases}
\Orb(g, f_D) & \text{if there exists a matching $g\in G_{\mr{rs}}$}\\
0 & \text{otherwise.}\end{cases}
\end{equation}
\end{conj}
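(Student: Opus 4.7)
The plan is to prove \eqref{eq:conj_FL} by separating the vanishing half (no matching $g\in G_{\mr{rs}}$ exists) from the matching half. By Corollary \ref{cor:universal_quaternion}, the obstruction to matching the invariant $\delta = \Inv(\gamma)$ is Brauer-theoretic: a matching element exists if and only if $[B_\delta] = [L_\delta \tensor_F D]$ in $\Br(L_\delta)$. I would first package this obstruction as a sign $\varepsilon_D(\delta) \in \{\pm 1\}$ and prove that $\Orb(\gamma, f'_D, s)$ satisfies a functional equation in $s \mapsto -s$ with that sign. The normalization $|h_1|^{-s} = q^{-2ns/\ell}$ built into Definition \ref{def:test_function} is evidently calibrated so that the functional equation becomes symmetric about $s=0$, and a direct change of variable on the lattice-chain support of $f_D'^\circ$ should yield the explicit sign formula. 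When $\varepsilon_D(\delta) = -1$ the central value then vanishes automatically, which settles the vanishing half.

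For the matching half, I propose a Shalika-style germ expansion argument. Each of $\Orb(\gamma, f'_D)$ and $\Orb(g, f_D)$ should admit an asymptotic expansion along degenerate orbits (unipotent, or more generally parabolic) as a finite linear combination of universal germs whose coefficients are orbital integrals on smaller Levi subgroups. On the $G$-side the relevant Levi structure is controlled by refinements of the quaternion algebra $B_g \subseteq D$; on the $G'$-side by block decompositions compatible with the parahoric $K'$ of Definition \ref{def:lattice_chains}. Matching the two expansions reduces Conjecture \ref{conj:FL} to (i) equality for orbits that factor through a proper Levi, handled by induction on $n$ with the Guo--Jacquet FL as base case, and (ii) equality of the principal germ, a concrete volume computation involving $\Stab_{H'}(Λ^{\mr{std}}_\bullet)$ and $O_C^\times$ that must cohere with the $q$-power normalization in $f'_D$.

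An orthogonal recursion runs on the index $\ell$ of $D$: the cases $\ell = 1$ (Guo--Jacquet) and $\ell = 2$ (\cite{HM}) are known, and the present paper establishes $\ell = 2n = 4$. Using the cyclic description $D = F_{2n}[Π]$ from \S\ref{ss:setting}(4), one could pass to an intermediate subfield $F \subset F' \subset F_{2n}$ over which $D\tensor_F F'$ has strictly smaller index, and relate the orbital integrals for $D$ to those for $D\tensor_F F'$ via a base-change identity twisted by $\Gal(F'/F)$, in the spirit of \cite{HM}. The main obstacle, and the reason Theorem \ref{thm:FL_intro} is currently confined to degree $4$, is the explicit identification of the non-principal germ coefficients: parabolic descent pins down only germs with at least one proper Levi factor, while the block structure of $K'$ introduces a subtle $\ell$-dependent combinatorics that does not reduce cleanly to known subcases. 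I expect that overcoming this will require either a much more refined germ expansion (as achieved in \S\ref{s:main_analytic} for $n = 2$) or geometric input imported from the AT side of the story.
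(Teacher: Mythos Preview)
The statement you are addressing is Conjecture~\ref{conj:FL}, which the paper does \emph{not} prove in general; it is stated as a conjecture and established only for division algebras of degree~$4$ (Theorem~\ref{thm:FL_teaser}, proved as Theorem~\ref{thm:FL}). Your proposal is a research strategy rather than a proof, and you acknowledge this yourself in the final paragraph.

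There is a genuine gap in your treatment of the vanishing half. You write that $\varepsilon_D(\delta) = -1$ forces the central value to vanish by the functional equation, and that this ``settles the vanishing half.'' But the vanishing half of \eqref{eq:conj_FL} requires $\Orb(\gamma, f'_D) = 0$ whenever \emph{no} matching $g \in G_{\mr{rs}}$ exists. By Lemma~\ref{lem:sign_central_value}, the existence of a matching $g$ implies $\varepsilon_D(\delta) = +1$, so $\varepsilon_D(\delta) = -1$ does guarantee non-matching and hence vanishing via the functional equation (Proposition~\ref{prop:functional_equation}). The converse, however, fails: Remark~\ref{rmk:sign_matching} and Table~\ref{table:matching} exhibit invariants $\delta$ with $\varepsilon_D(\delta) = +1$ for which no matching $g$ exists (e.g.\ row~5 for $D$ a division algebra of degree~$4$, or rows~3--4 for $D = M_2(D_{1/2})$ or $M_4(F)$). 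For these orbits the functional equation is even and says nothing about the central value; the vanishing must be proved by other means. In the degree-$4$ proof (Theorem~\ref{thm:FL}) the paper handles precisely this residual case---$L$ split with the ``wrong'' sign---via the explicit hyperbolic computation (Proposition~\ref{prop:hyperbolic}), which shows that $(X+1)$ divides $\Orb(\gamma, f'_\Iw, s)$ and hence the central value vanishes. Your plan does not account for this.

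Your matching-half strategy---germ expansion with parabolic descent and induction on~$n$---is broadly aligned with what the paper carries out in the degree-$4$ case (\S\ref{s:hyperbolic}--\S\ref{s:orb_ints}): hyperbolic orbits are reduced to $GL_2$, and the remaining orbits are accessed through principal and unipotent germs (Propositions~\ref{prop:germ_exp} and~\ref{prop:germ_independence}). But as you yourself note, the combinatorics does not obviously close up for general $n$ and $\ell$, and the paper makes no claim beyond the cases $\ell = 1, 2$ (already known) and the degree-$4$ division case treated here.
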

Conjecture \ref{conj:FL} complements the Guo--Jacquet Fundamental Lemma which is formulated for the case $D = M_{2n}(F)$ and for the full Hecke algebra. We recall it here for comparison:
\begin{conj}[Guo--Jacquet Fundamental Lemma \protect{\cite[(1.12)]{Guo}}]\label{conj:FL_GJ}
Assume that $D = M_{2n}(F)$ and that the embedding $E\to D$ satisfies $O_E\subset M_{2n}(O_F)$. Then every $\GL_{2n}(O_F)$-biinvariant compactly supported function $f$ is a transfer of itself: For regular semi-simple $γ\in G'_{\mr{rs}}$,
\begin{equation}\label{eq:conj_FL_GJ}
\Orb(γ,f) = \begin{cases}
\Orb(g, f) & \text{if there exists a matching $g\in G_{\mr{rs}}$}\\
0 & \text{otherwise.}\end{cases}
\end{equation}
\end{conj}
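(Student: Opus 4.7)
My plan is to reduce the full spherical Hecke algebra statement to the case of the unit element (essentially Guo's original result), and then extend from hyperbolic to all regular semi-simple orbits using the germ expansion principle described in \S\ref{ss:aspects}.

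First I would decompose $f\in C^\infty_c(\GL_{2n}(O_F)\backslash G'/\GL_{2n}(O_F))$ as a $\mbC$-linear combination of characteristic functions $f_{a_\bullet}$ of the double cosets $\GL_{2n}(O_F)\,\diag(π^{a_1},\ldots,π^{a_{2n}})\,\GL_{2n}(O_F)$ indexed by dominant coweights $a_\bullet$. By linearity it suffices to treat each $f_{a_\bullet}$. Unfolding $\Orb(γ, f_{a_\bullet})$ expresses it as an $η$-twisted count of $O_K$-stable $O_F$-lattices $Λ\subset F^{2n}$ with $Λ$ and $γΛ$ in relative position $a_\bullet$, while the geometric side $\Orb(g, f_{a_\bullet})$ (with $f_{a_\bullet}$ viewed on $G$ via the given $E$-embedding) unfolds as an unsigned count of $O_E$-stable lattices in the same ambient space. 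Matching of invariants (Lemma \ref{lem:invariant}) sets up a bijection between the two index sets whenever a matching $g$ exists; when no such $g$ exists, the $η$-signs are expected to cancel the sum in pairs corresponding to the two $O_F$-refinements of each $O_E$-lattice.

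Next I would handle the hyperbolic orbits, i.e.\ those regular semi-simple $γ$ whose centralizer $L_γ$ is $F$-split. Such orbits descend to a Levi $\GL_n(F)\times \GL_n(F)$ via the Bruhat decomposition and the orbital integral factors into a product of ordinary spherical orbital integrals on $\GL_n(F)$, for which \eqref{eq:conj_FL_GJ} reduces to a Macdonald/Kato--Lusztig computation available in the literature. The germ expansion principle would then propagate the identity from the hyperbolic orbits to all regular semi-simple orbits, since the unipotent germ contributions on both sides are determined linearly by a sufficient supply of hyperbolic test cases whose orbital integrals on either side are directly computable.

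The main obstacle is the parity/cancellation phenomenon responsible for the vanishing case in \eqref{eq:conj_FL_GJ}: in the absence of a matching $g$, one must show that the $η$-weighted lattice count annihilates itself, which requires a careful analysis of the $O_K$-action on the relative position stratification of the affine Grassmannian of $\GL_{2n}$. A secondary obstacle is controlling the germ expansion uniformly across the Satake parameter, so that the passage from hyperbolic to ramified orbits does not introduce unexpected unipotent contributions. An alternative, more indirect, route would be to globalize and deduce the identity from the relative trace formula comparison of Leslie--Xiao--Zhang \cite{LXZ}, but the local strategy sketched here is more self-contained and closer in spirit to the techniques of Part 2 of this paper.
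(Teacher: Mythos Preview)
The statement you are trying to prove is labeled a \emph{conjecture} in the paper, and the paper does not supply a proof. Immediately after stating it, the authors remark that Conjecture~\ref{conj:FL} and Conjecture~\ref{conj:FL_GJ} overlap only for the unit Hecke function $1_{\GL_{2n}(O_F)}$, and that this unit case is what Guo proved in \cite[(1.12)]{Guo}. The full spherical Hecke algebra statement remains open; the paper neither claims it nor needs it. So there is no ``paper's own proof'' to compare against.

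As for the substance of your sketch: the germ expansion principle you invoke from \S\ref{ss:aspects} is developed in Part~2 only for $n=2$ and only for the two specific parahoric/Iwahori test functions $f'_\Par$ and $f'_\Iw$, not for arbitrary spherical Hecke functions on $\GL_{2n}$. Extending it to the whole Hecke algebra would require new work---in particular, one would need a germ expansion whose unipotent germ is provably independent of the extension $L$ for every Satake basis element, and the argument in Proposition~\ref{prop:germ_independence} uses the very concrete lattice-chain combinatorics of the Iwahori function. Your ``bijection between the two index sets'' in the lattice-counting paragraph is also not a bijection in any obvious sense: the $O_K$-stable lattices and the $O_E$-stable lattices live in genuinely different Bruhat--Tits buildings, and matching of invariants (Lemma~\ref{lem:invariant}) says nothing about a lattice-by-lattice correspondence. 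These are not minor details but the heart of the problem, which is why the statement is still conjectural.
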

Conjecture \ref{conj:FL} and Conjecture \ref{conj:FL_GJ} precisely overlap for the unit Hecke function $1_{GL_{2n}(O_F)}$. This is also the case that was proved by Guo, see \cite[(1.12)]{Guo}. We mention that Guo's formulation does not involve the transfer factor. Instead, he works with an orbit representative of the form
$$γ = \begin{pmatrix}
1 & w\\ 1 & 1
\end{pmatrix},$$
where each entry is an $(n\times n)$-matrix (see the line after \cite[(1.10)]{Guo}). Such a representative satisfies $\Omega(γ, 0) = 1$ which gives the link of his result with our formulation.

For Hasse invariant $λ = 1/2$, the CSA $D$ is isomorphic to $M_n(D_{1/2})$. In this case, Conjecture \ref{conj:FL} can be reduced to Guo's result and is hence known; we refer to \cite{HM}.

The following is our main result in this setting. Its proof will be given as Theorem \ref{thm:FL} below.
\begin{thm}\label{thm:FL_teaser}
Conjecture \ref{conj:FL} holds whenever $D$ is a division algebra of degree $4$.
\end{thm}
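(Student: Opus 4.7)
The plan is to compute both sides of \eqref{eq:conj_FL} explicitly, and to reduce the vanishing of $\Orb(\gamma, f'_D)$ on non-matching orbits either to the functional equation of $s \mapsto \Orb(\gamma, f'_D, s)$ or to a split $GL_2 \times GL_2$ computation on a Levi. Since $n = 2$, Lemma \ref{lem:invariant} identifies each regular semi-simple orbit with a monic separable polynomial $\delta \in F[T]$ of degree $2$ satisfying $\delta(0)\delta(1) \neq 0$, and Table \ref{table:matching} together with Corollary \ref{cor:universal_quaternion} enumerates the five types of $(L_\delta, B_\delta)$ that must be treated.

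On the geometric side, $D$ division of degree $4$ has a unique maximal order $O_D$, so $f_D$ is the indicator of $O_D \setminus \Pi_D O_D$. Matching occurs precisely when $L_\delta$ is a field and $B_\delta$ is a quaternion division algebra over $L_\delta$. There, using $v_D\vert_C = 2 v_C$, the condition $h_1^{-1} g h_2 \in O_D^\times$ becomes an explicit integrality condition on $(h_1, h_2) \in (C^\times \times C^\times) / L_g^\times$. I would parametrise this double coset space via the valuation and the residue data, reducing $\Orb(g, f_D)$ to a finite sum, in agreement with Proposition \ref{prop:orb_int_division}.

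On the analytic side, for matching orbits I would follow the three-step strategy of \S\ref{ss:aspects}(1). Step one computes $\Orb(\gamma, f'_D)$ for \textit{hyperbolic} invariants ($L_\delta \iso F \times F$) by reducing to the Levi $GL_2(F) \times GL_2(F) \subseteq GL_4(F)$ compatible with $H'$, writing the orbital integral as a product of two $GL_2$-orbital integrals with Iwahori test functions, and evaluating each factor directly. Step two establishes a germ expansion principle: every $\Orb(\gamma, f'_D)$ decomposes as $a_{\mathrm{princ}}(f'_D)\, G_{\mathrm{princ}}(\gamma) + a_{\mathrm{unip}}(f'_D)\, G_{\mathrm{unip}}(\gamma)$, where the principal germ is controlled by the hyperbolic values from step one and the unipotent germ captures the behaviour near central-plus-nilpotent elements. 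Step three determines both coefficients from these inputs and then reads off the remaining orbital integrals. For non-matching orbits, I split the analysis by the sign $\varepsilon_D(\delta)$ from Table \ref{table:matching}: whenever $\varepsilon_D(\delta) = -1$ (rows $1$, $3$, $4$ for $D$ of Hasse invariant $1/4$ or $3/4$), the functional equation $\Orb(\gamma, f'_D, s) = \varepsilon_D(\delta)\, \Orb(\gamma, f'_D, -s)$ of Proposition \ref{prop:functional_equation} forces the vanishing at $s = 0$; the remaining row $5$ (where $\varepsilon_D(\delta) = +1$) is handled by the Levi reduction of step one, which exhibits the orbital integral as a product where one $GL_2$ factor visibly vanishes.

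I expect the germ expansion in step two to be the main obstacle. The classical Shalika machinery must be adapted to the $\eta$-twisted symmetric space $H'\backslash G'/H'$ equipped with the transfer factor $\Omega(\gamma, s)$; in particular, one has to verify that only two germs contribute for the Iwahori test function and compute $G_{\mathrm{unip}}$ explicitly in the $GL_4$ setting. A secondary technical point is the functional equation itself, which I would derive from the involution $\gamma \mapsto w \gamma^{-1} w$ for a suitable Weyl element $w$ interchanging the two factors of $H'$, together with careful bookkeeping of $\Omega(\gamma, s)$ under this involution.
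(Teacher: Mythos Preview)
Your proposal is essentially the paper's own strategy: the functional equation (Proposition~\ref{prop:functional_equation}) handles the $\varepsilon_D(\delta)=-1$ orbits, a Levi reduction to $GL_2\times GL_2$ computes all hyperbolic orbits (Proposition~\ref{prop:hyperbolic}), and a principal/unipotent germ expansion (Proposition~\ref{prop:germ_exp}) transfers the hyperbolic information to the remaining ramified field case (Theorem~\ref{thm:FL}).

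Two small points of divergence are worth noting. First, the paper's germ expansion is not an adaptation of Shalika's abstract machinery but a direct combinatorial definition: one rewrites $\Orb(\gamma(w),f'_\Iw,s)$ as a sum over quadruples of lattices in $L_\delta$ (Lemma~\ref{lem:lc_bijection}) and simply splits this sum according to whether the pair $(\Lambda_0,\Lambda_0^\flat)$ has conductor $(0,0)$ or not. The key fact (Proposition~\ref{prop:germ_independence}) is that the unipotent piece $U(w,s)$ depends only on the numerical invariants $(r,d)$ and \emph{not} on $L$; this is what allows one to read it off from the split case and then feed it back into the ramified case. The coefficients in the expansion are $1$ and $i(L)=[O_L^\times:(O_F+\pi O_L)^\times]$, so they depend on $\gamma$, not on $f'_D$ as your notation suggests. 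Second, the functional equation is already available from \S\ref{ss:functional_eqn} and is proved there via an explicit involution $Z_\gamma$ on the set of lattice chains $\mcL(\gamma)$ rather than via $\gamma\mapsto w\gamma^{-1}w$; you need not re-derive it.
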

Note that the orbital integrals on the right hand side of \eqref{eq:conj_FL} have a particularly simple form if $D$ is a division algebra:
\begin{prop}\label{prop:orb_int_division}
Assume that $D$ is a division algebra; denote by $v_D:D^\times \to \mbZ$ its normalized valuation. Assume that $g\in G_\mr{rs}$ is regular semi-simple and denote by $f(L_g/F)$ the inertia degree of $L_g/F$. Then
\begin{equation}\label{eq:orb_int_division}
\Orb(g, f_D) = \begin{cases} f(L_g/F) & \text{if $v_D(g)\in 2\mbZ$}\\
0 & \text{otherwise.}
\end{cases}
\end{equation}
\end{prop}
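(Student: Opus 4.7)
The strategy is to interpret $\Orb(g,f_D)$ as the $(H\times H)$-invariant measure of $HgH\cap O_D^\times$ inside $HgH\cong (H\times H)/L_g^\times$, and to evaluate it combinatorially via valuations. Throughout, let $\Pi_C\in O_C$ be a uniformizer, so that $\Pi_C^n$ is a uniformizer of the center $E$ and $v_D(\Pi_C) = 2$ (from $v_D(\pi)=2n$).

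\textbf{Vanishing.} Since $D$ is a division algebra, $O_D^\times = v_D^{-1}(0)$ and $O_D^\times$ is normal in $D^\times$, by uniqueness of the maximal order. Hence $h_1^{-1}gh_2\in O_D^\times$ reduces to the single condition $v_D(h_1)-v_D(h_2) = v_D(g)$. For $h_i\in H = C^\times$ the values $v_D(h_i) = 2v_C(h_i)$ lie in $2\mbZ$, so the support of the integrand is empty when $v_D(g)\notin 2\mbZ$ and the orbital integral vanishes.

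\textbf{Nontrivial case.} If $v_D(g) = 2m$ then $\Pi_C^{-m}g = (\Pi_C^m,1)\cdot g$ lies in the same $(H\times H)$-orbit with the same $L_g$ and is contained in $O_D^\times$; so we may assume $g\in O_D^\times$. Then $HgH\cap O_D^\times$ is $(O_C^\times\times O_C^\times)$-stable under $(u_1,u_2)\cdot x = u_1^{-1}xu_2$. The stabilizer of $g$ in $O_C^\times\times O_C^\times$ identifies with $O_{L_g}^\times$ via $s\mapsto (gsg^{-1},s)$: that $gsg^{-1}\in C^\times$ follows from $L_g = Z(B_g)$ centralizing $g^{-1}Eg$ (Proposition \ref{prop:quaternion_algebra}), and integrality of both coordinates then forces $v_C(s) = 0$. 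Since $\mr{vol}(O_C^\times\times O_C^\times) = \mr{vol}(O_{L_g}^\times) = 1$, each $(O_C^\times\times O_C^\times)$-orbit contributes invariant mass $1$, so $\Orb(g,f_D)$ equals the number of such orbits. Passing to the double quotient,
$$(O_C^\times\times O_C^\times)\backslash (H\times H)/L_g^\times\ \cong\ \mbZ^2/\Delta(v_C(L_g^\times)),$$
the image of the stabilizer is precisely the diagonal because $s\mapsto (v_C(gsg^{-1}),v_C(s)) = (v_C(s),v_C(s))$, where the equality $v_C(gsg^{-1}) = v_C(s)$ uses that conjugation preserves $v_D$ on $D^\times$. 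The support condition $v_C(h_1) = v_C(h_2)$ descends to $\Delta(\mbZ)$, giving $\Orb(g,f_D) = [\mbZ:v_C(L_g^\times)]$.

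\textbf{Identifying the index.} The étale $F$-algebra $L_g$ embeds in the division algebra $D$, so any nontrivial idempotents would be zero-divisors; hence $L_g$ is a field extension of $F$ of degree $n$. Consequently $v_C|_{L_g^\times}$ is (up to a positive scalar) the unique valuation on $L_g$ extending $v_F$. The scalar is fixed by $v_C(\pi) = n$ and $v_{L_g}(\pi) = e(L_g/F)$, together with $n = e(L_g/F)f(L_g/F)$, yielding $v_C|_{L_g^\times} = f(L_g/F)\cdot v_{L_g}$ and thus $v_C(L_g^\times) = f(L_g/F)\mbZ$. Therefore $\Orb(g,f_D) = f(L_g/F)$, as claimed. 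The most delicate step is verifying that the image of the stabilizer lies on the diagonal of $\mbZ^2$; this rests on normality of $O_D^\times$ in $D^\times$, which is specific to the division algebra setting.
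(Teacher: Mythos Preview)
Your proof is correct and follows essentially the same approach as the paper: both exploit that $O_D^\times = v_D^{-1}(0)$ is normal in $D^\times$ to reduce the support condition to a pure valuation constraint, and then identify the answer with $\mr{vol}(L_g^\times\backslash C^\times) = [\mbZ : v_C(L_g^\times)] = f(L_g/F)$. The only cosmetic difference is that the paper passes to the representative $1+z_g$ (whose stabilizer is literally the diagonal $L_g^\times$) and reads off the volume directly, whereas you count $(O_C^\times\times O_C^\times)$-orbits via the valuation map to $\mbZ^2$; these are two phrasings of the same computation.
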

\begin{proof}
In the given situation, $f_D$ is the indicator function of the units $O_D^\times$ of the unique maximal order in $D$. The centralizer $C = \mr{Cent}_D(E)$ is a CDA of degree $n$ over $E$, so $v_D(C^\times) = 2\mbZ$. It moreover holds that $v_D(D_-\backslash \{0\}) = 2\mbZ + 1$ and hence follows that $(O_C^\times\, g\,O_C^\times) \cap O_D^\times \neq \emptyset$ if and only if $v_D(g) \in 2\mbZ$. By the triangle inequality, this is equivalent to $v_D(g_-) > v_D(g_+)$ which is equivalent to $v_D(1 + z_g) = 0$. We obtain from Definition \ref{def:orb_int_geometric} that
$$\Orb(g, f_D) = \int_{L_g^\times \backslash C^\times} 1_{O_D^\times}(c(1+z_g)c^{-1})\ dc = \mr{vol}(L_g^\times \backslash C^\times) 1_{O_D}(z_g).$$
The Haar measures were defined such that $\mr{vol}(O_C^\times) = \mr{vol}(O_{L_g}^\times) = 1$ and, in particular, satisfy $\mr{vol}(L_g^\times \backslash C^\times) = f(L_g/F)$. This proves \eqref{eq:orb_int_division}.
\end{proof}

It is possible that the FL conjecture for $D$ a division algebra is related to Kottwitz's Euler--Poincaré functions \cite{Kottwitz}. Our proof of Theorem \ref{thm:FL_intro} is not along such lines, however, but rather a byproduct of our calculation of $\del O(γ, f'_D)$ when $n = 2$.

\begin{rmk}\label{rmk:defn_test_function}
The original motivation for our definition of $f'_D$ was the following. Let $\breve F$ be the completion of a maximal unramified field extension of $F$. Denote by $O_{\breve F}$ its ring of integers and by $\mbF$ its residue field. The scalar extension $\breve F\tensor_FD$ is isomorphic to $M_{2n}(\breve F)$ and under any such isomorphism, $\breve O_D = O_{\breve F}\tensor_{O_F}O_D$ gets identified with the stabilizer of a lattice chain
$$\breve{Λ}_\bullet = [\breve{Λ}_0 \supset \breve{Λ}_1 \supset \ldots \supset \breve{Λ}_{\ell-1}\supset \breve{Λ}_\ell = π\breve{Λ}_0]$$
such that $\dim_\mbF(\breve{Λ}_i /\breve{Λ}_{i+1}) = 2n/\ell$. The action of $R = O_{\breve F}\tensor_{O_F}O_E \subset \breve O_D$ on the quotients $\breve{Λ}_i/\breve{Λ}_{i+1}$ has the characteristics from Definition \ref{def:lattice_chains}: If $\ell$ is odd, then every quotient $\breve{Λ}_i/\breve{Λ}_{i+1}$ is free over $R/(π)$ of rank $n/\ell$. If $\ell$ is even, then the $R$-action on $\breve{Λ}_i/\breve{Λ}_{i+1}$ alternatingly factors through one of the two projections $R\to O_{\breve F}$.

A similar phenomenon occurs for the parahoric level fundamental lemma of Z. Zhang \cite[Theorem 4.1]{ZZ} in the Gan--Gross--Prasad setting  (also see \cite[Conjecture 10.3]{RSZ2} for an earlier formulation in a special case). The two group-theoretic data there that define the two test functions also have the property that they become isomorphic after scalar extension to $\breve F$.
\end{rmk}

\subsection{Functional equation for $\Orb(γ, f'_D, s)$}
\label{ss:functional_eqn}

Our aim in this section is to prove a functional equation for $\Orb(γ, f'_D, s)$. The motivation for this is twofold: First, it will imply the vanishing part of the fundamental lemma (Conjecture \ref{conj:FL}) in many cases. Second, it will imply that the derivatives that will occur in our AT conjecture are indeed the leading terms of the Taylor expansion of the orbital integral in question. We begin by defining the sign of the functional equation.

\begin{defn}\label{def:sign}
Let $λ \in (2n)^{-1}\mbZ/\mbZ$ be the Hasse invariant of $D$ and let $δ\in F[T]$ be regular semi-simple of degree $n$. Let $L_δ = F[z^2]/(δ(z^2))$ and $B_δ = (E\tensor_F L_δ)[z]$ be the universal algebras for $E$ and $δ$ from Definition \ref{def:universal_quaternion}. Write $L_δ = \prod_{i\in I} L_i$ for the decomposition of $L_δ$ into fields and let $B_δ = \prod_{i\in I} B_i$ be the corresponding decomposition of $B_δ$. Denoting by $β_i\in 2^{-1}\mbZ/\mbZ$ the Hasse invariant of $B_i/L_i$, we define
\begin{equation}
ε_D(δ) := nλ + \sum_{i\in I} β_i \in 2^{-1}\mbZ/\mbZ \iso \{\pm 1\}.
\end{equation}
We define $ε_D(γ) = ε_D(\Inv(γ))$ and $ε_D(g) = ε_D(\Inv(g))$ whenever $γ\in G'$ and $g\in G$ are regular semi-simple.
\end{defn}

\begin{lem}\label{lem:sign_alternative}
An equivalent description of $ε_D(δ)$ is given as follows. Let $δ_0\in F^\times$ be the constant coefficient of $δ$ and let $ε'_D = nλ \in \{\pm 1\}$. Then
\begin{equation}\label{eq:sign_comparison}
ε_D(δ) = η(δ_0)\cdot ε'_D.
\end{equation}
In particular, if $δ = \Inv(γ;T)$ for some $γ\in G'_{\mr{rs}}$, then $ε_D(γ) = η(\det_F(z_γ))ε'_D$.
\end{lem}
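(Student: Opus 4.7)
The plan is to pass from the additive notation in $2^{-1}\mathbb{Z}/\mathbb{Z}$ to the multiplicative notation in $\{\pm 1\}$, in which the claimed identity becomes
\[
\prod_{i \in I} \beta_i \;=\; \eta(δ_0).
\]
Everything then reduces to computing each local Hasse invariant $\beta_i$ and comparing with a norm-of-constant-term calculation.

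First, I would express $δ_0$ as a norm. Since $L_δ = F[z^2]/(δ(z^2))$, the element $z^2 \in L_δ$ has characteristic polynomial $δ$, so $δ_0 = δ(0) = (-1)^n N_{L_δ/F}(z^2) = (-1)^n \prod_{i \in I} N_{L_i/F}(z_i^2)$, where $z_i^2$ denotes the image of $z^2$ in $L_i$. Because $\eta$ is the unramified quadratic character of $F^\times$, $\eta(-1) = (-1)^{v_F(-1)} = 1$, so
\[
\eta(δ_0) = \prod_{i \in I} \eta\bigl(N_{L_i/F}(z_i^2)\bigr).
\]
For each factor I use the standard formula $v_F \circ N_{L_i/F} = f_i \cdot v_{L_i}$ on $L_i^\times$ (where $f_i$ is the residue degree of $L_i/F$). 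Setting $m_i := v_{L_i}(z_i^2)$, this gives $\eta\bigl(N_{L_i/F}(z_i^2)\bigr) = (-1)^{f_i m_i}$.

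Next I would show $\beta_i = (-1)^{f_i m_i}$ by splitting on the parity of $f_i$, which precisely detects whether $E$ embeds into $L_i$ (since $E/F$ is unramified quadratic). If $f_i$ is odd, then $E \otimes_F L_i$ is an unramified quadratic field extension of $L_i$, and $B_i$ is the cyclic algebra $(EL_i/L_i,\, \overline{\phantom{a}},\, z_i^2)$. By local class field theory its class in $\mathrm{Br}(L_i)$ is trivial iff $z_i^2$ lies in $N_{EL_i/L_i}((EL_i)^\times)$, which for an unramified quadratic extension is exactly the subgroup of elements of even $v_{L_i}$-valuation. Thus $\beta_i = (-1)^{m_i} = (-1)^{f_i m_i}$ when $f_i$ is odd. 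If $f_i$ is even, then $E \otimes_F L_i \cong L_i \times L_i$ with the involution acting by swapping factors, and $B_i$ is always split over $L_i$: an explicit isomorphism $B_i \xrightarrow{\sim} M_2(L_i)$ is given by
\[
(a,b) \longmapsto \begin{pmatrix} a & 0 \\ 0 & b \end{pmatrix}, \qquad z \longmapsto \begin{pmatrix} 0 & z_i^2 \\ 1 & 0 \end{pmatrix},
\]
which one checks intertwines the relation $z(a,b) = (b,a)z$. Hence $\beta_i = +1 = (-1)^{f_i m_i}$ since $f_i$ is even.

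Taking the product over $i$ then yields $\prod_i \beta_i = \prod_i (-1)^{f_i m_i} = \eta(δ_0)$, which in the additive notation of $2^{-1}\mathbb{Z}/\mathbb{Z}$ reads $\sum_i \beta_i = \eta(δ_0)$. Adding $n\lambda = \varepsilon'_D$ to both sides (resp.\ multiplying, on the right-hand side, by $\varepsilon'_D$ in $\{\pm 1\}$) gives \eqref{eq:sign_comparison}. The computation is essentially routine; the only item requiring care is the bookkeeping between the additive group $2^{-1}\mathbb{Z}/\mathbb{Z}$ and its multiplicative avatar $\{\pm 1\}$, together with the two cases for $E \otimes_F L_i$, so I do not anticipate any serious obstacle.
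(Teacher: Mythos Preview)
Your proof is correct and follows essentially the same approach as the paper: both reduce to showing $\beta_i = \eta(N_{L_i/F}(z_i^2))$ after the same norm computation for $\delta_0$. The only difference is that the paper dispatches this last identity in one line by invoking the compatibility of the local reciprocity map with norms (so that $\eta \circ N_{L_i/F}$ is the quadratic character of $L_i^\times$ cutting out $EL_i/L_i$), whereas you verify it by hand via the case split on the parity of $f_i$ --- this is the same content, unpacked.
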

\begin{proof}
With notation as in Definition \ref{def:sign}, we need to see that $\sum_{i\in I} β_i = η(δ_0)$. Let $z_i$ denote the component of $z\in B_δ$ in the factor $B_i$. Then $(-1)^nδ_0 = \prod_{i\in I} N_{L_i/F}(z_i^2)$, so it suffices to show that $β_i = η(N_{L_i/F}(z_i^2))$. This follows directly from the compatibility of the local reciprocity map with the norm of field extensions, see for example \cite[\S2.4]{Serre_lcft}.

If $δ = \Inv(γ;T)$, then $δ_0$ is (by definition) a square root of $\det_F(z_γ^2)$ and hence the last formula holds.
\end{proof}

\begin{lem}\label{lem:sign_central_value}
Let $g\in G_{\mr{rs}}$ be a regular semi-simple element. Then $ε_D(g) = 1$.
\end{lem}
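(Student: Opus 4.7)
The plan is to combine the existence criterion for matching orbits (Corollary \ref{cor:universal_quaternion}) with the standard formula for how local Hasse invariants behave under scalar extension.

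First I would invoke Proposition \ref{prop:quaternion_algebra} and Corollary \ref{cor:universal_quaternion}: since $g \in G_{\mr{rs}}$ with $\mr{Inv}(g) = \delta$ exists in $G = D^\times$, we have $B_g \iso B_\delta$ and, consequently, the Brauer class identity
\[
[B_\delta] = [L_\delta \tensor_F D] \quad \text{in } \Br(L_\delta).
\]
Writing $L_\delta = \prod_{i \in I} L_i$ and $B_\delta = \prod_{i \in I} B_i$ as in Definition \ref{def:sign}, this identity decomposes component-wise as $[B_i] = [L_i \tensor_F D]$ in $\Br(L_i)$ for each $i$.

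Next I would compute the invariant of each factor. Since $L_i/F$ is a finite extension of non-archimedean local fields, the restriction map $\Br(F) \to \Br(L_i)$ is multiplication by $[L_i:F]$ on Hasse invariants (a basic fact of local class field theory, cf.\ \cite[\S2.4]{Serre_lcft}). Hence
\[
\beta_i = \mr{inv}_{L_i}(L_i \tensor_F D) = [L_i:F]\,\lambda \quad \text{in } \mbQ/\mbZ.
\]
Summing over $i \in I$ and using $\sum_i [L_i:F] = \dim_F L_\delta = n$ gives
\[
\sum_{i \in I} \beta_i = n\lambda \quad \text{in } \mbQ/\mbZ.
\]
Each $\beta_i$ actually lies in $\tfrac{1}{2}\mbZ/\mbZ$ because $B_i$ is quaternion (Proposition \ref{prop:quaternion_algebra}), so this congruence automatically holds already modulo $\tfrac{1}{2}\mbZ$.

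Finally, substituting into Definition \ref{def:sign} yields
\[
\varepsilon_D(\delta) = n\lambda + \sum_{i \in I} \beta_i = 2n\lambda \quad \text{in } \tfrac{1}{2}\mbZ/\mbZ,
\]
and since $\lambda \in \tfrac{1}{2n}\mbZ/\mbZ$, we have $2n\lambda \in \mbZ$, i.e.\ $\varepsilon_D(\delta) = 0$, which corresponds to $+1 \in \{\pm 1\}$. There is no real obstacle here beyond correctly bookkeeping the modular arithmetic in $\mbQ/\mbZ$ vs.\ $\tfrac{1}{2}\mbZ/\mbZ$; the substance of the lemma is entirely contained in the centralizer-theorem identification of $B_g$ with the restriction class $L_\delta \tensor_F D$ supplied by Corollary \ref{cor:universal_quaternion}.
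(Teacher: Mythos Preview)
Your proof is correct and follows essentially the same approach as the paper: both invoke Corollary~\ref{cor:universal_quaternion} to obtain $[B_\delta] = [L_\delta \otimes_F D]$, then compute $\sum_i \beta_i = n\lambda$ via the degree formula for restriction of Hasse invariants, yielding $\varepsilon_D(\delta) = 2n\lambda \equiv 0$. Your write-up is slightly more explicit about the modular bookkeeping, but the argument is the same.
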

\begin{proof}
Put $δ = \Inv(g)$. By Corollary \ref{cor:universal_quaternion}, the existence of an element $g\in G_{\mr{rs}}$ of invariant $δ$ is equivalent to the identity $[B_δ] = [L_δ\tensor_F D]$ in the Brauer group of $L_δ$. Writing $L_δ = \prod_{i\in I} L_i$ as a product of fields as before and taking the sum of the Hasse invariants on both sides, we obtain
$$\sum_{i\in I} β_i = \sum_{i\in I} [L_i:F] λ = nλ.$$
\end{proof}
\begin{rmk}\label{rmk:sign_matching}
Table \ref{table:matching} illustrates that the converse to Lemma \ref{lem:sign_central_value} does not hold. Its rows 3 and 4 show cases where the sign $ε_D(δ)$ is positive for $D = M_4(F)$ or $M_2(D_{1/2})$, but where there is no $g\in G_{\mr{rs}}$ of invariant $δ$. Row 5 shows the existence of such cases when $D$ is a division algebra of degree $4$.
\end{rmk}
\begin{prop}\label{prop:functional_equation}
The orbital integrals of $f'_D$ satisfy the functional equation
\begin{equation}\label{eq:functional_eqn}
\Orb(γ, f'_D, -s) = ε_D(γ) \Orb(γ, f'_D, s).
\end{equation}
\end{prop}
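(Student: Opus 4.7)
The plan is to prove the functional equation by a change of variables in the defining integral, combining inversion $\gamma \mapsto \gamma^{-1}$ with the self-duality of the parahoric test function $f'_D$.

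First, I would verify that inversion preserves regular semi-simple orbits. Taking the universal representative $\gamma = 1 + z \in B_\delta$ from Definition \ref{def:universal_quaternion} and computing $(1 + z)^{-1} = (1 - z^2)^{-1}(1 - z)$, the $K$-decomposition yields $(1+z)^{-1}_+ = (1 - z^2)^{-1}$ and $(1+z)^{-1}_- = -z (1 - z^2)^{-1}$. Hence $\Inv((1+z)^{-1}) = \Inv(1+z) = \delta$, and Lemma \ref{lem:invariant} gives $\gamma^{-1} \in H'\gamma H'$. Consequently $\Orb(\gamma, f'_D, s) = \Orb(\gamma^{-1}, f'_D, s)$ for every $s$.

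Next, I would perform the substitution $(h_1, h_2) \mapsto (h_2^{-1}, h_1^{-1})$ in the unnormalized integral $O(\gamma^{-1}, f, s)$. Writing $h_2 \gamma^{-1} h_1^{-1} = (h_1 \gamma h_2^{-1})^{-1}$ and tracking the characters using the identity $\eta(h_1) = \eta(\det\gamma) \cdot \eta(\det(h_1^{-1}\gamma h_2))\cdot \eta(h_2)$ (with $\eta$ extended to $G'$ via $\eta \circ \det$, noting the two extensions agree on $H'$ since $\eta^2 = 1$), one derives
\begin{equation*}
O(\gamma^{-1}, f, s) = \eta(\det\gamma) \cdot O\bigl(\gamma,\, f^\vee \cdot (\eta\circ\det),\, s\bigr),
\end{equation*}
where $f^\vee(g) := f(g^{-1})$.

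Now I would apply this with $f = f'_D$ and exploit the self-duality of the parahoric. Since $K'$ is a subgroup, $f_D'^{\circ,\vee} = f_D'^\circ$; and $\eta \circ \det$ is trivial on $K'$ because $\det(K') = O_F^\times$ and $\eta$ is unramified. Combined with bi-$K'$-invariance, these imply that $(\eta\circ\det)\cdot f'^\vee_D$ equals $\eta(\det h_1)\cdot f'_D$ up to a translation that contributes a factor $|h_1|^{2s}$ in the orbital integral. The factor $|h_1|^{2s} = q^{4ns/\ell}$ cancels exactly against the ratio $\Omega(\gamma, s)/\Omega(\gamma^{-1}, s)$, computed from Definition \ref{def:transfer_factor} on the representative $\gamma = \left(\begin{smallmatrix}1 & w \\ 1 & 1\end{smallmatrix}\right)$ where $|\det(w)|^{-2s}$ matches the $|h_1|$-normalization of Definition \ref{def:test_function}. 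The residual sign simplifies via Lemma \ref{lem:sign_alternative} and the identity $\delta_0 = (-1)^n\det(a^{-1}bd^{-1}c)$ from Example \ref{ex:invariants} to produce $\eta(\delta_0) \cdot \epsilon_D' = \epsilon_D(\gamma)$.

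The main obstacle is the third step: matching $\eta(\det h_1)$ with the sign $\epsilon_D' = n\lambda$ for the normalizing translate of Definition \ref{def:test_function}. This requires a careful analysis of how the lattice chain structure of Definition \ref{def:lattice_chains}, particularly the alternating $K$-action when $\ell$ is even, constrains $h_1$, and how $\eta$ evaluated on its determinant reflects the Hasse invariant $\lambda = k/\ell$. A uniform treatment will likely split into separate analyses for odd and even $\ell$.
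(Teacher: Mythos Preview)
Your approach has a fundamental gap: the inversion trick never produces the sign flip $s \to -s$. Trace through your second step carefully. Starting from
\[
O(\gamma^{-1}, f, s) = \int f(h_1^{-1}\gamma^{-1}h_2)\,|h_1h_2|^s\,\eta(h_2)\,dh_1\,dh_2
\]
and swapping $(h_1,h_2)\mapsto(h_2,h_1)$ using $h_1^{-1}\gamma^{-1}h_2=(h_2^{-1}\gamma h_1)^{-1}$, you get
\[
O(\gamma^{-1}, f, s) = \int f^\vee(h_1^{-1}\gamma h_2)\,|h_1h_2|^s\,\eta(h_1)\,dh_1\,dh_2,
\]
still with exponent $s$, not $-s$. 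Rewriting $\eta(h_1)$ via $\eta(\det\gamma)\eta(\det(h_1^{-1}\gamma h_2))\eta(h_2)$ gives exactly your displayed formula, again with $s$ on both sides. All subsequent manipulations---self-duality of the parahoric, the translation by $h_1$, the ratio $\Omega(\gamma,s)/\Omega(\gamma^{-1},s)$---only multiply by factors of the form $\pm q^{ks}$; none of them inverts $s$. (For the representative $\gamma=\left(\begin{smallmatrix}1&w\\1&1\end{smallmatrix}\right)$ one checks directly that $\Omega(\gamma^{-1},s)=\Omega(\gamma,s)$, so even that ratio is $1$.) What you end up proving is an identity relating $\Orb(\gamma,f'_D,s)$ to itself at the \emph{same} $s$, which carries no information about the functional equation.

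The paper's proof uses a genuinely different symmetry. It rewrites $\Orb(\gamma,f'_D,s)$ as a sum over lattice chains $\Lambda_\bullet\in\mcL(\gamma)$ weighted by $\Omega(\gamma,\Lambda_0,s)$, and then constructs an involution $Z_\gamma$ on $\mcL(\gamma)$ given by multiplying each lattice by $z_\gamma$ and cyclically shifting the chain by one step. Because $z_\gamma$ is $K$-conjugate-linear, multiplication by $z_\gamma$ swaps the eigenspace decomposition $\Lambda=\Lambda_+\oplus\Lambda_-$; in block-matrix terms this interchanges the off-diagonal blocks $b$ and $c$, which is precisely what flips the sign of $s$ in $\Omega(\gamma,\Lambda,s)=\pm|\det(b^{-1}c)|^s$. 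The cyclic shift then accounts for the normalization $q^{-2ns/\ell}$ and the sign $\varepsilon_D'=n\lambda$ when $\ell$ is even. If you want to salvage your approach, you would need to replace inversion by some operation that swaps the two $K$-eigenspaces---multiplication by $z_\gamma$, or conjugation by the block permutation $\left(\begin{smallmatrix}0&1_n\\1_n&0\end{smallmatrix}\right)$---since that is the source of the $s\mapsto -s$ symmetry.
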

The proof will be given at the end of this section. We first establish some auxiliary results that provide a combinatorial expression for $\Orb(γ, f'_D, s)$. Everything relies on the following simple observation: Assume that $h_1$, $h_2\in H'$ are two elements and that $Λ_{i,\bullet} = h_i Λ^{\mr{std}}_\bullet$ are the two corresponding lattice chains in $\mcL$. Then
\begin{equation}\label{eq:translation_stabilizer_to_lattice}
h_1^{-1}γh_2 \in \mr{Stab}_{G'}(Λ_\bullet^{\mr{std}})\quad \Longleftrightarrow\quad γΛ_{2,\bullet} = Λ_{1,\bullet}.
\end{equation}
\begin{defn}\label{def:lattice_chains_gamma}
Motivated by \eqref{eq:translation_stabilizer_to_lattice}, we make the following two definitions. First, we let
\begin{equation}
\mcL(γ) := \{Λ_\bullet \in \mcL \mid γΛ_\bullet \in \mcL\}.
\end{equation}
Second, for every lattice $Λ\subset F^{2n}$ such that both $Λ$ and $γΛ$ are $(O_F\times O_F)$-stable, we put
\begin{equation}\label{eq:transfer_lattice}
\Omega(γ, Λ, s) := \Omega(h_1^{-1}γh_2, s),
\end{equation}
where $h_1$, $h_2\in H'$ are chosen such that $Λ = h_2\cdot O_F^{2n}$ and and $γΛ = h_1\cdot O_F^{2n}$. (The lattice $O_F^{2n}$ comes up here because we have normalized the test function $f'_D$ by the requirement $Λ_0^{\mr{std}} = O_F^{2n}$.)
\end{defn}
Assume that $γ\in G'_{\mr{rs}}$ is regular semi-simple. The torus $L_γ^\times \subset G'$ acts on $\mcL(γ)$ by multiplication and we write
$$\mr{Stab}(Λ_\bullet) = \{x\in L_γ^\times \mid xΛ_i = Λ_i\text{ for all $i = 0,\ldots,\ell-1$}\}$$
for the stabilizer a lattice chain $Λ_\bullet \in \mcL(γ)$. Taking into account the volume factor in the definition of $f'_D$, see Definition \ref{def:test_function}, as well as the normalization in \eqref{eq:norm_orb_int}, we can then write the orbital integral of $f'_D$ as
\begin{equation}\label{eq:orb_int_combinatorial}
\Orb(γ, f'_D, s) = q^{-ms} \sum_{Λ_\bullet \in L_γ^\times \backslash \mcL(γ)} [O_{L_γ}^\times : \mr{Stab}(Λ_\bullet)]\ \Omega(γ, Λ_0, s)
\end{equation}
where $m = 0$ if $\ell$ is odd and $m = 2n/\ell$ if $\ell$ is even. The next few lemmas study this expression in more detail.
\begin{lem}\label{lem:lattice_combinatorics}
Let $γ\in G'_{\mr{rs}}$ be a regular semi-simple element and let $z = z_γ$.
\begin{enumerate}[wide, labelindent=0pt, labelwidth=!, label=(\arabic*)]
\item Let $Λ$ be an $O_K$-lattice. Then $γΛ$ is an $O_K$-lattice as well if and only if $z/(1+z)\cdot Λ \subseteq Λ$. If $z$ is topologically nilpotent, then this is furthermore equivalent to $zΛ\subset Λ$.
\item Assume that $\ell$ is odd and that $z$ is topologically nilpotent. Then
\begin{equation}\label{eq:lattice_chains_gamma_odd}
\mcL(γ) = \{Λ_\bullet \in \mcL \mid zΛ_i \subseteq Λ_i\text{ for all $i = 0,\ldots,\ell-1$}\}.
\end{equation}
\item Assume that $\ell$ is even. Then $\mcL(γ)\neq \emptyset$ only for $γ$ such that $z$ is topologically nilpotent. More precisely,
\begin{equation}\label{eq:lattice_chains_gamma_even}
\mcL(γ) = \{Λ_\bullet \in \mcL \mid z Λ_i \subseteq Λ_{i+1}\text{ for all $i = 0,\ldots,\ell-1$}\}.
\end{equation}
\end{enumerate}
\end{lem}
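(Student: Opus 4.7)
The plan is to prove (1) first via a direct conjugation computation, and then to deduce (2) and (3) by tracking the extra structure that $\mcL$ carries in the odd versus even case of $\ell$. The main conceptual input is the commutator identity $[e_j, z] = \pm z(e_{3-j} - e_j)$, which reduces the $O_K$-stability condition (and the alternating chain condition, in the even case) to a single containment condition on the operator $\frac{z}{1+z}$. The trickiest bookkeeping is in (3), where topological nilpotence of $z$ itself has to be deduced from the cyclic composition of local chain conditions before the power-series arguments can finish the job.

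For (1), factor $\gamma = \gamma_+(1+z)$. Since $\gamma_+ \in H'$ preserves every $O_K$-lattice, $\gamma\Lambda$ is $O_K$-stable iff $(1+z)\Lambda$ is, iff the conjugate operator $(1+z)^{-1}e_1(1+z)$ stabilizes $\Lambda$. The $K$-conjugate linearity of $z$ gives the identities $e_1 z = z e_2$ and $e_2 z = z e_1$, hence $[e_1, z] = z e_2 - e_2 z = z(e_2 - e_1)$, whence
$$(1+z)^{-1} e_1 (1+z) = e_1 + (1+z)^{-1} z (e_2 - e_1).$$
Since $e_1 \in O_K$ and $e_2 - e_1 \in O_K^\times$, stabilization of $\Lambda$ reduces to $\frac{z}{1+z}\Lambda \subseteq \Lambda$. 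When $z$ is topologically nilpotent, the convergent expansions $\frac{z}{1+z} = \sum_{k \geq 0}(-1)^k z^{k+1}$ and $z = \frac{z}{1+z}\sum_{k \geq 0}\bigl(\frac{z}{1+z}\bigr)^k$ each express one operator as an $O_F$-linear combination of powers of the other, proving the equivalence with $z\Lambda \subseteq \Lambda$.

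For (2), when $\ell$ is odd the only conditions defining membership in $\mcL$ are the $O_K$-stability of each $\Lambda_i$ together with freeness of the quotients over $O_K/(\pi)$. Applying (1) to each lattice gives the inclusion $\mcL(\gamma) \subseteq \{\Lambda_\bullet : z\Lambda_i \subseteq \Lambda_i \text{ for all } i\}$. Conversely, if $z\Lambda_i \subseteq \Lambda_i$, then $(1+z)$ acts on $\Lambda_i$ as a topologically nilpotent perturbation of the identity, hence as an $O_F$-linear automorphism, and $\gamma\Lambda_i = \gamma_+\Lambda_i$. Therefore $\gamma\Lambda_\bullet = \gamma_+\Lambda_\bullet$ is an $H'$-translate of $\Lambda_\bullet$ and lies in $\mcL$.

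For (3), when $\ell$ is even, $\Lambda_\bullet \in \mcL$ additionally requires the alternating condition $e_{\sigma(i)}\Lambda_i \subseteq \Lambda_{i+1}$, where $\sigma(i) = 2$ for even $i$ and $\sigma(i) = 1$ for odd $i$. The analog of the commutator computation yields $\gamma^{-1}e_j \gamma = e_j + (1+z)^{-1} z(e_{3-j} - e_j)$ for $j \in \{1,2\}$, so for $\gamma\Lambda_\bullet \in \mcL$ we need $\gamma^{-1} e_{\sigma(i)}\gamma \Lambda_i \subseteq \Lambda_{i+1}$ at each position. Since $e_{\sigma(i)}\Lambda_i \subseteq \Lambda_{i+1}$ already holds and $e_{3-\sigma(i)} - e_{\sigma(i)} \in O_K^\times$, this reduces to $\frac{z}{1+z}\Lambda_i \subseteq \Lambda_{i+1}$ for all $i$. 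Cycling through $i = 0, \ldots, \ell-1$ with $\Lambda_\ell = \pi \Lambda_0$ gives $\bigl(\frac{z}{1+z}\bigr)^\ell \Lambda_0 \subseteq \pi \Lambda_0$, which forces $\frac{z}{1+z}$, and therefore $z = \frac{z}{1+z}\bigl(1 - \frac{z}{1+z}\bigr)^{-1}$, to be topologically nilpotent. The power-series manipulation from (1) then upgrades $\frac{z}{1+z}\Lambda_i \subseteq \Lambda_{i+1}$ to $z\Lambda_i \subseteq \Lambda_{i+1}$. The converse mirrors (2): from $z\Lambda_i \subseteq \Lambda_{i+1} \subseteq \Lambda_i$ we deduce that $(1+z)$ is an automorphism of $\Lambda_i$, whence $\gamma\Lambda_\bullet = \gamma_+\Lambda_\bullet \in \mcL$.
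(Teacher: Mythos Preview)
Your proof is correct and follows essentially the same approach as the paper's: both compute the conjugate $\gamma^{-1}\zeta\gamma$ (you use the idempotent $e_1$, the paper uses a trace-one generator $\zeta$, but for $K = F\times F$ one may take $\zeta = e_1$ and then $1-2\zeta = e_2 - e_1$), reduce to the containment $\frac{z}{1+z}\Lambda \subseteq \Lambda$, and handle (2) and (3) by the observation that $(1+z)\Lambda_i = \Lambda_i$ forces $\gamma\Lambda_\bullet = \gamma_+\Lambda_\bullet$. Your phrasing of the converse via the $H'$-action on $\mcL$ is a slight streamlining of the paper's direct check of freeness of the quotients.
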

\begin{proof}
(1) Write $O_K = O_F[ζ]$ where $ζ$ satisfies $\bar{ζ} = 1 - ζ$. Then $γ^{-1}ζγ = ζ + z/(1+z)\cdot (1-2ζ)$ as in \eqref{eq:get_to_linear}. Hence, given an $O_K$-lattice $Λ$, the lattice $γΛ$ is $ζ$-stable if and only if
$$(ζ+z/(1+z)\cdot (1-2ζ))Λ\subseteq Λ.$$
It is checked directly that $1-2ζ\in O_K^\times$, so this inclusion holds if and only if $z/(1+z)\cdot Λ \subseteq Λ$. If $z$ is moreover topologically nilpotent, then $O_F[z] = O_F[z/(1+z)]$ and this condition becomes equivalent to $zΛ\subset Λ$.

(2) Assume that $\ell$ is odd. Any lattice chain $Λ_\bullet \in \mcL(γ)$ has the property that each $Λ_i$ is both $O_K$-stable and $γ^{-1}O_Kγ$-stable. By Part (1) and under our assumption that $z$ is topologically nilpotent, this is equivalent to $z\cdot Λ_i \subseteq Λ_i$ which proves the relation $\subseteq$ in \eqref{eq:lattice_chains_gamma_odd}. Assume conversely that $Λ_\bullet \in \mcL$ has the property that each $Λ_i$ is $z$-stable. We need to show and claim that each $(γΛ_i)/(γΛ_{i+1})$ is a free $O_K/(π)$-module. Since $γ_+$ is $O_K$-linear, this is equivalent to each quotient
$$(γ_+^{-1}γΛ_i)/(γ_+^{-1}γΛ_{i+1}) = ((1+z)Λ_i)/((1+z)Λ_{i+1})$$
being a free $O_K/(π)$-module. But it was assumed that $z$ is topologically nilpotent and that each $Λ_i$ is $z$-stable, so $(1+z)Λ_i = Λ_i$ for all $i$ and the claim follows because $Λ_\bullet \in \mcL$.

(3) Assume that $\ell$ is even and that $Λ_\bullet\in \mcL(γ)$ is any lattice chain. Then by definition of $\mcL(γ)$, the action of $O_K$ on $(γΛ_i)/(γΛ_{i+1})$ factors over the first factor of $O_K = O_F\times O_F$ if $i$ is even and over the second factor if $i$ is odd. Equivalently (apply the isomorphism $γ$), the $γ$-conjugated action of $O_K$ on $Λ_i/Λ_{i+1}$ factors over the first factor if $i$ is even and over the second factor if $i$ is odd. This is yet equivalent to $ζ$ and $γ^{-1}ζγ = ζ + z/(1+z)\cdot (1-2ζ)$ defining the same endomorphism of $Λ_i/Λ_{i+1}$. Since $1-2ζ\in O_K^\times$, this happens if and only if $z/(1+z)\cdot Λ_i\subseteq Λ_{i+1}$. Given that this holds for all $i$ and that $Λ_\ell = πΛ_0$, we deduce that $z/(1+z)$ is topologically nilpotent. Then $z$ is topologically nilpotent as well as claimed in the lemma. Identity \eqref{eq:lattice_chains_gamma_even} follows easily from the given arguments.
\end{proof}

\begin{lem}\label{lem:additional_symmetry}
(1) The following operator $Z_γ$, defined on lattice chains in $F^{2n}$, defines an automorphism of $\mcL(γ)$:
\begin{equation}\label{eq:def_Z_gamma}
Z_γ \cdot [Λ_0\supset Λ_1 \supset \ldots \supset Λ_\ell] := [z_γΛ_1 \supset z_γΛ_2 \supset \ldots \supset z_γΛ_\ell \supset πz_γΛ_1].
\end{equation}

\noindent (2) Moreover, $Z_γ$ commutes with the $L_γ^\times$-action on $\mcL(γ)$ and satisfies
\begin{equation}\label{eq:omega_Z_gamma}
\Omega(γ, (Z_γ Λ_\bullet)_0, s) = ε_D(γ) \Omega(γ, Λ_0, -s)\cdot \begin{cases}
1 & \text{if $\ell$ is odd}\\
q^{4ns/\ell} & \text{if $\ell$ is even.}
\end{cases}
\end{equation}
\end{lem}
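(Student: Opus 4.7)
My plan splits into three pieces: that $Z_\gamma$ is a well-defined bijection of $\mcL(\gamma)$ (part (1)), the commutativity with $L_\gamma^\times$ (first half of part (2)), and the transfer factor identity (second half of part (2)).

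For part (1), the central fact is that $z_\gamma \in D_-$ is $K$-conjugate-linear, so $z_\gamma\Lambda$ is $O_K$-stable for any $O_K$-lattice $\Lambda$, and each graded piece $z_\gamma\Lambda/z_\gamma\Lambda'$ is canonically isomorphic to $\Lambda/\Lambda'$ as an abelian group but with $O_K$-action twisted by the factor-swap of $K = F \times F$. The $i$-th graded piece of $Z_\gamma\Lambda_\bullet$ is $z_\gamma\Lambda_{i+1}/z_\gamma\Lambda_{i+2}$ (indices cyclic with $\Lambda_{j+\ell} = \pi\Lambda_j$), and the index-shift combined with the swap preserves the conditions of Definition~\ref{def:lattice_chains}: for odd $\ell$, $O_K/(\pi)$-freeness of rank $n/\ell$ is manifestly invariant under the swap; for even $\ell$, both the alternating projection-parity condition and the index shift flip parity once each, so they cancel. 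To see $\gamma Z_\gamma\Lambda_\bullet \in \mcL$, I would use the identity $\gamma z_\gamma = w\gamma$ with $w := \gamma_+ z_\gamma \gamma_+^{-1} \in D_-$ (both sides expand as $\gamma_- + \gamma_+ z_\gamma^2$); this reduces the claim to the same statement for $w$ applied to $\gamma\Lambda_\bullet \in \mcL$. Bijectivity follows from the induction $(Z_\gamma^k\Lambda_\bullet)_i = z_\gamma^k\Lambda_{i+k}$: taking $k = \ell$ shows that $Z_\gamma^\ell$ acts as scalar multiplication by $\pi z_\gamma^\ell \in (G')^\times$.

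For part (2), the commutativity is immediate from $L_\gamma = F[z_\gamma^2]$, which commutes with $z_\gamma$. For the transfer factor identity, I write $\gamma$ in $(n\times n)$-block form with blocks $a, b, c, d$; then $z_\gamma$ has off-diagonal blocks $a^{-1}b$ (upper) and $d^{-1}c$ (lower), while $w$ has off-diagonal blocks $bd^{-1}$ and $ca^{-1}$. Fixing $h_{2,j}, h_{1,j} \in H'$ with $\Lambda_j = h_{2,j}O_F^{2n}$ and $\gamma\Lambda_j = h_{1,j}O_F^{2n}$ for $j = 0, 1$, I take the block representatives $h_2' := (a^{-1}b h_{2,1}^{(2)}, d^{-1}c h_{2,1}^{(1)})$ for $z_\gamma\Lambda_1$ and $h_1' := (bd^{-1}h_{1,1}^{(2)}, ca^{-1}h_{1,1}^{(1)})$ for $\gamma z_\gamma\Lambda_1 = w\gamma\Lambda_1$. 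Applying $\Omega(h_1^{-1}\gamma h_2, s) = |h_1 h_2|^s\eta(h_2)\Omega(\gamma, s)$ and collecting terms, the ratio $\Omega(\gamma, z_\gamma\Lambda_1, s)/\Omega(\gamma, \Lambda_0, -s)$ simplifies to
\[
\eta(\det(abcd)) \cdot \frac{\eta(h_{2,1})}{\eta(h_{2,0})} \cdot \left(\frac{|h_{1,1}||h_{2,1}|}{|h_{1,0}||h_{2,0}|}\right)^{-s}.
\]
The first factor equals $\eta(\delta_0)$ via $\Inv(\gamma; T) = \mr{char}(a^{-1}bd^{-1}c; T)$, Lemma~\ref{lem:sign_alternative}, and $\eta(-1) = 1$.

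The remaining factors encode the passage from $\Lambda_1$ to $\Lambda_0$ along the chain. For odd $\ell$, the quotients $\Lambda_0/\Lambda_1$ and $\gamma\Lambda_0/\gamma\Lambda_1$ are $O_K/(\pi)$-free of rank $n/\ell$, so the two block components contribute equally, forcing both ratios to equal $1$; combined with $\varepsilon_D' = 1$ (automatic since $\ell\mid 2n$ and $\ell$ odd give $\ell\mid n$), this matches the claim. For even $\ell$, only the first block component is altered, yielding $\eta$-ratio $(-1)^{2n/\ell}$ and absolute-value ratio $q^{-4n/\ell}$, hence the extra factor $(-1)^{2n/\ell} q^{4ns/\ell}$. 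The main obstacle is then the identity $(-1)^{2n/\ell} = \varepsilon_D'$. Writing $\ell = 2^a\ell''$ with $\ell''$ odd, one checks that $\ell\nmid n$ forces the $2$-part of $n$ to equal $2^{a-1}$, making $2n/\ell$ an odd integer, while $\ell\mid n$ makes it even. This matches precisely the condition for $\varepsilon_D' = nk/\ell \in \tfrac{1}{2}\mbZ/\mbZ$: the value $nk/\ell$ lies in $\mbZ$ iff $\ell\mid n$, since $(k,\ell) = 1$. Hence $(-1)^{2n/\ell} = \varepsilon_D'$ and \eqref{eq:omega_Z_gamma} follows.
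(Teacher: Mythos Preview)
Your proof is correct. Part~(1) uses the same key identity as the paper, there written as $\gamma z_\gamma\gamma^{-1}=\gamma_+z_\gamma\gamma_+^{-1}$; your explicit bijectivity argument via powers of $Z_\gamma$ is a welcome addition, since the paper leaves this implicit.

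The transfer factor computation in part~(2) is where your route genuinely diverges. The paper first normalizes to $\gamma=1+z_\gamma$ using left $H'$-invariance, then proves an auxiliary formula (Lemma~\ref{lem:translation_omega}) expressing $\Omega(\gamma,\Lambda,s)$ through lattice indices such as $[(\gamma\Lambda)_\pm:z\Lambda_\mp]$, and manipulates those; the shift from $z\Lambda_0$ to $z\Lambda_1$ is then handled separately via \eqref{eq:omega_Z_gamma_pre_II}. You instead keep a general $\gamma$ in block form and compute directly with the explicit representatives $h_1',h_2'$. Underlying your calculation is the clean observation that if $h_{1,1}^{-1}\gamma h_{2,1}=\left(\begin{smallmatrix}A&B\\C&D\end{smallmatrix}\right)$ then $(h_1')^{-1}\gamma h_2'=\left(\begin{smallmatrix}D&C\\B&A\end{smallmatrix}\right)$, which immediately yields $\Omega(\gamma,z_\gamma\Lambda_1,s)/\Omega(\gamma,\Lambda_1,-s)=\eta(\det(abcd))$. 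Your route is more self-contained and avoids the auxiliary lemma; the paper's route has the advantage that Lemma~\ref{lem:translation_omega} is reused later (e.g.\ in Lemma~\ref{lem:lc_bijection}). Your explicit $2$-adic verification that $(-1)^{2n/\ell}=\varepsilon_D'$ is also more detailed than the paper's one-line appeal to Lemma~\ref{lem:sign_alternative}, though the content is the same.
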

\begin{proof}
(1) A direct computation shows that $γz_γγ^{-1} = γ_+z_γγ_+^{-1}$, so both elements $z_γ$ and $γz_γγ^{-1}$ are $K$-conjugate linear elements of $G'$. It follows that if a lattice $Λ$ has the property that both $Λ$ and $γΛ$ are $O_K$-stable, then also $z_γΛ$ and $γz_γΛ$ are $O_K$-stable. Thus, given any $Λ_\bullet\in \mcL(γ)$, the new chains $Z_γΛ_\bullet$ and $γZ_γΛ_\bullet$ are again chains of $O_K$-lattices. Taking into account the shift by one in \eqref{eq:def_Z_gamma}, both $Z_γΛ_\bullet$ and $γZ_γΛ_\bullet$ again satisfy the eigenvalue condition in the definition of $\mcL$ (see Definition \ref{def:lattice_chains}). Hence $Z_γΛ_\bullet \in \mcL(γ)$ as claimed.

(2) Proposition \ref{prop:quaternion_algebra} states that $L_γ = F[z_γ^2]$ which implies that multiplication by $z_γ$ and by elements from $L_γ^\times$ commute. It is left to prove Identity \eqref{eq:omega_Z_gamma}. It is easily checked that both sides of that identity are invariant under left-multiplication of $H'$ on $γ$. So we may assume that $γ = 1 + z$ with $z = z_γ$. This implies that $γ$ and $z$ commute which will simplify some expressions below. It furthermore allows for a more convenient description of $\Omega(γ, Λ, s)$. In its formulation, we write $Λ = Λ_+ \oplus Λ_-$ for the decomposition of an $O_K$-lattice $Λ$ into its $O_K$-eigenspaces.
\begin{lem}\label{lem:translation_omega}
Assume that $γ = 1 + z$ with $z = z_γ$. Assume that $Λ$ is an $O_K$-lattice such that also $γΛ$ is an $O_K$-lattice. Then
\begin{equation}\label{eq:translation_omega}
\Omega(γ, Λ, s) = (-1)^{[(γΛ)_- : zΛ_+] + [(γΛ)_-: Λ_-]} q^{([(γΛ)_+:zΛ_-] - [(γΛ)_- : zΛ_+])s}.
\end{equation}
\end{lem}
\begin{proof}
This follows directly from the definition of $\Omega(γ, Λ, s)$: Assume that $Λ = h_2O_F^{2n}$ and that $γΛ = h_1O_F^{2n}$. Let $\left(\begin{smallmatrix} a & b \\ c & d \end{smallmatrix}\right) = h_1^{-1}γh_2$. Recall that now by \eqref{eq:transfer_lattice} and by Definition \ref{def:transfer_factor},
\begin{equation}\label{eq:recap_omega}
\Omega(γ, Λ, s) = \Omega\left(\left(\begin{smallmatrix} a & b \\ c & d \end{smallmatrix}\right),s\right) = (-1)^{v(c) + v(d)}q^{(v(b)-v(c))\cdot s},
\end{equation}
where $v:F^\times\to \mbZ$ is the normalized valuation. Translating to $Λ$, we have
\begin{equation}\label{eq:as_easy_as_abcd}
\begin{array}{rlcrl}
v(a) \!\!\!\! & = [(γΛ)_+ : Λ_+] & \ \ & v(b) \!\!\!\! & = [(γΛ)_+ : zΛ_-]\\
v(c) \!\!\!\! & = [(γΛ)_- : zΛ_+] & \ \ & v(d) \!\!\!\! & = [(γΛ)_- : Λ_-].
\end{array}
\end{equation}
Substituting \eqref{eq:as_easy_as_abcd} in \eqref{eq:recap_omega} proves the lemma.
\end{proof}
Let $Λ$ be an $O_K$-lattice such that also $γΛ$ is an $O_K$-lattice. Since $z$ is $K$-conjugate linear and furthermore commutes with $γ$, it holds that
$$(zΛ)_\pm = zΛ_\mp,\quad (γzΛ)_{\pm} = z(γΛ)_{\mp}.$$
We obtain from \eqref{eq:translation_omega} that
\begin{equation}\label{eq:translation_omega_II}
\Omega(γ, zΛ, s) = (-1)^{[z(γΛ)_+ : z^2 Λ_-] + [z(γΛ)_+ : z Λ_+]} q^{([z(γΛ)_- : z^2 Λ_+] - [z(γΛ)_+ : z^2 Λ_-])s}.
\end{equation}
The exponents of the signs of \eqref{eq:translation_omega} and \eqref{eq:translation_omega_II} are related by
\begin{equation}\label{eq:expo_rel_I}
[z(γΛ)_+ : z^2 Λ_-] + [z(γΛ)_+:zΛ_+] = [γΛ : zΛ] - [(γΛ)_- : zΛ_+] + [γΛ : Λ] - [γΛ_-: Λ_-],
\end{equation}
those for the $q$-powers by
\begin{equation}\label{eq:expo_rel_II}
[z(γΛ)_- : z^2 Λ_+] - [z(γΛ)_+ : z^2 Λ_-] = - [(γΛ)_+ : zΛ_-] + [(γΛ)_- : zΛ_+].
\end{equation}
Note that $[γΛ:zΛ] + [γΛ:Λ] \equiv [Λ:zΛ]$ mod $2$ in \eqref{eq:expo_rel_I}, so we obtain
\begin{equation}\label{eq:omega_Z_gamma_pre_I}
\Omega(γ, zΛ, s) = η(\det(z)) \Omega(γ, Λ, - s).
\end{equation}
It is left to take care of the shift in \eqref{eq:def_Z_gamma}. Assume that $Λ'\subseteq Λ$ is a sublattice that also has the property that both $Λ'$ and $γΛ'$ are $O_K$-stable. Define integers $a_{\pm}$ and $b_{\pm}$ by the identities
$$a_\pm = [Λ_\pm : Λ'_\pm] \quand b_\pm = [(γΛ)_\pm : (γΛ')_\pm].$$
Then we obtain from \eqref{eq:translation_omega} that
\begin{equation}\label{eq:omega_Z_gamma_pre_II}
\Omega(γ, Λ', s) = (-1)^{a_+ + a_-}q^{(a_- - b_+ - a_+ + b_-)s}\Omega(γ, Λ, s).
\end{equation}
Apply this to the two lattices $zΛ_1 \subset zΛ_0$ that arise from $zΛ_\bullet$ with $Λ_\bullet\in \mcL(γ)$. Depending on the parity of $\ell$, the following two cases occur. If $\ell$ is odd, then $zΛ_0/zΛ_1$ is free over $O_K/(π)$ so $a_+ = a_-$ and $b_+ = b_-$. We obtain that
\begin{equation}\label{eq:omega_Z_gamma_pre_III}
\Omega(γ, zΛ_1, s) = \Omega(γ, zΛ_0, s).
\end{equation}
If $\ell$ is even, then $zΛ_0/zΛ_1$ and $γzΛ_0/γzΛ_1$ are both free over $O_F/(π)$ of rank $2n/\ell$ with $O_K$ acting via the second projection. (Indeed, $O_K$ acts via the first projection on $Λ_0/Λ_1$ and $(γΛ_0)/(γΛ_1)$ but $z$ is $O_K$-conjugate linear.) In particular, $a_+ = b_+ = 0$ and $a_- = b_- = 2n/\ell$. Identity \eqref{eq:omega_Z_gamma_pre_II} then specializes to
\begin{equation}\label{eq:omega_Z_gamma_pre_IV}
\Omega(γ, zΛ_1, s) = (-1)^{2n/\ell} q^{4ns/\ell}\Omega(γ, zΛ_0, s).
\end{equation}
Combining \eqref{eq:omega_Z_gamma_pre_I}, \eqref{eq:omega_Z_gamma_pre_III} and \eqref{eq:omega_Z_gamma_pre_IV}, it follows that
\begin{equation}\label{eq:omega_Z_gamma_pre_V}
\Omega(γ, (Z_γ Λ_\bullet)_0, s) = (-1)^{2n/\ell}η(\det(z)) \Omega(γ, Λ_0, -s)\cdot \begin{cases}
1 & \text{if $\ell$ is odd}\\
q^{4ns/\ell} & \text{if $\ell$ is even.}
\end{cases}
\end{equation}
Recall that the constant coefficient of $\Inv(γ)$ is a square root of $\det(z^2)$. The sign $(-1)^{2n/\ell}η(\det(z))$ from \eqref{eq:omega_Z_gamma_pre_V} hence equals $ε_D(γ)$ by Lemma \ref{lem:sign_alternative}, and the proof of \eqref{eq:omega_Z_gamma} is complete.
\end{proof}

\begin{proof}[Proof of the functional equation (Proposition \ref{prop:functional_equation}).] Let $m = 0$ if $\ell$ is odd and $m = 2n/\ell$ if $\ell$ is even. Using the combinatorial description \eqref{eq:orb_int_combinatorial} together with Lemma \ref{lem:additional_symmetry}, we have
$$\begin{aligned}
\Orb(γ, f'_D, -s) &\ = \ q^{ms} \sum_{Λ_\bullet \in L_γ^\times\backslash \mcL(γ)} [O_{L_γ}^\times : \mr{Stab}(Λ_\bullet)]\ \Omega(γ, Λ_0, -s)\\
&\ =\ ε_D(γ) q^{ms} q^{-2ms}\sum_{Λ_\bullet \in L_γ^\times\backslash \mcL(γ)} [O_{L_γ}^\times : \mr{Stab}(Λ_\bullet)]\ \Omega(γ, (Z_γΛ_\bullet)_0, s)\\
&\ =\ ε_D(γ) \Orb(γ, f'_D, s)
\end{aligned}$$
as was to be shown.
\end{proof}

\section{Arithmetic Transfer}
\label{s:ATC}

The setting is the same as in \S\ref{ss:setting} except that we from now on take
\begin{equation}\label{eq:opposed}
G = D^{\mr{op}, \times},\quad H = C^{\mr{op}, \times}.
\end{equation}
Note that $G$ and $G^{\mr{op}}$ have the same underlying topological space which implies that $C^\infty_c(G) = C^\infty_c(G^{\mr{op}})$. Moreover, $H$ and $H^{\mr{op}}$ have the same underlying topological space as well and the definitions of $g\in G$ being regular semi-simple, of the invariant $\mr{Inv}(g;T)$, and of the orbital integral $\Orb(f, g)$ are all unchanged when taking them for opposed CSAs.

\subsection{Local Shimura Data}

Let $\breve F$ be the completion of a maximal unramified extension of $F$. Let $O_{\breve F}$ denote its ring of integers and let $\mbF$ be its residue field. The Frobenius automorphism of $\breve F$ is the unique $F$-automorphism inducing $q$-Frobenius $x\mapsto x^q$ on $\mbF$; we denote it by $σ:\breve F\to \breve F$. Let $v_{\breve F}:\breve F^\times\to \mbZ$ be the normalized valuation.

By $F$-isocrystal, or simply isocrystal, we mean a pair $\bN = (N, \bF)$ that consists of a finite-dimensional $\breve F$-vector space $N$ and a $σ$-linear automorphism $\bF$. The Verschiebung of $\bN$ is defined as $\bV = π\bF^{-1}$. Height, dimension and slope of $\bN$ are all meant in the relative sense with respect to $F$: The height $\mr{ht}(\bN)$ is the $\breve F$-dimension of $N$, the dimension $\dim(\bN)$ is the integer $v_{\breve F}(\det \bV)$, and the slope is their ratio $\dim(\bN)/\mr{ht}(\bN)$. Note that $\dim(\bN)$ might be negative.

The Dieudonné--Manin classification \cite{DM} states that the ($F$-linear) category of isocrystals is semi-simple and that the isomorphism classes of its simple objects are in bijection with $\mbQ$: For every $μ = r/s$, where $(r,s) = 1$, there is a unique (up to isomorphism) simple isocrystal $\bN_μ$ of height $s$ and dimension $r$. The endomorphism ring $\End(\bN_μ)$ is a CDA over $F$ of Hasse invariant $μ$.

\begin{defn}\label{def:isocrystals}
(1) By $C$-isocrystal, we mean a pair $(\bN_+, ι)$ that consists of an isocrystal $\bN_+$ and an $F$-linear $C$-action $ι:C\to \End(\bN_+)$ with the following numerical conditions: The height of $\bN_+$ is $2n^2 = \dim_F(C)$, the dimension of $\bN_+$ is $n$, and the slopes of all subisocrystals of $\bN_+$ lie in the interval $[0,1]$.

\noindent (2) By $D$-isocrystal, we mean a pair $(\bN, κ)$ that consists of an isocrystal $\bN$ and an $F$-linear $D$-action $κ:D\to \End(\bN)$ with the following numerical conditions: The height of $\bN$ is $4n^2 = \dim_F(D)$, the dimension of $\bN$ is $2n$, and the slopes of all subisocrystals of $\bN$ lie in the interval $[0,1]$.
\end{defn}

\begin{rmk}\label{rmk:isocrystal_numerical}
Recall that by covariant Dieudonné theory $p$-divisible groups over $\mbF$ together with quasi-homomorphisms are equivalent to $\mbQ_p$-isocrystals that have the slopes of all subisocrystals in the interval $[0,1]$. Under this equivalence, height and dimension of the $p$-divisible equal height and dimension of the corresponding $\mbQ_p$-isocrystal. The analogous statement holds for strict $O_F$-modules over $\mbF$ (see Definition \ref{def:pi_div_group}) and $F$-isocrystals. This motivates the slope condition in Definition \ref{def:isocrystals}.
\end{rmk}

The Serre tensor construction defines a functor
\begin{equation}\label{eq:Serre_tensor}
\begin{aligned}
\{\text{$C$-isocrystals}\} &\ \lr \{\text{$D$-isocrystals}\}\\
(\bN_+, ι) &\ \longmapsto (\bN = D\tensor_C \bN_+,\ κ(x) = x\tensor \mr{id}_{\bN_+}).
\end{aligned}
\end{equation}

\begin{lem}\label{lem:isocrystals}
(1) Two $C$-isocrystals (resp. two $D$-isocrystals) are isomorphic if and only if the underlying isocrystals are isomorpic. In particular, the functor \eqref{eq:Serre_tensor} defines an injective map on isomorphism classes.

\noindent (2) A $D$-isocrystal $(\bN, κ)$ lies in the essential image of \eqref{eq:Serre_tensor} if and only if there exists an $F$-algebra map $E\to \End_D(\bN, κ)$.
\end{lem}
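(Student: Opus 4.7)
The plan relies on two ingredients: a Morita-theoretic reduction of $C$-isocrystals (resp.\ $D$-isocrystals) to $E$-isocrystals (resp.\ $F$-isocrystals) of simpler type, and a direct eigenspace construction for the essential image in part (2).

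For part (1), the $D$-isocrystal case is immediate: $\breve F\tensor_F D \iso M_{2n}(\breve F)$ is a single matrix algebra, so Morita identifies the category of $D$-isocrystals with that of $F$-isocrystals of height $2n$, where isomorphism classes are Newton polygons by Dieudonn\'e--Manin. For the $C$-isocrystal case, I will use $\breve F\tensor_F C \iso M_n(\breve F)\times M_n(\breve F)$, which holds because $E/F$ is unramified quadratic (so $\breve F\tensor_F E \iso \breve F\times \breve F$) and $C$ is a CSA of $E$-degree $n$ split by $\breve F$. For a $C$-isocrystal $(\bN_+, \iota)$ with the specified numerical invariants, the $E$-eigenspace decomposition $\bN_+ = \bN_+^{(1)}\oplus \bN_+^{(2)}$ is swapped by $\bF$ since $\sigma$ permutes the two embeddings $E\to \breve F$; hence $\dim_{\breve F}\bN_+^{(i)} = n^2$ and each piece is free of rank $n$ over $M_n(\breve F)$. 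By Morita, the whole datum is equivalent to $V := \Hom_{M_n(\breve F)}(\breve F^n, \bN_+^{(1)})$, an $\breve F$-vector space of dimension $n$, equipped with the $\sigma^2$-linear operator induced by $\bF^2\vert_{\bN_+^{(1)}}$. Since $\sigma^2$ generates $\Gal(\breve F/E)$, this pair is an $E$-isocrystal of $E$-height $n$. Two $C$-isocrystals are thus isomorphic iff the associated $V$'s are, which by Dieudonn\'e--Manin over $E$ reduces to equality of Newton polygons -- data ultimately determined by the $F$-slopes of $\bN_+$. The ``in particular'' statement then follows from the retraction $\bN\mapsto p_+\bN$ constructed below.

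For part (2), necessity is immediate: on $\bN = D\tensor_C \bN_+$, the formula $\phi_e(d\tensor m) = de\tensor m$ for $e\in E$ gives an $F$-algebra map $E \to \End_D(\bN, \kappa)$; it is well-defined because $E\subseteq C$ is central in $C$, it commutes with left $D$-multiplication by construction, and it commutes with the Frobenius $\id_D\tensor \bF_{\bN_+}$. For sufficiency, suppose $\iota'_E\colon E\to \End_D(\bN, \kappa)$ is given. Since $\iota'_E$ centralizes $\kappa(D)$, it commutes with $\kappa\vert_E$, so the two $E$-actions together define an $F$-algebra map $E\tensor_F E \iso E\times E \to \End(\bN)$ in the isocrystal category. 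Let $p_+, p_-$ be the corresponding orthogonal idempotents and set $\bN_+ := p_+\bN$ with $\iota := \kappa\vert_C$; this preserves $\bN_+$ because $C$ centralizes both $E$-actions. Over $\breve F$, the double-centralizer theorem identifies both $\kappa(D)$ and $\End_D(\bN)$ with $M_{2n}(\breve F)$ inside $\End_{\breve F}(\bN)$, and the images of $\kappa\vert_E$ and $\iota'_E$ sit as pairs of orthogonal rank-$n$ idempotents in these two commuting matrix algebras; a direct rank computation then shows that each of the four joint eigenspaces has $\breve F$-dimension $n^2$. Consequently $\dim_{\breve F}\bN_+ = 2n^2$, and the natural map $D\tensor_C \bN_+ \to \bN$, $d\tensor m \mapsto \kappa(d)m$, is a morphism of $D$-isocrystals. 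Using the decomposition $D = C\oplus C\delta$ for a Skolem--Noether element $\delta\in D^\times$ with $\delta e\delta^{-1} = \bar{e}$ for all $e\in E$, the map identifies $D_-\tensor_C\bN_+$ with $\bN_- = p_-\bN$ via multiplication by $\kappa(\delta)$, hence is an isomorphism.

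The hard part will be confirming that the Morita-equivalent $E$-isocrystal $V$ in part (1) faithfully records the slope data of the underlying $F$-isocrystal $\bN_+$; careful bookkeeping of $\sigma$- versus $\sigma^2$-semilinearity across the Frobenius swap between $\bN_+^{(1)}$ and $\bN_+^{(2)}$ is unavoidable, and the normalization relating $E$-slopes of $V$ to $F$-slopes of $\bN_+$ must be spelled out. A secondary delicacy in part (2) is establishing the rank-$n$ count for both $E$-idempotents via the double centralizer, and thereby the uniform $n^2$ dimensions of the joint eigenspaces, without which $\bN_+$ would fail the $C$-isocrystal numerical condition.
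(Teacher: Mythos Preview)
Your approach to part (2) is essentially identical to the paper's: both construct the $E$-eigenspace splitting $\bN = \bN_+\oplus \bN_-$ from the commuting pair $(\kappa\vert_E,\ \iota'_E)$ and check that the natural map $D\tensor_C \bN_+ \to \bN$ is an isomorphism. Your extra care with the dimension count via double centralizer is fine but not strictly needed, since $\kappa(\delta)$ swaps $\bN_+$ and $\bN_-$ and commutes with $\bF$, giving an isocrystal isomorphism $\bN_+\iso \bN_-$ directly.

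For part (1), however, your Morita-theoretic route is substantially more elaborate than the paper's, and the ``hard part'' you flag is entirely avoidable. The paper argues in two lines: by Dieudonn\'e--Manin, $\End(\bN)$ is a product of CSAs over $F$; since $C$ (resp.\ $D$) is a simple $F$-algebra, Skolem--Noether forces any two $F$-algebra maps $C\to \End(\bN)$ (resp.\ $D\to \End(\bN)$) to be conjugate by a unit of $\End(\bN)$, i.e.\ by an automorphism of the isocrystal $\bN$. Hence the $C$- (resp.\ $D$-)isocrystal structure on a given $\bN$ is unique up to isomorphism. The injectivity of the Serre tensor map then follows immediately from $D\tensor_C\bN_+\iso \bN_+^{\oplus 2}$ as $F$-isocrystals. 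This bypasses all of the $E$-isocrystal bookkeeping, the $\sigma$-versus-$\sigma^2$ semilinearity tracking, and the slope-normalization comparison you identify as delicate. Your Morita argument is correct in outline and would eventually yield the same conclusion, but it trades a one-line application of Skolem--Noether for a page of structural reductions that the statement does not require.
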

\begin{proof}
(1) Let $\bN$ be any isocrystal. By the Dieudonné--Manin classification, $\End(\bN)$ is a product of CSAs over $F$. It then follows from the Skolem--Noether Theorem applied factor by factor that any two $F$-algebra homomorphisms $C\to \End(\bN)$ are conjugate. In other words, there is at most one way (up to $C$-linear isomorphism) to define a $C$-action on $\bN$. The same argument applies to $D$-isocrystals. The injectivity of \eqref{eq:Serre_tensor} on isomorphism classes follows directly because $D\tensor_C \bN_+ \iso \bN_+^{\oplus 2}$ as isocrystal.

(2) The category of $C$-isocrystals is $E$-linear because the center of $C$ is $E$. The Serre tensor construction is functorial, so every object in its image has a $D$-linear $E$-action. Explicitly, $E$ acts on $(\bN, κ) = D\tensor_C (\bN_+, ι)$ by
$$ι\vert_E:E\lr \End_D(\bN, κ),\quad a \longmapsto 1\tensor ι(a).$$
Assume conversely that there exists an embedding $ι:E\to \End_D(\bN,κ)$. Then $\bN$ has an action by $κ(E)\tensor_F ι(E)$ and thus decomposes into $C$-stable eigenspaces $\bN = \bN_+ \oplus \bN_-$. Let us write $ι$ for the resulting $C$-action $C \to \End(\bN_+)$ on the first factor. The natural $D$-linear map $D\tensor_C (\bN_+, ι) \to \bN$ is the desired isomorphism.
\end{proof}

We next place the above definitions into a group-theoretic context, following the EL formalism in \cite[Definition 3.18]{RZ1}. For this we consider $H$ and $G$ as algebraic groups over $F$. Our convention is that $\End_D(D)$ acts on the left of $D$ and is hence isomorphic to $D^{\mr{op}}$. In light of \eqref{eq:opposed}, we have an isomorphism
$$G \overset{\iso}{\lr} \End_D(D)^\times,\quad g \longmapsto [x \longmapsto xg].$$
Recall that the Kottwitz set of $G$ is defined as the set of $σ$-conjugacy classes in $G(\breve F)$:
$$B(G) = G(\breve F)/\{b \sim gbσ(g)^{-1}\}.$$
It is a standard fact (see \cite[\S1.7]{RZ1}) that this set is in bijection with isomorphism classes of isocrystals of height $\dim_F(D)$ with $D$-action:
\begin{equation}\label{eq:Kottwitz_bijection}
\begin{aligned}
B(G) &\ \overset{\iso}{\lr} \left\{(\bN, κ) \left\vert \text{\begin{varwidth}{\textwidth}\centering $\bN$ an $F$-isocrystal of height $\dim_F(D)$\\
$κ:D\to \End(\bN)$ a $D$-action\end{varwidth}}\right\}/\iso\right.\\
[b] &\ \longmapsto (\bN_b, κ) := (\breve F\tensor_F D, σ\tensor b).
\end{aligned}
\end{equation}
After scalar extension to $\ob{F}$, there is an isomorphism $α:\ob{F}\tensor_F D^{\mr{op}} \iso M_{2n}(\ob{F})$ of $\ob{F}$-algebras from which one obtains an isomorphism $G_{\ob{F}} \iso GL_{2n,\ob{F}}$. Consider the $\ob{F}$-conjugacy class of the cocharacter
\begin{equation}\label{eq:minuscule_cochar_G}
μ_G:\mbG_m\lr G_{\ob{F}} \overset{\iso}{\underset{α}{\lr}} GL_{2n,\ob{F}},\quad t\longmapsto \mr{diag}(t, \ldots, t, 1).
\end{equation}
Via \ref{def:isocrystals}, the subset $B(G, μ_G) \subset B(G)$ of $μ$-admissible elements is in bijection with the isomorphism classes of $D$-isocrystals from Definition \ref{def:isocrystals}. (For the purposes of our article, the reader may take that as the definition of $B(G, μ_G)$.) Namely, by the Dieudonné classification from Remark \ref{rmk:isocrystal_numerical}, every $D$-isocrystal $(\bN,κ)$ is the isocrystal of a strict $O_F$-module $X$ over $\mbF$ together with a rational action $κ:D\to F\tensor_{O_F} \End(X)$. By \cite[\S3.19]{RZ1} which also holds for $F$-isocrystals, this implies that $(\bN, κ)$ lies in $B(G, μ_G)$. Conversely, an $F$-isocrystal with $D$-action $(\bN, κ) \in B(G, μ_G)$ is necessarily $μ$-weakly admissible. By definition, see \cite[Definition 1.18 and \S1.3]{RZ1}, the latter is equivalent to $\bN$ being of dimension $2n$ and with the slopes of all subisocrystals in $[0,1]$.

To make the analogous definitions for $H$, we need to fix an embedding $E \subset \ob{F}$. Also choose an isomorphism $β:\ob{F}\tensor_F C^{\mr{op}} \iso M_n(\ob{F})\times M_n(\ob{F})$ such that $β(a) = (a, \bar a)$ for all $a\in E$. Consider the $H(\ob{F})$-conjugacy class of
\begin{equation}\label{eq:minuscule_cochar_H}
μ_H:\mbG_m\lr H_{\ob{F}} \overset{\iso}{\underset{β}{\lr}} GL_{n,\ob{F}} \times GL_{n,\ob{F}},\quad t\longmapsto \mr{diag}((t, \ldots, t, 1),\ (t,\ldots,t)).
\end{equation}
Then $B(H, μ_H)$ is in bijection with the isomorphism classes of $C$-isocrystals in the sense of Definition \ref{def:isocrystals}. Moreover, the natural map $B(H, μ_H) \to B(G, μ_G)$ is given by \eqref{eq:Serre_tensor}.

\begin{defn}\label{def:b}
For $b\in H(\breve F)$, we denote by $(\bN_{b, +}, ι)$ the $C$-isocrystal given by $(\breve F\tensor_F C, σ\tensor b)$ with its natural $C$-action. We write $C_b = \End_C(\bN_{b, +}, ι)$ and $H_b = C_b^\times$. We further define
$$(\bN_b, κ) := D\tensor_C (\bN_{+, b}, ι)$$
as well as $D_b = \End_D(\bN_b, κ)$ and $G_b = D_b^\times$. Note that there is an inclusion $H_b\to G_b$, $g\mapsto \mr{id}_D\tensor g$ by functoriality of the Serre tensor construction.
\end{defn}

It follows from the Dieudonné--Manin classification that $D_b$ is a semi-simple $F$-algebra of total degree $2n$ in the sense of \eqref{eq:degree_ss}. By construction, there is an embedding $E\to D_b$ and $C_b = \mr{Cent}_{D_b}(E)$. The more precise description of $D_b$ is as follows: Let $\bN_b \iso \bigoplus_{μ\in [0, 1]} \bN^{n_μ}_μ$ be the slope decomposition, where $\bN_μ$ denotes a simple isocrystal of slope $μ$. Then
$$D_b \iso \prod_{μ\in [0,1]} M_{m_μ}(D_{μ - λ})$$
where $λ$ is the Hasse invariant of $D$, where $D_{μ - λ}$ denotes a CDA over $F$ of Hasse invariant $μ - λ$, and where $m_μ$ is characterized by
$$M_{m_μ}(D_{μ-λ}) \tensor_F D_λ \iso M_{n_μ}(D_μ).$$
It is a well-known and curious phenomenon that the number of elements of $B(H, μ_H)$ and $B(G, μ_G)$ strongly depends on $λ$. We give some examples:

\begin{ex}\label{ex:isogeny_classes}
(1) Assume that $D = M_{m}(D_0)$ where $D_0$ is a CDA over $F$. Then, by Morita equivalence, the category of isocrystals with $D$-action is equivalent to that of isocrystals with $D_0$-action: To a pair $(\bN_0,\ κ_0:D_0\to \End(\bN_0))$, one associates the $m$-th power $\bN_0^m$ with its natural extension of $κ_0$ to $M_m(D_0)$. Under this equivalence, $D$-isocrystals in the sense of Definition \ref{def:isocrystals} correspond to isocrystals with $D_0$-action $(\bN_0, κ_0)$ such that $\bN_0$ is of height $\dim_F(D)/m = m\cdot \dim_F(D_0)$, of dimension $[D:F]/m = [D_0:F]$, and has all its slopes within $[0, 1]$. (Note that $[D:F] = m\cdot [D:F_0]$ explaining why $\bN_0$ is required to have dimension $[D_0:F]$.)

\noindent (2) Consider the special case $D = M_{2n}(F)$. By (1), the Kottwitz set $B(G, μ_G)$ is in bijection with isomorphism classes of isocrystals of height $2n$, dimension $1$ and with all slopes within $[0,1]$. The slope vector of such an isocrystal is of the form $(0^{(2n-n_0)}, 1/n_0)$ for a unique integer $1\leq n_0 \leq 2n$, and every such $n_0$ can occur. The endomorphism ring in this case is isomorphic to $M_{2n-n_0}(F)\times D_{1/n_0}$ which admits an embedding of $E$ if and only if $n_0$ is even. This characterizes the image of the map $B(H, μ_H) \to B(G, μ_G)$ by Lemma \ref{lem:isocrystals}.

\noindent (3) Assume that $λ \in \{1/2n, (n+1)/2n\}$. Then the Hasse invariant (over $E$) of $C$ is $2λ = 1/n$. In this case, $B(H, μ_H)$ consists of a single element $[b]$, cf. \cite[Lemma 3.60]{RZ1}, which is known as the Drinfeld case. The corresponding isocrystals $\bN_{+,b}$ and $\bN_b$ are isoclinic of slope $1/2n$. This applies in particular when $n = 2$ and $λ \in \{1/4, 3/4\}$ which is the main case of interest of the paper.

\noindent (4) Assume that $n = 3$ and $λ \in \{1/3, 5/6\}$. Then the Hasse invariant of $C$ is $2/3$ and $B(H, μ_H)$ consists of two elements. By Lemma \ref{lem:isocrystals}, they may be characterized uniquely by the slope vector of the underlying isocrystal. One possibility is $(1/6, 1/6, 1/6)$, which is the basic case, the other is $(1/12, 1/3, 1/3)$.
\end{ex}

Recall that we have given a definition of invariant for double cosets $H_b\backslash G_b /H_b$, see \eqref{eq:invariant_ss}.

\begin{prop}\label{prop:char_isogeny_class}
Let $δ\in F[T]$ be a regular semi-simple invariant of degree $n$. Then there is at most one $[b]\in B(H, μ_H)$ such that there exists an element $g\in G_b$ of invariant $δ$. In case of existence, all such elements $g$ form a single $H_b \times H_b$-orbit. Furthermore, in this case $ε_D(δ) = -1$.
\end{prop}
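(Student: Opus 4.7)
The plan is to exploit the subalgebra structure $B_g\cong B_\delta$ inside $D_b$ (which exists by the semi-simple extension of Corollary \ref{cor:universal_quaternion} noted at the end of Section \ref{s:invariants}) to pin down the isomorphism class of the $D$-isocrystal $\bN_b$, and hence $[b]$ via Lemma \ref{lem:isocrystals}(1). First I would decompose everything along the central idempotents of $L_g\cong L_\delta\subseteq Z(D_b)$: writing $L_\delta=\prod_i L_i$, one obtains $\bN_b=\bigoplus_i \bN_{b,i}$ as $D$-isocrystals, $D_b=\prod_i D_{b,i}$ with $D_{b,i}=\End_D(\bN_{b,i})$, and $B_\delta=\prod_i B_i$ with each $B_i$ a quaternion $L_i$-algebra embedded in $D_{b,i}$.

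The crux of the argument is to show that the isomorphism class of each $\bN_{b,i}$ is determined by the triple $(L_i, B_i, D)$. The point is that $\bN_{b,i}$ is simultaneously a module over the central simple $L_i$-algebra $D\otimes_F B_i$ (of degree $4n$ and Brauer class $[L_i\otimes_F D]+[B_i]\in \Br(L_i)$) and, via the Serre tensor construction of Lemma \ref{lem:isocrystals}, a component of the underlying $C$-isocrystal $\bN_{b,+}$. The $E$-action from $\iota$ and the $\mu_H$-admissibility condition (which fixes the average slope at $1/(2n)$) restrict the Newton polygon of $\bN_{b,i}$ sharply: as in the analyses hinted at in Example \ref{ex:isogeny_classes}, non-isoclinic Newton polygons are incompatible with an $E$-embedding into the center of $\End(\bN_{b,i,+})$, forcing $\bN_{b,i}$ to be isoclinic. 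Morita theory over $\breve L_i$ then identifies the unique admissible slope from the Brauer invariants above, pinning $\bN_{b,i}$ (and hence $[b]$) down completely.

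For the uniqueness of the $(H_b\times H_b)$-orbit, given two regular semi-simple $g_1, g_2\in G_b$ of invariant $\delta$, the subalgebras $B_{g_1},\,B_{g_2}\subseteq D_b$ are both isomorphic to $B_\delta$ and contain the fixed copy of $\iota(E)\subseteq D_b$. Skolem--Noether applied to the semi-simple algebra $D_b$ produces $c\in H_b$, centralizing $\iota(E)$, that conjugates $B_{g_1}$ onto $B_{g_2}$. After this conjugation one reduces to the case $B_{g_1}=B_{g_2}$, at which point the argument of Proposition \ref{prop:quaternion_algebra} extended verbatim to the semi-simple setting places $g_1$ and $g_2$ in a common $(H_b\times H_b)$-orbit.

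Finally, the sign $\epsilon_D(\delta)=-1$ follows from applying the natural semi-simple extension of Lemma \ref{lem:sign_central_value} to $D_b$: combined with the slope computation from Step~2, this yields the identity $\sum_i\beta_i=\tfrac{1}{2}-n\lambda\pmod{\mathbb Z}$, since the isoclinic slope of each $\bN_{b,i}$ differs from $\lambda$ in a way that contributes an aggregate Hasse-invariant shift of $n\cdot 1/(2n)=1/2$ (the ``basic case'' phenomenon: when $b$ is basic, $D_b$ is a CSA of Hasse invariant $1/(2n)-\lambda$, and the general case is reduced to this component-by-component). Substituting into $\epsilon_D(\delta)=n\lambda+\sum_i\beta_i$ gives $\epsilon_D(\delta)=1/2$, i.e.\ $-1$. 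The main obstacle throughout is Step~2, namely rigidifying each $\bN_{b,i}$ to be isoclinic of the prescribed slope, which requires an intricate interplay of Morita theory over $L_i$ with the compatibility constraints from the $E$-action, and likely splits into cases based on the ramification of $L_i/F$ and on whether $E\subseteq L_i$.
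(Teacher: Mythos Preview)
Your overall strategy---decomposing along the idempotents of $L_\delta$ and working with the $(D\otimes_F B_\delta)$-action on the isocrystal---matches the paper's approach. However, two of your key steps have genuine gaps that the paper handles by a cleaner reformulation.

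\textbf{Isoclinicity of $\bN_{b,i}$.} Your claim that ``non-isoclinic Newton polygons are incompatible with an $E$-embedding into the center of $\End(\bN_{b,i,+})$'' is not correct as stated: Example~\ref{ex:isogeny_classes}(2) exhibits non-isoclinic elements of $B(H,\mu_H)$, so the $E$-action alone cannot force isoclinicity. The paper's argument is purely dimensional. Writing $P_i = (D\otimes_F B_\delta)_i$, a CSA of degree $4n$ over $L_i$, any faithful $P_i$-isocrystal has height a multiple of $4n\,[L_i:F]$. Since $\sum_i 4n\,[L_i:F] = 4n^2 = \mr{ht}(\bN_b)$, each $\bN_{b,i}$ has exactly the \emph{minimal} height $4n\,n_i$, and a faithful action of a degree-$4n$ CSA on an isocrystal of this minimal height forces it to be isoclinic (the slope decomposition would otherwise require strictly larger height). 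No case analysis on ramification of $L_i/F$ or $E\subseteq L_i$ is needed.

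\textbf{Orbit uniqueness.} You invoke Skolem--Noether for the semi-simple algebra $D_b$, but the paper explicitly flags that Skolem--Noether does not immediately apply here. The paper sidesteps this entirely by observing at the outset that specifying the pair (isogeny class $[b]$, orbit $H_bgH_b$ of invariant $\delta$) is the \emph{same} as specifying a faithful $P$-isocrystal $(\bN,\wt\kappa)$ of the correct height, dimension and slope range, up to $P$-linear isomorphism. Uniqueness of such an $(\bN,\wt\kappa)$ then yields both the uniqueness of $[b]$ and of the orbit in one stroke.

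\textbf{The sign.} Your heuristic is right in spirit. Once isoclinicity is established with slope $\mu_i = d_i/(4nn_i)$, the centralizer theorem gives $\rho_i = n_i\mu_i = d_i/(4n)$ for the Hasse invariant of $P_i$, and since $\rho_i = \beta_i + n_i\lambda$, summing over $i$ yields directly
\[
\varepsilon_D(\delta) = n\lambda + \sum_i \beta_i = \sum_i \rho_i = \sum_i \frac{d_i}{4n} = \frac{2n}{4n} = \frac{1}{2},
\]
with no need to reduce to the basic case.
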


Note that the statement about the set of such $g$ forming a single orbit is non-trivial because $D_b$ may not be simple, so the Skolem--Noether Theorem does not immediately apply.

\begin{cor}\label{cor:vanishing_geometric_matching}
Let $γ\in G'_{\mr{rs}}$ be a regular semi-simple element such that there exists an isogeny class $[b]\in B(H, μ_H)$ and an element $g\in G_b$ that matches $γ$. Then $\Orb(γ, f'_D) = 0$.
\end{cor}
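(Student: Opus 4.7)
The plan is to read the corollary off as an immediate consequence of two already-proven ingredients: the functional equation from Proposition \ref{prop:functional_equation} and the sign statement at the end of Proposition \ref{prop:char_isogeny_class}. I would not attempt to touch the orbital integrals themselves; the point is purely that the sign forces the central value to vanish.

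Concretely, suppose $\gamma \in G'_{\mr{rs}}$ matches some $g \in G_b$ for an isogeny class $[b] \in B(H,\mu_H)$. Set $\delta = \Inv(\gamma) = \Inv(g)$. By Proposition \ref{prop:char_isogeny_class}, the existence of such a $g$ forces
$$\varepsilon_D(\gamma) = \varepsilon_D(\delta) = -1.$$
Proposition \ref{prop:functional_equation} gives the identity
$$\Orb(\gamma, f'_D, -s) = \varepsilon_D(\gamma)\, \Orb(\gamma, f'_D, s) = -\Orb(\gamma, f'_D, s)$$
as Laurent polynomials in $q^s$. Evaluating at $s=0$ yields $\Orb(\gamma, f'_D) = -\Orb(\gamma, f'_D)$, hence $\Orb(\gamma, f'_D) = 0$.

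There is really no obstacle here, since both inputs are in place: the sign calculation is packaged into Proposition \ref{prop:char_isogeny_class} and the functional equation into Proposition \ref{prop:functional_equation}. The only thing worth flagging is that this argument also explains structurally why the central derivative $\partial \Orb(\gamma, f'_D)$ is the \emph{leading} term of the Taylor expansion in the matching case, which is the form in which this corollary will be used on the geometric side of the arithmetic transfer conjecture.
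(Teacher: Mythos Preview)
Your proof is correct and matches the paper's own argument essentially verbatim: invoke Proposition~\ref{prop:char_isogeny_class} to get $\varepsilon_D(\gamma)=-1$, then feed this into the functional equation of Proposition~\ref{prop:functional_equation} and evaluate at $s=0$.
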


\begin{proof}[Proof of the corollary.] Proposition \ref{prop:char_isogeny_class} states that the sign in the functional equation of $\Orb(γ, f'_D, s)$ is negative. (See Proposition \ref{prop:functional_equation} for that functional equation.)
\end{proof}

\begin{proof}[Proof of Proposition \ref{prop:char_isogeny_class}.]
We write $B = B_δ$ and $L = L_δ$ in the following. Let $L = \prod_{i\in I}L_i$ denote the decomposition of $L$ into fields and let $B = \prod_{i\in I}B_i$ be the corresponding decomposition of $B$. The tensor product $P = D\tensor_F B$ has center $L$ and a similar decomposition $P = \prod_{i\in I} P_i$. The $i$-th factor $P_i$ is a CSA of degree $4n$ over $L_i$. Let $ρ_i \in (4n)^{-1}\mbZ/\mbZ$ be its Hasse invariant, and write $n_i = [L_i:F]$. Let $β_i$ be the Hasse invariant of $B_i/L_i$. These invariants are related by $ρ_i = n_i λ + β_i$ because
\begin{equation}\label{eq:Hasse_relations}
\begin{aligned}
\mr{inv}_{L_i}((D\tensor_FL_i)\tensor_{L_i} B_i) & = \mr{inv}_{L_i}(D\tensor_FL_i) + \mr{inv}_{L_i}(B_i)\\
& = [L_i:F]\cdot λ + β_i.
\end{aligned}
\end{equation}

Let $b\in B(G, μ_G)$ be any isomorphism class; denote by $(\bN_b, κ)$ the corresponding $D$-isocrystal. Giving an embedding $E\to D_b$ and an orbit $H_bgH_b \subset G_b$ of invariant $δ$ is the same as lifting $κ$ to a faithful action $\wt{κ}:P\to \End(\bN)$ up to $G_b$-conjugacy by \eqref{eq:conjugacy_orbit} and because necessarily $B_g = B_δ$ (via $z_g \mapsto z$) if such an orbit exists (Proposition \ref{prop:quaternion_algebra}). The uniqueness of the pair $(b,\ H_b g H_b)$ is thus equivalent to the uniqueness (up to $P$-linear isomorphism) of an isocrystal $\bN$ of height $4n^2$, of dimension $2n$, with all slopes within $[0,1]$ and with a faithful $P$-action $\wt{κ}$.

Assume $(\bN, \wt{κ})$ is such a pair. Then $\bN$ decomposes, $\bN = \prod_{i\in I}\bN_i$, into a product of isocrystals with faithful $P_i$-action. The $i$-th factor $P_i$ is a CSA over $L_i$ of degree $4n$ so the height of $\bN_i$ has to be an integer multiple of $4nn_i$. Since $\sum_{i\in I} 4nn_i = 4n^2 = \mr{ht}(\bN)$, the height of $\bN_i$ has to be exactly $4nn_i$. Furthermore, $\bN_i$ is necessarily isoclinic, say of slope $μ_i = d_i/4nn_i$. The uniqueness of $(\bN, \wt{κ})$ up to $P$-linear isomorphism is then equivalent to the vector $(d_i)_{i\in I}$ being uniquely determined by $P$.

The $i$-th endomorphism ring $\End(\bN_i)$ is a CSA over $F$ of Hasse invariant $μ_i$ and degree $4nn_i$. Since $[P_i:F][L_i:F] = 4nn_i$, it follows from the centralizer theorem that $\wt{κ}(P_i)$ equals the centralizer of $\wt{κ}(L_i)$ in $\End(\bN_i)$. This implies that
\begin{equation}\label{eq:key_rho}
ρ_i = n_iμ_i = d_i/4n.
\end{equation}
Here, the integer $d_i$ is the dimension of $\bN_i$ and we know that $\sum_{i\in I} d_i = 2n$. Because all slopes are assumed to lie within the interval $[0,1]$, we in particular obtain that $0\leq d_i \leq 2n$ and hence see that $(d_i)_{i\in I}$ is uniquely determined by $P$. This shows the uniqueness of $b$ and the orbit $H_bgH_b$.

It is still left to prove that $ε_D(δ) = -1$ if $b$ and such an orbit exist. To this end, we take up the identity $ρ_i = β_i + n_i λ$ from the beginning of the proof. Combining with \eqref{eq:key_rho}, we see that $β_i + n_iλ = d_i/4n$. Taking the sum over all $i\in I$, it follows that
\begin{equation}\label{eq:sign_matching_RZ}
ε_D(δ) = \sum_{i\in I} \frac{d_i}{4n} = \frac{1}{2} \in 2^{-1}\mbZ/\mbZ \iso \{\pm 1\}
\end{equation}
as claimed.
\end{proof}

\subsection{Moduli Spaces}

Let $[b]\in B(H, μ_H)$ be an isogeny class. Then $(H, b, μ_H)$ and $(G, b, μ_G)$ are local Shimura data triples. Our aim in this section is to define integral models of the corresponding local Shimura varieties (over $\breve F$) at maximal level.

Fix an embedding $E\to \breve F$ as well as maximal orders $O_C\subset C$ and $O_D\subset D$. By definition, a $\Spf O_{\breve F}$-scheme is an $O_{\breve F}$-scheme $S$ such that $π\in \mcO_S$ is locally nilpotent.

\begin{defn}\label{def:pi_div_group}
Let $S$ be a $\Spf O_{\breve F}$-scheme.
\begin{enumerate}[wide, labelindent=0pt, labelwidth=!, label=(\arabic*)]
\item Assume that $F$ is of characteristic $0$. A strict $O_F$-module over $S$ is a pair $(X, α)$ that consists of a $p$-divisible group $X$ over $S$ and an action $α:O_F\to\End(X)$ that is strict in the sense that $\Lie(α(a)) = a$ for all $a\in O_F$. Height and slope of a strict $O_F$-module are meant in the relative sense, meaning $[F:\mbQ_p]\cdot \mr{ht}(X)$ is the height of $X$ as $p$-divisible group.
\item Assume that $F \iso \mbF_q(\!(π)\!)$ is of characteristic $p$. A strict $O_F$-module over $S$ is a $π$-divisible group $(X, α)$ over $S$ in the sense of \cite[Definition 7.1]{HS} such that $\Lie(α(a)) = a$ for all $a\in O_F$. (In other words, we demand $d = 1$ in part (iv) of \cite[Definition 7.1]{HS}.) Height, dimension and slope are defined as in \cite[\S7]{HS}.
\end{enumerate}
\end{defn}

\begin{defn}\label{def:special}
Let $S$ be a $\Spf O_{\breve F}$-scheme.
\begin{enumerate}[wide, labelindent=0pt, labelwidth=!, label=(\arabic*)]
\item A special $O_C$-module over $S$ is a pair $(Y, ι)$ that consists of a strict $O_F$-module $Y$ and an $O_C$-action $ι:O_C\to \End(Y)$ such that the following conditions are satisfied. The height of $Y$ is $2n^2$, its dimension is $n$, and the $O_C$-action is special in the sense that for all $x\in O_C$,
\begin{equation}\label{eq:special_C}
\mr{char}(ι(x)\mid \Lie(Y); T) = \mr{charred}_{C/E}(x; T).
\end{equation}
Here, the right hand side is considered as an element of $\mcO_S[T]$ via the fixed embedding $E\subset \breve F$ and the structure map $O_{\breve F}\to \mcO_S$.

\item A special $O_D$-module over $S$ is a pair $(X, κ)$ that consists of a strict $O_F$-module $X$ and an $O_D$-action $κ:O_D\to \End(X)$ such that the following conditions are satisfied. The height of $X$ is $4n^2$, its dimension is $2n$, and the $O_D$-action is special in the sense that for all $x\in O_D$,
\begin{equation}\label{eq:special_D}
\mr{char}(κ(x)\mid \Lie(X); T) = \mr{charred}_{D/F}(x; T).
\end{equation}
Here, the right hand side is considered as an element of $\mcO_S[T]$ via the structure map $O_{\breve F}\to \mcO_S$.
\end{enumerate}
\end{defn}

\begin{rmk}\label{rmk:special_condition}
By \cite[3.58]{RZ1}, an equivalent way to formulate \eqref{eq:special_C} and \eqref{eq:special_D} is as follows. Let $L/E$ be an unramified field extension of degree $n$ and fix an embedding $O_L\to O_C$. Then, given an action $ι:O_C\to \End(Y)$ or $κ:O_D\to \End(X)$, the Lie algebra $\Lie(Y)$ resp. $\Lie(X)$ becomes an $O_L\tensor_{O_F}\mcO_S$-module. Since $S$ was assumed to be an $O_{\breve F}$-scheme, there is an eigenspace decomposition
$$\Lie(Y) = \bigoplus_{φ\in \Hom_E(L, \breve F)} \Lie(Y)_φ\quad \text{resp.}\quad \Lie(X) = \bigoplus_{φ\in \Hom_F(L, \breve F)} \Lie(X)_φ.$$
Then \eqref{eq:special_C} resp. \eqref{eq:special_D} holds for all $x\in O_C$ (resp. all $x\in O_D$) if and only if each summand $\Lie(Y)_φ$ (resp. each summand $\Lie(X)_φ$) is locally free of rank $1$ as $O_S$-module.
\end{rmk}

\begin{rmk}[Morita Equivalence]\label{rmk:reformulation}
It is possible to reformulate Definition \ref{def:special} in terms of division algebras only. For brevity, we only consider the case of $D$: Assume that $O_D = M_m(O_{D_0})$ where $O_{D_0}$ denotes the maximal order in a CDA $D_0$. Then special $O_D$-modules over $S$ are equivalent to pairs $(X_0, κ_0)$ where $X_0$ is a strict $O_F$-module over $S$ of height $m\cdot\dim_F(D_0)$, dimension $[D_0:F]$, and where $κ_0:O_{D_0}\to \End(X_0)$ is special in the sense that for all $x\in O_{D_0}$,
$$\mr{char}(κ_0(x)\vert \Lie(X_0); T) = \mr{charred}_{D_0/F}(x; T).$$
The equivalence is given by $(X_0, κ_0) \mapsto (X_0^m, M_m(κ_0))$.
\end{rmk}

We fix a special $O_C$-module $(\mbY, ι)$ and a special $O_D$-module $(\mbX, κ)$ over $\mbF$ for the next definition. These are the so-called framing objects.

\begin{defn}\label{def:RZ}
The RZ moduli space $\mcM_C$ is defined as the following functor on the category of schemes over $\Spf O_{\breve F}$,
$$\mcM_C(S) = \left\{(Y, ι, ρ) \left\vert \text{\begin{varwidth}{\textwidth}\centering $(Y,ι)$ a special $O_C$-module over $S$\\
$ρ:\ob{S}\times_{\Spec \mbF} \mbY \lr \ob{S}\times_S Y$ an $O_C$-linear quasi-isogeny\end{varwidth}}\right.\right\}.$$
We define a moduli space of $\mcM_D$ in the exact same way,
$$\mcM_D(S) = \left\{(X, κ, ρ) \left\vert \text{\begin{varwidth}{\textwidth}\centering $(X,κ)$ a special $O_D$-module over $S$\\
$ρ:\ob{S}\times_{\Spec \mbF} \mbX \lr \ob{S}\times_S X$ an $O_D$-linear quasi-isogeny\end{varwidth}}\right.\right\}.$$
\end{defn}

\begin{prop}\label{prop:representability}
The functors $\mcM_C$ and $\mcM_D$ are representable by formal schemes that are locally formally of finite type over $\Spf O_{\breve F}$. The irreducible components of the maximal reduced subschemes of $\mcM_C$ and $\mcM_D$ are projective over $\Spec \mbF$. Both formal schemes are regular with semi-stable reduction over $\Spf O_{\breve F}$. Moreover, $\mcM_C$ has dimension $n$ and $\mcM_D$ has dimension $2n$.
\end{prop}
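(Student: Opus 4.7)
The plan is to combine the general Rapoport--Zink representability theorem with an explicit local model computation.

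First, representability of $\mcM_C$ and $\mcM_D$ by formal schemes locally formally of finite type over $\Spf O_{\breve F}$ is an instance of the EL-type representability theorem: in mixed characteristic one invokes \cite[Theorem 3.25]{RZ1}, and in equal characteristic the analogous theorem of Hartl--Singh \cite{HS} applies to strict $O_F$-modules in the sense of Definition \ref{def:pi_div_group}. Projectivity of each irreducible component of the maximal reduced subscheme is standard: on the special fiber, every closed point defines a quasi-isogeny of bounded height (controlled by the Hodge polygons of $μ_H$ and $μ_G$ together with the slopes of $\mbY$ and $\mbX$), so each height stratum is quasi-compact; combined with the fact that the reduced special fiber is a union of Deligne--Lusztig-type strata carrying an ample determinant line bundle, this yields projectivity.

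Next, regularity, semi-stable reduction and the dimension count are read off from the local model. By the formulation given in Remark \ref{rmk:special_condition}, the special condition is equivalent, after fixing an unramified $L/E$ inside $O_C$ (resp. $L/F$ inside $O_D$), to asking that each $φ$-isotypic component of $\Lie(Y)$ (resp.\ $\Lie(X)$) is locally free of rank~$1$. Via the Morita equivalence of Remark \ref{rmk:reformulation}, one reduces to the case that $C$ (resp.\ $D$) is a central division algebra. Then the tame uniformizer $\Pi$ satisfying $\Pi^\ell = π$ and $\Pi a = σ(a)\Pi$ cyclically permutes the rank-$1$ $L$-eigenspaces, and the local model becomes the cyclic chain moduli of rank-$1$ $\mcO_S$-quotients linked by $\Pi$. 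This is exactly the local model studied in Drinfeld's original work \cite{Drinfeld}, which is regular with semi-stable reduction. The dimension count then reduces to counting the number of embeddings $L\hookrightarrow \breve F$ (over $E$, resp.\ over $F$), giving $n$ and $2n$ respectively.

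The main technical obstacle lies in the last step: matching the cyclic $\Pi$-chain with Drinfeld's semi-stable local model. For $\mcM_C$ this is exactly Drinfeld's $n$-dimensional result. For $\mcM_D$ of general Hasse invariant $λ$, the local model is not the Drinfeld half space itself (except when $λ\in \{1/(2n), (2n-1)/(2n)\}$), but an EL variant built from the same cyclic chain data; the verification that this variant still has semi-stable reduction of dimension $2n$ requires computing the tangent spaces at worst points of the special fiber, which is a standard but unavoidable local computation.
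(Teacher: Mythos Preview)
Your overall strategy---Rapoport--Zink representability plus a local model argument---matches the paper. However, the paper handles the local model step more cleanly and avoids what you call the ``main technical obstacle.'' Rather than reducing via Morita to a division algebra and then analyzing a cyclic $\Pi$-chain by hand, the paper observes that after base change to $O_{\breve F}$ the local model for $(G, μ_G, O_D^\times)$ becomes isomorphic to the standard parahoric local model for $GL_{2n}$ with cocharacter $(1,\ldots,1,0)$---independently of the Hasse invariant of $D$. The same holds for $(H, μ_H, O_C^\times)$ and $GL_n$. One can then cite the known result that these $GL$-parahoric local models are regular with semi-stable reduction of the stated dimension (\cite[Theorem 5.6]{HPR}, originally Drinfeld). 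So the tangent-space computation you flag as ``unavoidable'' is in fact avoided entirely; your approach would eventually succeed, but only after reproducing a computation that the literature already contains for the split group.

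Two smaller points. First, your projectivity argument is both more elaborate and vaguer than necessary: the claim about ``Deligne--Lusztig-type strata carrying an ample determinant line bundle'' is not obviously applicable in this generality, and in any case projectivity of the irreducible components of the reduced locus is already part of \cite[Theorem 3.25]{RZ1}, so no separate argument is needed. Second, for the equal-characteristic representability you cite \cite{HS}, but the relevant representability theorem for RZ-type moduli is in Arasteh Rad--Hartl \cite[Theorem 4.18]{AH}; \cite{HS} gives the equivalence of strict $O_F$-modules with local shtukas, which is an input but not the representability statement itself.
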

\begin{proof}
The representability of $\mcM_C$ and $\mcM_D$ by a locally formally finite type formal scheme, and the fact that the irreducible components of their reduced loci are projective over $\Spec \mbF$ are general properties of (P)EL type RZ spaces, see \cite[Theorem 3.25]{RZ1}. (The analogous result in the equal characteristic setting is \cite[Theorem 4.18]{AH}.) The regularity follows with the standard local model argument: \cite[Proposition 3.33]{RZ1} states that regularity of the two spaces follows from that of the local models for the data $(H, μ_H, O_C^\times)$ and $(G, μ_G, O_D^\times)$. After base extension to $O_{\breve F}$, these local models are isomorphic to parahoric type local models for $GL_n$ resp. $GL_{2n}$ with cocharacter $μ = (1,\ldots, 1,0)$. It is well-known that these are of dimension $n$ (resp. $2n$) with semi-stable reduction \cite[Theorem 5.6]{HPR}. (The assumption $p\neq 2$ is not needed for this part of the theorem. The result is originally due to Drinfeld \cite{Drinfeld}.)
\end{proof}

\subsection{Quadratic CM Cycles}
\label{ss:quad_cycles}

Assume from now on that the maximal orders $O_C$ and $O_D$ are chosen such that $O_C = C\cap O_D$. (See Lemma \ref{lem:order_uniqueness} for a uniqueness statement in this context.) Let $S$ be a scheme over $\Spf O_{\breve F}$ and let $(Y, ι)$ be a strict $O_F$-module with $O_C$-action $ι$ over $S$. Then $O_D\tensor_{O_C} Y$ is a strict $O_F$-module over $S$ with a natural $O_D$-action. It will be useful to have a more explicit description of this construction.

The ring $O_D$ is an $O_E\tensor_{O_F}O_E$-module via left and right multiplication and decomposes into eigenspaces with respect to this action: $O_D = O_C \oplus (D_-\cap O_D)$. We have used here that $E$ is unramified over $F$. The space of conjugation linear element $D_-\cap O_D$ is an $O_C$-module via left multiplication and takes the form $O_C\cdot Π$ for some generator $Π$. With respect to the $O_C$-basis $(1, Π)$, there is then the presentation
\begin{equation}\label{eq:presentation_Serre_tensor_I}
O_D\tensor_{O_C} Y = Y\oplus ΠY
\end{equation}
where $ΠY$ is our notation for the summand $Π\tensor Y$ which we identify with $Y$. Note that $Π^2\in O_C$ and that $Π^{-1}O_CΠ = O_C$. The $O_D$-action on $O_D\tensor_{O_C}Y$ has the matrix description
\begin{equation}\label{eq:O_D_action_explicit}
\begin{aligned}
O_D &\ \lr\ \End(Y \oplus ΠY)\\
a + bΠ &\ \longmapsto \ \begin{pmatrix} a & bΠ^2 \\ Π^{-1}bΠ & Π^{-1}aΠ\end{pmatrix}.
\end{aligned}
\end{equation}
\begin{lem}\label{lem:special_Serre_compatible}
Let $(Y, ι)$ be a special $O_C$-module over a $(\Spf O_{\breve F})$-scheme $S$. Then the Serre tensor construction $(O_D\tensor_{O_C}Y,\ κ(x) = x\tensor \mr{id}_Y)$ is a special $O_D$-module.
\end{lem}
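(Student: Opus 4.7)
The plan is to verify both the numerical conditions and the special condition \eqref{eq:special_D} by unpacking the description \eqref{eq:presentation_Serre_tensor_I}--\eqref{eq:O_D_action_explicit} and then applying the eigenspace reformulation from Remark \ref{rmk:special_condition}. First, because $O_D$ is free of rank $2$ as a right $O_C$-module (one has $O_D = O_C \oplus \Pi O_C$, since $\Pi O_C = O_C \Pi$), the underlying strict $O_F$-module of $O_D \otimes_{O_C} Y$ is isomorphic to $Y \oplus Y$. Hence its height is $2 \cdot 2n^2 = 4n^2$ and its dimension is $2 \cdot n = 2n$, matching the numerical part of Definition \ref{def:special}.

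For the special condition, I would choose an unramified field extension $L/E$ of degree $n$ together with an embedding $O_L \hookrightarrow O_C$, as in Remark \ref{rmk:special_condition}. Since $E/F$ is unramified quadratic, $L/F$ is unramified of degree $2n$, and the composition $O_L \hookrightarrow O_C \hookrightarrow O_D$ realises $L$ as a maximal unramified subalgebra of $D$. The decomposition \eqref{eq:presentation_Serre_tensor_I} yields
\begin{equation*}
\Lie(O_D \otimes_{O_C} Y) \iso \Lie(Y) \oplus \Lie(Y)
\end{equation*}
as $\mcO_S$-modules. Inspecting the diagonal entries of \eqref{eq:O_D_action_explicit}, the $O_C$-action is $\iota$ on the first summand and $\iota \circ \tau^{-1}$ on the second, where $\tau$ denotes conjugation by $\Pi$. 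Since $\Pi \in D_-$, the automorphism $\tau$ restricts to the nontrivial Galois involution on the center $E$ of $C$.

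The assumption that $(Y, \iota)$ is special provides, via Remark \ref{rmk:special_condition}, a decomposition $\Lie(Y) = \bigoplus_{\varphi \in \Hom_E(L, \breve F)} \Lie(Y)_\varphi$ into $n$ line bundles, indexed by the $E$-linear embeddings of $L$ into $\breve F$. Applied to the first summand of $\Lie(O_D \otimes_{O_C} Y)$, this produces line-bundle eigenspaces at those $F$-embeddings $L \to \breve F$ that extend the fixed $E \hookrightarrow \breve F$. Applied to the second summand, where the $O_E$-action has been pre-composed with the Galois involution, it produces line-bundle eigenspaces at the complementary Galois-conjugate embeddings. Together these exhaust all $2n$ elements of $\Hom_F(L, \breve F)$, each supporting a rank-one eigenspace; by Remark \ref{rmk:special_condition} this is equivalent to the special condition \eqref{eq:special_D} for $(O_D \otimes_{O_C} Y, \kappa)$.

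I do not anticipate any serious obstacle. The only point requiring attention is the observation that the $\tau^{-1}$-twist on the second summand exactly swaps the two $F$-conjugacy classes of embeddings $E \hookrightarrow \breve F$, so that the two copies of $\Lie(Y)$ cover complementary halves of $\Hom_F(L, \breve F)$ and combine to give the full decomposition into $2n$ line bundles.
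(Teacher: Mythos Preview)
Your strategy is the same as the paper's, but you have glossed over the one genuine difficulty. On the second summand $\Pi Y$, the $O_L$-action (via $\kappa$) is $a \mapsto \iota(\Pi^{-1} a \Pi)$. You correctly note that on $O_E$ this is the Galois involution, so the $O_E$-eigenspace of the second summand sits entirely over the conjugate embedding $E \hookrightarrow \breve F$. But that only tells you the second summand has total rank $n$ concentrated on the ``other half'' of $\Hom_F(L,\breve F)$; it does \emph{not} tell you that each of those $n$ individual $O_L$-eigenspaces is a line bundle. The problem is that $\Pi^{-1} i(O_L) \Pi$ need not equal $i(O_L)$: conjugation by $\Pi$ is an automorphism of $O_C$, but it may move your chosen copy of $O_L$ to a different maximal unramified subring. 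Hence you cannot simply say the $O_L$-eigenspaces of the second summand are a Galois-permutation of those of the first.

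The paper closes this gap by first proving that any two $E$-linear embeddings $i_1,i_2:O_L \to O_C$ have $O_C^\times$-conjugate images. This allows one to choose $i$ and the generator $\Pi$ of $D_- \cap O_D$ compatibly, so that $\Pi i(O_L) = i(O_L)\Pi$; then conjugation by $\Pi$ does act on $i(O_L)$ via some $\psi \in \mr{Gal}(L/F)$ with $\psi|_E \neq \mr{id}_E$, and your intended argument goes through via $\Lie(X)_\varphi \cong \Lie(Y)_{\varphi\psi^{-1}}$ for $\varphi|_E \neq \mr{id}_E$. An alternative patch, if you prefer to keep $\Pi$ arbitrary, is to observe that $i'' := \Pi^{-1} i(\,\cdot\,) \Pi \circ \sigma_L^{-1}$ (for any $\sigma_L \in \mr{Gal}(L/F)$ restricting to $\sigma_E$) is again an $E$-linear embedding $O_L \to O_C$, and then to reinvoke Remark \ref{rmk:special_condition} for $i''$; but either way some extra work is needed beyond the $O_E$-level observation in your last paragraph.
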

\begin{proof}
Let $L$ denote an unramified extension of degree $2n$ of $F$; fix an embedding $E\to L$. We claim that for any choice of two $E$-linear embeddings $i_1,i_2:O_L\to O_C$, the two images $i_1(O_L)$ and $i_2(O_L)$ are $O_C^\times$-conjugate. To prove this, we consider the decomposition
$$O_C = \bigoplus_{φ\in \mr{Gal}(L/E)} Λ_φ$$
into eigenspaces with respect to the action $i_1\tensor i_2$ of $O_{L}\tensor_{O_F}O_{L}$ by left and right multiplication. Our task is to show that there exists an index $φ$ and an element $x\in O_C^\times \cap Λ_φ$. Namely, any such element satisfies $x^{-1}\circ i_1 \circ x = i_2\circ φ$ and hence $x^{-1}i_1(O_L)x = i_2(O_L)$.

Each $Λ_φ$ is an $i_1(O_L)$-module of rank $1$ and $Λ_φΛ_ψ \subseteq Λ_{φ+ψ}$. It follows that every non-zero homogeneous element $x_φ\in Λ_φ$ lies in $C^\times$ and that if $x_φ\in Λ_φ$ is topologically nilpotent and $0\neq x_ψ\in Λ_ψ$ any other homogeneous element, then $x_ψx_φx_ψ{}^{-1}$ is again topologically nilpotent. Thus, given any two topologically nilpotent homogeneous elements $x_φ, x_ψ$ in degrees $φ$ and $ψ$, say, their product $x_φx_ψ$ is again topologically nilpotent. Since $O_C$ contains elements that are not topologically nilpotent, it follows that there also exists a homogeneous element $x_φ\in Λ_φ$ that is not topologically nilpotent. Then $x_φ^{n+1} \in i_1(O_L)^\times x_φ$ implies that $x_φ\in O_C^\times$ and we have proved the claim.

We now come to the main arguments. Let $X = O_D\tensor_{O_C} Y$. The above claim implies that there exists an embedding $i:O_L\to O_C$ and a choice of $Π$ in \eqref{eq:presentation_Serre_tensor_I} such that $Πi(O_L) = i(O_L)Π$. Let $ψ\in \mr{Gal}(L/F)$ be defined by $Π^{-1}\circ i \circ Π = i\circ ψ$. Note that $ψ$ satisfies $ψ\vert_E \neq \mr{id}_E$ because $Π$ is $E$-conjugate linear and consider the decompositions of $\Lie(Y)$ and $\Lie(X)$ into $O_L$-eigenspaces,
$$\Lie(Y) = \bigoplus_{φ\in \Hom_E(L, \breve F)} \Lie(Y)_φ,\quad \Lie(X) = \bigoplus_{φ\in \Hom_F(L, \breve F)} \Lie(X)_φ.$$
Then \eqref{eq:presentation_Serre_tensor_I} and \eqref{eq:O_D_action_explicit} for our specific choices of $i$ and $Π$ imply that
\begin{equation}\label{eq:Lie_alg_relation}
\Lie(X)_φ \iso \begin{cases} \Lie(Y)_φ & \text{if $φ\vert_E = \mr{id}_E$}\\
\Lie(Y)_{φψ^{-1}} & \text{if $φ\vert_E \neq \mr{id}_E$.}
\end{cases}
\end{equation}
It follows that if $\Lie(Y)_φ$ is of rank $1$ for all $φ\in \Hom_E(L, \breve F)$, then $\Lie(X)_φ$ is of rank $1$ for all $φ\in \Hom_F(L, \breve F)$. By Remark \ref{rmk:reformulation}, this precisely means that $X$ as a special $O_D$-module if $Y$ is a special $O_C$-module.
\end{proof}

From now on, we assume that the two framing objects are related by the Serre tensor construction
\begin{equation}\label{eq:framing}
(\mbX, κ) = O_D \tensor_{O_C} (\mbY, ι).
\end{equation}
In this situation, Lemma \ref{lem:special_Serre_compatible} states that there is a morphism of formal schemes given by
\begin{equation}\label{eq:basic_construction}
\begin{aligned}
\mcM_C &\ \lr\ \mcM_D\\
(Y, ι, ρ) &\ \longmapsto\ (O_D\tensor_{O_C} Y,\ κ(x) = x\tensor \mr{id}_Y,\ \mr{id}_{O_D} \tensor ρ).
\end{aligned}
\end{equation}
Given any subset $T \subseteq \End^0(\mbY)$ of the quasi-endomorphisms of $\mbY$, we define a subfunctor $\mcZ(T) \subseteq \mcM_C$ by
\begin{equation}\label{eq:def_cycle}
\mcZ(T)(S) := \{(Y, ι, ρ) \in \mcM_C(S) \mid ρTρ^{-1} \subseteq \End(Y)\}.
\end{equation}
Here, the condition is meant in the sense that $ρTρ^{-1}$ is always a subset of the \emph{quasi}-endomorphisms $\End^0(Y)$ because $ρ$ is a quasi-isogeny. The functor $\mcZ(T)$ is representable by a closed formal subscheme of $\mcM_C$, see \cite[Proposition 2.9]{RZ1}. In exactly the same way, we define a closed formal subscheme $\mcZ(T)\subseteq \mcM_D$ whenever $T\subseteq \End^0(\mbX)$. We apply this construction to the subring $ι(O_E)\subseteq \End(\mbX)$ to obtain the closed formal subscheme $\mcZ(ι(O_E))\subset \mcM_D$.

Consider an $S$-valued point $(X, κ, ρ)\in \mcZ(ι(O_E))(S)$. Then $X$ is equipped with the two commuting $O_E$-actions $κ\vert_{O_E}$ and $ρ\circ ι \circ ρ^{-1}$. Since $E/F$ is unramified, $(\mr{id}\cdot \mr{id}, \mr{id} \cdot τ):O_E\tensor_{O_F} O_E \overset{\sim}{\to} O_E\times O_E$. We denote by $X = X_+ \oplus X_-$ the resulting eigenspace decomposition of $X$. In particular, $X_+$ is the summand on which the two $O_E$-actions agree.

The purpose of the above definitions was that we can now give a description of the image of $\mcM_C\to \mcM_D$.

\begin{prop}\label{prop:closed_immersion}
The morphism $\mcM_C\to \mcM_D$ is a closed immersion. Its image consists of all those points $(X, κ, ρ)\in \mcZ(ι(O_E))$ with the following two additional properties. Let $X = X_+ \oplus X_-$ be the eigenspace decomposition as explained before.
\begin{enumerate}[wide, labelindent=0pt, labelwidth=!, label=(\arabic*)]
\item The $κ(O_C)$-action on $X_+$ is special in the sense of \eqref{eq:special_C}.
\item The endomorphism $κ(Π)$ defines an isomorphism $κ(Π):X_+ \overset{\iso}{\lr} X_-$.
\end{enumerate}
\end{prop}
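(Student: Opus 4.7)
The plan is to exhibit the Serre tensor construction $(Y,\iota,\rho)\mapsto (O_D\otimes_{O_C}Y,\kappa,\mathrm{id}_{O_D}\otimes\rho)$ as a bijection onto the locus in $\mcZ(\iota(O_E))$ cut out by conditions (1) and (2), and then to upgrade this bijection to an isomorphism of formal schemes by writing down an explicit inverse.

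First I would verify that the image of $\mcM_C\to \mcM_D$ lies inside the asserted locus. The presentation \eqref{eq:presentation_Serre_tensor_I} writes $X=O_D\otimes_{O_C}Y$ as $Y\oplus \Pi Y$, and \eqref{eq:O_D_action_explicit} gives the $O_D$-action explicitly. The two commuting $O_E$-actions, namely $\kappa|_{O_E}$ and $\rho\,\iota(O_E)\,\rho^{-1}=\mathrm{id}_{O_D}\otimes \iota(O_E)$, generate an $O_E\otimes_{O_F}O_E\cong O_E\times O_E$-module structure on $X$, and the associated idempotent decomposition yields precisely $X_+=1\otimes Y$ and $X_-=\Pi\otimes Y$. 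Condition (1) is then immediate from the specialness of $(Y,\iota)$, while condition (2) is witnessed by the tautological isomorphism $Y\xrightarrow{\,\Pi\,\cdot\,}\Pi Y$.

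For the inverse I would use that on $\mcZ(\iota(O_E))$ there is a canonical direct sum decomposition $X=X_+\oplus X_-$ coming from the idempotents of $O_E\otimes_{O_F}O_E$; since $\rho$ is $O_D$-linear it respects this splitting and induces $\rho_+\colon \overline{S}\times \mbY\to \overline{S}\times X_+$. Sending $(X,\kappa,\rho)$ to $(X_+,\kappa|_{O_C},\rho_+)$ lands in $\mcM_C$ by condition (1). To see the two assignments are mutually inverse it is enough to check that the natural $O_D$-linear map $O_D\otimes_{O_C}X_+\to X$, $(a+b\Pi)\otimes x\mapsto \kappa(a)x+\kappa(b\Pi)x$, is an isomorphism; via \eqref{eq:O_D_action_explicit} this is $\mathrm{id}_{X_+}\oplus \kappa(\Pi)|_{X_+}\colon X_+\oplus X_+\to X_+\oplus X_-$, hence an isomorphism precisely when condition (2) holds.

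The main obstacle I expect will be the scheme-theoretic structure of the image. The inclusion $\mcZ(\iota(O_E))\hookrightarrow \mcM_D$ is a closed immersion by \cite[Proposition 2.9]{RZ1}, and condition (1) is a Kottwitz-type determinant identity on $\Lie(X_+)$, which cuts out a further closed subfunctor. Condition (2) is the most delicate: on the sublocus where (1) holds, $X_+$ and $X_-$ have the same height $2n^2$ and dimension $n$, so $\kappa(\Pi)\colon X_+\to X_-$ is automatically an isogeny, and one has to argue that requiring it to be an isomorphism defines a closed subfunctor. This follows because the degree of the isogeny $\kappa(\Pi)$ is locally constant on the base (the kernel is a finite flat group scheme of locally constant rank), so the degree-one locus is a union of connected components, hence both open and closed. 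Once that locus is identified with the image, the inverse constructed above becomes a genuine morphism of formal schemes, and $\mcM_C\to \mcM_D$ is the asserted closed immersion.
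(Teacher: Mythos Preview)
Your approach is essentially identical to the paper's: both identify the image as the closed formal subscheme $\mcZ\subseteq\mcZ(\iota(O_E))$ cut out by (1) and (2), argue that (1) is Zariski closed (a polynomial identity on the Lie algebra) and (2) is open-and-closed (the height of an isogeny is locally constant), and then exhibit $(X,\kappa,\rho)\mapsto(X_+,\kappa|_{O_C},\rho_+)$ as an explicit inverse via the natural $O_D$-linear map $O_D\otimes_{O_C}X_+\to X$.

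There is one small gap. You assert that $\kappa(\Pi)\colon X_+\to X_-$ is automatically an isogeny because $X_+$ and $X_-$ have the same height and dimension; but matching numerical invariants alone do not force a homomorphism of strict $O_F$-modules to be an isogeny. The paper closes this by using the framing: since $\rho$ is $O_D$-linear and intertwines the two $O_E$-actions, it respects the eigenspace decompositions, and on the framing object the map $\kappa(\Pi)\colon\mbX_+\to\mbX_-$ is the tautological isomorphism $\mbY\overset{\sim}{\to}\Pi\mbY$; hence $\rho^{-1}\kappa(\Pi)\rho$ is an isogeny on the special fiber, and therefore so is $\kappa(\Pi)$. (An alternative repair: some power $\Pi^k$ lies in $\pi\cdot O_D^\times$, so $\kappa(\Pi)\colon X\to X$ is an isogeny, and since $\Pi$ is $E$-conjugate-linear it swaps the summands, forcing the restriction $X_+\to X_-$ to be an isogeny as well.) With this fixed, your argument and the paper's coincide.
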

\begin{proof}
Let $\mcZ\subseteq \mcZ(ι(O_E))$ be the subfunctor defined by the conditions (1) and (2). Condition (1) is a Zariski closed condition on $\mcZ(ι(O_E))$ because it is given by the equality of the two polynomials in \eqref{eq:special_C}. Condition (2) is an open and closed condition: The map $κ(Π):X_+ \to X_-$ is always an isogeny because $ρ^{-1}κ(Π)ρ:\mbX_+\to \mbX_-$ is an isogeny. Condition (2) then describes the locus where the height of $κ(Π)$ is $0$, which is open and closed. We conclude that $\mcZ$ is a closed formal subscheme of $\mcZ(ι(O_E))$.

It is clear from definitions that the map $\mcM_C \to \mcM_D$ factors through $\mcZ$. Conversely, given a point $(X, κ, ρ)\in \mcZ(S)$, let $(X_+, κ\vert_{O_C}, ρ_+)$ be the direct summand where the $κ(O_E)$ and $(ρι(O_E)ρ^{-1})$-actions coincide. Then $(X_+, κ\vert_{O_C}, ρ_+) \in \mcM_C(S)$ because of Condition (1). Condition (2) ensures that
$$O_D\tensor_{O_C}(X_+, κ\vert_{O_C}, ρ_+) \overset{\iso}{\lr} (X, κ, ρ)$$
via the natural $O_D$-linear map $O_D\tensor_{O_C} X_+ \to X$. This constructs an inverse $\mcZ\to \mcM_C$.
\end{proof}

\subsection{Intersection Numbers}
\label{ss:intersection_numbers}

Let $[b] \in B(H, μ_H)$ be the isogeny class defined by the framing object $(\mbY, ι)$. After a suitable choice of identification, we may simply write (or redefine) $H_b = \End^0_C(\mbY, ι)^\times$ and $G_b = \End^0_D(\mbX, κ)$. Then $H_b$ and $G_b$ act from the right on $\mcM_C$ resp. $\mcM_D$ by composition in the framing. The closed immersion $\mcM_C\to \mcM_D$ is equivariant with respect to $H_b\to G_b$.

\begin{defn}\label{def:intersection_locus}
Let $g\in G_{b,\mr{rs}}$ be a regular semi-simple element. The intersection locus for $g$ is
$$\mcI(g) := \mcM_C \cap (g\cdot \mcM_C).$$
\end{defn}
Let $g \in G_{b,\mr{rs}}$ be regular semi-simple. Recall from Definition \ref{def:notation_rs} that $B_g \subset D_b$ denotes the subring $F[ι(E), g^{-1} ι(E)g]$ and that $L_g\subset B_g$ denotes its center. In particular, it holds that $L_g = C_b \cap g^{-1} C_b g$ and we obtain the following lemma.
\begin{lem}\label{lem:Ig_stable_L}
The action of $L_g^\times \subset G_b$ on $\mcM_D$ preserves both $\mcM_C$ and $g\cdot \mcM_C$. In particular, it preserves $\mcI(g)$.
\end{lem}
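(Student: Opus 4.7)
The plan is to verify the two preservation claims separately and then deduce preservation of the intersection. The key algebraic input is the description of $L_g$ as the center of $B_g$ from Proposition \ref{prop:quaternion_algebra}, together with the equivariance of the closed immersion $\mcM_C \hookrightarrow \mcM_D$ with respect to $H_b \hookrightarrow G_b$.

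First I would handle the preservation of $\mcM_C$. Since $z_g = g_+^{-1}g_-$ is the product of two $E$-conjugate linear elements, $z_g^2$ is $E$-linear, so $L_g = F[z_g^2] \subseteq C_b$ and hence $L_g^\times \subseteq H_b$. Because $\mcM_C \hookrightarrow \mcM_D$ is $H_b \to G_b$-equivariant, the right action of $L_g^\times$ on $\mcM_D$ stabilizes $\mcM_C$.

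Next I would handle the preservation of $g\cdot \mcM_C$. Interpreting $g\cdot \mcM_C$ as the image of $\mcM_C$ under the right-translation by $g\in G_b$, the $G_b$-stabilizer of $g\cdot \mcM_C$ is the conjugate $g^{-1}H_b g$ of the stabilizer of $\mcM_C$. Therefore it suffices to show $L_g \subseteq g^{-1}C_b g$, equivalently $gL_g g^{-1} \subseteq C_b$. By Proposition \ref{prop:quaternion_algebra}(3), $L_g$ is the center of $B_g$, and $B_g = F[\iota(E),\, g^{-1}\iota(E) g]$ contains $g^{-1}\iota(E) g$. Hence $L_g$ commutes with $g^{-1}\iota(E) g$, which upon conjugating by $g$ means that $g L_g g^{-1}$ commutes with $\iota(E)$, i.e. lies in $C_b$.

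The final assertion that $L_g^\times$ preserves $\mcI(g) = \mcM_C \cap g\cdot \mcM_C$ then follows at once from the two preceding statements. The argument is purely algebraic and there is no real obstacle; the only item needing care is the bookkeeping of the right-action convention (identifying the stabilizer of $g\cdot \mcM_C$ as $g^{-1}H_b g$ rather than $gH_b g^{-1}$), which is why the conjugation $L_g \mapsto g L_g g^{-1} \subseteq C_b$ is the precise inclusion one must verify.
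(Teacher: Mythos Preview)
Your proof is correct and follows essentially the same route as the paper. The paper compresses the argument into the single observation $L_g = C_b \cap g^{-1}C_b g$ (stated just before the lemma), from which both preservation claims are immediate; you have simply unpacked the two inclusions $L_g \subseteq C_b$ and $L_g \subseteq g^{-1}C_b g$ separately and checked the right-action bookkeeping explicitly.
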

One consequence is that $\mcI(g)$ is never quasi-compact if it is non-empty. (Consider e.g. the action of $π^\mbZ \subset L_g^\times$.) However, taking the quotient by $L_g^\times$ solves this issue:

\begin{prop}\label{prop:proper}
Assume that $F$ is $p$-adic. Let $g\in G_{b,\mr{rs}}$ be regular semi-simple and let $Γ\subset L_g^\times$ be a discrete cocompact subgroup with $L_g^\times = Γ\times O_{L_g}^\times$. Then $\mcI(g)$ is a scheme and the quotient $Γ\backslash \mcI(g)$ is proper over $\Spec O_{\breve F}$.
\end{prop}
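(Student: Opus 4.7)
The plan is to establish two separate claims: that $\mcI(g)$ is a scheme (equivalently, that $\pi$ is locally nilpotent on it), and that $\Gamma\backslash\mcI(g)$ is proper over $\Spec O_{\breve F}$. The first will follow by showing that the generic fiber of $\mcI(g)$ is empty. The second will follow from the projectivity of the irreducible components of $(\mcM_C)_{\mr{red}}$ combined with a finiteness statement for $\Gamma$-orbits on the irreducible components of $\mcI(g)_{\mr{red}}$.

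For the first claim, I would consider a geometric point $\xi \in \mcI(g)(\overline{\breve F})$. Via Proposition \ref{prop:closed_immersion}, its underlying $p$-divisible group $X_\xi$ would inherit an $\iota(O_E)$-action from the factorization through $\mcM_C$ and a second $O_E$-action from the factorization through $g\cdot\mcM_C$. Together with $\kappa(O_D)$, these would generate an action of $\rho B_g \rho^{-1}$ on $X_\xi$ up to isogeny. Regular semi-simplicity ensures that $L_g$ is an étale $F$-algebra of degree $n$ (Proposition \ref{prop:quaternion_algebra}), and its action would split $\Lie X_\xi$ into $n$ one-dimensional pieces indexed by the characters of $L_g$. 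The two special conditions of Definition \ref{def:special} coming from the two $\mcM_C$-structures then impose, via Remark \ref{rmk:special_condition}, two different rank-one conditions on the $E$-eigenspace decompositions within each $L_g$-eigenspace; these are incompatible because separability of the invariant of $g$ forces the two conjugate $E$-actions on $\Lie X_\xi$ to differ. Since $\mcI(g)$ is locally formally of finite type over $\Spf O_{\breve F}$, emptiness of its generic fiber forces $\pi$ to be locally nilpotent on its structure sheaf, so $\mcI(g)$ is a scheme.

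For properness of the quotient, I would pass to the reduced subscheme: $\mcI(g)_{\mr{red}} \hookrightarrow (\mcM_C)_{\mr{red}}$ is a closed immersion, and by Proposition \ref{prop:representability} each irreducible component of the target is projective over $\mbF$. The group $L_g^\times \subseteq H_b$ acts on the set of irreducible components of $\mcI(g)_{\mr{red}}$, and the key observation is that $\Gamma$ acts on this set with only finitely many orbits. This uses that $\Gamma\backslash L_g^\times = O_{L_g}^\times$ is compact and that $L_g^\times$ acts on $(\mcM_C)_{\mr{red}}$ with finitely many orbits on each Bruhat--Tits type stratum (a standard feature of Rapoport--Zink uniformization for the cases at hand). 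Consequently $\Gamma\backslash\mcI(g)_{\mr{red}}$ is a finite disjoint union of closed subschemes of projective $\mbF$-schemes, hence proper. Combined with the fact that $\mcI(g)$ is a scheme locally of finite type over $\mbF[\pi]/(\pi^N)$ for some $N$, this would imply that $\Gamma\backslash\mcI(g)$ itself is proper over $\Spec O_{\breve F}$.

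\textbf{Main obstacle.} The most delicate step is the emptiness of the generic fiber. One must verify carefully that the presence of both $\mcM_C$- and $g\cdot\mcM_C$-structures over a characteristic-zero point forces two genuinely different rank-one conditions on $\Lie X_\xi$ that cannot be simultaneously satisfied; this relies on regular semi-simplicity of $g$ translating into separability of its invariant polynomial, and from there into linear independence of the eigenspace decompositions for the two conjugate $E$-actions. Once this emptiness is in hand, the scheme property is automatic, and properness reduces to the well-known finiteness statements for the $L_g^\times$-action on irreducible components of the RZ space.
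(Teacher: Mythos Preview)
Your proposal has a genuine gap in the passage from ``empty generic fiber'' to ``$\mcI(g)$ is a scheme.'' You write that emptiness of the generic fiber forces $\pi$ to be locally nilpotent on the structure sheaf, but this implication is false for formal schemes locally formally of finite type over $\Spf O_{\breve F}$. For instance, $\Spf \mbF[\![t]\!]$ has empty rigid generic fiber yet is not a scheme. The paper treats this point with care: after reducing to a complete DVR $A$ (where emptiness of the generic fiber does give $\pi A = 0$), one still needs to show that the framing quasi-isogeny $\rho$ algebraizes from $\Spf A$ to $\Spec A$. This is not automatic, because although the $p$-divisible group itself algebraizes, quasi-isogenies need not. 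The paper's key input is Proposition~\ref{prop:char_isogeny_class}: the isogeny class over the generic point of $\Spec A$ is uniquely determined by $\Inv(g;T)$, hence agrees with that of $\mbX$, so the Newton polygon is constant along $\Spec A$. One can then invoke \cite[Lemma 4.3.15]{CS} (after passing to the perfection) to algebraize $\rho$. Your outline omits this mechanism entirely.

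Two further points of comparison. First, your Lie-algebra argument for emptiness of the generic fiber is different from the paper's and, as stated, is not clearly correct: you claim the two special conditions impose incompatible rank-one constraints on the $E$-eigenspaces of $\Lie X_\xi$, but you have not explained why regular semi-simplicity forces an actual contradiction rather than merely a constraint. The paper instead observes that over a characteristic-zero point the étale $\pi$-divisible module of height $4n^2$ would carry a faithful action of $P = B_g\otimes_F D$, forcing $P \cong M_{4n}(L_g)$ and hence $\varepsilon_D(g) = 1$, which contradicts Proposition~\ref{prop:char_isogeny_class}. Second, your properness argument invokes an unspecified ``standard'' finiteness of $L_g^\times$-orbits on Bruhat--Tits strata; the paper instead bounds the set of $R$-stable Dieudonn\'e lattices modulo $\Gamma$ directly, using that lattices stable under a maximal order in each factor $P_i$ are bounded modulo $\pi^{\mbZ}$.
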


\begin{rmk}\label{rmk:F_p_adic}
The only reason for the restriction to $p$-adic $F$ is that our proof relies on \cite[Lemma 4.3.15]{CS} which is only stated for $p$-divisible groups. The statement should also be true when $F = \mbF_q(\!(π)\!)$, however, and we assume this for later definitions.
\end{rmk}

Note that any $Γ$ as in Proposition \ref{prop:proper} acts without fixed points on $\mcM_D$ by \cite[Corollary 2.35]{RZ1}. The quotient $Γ\backslash \mcI(g)$ can be constructed in the following way. First choose a finite index subgroup $Γ'\subset Γ$ that acts properly discontinuously on $\mcM_D$. The quotient $Γ'\backslash \mcI(g)$ can be constructed in the Zariski topology. Then pass to $(Γ/Γ')\backslash (Γ'\backslash \mcI(g))$ which is a quotient of a formal scheme by a finite group that acts without fixed points.

In the following we write $L = L_g$ and $B = B_g$.

\begin{proof}
The proof will even show that $\mcZ(ι(O_E))\cap g\mcZ(ι(O_E))$ is a scheme and that the quotient $Γ\backslash (\mcZ(ι(O_E))\cap g\cdot \mcZ(ι(O_E)))$ is quasi-compact. It is based on the observation that
\begin{equation}\label{eq:int_identity}
\mcZ(ι(O_E) \cap g\cdot \mcZ(ι(O_E)) = \mcZ(ι(O_E) \cup g^{-1} ι(O_E)g) = \mcZ(R),
\end{equation}
where $R$ is defined as the ring $R = O_F[ι(O_E), g^{-1} ι(O_E)g] \tensor_{O_F}O_D$. This is an order in the semi-simple $F$-algebra $P = B\tensor_F D$.

It follows that the generic fiber of $\mcI(g)$ is empty: The algebra $P$ is a CSA of degree $4n$ over $L$. It can only act faithfully on an étale $π$-divisible $O_F$-module of height $4n^2$ if $P \iso M_{4n}(L)$. But this would mean that $B$ splits $D$ which would imply that $ε_D(g) = 1$ by Lemma \ref{lem:sign_central_value}. However, this is excluded by Proposition \ref{prop:char_isogeny_class}.

We next prove that $Γ\backslash \mcI(g)$ is quasi-compact. This is equivalent to proving that the set of closed points $\mcZ(R)(\mbF)$ is bounded modulo $Γ$ in the following sense. The set $\mcZ(R)(\mbF)$ identifies with the set of $R$-stable Dieudonné lattices $\bM$ in the isocrystal $\bN = (N, \bF)$ of $\mbX$ that are special. Let $\bM(\mbX)\subset \bN$ be the Dieudonné lattice defined by $\mbX$. We need to see that there exists an integer $c \geq 0$ such that for every $\bM \in \mcZ(R)(\mbF)$, there is some $x\in Γ$ with $π^c\bM(\mbX) \subseteq x\bM \subseteq π^{-c}\bM(\mbX)$. (The condition of points in $\mcZ(R)(\mbF)$ being special will not play a role for the argument.)

Let $L = \prod_{i\in I}L_i$ and $P = \prod_{i\in I}P_i$ be the decompositions that correspond to the idempotents in $L$. Then $P_i$ is a CSA over $L_i$ of degree $4n$ and the corresponding summand $\bN_i$ of $\bN$ has height $4n[L_i:F]$. In this situation, the set of Dieudonné lattices $\bM_i\subseteq \bN_i$ that are stable under some choice of maximal order $O_{P_i}\subset P_i$ is bounded modulo $π^\mbZ$. (Indeed, $\breve O_{P_i} = O_{\breve F}\tensor_{O_{L_i}} O_{P_i}$ is an order in $M_{4n}(\breve F)$. The set of $\breve O_{P_i}$-stable lattices in $\breve F^{4n}$ is bounded.) Thus the set of $O_P = \prod_{i\in I} O_{P_i}$-stable Dieudonné lattices in $\bN$ is bounded modulo $Γ$. For every $R$-stable Dieudonné lattice $\bM$, the lattice $O_P\cdot \bM$ is $O_P$-stable and the index $[O_P\cdot \bM : \bM]$ is bounded in terms the index $[O_P:R]$. It follows that $Γ\backslash \mcZ(R)(\mbF)$ is bounded as claimed.

It is left to show that $\mcI(g)$ is a scheme. A priori, it is known to be a locally noetherian formal scheme. We thus need to see that for every noetherian, adic, $π$-adically complete $O_{\breve F}$-algebra $A$, every morphism $f:\Spf A\to \mcI(g)$ extends to a map $\Spec A\to \mcI(g)$. Equivalently, we need to see that for every such $f$, the ideal $J(f) = f^{-1}(\mcO_{\mcI(g)}^{\circ\circ})A$ that is generated by the inverse images of all topologically nilpotent elements in $\mcO_{\mcI(g)}$ is nilpotent. We claim that it suffices to consider the case of a DVR: Indeed, assume that there exists a prime ideal $\mfp\subset A$ such that $J(f)\not\subset \mfp$ and let $\mfm$ be a maximal ideal containing $\mfp$. It necessarily holds that $J(f)\subseteq \mfm$ because $J(f)$ is nilpotent. Since $A$ is noetherian, there exists a complete DVR $B$ and a map $g:\Spec B \to \Spec A$ such that the generic point of $\Spec B$ maps to $\mfp$ and the special point to $\mfm$. In particular $J(f\circ g) = g^{-1}(J(f))B$ would be a non-trivial ideal of $B$. This proves the claim and allows us to henceforth assume that $A$ is a DVR. We will even assume that $A$ is complete with algebraically closed residue field. We already know that $\mcI(g)$ has empty generic fiber, so it holds that $πA = 0$. we write $\Spec A = \{s, η\}$ where $s$ is the special and $η$ the generic point.

Let $(X, κ, ρ)$ be the point that defines the morphism $f:\Spf A\to \mcI(g)$. The datum $X$ algebraizes to a strict $O_F$-module over $\Spec A$ with $R$-action. Our task is to show that $ρ$ algebraizes as well.
The key observation for this is that, by Proposition \ref{prop:char_isogeny_class}, the geometric isogeny class of the generic fiber $X_η$ is uniquely determined by $\Inv(g;T)$ and hence equal to that of $\mbX$. In other words, the point-wise slope vector of $X$ on $\Spec A$ is constant. The perfection $A^{\mr{perf}} = \colim_{x\mapsto x^p} A$ of $A$ is again strictly henselian.
\begin{lem}[\protect{\cite[Lemma 4.3.15]{CS}}]\label{lem:CS}
Let $k$ be the residue field of $A^\mr{perf}$. The functor $Y\mapsto k\tensor_{A^\mr{perf}} Y$ from strict $O_F$-modules $Y$ up to isogeny over $A^\mr{perf}$ to strict $O_F$-modules up to isogeny over $k$ is an equivalence.
\end{lem}
\begin{proof}
The cited lemma states this when $F = \mbQ_p$. The general case follows immediately because strict $O_F$-modules are nothing but $p$-divisible groups with strict $O_F$-action.
\end{proof}
By Lemma \ref{lem:CS}, there exists a quasi-isogeny $ρ':A^{\mr{perf}}\tensor_k \mbX \to X$ such that $k\tensor_{A^{\mr{perf}}} ρ' = k\tensor_A ρ$. By the rigidity of quasi-isogenies \cite[(2.1)]{RZ1}, this implies $ρ' = A^{\mr{perf}}\tensor_A ρ$ which shows that $A^{\mr{perf}}\tensor_A ρ$ is algebraic. The map $A\to A^{\mr{perf}}$ is faithfully flat, so this implies that $ρ$ is algebraic.
\end{proof}

\begin{defn}\label{def:intersection_number}
For $g\in G_{b,\mr{rs}}$, we define
$$\Int(g) := χ\left(Γ\backslash \mcI(g),\ \  \mcO_{Γ\backslash \mcM_C} \tensor^{\mbL}_{\mcO_{Γ\backslash \mcM_D}} \mcO_{Γ\backslash g\cdot \mcM_C}\right)\in \mbZ.$$
Here, by the regularity of $\mcM_C$ and $\mcM_D$ from Proposition \ref{prop:representability}, the complex on the right hand side is perfect. It is supported on $Γ\backslash \mcI(g)$ which is a projective $O_{\breve F}$-scheme with $π^N\mcO_{\mcI(g)} = 0$ for $N\gg 0$ by Proposition \ref{prop:proper}. This explains why $\Int(g)$ is well-defined. Note that the passage to the quotient by $Γ$ is completely analogous to taking the quotient by the stabilizer in the definition of the orbital integrals in \S3.1.
\end{defn}

We end this section with some auxiliary results about the intersection locus $\mcI(g)$ that will be useful in later sections.

\begin{lem}\label{lem:nilpotent_reduction}
Let $\bN$ be an isocrystal with $E$-action and let $g = g_+ + g_- \in \End(\bN)^\times$ be an automorphism such that $g_+$ lies again in $\End(\bN)^\times$, where $g_+$ and $g_-$ denote the $E$-linear resp. $E$-conjugate linear components of $g$. Assume that there exists an $O_E$-stable Dieudonné lattice  $\bM\subset \bN$ such that $g\bM$ is $O_E$-stable as well. Assume furthermore that the Verschiebung $\bV$ on $\bN$ is topologically nilpotent and that the $O_E$-action on both $\bM/\bV\bM$ and $g\bM/\bV(g\bM)$ is strict. Then $z_g = g_+^{-1}g_-$ is topologically nilpotent.
\end{lem}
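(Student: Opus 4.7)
The plan is to decompose $\bN$ under the $E$-action, translate the two strictness hypotheses into concrete inclusions of $O_{\breve F}$-lattices, and iterate the resulting key containment using topological nilpotence of $\bV$ to conclude that $z_g$ is topologically nilpotent.

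First I would set up the decomposition. Since $E/F$ is unramified, the fixed embedding $E\subset \breve F$ identifies $\breve F\otimes_F E$ with $\breve F\times \breve F$, so $\bN$ splits canonically as $\bN = \bN_+\oplus \bN_-$ into $E$-eigenspaces. The Frobenius $\sigma$ on $\breve F$ restricts to the non-trivial Galois element on $E$, and therefore swaps the two idempotents; hence $\bF$ and $\bV$ both interchange $\bN_+$ and $\bN_-$. Since $g\in \End(\bN)$ commutes with $\bF$, projecting to the two $E$-eigenspaces shows that both $g_+$ and $g_-$ individually commute with $\bF$, and hence also with $\bV$. The lattice $\bM$ decomposes as $\bM_+\oplus \bM_-$, and using $O_E$-stability of $g\bM$ together with the unramifiedness of $E/F$ (to separate the $\pm$ parts via the idempotents in $O_E\otimes_{O_F} O_E$) one checks
\begin{equation*}
(g\bM)_+ = g_+\bM_+ + g_-\bM_-, \qquad (g\bM)_- = g_-\bM_+ + g_+\bM_-.
\end{equation*}

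Second, I would translate the strict $O_E$-action hypothesis into a lattice identity. Strictness of the $O_E$-action on $\Lie = \bM/\bV\bM$ forces the $-$-eigenspace of $\Lie$ to vanish, which, since $\bV\bM = \bV\bM_+\oplus \bV\bM_-$ with $\bV\bM_+\subset \bN_-$, unfolds to the concrete identity $\bM_- = \bV\bM_+$. Applied to $g\bM$, strictness likewise forces $(g\bM)_- = \bV(g\bM)_+$. Substituting the formulas from the previous step and using that $\bV$ commutes with $g_\pm$, this equality reads
\begin{equation*}
g_-\bM_+ + g_+\bV\bM_+ \;=\; g_+\bV\bM_+ + g_-\bV^2\bM_+,
\end{equation*}
from which, applying $g_+^{-1}$ and writing $z_g = g_+^{-1}g_-$, the key containment emerges:
\begin{equation*}
z_g\bM_+ \;\subseteq\; \bV\bM_+ + \bV^2\, z_g\bM_+ \;=\; \bM_- + \bV^2\, z_g\bM_+.
\end{equation*}

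Third, I would iterate. Because $\bV^2\bM_-\subseteq \bM_-$ and $\bV^2$ commutes with $z_g$, iteration gives $z_g\bM_+\subseteq \bM_- + \bV^{2k}\, z_g\bM_+$ for every $k\geq 1$. Topological nilpotence of $\bV$ implies $\bV^{2k}\, z_g\bM_+\subseteq \pi^N z_g\bM_+$ for $k$ large depending on $N$. Hence the finitely generated $O_{\breve F}$-module $(z_g\bM_+ + \bM_-)/\bM_-$ equals its own $\pi^N$-multiple for every $N$, and Nakayama's lemma forces $z_g\bM_+\subseteq \bM_-$. Combined with $\bM_- = \bV\bM_+$, one then obtains $z_g\bM_- = \bV z_g\bM_+ \subseteq \bV\bM_- = \bV^2\bM_+$, so that $z_g^2\bM\subseteq \bV^2\bM$. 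An easy induction using that $z_g^2$ commutes with $\bV^{2k}$ then yields $z_g^{2k}\bM\subseteq \bV^{2k}\bM$, which shrinks to zero $\pi$-adically. This shows that $z_g$ is topologically nilpotent.

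The main obstacle I anticipate is bookkeeping: one must verify carefully that $g_+$ and $g_-$ each commute with the $\sigma$-linear $\bV$ that swaps $\bN_\pm$, and that the strictness convention (fixed versus conjugate embedding) is consistently applied to both $\bM$ and $g\bM$. Once the lattice identity $\bM_-=\bV\bM_+$ and its analog for $g\bM$ are correctly extracted, the remainder is a formal Nakayama-plus-nilpotence argument.
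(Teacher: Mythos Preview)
Your proof is correct and follows essentially the same route as the paper: extract $\bM_- = \bV\bM_+$ and its analog for $g\bM$ from strictness, derive the key containment $z_g\bM_+\subseteq \bM_- + \bV^2 z_g\bM_+$, iterate using topological nilpotence of $\bV$, and conclude. The only cosmetic difference is that the paper first normalizes to $g = 1+z$ (replacing $g\bM$ by $g_+^{-1}g\bM$) whereas you carry $g_+$ through and divide by it at the key step; the remaining steps (iteration/Nakayama, then $z_g^{2k}\bM\subseteq \bV^{2k}\bM$) match.
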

\begin{proof}
Considering $g_+^{-1}g\bM$ instead, we may assume that $g$ is of the form $g = 1 + z$ with $z = z_g$. Let $\bM = \bM_+ \oplus \bM_-$ and $g\bM = \bM_+' \oplus \bM_-'$ be the bigradings that come from the $O_E$-action.
\emph{Claim: It holds that $z \bM_+\subseteq \bM_-$.} Assume this claim holds. The strictness condition for $\bM$ means that $\bV\bM_+ = \bM_-$. Thus we obtain $z\bM_+\subseteq \bV\bM_+$ and hence $z^{2n} \bM_+ \subseteq \bV^{2n}\bM_+$ for every $n\geq 0$. The Verschiebung is topologically nilpotent by assumption, so it follows that $z$ is topologically nilpotent as claimed. It is only left to prove the claim.

\emph{Proof of the Claim.} First note that
\begin{equation}\label{eq:ident_Dieud}
\bM_+' = \bM_+ + z\bM_-\quand \bM_-' = \bM_- + z\bM_+.
\end{equation}
Moreover, the $O_E$-action on $g\bM$ was assumed to be strict as well, meaning that $\bV\bM_+' = \bM_-'$. Substituting this in \eqref{eq:ident_Dieud}, it follows that
$$\bM_- + z\bM_+ = \bM_- + zV\bM_-.$$
In particular, $z\bM_+\subseteq \bM_- + z\bV^2 \bM_+$ and hence, for all $i\geq 1$,
\begin{equation}\label{eq:ident_Dieud_II}
z\bV^{2i-2}\bM_+ \subseteq \bM_- + z\bV^{2i}\bM_+.
\end{equation}
Since $\bV$ is topologically nilpotent by assumption, there exists an integer $i$ such that $z\bV^{2i}\bM_+ \subset \bM_-$. Descending induction on $i$ with the help of \eqref{eq:ident_Dieud_II} proves that $z\bM_+\subseteq \bM_-$ as claimed.
\end{proof}

By definition, for every regular semi-simple $g\in G_b$, the element $z_g = g_+^{-1}g_-$ lies in $\End^0_D(\mbX, κ)$. So the definition in \eqref{eq:def_cycle} applies and defines a closed formal subscheme $\mcZ(z_g)\subseteq \mcM_D$.

\begin{prop}\label{prop:nilpotent_reduction}
(1) Assume that $[b]\in B(H, μ_H)$ is such that $\bN_b$ has no étale part and assume that $g\in G_{b, \mr{rs}}$ is regular semi-simple. If $\mcI(g)\neq \emptyset$, then $z_g$ is topologically nilpotent.

\noindent (2) Let $[b]\in B(H, μ_H)$ be any and let $g\in G_{b, \mr{rs}}$ be an element such that $z_g$ is topologically nilpotent. Then $\mcI(g) = \mcM_C \cap \mcZ(z_g)$.
\end{prop}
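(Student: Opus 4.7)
My plan is to translate both parts into statements about Dieudonné lattices in $\bN_b$. A point $(X,\kappa,\rho) \in \mcM_D$ determines $\bM := \rho(\bM(X)) \subset \bN_b$, and by Proposition \ref{prop:closed_immersion}, the point lies in $\mcM_C$ precisely when $\bM$ is stable under the copy of $O_E \subset D_b$ coming from the Serre tensor structure on $\mbX$ (together with the accompanying eigenspace conditions). Consequently, an $R$-point of $\mcI(g)$ corresponds to an $O_E$-stable lattice $\bM$ such that $g\bM$ is also $O_E$-stable.

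For Part (2), fix $g$ with $z := z_g$ topologically nilpotent. After replacing $g$ by $g_+^{-1} g = 1 + z$ and choosing $\zeta \in O_E$ with $\bar\zeta = 1 - \zeta$, the identity
\[
g^{-1}\zeta g \;=\; \zeta + \tfrac{z}{1+z}(1-2\zeta)
\]
from \eqref{eq:get_to_linear} shows that, given $\bM$ is $O_E$-stable, $g\bM$ is $O_E$-stable if and only if $\tfrac{z}{1+z}\bM \subseteq \bM$. Topological nilpotence of $z$ gives $O_F[z] = O_F[\tfrac{z}{1+z}]$, and so this is equivalent to $z\bM \subseteq \bM$, i.e., to the point lying in $\mcZ(z_g)$. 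This yields the identity $\mcI(g) = \mcM_C \cap \mcZ(z_g)$; it is the direct analog of Lemma \ref{lem:lattice_combinatorics}~(1) in the Dieudonné setting.

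For Part (1), non-emptiness of $\mcI(g)$ produces an $\mbF$-point and hence a Dieudonné lattice $\bM \subset \bN_b$ with $\bM$ and $g\bM$ both $O_E$-stable. I would then apply Lemma \ref{lem:nilpotent_reduction} to $\bN = \bN_b$ with its $E$-action: the hypothesis that $\bN_b$ has no étale part is precisely topological nilpotence of $\bV$. The remaining input is strictness of the $O_E$-action on both $\bM/\bV\bM$ and $g\bM/\bV(g\bM)$.

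Verifying this strictness is the main, though not deep, step. The essential observation is that, for a special $O_C$-module $Y$, the special condition forces $O_E$ to act on $\Lie(Y)$ purely through the fixed embedding $O_E \hookrightarrow O_{\breve F}$. Passing to the Serre tensor $X = O_D \tensor_{O_C} Y$, the outer $O_E$-action on $X$ only touches the tensor factor $Y$, so it acts on $\Lie(X) \iso \Lie(Y)\oplus \Lie(\Pi Y)$ via the fixed embedding as well. Translated to the Dieudonné lattice this becomes $\bV\bM_+ = \bM_-$, where $\pm$ refers to the eigenspaces for the outer $O_E$-action, which is exactly the strictness hypothesis. The identical argument applied to $(X,\kappa,\rho g) \in \mcM_C$ gives strictness on $g\bM$, and Lemma \ref{lem:nilpotent_reduction} then concludes that $z_g$ is topologically nilpotent.
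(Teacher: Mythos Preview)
Your argument for Part (1) is correct and matches the paper's (brief) proof; the verification that the outer $O_E$-action on $\bM/\bV\bM$ is strict is exactly what is needed to invoke Lemma \ref{lem:nilpotent_reduction}, and the paper leaves this implicit.

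For Part (2), there is a gap in the reverse inclusion $\mcM_C \cap \mcZ(z_g) \subseteq \mcI(g)$. You yourself note that membership in $\mcM_C$ amounts to $O_E$-stability of $\bM$ \emph{together with the accompanying eigenspace conditions} of Proposition \ref{prop:closed_immersion}, but in the argument you only check $O_E$-stability of $g\bM$. What you have actually shown is that, for a point of $\mcM_C$, the condition $z\bM\subseteq\bM$ is equivalent to $g\bM$ being $O_E$-stable, i.e.\ to the point lying in $g\cdot\mcZ(\iota(O_E))$. This yields $\mcI(g)\subseteq\mcM_C\cap\mcZ(z_g)$ but not its converse, since $\mcM_C\subsetneq\mcZ(\iota(O_E))$: lying in $g\cdot\mcM_C$ also requires the special condition on $X_+$ and the $\kappa(\Pi)$-isomorphism condition, now for the conjugated $g^{-1}\iota(O_E)g$-action, and you have not addressed these.

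The paper closes this gap with a fixed-point observation that bypasses those extra conditions entirely. After reducing to $g=1+z_g$, if $(X,\kappa,\rho)\in\mcM_C\cap\mcZ(z_g)$ then $\rho z_g\rho^{-1}\in\End(X)$ is topologically nilpotent, so $\rho g\rho^{-1}=1+\rho z_g\rho^{-1}$ is an \emph{automorphism} of $(X,\kappa)$. Hence the point is literally $g$-fixed in $\mcM_D$, and being in $\mcM_C$ trivially implies being in $g\cdot\mcM_C$. This argument is functorial in the base $S$, so it gives the scheme-theoretic identity; your Dieudonn\'e-lattice framing, as written, only treats $\mbF$-points.
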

\begin{proof}
(1) The assumption that $\bN_b$ has no étale part precisely says that $\bV$ is topologically nilpotent. Then any point $(X, κ, ρ)\in \mcI(g)(\bM)$ defines a Dieudonné lattice $\bM\subset \bN_b$ that satisfies the conditions of Lemma \ref{lem:nilpotent_reduction}.

(2) Let $ζ\in O_E^\times$ be an $O_F$-algebra generator of trace $1$. It always holds that $1-2ζ\in O_E^\times$ because $E/F$ is unramified. Using that $z_g$ is topologically nilpotent, identity \eqref{eq:get_to_linear} then implies the following equality of subrings of $\End^0_D(\mbX)$,
\begin{equation}\label{eq:order}
R = O_F[ι(O_E), g^{-1}ι(O_E)g] = O_E[z_g].
\end{equation}
We obtain from \eqref{eq:int_identity} that $\mcM_C \cap g \cdot \mcM_C \subseteq \mcM_C \cap \mcZ(z_g)$. Assume conversely that $(X, κ, ρ)\in \mcM_C\cap \mcZ(z_g)$. We need to show that $(X, κ, ρ)\in g\cdot \mcM_C$. Equivalently, by the $H_b$-equivariance of the embedding $\mcM_C\to \mcM_D$, we need to show that $(X, κ, ρ)\in (g_+^{-1}g)\mcM_C = (1 + z_g)\mcM_C$. So we may assume that $g = 1+z_g$ from now on. Using that $(X, κ, ρ)\in \mcZ(z_g)$ and also that $z_g$ is topologically nilpotent by assumption, $ρgρ^{-1} = 1+ρz_gρ^{-1}$ defines an automorphism of $X$. Thus $(X, κ, ρ)$ is a $g$-fixed point of $\mcM_D$ that also lies in $\mcM_C$, and hence lies in $g\cdot \mcM_C$.
\end{proof}

\subsection{The Arithmetic Transfer Conjecture}
\label{ss:ATC}

We can now formulate our AT conjecture. Recall that $f_D = 1_{O_D^\times}\in C^\infty_c(G)$ denotes the standard test function on the CSA side, see \S\ref{ss:FL}.
\begin{conj}[ATC]\label{conj:ATC}
There exists a transfer $f''_D\in C^\infty_c(G')$ of $f_D$ in the sense of Definition \ref{def:matching} with the following additional property. For every regular semi-simple element $γ\in G'_{\mr{rs}}$,
\begin{equation}\label{eq:ATC}
\del(γ, f''_D) = \begin{cases} 2\,\Int(g)\log(q) & \text{\begin{varwidth}{\textwidth}if there exists some $[b]\in B(H, μ_H)$\\and some $g\in G_{b, \mr{rs}}$ that matches $γ$\end{varwidth}}\\[8pt]
0 & \text{otherwise.}
\end{cases}
\end{equation}
\end{conj}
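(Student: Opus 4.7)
The plan is to establish the conjecture by constructing $f''_D$ explicitly, most naturally as $f''_D = f'_D + f'_{\mr{corr}}$ where $f'_D$ is the test function from Definition~\ref{def:test_function} (conjectured to provide the FL) and $f'_{\mr{corr}}$ is a correction drawn from the Hecke algebra, likely a linear combination of indicator functions of standard parahoric subgroups. With this split, the ATC decomposes into two requirements: $f''_D$ must be a smooth transfer of $f_D$ at the central value (which follows from Conjecture~\ref{conj:FL} provided one arranges $\Orb(\gamma,f'_{\mr{corr}})=0$ on $G'_{\mr{rs}}$), and the derivative identity $\del(\gamma,f''_D)=2\Int(g)\log(q)\cdot \mathbf{1}_{\text{matches some }(b,g)}$ must hold.

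For the derivative identity, I would compute both sides explicitly and compare. On the analytic side, the strategy is a germ expansion: write $\Orb(\gamma,f'_D,s)$ as a linear combination of a principal germ and a unipotent germ, where the principal germ is computable in closed form and the coefficient of the unipotent germ is pinned down by comparing with hyperbolic orbits, for which the computation descends to a Levi subgroup and ultimately to $GL_2$. Differentiating at $s=0$ yields a formula for $\del(\gamma,f'_D)$ in terms of explicit combinatorial invariants attached to $\gamma$; the same technique applies to any parahoric-type contribution entering $f'_{\mr{corr}}$.

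On the geometric side, I would apply the intersection-number formula of Proposition~\ref{prop:intro_int_formula} to reduce $\Int(g)$ to three computable pieces: the length of the artinian locus $\mcI(g)^{\mr{art}}$, the degree of the conormal bundle $\mcC_{\mcM_C/\mcM_D}$ restricted to the pure one-dimensional locus $\mcI(g)^{\mr{pure}}$, and a self-intersection on $\mcM_C$. Drinfeld's explicit model of $\mcM_C$ makes each irreducible component of its special fiber a $\mbP^1$, which should give a uniform conormal degree of $q^2-1$. The pure part is controlled by a multiplicity function $\Lambda\mapsto m(g,\Lambda)$ on the Bruhat--Tits tree of $PGL_{2,E}$, characterized combinatorially in terms of the action of $z_g$ on $E^2$; this both gives the self-intersection (as a sum over the tree) and enables the final comparison with the analytic side. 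The vanishing statement for non-matching $\gamma$ or matching $\gamma$ without admissible $b$ should then follow from Proposition~\ref{prop:char_isogeny_class} combined with Proposition~\ref{prop:functional_equation}, which forces $\del(\gamma,f'_D)=0$ automatically in the relevant sign class.

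The main obstacle is dealing with the case where the Hasse invariant of $D$ is such that $\mcM_D$ admits no explicit model (e.g.\ invariant $3/4$ when $n=2$). Here the plan is to use Dieudonné, Cartier, and display-theoretic descriptions of $\mcM_D$, bootstrapping from an already-treated case via the Serre tensor construction $O_D\tensor_{O_C}(-)$ that realizes the closed immersion $\mcM_C\hookrightarrow \mcM_D$. The delicate point is that $\mcI(g)^{\mr{art}}$ can develop substantial embedded $0$-dimensional components whose length depends sensitively on $g$ and on the vertex $\Lambda$ in the Bruhat--Tits tree; controlling them stratum-by-stratum is what should account for the need (or absence) of a nontrivial parahoric correction $f'_{\mr{corr}}$. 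Once both sides are in fully explicit form, the comparison should reduce to matching two tree-theoretic sums, with the correction term absorbing precisely the discrepancy.
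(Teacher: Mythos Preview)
Your proposal is a reasonable high-level outline, but there is a mismatch between what you are sketching and what the paper actually proves for this statement. Conjecture~\ref{conj:ATC} is stated as a \emph{conjecture}; the paper does not prove it in general. The only argument appearing immediately after it is the proof that Conjecture~\ref{conj:ATC} is \emph{equivalent} to Conjecture~\ref{conj:ATC_equiv}: one direction is the elementary observation that $\theta(\phi'):=\phi'-(h,1)^*\phi'$ satisfies $\Orb(\gamma,\theta(\phi'))=0$ and $\del(\gamma,\theta(\phi'))=-\Orb(\gamma,\phi')\log(q)$, so a correction term can be absorbed into the test function; the other direction uses Xue's density principle for orbital integrals. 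Your proposal does not engage with this equivalence argument at all.

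What you have written is instead the strategy for the \emph{Explicit Form} (Conjecture~\ref{conj:ATC_explicit}) in the degree-$4$ division algebra case, which the paper carries out in Parts~2 and~3 and which, combined with the equivalence just mentioned, yields Conjecture~\ref{conj:ATC} in that case. For that purpose your outline is accurate and mirrors the paper closely: germ expansion plus Levi reduction on the analytic side, Corollary~\ref{cor:intersection_simplified} plus the multiplicity function $m(g,\Lambda)$ on the Bruhat--Tits tree on the geometric side, Drinfeld's model for $\lambda=1/4$, and deformation-theoretic arguments (Cartier theory, displays) to bootstrap to $\lambda=3/4$. Two small corrections: first, the paper does not arrange $\Orb(\gamma,f'_{\mr{corr}})=0$ directly---it proves the Explicit Form with a correction function having \emph{nonzero} central value (namely $-4q\log(q)f'_\Par$ for $\lambda=1/4$), and only then passes to the ATC form via the $\theta$-construction above. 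Second, your claim that the vanishing for non-matching $\gamma$ follows from Propositions~\ref{prop:char_isogeny_class} and~\ref{prop:functional_equation} alone is incomplete: those results control the sign, but the actual vanishing of $\del(\gamma,f'_D)+\Orb(\gamma,f'_{\mr{corr}})$ in the non-matching cases with $\varepsilon_D(\gamma)=-1$ is established by the explicit formulas of Propositions~\ref{prop:orb_int_para_teaser} and~\ref{prop:derivative_teaser}, not by functional-equation considerations.
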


\begin{conj}[ATC -- Equivalent Form]\label{conj:ATC_equiv}
For every transfer $f''_D\in C^\infty_c(G')$ of $f_D$ in the sense of Definition \ref{def:matching}, there exists a correction function $f''_{\mr{corr}}\in C^\infty_c(G')$ such that for every regular semi-simple element $γ\in G'_{\mr{rs}}$,
\begin{equation}\label{eq:ATC_equiv}
\del(γ, f''_D) + \Orb(γ, f''_{\mr{corr}}) = \begin{cases} 2\,\Int(g)\log(q) & \text{\begin{varwidth}{\textwidth}if there exists some $[b]\in B(H, μ_H)$\\and some $g\in G_{b, \mr{rs}}$ that matches $γ$\end{varwidth}}\\[8pt]
0 & \text{otherwise.}
\end{cases}
\end{equation}
\end{conj}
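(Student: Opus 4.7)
The plan is to show Conjecture \ref{conj:ATC_equiv} is equivalent to Conjecture \ref{conj:ATC}, by reducing the equivalence to a single identity between the derivatives of functions with vanishing orbital integrals and the orbital integrals of arbitrary test functions, and then establishing that identity through a direct computation in one direction and a more delicate inversion in the other.

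First I would reduce the equivalence to a linear-algebraic statement. Set $\mcI := \{\phi \in C^\infty_c(G') \mid \Orb(γ, \phi) = 0 \text{ for all } γ \in G'_{\mr{rs}}\}$. By the smooth transfer theorem of C. Zhang and H. Xue \cite{C_Zhang, Xue1}, at least one transfer $f''_D$ of $f_D$ exists, and the full set of transfers is the coset $f''_D + \mcI$. Writing both conjectures as linear conditions on the fixed function $\mr{RHS} - \del(\cdot, f''_D)$ on $[G'_{\mr{rs}}]$, Conjecture \ref{conj:ATC} becomes $\mr{RHS} - \del(\cdot, f''_D) \in \del(\mcI)$, while Conjecture \ref{conj:ATC_equiv} becomes $\mr{RHS} - \del(\cdot, f''_D) \in \Orb(C^\infty_c(G'))$ together with $\del(\mcI) \subseteq \Orb(C^\infty_c(G'))$ (the second condition ensures independence from the chosen transfer). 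Hence the equivalence of the two conjectures reduces to the identity
\[
\del(\mcI) \;=\; \Orb\bigl(C^\infty_c(G')\bigr)
\]
as subspaces of functions on $[G'_{\mr{rs}}]$.

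The main input is the identity $\Orb(γ, f\cdot\psi) = \del(γ, f)$ for all $γ \in G'_{\mr{rs}}$ and $f \in C^\infty_c(G')$, where $\psi(γ) := \log|\det(b^{-1}c)|$ with $b, c$ the off-diagonal $(n \times n)$ blocks of $γ$. This follows from Definition \ref{def:transfer_factor}: since $\Omega(γ, s) = \eta(\det(cd^{-1}))|\det(b^{-1}c)|^s$, the $s$-derivative at $s=0$ is $\psi(γ)\Omega(γ, 0)$, and the transformation rule $\psi(h_1^{-1}γh_2) = \psi(γ) + \log|h_1h_2|$ is precisely what matches the product rule for differentiating $\Orb(γ, f, s)$ at $s = 0$. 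The inclusion $\del(\mcI) \subseteq \Orb(C^\infty_c(G'))$ is then obtained by taking $f := \phi\cdot\psi$ for $\phi \in \mcI$, modulo the technical point that $\psi$ is unbounded on the complement of the open locus $U \subseteq G'$ where all four blocks are invertible; this is handled by multiplying $\phi$ by a cutoff supported in a large compact subset of $U$, which leaves $\Orb$ and $\del$ unchanged on regular semi-simple orbits (all of which lie in $U$) and yields a legitimate element of $C^\infty_c(G')$.

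The reverse inclusion $\Orb(C^\infty_c(G')) \subseteq \del(\mcI)$ is the crux of the argument. Given $f \in C^\infty_c(G')$, one seeks $\phi \in \mcI$ with $\phi\psi \equiv f \pmod{\mcI}$, whence $\del(\cdot, \phi) = \Orb(\cdot, \phi\psi) = \Orb(\cdot, f)$. Decomposing $f|_U = \sum_{k \in \mbZ} f_k$ according to the locally constant integer-valued function $v(\det(b^{-1}c)) = -\psi/\log q$, the natural ansatz is $\phi := -(\log q)^{-1}\sum_{k \neq 0} k^{-1} f_k + \phi_0$, where $\phi_0 \in C^\infty_c(U)$ is a correction chosen so that $\phi \in \mcI$. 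The hard part, which I expect to be the main obstacle, is the existence of such a $\phi_0$: equivalently, the surjectivity of the orbital integral map $\Orb: C^\infty_c(U) \to \Orb(C^\infty_c(G'))$. I would attempt this via a density argument using Shalika germ expansions, together with an iterative application of the Zhang--Xue transfer theorem to trade contributions between the pieces $f_k$ until the remaining obstruction is supported on the single stratum $v(\det(b^{-1}c)) = 0$, where it can be canceled directly by a careful choice of $\phi_0$.
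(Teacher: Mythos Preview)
Your reduction to the identity $\del(\mcI) = \Orb(C^\infty_c(G'))$ is correct and matches the paper's framework. However, you have the relative difficulty of the two inclusions exactly reversed, and both of your proposed arguments have problems.

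The inclusion $\Orb(C^\infty_c(G')) \subseteq \del(\mcI)$, which you call ``the crux'' and attack with Shalika germs and iterated transfer, is actually a one-liner. Fix $h\in H'$ with $|h|^{-s}=q^s$ and set $\theta(\phi') := \phi' - (h,1)^*\phi'$. By \eqref{eq:orb_int_trafo} one has $\Orb(\gamma,\theta(\phi'),s) = (1-q^s)\Orb(\gamma,\phi',s)$, hence $\theta(\phi')\in\mcI$ and $\del(\gamma,\theta(\phi')) = -\log(q)\,\Orb(\gamma,\phi')$. Thus $\Orb(\cdot,\phi') = \del(\cdot,-\theta(\phi')/\log q)$ for every $\phi'$. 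Your elaborate ansatz (the decomposition $f=\sum_k f_k$, the correction $\phi_0$, and the iterative density argument) is entirely unnecessary; moreover, the decomposition itself is ill-posed unless $f$ is already supported in $U$, since for general $f$ the level sets $U_k$ accumulate at $\partial U\cap\mathrm{supp}(f)$ and infinitely many $f_k$ are nonzero.

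The inclusion $\del(\mcI)\subseteq\Orb(C^\infty_c(G'))$, which you treat as routine, has a genuine gap in your cutoff argument. Your identity $\del(\gamma,\phi)=\Orb(\gamma,\phi\psi)$ is correct as an equality of convergent integrals, but to conclude you need $\phi\psi\in C^\infty_c(G')$, or at least a function with the same orbital integrals. Your proposed fix---replace $\phi$ by $\phi\chi$ for a cutoff $\chi$ supported in a compact subset of $U$---does \emph{not} leave $\Orb$ and $\del$ unchanged: since $G'_{\mr{rs}}$ is dense in $G'$, the set $\mathrm{supp}(\phi)\cap G'_{\mr{rs}}$ has closure $\mathrm{supp}(\phi)$, which need not lie in $U$, so no compactly-$U$-supported $\chi$ can equal $1$ on it. The paper closes this gap by invoking Xue's density principle \cite[Theorem 8.3]{Xue2}: every $\phi\in\mcI$ lies in the span of functions $\phi'-\eta(h_2)(h_1,h_2)^*\phi'$, for which the same transformation rule gives $\del(\gamma,\cdot)=\log|h_1h_2|\cdot\Orb(\gamma,\phi')\in\Orb(C^\infty_c(G'))$ directly.
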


\begin{proof}[Proof of the equivalence of Conjectures \ref{conj:ATC} and \ref{conj:ATC_equiv}.]
One direction is completely elementary: Let $ϕ'\in C^\infty_c(G')$ be any test function. Fix some $h\in H'$ that satisfies $|h|^{-s} = q^s$. The function $θ(ϕ') := ϕ' - (h, 1)^*(ϕ')$ then satisfies $\Orb(γ, θ(ϕ'), s) = (1 - q^s)\Orb(γ, ϕ', s)$ for all $γ\in G'_{\mr{rs}}$, see \eqref{eq:orb_int_trafo}, and hence
$$\Orb(γ, θ(ϕ')) = 0,\quad \del(γ, θ(ϕ')) = -\Orb(γ, ϕ') \log(q).$$
So if $f''_D$ and $f''_{\mr{corr}}$ are as in Conjecture \ref{conj:ATC_equiv}, then $f''_D - θ(f''_{\mr{corr}})/\log(q)$ has all the properties required in Conjecture \ref{conj:ATC}.

The converse direction relies on the density principle for orbital integrals on $G'$ which is due to H. Xue \cite[Theorem 8.3]{Xue2}. It states that any test function $f'\in C^\infty_c(G')$ such that $\Orb(γ, f') = 0$ for all $γ\in G'_{\mr{rs}}$ lies in the space
$$V = \{ϕ' - η(h_2)(h_1, h_2)^*(ϕ') \mid ϕ'\in C^\infty_c(G'),\ h_1,h_2\in H'\}.$$
We apply this as follows: Assume that $f''_D$ has all the properties that are required in Conjecture \ref{conj:ATC} and assume that $f'\in C^\infty_c(G')$ is any transfer of $f_D$. Then $f''_D - f'$ lies in $V$. Since
$$\del (γ, ϕ' - η(h_2)(h_1, h_2)^*(ϕ')) = \Orb(γ, ϕ') \log |h_1h_2|,$$
we deduce that there exists a correction function $f''_{\mr{corr}}$ with $\Orb(γ, f''_{\mr{corr}}) = \del(γ, f''_D - f')$ for all $γ\in G'_{\mr{rs}}$. Then $(f', f''_{\mr{corr}})$ has all the properties that are required in Conjecture \ref{conj:ATC_equiv}.
\end{proof}

Taking into account our FL (Conjecture \ref{conj:FL}), we have the following explicit form of the AT:

\begin{conj}[ATC -- Explicit Form]\label{conj:ATC_explicit}
Let $f'_D$ be the test function from Definition \ref{def:test_function}. There exists a correction function $f'_{\mr{corr}}\in C^\infty_c(G')$ such that for every regular semi-simple element $γ\in G'_{\mr{rs}}$,
\begin{equation}\label{eq:ATC_explicit}
\del(γ, f'_D) + \Orb(γ, f'_{\mr{corr}}) = \begin{cases} 2\,\Int(g)\log(q) & \text{\begin{varwidth}{\textwidth}if there exists some $[b]\in B(H, μ_H)$\\and some $g\in G_{b, \mr{rs}}$ that matches $γ$\end{varwidth}}\\[8pt]
0 & \text{otherwise.}
\end{cases}
\end{equation}
\end{conj}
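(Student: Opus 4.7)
The plan is to compute both sides of \eqref{eq:ATC_explicit} independently and then match them via the invariant $\Inv(γ) = \Inv(g)$. Following the layout sketched in \S\ref{ss:aspects}, this splits naturally into an analytic half (evaluation of $\del(γ, f'_D)$ and $\Orb(γ, f'_{\mr{corr}})$) and a geometric half (evaluation of $\Int(g)$). The general conjecture is out of reach by this method, but explicit evaluation is feasible in the cases covered by Theorem \ref{thm:main_intro}, namely $n = 2$ with $D$ a CDA of Hasse invariant $1/4$ or $3/4$, where Drinfeld's explicit description of $\mcM_C$, and of $\mcM_D$ when $λ = 1/4$, is available.

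For the analytic half, after reducing to $f'_{\Iw}$ and $f'_{\Par}$ via the specific form of $f'_D$ in Definition \ref{def:test_function}, I would compute $\Orb(γ, f'_{\Par})$, $\Orb(γ, f'_{\Iw})$ and $\del(γ, f'_{\Iw})$ in three stages. First, handle hyperbolic (split) orbits, where the integral descends to a Levi of $G'$ and ultimately reduces to a $GL_2$ calculation. Second, establish a germ expansion principle about the unipotent orbit that expresses each orbital integral as a linear combination of an explicit principal germ and a unipotent germ. Third, combine the hyperbolic values with the linear relations among germs to pin down the remaining cases. As a byproduct this yields the FL (Conjecture \ref{conj:FL}) for $f'_{\Iw}$, and hence for $f'_D$ by the normalization \eqref{eq:norm_orb_int}. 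It also confirms the vanishing part of \eqref{eq:ATC_explicit} outside the matching locus, combining the functional equation of Proposition \ref{prop:functional_equation} with the negative sign $ε_D(δ) = -1$ from Proposition \ref{prop:char_isogeny_class}.

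For the geometric half, the central device is Proposition \ref{prop:intro_int_formula} applied to $Y_1 = \mcM_C$, $Y_2 = g \cdot \mcM_C$ inside $X = \mcM_D$. I would analyze its three terms separately: the conormal degree is controlled by Proposition \ref{prop:intro_conormal}, giving the uniform value $q^2 - 1$ on every $\mbP^1$-component of the special fiber of $\mcM_C$; the purely one-dimensional locus $\mcI(g)^{\mr{pure}}$ is written as $\sum_Λ m(g, Λ)[P_Λ]$ over the vertices of the Bruhat--Tits tree of $PGL_{2, E}$, where the multiplicity function is classified explicitly as in Theorem \ref{thm:classification_multiplicity_function}; and the self-intersection $\langle \mcI(g)^{\mr{pure}}, \mcI(g)^{\mr{pure}}\rangle_{g\cdot \mcM_C}$ reduces to a combinatorial count together with the standard intersection theory of divisors on the resulting chain of $\mbP^1$'s. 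Together with the description of the artinian embedded locus $\mcI(g)^{\mr{art}}$, this produces $\Int(g)$ in closed form.

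The main obstacle is the case $λ = 3/4$, where no explicit description of $\mcM_D$ is available. Here the embedded artinian components have to be determined without an ambient model, using a mix of Dieudonné theory, Cartier theory and $O_F$-display theory, together with the fully explicit case $λ = 1/4$ as input. Where $\mcI(g)^{\mr{art}}$ contributes at most a single length-$1$ point for $λ = 1/4$, every Bruhat--Tits stratum $P_Λ$ appearing in $\mcI(g)^{\mr{pure}}$ carries embedded components of total length $q$ for $λ = 3/4$. The decisive point is to show that summing these lengths modulo $Γ$ reproduces exactly $4q\,\Orb(γ, f'_{\Par})$, which forces the asymmetric correction in \eqref{eq:corr_function_intro}. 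This matching of an artinian length with a parahoric orbital integral is the crux of the comparison and, in our view, the most mysterious structural feature of the result.
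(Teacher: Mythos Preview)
Your proposal is correct and follows essentially the same approach as the paper: explicit computation of both sides for $n=2$ and $λ\in\{1/4,3/4\}$ via the analytic germ-expansion method and the geometric surface-intersection formula, with the $λ=3/4$ case handled through Dieudonné/Cartier/display deformation theory and the correction term arising from the artinian embedded components matching $4q\,\Orb(γ,f'_\Par)$. All the structural ingredients you list (hyperbolic reduction, principal/unipotent germs, conormal degree $q^2-1$, multiplicity functions on the tree, the asymmetry between the two Hasse invariants) coincide with what the paper carries out in Parts~2 and~3.
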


The status of Conjecture \ref{conj:ATC_explicit} is as follows:
\begin{enumerate}[wide, labelindent=0pt, labelwidth=!, label=(\arabic*)]
\item Consider the case that $D \iso M_{2n}(F)$. Then it is conjectured that one may take $f'_{\mr{corr}} = 0$ (AFL conjecture). The AFL conjecture first appeared in \cite{Li}\footnote{The version in \cite{LM} includes a correction that is related to the counting of connected components of $Γ\backslash \mcI(g)$.} and has been verified for $n = 1$ and $n = 2$ in \cite{Li} and \cite{Li_future}.

For general $n$, at least the vanishing part of \eqref{eq:ATC_explicit} is known by \cite[Corollary 2.14]{LM}. Furthermore, \cite[Theorem 1.2]{LM} states that it is enough to consider \eqref{eq:ATC_explicit} for all basic isogeny classes.

\item Consider next the case that $D\iso M_n(D_{1/2})$. Then \cite[Theorem B]{HM} reduces Conjecture \eqref{conj:ATC_explicit} to the linear AFL conjecture for $M_{2n}(F)$. In particular, the case $D\iso M_2(D_{1/2})$ is known by \cite{Li_future}.

\item The main result of the present paper is a verification of Conjecture \ref{conj:ATC_explicit} for $D \iso D_{1/4}$ and $D\iso D_{3/4}$. In particular, Conjecture \ref{conj:ATC_explicit} is known in all cases with $n\leq 2$.
\end{enumerate}

\part{Orbital integrals for $GL_4$}

\section{Main Results}
\label{s:main_analytic}
We now specialize to the case $n = 2$, i.e. $G' = GL_4(F)$. Consider the following two subgroups of $GL_4(O_F)$,
\begin{equation}
\Par := \begin{pmatrix}
GL_2(O_F) & π\,M_2(O_F)\\
M_2(O_F) & GL_2(O_F)
\end{pmatrix},\quad\quad \Iw := \begin{pmatrix}
O_F^\times & (π) & (π) & (π)\\
O_F & O_F^\times & O_F & (π)\\
O_F & (π) & O_F^\times & (π)\\
O_F & O_F & O_F & O_F^\times
\end{pmatrix}.
\end{equation}
The first is the stabilizer of the lattice chain
$$O_F^{\oplus 2}\oplus O_F^{\oplus 2}\ \supset\ (π)^{\oplus 2}\oplus O_F^{\oplus 2},$$
the second is the stabilizer of
$$\begin{aligned}
O_F^{\oplus 2}\oplus O_F^{\oplus 2} & \ \supset\ (π) \oplus O_F \oplus O_F \oplus O_F\\
& \ \supset\ (π) \oplus O_F\oplus (π)\oplus O_F\\
& \ \supset\ (π) \oplus (π)\oplus (π)\oplus O_F.
\end{aligned}$$
These are standard lattice chains in the sense of Definition \ref{def:test_function}. Set $f'_\Par = 1_\Par$ and $f'_\Iw = (q+1)^4\,1_\Iw.$ These define the functions $f_D'^\circ$ from Definition \ref{def:test_function}, i.e.
\begin{equation}
f_D'^\circ = \begin{cases}
1_{GL_4(O_F)} & \text{if $D \iso M_4(F)$}\\
f'_\Par & \text{if $D\iso M_2(D_{1/2})$}\\
f'_\Iw & \text{if $D$ division.}
\end{cases}
\end{equation}
In this section, $D$ will always be a division algebra of degree $4$ and $f'_D$ the corresponding test function. The relation of $f'_D$ and $f_D'^\circ$ from \eqref{eq:norm_orb_int} specializes to
\begin{equation}\label{eq:norm_orb_int_Iw}
\Orb(γ, f'_D, s) = q^{-s}\Orb(γ, f'_\Iw, s).
\end{equation}
The aim of this chapter is to compute the central values and the central derivatives $\Orb(γ, f'_\Par)$, $\Orb(γ, f'_\Iw)$ and $\del(γ, f'_\Iw)$. Our results on $\Orb(γ, f'_\Iw)$ will, in particular, prove the FL conjecture for $D$. The results about $\Orb(γ, f'_\Par)$ and $\del(γ, f'_\Iw)$ in turn will be used to verify the AT conjecture later.

We now define the so-called \emph{numerical invariant} of an element $γ\in G'_{\mr{rs}}$ or $g\in G_{\mr{rs}}$. It simplifies the invariant $\Inv(γ;T)$ resp. $\Inv(g;T)$ in the sense that it only records a certain valuation and a certain conductor. Its significance lies in the fact that all orbital integrals and all intersection numbers in this article only depend on the numerical invariant.

Recall the definition of the conductor: Assume $L/F$ is an étale quadratic extension and $O\subset L$ an $O_F$-order. The conductor $\mr{cond}(O)$ is the unique integer $c\geq 0$ such that $O = O_F + π^cO_L$.

\begin{defn}\label{def:orbit_invariants}
Let $δ = T^2 + δ_1 T + δ_0\in F[T]$ be a regular semi-simple invariant of degree $2$. Recall that this means that $δ$ is separable with $δ(0)δ(1)\neq 0$. The numerical invariant of $δ$ is the triple $(L, r, d)$ where
\begin{equation}\label{eq:num_inv_I}
L := F[T]/(δ(T)),\quad r = v(δ_0),\quad d = \mr{cond}(O_F[π^k\cdot t]) - r/2 - k.
\end{equation}
Here, $t := T$ mod $(δ(T))$ is the image of $T$ in $L$. The étale quadratic $F$-algebra $L$ is only considered up to isomorphism. In fact, everything will only depend on whether $L/F$ is inert, ramified or split. Moreover, the integer $k$ in \eqref{eq:num_inv_I} is chosen sufficiently large so that $π^k t\in O_L$; the definition of $d$ is independent of this choice.

The numerical invariant of a regular semi-simple element $γ\in G'_{\mr{rs}}$ or $g\in G_{\mr{rs}}$ is the numerical invariant of $\Inv(γ; T)$ resp. $\Inv(g;T)$. For example, the numerical invariant of an element $γ\in G'_{\mr{rs}}$ may also be written as
\begin{equation}\label{eq:invariants_gamma_concrete}
\left(L_γ,\ \ v(\det(z_γ)),\ \ \mr{cond}(O_F[π^kz_γ^2]) - v(\det(z_γ))/2 - k\right),\quad k\gg 0.
\end{equation}
\end{defn}
Note that Lemma \ref{lem:sign_alternative} expresses the sign of the functional equations of $f'_\Par$ and $f'_\Iw$ directly in terms of $r$:
\begin{equation}\label{eq:sign_concrete}
ε_{M_2(D_{1/2})}(γ) = (-1)^r,\quad ε_D(γ) = (-1)^{r+1}.
\end{equation}
The following three are our main results in this chapter and will all be proved in \S\ref{s:orb_ints}.
\begin{prop}\label{prop:orb_int_para_teaser}
Let $γ\in G'_{\mr{rs}}$ be regular semi-simple with numerical invariant $(L, r, d)$. The parahoric orbital integral $\Orb(γ, f'_\Par)$ vanishes if $r$ is odd, or if $r \leq 0$, or if $r/2 + d \leq 0$. In all other cases, it is given by
\begin{equation}\label{eq:orb_int_para_teaser}
\begin{cases}
\phantom{2(}1 + q^2 +  \ldots + q^{r/2 - 2} & \text{if $L$ ramified and $r\in 4\mbZ$}\\
\phantom{2}(1 + q^2 +  \ldots + q^{r/2 - 3}) + \phantom{2}(q^{r/2-1} + q^{r/2} + \ldots + q^{r/2 + d - 1}) & \text{if $L$ ramified and $r\in 2 + 4\mbZ$}\\
2(1 + q^2 + \ldots + q^{r/2-2}) & \text{if $L$ inert and $r\in 4\mbZ$}\\
2(1 + q^2 + \ldots + q^{r/2-3}) + 2(q^{r/2-1} + q^{r/2} + \ldots + q^{r/2 + d - 2}) + q^{r/2 + d - 1} & \text{if $L$ inert and $r\in 2 + 4\mbZ$}\\
\phantom{2(}0 & \text{if $L$ split and $r\in 4\mbZ$}\\
\phantom{2(}q^{r/2 + d - 1} & \text{if $L$ split and $r\in 2 + 4\mbZ$.}
\end{cases}
\end{equation}
\end{prop}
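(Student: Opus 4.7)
The plan is to apply the combinatorial formula \eqref{eq:orb_int_combinatorial} for $\Orb(\gamma, f'_\Par, s)$ at $s = 0$. After an $H'$-translation, which preserves both the orbital integral and the numerical invariants (see \eqref{eq:conjugacy_orbit}), we may assume $\gamma = 1 + z$ with $z = z_\gamma$ the $K$-conjugate-linear component. Writing $F^4 = V_+ \oplus V_-$ for the $K$-eigenspace decomposition, the lattice chains $\Lambda_\bullet \in \mcL$ are in bijection with pairs $(\Lambda_+, \Lambda_-)$ of $O_F$-lattices in $V_\pm$ via $\Lambda_0 = \Lambda_+ \oplus \Lambda_-$ and $\Lambda_1 = \pi\Lambda_+ \oplus \Lambda_-$. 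Since $\ell = 2$ is even, Lemma~\ref{lem:lattice_combinatorics}(3) identifies $\mcL(\gamma)$ with those pairs satisfying
\[
z\Lambda_+ \subseteq \Lambda_-, \qquad z\Lambda_- \subseteq \pi\Lambda_+.
\]
These force $(z^2/\pi)\Lambda_\pm \subseteq \Lambda_\pm$, so $z^2/\pi$ must be integral in $L_\gamma = F[z^2]$, delivering immediately the vanishing claim when $r \leq 0$.

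For the remaining cases, we fix an $F$-linear identification $V_- \iso V_+$ intertwining the $z^2$-action, turning $V_+$ into a free $L_\gamma$-module of rank one. An admissible pair becomes a pair of $O_F$-lattices in $L_\gamma$ related by the two inclusions above, and up to $L_\gamma^\times$-scaling its orbit is classified by the suborder $O_F[\pi^k z^2] \subseteq O_{L_\gamma}$ it generates, together with a discrete refinement recording how $\Lambda_+$ sits inside $\pi^{-1} z \Lambda_-$. The allowed range of $k$ is $0 \leq k \leq r/2 + d - 1$, extracted from the definition of $d$ in \eqref{eq:num_inv_I}, and the stabilizer of such an orbit is $(O_F + \pi^k O_{L_\gamma})^\times$, whose index in $O_{L_\gamma}^\times$ produces the $q$-powers appearing in \eqref{eq:orb_int_para_teaser}.

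Finally, the sign $\Omega(\gamma, \Lambda_0, 0)$ in each term is computed via Lemma~\ref{lem:translation_omega}: it depends only on parities of certain lattice indices that, in our parametrization, reduce to the parity of $r/2 - k$ together with the character $\eta$ applied to the discriminant-type invariant of $L$. Splitting the sum according to the parity of $r/2 - k$ produces two geometric progressions whose relative signs cause total cancellation in the split/$r \in 4\mbZ$ case, partial cancellation leaving only the tail term $q^{r/2+d-1}$ in the split/$r \in 2+4\mbZ$ case, and pure addition in the ramified case; in the inert case the factor of $2$ reflects a nontrivial $L_\gamma^\times$-action arising from the unramified quadratic structure, while the isolated summand $q^{r/2+d-1}$ for $r \in 2 + 4\mbZ$ and inert $L$ comes from the orbit attached to the maximal order $O_{L_\gamma}$ itself, whose stabilizer is the full $O_{L_\gamma}^\times$.

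The main obstacle is the combinatorial bookkeeping: pinning down the correct parametrization of $L_\gamma^\times$-orbits of admissible pairs, and simultaneously tracking the stabilizer index, the range of $k$, and the sign $\Omega(\gamma, \Lambda_0, 0)$ across all six cases. Once this framework is in place each case reduces to an explicit geometric series; the parity obstruction for $r$ odd emerges from a sign-flip that makes neighbouring terms cancel in pairs over the full admissible range of $k$, accounting for the vanishing statement there.
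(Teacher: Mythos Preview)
Your setup in the first paragraph is correct and coincides with the paper's reduction in \S\ref{ss:parahoric}: after the identification, elements of $\mcP(\gamma)$ become pairs $(\Lambda_0,\Lambda_1)$ of $O_F$-lattices in $L$ with $\Lambda_0 \supset \pi\Lambda_0 \supseteq \Lambda_1 \supseteq w\Lambda_0$ for $w=z^2$, and your integrality argument for $z^2/\pi$ correctly yields the vanishing whenever $r\leq 0$ or $r/2+d\leq 0$. However, from that point on your proposal diverges from the paper and leaves the essential work undone. The paper does \emph{not} enumerate all $L_\gamma^\times$-orbits directly. Instead it splits the sum into a principal germ $P_\Par$ (the terms with $\Lambda_0=O_L$) and a unipotent germ $U_\Par$ (the rest), proves in Proposition~\ref{prop:germ_indep_par} that $U_\Par$ depends only on $(r,d)$ and \emph{not} on $L$, and then determines $U_\Par$ by comparison with the split case, which was already computed in closed form via Levi reduction (Proposition~\ref{prop:hyperbolic}). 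The final formulas arise from $\Orb=P_\Par(r,d,0)+i(L)\,U_\Par(r,d,0)$ with $i(L)\in\{q-1,q,q+1\}$; in particular the factor of $2$ in the inert case is \emph{not} a stabilizer phenomenon but the effect of $i(L)=q+1$ against the explicit $P_\Par$, and your ``nontrivial $L_\gamma^\times$-action'' explanation does not match this mechanism.

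Your direct-enumeration route could in principle be made to work, but the sketch does not carry it out: the phrase ``classified by the suborder $O_F[\pi^k z^2]$ together with a discrete refinement'' is too vague to parameterize the pairs $(\Lambda_0,\Lambda_1)$, the asserted range $0\leq k\leq r/2+d-1$ is not justified (and it is unclear what $k$ indexes---the conductor of $\Lambda_0$, of $\Lambda_1$, or the relative index $[\Lambda_0:\Lambda_1]$), and the sign bookkeeping via Lemma~\ref{lem:translation_omega} is asserted rather than executed. The vanishing for odd $r$ is most cleanly obtained from the functional equation (Proposition~\ref{prop:functional_equation}), whose sign is $(-1)^r$; your ``neighbouring-term cancellation'' is the same content, coming from the symmetry $Z_\gamma$ of Lemma~\ref{lem:additional_symmetry}, but you would still need to exhibit the pairing explicitly. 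In short, the ``combinatorial bookkeeping'' you flag as the main obstacle is the entire proof, and the paper's germ-expansion argument is designed precisely to sidestep it.
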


\begin{thm}\label{thm:Iw_central_teaser}
The fundamental lemma (Conjecture \ref{conj:FL}) holds. In other words, for every regular semi-simple $γ\in G'_{\mr{rs}}$,
\begin{equation}\label{eq:FL_intermediate_teaser}
\Orb(γ, f'_\Iw) = \begin{cases} \Orb(g, f_D) & \text{if there exists a matching $g\in G$}\\
0 & \text{otherwise.}
\end{cases}
\end{equation}
\end{thm}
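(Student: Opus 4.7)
The plan is to combine the functional equation with an explicit evaluation of $\Orb(γ, f'_\Iw)$ via the lattice-chain formula \eqref{eq:orb_int_combinatorial} and the hyperbolic-reduction/germ-expansion strategy outlined in \S\ref{ss:aspects}. First, Proposition \ref{prop:functional_equation} gives $\Orb(γ, f'_\Iw, -s) = ε_D(γ)\,\Orb(γ, f'_\Iw, s)$. Since $n = 2$ and $D$ has invariant $1/4$ or $3/4$, one has $ε'_D = -1$, so Lemma \ref{lem:sign_alternative} specializes to $ε_D(γ) = (-1)^{r+1}$ with $r = v(δ_0)$. When $r$ is even, both sides of \eqref{eq:FL_intermediate_teaser} vanish: the left by the functional equation, the right because matching would force $ε_D(γ) = +1$ by Lemma \ref{lem:sign_central_value}. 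Thus one is reduced to $r$ odd.

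For $r$ odd, split by the type of $L_γ$. The case $L_γ$ unramified quadratic is vacuous because $r = 2v_{L_γ}(z_γ^2)$ is forced to be even there. In the two remaining cases, Lemma \ref{lem:lattice_combinatorics}(3) with $\ell = 4$ shows $\mcL(γ) = \emptyset$ unless $z_γ$ is topologically nilpotent, i.e., unless $r > 0$; this immediately gives $\Orb(γ, f'_\Iw) = 0$ whenever $r < 0$, matching the prediction of Proposition \ref{prop:orb_int_division} (in the ramified elliptic case, $v_D(1 + z_g) = r$ is then odd, so the target value is $0$; in the split case there is no matching). Thus one is reduced to $r > 0$ odd. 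If $L_γ \cong F \times F$ (Row 5 of Table \ref{table:matching}) no matching $g \in G$ exists, so one must prove $\Orb(γ, f'_\Iw) = 0$; if $L_γ$ is ramified quadratic (Row 2), matching exists and by Proposition \ref{prop:orb_int_division} one must prove $\Orb(γ, f'_\Iw) = f(L_γ/F) = 1$.

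For the hyperbolic case $L_γ = F \times F$ I would reduce to a computation on the Levi $GL_2(F) \times GL_2(F) \subset GL_4(F)$ and ultimately to the $GL_2$ Guo--Jacquet FL, yielding the required vanishing for $r$ odd. For the ramified elliptic case I would invoke the germ-expansion principle of \S\ref{ss:aspects}(1): $\Orb(γ, f'_\Iw)$ decomposes as a principal germ (computed explicitly from the numerical invariant $(L_γ, r, d)$) plus a unipotent germ, the unipotent germ being pinned down by the already-computed hyperbolic values via the universal linear relations among germs. The main obstacle is the explicit evaluation of the weighted lattice-chain sum
$$\sum_{Λ_\bullet \in L_γ^\times \backslash \mcL(γ)} [O_{L_γ}^\times : \mr{Stab}(Λ_\bullet)]\, Ω(γ, Λ_0, 0)$$
in the ramified elliptic case with $r > 0$ odd: the alternating-eigenspace condition for $\ell = 4$, the chain constraint $z_γΛ_i \subseteq Λ_{i+1}$, the $L_γ^\times$-stabilizers, and the $\pm 1$ values of the transfer factor $Ω(γ, Λ_0, 0)$ together produce a delicate signed count whose output must equal precisely $1$. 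Establishing this, together with the corresponding cancellation producing $0$ in the hyperbolic case, is the technical heart of the argument.
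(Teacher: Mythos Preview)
Your proposal is correct and follows essentially the same approach as the paper's proof (given as Theorem~\ref{thm:FL}): functional equation to kill $r$ even, vacuity of the unramified case for $r$ odd, explicit hyperbolic computation via Levi reduction for the split case, and germ expansion (principal germ plus unipotent germ determined by the hyperbolic values) for the ramified elliptic case with $r>0$ odd. One small correction: the hyperbolic vanishing does not come from the $GL_2$ Guo--Jacquet FL (which concerns the spherical function) but from the explicit Iwahori-level formula of Proposition~\ref{prop:hyperbolic}, where the factor $(X+1)$ in $\Orb(γ,f'_\Iw,s) = 2q(X+1)\Orb(γ,f'_\Par,s)$ forces the central value to vanish; your Levi-reduction intuition is right, but the endpoint is a direct parahoric $GL_2$ computation (Proposition~\ref{prop:orb_int_n_equal_1}) rather than a known FL.
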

Let $(L, r, d)$ be the numerical invariant of an element $γ\in G'_{\mr{rs}}$ and let $δ = \Inv(γ;T)$. We remark that by Corollary \ref{cor:universal_quaternion} the matching element $g$ in \eqref{eq:FL_intermediate_teaser} exists if and only if $B_δ$ (constructed for $E/F$) is a division algebra, which is if and only if $L_δ\tensor E$ is a field and $z^2 \in L_δ$ not a norm from $L_δ\tensor_FE$, which is if and only if $L/F$ is a ramified field extension and $r$ odd. (Recall that $L \iso L_δ$.)

\begin{prop}\label{prop:derivative_teaser}
Let $γ\in G'_{\mr{rs}}$ be regular semi-simple with numerical invariants $(L, r, d)$. Assume first that $r$ is odd, meaning that the sign $ε_D(γ)$ of the functional equation of $\Orb(γ, f'_\Iw, s)$ is positive. Then
\begin{equation}\label{eq:derivative_trivial_case_teaser}
\del(γ, f'_D) = 0\quand \dOrb(γ, f'_\Iw) = \Orb(γ, f'_\Iw) \log(q).
\end{equation}
Assume now that $r$ is even which implies $\del(γ, f'_D) = \del(γ, f'_\Iw)$. If $r\leq 0$, then $\dOrb(γ, f'_\Iw) = 0$. If $r > 0$, it is given by
\begin{equation}\label{eq:derivative_main_teaser}
\del(γ, f'_\Iw) = 4q\log(q)\,\Orb(γ, f'_\Par) + \log(q)\,\begin{cases}
r & \text{if $L$ ramified}\\
2r & \text{if $L$ inert}\\
0 & \text{if $L$ split}.
\end{cases}
\end{equation}
\end{prop}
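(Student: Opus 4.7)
When $r$ is odd, Lemma~\ref{lem:sign_alternative} gives $ε_D(γ) = η(δ_0)\cdot ε'_D = (-1)^r\cdot(-1) = +1$, so Proposition~\ref{prop:functional_equation} makes $\Orb(γ,f'_D,s)$ an even function of $s$ and forces $\del(γ,f'_D)=0$. Differentiating the identity $\Orb(γ,f'_D,s) = q^{-s}\Orb(γ,f'_\Iw,s)$ at $s=0$ then yields $\del(γ,f'_\Iw) = \log(q)\,\Orb(γ,f'_\Iw)$, which is \eqref{eq:derivative_trivial_case_teaser}. When $r$ is even, the same computation gives $ε_D(γ) = -1$, so the functional equation forces $\Orb(γ,f'_D) = \Orb(γ,f'_\Iw) = 0$ and hence $\del(γ,f'_D) = \del(γ,f'_\Iw)$. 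For $r \leq 0$ (even), the combinatorial description of $\mcL(γ)$ in Lemma~\ref{lem:lattice_combinatorics} forces $\mcL(γ) = \emptyset$, since the requirement $z_γ Λ_i \subseteq Λ_{i+1}$ combined with $v(\det z_γ) = r \leq 0$ admits no lattice chain in $\mcL$. The combinatorial formula \eqref{eq:orb_int_combinatorial} then makes $\Orb(γ,f'_\Iw,s)$ vanish identically in $s$, so $\del(γ,f'_\Iw) = 0$.

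\textbf{Hyperbolic reduction.} For the main case $r>0$ even, I would follow the three-step germ-expansion programme outlined in \S\ref{ss:aspects}. The first step is to compute $\Orb(γ,f'_\Iw,s)$ and $\Orb(γ,f'_\Par,s)$ for all hyperbolic $γ$, meaning those whose invariant polynomial splits over $F$ into two distinct linear factors. An Iwasawa-type decomposition of the Iwahori compatible with a Borel subgroup containing the split $(H'\times H')$-stable torus reduces the integral to a product of two $GL_2$-style orbital integrals on a Levi subgroup $GL_2(F)\times GL_2(F)\subset G'$, for which explicit formulas are standard. Differentiating gives closed-form expressions for $\del(γ,f'_\Iw)$ and $\Orb(γ,f'_\Par)$ on the hyperbolic locus, in terms of $r$, $d$ and the splitting type of $L$.

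\textbf{Germ expansion and propagation.} The second step is to prove a germ expansion: for $f\in\{f'_\Iw,f'_\Par\}$ one writes
$$\Orb(γ,f,s) = A(f,s)\,\Gamma_{\mathrm{prin}}(γ,s) + B(f,s)\,\Gamma_{\mathrm{unip}}(γ,s),$$
where $\Gamma_{\mathrm{prin}}$ is a principal germ determined by the hyperbolic computation, $\Gamma_{\mathrm{unip}}$ is a universal unipotent germ, and the coefficients $A,B$ are fixed by evaluating the expansion on two independent hyperbolic families and solving the resulting $2\times 2$ linear system. The third step uses this expansion, together with the already-known parahoric formula from Proposition~\ref{prop:orb_int_para_teaser}, to propagate the computation of $\del(γ,f'_\Iw)$ from the hyperbolic to the general regular semi-simple locus.

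\textbf{Final matching, and the main obstacle.} Differentiating the germ expansion for $f'_\Iw$ at $s=0$ and subtracting $4q\log(q)\cdot\Orb(γ,f'_\Par)$ should eliminate the unipotent-germ contribution, the coefficient $4q$ tracking the index $[\Par:\Iw]=(q+1)^2$ against the normalization constant $(q+1)^4$ that was built into the definition of $f'_\Iw$. What survives is the derivative of the principal germ, which evaluates to $r\log(q)$, $2r\log(q)$, or $0$ according to whether $L$ is ramified, inert, or split, reflecting how the stabilizer torus $L_γ^\times$ embeds in $H'\times H'$ (ramified: one line of invariants; inert: conjugation doubles the count; split: a cancellation of transfer-factor signs). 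The hard part will be the bookkeeping that makes this final subtraction clean: one must verify that after removing the parahoric contribution, the residual principal-germ derivative depends only on $r$ and on the splitting type of $L$, and \emph{not} on the conductor offset $d$. This conductor-independent cancellation is the main technical input that allows \eqref{eq:derivative_main_teaser} to take its strikingly simple form, and verifying it occupies the remainder of Part~2.
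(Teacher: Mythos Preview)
Your handling of the easy cases ($r$ odd, $r\leq 0$ even) is correct and matches the paper. Your overall three-step plan (hyperbolic reduction, germ expansion, propagation) is also the paper's strategy. However, the details of the last two steps diverge from the paper in ways that would cause real trouble.

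\textbf{The germ expansion is structured differently.} The paper's expansion (Proposition~\ref{prop:germ_exp}) has the form
\[
\Orb(\gamma, f'_\Iw, s) \;=\; (-q^s)^{-r}\bigl[P(L,r,d,s) + i(L)\,U(r,d,s)\bigr],
\]
where the coefficients $1$ and $i(L) = [O_L^\times : (O_F+\pi O_L)^\times]$ depend on $L$, not on the test function, and where $P,U$ are defined combinatorially as lattice sums restricted to conductor $=0$ and conductor $\geq 1$ loci respectively. The key lemma (Proposition~\ref{prop:germ_independence}) is that $U$ depends only on $(r,d)$ and \emph{not} on $L$. Your Shalika-style proposal with $A(f,s),B(f,s)$ depending on the test function and germs depending on $\gamma$ is the opposite convention, and your plan to determine the coefficients by ``two independent hyperbolic families'' does not match how the paper proceeds: there is only one hyperbolic family (namely $L$ split), and it is used to solve for the single unknown $U(r,d,s)$.

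\textbf{The origin of the factor $4q$ and the final mechanism are misidentified.} The $4q$ does not come from an index $[\Par:\Iw]$; it comes from Proposition~\ref{prop:hyperbolic}, which gives the exact functional relation $\Orb(\gamma,f'_\Iw,s)=2q(X+1)\Orb(\gamma,f'_\Par,s)$ in the hyperbolic case, so that differentiating at $s=0$ yields $\del(\gamma,f'_\Iw)=4q\log(q)\,\Orb(\gamma,f'_\Par)$ directly. More seriously, your claim that subtracting $4q\log(q)\,\Orb(\gamma,f'_\Par)$ ``eliminates the unipotent-germ contribution'' and leaves the principal germ is not what happens. In the inert case the principal germ vanishes identically (Proposition~\ref{prop:principal_germ}), so the entire derivative $\del(\gamma,f'_\Iw)=(q+1)\,\partial U$ is unipotent; yet the residue after subtracting $4q\log(q)\,\Orb(\gamma,f'_\Par)$ is $2r\log(q)\neq 0$. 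The paper's actual mechanism is: use the split case to solve for $\partial U(r,d)$ via \eqref{eq:uni_germ_deriv}, compute $\partial P(L,r,d)$ directly from the explicit formula \eqref{eq:princ_germ_ram} for ramified $L$ (it is zero for inert $L$), and then assemble $\del = \partial P + i(L)\,\partial U$ with $i(L)\in\{q,q+1\}$. The formula \eqref{eq:derivative_main_teaser} emerges only after explicitly summing the resulting geometric series and comparing with \eqref{eq:orb_int_para}; no structural cancellation of the type you describe occurs.
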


\section{Hyperbolic Orbits}
\label{s:hyperbolic}

We call a regular semi-simple element $γ\in G'_{\mr{rs}}$ hyperbolic if $L_γ\iso F\times F$. In this situation, the orbital integrals $\Orb(γ, f'_\Iw, s)$ and $\Orb(γ, f'_\Par, s)$ can be expressed in terms of much simpler orbital integrals for the Levi that is defined by $L_γ$. 

In the following we fix a hyperbolic element $γ\in G'_{\mr{rs}}$ of the form $γ = 1+z_γ$; set $z = z_γ$. We also fix an isomorphism $L_γ \iso F\times F$. Recall from \S\ref{ss:setting} that $K = \{\diag(a, a, b, b) \in M_4(F) \mid a, b\in F\}$ denotes the diagonal copy of $F\times F$, and recall from Proposition \ref{prop:quaternion_algebra} that $V := F^4$ is free as $K\tensor_FL_γ$-module.

Let $V = V^0\oplus V^1$ be the eigenspace decomposition as $L_γ$-module. It is preserved by $γ$ because $γ$ and $z$ commute under our assumption $γ = 1+z$. It also has the property that both $V^0$ and $V^1$ are free $K$-modules of rank $1$. Thus, we are precisely in the setting of the Levi reduction formula from \cite{LM} and we begin by recalling the relevant results from \cite{LM}.

\subsection{Lattice Decomposition}

The reduction to the Levi is based on the fact that there is a bijection of lattices $X\subset V$ and the set
\begin{equation}\label{eq:lattice_bijection}
\left\{(X^0, X^1, s) \,\left\vert\, \text{\begin{varwidth}{\textwidth} \center $X^0\subset V^0,\ X^1\subset V^1$ both $O_F$-lattices\\
$s:X^1\to V^0/X^0$ any $O_F$-linear map\end{varwidth}}\right\}\right..
\end{equation}
It is given by sending $X$ to $(X^0, X^1, s)$ where
\begin{equation}\label{eq:lattice_bijection_map}
X^0 = X\cap V^0,\quad X^1 = (X+V^0)/V^0,\quad s = [X^1\to X \to V^0/X^0].
\end{equation}
Here, the map $X\to V^0/X^0$ is the projection to the first component and the map $X^1\to X$ is defined by any choice of splitting for $X\twoheadrightarrow X^1$. Moreover, there is a criterion for lattice inclusions. Assume that $X^0\subseteq Y^0\subset V^0$ and $X^1\subseteq Y^1\subset V^1$ are sublattices and that $s_Y:Y^1\to V^0/Y^0$ resp. $s_X:X^1\to V^0/X^0$ are maps as in \eqref{eq:lattice_bijection}. Let $X, Y\subset V$ be the corresponding lattices in $V$. Then
\begin{equation}\label{eq:inclusion_criterion}
X\subseteq Y\quad \Longleftrightarrow\quad \text{the diagram}\quad
\begin{minipage}{5cm}
\xymatrix{
Y^1 \ar[r]^{s_Y} & V^0/Y^0 \\
X^1 \ar[r]^{s_X} \ar@{^{(}->}[u] & V^0/X^0 \ar@{->>}[u]}\end{minipage}
\quad\text{commutes.}
\end{equation}
The following lemma is immediately clear and stated here for later application.
\begin{lem}\label{lem:inclusion_triviality}
Consider two lattices $X^0\subset Y^0$ and $X^1 \subset Y^1$ as in Diagram \eqref{eq:inclusion_criterion}.
\begin{enumerate}[wide, labelindent=0pt, labelwidth=!, label=(\arabic*), topsep=2pt, itemsep=2pt]
\item Assume that $X^0 = Y^0$. Then for every map $s_Y:Y^1\to V^0/Y^0$, there is a unique map $s_X$ such that \eqref{eq:inclusion_criterion} commutes.
\item Assume that $X^1 = Y^1$. Then for every map $s_X:X^1\to V^0/X^0$, there is a unique map $s_Y$ such that \eqref{eq:inclusion_criterion} commutes.
\end{enumerate}
\end{lem}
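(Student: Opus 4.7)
The plan is essentially to unwind the commutativity condition in Diagram \eqref{eq:inclusion_criterion} in each of the two special cases and observe that in both cases one of the two vertical maps becomes the identity, which forces $s_X$ (respectively $s_Y$) to be determined by $s_Y$ (respectively $s_X$). Since both statements amount to the same trivial observation about a commutative square with one identity side, I do not expect any obstacle.

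For part (1), under the hypothesis $X^0 = Y^0$ the right vertical map $V^0/X^0\twoheadrightarrow V^0/Y^0$ is the identity. Commutativity of \eqref{eq:inclusion_criterion} then reads $s_Y\circ \iota = s_X$, where $\iota : X^1\hookrightarrow Y^1$ is the inclusion. This uniquely defines $s_X$ as the restriction $s_Y|_{X^1}$, and conversely one checks that this choice indeed makes the square commute. Hence existence and uniqueness are both immediate.

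For part (2), under the hypothesis $X^1 = Y^1$ the left vertical map $X^1\hookrightarrow Y^1$ is the identity. Commutativity of \eqref{eq:inclusion_criterion} then reads $s_Y = \pi\circ s_X$, where $\pi : V^0/X^0\twoheadrightarrow V^0/Y^0$ is the natural surjection induced by $X^0\subseteq Y^0$. This uniquely defines $s_Y$ as $\pi\circ s_X$, and again one sees that this choice makes the square commute, which gives the required existence and uniqueness.

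The main (minor) point worth flagging is that one should confirm the direction and nature of the vertical maps in \eqref{eq:inclusion_criterion}: the left one is the inclusion $X^1\hookrightarrow Y^1$ and the right one is the canonical surjection $V^0/X^0\twoheadrightarrow V^0/Y^0$. Both facts are already built into the formulation of \eqref{eq:inclusion_criterion}, so no additional argument is needed. The lemma will then be used in later sections in conjunction with the bijection \eqref{eq:lattice_bijection} to reduce the enumeration of intermediate lattices between $X$ and $Y$ to a problem on the Levi components when one of the two refinements $X^0\subset Y^0$ or $X^1\subset Y^1$ is trivial.
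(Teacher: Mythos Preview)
Your proof is correct and matches the paper's approach: the paper simply states that the lemma ``is immediately clear'' and gives no further argument, so your unwinding of the commutative square is exactly the intended verification.
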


In the situation of the fixed hyperbolic element $γ$, there is the following numerical result. Write $γ^0 = γ\vert_{V^0}$ and $γ^1 = γ\vert_{V^1}$ for the two components. Then $γ^0$ and $γ^1$ are regular semi-simple (in the sense of \S\ref{s:invariants}) as endomorphisms of the $K$-modules $V^0$ and $V^1$, respectively, and
$$\Inv(γ; T) = \Inv(γ^0; T)\Inv(γ^1; T).$$
Thus, if we define $α^j\in F$ by $\Inv(γ^j; T) = T-α^j$, then $α^0,α^1\not\in \{0,1\}$ and $α^0 \neq α^1$ by regular semi-simpleness of $γ$. We also define $z^j$ as the $j$-component of $z$. Equivalently, $z^j = z_{γ^j}$.

\begin{prop}\label{prop:count_lattice_gluing}
Assume that $X^0\subset V^0$ and $X^1\subset V^1$ are two $O_K$-lattices that are $z^j$-stable. Then there are $|α^0-α^1|^{-1}$ many lattices $X\subset V$ such that
\begin{enumerate}[wide, labelindent=0pt, labelwidth=!, label=(\arabic*), topsep=2pt, itemsep=2pt]
\item $X\cap V^0 = X^0$ and $(X+V^0)/V^0 = X^1$,
\item $X$ is $O_K$-stable and $z$-stable.
\end{enumerate}
\end{prop}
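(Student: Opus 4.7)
The plan is to use the parametrization of \eqref{eq:lattice_bijection}: any lattice $X \subset V$ satisfying condition (1) corresponds to a unique $O_F$-linear map $s \colon X^1 \to V^0/X^0$ via \eqref{eq:lattice_bijection_map}. The first step is to translate condition (2) into a constraint on $s$. Applying the inclusion criterion \eqref{eq:inclusion_criterion} to the inclusions $k \cdot X \subseteq X$ for $k \in O_K$ and to $z \cdot X \subseteq X$, and using that $X^0, X^1$ are by assumption $O_K$-stable and $z^j$-stable, condition (2) becomes equivalent to $s$ being $O_K$-linear and satisfying the twisted intertwining relation $s(z^1 x) \equiv z^0 s(x) \pmod{X^0}$ for all $x \in X^1$.

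Next, using the $K$-eigenspace structure from Proposition \ref{prop:quaternion_algebra}, write $X^j = X^j_+ \oplus X^j_-$ and $V^0/X^0 = V^0_+/X^0_+ \oplus V^0_-/X^0_-$. The $O_K$-linearity of $s$ forces $s = s_+ \oplus s_-$ with $s_\pm \colon X^1_\pm \to V^0_\pm/X^0_\pm$, and since $z^j$ is $K$-conjugate linear, the $z$-intertwining splits into two coupled congruences relating $s_+$ and $s_-$. Iterating these congruences yields $(z^0)^2 s \equiv s (z^1)^2 \pmod{X^0}$, i.e., $(α^0 - α^1) s \equiv 0 \pmod{X^0}$, so the image of $s$ lies in the $(α^0 - α^1)$-torsion of $V^0/X^0$. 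Since each $X^1_\pm$ is free of rank one over $O_F$, the map $s$ is determined by the pair $(λ_+, λ_-) \in V^0_+/X^0_+ \times V^0_-/X^0_-$ of values at chosen generators.

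Finally, I count the pairs $(λ_+, λ_-)$ by explicit linear algebra. The $z^j$-stability of $X^j$ allows one to choose bases $e^j_\pm$ in which $z^j e^j_+ = β^j e^j_-$ and $z^j e^j_- = γ^j e^j_+$, with $β^j, γ^j \in O_F$ and $β^j γ^j = α^j$. The two coupled congruences take the concrete form $β^1 λ_- \equiv β^0 λ_+$ and $γ^1 λ_+ \equiv γ^0 λ_-$ in $F/O_F$. Direct elimination shows that the ``determinant'' of this $2 \times 2$ system is $β^0 γ^0 - β^1 γ^1 = α^0 - α^1$, yielding the key relation $v λ_+ \in O_F$ with $v = α^0 - α^1$; in particular $v(α^0-α^1)\ge 0$, so the claimed count is a positive integer. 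Careful tracking of the kernels and cokernels of the multiplication-by-$β^j, γ^j$ maps on $F/O_F$ then produces exactly $q^{v(α^0 - α^1)} = |α^0 - α^1|^{-1}$ valid pairs. The main technical obstacle is this final bookkeeping: the two congruences impose mutually compensating constraints on $λ_+$ and $λ_-$, and one must verify that the product of the resulting constraint sizes telescopes to a clean answer depending only on $α^0 - α^1$ rather than on the auxiliary parameters $β^j, γ^j$.
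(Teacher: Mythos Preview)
Your proposal is correct and follows essentially the same route as the paper: both reduce via the lattice bijection \eqref{eq:lattice_bijection} to counting $O_K$-linear maps $s\colon X^1 \to V^0/X^0$ satisfying $z^0 s = s z^1$, then pass to explicit $(+,-)$-coordinates to obtain a coupled $2\times 2$ system over $F/O_F$ whose matrix has determinant $\alpha^0 - \alpha^1$. The paper executes your ``direct elimination'' by assuming without loss of generality that one coefficient has minimal valuation and substituting, which is precisely the telescoping you anticipate; your identification of this bookkeeping as the only delicate point is accurate.
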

This is essentially a very special case of \cite[Proposition 4.5]{LM}. There are, however, some boundary cases which are not covered by that result (especially if the residue cardinality is $2$) which is why we include a short proof.
\begin{proof}
Fix $O_K$-linear isomorphisms $O_K\iso X^j$ for both $j = 0,1$. Via these coordinates, we understand $z^0$ and $z^1$ as $O_K$-conjugate linear endomorphisms of $O_K$. By \eqref{eq:inclusion_criterion}, the lattices $X\subset V$ that satisfy the conditions (1) and (2) are in bijection with the set
\begin{equation}\label{eq:lattice_gluing_hom_set}
\left\{s\in \Hom_{O_K}(O_K, K/O_K) \mid z^0 s = s z^1\right\}.
\end{equation}
Also fix an isomorphism $O_K\iso O_F\times O_F$. In this basis, $z^0$ and $z^1$ are given by anti-diagonal matrices because they are $O_K$-conjugate linear, say
$$z^0 = \begin{pmatrix}
 & a\\ b &
\end{pmatrix},\quad z^1 = \begin{pmatrix}
 & c\\ d &
\end{pmatrix}.$$
Here, $a, b, c$ and $d$ all lie in $O_F$ while $ab = α^0$ and $cd = α^1$. An element $(s_+, s_-)\in \Hom_{O_F}(O_F, F/O_F)^2$ lies in the set \eqref{eq:lattice_gluing_hom_set} if and only if
\begin{equation}\label{eq:lat_concrete}
as_+ = cs_-,\quad bs_- = ds_+.
\end{equation}
Thus we need to count the solutions $(s_+, s_-)\in (F/O_F)^2$ of \eqref{eq:lat_concrete}. By symmetry of the expression, we may assume that $a$ is the coefficient with minimal valuation. Dividing \eqref{eq:lat_concrete} by $a$, we first note that the solutions to
$$s_+ = a^{-1}cs_-,\quad a^{-1}bs_- = a^{-1}ds_+$$
are precisely the pairs of the form $(a^{-1}cs_-, s_-)$ with $(a^{-1}b - a^{-2}cd)s_- = 0$. There are $|a^{-1}b - a^{-2}cd|^{-1}$ many such pairs. It follows that the solution count for \eqref{eq:lat_concrete} is
$$|a^2|^{-1} |a^{-1}b - a^{-2}cd|^{-1} = |ab - cd|^{-1} = |α^0 - α^1|^{-1}$$
and the proposition is proved.
\end{proof}
%

\subsection{Lattice Chains}

We now extend Proposition \ref{prop:count_lattice_gluing} to the lattice chains from Definitions \ref{def:lattice_chains} and \ref{def:lattice_chains_gamma} when $n \leq 2$.
\begin{defn}\label{def:lattice_chains_GL4}
Define the following sets of lattice chains.
\begin{enumerate}[wide, labelindent=0pt, labelwidth=!, label=(\arabic*), topsep=2pt, itemsep=2pt]
\item Let $\mcP$ be the set of chains of $O_K$-lattices in $V$ of the form
$$Λ_0 \supset (π, 1) Λ_0 \supset πΛ_0.$$
Here, $(π, 1)$ is meant as the element in $O_K$. Moreover, define
\begin{equation}\label{eq:lattice_chains_Par}
\mcP(γ) = \{Λ_\bullet \in \mcP \mid zΛ_i \subseteq Λ_{i+1}\text{ for $i = 0,1$}\}.
\end{equation}
By Lemma \ref{lem:lattice_combinatorics} (3), the set $\mcP(γ)$ agrees (up to notation) with the set defined in Definition \ref{def:lattice_chains_gamma} for $(n, \ell) = (2, 2)$.

\item Let $\mcL$ be the set of chains of $O_K$-lattices in $V$ of the form
$$
Λ_0\supset Λ_1\supset Λ_2\supset Λ_3 \supset πΛ_0
$$
and such that $O_K$ acts on $Λ_i/Λ_{i+1}$ via the first projection $O_K\to O_F$ if $i = 0,2$, resp. via the second projection if $i = 1,3$. Furthermore, let
\begin{equation}\label{eq:lattice_chains_Iw}
\mcL(γ) = \{Λ_\bullet \in \mcL \mid zΛ_i \subseteq Λ_{i+1}\text{ for all $i = 0,\ldots,3$}\}.
\end{equation}
By Lemma \ref{lem:lattice_combinatorics} (3), the set $\mcL(γ)$ is the set defined in Definition \ref{def:lattice_chains_gamma} for $(n, \ell) = (2, 4)$.

\item For $j = 0,1$, let $\mcL_+^j$ be the set of chains of $O_K$-lattices in $V^j$ of the form
$$
Λ^j_0\supset (π, 1) Λ^j_0 \supset πΛ^j_0.
$$
Let $\mcL_-^j$ be the set of chains of $O_K$-lattices in $V^j$ of the form
$$
Λ^j_0\supset (1, π) Λ^j_0 \supset πΛ^j_0.
$$
Denote by $z^0$ and $z^1$ the two components of $z$ and define
\begin{equation}\label{eq:lattice_chains_Levi}
\mcL^j_\pm(γ^j) = \{Λ^j_\bullet \in \mcL^j_\pm(γ^j) \mid z^jΛ^j_i\subseteq Λ^j_{i+1}\text{ for $i = 0,1$}\}.
\end{equation}
By Lemma \ref{lem:lattice_combinatorics} (3), the set $\mcL^j_+(γ^j)$ agrees (up to notation) with the set defined in Definition \ref{def:lattice_chains_gamma} for $(n, \ell) = (1, 2)$. The set $\mcL^j_-(γ^j)$ is a variant.
\end{enumerate}
\end{defn}

Next, let $Λ_\bullet$ lie in $\mcP$ or $\mcL$. Applying the map \eqref{eq:lattice_bijection_map} to each term, we construct a pair of lattice chains in $V^0$ and $V^1$ by
\begin{equation}\label{eq:lattice_chain_decomp}
Λ_\bullet^0 := Λ_\bullet \cap V^0 \quand Λ_\bullet^1 := (Λ_\bullet + V^0)/V^0.
\end{equation}
The situation is straightforward for $\mcP$: If $Λ_\bullet \in \mcP$, then in particular $Λ_1 = (π, 1) Λ_0$ and hence also $Λ_1^j = (π, 1)Λ^j_0$ for both $j = 0,1$. It follows that $(Λ_\bullet^0, Λ_\bullet^1) \in \mcL^0_+ \times \mcL^1_+$.

The situation is more subtle for $\mcL$: For each index $i = 0, \ldots, 3$, precisely one out of the following two possibilities occurs,
\begin{equation}\label{eq:lattice_chain_decomp_cases}
\begin{cases}
[Λ_i^0 : Λ_{i+1}^0] = 1 &\text{and}\quad Λ_i^1 = Λ_{i+1}^1\\
Λ_i^0 = Λ_{i+1}^0 &\text{and}\quad [Λ_i^1 : Λ_{i+1}^1] = 1.
\end{cases}
\end{equation}
We define the type of $Λ_\bullet$ as the vector $t(Λ_\bullet) \in \{0,1\}^4$ with $t(Λ_\bullet)_i = 0$ precisely if the first case occurs in \eqref{eq:lattice_chain_decomp_cases}. Since $Λ_4 = πΛ_0$ and since $Λ_0$, $Λ_0^0$ and $Λ_0^1$ are all free over $O_K$, the type $t(Λ_\bullet)$ can take the four values
\begin{equation}\label{eq:type}
(0, 0, 1, 1),\ (1, 1, 0, 0),\ (0, 1, 1, 0)\ \text{and}\ (1, 0, 0, 1).
\end{equation}
In particular, for each $Λ_\bullet \in \mcL$, each case in \eqref{eq:lattice_chain_decomp_cases} occurs precisely twice. So there is a natural way to view $Λ_\bullet^0$ and $Λ_\bullet^1$ as $2$-term lattice chains. With this indexing convention,
\begin{equation}\label{eq:lattice_chain_decomp_cases_II}
(Λ_\bullet^0, Λ_\bullet^1) \in \begin{cases}
\mcL_+^0\times \mcL_+^1 & \text{if $t(Λ_\bullet) = (0, 0, 1, 1)$ or $(1, 1, 0, 0)$}\\
\mcL_+^0\times \mcL_-^1 & \text{if $t(Λ_\bullet) = (0, 1, 1, 0)$}\\
\mcL_-^0\times \mcL_+^1 & \text{if $t(Λ_\bullet) = (1, 0, 0, 1)$}.
\end{cases}
\end{equation}

\begin{lem}\label{lem:lattice_projection}
The map in \eqref{eq:lattice_chain_decomp} restricts to a surjection
\begin{equation}\label{eq:surjection_Par}
\mcP(γ)\ \relbar\joinrel\twoheadrightarrow\ \mcL_+^0(γ^0) \times \mcL_+^1(γ^1)
\end{equation}
all of whose fibers have cardinality $q^{-1}|α^0 - α^1|^{-1}$. Similarly, the map in \eqref{eq:lattice_chain_decomp_cases_II} restricts to a surjection
\begin{equation}\label{eq:surjection_Iw}
\mcL(y)\ \relbar\joinrel\twoheadrightarrow\ \mcL_+^0(γ^0)\times \mcL_+^1(γ^1) \ \sqcup\ \mcL_-^0(γ^0)\times \mcL_+^1(γ^1)\ \sqcup\ \mcL_+^0(γ^0)\times \mcL_-^1(γ^1)
\end{equation}
such that fibers over $\mcL_+^0(γ^0)\times \mcL_+^1(γ^1)$ have cardinality $2\,|α^0-α^1|^{-1}$ and fibers over its complement have cardinality $|α^0-α^1|^{-1}$. Moreover, both \eqref{eq:surjection_Par} and \eqref{eq:surjection_Iw} commute with the action of $L_γ^\times$.
\end{lem}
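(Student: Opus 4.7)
The proof treats the two surjections separately. In both cases, $L_γ^\times$-equivariance is clear: since $L_γ = F[z^2]\subseteq \End_K(V)$ commutes with the $O_K$-action and preserves the $L_γ$-eigenspace decomposition $V = V^0 \oplus V^1$, it permutes all lattice chains compatibly with the projections.

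For the parahoric surjection \eqref{eq:surjection_Par}, observe that a chain $Λ_\bullet \in \mcP(γ)$ is determined by $Λ_0$, since $Λ_1 = (π,1)Λ_0$ and $Λ_2 = πΛ_0$ are forced. Using the relation $z(π,1) = (1,π)z$, both chain conditions $zΛ_0 \subseteq Λ_1$ and $zΛ_1 \subseteq Λ_2$ collapse into the single condition $zΛ_0 \subseteq (π,1)Λ_0$. Setting $z' := (π,1)^{-1}z \in \End^0_F(V)$, which is again $K$-conjugate linear, the condition becomes $z'Λ_0 \subseteq Λ_0$. Similarly, the defining conditions of $\mcL_+^j(γ^j)$ translate to $z'^jΛ_0^j \subseteq Λ_0^j$. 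The element $γ' := 1 + z'$ is regular semi-simple with invariants $α'^j = α^j/π$ (so $γ'$ is still regular semi-simple because $α^0 \neq α^1$). Applying Proposition \ref{prop:count_lattice_gluing} to $γ'$ yields fiber count $|α'^0 - α'^1|^{-1} = q^{-1}|α^0 - α^1|^{-1}$. This is a positive integer since $α^0,α^1 \in πO_F$ (from the definition of $\mcL_+^j$) forces $v(α^0 - α^1) \geq 1$, establishing both surjectivity and the stated fiber size.

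For the Iwahori surjection \eqref{eq:surjection_Iw}, I would proceed type-by-type (cf.\ \eqref{eq:type}). Fix a type $t$ and compatible projections $(Λ_\bullet^0, Λ_\bullet^1)$ in the corresponding target component. Then each $Λ_i^j$ is determined, so $Λ_i$ is controlled by a splitting $s_i$; the inclusions $Λ_{i+1} \subseteq Λ_i$ force each $s_{i+1}$ to be either a lift of $s_i$ along a projection (when the drop is on the $V^0$-side) or a restriction of $s_i$ (when on the $V^1$-side). Unpacking in the coordinates of the proof of Proposition \ref{prop:count_lattice_gluing}, write $s_0 = β = (β_+, β_-) \in (F/O_F)^2$ and decompose $z^0 = \bigl(\begin{smallmatrix} & a_0 \\ b_0 & \end{smallmatrix}\bigr)$, $z^1 = \bigl(\begin{smallmatrix} & a_1 \\ b_1 & \end{smallmatrix}\bigr)$. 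For type $(0,0,1,1)$, a direct computation shows the four conditions $zΛ_i \subseteq Λ_{i+1}$ are equivalent to the following three independent equations: the standard pair $a_1β_+ = a_0β_-$ (now in $F/πO_F$, rather than $F/O_F$) and $b_1β_- = b_0β_+$ (in $F/O_F$), together with a lift-level condition $b_1\tilde{β}_- = b_0\tilde{β}_+$ (in $F/πO_F$) on the lifts $\tilde{β}_\pm \in F/πO_F$ parameterizing $s_2$. The strengthening of the first equation from $F/O_F$ to $F/πO_F$ divides the Proposition-style solution count by $q$, but the lift-level condition contributes a compensating factor of $q$ (the number of lifts of a given solution pair satisfying it). The net fiber count is thus $|α^0 - α^1|^{-1}$. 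An entirely analogous analysis handles the other three types.

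Finally, the assembly: types $(0,0,1,1)$ and $(1,1,0,0)$ both land in $\mcL_+^0(γ^0) \times \mcL_+^1(γ^1)$, contributing $2|α^0-α^1|^{-1}$; types $(0,1,1,0)$ and $(1,0,0,1)$ land respectively in $\mcL_+^0 \times \mcL_-^1$ and $\mcL_-^0 \times \mcL_+^1$, each contributing $|α^0-α^1|^{-1}$. Surjectivity in each case follows from positivity of the fiber count. The main technical obstacle is the Iwahori bookkeeping: several of the four compatibility conditions turn out to be redundant, and the nontrivial content balances exactly between the strengthened linear equations (on $\beta$) and the new degrees of freedom (the lifts $\tilde{\beta}$), producing the clean count that matches Proposition \ref{prop:count_lattice_gluing} without any extra $q$-factor, in contrast to the parahoric case.
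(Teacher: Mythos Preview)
Your parahoric argument is correct and essentially identical to the paper's: both introduce the auxiliary endomorphism $z' = (\pi,1)^{-1}z$ and apply Proposition~\ref{prop:count_lattice_gluing} with the shifted eigenvalues $\alpha^j/\pi$.

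The Iwahori argument, however, has a genuine gap. Your parameterization mixes $\beta = s_0 \in (F/O_F)^2$ with ``equations in $F/\pi O_F$,'' which does not parse: an equation among elements of $F/O_F$ cannot be imposed modulo $\pi O_F$ without first choosing lifts, and the lifts are exactly your $\tilde\beta = s_2$. Once you acknowledge this, your ``three independent equations'' collapse to two conditions on $\tilde\beta$ alone (condition (B) on $\beta$ is the projection of your lift-level condition (C), hence redundant). The heuristic ``strengthening divides by $q$, lift contributes a compensating $q$'' is not a proof: whether such factors balance depends on the rank of the linear map $(\varepsilon_+,\varepsilon_-)\mapsto (a_1\varepsilon_+ - a_0\varepsilon_-,\, b_1\varepsilon_- - b_0\varepsilon_+)$ over $O_F/\pi O_F$, whose determinant is $\alpha^0-\alpha^1$, which has \emph{positive} valuation under the standing hypothesis $v(\alpha^j)>0$. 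So the map is degenerate and the naive count-by-equation bookkeeping does not apply.

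The paper sidesteps this entirely. Using Lemma~\ref{lem:inclusion_triviality}, it observes that in the commutative ladder~\eqref{eq:lattice_gluing_ladder} the four splittings $(s_0,s_1,s_2,s_3)$ are all determined by $s_2$ alone: one walks left along the ladder using part~(2) of the lemma and right using part~(1). Thus the fiber over a fixed $(\Lambda_\bullet^0,\Lambda_\bullet^1)$ of type $(0,0,1,1)$ is in bijection with $O_K[z]$-linear maps $s_2:\Lambda_0^1\to V^0/(\pi\Lambda_0^0)$, which is exactly the set counted by Proposition~\ref{prop:count_lattice_gluing} applied to the $z$-stable lattices $X^0=\pi\Lambda_0^0$ and $X^1=\Lambda_0^1$. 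This gives $|\alpha^0-\alpha^1|^{-1}$ with no $q$-balancing needed. The paper then checks that the resulting chain lies in $\mcL(\gamma)$ (which you omit), and handles the remaining three types by the rotation symmetry of $\pi^{\mbZ}$-periodic chains rather than by repeating the computation. Your assembly paragraph is correct once the fiber counts are established.
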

\begin{proof}
It is clear that if $Λ_\bullet$ is $z$-stable in the sense of \eqref{eq:lattice_chains_Par} or \eqref{eq:lattice_chains_Iw}, then $Λ_\bullet^j$ is $z^j$-stable in the sense of \eqref{eq:lattice_chains_Levi}. In other words, the two maps \eqref{eq:surjection_Par} and \eqref{eq:surjection_Iw} are defined as claimed. Moreover, the inclusion and projection maps $V^0\hookrightarrow V \twoheadrightarrow V/V^0$ are $L_γ$-linear by definition, so it is clear that both maps \eqref{eq:surjection_Par} and \eqref{eq:surjection_Iw} commute with the $L_γ^\times$-action. Our main task is to prove the claims on their fiber cardinalities. We will assume $v(α^0), v(α^1)> 0$ for this because otherwise both the sources and targets in \eqref{eq:surjection_Par} and \eqref{eq:surjection_Iw} are empty.

We begin with the case of $\mcP(γ)$. Define the auxiliary operator $\wt{z} = (π,1)^{-1}z$. Then the pairs $(Λ^0_\bullet, Λ^1_\bullet) \in \mcL^0_+(γ^0)\times \mcL^1_+(γ^1)$ are in bijection with pairs of $O_K[\wt{z}]$-lattices $(Λ^0_0, Λ^1_0)$ in $V^0$ and $V^1$. (Indeed, an $O_K[\wt z]$-lattice $Λ^j_0$ in $V^j$ can be uniquely extended to the lattice chain $(Λ^j_0, (π, 1)Λ^j_0)\in  \mcL^j_+(γ^j)$.) Note that $\wt{z}^2 = π^{-1}z^2$, so the eigenvalues of $\wt{z}^2$ are $π^{-1}α^0$ and $π^{-1}α^1$. By Proposition \ref{prop:count_lattice_gluing}, there are $q^{-1}|α^0 - α^1|^{-1}$ many $O_K[\wt z]$-stable lattices $Λ_0$ such that
$$Λ_0\cap V^0 = Λ_0^0\quand (Λ_0\cap V^0)/V^0 = Λ_0^1.$$
For any of these possibilities, $Λ_0 \supset (π, 1) Λ_0 \supset πΛ_0$ defines a unique extension to an element $Λ_\bullet \in \mcP(γ)$. This $Λ_\bullet$ is then a preimage of $(Λ_\bullet^0, Λ_\bullet^1)$ and all claims about \eqref{eq:surjection_Par} are proved.

Now we turn to $\mcL(γ)$. Assume we are given a pair $(Λ_\bullet^0, Λ_\bullet^1)$ in the right hand side of \eqref{eq:surjection_Iw} as well as a type $t\in \{0,1\}^4$ that is compatible with the pair in the sense of \eqref{eq:lattice_chain_decomp_cases_II}. We claim that there are $|α^0-α^1|^{-1}$ many lattice chains $Λ_\bullet\in \mcL(γ)$ of type $t$ that map to $(Λ_\bullet^0, Λ_\bullet^1)$ under \eqref{eq:surjection_Iw}. We first prove this for the type $t = (0, 0, 1, 1)$. By \eqref{eq:inclusion_criterion}, the set of four term $O_K[z]$-lattice chains $Λ_\bullet$ such that $Λ_\bullet \cap V^0 = Λ_\bullet^0$ and $(Λ_\bullet + V^0)/V^0 = Λ^1_\bullet$ is in bijection with the set of tuples $(s_0, s_1, s_2, s_3)$ of $z$-linear maps that give rise to a commutative ladder of the form
\begin{equation}\label{eq:lattice_gluing_ladder}
\begin{minipage}{\textwidth}
\xymatrix{
Λ^1_0 \ar[d]_{s_0} & Λ^1_0 \ar@{=}[l] \ar[d]_{s_1} & Λ^1_0 \ar@{=}[l] \ar[d]_{s_2} & Λ^1_1 \ar@{_{(}->}[l] \ar[d]_{s_3} & πΛ^1_0 \ar@{_{(}->}[l] \ar[d]_{s_0}\\
V^0/Λ^0_0 & V^0/Λ^0_1 \ar@{->>}[l] & V^0/(πΛ^0_0) \ar@{->>}[l] & V^0/(πΛ^0_0) \ar@{=}[l] & V^0/(πΛ^0_0). \ar@{=}[l]
}
\end{minipage}
\end{equation}
By Lemma \ref{lem:inclusion_triviality} (or by direct inspection), these tuples are in bijection with just the datum of the $O_K[z]$-linear map $s_2$. By Proposition \ref{prop:count_lattice_gluing}, there are precisely $|α^0-α^1|^{-1}$ many of those. Moreover, the corresponding chain $Λ_\bullet$ necessarily lies in $\mcL(γ)$ because for every $i$, there are $j$ and $k$ with $Λ_i/Λ_{i+1} \iso Λ^j_k/Λ^j_{k+1}$. So the condition $zΛ_i/Λ_{i+1} = 0$ follows from the assumption that the same kind of condition holds for $Λ^0_\bullet$ and $Λ^1_\bullet$. Our claim is now proved for $t = (0, 0, 1, 1)$. For the other three possibilities for $t$, the argument is completely identical. Namely, all of the involved lattice chains may be extended (uniquely) to a $π^\mbZ$-periodic chain and the four possibilities for the type $t$ differ by rotation permutations, see \eqref{eq:type}.
\end{proof}

\subsection{Orbital integrals}

We next define suitably normalized orbital integrals on $GL(V^0)$ and $GL(V^1)$. Choose $K$-bases $V^j\iso F\times F$ such that
$$O_F^4 \cap V^0 = O_F^2\quad \text{and}\quad (O_F^4 + V^0)/V^0 = O_F^2.$$
Assume that $Λ^j\subset V^j$ is a lattice such that both $Λ^j$ and $γ^jΛ^j$ are $O_K$-stable. Specializing the definition in \eqref{eq:transfer_lattice} to this situation, we have defined
$$\Omega(γ^j, Λ^j, s) = \Omega((h_1^j)^{-1}γh_2^j, s)$$
where $h_1^j$, $h_2^j\in K^\times$ are such that $h_2^j O_F^2 = Λ^j$ and $h_1^j O_F^2 = γ^jΛ^j$. It is immediate that whenever $(Λ^0, Λ^1) = (Λ\cap V^0, (Λ+V^0)/V^0)$ for some $O_K$-lattice $Λ\subset F^4$ such that also $γΛ$ is $O_K$-stable, then
\begin{equation}\label{eq:rel_omega_Levi}
\Omega(γ, Λ, s) = \Omega(γ^0, Λ^0, s)\Omega(γ^1, Λ^1, s).
\end{equation}
Consider the two standard parahoric subgroups of $GL_2(F)$,
\begin{equation}
I_+ = \begin{pmatrix}
O^\times_F & (π) \\ O_F & O_F^\times
\end{pmatrix} \quand I_- = \begin{pmatrix}
O^\times_F & O_F \\ (π) & O_F^\times
\end{pmatrix}.
\end{equation}
Let $ϕ_\pm$ denote the indicator function of $I_\pm$. These two functions are related by the conjugation $I_- = \diag(π, 1)^{-1}\cdot I_+\cdot \diag(π, 1)$ so their orbital integrals satisfy
\begin{equation}\label{eq:rel_phi_pm}
\Orb(γ', ϕ_-, s) = (-q^{-2s}) \Orb(γ', ϕ_+, s).
\end{equation}
Here, $γ'\in GL_2(F)_{\mr{rs}}$ denotes a regular semi-simple element. Let us write $\mcL_\pm(γ')$ for the analog of \eqref{eq:lattice_chains_Levi} for the vector space $F^2$. The combinatorial interpretation of orbital integrals \eqref{eq:orb_int_combinatorial} specializes to
\begin{equation}\label{eq:orb_int_phi}
\Orb(γ', ϕ_\pm, s) = \sum_{Λ_\bullet \in F^\times \backslash \mcL_\pm(γ')} \Omega(γ', Λ_0, s).
\end{equation}

\begin{prop}\label{prop:orb_int_n_equal_1}
Let $γ'\in GL_2(F)_{\mr{rs}}$ be a regular semi-simple element of invariant $T-α$. Put $X = -q^{-2s}$. Then the orbital integrals of $ϕ_+$ is given by
\begin{equation}\label{eq:GL2_par_int}
\Orb(γ', ϕ_+, s) = \begin{cases} 0 & \text{if $v(α) \leq 0$}\\
|α|^{-s} \frac{X^{v(α)} - 1}{X - 1} & \text{if $v(α) > 0$}.
\end{cases}
\end{equation}
\end{prop}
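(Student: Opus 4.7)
The plan is to use the combinatorial expression \eqref{eq:orb_int_phi} and to parametrize explicitly both the orbit space $F^\times \backslash \mcL_+(γ')$ and the transfer factor $\Omega(γ', Λ_0, s)$. Since the orbital integral depends only on the double coset, I would first replace $γ'$ by an $H'$-translate of the form $γ' = 1 + z$ with $z = z_{γ'}$. In an $O_K$-adapted basis $(e_+, e_-)$ of $F^2 = V^+ \oplus V^-$, the $K$-conjugate linear element $z$ is anti-diagonal, say $z(e_+) = β e_-$ and $z(e_-) = γ e_+$, and the invariant reads $α = β γ$, so $v(α) = v(β) + v(γ)$.

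Next, every $O_K$-stable lattice has the form $Λ_0 = π^a e_+ O_F \oplus π^b e_- O_F$. Setting $d = b - a$, a direct check against the definition $\mcL_+(γ') = \{Λ_\bullet \mid zΛ_i \subseteq Λ_{i+1}\}$ shows that the four inclusion conditions (from $z$ applied to each generator) all reduce to the pair
\[
v(β) \geq d \qquad \text{and} \qquad v(γ) \geq 1 - d.
\]
Adding these yields $v(α) \geq 1$, so $\mcL_+(γ')$ is empty when $v(α) \leq 0$, which accounts for the vanishing case in \eqref{eq:GL2_par_int}. When $v(α) = r > 0$, the admissible integers $d$ form an interval of length exactly $r$. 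The action of $F^\times$ shifts $a$ and $b$ simultaneously, hence preserves $d$ and identifies any two lattices with the same $d$, so $F^\times \backslash \mcL_+(γ')$ has exactly $r$ orbits, indexed by $d \in \{1 - v(γ), \ldots, v(β)\}$.

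The remaining step is to compute $\Omega(γ', Λ_0, s)$. Taking $h_2 = \diag(π^a, π^b)$ so that $Λ_0 = h_2 O_F^2$, one verifies from the inequalities on $v(β), v(γ)$ that $γ' Λ_0$ has the same $\pm$-valuations as $Λ_0$, so one may take $h_1 = h_2$. Then
\[
h_1^{-1} γ' h_2 \;=\; \begin{pmatrix} 1 & π^d γ \\ π^{-d} β & 1 \end{pmatrix},
\]
and Definition \ref{def:transfer_factor} gives $\Omega(γ', Λ_0, s) = η(π^{-d}β) \cdot |π^{-2d} β γ^{-1}|^s = (-1)^{v(β) - d} \, q^{(r - 2(v(β) - d))s}$, using $η(π^k u) = (-1)^k$ for $u \in O_F^\times$. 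Reindexing by $j = v(β) - d \in \{0, 1, \ldots, r-1\}$ turns \eqref{eq:orb_int_phi} into the geometric sum
\[
\Orb(γ', ϕ_+, s) \;=\; q^{rs} \sum_{j=0}^{r-1} (-q^{-2s})^j \;=\; |α|^{-s}\, \frac{X^r - 1}{X - 1},
\]
which is the claimed formula. The computation is routine once set up; the only subtle point is verifying that $h_1 = h_2$ works (equivalently, that the $\pm$-components of $γ' Λ_0$ coincide with those of $Λ_0$ under the $z$-stability inequalities), and this is the main step where bookkeeping must be done carefully.
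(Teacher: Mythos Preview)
Your proof is correct. The paper takes a slightly more direct route: it picks the representative $γ' = \left(\begin{smallmatrix} 1 & α \\ 1 & 1 \end{smallmatrix}\right)$ and works straight from the integral definition \eqref{eq:def_orb_int_GL_omega}, determining the conditions on $h_1 = \diag(a,1)$, $h_2 = \diag(c,d)$ for $h_1^{-1}γ'h_2 \in I_+$ (namely $v(d)=0$, $v(c)\geq 0$, $v(a)=v(c)$, $v(α)>v(a)$) and then summing over $v(c) \in \{0,\ldots,v(α)-1\}$. Your argument instead passes through the combinatorial description \eqref{eq:orb_int_phi} and parametrizes $F^\times\backslash\mcL_+(γ')$ by the difference $d = b-a$; after your reindexing $j = v(β)-d$, the two computations are identical term by term (your $j$ is the paper's $v(c)$). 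Both arrive at the same geometric sum, and your choice is arguably more in keeping with how the paper handles the $GL_4$ case in the surrounding sections.
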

\begin{proof}
The orbital integral only depends on the orbit of $γ'$, so we may choose $γ' = \left(\begin{smallmatrix} 1 & α \\ 1 & 1\end{smallmatrix}\right)$ for the computation. Let $h_1 = \diag(a, 1)$ and $h_2 = \diag(c, d)$. Then
$$h_1^{-1} γ' h_2 \in I_+ \quad \Longleftrightarrow \quad v(d) = 0,\ v(c) \geq 0,\ v(a) = v(c),\ v(α) > v(a).
$$
Assuming these equivalent conditions are met, the transfer factor is given by $\Omega(γ', s) = q^{v(α)s}$ and the character that occurs in the integrand is
$$|h_1h_2|^s η(h_2) = (-1)^{v(c)}q^{-2v(c)s}.$$
A direct evaluation of the definition in \eqref{eq:def_orb_int_GL_omega} now gives
$$\Orb(γ', ϕ_+, s) = q^{v(α)s} \sum_{i = 0}^{v(α)-1} (-q^{-2s})^i$$
as was to be shown.
\end{proof}

\begin{prop}\label{prop:hyperbolic}
Let $γ\in G'_{\mr{rs}}$ be regular semi-simple and hyperbolic with $\Inv(γ; T) = (T-α)(T-β)$. Set $X = -q^{-2s}$.
\begin{enumerate}[wide, labelindent=0pt, labelwidth=!, label=(\arabic*), topsep=2pt, itemsep=2pt]
\item The parahoric orbital integral $\Orb(γ, f'_\Par, s)$ vanishes if $v(α)\leq 0$ or $v(β)\leq 0$. Otherwise, it is given by
\begin{equation}\label{eq:Par_orb_int_hyper}
\Orb(γ, f'_\Par, s)\, =\, q^{-1} |α - β|^{-1}\ |αβ|^{-s}\ \frac{(X^{v(α)} - 1)(X^{v(β)} - 1)}{(X-1)^2}.
\end{equation}
\item The Iwahori orbital integral $\Orb(γ, f'_\Iw, s)$ vanishes if $v(α)\leq 0$ or $v(β)\leq 0$. Otherwise, it is given by
\begin{equation}\label{eq:Iw_orb_int_hyper}
\Orb(γ, f'_\Iw, s)\, =\, 2q\,(X+1)\Orb(γ, f'_\Par, s).
\end{equation}
\end{enumerate}
\end{prop}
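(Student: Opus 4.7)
The plan is to carry out the Levi reduction of Lemma \ref{lem:lattice_projection} directly at the level of orbital integrals. Using the combinatorial description \eqref{eq:orb_int_combinatorial} together with the normalization \eqref{eq:norm_orb_int_Iw}, one has
\begin{equation*}
\Orb(\gamma, f'_\Par, s) = \sum_{\Lambda_\bullet \in L_\gamma^\times \backslash \mcP(\gamma)} [O_{L_\gamma}^\times : \mr{Stab}(\Lambda_\bullet)]\, \Omega(\gamma, \Lambda_0, s),
\end{equation*}
and the analogous formula for $\Orb(\gamma, f'_\Iw, s)$ with $\mcL(\gamma)$ in place of $\mcP(\gamma)$. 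The crucial observation is that by \eqref{eq:rel_omega_Levi} the weight $\Omega(\gamma, \Lambda_0, s) = \Omega(\gamma^0, \Lambda^0_0, s)\,\Omega(\gamma^1, \Lambda^1_0, s)$ depends only on the image $(\Lambda^0_\bullet, \Lambda^1_\bullet)$ under the equivariant surjections of Lemma \ref{lem:lattice_projection}, so the sum may be decomposed over the target.

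Next, for any $(\Lambda^0_\bullet, \Lambda^1_\bullet)$ lying in the target, its stabilizer in $L_\gamma^\times = F^\times \times F^\times$ is exactly $O_F^\times \times O_F^\times = O_{L_\gamma}^\times$, since the $F^\times$-stabilizer of a $GL_2$-lattice chain in any $\mcL^j_\pm$ is $O_F^\times$. Therefore the $L_\gamma^\times$-orbits in the fiber $\mcF \subseteq \mcP(\gamma)$ (resp.~$\mcL(\gamma)$) over such a pair correspond to $O_{L_\gamma}^\times$-orbits on $\mcF$, and the orbit--stabilizer identity gives
\begin{equation*}
\sum_{\Lambda_\bullet \in O_{L_\gamma}^\times \backslash \mcF} [O_{L_\gamma}^\times : \mr{Stab}(\Lambda_\bullet)] = |\mcF|.
\end{equation*}
Combining this with the constant fiber sizes from Lemma \ref{lem:lattice_projection} and recalling \eqref{eq:orb_int_phi} yields
\begin{equation*}
\Orb(\gamma, f'_\Par, s) = q^{-1}|\alpha - \beta|^{-1}\, \Orb(\gamma^0, \phi_+, s)\, \Orb(\gamma^1, \phi_+, s),
\end{equation*}
and Proposition \ref{prop:orb_int_n_equal_1} applied to the two factors (with $\{\alpha^0, \alpha^1\} = \{\alpha, \beta\}$) delivers both the vanishing claim when $v(\alpha) \leq 0$ or $v(\beta) \leq 0$ and the explicit formula \eqref{eq:Par_orb_int_hyper}.

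For the Iwahori case, the same procedure applied to the three fiber types of \eqref{eq:surjection_Iw} gives
\begin{equation*}
\Orb(\gamma, f'_\Iw, s) = |\alpha - \beta|^{-1}\Bigl[2\,\Orb(\gamma^0, \phi_+, s)\Orb(\gamma^1, \phi_+, s) + \Orb(\gamma^0, \phi_-, s)\Orb(\gamma^1, \phi_+, s) + \Orb(\gamma^0, \phi_+, s)\Orb(\gamma^1, \phi_-, s)\Bigr],
\end{equation*}
and substituting $\Orb(\gamma^j, \phi_-, s) = X\,\Orb(\gamma^j, \phi_+, s)$ from \eqref{eq:rel_phi_pm} collapses the bracket to $2(1+X)\,\Orb(\gamma^0, \phi_+, s)\Orb(\gamma^1, \phi_+, s)$. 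Comparing with part (1) gives $\Orb(\gamma, f'_\Iw, s) = 2q(1+X)\,\Orb(\gamma, f'_\Par, s)$, which is \eqref{eq:Iw_orb_int_hyper}.

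The main bookkeeping obstacle is the interplay between the stabilizer index and the fiber count: one must verify that the gluing datum contributes only to the orbit size, not to the $\Omega$-weight. This is ensured precisely by the factorization \eqref{eq:rel_omega_Levi} and the fact that the Levi stabilizer is $O_{L_\gamma}^\times$; once this is in place, the calculation is purely mechanical, with the nontrivial combinatorial input already supplied by Lemma \ref{lem:lattice_projection}.
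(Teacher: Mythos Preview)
Your proof is correct and follows essentially the same route as the paper: both reduce to the Levi via Lemma \ref{lem:lattice_projection}, factor the transfer factor via \eqref{eq:rel_omega_Levi}, and then invoke Proposition \ref{prop:orb_int_n_equal_1} and \eqref{eq:rel_phi_pm}. The only cosmetic difference is in handling the quotient by $L_\gamma^\times$: the paper passes to the free discrete subgroup $\Gamma = (\pi,1)^{\mbZ}\times(1,\pi)^{\mbZ}$ so that no stabilizer indices appear in the sum, whereas you stay with the full $L_\gamma^\times$-quotient and absorb the stabilizer index into the fiber count via orbit--stabilizer; both are equivalent unwindings of \eqref{eq:orb_int_combinatorial}.
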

\begin{proof}
If $v(α)\leq 0$ or $v(β)\leq 0$, then $z = z_γ$ is not topologically nilpotent. It follows that $\mcL(γ) = \emptyset$ and $\mcP(γ) = \emptyset$, so both orbital integrals vanish by \eqref{eq:orb_int_combinatorial}.

Now assume that $v(α),v(β)> 0$. We first deal with the function $f'_\Par$. Let $Γ\subset L_γ^\times$ be the subgroup $(π,1)^\mbZ \times (1, π)^\mbZ$. It has the property that $\mr{vol}(L_γ^\times /Γ) = 1$, so we may rewrite the combinatorial description of the orbital integral \eqref{eq:orb_int_combinatorial} as
$$\Orb(γ, f'_\Par, s) = \sum_{Λ_\bullet \in Γ\backslash \mcP(γ)} \Omega(γ, Λ_0, s).$$
By \eqref{eq:surjection_Par} and \eqref{eq:rel_omega_Levi}, this equals
$$\sum_{(Λ_\bullet^0 \times Λ_\bullet^1) \in \left(F^\times \backslash \mcL^0_+(γ^0)\right)\times \left(F^\times \backslash \mcL^1_+(γ^1)\right)}
\,q^{-1}|α-β|^{-1}\ \Omega\left(γ^0, Λ_0^0, s\right)\ \Omega\left(γ^1, Λ_0^1, s\right)$$
which coincides with $q^{-1}|α-β|^{-1}\Orb(γ^0, ϕ_+, s)\Orb(γ^1, ϕ_+, s)$ by \eqref{eq:orb_int_phi}. Substituting \eqref{eq:GL2_par_int} yields \eqref{eq:Par_orb_int_hyper}.

Now we turn to the test function $f'_\Iw$. Arguing as before and using \eqref{eq:orb_int_combinatorial}, we obtain the combinatorial formula
$$\Orb(γ, f'_\Iw, s) = \sum_{Λ_\bullet \in Γ\backslash \mcL(γ)} \Omega(γ, Λ_0, s).$$
By \eqref{eq:surjection_Iw} and \eqref{eq:rel_omega_Levi}, this expression may be rewritten as the following sum of three terms:
$$\begin{aligned}
\phantom{+} & \sum_{
(Λ_\bullet^0 \times Λ_\bullet^1) \in \left(F^\times \backslash \mcL^0_+(γ^0)\right)\times \left(F^\times \backslash \mcL^1_+(γ^1)\right)}
2\,|α-β|^{-1}\ \Omega\left(γ^0, Λ_0^0, s\right)\ \Omega\left(γ^1, Λ_0^1, s\right)
\\
 + & \sum_{
(Λ_\bullet^0 \times Λ_\bullet^1) \in \left(F^\times \backslash \mcL^0_+(γ^0)\right)\times \left(F^\times \backslash \mcL^1_-(γ^1)\right)}
\phantom{2\,}|α-β|^{-1}\ \Omega\left(γ^0, Λ_0^0, s\right)\ \Omega\left(γ^1, Λ_0^1, s\right)
\\
+ & \sum_{
(Λ_\bullet^0 \times Λ_\bullet^1) \in \left(F^\times \backslash \mcL^0_-(γ^0)\right)\times \left(F^\times \backslash \mcL^1_+(γ^1)\right)}
\phantom{2\,}|α-β|^{-1}\ \Omega\left(γ^0, Λ_0^0, s\right)\ \Omega\left(γ^1, Λ_0^1, s\right).
\end{aligned}$$
So we obtain from \eqref{eq:rel_phi_pm} and \eqref{eq:orb_int_phi} that
$$\Orb(γ, f'_\Iw, s) = 2\,|α-β|^{-1}\ (X+1)\, \Orb(γ^0, ϕ_+, s)\, \Orb(γ^1, ϕ_+, s).$$
Substituting \eqref{eq:GL2_par_int} again and comparing the result with the formula for $\Orb(γ, f'_\Par, s)$ yields \eqref{eq:Iw_orb_int_hyper}.
\end{proof}

\section{Germ Expansion}

In this section, we prove a germ expansion principle for the parahoric and Iwahori orbital integrals. Together with the results from \S\ref{s:hyperbolic}, this will allow us to compute $\Orb(γ, f'_\Par, s)$ and $\Orb(γ, f'_\Iw, s)$ in all cases. We will first focus on the case of $f'_\Iw$; the case of $f'_\Par$ is much simpler and will be treated in \S\ref{ss:parahoric}.

\subsection{Simplified lattice counting}
\label{ss:simplified_lattices}
Our first aim is to give a more concrete description of the set $\mcL(γ)$ from Definition \ref{def:lattice_chains_GL4}. (Note that its definition does not require $γ$ to be hyperbolic.) Throughout, $w\in GL_2(F)$ denotes an element such that $γ(w) := \left(\begin{smallmatrix} 1 & 1 \\ w & 1 \end{smallmatrix}\right)$ lies in $G'_{\mr{rs}}$. In other words, we assume that $L := F[w]$ is a quadratic étale extension of $F$ and that $w, 1-w \in L^\times$. In this situation, $z_{γ(w)}^2 = \diag(w,w)$, so $L_{γ(w)}$ can be identified with $L$. More precisely, $L_{γ(w)}$ equals the image of the diagonal embedding $L\to M_2(F)\times M_2(F) \subset M_4(F)$.

\begin{defn}\label{def:lc_simplified}
Let $\mcL(w)$ be the set of quadruples $(Λ_0, Λ_0^\flat, Λ_1, Λ_1^\flat)$ of $O_F$-lattices in $L$ that have the following three properties.
\begin{enumerate}[wide, labelindent=0pt, labelwidth=!, label=(\arabic*), topsep=2pt, itemsep=2pt]
\item It holds that $Λ_0^\flat \subset Λ_0$ and $Λ_1^\flat \subset Λ_1$, and each of these two inclusions is of index $1$.
\item It holds that $O_L\cdot Λ_0 = O_L$.
\item The four lattices fit into the diagram
\begin{equation}\label{eq:lc_simplified}
\begin{array}{ccccccc}
Λ_0 & \supset & Λ_0^\flat & \supseteq & Λ_1       & \supseteq & wΛ_0\\
    &         & \cup      &           & \cup      &           & \cup\\
    &         & πΛ_0      & \supseteq & Λ_1^\flat & \supseteq & wΛ_0^\flat.
\end{array}
\end{equation}
\end{enumerate}
\end{defn}
For the next statement, choose a free discrete subgroup $Γ\subset L^\times$ such that $\mr{vol}(L^\times/Γ) = 1$. Concretely, take $Γ = \varpi^{\mbZ}$ with some uniformizer $\varpi \in L$ if $L$ is a field or $Γ \iso (\varpi_1^\mbZ, \varpi_2^\mbZ)$ for two uniformizers $\varpi_1, \varpi_2\in F$ if $L\iso F\times F$.

Also let $(L, r, d)$ be the numerical triple of $γ(w)$ from \ref{def:orbit_invariants}: The quadratic $F$-algebra $L = F[w]$ is already given while
\begin{equation}\label{eq:invariants_w}
r = v(N_{L/F}(w))\quand d = \mr{cond}(O_F[π^kw]) - k - r/2,\quad k\gg 0.
\end{equation}
\begin{lem}\label{lem:lc_bijection}
Given $(Λ_0, Λ_0^\flat, Λ_1, Λ_1^\flat)\in \mcL(w)$, the following lattice chain lies in $\mcL(γ(w))$:
$$Λ_0\oplus Λ_1\ \supset\ Λ_0^\flat \oplus Λ_1\ \supset\ Λ_0^\flat \oplus Λ_1^\flat\ \supset\ πΛ_0 \oplus Λ_1^\flat\ \supset\ π(Λ_0 \oplus Λ_1).$$
This assignment defines a bijection
\begin{equation}\label{eq:lc_bijection}
ψ:\mcL(w) \overset{\sim}{\lr} Γ\backslash \mcL(γ(w))
\end{equation}
which has the property that
\begin{equation}\label{eq:lc_bijection_omega}
\Omega(γ(w),\ ψ(Λ_0, Λ_0^\flat, Λ_1, Λ_1^\flat),\ s) = (-q^s)^{-r(w)} (-q^{2s})^{[Λ_0:Λ_1]}.
\end{equation}
In particular, the Iwahori orbital integral has the expressions
\begin{equation}\label{eq:lc_orb_int}
\Orb(γ(w), f'_\Iw, s) \ =\ (-q^s)^{-r(w)} \sum_{(Λ_0, Λ_0^\flat, Λ_1, Λ_1^\flat)\in \mcL(w)} (-q^{2s})^{[Λ_0:Λ_1]}.
\end{equation}
\end{lem}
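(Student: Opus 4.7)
The plan is to translate lattice chains in $V = F^4$ with their various constraints into data on the abstract field $L = F[w]$, using the action of $L_{\gamma(w)}$ to dictate the identifications.

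First, I would decompose $V = V^+ \oplus V^-$ into the eigenspaces of $O_K = O_F \times O_F$, where $K \subset M_4(F)$ is embedded via $(a,b) \mapsto \diag(a,a,b,b)$. The torus $L_{\gamma(w)} = F[z_{\gamma(w)}^2] = F[\diag(w,w)] \iso L$ preserves this decomposition and makes each $V^\pm$ a free rank-one $L$-module. I fix $L$-linear identifications $V^\pm \iso L$; under these, $z = \left(\begin{smallmatrix} 0 & 1 \\ w & 0 \end{smallmatrix}\right)$ acts as multiplication by $w$ from $V^+$ to $V^-$ and as the identity from $V^-$ to $V^+$.

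Next, any $O_K$-stable lattice $\Lambda \subset V$ decomposes as $\Lambda = \Lambda^+ \oplus \Lambda^-$ with $\Lambda^\pm$ an arbitrary $O_F$-lattice in $V^\pm \iso L$. Unpacking the eigenspace condition in Definition \ref{def:lattice_chains_GL4}(2), a chain $\Lambda_\bullet \in \mcL$ has the shape $\Lambda_0^+ \supset \Lambda_1^+ = \Lambda_2^+ \supset \pi\Lambda_0^+ = \Lambda_3^+$ on the plus side and $\Lambda_0^- = \Lambda_1^- \supset \Lambda_2^- = \Lambda_3^- \supset \pi\Lambda_0^-$ on the minus side, each step of index one. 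Translating the stability conditions $z\Lambda_i \subseteq \Lambda_{i+1}$ via the explicit $z$-action above yields exactly the inclusions of diagram \eqref{eq:lc_simplified} for the quadruple $(\Lambda_0, \Lambda_0^\flat, \Lambda_1, \Lambda_1^\flat) := (\Lambda_0^+, \Lambda_1^+, \Lambda_0^-, \Lambda_2^-)$. This constructs the candidate map from $\mcL(\gamma(w))$ to quadruples satisfying conditions (1) and (3) of Definition \ref{def:lc_simplified}.

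To identify $\Gamma$-orbits, I would use that $L^\times$ acts diagonally on $V^+ \oplus V^-$ (since the identifications are $L$-linear), hence simultaneously scales all four lattices. The hypothesis $\mr{vol}(L^\times/\Gamma) = 1$ combined with our normalization $\mr{vol}(O_L^\times) = 1$ gives $L^\times = O_L^\times \cdot \Gamma$ with $O_L^\times \cap \Gamma = \{1\}$, so every $\Gamma$-orbit of such quadruples contains a unique representative with $O_L \cdot \Lambda_0 = O_L$. This is precisely condition (2), so $\psi$ is a bijection.

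Finally, I would compute $\Omega(\gamma(w), \Lambda_0, s)$ using Lemma \ref{lem:translation_omega}, applicable because $\gamma(w) = 1 + z$. For $\Lambda = \Lambda^+ \oplus \Lambda^-$, the identifications give $(\gamma\Lambda)^+ = \Lambda^+ + \Lambda^-$ and $(\gamma\Lambda)^- = \Lambda^- + w\Lambda^+$ inside $L$. The chain inclusions $\Lambda_0 \supseteq \Lambda_1 \supseteq w\Lambda_0$ collapse these sums and yield $[(\gamma\Lambda)^+ : z\Lambda^-] = [\Lambda_0 : \Lambda_1]$, $[(\gamma\Lambda)^- : z\Lambda^+] = [\Lambda_1 : w\Lambda_0]$, and $[(\gamma\Lambda)^- : \Lambda^-] = 0$. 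Using $[\Lambda_0 : \Lambda_1] + [\Lambda_1 : w\Lambda_0] = v(N_{L/F}(w)) = r(w)$ to eliminate the second index, Lemma \ref{lem:translation_omega} simplifies to \eqref{eq:lc_bijection_omega}. The formula \eqref{eq:lc_orb_int} then follows by combining the bijection $\psi$ with the combinatorial description \eqref{eq:orb_int_combinatorial}, after rewriting the sum over $L_\gamma^\times$-orbits as a sum over $\Gamma\backslash\mcL(\gamma(w))$ (each $L_\gamma^\times$-orbit contributing via an $O_L^\times$-orbit of size $[O_L^\times : \mr{Stab}(\Lambda_\bullet)]$).

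The main obstacle I expect is the sign-and-$q$-power bookkeeping in the computation of $\Omega$: one must consistently track the $L$-linear identifications $V^\pm \iso L$ when comparing $z\Lambda^\pm$ with $(\gamma\Lambda)^\pm$, and carefully use the chain inclusions to collapse the indices into the compact form in \eqref{eq:lc_bijection_omega}.
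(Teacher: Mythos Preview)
Your proposal is correct and follows essentially the same approach as the paper. The paper's own proof is very terse: it declares the bijectivity ``direct from definitions'' and, for the transfer factor, observes that $\mcL(w)\neq\emptyset$ forces $z_\gamma$ topologically nilpotent so $\gamma\Lambda = \Lambda$ (citing Lemma~\ref{lem:lattice_combinatorics}), then applies the simplified form of Lemma~\ref{lem:translation_omega} with $\Lambda_+ = \Lambda_0$, $\Lambda_- = \Lambda_1$. You spell out the same content with more care, deriving $\gamma\Lambda=\Lambda$ from the chain inclusions rather than quoting the lemma, and unpacking the $\Gamma$-orbit / $O_L^\times$-stabilizer bookkeeping that the paper leaves implicit.
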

\begin{proof}
The well-definedness and bijectivity of \eqref{eq:lc_bijection} follows directly from definitions. For the description of the transfer factor \eqref{eq:lc_bijection_omega}, we first note that $\mcL(w)\neq \emptyset$ implies that $w$ and hence $z_γ$ are topologically nilpotent. The element $γ = γ(w)$ has the form $γ = 1 + z_γ$, so if $Λ\subset F^4$ is both $O_K$-stable and $γ^{-1}O_Kγ$-stable, then already $γΛ = Λ$ by Lemma \ref{lem:lattice_combinatorics}. In this case, \eqref{eq:translation_omega} simplifies to
$$\Omega(γ, Λ, s) = (-1)^{[Λ_- : z Λ_+]} q^{([Λ_+:zΛ_-] - [Λ_-: zΛ_+])s}.$$
Then \eqref{eq:lc_bijection_omega} is obtained from evaluating this with $Λ_+ = Λ_0\oplus 0$ and $\,Λ_- = 0 \oplus Λ_1$ with $z = \left(\begin{smallmatrix} & 1\\ w & \end{smallmatrix}\right)$. Finally, \eqref{eq:lc_orb_int} follows from the previous two statements and \eqref{eq:orb_int_combinatorial}.
\end{proof}

\subsection{The Germ Expansion}
\label{ss:germ_exp}

Any $O_F$-lattice $Λ\subset L$ defines an order $O_Λ = \{x\in L \mid xΛ\subseteq Λ\}$. We define the conductor of $Λ$ as the conductor of its order,
$$\mr{cond}(Λ) := \mr{cond}(O_Λ).$$

\begin{defn}\label{def:germs}
We define the principal germ orbital integral of $w$ as
\begin{equation}\label{eq:principal_germ}
P(w, s) := \sum_{(Λ_0, Λ_0^\flat, Λ_1, Λ_1^\flat) \in \mcL(w),\ \mr{cond}(Λ_0)\ = \ \mr{cond}(Λ_0^\flat)\ =\ 0} (-q^{2s})^{[Λ_0:Λ_1]}.
\end{equation}
Note that $\mr{cond}(Λ_0) = 0$ just says $Λ_0 = O_L$. Let $i(L)$ denote the index $[O_L^\times : (O_F + πO_L)^\times]$. Define the unipotent germ orbital integral as
\begin{equation}\label{eq:unipotent_germ}
U(w, s) := i(L)^{-1}\sum_{(Λ_0, Λ_0^\flat, Λ_1, Λ_1^\flat) \in \mcL(w),\ (\mr{cond}(Λ_0), \mr{cond}(Λ_0^\flat))\ \neq\ (0,0)} (-q^{2s})^{[Λ_0:Λ_1]}.
\end{equation}
\end{defn}
\begin{prop}\label{prop:germ_exp}
The Iwahori orbital integral, the principal germ, and the unipotent germ are related by the following identity. For every $w \in GL_2(F)$ such that $γ(w)\in G'_{\mr{rs}}$,
\begin{equation}\label{eq:germ_exp}
\Orb(γ(w), f'_\Iw, s) = (-q^s)^{-r(w)} \left[P(w,s) + i(L) U(w,s)\right].
\end{equation}
\end{prop}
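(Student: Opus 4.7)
The statement is essentially a bookkeeping identity, once one has the combinatorial expression from Lemma \ref{lem:lc_bijection}. The plan is to start from \eqref{eq:lc_orb_int},
\[
\Orb(γ(w), f'_\Iw, s) \;=\; (-q^s)^{-r(w)} \sum_{(Λ_0, Λ_0^\flat, Λ_1, Λ_1^\flat)\in \mcL(w)} (-q^{2s})^{[Λ_0:Λ_1]},
\]
and to split the sum over $\mcL(w)$ according to whether the pair of conductors $\mr{cond}(Λ_0,Λ_0^\flat)$ equals $(0,0)$ or not.

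The contribution of the tuples with $\mr{cond}(Λ_0,Λ_0^\flat)=(0,0)$ is, by Definition \ref{def:germs}, exactly the principal germ $P(w,s)$ from \eqref{eq:principal_germ}. The contribution of the complementary set of tuples is, by Definition \ref{def:germs}, exactly $i(L)\,U(w,s)$, because the unipotent germ $U(w,s)$ in \eqref{eq:unipotent_germ} is defined with the normalizing factor $i(L)^{-1}$ in front. Adding the two contributions gives
\[
\sum_{(Λ_0, Λ_0^\flat, Λ_1, Λ_1^\flat)\in \mcL(w)} (-q^{2s})^{[Λ_0:Λ_1]} \;=\; P(w,s) + i(L)\,U(w,s),
\]
and multiplying by the overall factor $(-q^s)^{-r(w)}$ yields \eqref{eq:germ_exp}.

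In this sense, there is no real obstacle to overcome in the proof itself: all the work has been packaged into the preceding Lemma \ref{lem:lc_bijection} (which rewrites the Iwahori orbital integral as a sum over $\mcL(w)$ and identifies the transfer factor) and into the definitions of $P(w,s)$ and $U(w,s)$. The only feature worth underlining is the choice of normalization $i(L)^{-1}$ in the definition of $U(w,s)$: it is cancelled by the prefactor $i(L)$ in \eqref{eq:germ_exp} and is presumably arranged this way so that $U(w,s)$ itself is the object satisfying a universal germ-type formula independent of $w$, which is needed for the later applications sketched in \S\ref{s:main_analytic}.
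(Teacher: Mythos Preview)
Your proof is correct and matches the paper's argument exactly: the paper's proof simply states that the identity follows directly from the combinatorial description \eqref{eq:lc_orb_int} and the definitions of the two germs, which is precisely the decomposition you spell out.
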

\begin{proof}
This follows directly from the combinatorial description in \eqref{eq:lc_orb_int} and the definition of the two germs.
\end{proof}

\begin{prop}\label{prop:germ_independence}
The principal germ $P(w,s)$ depends only on the triple $(L, r(w), d(w))$. The unipotent germ is independent of $L$ and only depends on the pair $(r(w), d(w))$.
\end{prop}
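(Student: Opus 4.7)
The starting point is the following structural observation. Let $(Λ_0, Λ_0^\flat, Λ_1, Λ_1^\flat)\in \mcL(w)$. Chasing the inclusions in the diagram \eqref{eq:lc_simplified} gives $wΛ_0 \subseteq Λ_1 \subseteq Λ_0^\flat \subseteq Λ_0$ and similarly $wΛ_0^\flat \subseteq Λ_1^\flat \subseteq πΛ_0 \subseteq Λ_0^\flat$; for the two lattices $Λ_1$ and $Λ_1^\flat$ one also uses $πΛ_1\subseteq Λ_1^\flat$, which comes from the four-term chain in Lemma~\ref{lem:lc_bijection}. Hence each of the four lattices is an $O_F[w]$-module. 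This means that $\mcL(w)$ is intrinsic to the data of the $O_F$-subalgebra $O_F[w]\subseteq L$ acting on $L$, and in particular it is sensitive to $w$ only through invariants of this action.

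For the principal germ, the conditions $\mr{cond}(Λ_0)=\mr{cond}(Λ_0^\flat)=0$ together with $O_L\cdot Λ_0=O_L$ pin down $Λ_0=O_L$ and force $Λ_0^\flat$ to be a fractional $O_L$-ideal with $[O_L:Λ_0^\flat]=q$. Such $Λ_0^\flat$ exist only in specific shapes depending on $L$: none if $L$ is inert (giving $P(w,s)=0$), the single ideal $\varpi_LO_L$ if $L$ is ramified, and the two ideals $(π,1)O_L$ and $(1,π)O_L$ if $L$ is split. With $(Λ_0,Λ_0^\flat)$ fixed, the bounding lattices $wΛ_0=wO_L$ and $wΛ_0^\flat$ are fractional $O_L$-ideals determined solely by $v_L(w)$, which is a function of $(L,r)$. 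Enumerating the admissible $(Λ_1,Λ_1^\flat)$ then becomes a count of $O_F$-submodules of the explicit finite $O_L$-module $Λ_0^\flat/wΛ_0$, subject to the inclusions $Λ_1^\flat\subseteq πΛ_0$ and $Λ_1^\flat\supseteq wΛ_0^\flat$ and the index-one conditions. The count depends only on $(L,r)$, which implies (in fact is stronger than) the claim for~$P(w,s)$.

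For the unipotent germ, I stratify $\mcL(w)^{\mr{unip}}$ by the pair of conductors $(c,c^\flat)=(\mr{cond}(Λ_0),\mr{cond}(Λ_0^\flat))\neq(0,0)$. The plan is to produce, within each stratum, an explicit bijection $\mcL(w)^{\mr{unip}}\iso \mcL(w')^{\mr{unip}}$ between pairs $(w,L)$ and $(w',L')$ sharing the same $(r,d)$, preserving the weights $[Λ_0:Λ_1]$ and the individual conductors. The key mechanism is that once $c$ or $c^\flat$ is positive, every lattice in the tuple is contained in an order of the form $O_F+π^{c_{\min}}O_L$ with $c_{\min}\geq 1$, and the condition $O_L\cdot Λ_0=O_L$ together with the $O_F[w]$-stability reduces to a set of conditions on $Λ_0$ as a rank-two $O_F$-module equipped with the operator $w$, whose $O_F$-conjugacy class is determined by the reduced characteristic polynomial of $w$. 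Since $r$ encodes $v(\det w)$ and $d$ encodes the discriminant-like invariant of $O_F[π^kw]$ (see \eqref{eq:invariants_w}), the pair $(r,d)$ determines this $O_F$-conjugacy class, and the corresponding lattice-counting problem is therefore independent of the ambient $L$. Choosing canonical representatives $w$ in each of the three types of $L$ (e.g.\ $π^a+π^b\varpi_L$ for a fixed $\varpi_L$) realizing a given $(r,d)$ then reduces the equality $U(w,s)=U(w',s)$ to the observation that the canonical models on different sides are identified by the above bijection.

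The principal-germ part is essentially bookkeeping. The main technical obstacle is the construction and verification of the $L$-independent bijection for the unipotent stratum, because the normalization $O_L\cdot Λ_0=O_L$ references the maximal order $O_L$, which varies with $L$. Translating it into an $L$-free form requires carefully tracking how $Λ_0$ sits inside $O_L$, and using that in the positive-conductor regime the $O_L$-colength of $Λ_0$ is bounded in terms of $c,c^\flat$ alone, so that the fine arithmetic of $L$ (e.g.\ split versus inert) drops out of the count.
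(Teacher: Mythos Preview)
Your treatment of the principal germ follows the same route as the paper (which defers to the explicit computation in Proposition~\ref{prop:principal_germ}), modulo a slip: for split $L$ with $d<0$ the lattice $wO_L$ has $O_F$-type $(r/2+d,\,r/2-d)$, so the count depends on the full triple $(L,r,d)$, not only on $(L,r)$ as you assert.

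The genuine gap is in the unipotent-germ argument. Your claim that ``the pair $(r,d)$ determines this $O_F$-conjugacy class'' of $w$ is false: the $\GL_2(F)$-conjugacy class of a non-scalar $w$ is its characteristic polynomial, and $(r,d)$ records only valuation data of the coefficients. For instance, for split $L$ with $d=0$, every $w=(u_1\pi^{r/2},u_2\pi^{r/2})$ with $u_i\in O_F^\times$ has the same $(r,d)$ but a different characteristic polynomial. So a comparison via conjugacy of $w$ as a linear operator cannot succeed, and your proposal offers no substitute. What the paper does is different in kind. First, it uses the transitive $O_L^\times$-action on lattices of a fixed conductor to move $(Λ_0,Λ_0^\flat)$ to the standard pair $(R_c,R_{c+1})$ with $R_c=O_F+\pi^cO_L$; this step simultaneously absorbs the factor $i(L)^{-1}$, whose role your sketch leaves unaccounted for. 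Second, for any two pairs $(L_1,w_1)$ and $(L_2,w_2)$ with the same $(r,d)$ it constructs (Lemma~\ref{lem:changing_L}) an $F$-\emph{linear} map $\phi:L_1\to L_2$ with $\phi(R_{1,c})=R_{2,c}$ and $\phi(w_1R_{1,c})=w_2R_{2,c}$ for all $c\ge 0$. Crucially, $\phi$ does not intertwine multiplication by $w_1$ and $w_2$ as operators; it only carries the \emph{images} $w_iR_{i,c}$ to one another. That is strictly weaker than conjugacy but is exactly what the outer frame of~\eqref{eq:lc_simplified} needs in order to transport the count of $(Λ_1,Λ_1^\flat)$. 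The construction of $\phi$ proceeds by a case analysis on the normal forms of Lemma~\ref{lem:std_form_r_d}; any argument along your lines would need an equivalent of this lemma.
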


\begin{proof}
The claim about the principal germ will be proved as part of Proposition \ref{prop:principal_germ} below. We only deal with the unipotent germ here which relies on the following lemmas.

\begin{lem}[Classification of numerical invariants]\label{lem:std_form_r_d} Let $r = r(w)$ and $d = d(w)$ in the following.
\begin{enumerate}[wide, labelindent=0pt, labelwidth=!, label=(\arabic*), topsep=2pt, itemsep=2pt]
\item Assume that $d \geq 0$ and let $ζ \in O_L$ be an $O_F$-algebra generator. Then $r$ is even, $d\in \mbZ$, and $π^{-r/2}w \in O_L^\times$. Moreover, $w$ is of the form $π^{r/2}(a + π^dbζ)$ for suitable elements $a\in O_F$ and $b\in O_F^\times$.

\item Assume that $d < 0$. Then $L$ is either ramified or split.

\item Assume that $d < 0$ and that $L$ is ramified. Then $r$ is odd, $d = -1/2$ and $π^{(r-1)/2}w$ is a uniformizer of $L$.

\item Assume that $d < 0$ and that $L = F\times F$. Then $w = (x, y)$ for two element $x,y\in F^\times$ such that
$$v(x) + v(y) = r\quand |v(x) - v(y)| = -2d.$$
\end{enumerate}
\end{lem}
\begin{proof}
This is proved by an easy case-by-case analysis.

(a) Assume that $L/F$ is an unramified field extension. Then $r = 2v_L(w)$ is even. The element $w_0 = π^{-r/2}w$ lies in $O_L^\times$. By definition, see \eqref{eq:invariants_w}, $d = \mr{cond}(O_F[w_0])$ is $\geq 0$. Moreover, given a generator $O_L = O_F[ζ]$, there obviously are $a, b\in O_F^\times$ such that $w_0 = a + π^dbζ$. This proves (2), as well as (1) whenever $L$ is an unramified field extension.

(b1) Assume that $L/F$ is a ramified field extension. Then $r = v_L(w)$. First assume that $r$ is even. Then $w_0 = π^{-r/2}w$ lies in $O_L^\times$ and, just as in (a), $d = \mr{cond}(O_F[w_0])$ is $\geq 0$. Given a generator $O_L = O_F[ζ]$, it is again clear that there are $a, b\in O_F^\times$ such that $w_0 = a + π^dbζ$. This shows (1) whenever $L$ is ramified.

(b2) Now assume $r$ is odd. Then $w = π^{(r-1)/2}\varpi$ for a uniformizer $\varpi \in L$. We find that
$$\begin{aligned}
d & = \mr{cond}(O_F[π^{-(r-1)/2}w]) + (r-1)/2 - r/2\\
& = \mr{cond}(O_L) - 1/2 = -1/2.
\end{aligned}$$
This proves (3).

(c1) Now assume that $L = F\times F$ and $w = (x,y)$ with $v(x) \leq v(y)$. The identity $r = v(x) + v(y)$ is clear from definition. Set $w_0 = π^{-v(x)}w$. Then
\begin{equation}\label{eq:r_d_split}
\begin{aligned}
d & = \mr{cond}(O_F[w_0]) + v(x) - (v(x) + v(y))/2\\
& = \mr{cond}(O_F[w_0]) + (v(x) - v(y))/2.
\end{aligned}
\end{equation}
Assume that $v(x) = v(y)$. Then we get $d = \mr{cond}(O_F[w_0]) \geq 0$. Given a generator $ζ\in O_L$, there are $a,b\in O_F^\times$ such that $w_0 = a + π^dbζ$. Since $v(x) = r/2$, this shows (1).

(c2) Finally, assume that $v(x) < v(y)$. Then $O_F[w_0] = O_L$, so \eqref{eq:r_d_split} gives $-2d = v(y) - v(x)$ which shows (4).
\end{proof}

\begin{lem}\label{lem:changing_L}
Let $L_1$ and $L_2$ be two quadratic étale $F$-algebras. Let $R_{i,c} = O_F + π^cO_{L_i}$ denote the order of conductor $c$ in $L_i$. Let furthermore $w_1\in L_1^\times \setminus F$ and $w_2\in L_2^\times\setminus F$ be elements with
$$r(w_1) = r(w_2)\quand d(w_1) = d(w_2).$$
Then there exists an $F$-linear map $ϕ:L_1\to L_2$ that has the following property. For every $c\geq 0$, both
$$ϕ(R_{1,c}) = R_{2,c}\quand ϕ(w_1R_{1,c}) = w_2R_{2,c}.$$
\end{lem}
\begin{proof}
Given $x,y\in O_F^\times$, there is a unique $F$-linear map $ϕ_{x,y}:L_1\to L_2$ such that
$$ϕ_{x,y}(1) = x\quand ϕ_{x,y}(w_1) = yw_2.$$
We will show that there are $x,y\in O_F$ such that $ϕ = ϕ_{x,y}$ has the properties claimed in the lemma. (In fact, $ϕ$ is necessarily of such a form.) Note that for all $x,y$ as above, $ϕ_{x,y}(O_F) = O_F$ and $ϕ_{x,y}(w_1O_F) = w_2O_F$. Since $R_{i,c} = O_F + π^cO_{L_i}$, our task is thus to find $x$ and $y$ such that also $ϕ_{x,y}(O_{L_1}) = O_{L_2}$ and $ϕ_{x,y}(w_1 O_{L_1}) = w_2 O_{L_2}$. Put $r = r(w_1)$ and $d = d(w_1)$ in the following.

\emph{First assume that $d \geq 0$.} Let $ζ_i\in O_{L_i}$ be two $O_F$-algebra generators. Using Lemma \ref{lem:std_form_r_d} (1), there are $a_i\in O_F$ and $b_i\in O_F^\times$ such that
$$w_1 = π^{r/2}(a_1 + π^d b_1 ζ_1),\quad w_2 = π^{r/2}(a_2 + π^d b_2 ζ_2).$$
Consider first the case $d = 0$. Then we claim that $ϕ = ϕ_{1,1}$ satisfies the assertion of the lemma. Indeed,
$$ϕ(ζ_1) = b_1^{-1}(π^{-r/2} w_2 - a_1) = b_1^{-1}(a_2 - a_1) + b_1^{-1}b_2 ζ_2$$
is an $O_F$-algebra generator of $O_{L_2}$. Furthermore, $w_iO_{L_i} = π^{r/2}O_{L_i}$ because $d\geq 0$, so it also follows that $ϕ(w_1O_{L_1}) = w_2O_{L_2}$.

Consider now the case $d > 0$. Then $a_1, a_2\in O_F^\times$ and we claim that $ϕ = ϕ_{x,y}$ with $x = a_1^{-1}a_2$ and $y = 1$ satisfies the assertion of the lemma. Indeed, we obtain
$$ϕ(ζ_1) = π^{-d}b_1^{-1}(π^{-r/2} w_2 - a_2) = b_1^{-1}b_2 ζ_2$$
which is again an $O_F$-algebra generator of $O_{L_2}$. Hence $ϕ(O_{L_1}) = O_{L_2}$ and the desired statement $ϕ(w_1O_{L_1}) = w_2O_{L_2}$ is obtained just as in the previous case. This proves the lemma in case $d \geq 0$.

\emph{Now assume that $d < 0$.} No matter which of the two cases (3) and (4) of Lemma \ref{lem:std_form_r_d} occur for $w_1$ and $w_2$, it always holds that $ζ_i = π^{-r/2-d}w_i$ is an $O_F$-algebra generator of $O_{L_i}$. So take $ϕ = ϕ_{1,1}$. Then $ϕ(ζ_1) = ζ_2$ and hence $ϕ(O_{L_1}) = O_{L_2}$. Furthermore, the element $π^{-2d}/ζ_i$ is again an $O_F$-algebra generator of $O_{L_i}$, see Lemma \ref{lem:std_form_r_d} (4). We obtain
$$ϕ(w_1O_{L_1}) = π^{r/2 + d} ϕ(ζ_1 (O_F + π^{-2d}/ζ_1 O_F)) = π^{r/2+d} (ζ_2O_F + π^{-2d}O_F) = w_2O_{L_2}$$
as desired and the proof of the lemma is complete.
\end{proof}

We now come to the main part of the proof of Proposition \ref{prop:germ_independence}. Let $\mcB$ be the $PGL$-building for $L$ viewed as $F$-vector space. Concretely, $\mcB$ is the graph (a tree in fact) with the following description. Its vertices are in bijection with $O_F$-lattices $Λ\subset O_L$ such that $O_L/Λ$ is a cyclic $O_F$-module. Its edges are given by unordered pairs $\{Λ, Λ'\}$ such that one lattice is a sublattice of index $1$ of the other. We write $d(Λ, Λ')$ for the distance between two vertices $Λ$ and $Λ'$. Consider the subtree $\mcE \subset \mcB$ that is spanned by the vertices
\begin{equation}\label{eq:def_mcE}
\mcE = \{ Λ\in \mcB \mid O_L\cdot Λ = O_L\}.
\end{equation}
(An equivalent description is as follows: Let $\mcC = \{Λ\in \mcB\mid O_L\cdot Λ = Λ\}$ be the subtree spanned by all $O_L$-lattices. It equals $\{O_L\}$, the edge $\{O_L, \varpi O_L\}$ or the apartment $\{(π^k, 1)O_L,\, (1, π^k)O_L \mid k \geq 0\}$, depending on whether $L$ is inert, ramified with uniformizer $\varpi$, or isomorphic to $F\times F$. Then $\mcE$ consists of all points of $\mcB$ whose shortest path to $O_L$ does not contain any other point of $\mcC$.)
Let $\mcE_c = \{Λ\in \mcE \mid d(O_L, Λ) = c\}$. The following statements hold in this situation:
\begin{enumerate}[wide, labelindent=0pt, labelwidth=!, label=(\arabic*), topsep=2pt, itemsep=2pt]
\item For any vertex $Λ\in \mcE$, we have that $\mr{cond}(Λ) = d(O_L, Λ)$.
\item The group $O_L^\times$ acts transitively on $\mcE_c$ with stabilizer $R_c^\times$. Here, $R_c = O_F + π^cO_L$ again denotes the order of conductor $c$. In particular, $\#\mcE_c = [O_L^\times : R_c^\times]$ which equals $i(L) q^{c-1}$ if $c\geq 1$.
\item Since $\mcE$ is a tree, $O_L^\times$ then also acts transitively on the set of edges of $\mcE$ with distance $c$ from $O_L$. In particular, every edge of distance $c$ from $O_L$ is an $O_L^\times$-translate of the edge $\{R_c, R_{c+1}\}$.
\end{enumerate}
Consider now a pair $(Λ_0, Λ_0^\flat)$ of lattices in $L$ with $Λ_0\in \mcE$ and such that $Λ_0^\flat \subset Λ_0$ with index $1$. Assume that $\mr{cond}(Λ_0, Λ_0^\flat) \neq (0, 0)$. This is equivalent to $\{Λ_0, Λ_0^\flat\}\not\subset \mcC$ which means that $\{Λ_0, Λ_0^\flat\}$ is an edge of $\mcE$. In particular, $Λ_0$ and $Λ_0^\flat$ have different conductor. So if $c = \min\{\mr{cond}(Λ_0), \mr{cond}(Λ_0^\flat)\}$, then
\begin{equation}\label{eq:conductors}
\mr{cond}(Λ_0, Λ_0^\flat) \in \{ (c, c+1),\ (c+1, c)\}.
\end{equation}
In the first case, $Λ_0^\flat$ lies in $\mcB$. In the second case, it is instead $π^{-1}Λ_0^\flat$ that lies in $\mcB$. Depending on which case occurs, $\{Λ_0, Λ_0^\flat\}$ or $\{Λ_0, π^{-1}Λ_0^\flat\}$ defines an edge of $\mcB$ that lies in $\mcE$ (because $Λ_0\in \mcE$) and that has distance $c$ from $O_L$.

For a fixed pair $(Λ_0, Λ_0^\flat)$, let $\mcL(w; Λ_0, Λ_0^\flat)$ denote the set of quadruples $(Λ_0, Λ_0^\flat, Λ_1, Λ_1^\flat)\in \mcL(w)$. Assuming that $Λ_0^\flat \subseteq πO_L$, there is the symmetry
$$\begin{aligned}
\mcL(w; Λ_0, Λ_0^\flat) &\ \overset{\sim}{\lr} \ \mcL(w; π^{-1}Λ_0^\flat, Λ_0)\\
(Λ_0, Λ_0^\flat, Λ_1, Λ_1^\flat) &\ \longmapsto\ (π^{-1}Λ_0^\flat, Λ_0, π^{-1}Λ_1^\flat, Λ_1).
\end{aligned}$$
We deduce that no matter which case occurs in \eqref{eq:conductors}, $\#\mcL(w; Λ_0, Λ_0^\flat) = \#\mcL(w; R_c, R_{c+1}).$
It follows that we can rewrite the definition of the unipotent germ in \eqref{eq:unipotent_germ} as
\begin{equation}\label{eq:unipotent_during_proof}
U(w, s) \ =\ \ 2\ \sum_{c\geq 0} \ \ q^c \sum_{(R_c, R_{c+1}, Λ_1, Λ_1^\flat) \in \mcL(w)} (-q^{2s})^{[R_c:Λ_1]}.
\end{equation}
It now follows from Lemma \ref{lem:changing_L} that the outer diagram in \eqref{eq:lc_simplified},
$$
\begin{array}{ccccccc}
R_c & \supset & R_{c+1}  &\supseteq & \ldots & \supseteq & wR_c\\
    &         & \cup     & &  &           & \cup\\
    &         & πR_c     &\supseteq & \ldots & \supseteq & wR_{c+1},
\end{array}
$$
only depends on the invariants $r(w)$ and $d(w)$ up to $F$-linear isomorphism. We conclude that the Expression \eqref{eq:unipotent_during_proof} only depends on $(r(w), d(w))$ and not on $L$, as was to be shown.
\end{proof}

\subsection{The Principal Germ}

The purpose of this section is to explicitly compute the principal germ. The relative position $(M_0 : M_1) \in \mbZ^2$ of two lattices $M_0, M_1\subset F^2$ is, by definition, the pair $(a, b)$ with $a\leq b$ that consists of the valuations of their elementary divisors (Cartan decomposition). For example, $(M_0 : M_1) = (0, k)$ with $k\geq 0$ if and only if $M_0\supseteq M_1$ with cyclic quotient $M_0/M_1$ of length $k$. By lattice pair in $F^2$, we mean a pair of lattices $(M, M^\flat)$ such that $(M : M^\flat) = (0, 1)$.

For non-negative integers $0\leq a \leq b$ and a third integer $0\leq k \leq a + b$, we define the quantity
\begin{equation}\label{eq:xi}
\Xi_k(a, b) = 1 + 2q + \ldots + 2q^{\min\{k, a, a+b-k\}}.
\end{equation}
The boundary cases here are understood as $\Xi_k(a,b) = 1$ whenever $\min\{k, a, a+b-k\} = 0$. We also need a slight modification of $\Xi_k(a,b)$ which will only be considered for $a\geq 1$:
\begin{equation}\label{eq:xi_prime}
\Xi'_k(a, b) = \begin{cases}
1 + 2q + \ldots + 2q^{\min\{k, a + b -k\}} & \text{if $k < a$ or $b < k$}\\
1 + 2q + \ldots + 2q^{a-1} + q^a & \text{if $a \leq k \leq b$}
\end{cases}
\end{equation}
with boundary cases $\Xi'_k(a, b) = 1$ whenever $k \in \{0, a+b\}$.

\begin{lem}\label{lem:lattice_count}
Let $(M_0, M_0^\flat)$ and $(M_1, M_1^\flat)$ be two lattice pairs in $F^2$ such that $M_0\supseteq M_1$ and $M_0^\flat \supseteq M_1^\flat$. Let $0\leq a \leq b$ be such that $(M_0:M_1) = (a : b)$. Then the number of lattice pairs $(Λ, Λ^\flat)$ that fit into the diagram
\begin{equation}\label{eq:standard_lattice_diagram}
\begin{array}{ccccc}
M_0 & \supseteq & Λ & \supseteq & M_1\\
\cup & & \cup & & \cup\\
M_0^\flat & \supseteq & Λ^\flat & \supseteq & M_1^\flat
\end{array}
\end{equation}
and furthermore satisfy $[M_0:Λ] = k$ is given by
\begin{equation}\label{eq:count_xi}
\begin{cases}
\Xi_k(a, b) & \text{if $(M_0^\flat : M_1^\flat) = (a : b)$}\\
\Xi_k'(a, b) & \text{if $(M_0^\flat : M_1^\flat) = (a-1 : b+1)$}\\
\Xi_k'(a+1, b-1) & \text{if $a+2 \leq b$ and $(M_0^\flat : M_1^\flat) = (a+1 : b-1)$}.
\end{cases}
\end{equation}
\end{lem}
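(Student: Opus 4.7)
\medskip

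\noindent\textbf{Proof plan.} The natural setting is the Bruhat--Tits tree $\mathcal{T}$ of $PGL_2(F)$: a lattice pair $(M, M^\flat)$ (i.e. $M \supset M^\flat$ of index $1$) is an oriented edge of $\mathcal{T}$, and a chain of inclusions translates into a geodesic configuration. The hypothesis $(M_0:M_1) = (a,b)$ says that $[M_0]$ and $[M_1]$ lie at tree-distance $b-a$, and the three cases $(M_0^\flat:M_1^\flat) \in \{(a,b),(a-1,b+1),(a+1,b-1)\}$ correspond to the three possible relative positions of the edge $\{[M_0^\flat],[M_1^\flat]\}$ with respect to $\{[M_0],[M_1]\}$ (parallel, longer, shorter). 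My plan is to first reduce to standard coordinates, then enumerate the $Λ$'s via their Cartan type in $M_0$, and finally, for each such $Λ$, count the admissible $Λ^\flat$.

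First I would fix $M_0 = O_Fe_1\oplus O_Fe_2$ and, using the action of the isotropy of $(M_0,M_1)$, normalize $M_0^\flat$ to the standard sublattice $πO_Fe_1\oplus O_Fe_2$. The lattices $Λ$ with $M_0\supseteq Λ\supseteq M_1$ and $[M_0:Λ]=k$ are in bijection with index-$q^k$ submodules of $M_0/M_1\cong O_F/π^a\oplus O_F/π^b$. Grouping them by their elementary divisors $(c,k-c)$ in $M_0$, where $c$ ranges over $0\le c\le \min\{k,a,a+b-k\}$, one finds: a unique $Λ$ when $c=0$ or $c=\min\{k,a,a+b-k\}$ is attained at the endpoint (the ``extreme'' position where $[Λ]$ sits on the geodesic from $[M_0]$ to $[M_1]$), and precisely $2q^c$ such $Λ$'s for intermediate $c$ (coming from two $O_F^\times$-orbits of directions off the geodesic, each of size $q^c$). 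This explains the shape $1+2q+\cdots+2q^{\min\{k,a,a+b-k\}}$ appearing in $\Xi_k(a,b)$.

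Second, for each such $Λ$, I count the $Λ^\flat \subset Λ$ of index $1$ satisfying both $M_0^\flat \supseteq Λ^\flat$ and $Λ^\flat \supseteq M_1^\flat$. The inclusion $πM_1\subseteq M_1^\flat$ implies $M_1^\flat \subseteq Λ$ automatically, so the count of admissible $Λ^\flat$ depends only on two dichotomies: whether $Λ\subseteq M_0^\flat$ (in which case all $q+1$ choices of $Λ^\flat$ satisfy the first constraint, otherwise only the unique $Λ^\flat := Λ\cap M_0^\flat$ does), and similarly whether $M_1^\flat\subseteq πΛ$. A short check shows that exactly one admissible $Λ^\flat$ exists for each $Λ$ in case (i) and case (iii), giving \eqref{eq:count_xi} directly in those cases.

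Case (ii) is the only one with a nontrivial enhancement: here the edge $\{[M_0^\flat],[M_1^\flat]\}$ is longer than $\{[M_0],[M_1]\}$, and the ``extreme'' $Λ$ at depth $c=a$ in the geodesic direction happens to satisfy $Λ\subseteq M_0^\flat$ \emph{and} $M_1^\flat\subseteq πΛ$, so it contributes $q+1$ admissible $Λ^\flat$ instead of $1$. This is precisely the boundary case $a\le k\le b$ in \eqref{eq:xi_prime}, where the last summand $q^a$ in $\Xi'_k(a,b)$ accounts for the unique extreme $Λ$ plus the extra $q$ choices of $Λ^\flat$ ($1 + q = $ the ``$q^a$'' plus one of the doubled counts reinterpreted). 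Summing $2q^c$ for intermediate $c$ and adding the endpoint contributions gives $\Xi'_k(a,b)$. The main obstacle I anticipate is keeping track of these endpoint/extreme cases cleanly (especially when $k<a$, $a\le k\le b$, or $b<k$), since the dichotomies above interact with the ranges of $c$ and a careful bookkeeping is needed to match the three piecewise formulas in \eqref{eq:xi_prime}.
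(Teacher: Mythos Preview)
Your two-step strategy (enumerate the admissible $Λ$'s, then for each $Λ$ count the admissible $Λ^\flat$'s) is sound and genuinely different from the paper's approach, which instead partitions the set of pairs $(Λ,Λ^\flat)$ into three pieces according to whether $Λ\not\subseteq M_0^\flat$, whether $Λ^\flat\not\supseteq M_1$, or neither, and evaluates each piece via the auxiliary lattice count $\Phi_k$ and the pair count $\Psi_k$ computed beforehand from elementary recursions. Your dichotomy is also correct: for a fixed $Λ$, the number of admissible $Λ^\flat$ is $q+1$ if both $Λ\subseteq M_0^\flat$ and $M_1^\flat\subseteq πΛ$ hold, and $1$ otherwise.

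The gap is in what follows. You assert that in cases (i) and (iii) the situation ``both conditions hold'' never occurs, so each $Λ$ contributes exactly one $Λ^\flat$. This is false. Take $a=b=k=2$, $M_0=O_F^2$, $M_1=π^2M_0$, $M_0^\flat=πO_F\oplus O_F$, $M_1^\flat=π^2M_0^\flat$; then $(M_0^\flat:M_1^\flat)=(2,2)$ so we are in case (i), yet for $Λ=πM_0$ both $Λ\subseteq M_0^\flat$ and $M_1^\flat\subseteq πΛ$ hold, giving $q+1$ choices of $Λ^\flat$. More structurally, the total number of $Λ$'s is $\Phi_k(a,b)=1+q+\cdots+q^m$ with $m=\min\{k,a,a+b-k\}$, \emph{not} $\Xi_k(a,b)$; your tree count ``$2q^c$ for intermediate $c$, $1$ at the endpoints'' already conflates the count of $Λ$'s with the count of pairs. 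The number of $Λ$ for which both conditions hold is exactly $\#\{Λ:M_0^\flat\supseteq Λ\supseteq π^{-1}M_1^\flat,\ [M_0^\flat:Λ]=k-1\}=\Phi_{k-1}(M_0^\flat:π^{-1}M_1^\flat)$, and this is nonzero in all three cases, not just for a single extreme $Λ$ in case (ii). Correctly executed, your method actually gives the clean uniform identity
\[
\text{(answer)}\;=\;\Phi_k(a,b)\;+\;q\cdot\Phi_{k-1}(M_0^\flat:π^{-1}M_1^\flat),
\]
which specializes to $\Xi_k(a,b)$, $\Xi'_k(a,b)$, $\Xi'_k(a+1,b-1)$ as $(M_0^\flat:π^{-1}M_1^\flat)$ runs through $(a-1,b-1)$, $(a-2,b)$, $(a,b-2)$; but the case analysis you sketched does not get there.
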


\begin{proof}[Part 1 of the proof: Auxiliary results.]
We begin with a few easier counting formulas. Let $0 \leq a \leq b$ and $0\leq k \leq a+b$ be integers and let $M_0\supseteq M_1$ be two lattices of relative position $(a :b)$.  Put
\begin{equation}
\Phi^{\mr{prim}}_k(a,b) = \#\{M_0 \supseteq Λ\supseteq M_1 \mid [M_0:Λ] = k,\ M_0/Λ\text{ cyclic}\}.
\end{equation}
We have
$$
\#\mr{Surjections}\big(O_F/(π)^a \oplus O_F/(π)^b,\ O_F/(π)^k\big) = \begin{cases}
1 & \text{if $k = 0$}\\
q^{2k} - q^{2k-2} & \text{if $1 \leq k\leq a$}\\
q^a(q^k - q^{k-1}) & \text{if $a < k \leq b$}\\
0 & \text{if $b<k$}.\end{cases}
$$
Since $\#\Aut\big(O_F/(π)^k\big) = q^k - q^{k-1},$ it follows that
\begin{equation}\label{eq:count_phi_prim}
\Phi^{\mr{prim}}_k(a,b) = \begin{cases}
1 & \text{if $k = 0$}\\
q^{k-1} + q^k & \text{if $1 \leq k \leq a$}\\
q^a & \text{if $a < k\leq b$}\\
0 & \text{if $b<k$}.\end{cases}
\end{equation}
We next consider the quantities
\begin{equation}\label{eq:def_phi}
\Phi_k(a,b) = \#\{M_0 \supseteq Λ\supseteq M_1 \mid [M_0:Λ] = k\}.
\end{equation}
It holds that $\Phi_0(a,b) = 1$ and that
\begin{equation}
\Phi_1(a,b) = \begin{cases} 0 & \text{if $a = b = 0$}\\
1 & \text{if $a = 0$ and $1 \leq b$}\\
1 + q  & \text{if $1 \leq a$.}
\end{cases}
\end{equation}
A sublattice $Λ\subseteq M_0$ has the property that $M_0/Λ$ is not cyclic if and only if it is contained in $πM_0$. So there is a recursion formula for $2\leq k$:
\begin{equation}
\Phi_k(a,b) = \Phi_{k-2}(a-1, b-1) + \Phi^{\mr{prim}}_k(a,b).
\end{equation}
It follows from this and \eqref{eq:count_phi_prim} that
\begin{equation}\label{eq:count_phi}
\Phi_k(a,b) = 1 + q + \ldots + q^{\min\{k, a, a+b-k\}}.
\end{equation}

We next count lattice pairs that lie between $M_0$ and $M_1$. More precisely, we consider the quantity
\begin{equation}\label{eq:def_psi}
\Psi_k(a,b) = \#\{M_0 \supseteq Λ\supset Λ^\flat \supseteq M_1 \mid [M_0:Λ] = k,\ [Λ:Λ^\flat] = 1\}.
\end{equation}
If we are given $M_0 \supseteq Λ^\flat \supseteq M_1$, then there are either $1$ or $1+q$ possibilities for finding a lattice $Λ$ as in \eqref{eq:def_psi}, depending on whether $M_0/Λ^\flat$ is cyclic or not. We obtain that
$$\Psi_k(a,b) = Φ^{\mr{prim}}_{k+1}(a, b) + (1+q)Φ_{k-1}(a-1, b-1).$$
Evaluating this expression with \eqref{eq:count_phi}, it follows that
\begin{equation}\label{eq:count_psi}
\Psi_k(a, b) = \begin{cases}
1 + 2q + \ldots + 2q^k + q^{k+1} & \text{if $k < a$}\\
1 + 2q + \ldots + 2q^a & \text{if $a \leq k < b$}\\
1 + 2q + \ldots + 2q^{a+b-k-1} + q^{a+b-k} & \text{if $b \leq k \leq a+b-1$.}
\end{cases}
\end{equation}

\noindent \emph{Part 2 of the proof: Main result.} We now come back to the setting of the lemma. That is, $(M_0, M_0^\flat)$ and $(M_1, M_1^\flat)$ denote lattice pairs with $(M_0 : M_1) = (a:b)$ and $M_0^\flat\supseteq M_1^\flat$; our aim is to show \eqref{eq:count_xi}. We begin by noting that diagrams of the form \eqref{eq:standard_lattice_diagram} have the symmetry
$$
\begin{array}{ccccc}
M_0 & \supseteq & Λ & \supseteq & M_1\\
\cup & & \cup & & \cup\\
M_0^\flat & \supseteq & Λ^\flat & \supseteq & M_1^\flat
\end{array}
\quad\longmapsto\quad
\begin{array}{ccccc}
M_0^\flat & \supseteq & Λ^\flat & \supseteq & M^\flat_1\\
\cup & & \cup & & \cup\\
πM_0 & \supseteq & πΛ & \supseteq & πM_1.
\end{array}
$$
Thus the third case in \eqref{eq:count_xi} follows directly from the second one and will not be considered anymore.

Next, we settle the case $a = 0$: Then $M_0/M_1$ is cyclic. The second case cannot occur (and the third case has already been dealt with), so we are in the first case meaning that the quotient $M_0^\flat/M_1^\flat$ is cyclic as well. It follows that for every $0\leq k \leq b$ there is a unique lattice pair $(Λ, Λ^\flat)$ that fits \eqref{eq:standard_lattice_diagram} and satisfies $[M_0:Λ] = k$. This fits the special case $\Xi_k(0, b) = 1$ in \eqref{eq:xi}.

From now on we can and do assume that $1\leq a$ which implies that $M_0^\flat \supset M_1$. The set of lattice pairs $(Λ, Λ^\flat)$ in question is then in bijection with the following disjoint union. (The condition $[M_0:Λ] = k$ is understood without explicit mentioning.)
\begin{equation}\label{eq:lattice_counting_three_conditions}
\{M_0 \supseteq Λ \supseteq M_1 \mid M_0^\flat \not\supseteq Λ\}\ \ \sqcup\ \ 
\{M_0^\flat \supseteq Λ^\flat \supseteq M_1^\flat \mid Λ_0^\flat\not\supseteq M_1\}\ \ \sqcup\ \ 
\{M^\flat_0 \supseteq Λ \supset Λ^\flat \supseteq M_1\}.
\end{equation}
Indeed, for every lattice $Λ$ from the first set or $Λ^\flat$ from the second set, there is a unique way to complete the diagram \eqref{eq:standard_lattice_diagram}. It is given by setting $Λ^\flat = Λ\cap M_0^\flat$ or $Λ = Λ^\flat + M_1$, respectively. Furthermore, the cardinalities of all three sets in \eqref{eq:lattice_counting_three_conditions} are easily expressed in terms of $\Phi$ and $\Psi$:
\begin{equation}\label{eq:counting_formulas_xi}
\begin{aligned}
\#\{M_0 \supseteq Λ \supseteq M_1 \mid M_0^\flat \not\supseteq Λ\} &\ = \#\{M_0 \supseteq Λ \supseteq M_1\} - \#\{M_0^\flat \supseteq Λ \supseteq M_1\}\\
&\ = Φ_k(a, b) - Φ_{k-1}(M_0^\flat:M_1)\\
\#\{M_0^\flat \supseteq Λ^\flat \supseteq M_1^\flat \mid Λ^\flat \not \supseteq M_1\} &\ = \#\{M_0^\flat\supseteq Λ^\flat \supseteq M_1^\flat\} - \#\{M_0^\flat \supseteq Λ^\flat \supseteq M_1\}\\
&\ = Φ_k(M_0^\flat : M_1^\flat) - Φ_k(M_0^\flat:M_1)\\
\#\{M_0^\flat \supseteq Λ\supset Λ^\flat\supseteq M_1\} &\ = Ψ_{k-1}(M_0^\flat:M_1).
\end{aligned}
\end{equation}
More precisely, we obtain from \eqref{eq:counting_formulas_xi} that the number of $(Λ, Λ^\flat)$ in question is
\begin{equation}\label{eq:Xi_general}
Φ_k(a, b) + Φ_k(M_0^\flat : M_1^\flat) - Φ_{k-1}(M_0^\flat : M_1) - Φ_k(M_0^\flat : M_1) + Ψ_{k-1}(M_0^\flat : M_1).
\end{equation}
We have already reduced to the first and second case in \eqref{eq:count_xi}, so there are the following three possibilities left, none of which poses any difficulties:
$$\begin{cases}
(M_0^\flat : M_1^\flat) = (a, b),\mspace{74mu} (M_0^\flat : M_1) = (a, b - 1),\ \ a \leq b-1 & \text{\quad Case (1)}\\
(M_0^\flat : M_1^\flat) = (a, b),\mspace{74mu} (M_0^\flat : M_1) = (a - 1, b) & \text{\quad Case (2)}\\
(M_0^\flat : M_1^\flat) = (a - 1, b + 1),\ \ (M_0^\flat : M_1) = (a - 1, b) & \text{\quad Case (3)}.
\end{cases}$$
Let $Φ_k^\mr{total}$ denote the sum of the four $Φ$-terms in \eqref{eq:Xi_general}. Using \eqref{eq:count_phi}, one sees that in Case (1)
$$Φ_k^{\mr{total}} = \begin{cases}
q^k & \text{if $0\leq k\leq a$}\\
0 & \text{if $a < k < b$}\\
q^{a+b-k} & \text{if $b\leq k \leq a + b$,}
\end{cases}$$
in Case (2)
$$Φ_k^{\mr{total}} = \begin{cases}
q^k & \text{if $0\leq k < a$}\\
2q^a & \text{if $a \leq k \leq b$}\\
q^{a+b-k} & \text{if $b < k \leq a + b$,}
\end{cases}$$
and in Case (3)
$$Φ_k^{\mr{total}} = \begin{cases}
q^k & \text{if $0\leq k < a$}\\
q^a & \text{if $a \leq k \leq b$}\\
q^{a+b-k} & \text{if $b < k \leq a + b$.}
\end{cases}$$
Adding these to $Ψ_{k-1}(a, b-1)$ in Case (1) resp. to $Ψ_{k-1}(a-1, b)$ in Cases (2) and (3), and using \eqref{eq:count_psi}, proves \eqref{eq:count_xi}.
\end{proof}

\begin{prop}\label{prop:principal_germ}
Let $w\in M_2(F)$ and $P(w,s)$ be as in Definition \ref{def:germs}. Let $(L, r, d)$ be the numerical invariants of $w$, see \eqref{eq:invariants_w}. For integers $0\leq a \leq b$ and $0\leq k\leq a+b$, let $\Xi_k(a, b)$ and $\Xi'_k(a, b)$ denote the quantities from \eqref{eq:xi} and \eqref{eq:xi_prime}; set $X = -q^{-2s}$.
\begin{enumerate}[wide, labelindent=0pt, labelwidth=!, label=(\arabic*), topsep=2pt, itemsep=2pt]
\item If $L$ is an unramified field extension or if $r \leq 0$, then $P(w, s) = 0$.
\item Assume that $L$ is ramified and $r \geq 1$. Then either $d \geq 0$ and $r$ is even or $d = -1/2$ and $r$ is odd. The principal germ $P(w, s)$ only depends on $(r,d)$ and equals
\begin{equation}\label{eq:princ_germ_ram}
P(L, r, d, s) := \begin{cases}
\sum_{k = 0}^{r-1} \Xi_{k}(r/2-1,r/2) X^{-k-1} & \text{if $d \geq 0$}\\
\sum_{k = 0}^{r-1} \Xi_{k}(r/2+d,r/2-d-1) X^{-k-1} & \text{if $d = -1/2$.}
\end{cases}
\end{equation}

\item Assume that $L\iso F\times F$ and $r \geq 1$. If $d \geq 0$, then $r$ is even. The principal germ $P(w, s)$ only depends on $(r, d)$ and equals
\begin{equation}\label{eq:princ_germ_split}
P(F\times F, r, d, s) := 2 \cdot \begin{cases}
\sum_{k = 0}^{r-1} \Xi_k(r/2-1,r/2) X^{-k-1} & \text{if $d \geq 0$}\\
\sum_{k = 0}^{r-1} \Xi'_k(r/2+d,r/2-d-1) X^{-k-1} & \text{if $d < 0$.}
\end{cases}
\end{equation}
\end{enumerate}
\end{prop}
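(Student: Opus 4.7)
The plan is to evaluate $P(w,s)$ directly from Definition \ref{def:germs} in three steps: (i) enumerate the index-$1$ sublattices $\Lambda_0^\flat \subset \Lambda_0$ allowed by $\mr{cond}(\Lambda_0, \Lambda_0^\flat) = (0,0)$; (ii) recognise that the inner diagram of \eqref{eq:lc_simplified} with $(M_0, M_0^\flat) := (\Lambda_0^\flat, \pi\Lambda_0)$ and $(M_1, M_1^\flat) := (w\Lambda_0, w\Lambda_0^\flat)$ is precisely of the form \eqref{eq:standard_lattice_diagram}, so that Lemma \ref{lem:lattice_count} counts the pairs $(\Lambda_1, \Lambda_1^\flat)$ with $[M_0:\Lambda_1] = k$; and (iii) sum the weights $X^{-k-1}$ over $0 \leq k \leq [M_0 : M_1]$ (the shift $+1$ in the exponent accounts for $[\Lambda_0 : \Lambda_0^\flat] = 1$). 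The conditions $O_L \cdot \Lambda_0 = O_L$ and $\mr{cond}(\Lambda_0) = 0$ together force $\Lambda_0 = O_L$, while $\mr{cond}(\Lambda_0^\flat) = 0$ forces $\Lambda_0^\flat$ to be an $O_L$-ideal of $O_F$-index one, which reduces the enumeration to a short list.

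For Case (1): if $L/F$ is an unramified field then $O_L$ admits no $O_L$-ideal of $O_F$-index one and no $\Lambda_0^\flat$ exists; if $r \leq 0$ then a short valuation count shows $wO_L \not\subseteq \Lambda_0^\flat$ for every candidate $\Lambda_0^\flat$, so the chain $w\Lambda_0 \subseteq \Lambda_1 \subseteq \Lambda_0^\flat$ cannot be completed. Either way $P(w,s) = 0$. For Case (2), $L$ ramified and $r \geq 1$: the unique candidate is $\Lambda_0^\flat = \varpi O_L$, and since $v_L(w) = v_F(N_{L/F}(w)) = r$ in this setting we have $wO_L = \varpi^r O_L$ regardless of $d$. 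A direct Smith normal form computation in the $O_F$-basis $\{1, \varpi\}$ of $O_L$ yields $(M_0 : M_1) = (M_0^\flat : M_1^\flat)$, equal to $(r/2-1, r/2)$ when $r$ is even (corresponding to $d \geq 0$ by Lemma \ref{lem:std_form_r_d}) and to $((r-1)/2, (r-1)/2)$ when $r$ is odd (corresponding to $d = -1/2$). Case (1) of \eqref{eq:count_xi} then produces $\Xi_k$-terms that, summed against $X^{-k-1}$ over $0 \leq k \leq r-1$, recover \eqref{eq:princ_germ_ram}.

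For Case (3), $L = F \times F$ and $r \geq 1$: by Lemma \ref{lem:std_form_r_d} one may take $w = (\pi^a, \pi^b)$ with $a \leq b$, $a + b = r$, $a = r/2 + d$, $b = r/2 - d$, and the two candidates for $\Lambda_0^\flat$ are $(\pi) \times O_F$ and $O_F \times (\pi)$. When $d \geq 0$, $a = b = r/2$ and $wO_L = \pi^{r/2} O_L$; for either candidate $\Lambda_0^\flat$, both relative positions equal $(r/2 - 1, r/2)$, so Case (1) of \eqref{eq:count_xi} contributes $\sum_k \Xi_k(r/2 - 1, r/2) X^{-k-1}$ from each of the two, yielding the factor $2$ in \eqref{eq:princ_germ_split}. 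When $d < 0$ and $a \geq 1$: for $\Lambda_0^\flat = O_F \times (\pi)$ one finds $(M_0 : M_1) = (a, b-1)$ and $(M_0^\flat : M_1^\flat) = (a-1, b)$, matching Case (2) of \eqref{eq:count_xi} with lemma-parameters $(a, b-1)$ and producing $\Xi'_k(r/2 + d, r/2 - d - 1)$; for $\Lambda_0^\flat = (\pi) \times O_F$ the two relative positions are swapped to $(a-1, b)$ and $(a, b-1)$ respectively, placing us in Case (3) of \eqref{eq:count_xi} with lemma-parameters $(a - 1, b)$ (the hypothesis $a+1 \leq b$ holds since $d < 0$ strictly) and yielding the same $\Xi'_k$. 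Summing the two contributions produces the factor $2$. The boundary $a = 0$ (i.e., $d = -r/2$) must be handled separately: neither candidate $\Lambda_0^\flat$ satisfies $\pi\Lambda_0 \supseteq w\Lambda_0^\flat$, so $P(w,s) = 0$, consistent with $\Xi'_k$ being undefined at first argument $0$.

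The main subtlety is this bookkeeping in the split case with $d < 0$: correctly matching each of the two $\Lambda_0^\flat$ with the appropriate case of Lemma \ref{lem:lattice_count} and verifying that both produce the same $\Xi'_k$, so that their sum cleanly yields the prefactor $2$; once this is in place, the $(L, r, d)$-dependence of all three expressions is manifest from Lemma \ref{lem:lattice_count}, completing the proof.
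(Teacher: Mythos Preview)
Your approach is essentially identical to the paper's: fix $\Lambda_0 = O_L$, enumerate the conductor-$0$ index-$1$ sublattices $\Lambda_0^\flat$, and apply Lemma \ref{lem:lattice_count} to the inner square of \eqref{eq:lc_simplified}, matching each choice of $\Lambda_0^\flat$ in the split $d<0$ case with the appropriate branch of \eqref{eq:count_xi}. One small correction to your boundary remark at $a=0$: for $\Lambda_0^\flat=(\pi)\times O_F$ it is the inclusion $w\Lambda_0\subseteq\Lambda_0^\flat$ that fails (not $w\Lambda_0^\flat\subseteq\pi\Lambda_0$), while for $\Lambda_0^\flat=O_F\times(\pi)$ the situation is reversed---either way no quadruple exists and $P(w,s)=0$, and the paper simply leaves this degenerate case implicit.
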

\begin{proof}
Recall first the definition of the principal germ from \eqref{eq:principal_germ}:
\begin{equation}\label{eq:princ_germ_recap}
P(w, s) = \sum_{(O_L, Λ_0^\flat, Λ_1, Λ_1^\flat) \in \mcL(w),\ \mr{cond}(Λ_0^\flat) = 0} X^{-[O_L:Λ_1]}.
\end{equation}
The task is thus to count the set of lattice diagrams of the form $(O_L, Λ_0^\flat, Λ_1, Λ_1^\flat)\in \mcL(w)$ with $\mr{cond}(Λ_0^\flat) = 0$ and $[O_L:Λ_1] = k$. This counting problem was the content of Lemma \ref{lem:lattice_count} and it is only left to evaluate this lemma in dependence on $(L, r, d)$.

If $L$ is an unramified field extension, then $P(w, s) = 0$ for the trivial reason that there are no lattices $Λ_0^\flat\subset O_L$ of index $1$ and conductor $0$. If $r \leq 0$, then $w$ is not topologically nilpotent which implies $\mcL(w) = \emptyset$ and hence $P(w, s) = 0$. This proves Part (1). We also note that the case distinctions for $(r, d)$ in Parts (2) and (3) were already stated in Lemma \ref{lem:std_form_r_d}, so it only left to prove \eqref{eq:princ_germ_ram} and \eqref{eq:princ_germ_split}.

Consider first the case of a ramified extension $L$ and of $r \geq 1$. Let $\varpi\in L$ denote a uniformizer. Then $Λ_0^\flat = \varpi O_L$ is the unique sublattice of $O_L$ of index $1$ and conductor $0$. Define $0\leq a \leq b$ by
$$(a, b) = (\varpi O_L : w O_L) = (π O_L : w\varpi O_L).$$
If $d \geq 0$, then $r$ is even and $(a, b) = (r/2-1, r/2)$. Lemma \ref{lem:lattice_count} states that there are $\Xi_{k-1}(r/2-1, r/2)$ many choices $(Λ_1, Λ_1^\flat)$ such that $(O_L, \varpi O_L, Λ_1, Λ_1^\flat) \in \mcL(w)$ and such that $[O_L:Λ_1] = k$. Specializing \eqref{eq:princ_germ_recap} to this case we precisely obtain the first identity in \eqref{eq:princ_germ_ram}.

We use the same arguments for $d = -1/2$. In this case, $r$ is odd and $a = b = (r-1)/2$. Lemma \ref{lem:lattice_count} states that there are $\Xi_{k-1}((r-1)/2, (r-1)/2)$ many tuples $(O_L, \varpi O_L, Λ_1, Λ_1^\flat) \in \mcL(w)$ with $[O_L:Λ_1] = k$, and we obtain the second identity in \eqref{eq:princ_germ_ram}. This completes the proof of Part (2).

Consider now the case of a split extension $L \iso F\times F$ and of $r\geq 1$. Write $w = (w_1, w_2)$  with $v(w_1)\leq v(w_2)$ for a fixed choice of such an isomorphism. There are two sublattices of $O_L$ of index $1$ and conductor $0$, namely $M_1 = (π, 1)O_L$ and $M_2 = (1, π)O_L$.

Assume first that $d \geq 0$. Then $v(w_1) = v(w_2) = r/2$ and
$$(M_i: wO_L) = (πO_L : wM_i) = (r/2-1, r/2)$$
for both possibilities $i \in \{1,2\}$. Lemma \ref{lem:lattice_count} states that there are $\Xi_{k-1}(r/2 - 1, r/2)$ many tuples $(O_L, \varpi O_L, Λ_1, Λ_1^\flat) \in \mcL(w)$ with $[O_L:Λ_1] = k$ and one obtains the first identity in \eqref{eq:princ_germ_split} in the same way as before.

Assume now that $d < 0$ and put $a := v(w_1)$ as well as $b := v(w_2)$. Then $(a,b) = (r/2 + d, r/2 - d)$ and one easily checks the identities
$$\begin{aligned}
(M_1 : wO_L) = (a-1, b),&\quad (πO_L : wM_1) = (a, b-1)\\
(M_2 : wO_L) = (a, b-1),&\quad (πO_L : wM_2) = (a-1, b).
\end{aligned}$$
It always holds that $a < b$. Applying the second and third identity in \eqref{eq:count_xi}, we find that the number of tuples $(O_L, M_i, Λ_1, Λ_1^\flat)\in \mcL(w)$ with $[O_L:Λ_1] = k$ is given by $\Xi'_{k-1}(a, b-1)$ for both $i = 1$ and $2$. The second identity in \eqref{eq:princ_germ_split} follows directly from \eqref{eq:princ_germ_recap} which finishes the proof of Part (3) and of the proposition.
\end{proof}

\subsection{The Parahoric Case}
\label{ss:parahoric}

The exact same ideas can be used to define a germ expansion for the parahoric orbital integral $\Orb(γ, f'_\Par, s)$ and to give a formula for the principal germ. Throughout, $w\in GL_2(F)$ is an element such that $γ(w) = \left(\begin{smallmatrix}
1 & 1 \\ w & 1
\end{smallmatrix}\right)$ is regular semi-simple. Let $(L, r, d)$ be its numerical triple, see \eqref{eq:invariants_w}. Define $\mcP(w)$ as the set of pairs $(Λ_0, Λ_1)$ of $O_F$-lattices in $L$ such that $O_L\cdot Λ_0 = O_L$ and such that
\begin{equation}\label{eq:lattice_set_para}
Λ_0\ \supset\ πΛ_0\ \supseteq\ Λ_1\ \supseteq\ wΛ_0.
\end{equation}
The set $\mcP(w)$ takes the role of $\mcL(w)$, but for the parahoric test function $f'_\Par$:
\begin{lem}\label{lem:lc_bijection_par}
Given $(Λ_0, Λ_1) \in \mcP(w)$, the following lattice chain lies in $\mcP(γ(w))$:
$$Λ_0 \oplus Λ_1\ \supset\ πΛ_0 \oplus Λ_1\ \supset\ π(Λ_0 \oplus Λ_1).$$
This assignment defines a bijection
\begin{equation}\label{eq:lc_bijection_par}
ψ:\mcP(w) \overset{\sim}{\lr} Γ\backslash \mcP(γ(w))
\end{equation}
with the property
\begin{equation}\label{eq:lc_bijection_omega_par}
\Omega(γ(w),\ ψ(Λ_0, Λ_1),\ s) = (-q^s)^{-r(w)}(-q^{2s})^{[Λ_0:Λ_1]}.
\end{equation}
In particular, the parahoric orbital integral has the expression
\begin{equation}\label{eq:lc_orb_int_par}
\Orb(γ(w), f'_\Par, s) \ =\ (-q^s)^{-r(w)} \sum_{(Λ_0, Λ_1)\in \mcP(w)} (-q^{2s})^{[Λ_0:Λ_1]}.
\end{equation}
\end{lem}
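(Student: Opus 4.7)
My plan is to prove each of the four assertions by making concrete the $K$-eigenspace decomposition of $V = F^4$ and then imitating the approach of Lemma \ref{lem:lc_bijection}. Throughout, write $V = V_+ \oplus V_-$ for the decomposition into $K$-eigenspaces, where $V_+$ is the top copy of $F^2$ and $V_-$ the bottom. Identify $V_+ \cong L$ and $V_- \cong L$ as $F[w]$-modules via $\binom{1}{0} \leftrightarrow 1$, $\binom{0}{1} \leftrightarrow w$. Under this identification, the action of $z = z_{γ(w)} = \bigl(\begin{smallmatrix} 0 & 1 \\ w & 0 \end{smallmatrix}\bigr)$ becomes $V_+ \to V_-$, $l \mapsto wl$ and $V_- \to V_+$, $l \mapsto l$, while an element $x \in L_{γ(w)} \cong L$ acts diagonally by multiplication.

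First, for the image claim and bijectivity: any chain $Λ_\bullet \in \mcP(γ(w))$ is determined by its first term $Λ_0$, which is $O_K$-stable and therefore decomposes as $Λ_0 = Λ_{0,+} \oplus Λ_{0,-}$ with $Λ_{0,\pm} \subset V_\pm \cong L$. The middle lattice of the chain is automatically $(π,1)Λ_0 = πΛ_{0,+} \oplus Λ_{0,-}$. Writing $(Λ_0, Λ_1) := (Λ_{0,+}, Λ_{0,-})$, the two $z$-stability conditions $zΛ_0 \subseteq (π,1)Λ_0$ and $z((π,1)Λ_0) \subseteq πΛ_0$ both translate, via the description of $z$ above, to the single pair of inclusions $wΛ_0 \subseteq Λ_1 \subseteq πΛ_0$. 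This establishes a bijection between $\mcP(γ(w))$ and all pairs of $O_F$-lattices in $L$ satisfying the inclusions of Definition \ref{def:lc_simplified}, without the normalization $O_L \cdot Λ_0 = O_L$.

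To descend to $Γ$-orbits: $L^\times$ acts on such pairs diagonally by $(xΛ_0, xΛ_1)$, and the normalization $O_L \cdot Λ_0 = O_L$ picks exactly one representative per $L^\times$-orbit. The choice of $Γ$ (with $\mr{vol}(L^\times/Γ) = 1$) can be made so that $Γ \cdot O_L^\times = L^\times$ and $Γ \cap O_L^\times = \{1\}$ (e.g.\ $Γ = \varpi^\mbZ$ if $L$ is a field, or $Γ = π_1^\mbZ π_2^\mbZ$ if $L \cong F\times F$); together these imply that any two $Γ$-equivalent normalized pairs must be equal. This is the main (though mild) obstacle, as one must verify it in both the field and split cases, and it yields the bijection $\mcP(w) \cong Γ \backslash \mcP(γ(w))$.

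For \eqref{eq:lc_bijection_omega_par}: because $z$ is topologically nilpotent on $Λ_0 \oplus Λ_1$ (since $z(Λ_0\oplus Λ_1)\subseteq πΛ_0\oplus Λ_1$), we have $γ(w)\cdot(Λ_0\oplus Λ_1) = Λ_0\oplus Λ_1$, so Lemma \ref{lem:translation_omega} simplifies. The relevant indices are
\[
[Λ_+ : zΛ_-] = [Λ_0 : Λ_1], \qquad [Λ_- : zΛ_+] = [Λ_1 : wΛ_0] = r(w) - [Λ_0:Λ_1],
\]
and substituting gives exactly $(-q^s)^{-r(w)}(-q^{2s})^{[Λ_0:Λ_1]}$. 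Finally, \eqref{eq:lc_orb_int_par} follows from the combinatorial formula \eqref{eq:orb_int_combinatorial}, adapted to the unnormalized test function $f'_\Par = 1_\Par$: one observes that $\Par \cap H' = GL_2(O_F) \times GL_2(O_F)$ is a maximal compact subgroup of $H'$ (so the volume factor disappears under our Haar normalization), and then passes from the stabilizer-weighted sum over $L_{γ(w)}^\times$-orbits to a plain sum over $Γ$-orbits by the standard index computation $[L^\times : Γ \cdot \mr{Stab}(Λ_\bullet)] = [O_L^\times : \mr{Stab}_{O_L^\times}(Λ_\bullet)]$, which again uses $Γ \cdot O_L^\times = L^\times$ and $Γ \cap O_L^\times = \{1\}$.
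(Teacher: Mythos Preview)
Your proof is correct and follows exactly the approach the paper takes (implicitly, since the paper states this lemma without proof as a direct analog of Lemma~\ref{lem:lc_bijection}): decompose into $K$-eigenspaces, translate the $z$-stability conditions into the lattice inclusions $wΛ_0 \subseteq Λ_1 \subseteq πΛ_0$, compute $\Omega$ via Lemma~\ref{lem:translation_omega} using $γΛ = Λ$, and pass from $L_γ^\times$-orbits to $Γ$-orbits via \eqref{eq:orb_int_combinatorial}. One cosmetic point: your explicit identification $\binom{1}{0} \leftrightarrow 1$, $\binom{0}{1} \leftrightarrow w$ is not an $F[w]$-module map for a general matrix $w$, but since $V_\pm$ are free of rank~$1$ over $L = F[w]$ some identification exists and that is all you use.
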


We again decompose the sum in \eqref{eq:lc_orb_int_par} into principal and unipotent germ:
\begin{equation}\label{eq:def_germs_par}
\begin{aligned}
P_{\Par}(w, s) &\ = \sum_{(O_L, Λ_1)\in \mcP(w)} (-q^{2s})^{[Λ_0:Λ_1]},\\
U_{\Par}(w,s) &\ = i(L)^{-1}\sum_{(Λ_0, Λ_1)\in \mcP(w),\ Λ_0\neq O_L} (-q^{2s})^{[Λ_0:Λ_1]}.
\end{aligned}
\end{equation}
The relation of the two germs with the orbital integral is again given by
\begin{equation}\label{eq:germ_exp_par}
O(γ(w), f'_\Par, s) = (-q^s)^{-r(w)}[P_\Par(w,s) + i(L) U_\Par(w,s)].
\end{equation}

\begin{prop}\label{prop:germ_indep_par}
Both the principal germ $P_\Par(w,s)$ and the unipotent germ $U_{\Par}(w, s)$ depend only on $(r,d)$ and not on $L$.
\end{prop}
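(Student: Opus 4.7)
The plan is to mimic the strategy used in the proof of Proposition~\ref{prop:germ_independence}, which becomes considerably simpler in the parahoric setting because each element of $\mcP(w)$ consists of only two lattices rather than a quadruple. The key technical input is again Lemma~\ref{lem:changing_L}: for any two pairs $(L_1, w_1)$ and $(L_2, w_2)$ sharing the numerical invariants $(r, d)$, it produces an $F$-linear isomorphism $ϕ:L_1 \to L_2$ satisfying $ϕ(R_{1,c}) = R_{2,c}$ and $ϕ(w_1 R_{1,c}) = w_2 R_{2,c}$ for every $c \geq 0$. Since $ϕ$ is $F$-linear, one automatically has $ϕ(πR_{1,c}) = πR_{2,c}$ as well, so $ϕ$ transports all three ingredients that enter into the definition of $\mcP(w)$.

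First I would handle the unipotent germ by reparameterizing the sum using the $O_L^\times$-action on the tree $\mcE = \{Λ_0 \mid O_L\cdot Λ_0 = O_L\}$ introduced in the proof of Proposition~\ref{prop:germ_independence}. For each $c \geq 1$, $O_L^\times$ acts transitively on $\mcE_c = \{Λ_0 \mid \mr{cond}(Λ_0) = c\}$ with stabilizer $R_c^\times$, so $\#\mcE_c = i(L)\,q^{c-1}$. The inner sum over $Λ_1$ with $wΛ_0 \subseteq Λ_1 \subseteq πΛ_0$ is $O_L^\times$-equivariant and preserves the index $[Λ_0 : Λ_1]$, so the prefactor $i(L)^{-1}$ cancels precisely and one arrives at
\[
U_\Par(w, s) \;=\; \sum_{c \geq 1} q^{c-1} \sum_{w R_c \subseteq Λ_1 \subseteq π R_c} (-q^{2s})^{[R_c : Λ_1]}.
\]
Lemma~\ref{lem:changing_L} then provides, for each $c$, an index-preserving bijection between the inner sums for $(L_1, w_1)$ and $(L_2, w_2)$, yielding independence of $L$.

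For the principal germ the argument is even shorter: Lemma~\ref{lem:changing_L} applied at $c = 0$ (where $R_{i,0} = O_{L_i}$) simultaneously matches $O_{L_1} \to O_{L_2}$, $w_1 O_{L_1} \to w_2 O_{L_2}$, and $πO_{L_1} \to πO_{L_2}$, hence induces an index-preserving bijection on the sets $\{Λ_1 : wO_L \subseteq Λ_1 \subseteq πO_L\}$. Thus $P_\Par(w,s)$ also depends only on $(r,d)$.

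I do not expect any substantial obstacle. What is worth flagging is that this statement is strictly stronger than its Iwahori analog: in Proposition~\ref{prop:germ_independence} the Iwahori principal germ genuinely depends on $L$ (Proposition~\ref{prop:principal_germ} records a factor of two separating the split from the ramified/inert cases), because the auxiliary lattice $Λ_0^\flat$ has different admissible positions depending on $L$. In the parahoric setting $\mcP(w)$ contains no such secondary lattice, and this is precisely what removes the residual $L$-dependence of the principal germ.
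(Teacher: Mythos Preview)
Your proposal is correct and follows essentially the same approach as the paper: reparametrize by conductor using the $O_L^\times$-action on $\mcE_c$ (which cancels the factor $i(L)$), and then invoke Lemma~\ref{lem:changing_L} to transport the inner lattice count between two choices of $(L,w)$ with the same $(r,d)$. The paper's version is terser---it phrases the conclusion as ``the relative position $(O_{L,c}:wO_{L,c})$ depends only on $(r,d)$'' rather than exhibiting the bijection explicitly---but the content is the same, and your closing remark explaining why the $L$-independence of the principal germ is stronger here than in the Iwahori case is a nice clarification.
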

\begin{proof}
Let $R_c = O_F + π^cO_L$ again denote the order of conductor $c$ in $L$. By Lemma \ref{lem:changing_L}, the relative position $(R_c:w R_c)$ only depends on $(r,d)$. Moreover, for every $c \geq 1$, the number
$$i(L)^{-1} \#\{Λ_0\subseteq O_L\mid \mr{cond}(Λ_0) = c,\ O_L\cdot Λ_0 = O_L\}$$
equals $q^{c-1}$ and is hence independent of $L$. Combining these facts with the definition of $\mcP(w)$ and \eqref{eq:def_germs_par} proves the proposition.
\end{proof}

\begin{prop}\label{prop:principal_germ_par}
For integers $0\leq a \leq b$ and $0\leq k\leq a+b$, let $Φ_k(a, b)$ denote the quantity from \eqref{eq:count_phi}; set $X = -q^{-2s}$.
\begin{enumerate}[wide, labelindent=0pt, labelwidth=!, label=(\arabic*), topsep=2pt, itemsep=2pt]
\item If $r\leq 0$, then $\Orb(γ(w), f'_\Par, s) = 0$.
\item If $r > 0$, then $\Orb(γ(w), f'_\Par, s)$ only depends on $(r,d)$ and equals
\begin{equation}\label{eq:princ_germ_par}
P_\Par(r,d,s) := \begin{cases}
\sum_{k = 0}^{r-2} Φ_k(r/2-1, r/2 - 1) X^{-k-2} & \text{if $d \geq 0$}\\
\sum_{k = 0}^{r-2} Φ_k(r/2 + d - 1, r/2 - d - 1) X^{-k-2} & \text{if $d < 0$.}
\end{cases}
\end{equation}
\end{enumerate}
\end{prop}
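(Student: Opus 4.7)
The plan is to mirror the proof of Proposition~\ref{prop:principal_germ} for the Iwahori principal germ; the parahoric case is considerably simpler because $\mcP(w)$ involves only a single pair $(\Lambda_0,\Lambda_1)$ rather than a quadruple. The two key inputs will be Proposition~\ref{prop:germ_indep_par} (for $L$-independence) and Lemma~\ref{lem:std_form_r_d} (for the normal forms of $w$).

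For Part~(1), note that $r\leq 0$ forces $v_L(w)<v_L(\pi)$, so $\pi^{-1}w\notin O_\Lambda$ for any rank-$2$ $O_F$-lattice $\Lambda\subset L$; the chain condition $w\Lambda_0\subseteq\pi\Lambda_0$ in the definition of $\mcP(w)$ therefore fails for every admissible $\Lambda_0$. Thus $\mcP(w)=\emptyset$, and the combinatorial identity \eqref{eq:lc_orb_int_par} yields the claimed vanishing.

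For Part~(2), I would restrict to $\Lambda_0=O_L$ and compute the resulting principal contribution directly. The admissible $\Lambda_1$ are then the $O_F$-lattices satisfying $\pi O_L\supseteq\Lambda_1\supseteq wO_L$, and by the very definition \eqref{eq:count_phi} of $\Phi_k(a,b)$ the number of such $\Lambda_1$ with $[\pi O_L:\Lambda_1]=k$ equals $\Phi_k(a,b)$, where $(a,b):=(\pi O_L:wO_L)$. Since $[O_L:\Lambda_1]=k+2$, this produces
$$\sum_{k=0}^{a+b}\Phi_k(a,b)\,X^{-k-2}.$$
Applying Lemma~\ref{lem:std_form_r_d} in each regime then identifies $(a,b)$: for $d\geq 0$ (any $L$), one has $wO_L=\pi^{r/2}O_L$ because the factor $a_0+\pi^d b_0\zeta$ of Lemma~\ref{lem:std_form_r_d}(1) is always a unit in $O_L$, so $(a,b)=(r/2-1,r/2-1)$; for $d=-1/2$ with $L$ ramified, $w$ equals $\pi^{(r-1)/2}$ times a uniformizer of $O_L$ and a short Smith normal form computation in the basis $(1,\varpi)$ gives $(a,b)=((r-3)/2,(r-1)/2)$; for $d<0$ with $L$ split, Lemma~\ref{lem:std_form_r_d}(4) writes $w=(w_1,w_2)$ with $v(w_i)=r/2\pm d$, yielding $(a,b)=(r/2+d-1,r/2-d-1)$ at once. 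In every case $a+b=r-2$, so substituting recovers the formula for $P_\Par(r,d,s)$.

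The main obstacle will be the bookkeeping needed to verify $(a,b)$ uniformly across the three regimes, especially in the inert subcase of $d\geq 0$ where $\zeta$ is a unit rather than a uniformizer and one must normalize the representative so that the factor in Lemma~\ref{lem:std_form_r_d}(1) remains a unit; Proposition~\ref{prop:germ_indep_par} mitigates this by permitting the choice of a single convenient $L$ per regime of $d$, so each computation reduces to an elementary Smith normal form once the normal form of $w$ has been fixed.
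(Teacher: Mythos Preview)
Your proposal is correct and follows essentially the same approach as the paper: restrict to $\Lambda_0=O_L$, count the admissible $\Lambda_1$ via $\Phi_k$ applied to the relative position $(\pi O_L:wO_L)$, and substitute into the combinatorial expression for the principal germ. The only cosmetic difference is that the paper states the relative position $(O_L:wO_L)$ uniformly in $L$ (equal to $(r/2,r/2)$ for $d\geq 0$ and $(r/2+d,r/2-d)$ for $d<0$) rather than splitting into the ramified, split, and inert cases separately; this makes your anticipated ``bookkeeping obstacle'' disappear, and the invocation of Proposition~\ref{prop:germ_indep_par} to pick a convenient $L$ becomes unnecessary.
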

\begin{proof}
The vanishing statement in (1) holds because $\mcP(w) = \emptyset$ if $r \leq 0$. For (2), we note that no matter what $L$ is, the relative position of $O_L$ and $wO_L$ is given by
$$(O_L: wO_L) = \begin{cases}
(r/2, r/2) & \text{if $d \geq 0$}\\
(r/2 + d, r/2 - d) & \text{if $d < 0$}.
\end{cases}$$
It follows from the definition of $Φ_k(a,b)$ in \eqref{eq:def_phi} and that of $\mcP(w)$ in \eqref{eq:lattice_set_para} that the number of pairs $(O_L, Λ_1)\in \mcP(w)$ such that $[O_L: Λ_1] = k$ equals
$$\begin{cases}
Φ_{k-2}(r/2 - 1, r/2 - 1) & \text{if $d \geq 0$}\\
Φ_{k-2}(r/2 + d - 1, r/2 - d - 1) & \text{if $d < 0$}.
\end{cases}$$
Substituting these quantities in \eqref{eq:def_germs_par} proves the proposition.
\end{proof}

\section{$\Orb(γ, f'_\Par)$, $\Orb(γ, f'_\Iw)$ and $\dOrb(γ, f'_\Iw)$}
\label{s:orb_ints}

We have shown in Propositions \ref{prop:germ_independence} and \ref{prop:germ_indep_par} that the principal and unipotent germs for both $f'_\Par$ and $f'_\Iw$ only depend on the triple $(L, r, d)$ resp. $(r, d)$. Accordingly, we will write $P(L, r, d, s)$ for the Iwahori principal germ for such a numerical triple. We will similarly write $U(r, d, s)$ for the Iwahori unipotent germ as well as $P_\Par(r,d,s)$ and $U_\Par(r, d, s)$ for the parahoric germs.

\subsection{The Central Values}

\begin{prop}\label{prop:orb_int_para}
Let $γ\in G'_{\mr{rs}}$ be regular semi-simple with numerical invariants $(L, r, d)$, see \eqref{eq:invariants_gamma_concrete}.
\begin{enumerate}[wide, labelindent=0pt, labelwidth=!, label=(\arabic*), topsep=2pt, itemsep=2pt]
\item If $r$ is odd, or if $r \leq 0$, or if $r/2 + d \leq 0$, then $\Orb(γ, f'_\Par) = 0$.
\item In all other cases, the parahoric orbital integral $\Orb(γ, f'_\Par)$ is given by
\begin{equation}\label{eq:orb_int_para}
\begin{cases}
\phantom{2(}1 + q^2 +  \ldots + q^{r/2 - 2} & \text{if $L$ ramified and $r\in 4\mbZ$}\\
\phantom{2}(1 + q^2 +  \ldots + q^{r/2 - 3}) + \phantom{2}(q^{r/2-1} + q^{r/2} + \ldots + q^{r/2 + d - 1}) & \text{if $L$ ramified and $r\in 2 + 4\mbZ$}\\
2(1 + q^2 + \ldots + q^{r/2-2}) & \text{if $L$ inert and $r\in 4\mbZ$}\\
2(1 + q^2 + \ldots + q^{r/2-3}) + 2(q^{r/2-1} + q^{r/2} + \ldots + q^{r/2 + d - 2}) + q^{r/2 + d - 1} & \text{if $L$ inert and $r\in 2 + 4\mbZ$}\\
\phantom{2(}0 & \text{if $L$ split and $r\in 4\mbZ$}\\
\phantom{2(}q^{r/2 + d - 1} & \text{if $L$ split and $r\in 2 + 4\mbZ$.}
\end{cases}
\end{equation}
\end{enumerate}
\end{prop}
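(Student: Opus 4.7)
The plan is to combine three ingredients established earlier: the germ expansion
\begin{equation*}
\Orb(γ(w), f'_\Par, 0) = (-1)^r\bigl[P_\Par(r,d,0) + i(L)\,U_\Par(r,d,0)\bigr],
\end{equation*}
coming from \eqref{eq:germ_exp_par} and Proposition \ref{prop:germ_indep_par}; the explicit formula for $P_\Par(r,d,s)$ from Proposition \ref{prop:principal_germ_par}; and the hyperbolic split computation of Proposition \ref{prop:hyperbolic}. Since $P_\Par$ and $U_\Par$ depend only on $(r,d)$, the split case together with the known $P_\Par$ determines $U_\Par(r,d,0)$, after which the germ expansion yields $\Orb(γ, f'_\Par)$ for arbitrary $L$ via the indices $i(F\times F) = q-1$, $i(L) = q$ for $L$ ramified, and $i(L) = q+1$ for $L$ inert; each index is a short residue-field computation.

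First I would dispose of the vanishing assertions in (1). If $r \leq 0$ or $r/2+d \leq 0$, the smaller of the two eigenvalues of $w$ has non-positive valuation, so no $O_F$-lattice $Λ_0 \subset L$ with $O_L Λ_0 = O_L$ can satisfy $wΛ_0 \subseteq πΛ_0$; hence $\mcP(w) = \emptyset$ and \eqref{eq:lc_orb_int_par} forces $\Orb = 0$. For $r$ odd I would reduce to the split case: then $v(α)$ and $v(β)$ have opposite parities, so \eqref{eq:Par_orb_int_hyper} vanishes at $s=0$. By the germ expansion and $L$-independence of $U_\Par$ this pins down $U_\Par(r,d,0)$ in terms of $P_\Par(r,d,0)$, and a brief check using Proposition \ref{prop:principal_germ_par} shows that the alternating sum defining $P_\Par(r,d,0)$ collapses to zero for $r$ odd, so the vanishing propagates to the ramified case as well.

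For part (2), I would apply Proposition \ref{prop:hyperbolic} at $s = 0$ in the split case. Setting $X = -1$ kills $(X^v - 1)$ unless $v$ is odd, so $\Orb(γ, f'_\Par)$ equals $q^{-1}|α-β|^{-1} = q^{r/2+d-1}$ precisely when both $v(α)$ and $v(β)$ are odd, and vanishes otherwise. Going through the subcases ($d \geq 0$ versus $d<0$, and within the latter the parity of $d$) reproduces the two split rows of \eqref{eq:orb_int_para}. Substituting this back into the germ expansion at $L = F\times F$ gives a closed form for $U_\Par(r,d,0)$, which I then plug into the germ expansion for $L$ ramified and $L$ inert. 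Elementary geometric-series manipulation, combined with the explicit expressions for $P_\Par(r,d,0)$ from Proposition \ref{prop:principal_germ_par}, should then reproduce the four remaining rows of \eqref{eq:orb_int_para}.

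The main obstacle is this last bookkeeping step. The sum
\begin{equation*}
P_\Par(r,d,0) = \sum_{k=0}^{r-2}(-1)^k \Phi_k(a,b), \qquad \Phi_k(a,b) = 1 + q + \cdots + q^{\min\{k,a,a+b-k\}},
\end{equation*}
splits into three ranges of $k$ according to which of $k$, $a$, $a+b-k$ attains the minimum, and each range contributes a telescoping alternating sum of geometric series. Tracking how these three pieces reassemble into the two blocks $1 + q^2 + \cdots + q^{r/2-2}$ and $q^{r/2-1} + \cdots + q^{r/2+d-1}$ visible in the ramified and inert formulas, with the correct prefactors ($1$ versus $2$) and the tail correction $q^{r/2+d-1}$ appearing in the inert row for $r \in 2+4\mbZ$, is the most delicate calculation. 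The dichotomy $r \in 4\mbZ$ versus $r \in 2+4\mbZ$ is driven precisely by whether the endpoint contribution $\Phi_{r-2}(a,b)$ in the alternating sum reinforces or cancels the body of the sum, and I would organize the case analysis around this dichotomy.
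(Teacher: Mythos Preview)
Your plan is correct and follows the paper's approach: germ expansion, explicit split-case computation via Proposition \ref{prop:hyperbolic}, solve for $U_\Par(r,d,0)$, then plug back in with $i(L)=q$ (ramified) and $i(L)=q+1$ (inert). Two simplifications in the paper would substantially shrink your ``main obstacle.'' First, for $r$ odd the paper invokes the functional equation (Proposition \ref{prop:functional_equation}): the sign for $f'_\Par$ is $(-1)^r$, so the central value vanishes immediately---no need to route through the split case or to check that $P_\Par(r,d,0)$ vanishes. Second, once $r$ is even and $L$ is a field, Lemma \ref{lem:std_form_r_d} forces $d\geq 0$; in that regime the principal germ collapses to a single alternating geometric series
\[
P_\Par(r,d,0)=\sum_{j=0}^{r/2-1}(-q)^j,
\]
which equals $(1-q)\,U_\Par(r,d,0)$ in the $r\in 4\mbZ$ case and similarly combines cleanly with $U_\Par$ when $r\in 2+4\mbZ$. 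The three-range decomposition of $\Phi_k(a,b)$ you anticipate is therefore never needed for the field cases, and the final reassembly into \eqref{eq:orb_int_para} is a two-line substitution rather than a delicate calculation.
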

\begin{proof}
The sign of the functional equation of $\Orb(γ, f'_\Par, s)$ is $(-1)^r$, see Lemma \ref{lem:sign_alternative} and Proposition \ref{prop:functional_equation}. So $\Orb(γ, f'_\Par) = 0$ whenever $r$ is odd. Assume that $r \leq 0$ or $r/2 + d \leq 0$. Then $z_γ$ is not topologically nilpotent, and hence $\Orb(γ, f'_\Par, s) = 0$ by Lemma \ref{lem:lattice_combinatorics} (3). (If $r/2 + d \leq 0$ and $r  >0$, then necessarily $d < 0$ which implies that one eigenvalue of $z_γ$ has valuation $r/2 + d$, see Lemma \ref{lem:std_form_r_d} cases (3) and (4).) So we can henceforth assume that $r$ is even, that $r> 0$ and that $r/2 + d > 0$.

First, we consider the case of a split extension $L$. Let $α,β\in F$ be the two eigenvalues of $z_γ$. Note that $v(α)$ and $v(β)$ are both positive and of the same parity under our assumptions on $(r, d)$. If their parity is even, which is equivalent to $r\in 4\mbZ$, then $\Orb(γ, f'_\Par) = 0$ by Identity \eqref{eq:Par_orb_int_hyper}. If the parity of $v(α)$ and $v(β)$ is odd instead, then $v(α - β) = r/2 + d$ and \eqref{eq:Par_orb_int_hyper} shows $\Orb(γ, f'_\Par) = q^{r/2 + d - 1}$. This proves \eqref{eq:orb_int_para} in case $L$ is split.

Assume from now on that $L$ is a field. With the standing assumption that $r$ is even, it will necessarily hold that $d\geq 0$. Moreover, the germ expansion identity \eqref{eq:germ_exp_par} specializes to
\begin{equation}\label{eq:germ_exp_par_value}
\Orb(γ, f'_\Par) = P_\Par(r, d, 0) + i(L) U_\Par(r, d, 0).
\end{equation}
We can now use our knowledge of the hyperbolic orbital integrals to compute the unipotent germ. Let $\wt{γ} \in G'_{\mr{rs}}$ be an auxiliary element with numerical invariants $(F\times F, r, d)$. The value at $s = 0$, equivalently at $X = -1$, of the parahoric principal germ \eqref{eq:princ_germ_par} for $d\geq 0$ is given by the geometric series
\begin{equation}\label{eq:para_princ_germ_value}
P_\Par(r, d, 0) = 1 - q + q^2 - \ldots + (-q)^{r/2-1}.
\end{equation}
Substituting \eqref{eq:para_princ_germ_value} in the germ expansion \eqref{eq:germ_exp_par_value} for $\wt{γ}$ shows that the unipotent germ is either a geometric series or a sum of two such series:
\begin{equation}\label{eq:para_uni_germ_value}
U_\Par(r, d, 0) = \begin{cases}
\phantom{(}1 + q^2 + \ldots + q^{r/2-2} & \text{if $r\in 4\mbZ$}\\
(1 + q^2 + \ldots + q^{r/2 - 3}) + (q^{r/2-1} + q^{r/2} + \ldots + q^{r/2 + d - 2}) & \text{if $r\in 2 + 4\mbZ$.}
\end{cases}
\end{equation}
It is left to substitute \eqref{eq:para_princ_germ_value} and \eqref{eq:para_uni_germ_value} in \eqref{eq:germ_exp_par_value} with $i(L) = q$ for $L$ ramified and $i(L) = q+1$ for $L$ inert. This proves the proposition in the remaining four cases.
\end{proof}

\begin{thm}\label{thm:FL}
The central value of the Iwahori orbital integral is given by
\begin{equation}\label{eq:fund_lem_gl4}
\Orb(γ, f'_\Iw) = \begin{cases} 1 & \text{if $L$ ramified and $r \geq 1$ odd}\\
0 & \text{otherwise.}
\end{cases}
\end{equation}
In particular, the fundamental lemma (Conjecture \ref{conj:FL}) holds in case $D$ is a division algebra of degree $4$.
\end{thm}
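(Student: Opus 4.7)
The proof naturally splits according to the parity of $r$. When $r$ is even, Lemma \ref{lem:sign_alternative} gives $\epsilon_D(\gamma) = \eta(\delta_0)\cdot n\lambda = (-1)^r\cdot(-1) = -1$, since $n\lambda = 1/2$ for $D$ division of degree $4$. Combining with Proposition \ref{prop:functional_equation} (transported via \eqref{eq:norm_orb_int_Iw} to $f'_\Iw$) yields $\Orb(\gamma, f'_\Iw) = 0$. This also subsumes all $r \leq 0$ cases, since then $z_\gamma$ is not topologically nilpotent and $\mcL(\gamma) = \emptyset$ by Lemma \ref{lem:lattice_combinatorics}.

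When $r$ is odd, the classification in Lemma \ref{lem:std_form_r_d} forces $L$ to be ramified with $d = -1/2$: a split $L$ gives integer $d$ and hence $r$ even, while an unramified inert $L$ requires $d \geq 0$ and hence $r$ even. By Corollary \ref{cor:universal_quaternion} together with the computation in Proposition \ref{prop:orb_int_division}, this is precisely the configuration where a matching element $g \in G$ exists, and for such $g = 1 + \iota(z)$ with $v_B(z) = r$ we have $v_D(g) = 0$, giving $\Orb(g, f_D) = f(L/F) = 1$. The fundamental lemma claim thus reduces to showing $\Orb(\gamma, f'_\Iw) = 1$ in the remaining case.

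For this I would apply the germ expansion of Proposition \ref{prop:germ_exp}, which, using $i(L) = q$ for $L$ ramified, specializes to
\begin{equation*}
\Orb(\gamma, f'_\Iw) \;=\; -\bigl[\,P(L, r, -1/2, 0) + q\cdot U(r, -1/2, 0)\,\bigr].
\end{equation*}
The principal germ is read off directly from Proposition \ref{prop:principal_germ}: at $s = 0$ it equals $-\sum_{k=0}^{r-1}(-1)^k\,\Xi_k\bigl(\tfrac{r-1}{2},\tfrac{r-1}{2}\bigr)$, which admits a tractable closed form using the palindromic symmetry $\Xi_k = \Xi_{r-1-k}$.

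The main obstacle is the unipotent germ $U(r, -1/2, 0)$: the clean hyperbolic reduction used in Proposition \ref{prop:orb_int_para} for $f'_\Par$ is unavailable here, since by Lemma \ref{lem:std_form_r_d} the pair $(r, -1/2)$ with $r$ odd never arises from any split $L$. My plan is to evaluate $U(r, -1/2, 0)$ directly from the combinatorial definition \eqref{eq:unipotent_germ} applied to the concrete representative $w = \varpi_L^r$. The constraint $w\Lambda_0 \subseteq \Lambda_0$ forces the order of $\Lambda_0$ to be $R_c$ with $c \leq (r-1)/2$, since this is equivalent to $\varpi_L^r \in R_c$. For each such conductor $c$, the number of $O_L^\times$-translates of $R_c$ with $O_L\cdot\Lambda_0 = O_L$ is $i(L)\,q^{c-1} = q^c$, and for each $\Lambda_0$ the admissible data $(\Lambda_0^\flat, \Lambda_1, \Lambda_1^\flat)$ can be enumerated by linear algebra over the residue field applied to the $O_F$-module $\Lambda_0/w\Lambda_0$ of length $r$. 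Summing these contributions with signs $(-1)^{[\Lambda_0:\Lambda_1]}$ and substituting into the germ expansion should verify the identity $P + qU = -1$, hence $\Orb(\gamma, f'_\Iw) = 1$. The ``in particular'' clause then follows immediately by comparison with $\Orb(g, f_D) = 1$ computed above.
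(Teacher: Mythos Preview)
Your case analysis for odd $r$ contains a factual error that undermines the rest of the argument. You claim that ``a split $L$ gives integer $d$ and hence $r$ even,'' but this is false: by Lemma~\ref{lem:std_form_r_d}\,(4), for $L\cong F\times F$ with $w=(x,y)$ one has $r=v(x)+v(y)$ and $-2d=|v(x)-v(y)|$, so taking $v(x),v(y)$ of opposite parity produces exactly $d=-1/2$ with $r$ odd. This has two consequences. First, your odd-$r$ reduction to the ramified case leaves the split odd-$r$ case unhandled, so \eqref{eq:fund_lem_gl4} is not yet established there. Second, and more importantly, your stated reason for abandoning the hyperbolic reduction---that ``the pair $(r,-1/2)$ with $r$ odd never arises from any split $L$''---is precisely the opposite of the truth.

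The paper exploits this very fact. For split $L$, Proposition~\ref{prop:hyperbolic} shows that $\Orb(\gamma,f'_\Iw,s)$ carries the factor $(X+1)$, hence its central value vanishes for \emph{all} $r$; this closes the gap in your case analysis. It also turns the split case at $(r,-1/2)$, $r$ odd, into an equation for the unipotent germ: the germ expansion \eqref{eq:germ_exp} with $i(F\times F)=q-1$ gives
\[
0 \;=\; -\bigl[P(F\times F,r,-1/2,0)+(q-1)\,U(r,-1/2,0)\bigr],
\]
so $U(r,-1/2,0)=(1-q)^{-1}P(F\times F,r,-1/2,0)$, which is evaluated explicitly from Proposition~\ref{prop:principal_germ}. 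Substituting this into the ramified germ expansion (with $i(L)=q$) together with $P(L,r,-1/2,0)$ yields $\Orb(\gamma,f'_\Iw)=1$. Your proposed direct enumeration of $U(r,-1/2,0)$ from \eqref{eq:unipotent_germ} is therefore unnecessary: the hyperbolic reduction you thought was unavailable is in fact the intended mechanism.
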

\begin{proof}
We first compute the right hand side of the FL Identity \eqref{eq:conj_FL}. Let $g\in G_{\mr{rs}}$ be regular semi-simple. Proposition \ref{prop:quaternion_algebra} states that $D$ contains the $F$-algebra $E\tensor_F L_g$ which hence has to be a field. It follows that $L_g$ is a ramified field extension of $F$. Then Proposition \ref{prop:orb_int_division} states that
$$\Orb(g, f_D) = \begin{cases}
1 & \text{if $v_D(g) \in 2\mbZ$}\\
0 & \text{otherwise.}
\end{cases}
$$
Here, $v_D:D^\times\to \mbZ$ denotes the normalized valuation of $D$. The condition $v_D(g) \in 2\mbZ$ holds if and only if $v_D(z_g) \geq 1$. Moreover, $v_D(z_g)$ is always odd and thus $r = v_F(N_{L_g/F}(z_g^2)) \in 2\mbZ + 1$. In this way, the FL for $f'_\Iw$ becomes Identity \eqref{eq:fund_lem_gl4}.

We turn to the computation of $\Orb(γ, f'_\Iw)$. The sign of the functional equation of $\Orb(γ, f'_\Iw, s)$ is $(-1)^{r+1}$, see \eqref{eq:sign_concrete} and Proposition \ref{prop:functional_equation}. It follows that $\Orb(γ, f'_\Iw) = 0$ whenever $r$ is even. Moreover, Proposition \ref{prop:hyperbolic} in particular implies that $(X+1)$ divides $\Orb(γ, f'_\Iw, s)$ if $L$ is split, and hence that $\Orb(γ, f'_\Iw) = 0$ in all such cases. Since $r$ is always even when $L$ is an unramified field extension, the only remaining possibility for $\Orb(γ, f'_\Iw)$ to be non-zero is when $L$ is ramified and $r$ odd. In this case $d = -1/2$ by Lemma \ref{lem:std_form_r_d} (3). If $r \leq 0$, then \eqref{eq:fund_lem_gl4} holds for the trivial reason that $\Orb(γ, f'_\Iw, s) = 0$ by Lemma \ref{lem:lattice_combinatorics} (3). Thus it is left to consider the case $L$ ramified, $r> 0$ odd, $d = -1/2$.

Our first aim is to determine $U(r, -1/2, 0)$. To this end, we evaluate the principal germ for $F\times F$ from \eqref{eq:princ_germ_split} at $s = 0$, equivalently at $X = -1$:
\begin{equation}\label{eq:princ_germ_split_value}
P(F\times F, r, -1/2, 0) = -2[1 - 2q + 2q^2 + \ldots + 2(-q)^{(r-3)/2} + (-q)^{(r-1)/2}].
\end{equation}
Using the vanishing of $\Orb(\bdot, f'_\Iw)$ in all hyperbolic cases (see above), we obtain from the germ expansion \eqref{eq:germ_exp} that
\begin{equation}\label{eq:uni_germ_value}
\begin{aligned}
U(r, -1/2, 0) &\ = (1-q)^{-1}P(F\times F, r, -1/2, 0)\\
&\ = -2[1 - q + q^2 - \ldots + (-q)^{(r-3)/2}].
\end{aligned}
\end{equation}
Let $L$ be a ramified field extension. The principal germ for the case $(L, r, -1/2)$, given by \eqref{eq:princ_germ_ram}, specializes to
\begin{equation}\label{eq:princ_germ_ram_value}
P(L, r, -1/2, 0) = -[1 - 2q + 2q^2 - \ldots +2(-q)^{(r-1)/2}].
\end{equation}
Let $γ\in G'_{\mr{rs}}$ have numerical invariants $(L, r, d)$. We substitute \eqref{eq:uni_germ_value} and \eqref{eq:princ_germ_ram_value}, with $i(L) = q$ and $r$ odd in the germ expansion \eqref{eq:germ_exp} for $(L, r, 1/2)$ and obtain
$$\Orb(γ, f'_\Iw) = -[P(L, r, -1/2, 0) + q\, U(r, -1/2, 0)] = 1.$$
This is precisely Identity \eqref{eq:fund_lem_gl4} and the proof of the theorem is complete.
\end{proof}

\subsection{The Central Derivative}

\begin{prop}\label{prop:derivative}
Let $γ\in G'_{\mr{rs}}$ be regular semi-simple with numerical invariants $(L, r, d)$. Assume first that $r$ is odd, meaning that the sign $ε_D(γ)$ of the functional equation of $\Orb(γ, f'_\Iw, s)$ is positive. Then
\begin{equation}\label{eq:derivative_trivial_case}
\del(γ, f'_D) = 0\quand \dOrb(γ, f'_\Iw) = \Orb(γ, f'_\Iw) \log(q).
\end{equation}
Assume now that $r$ is even which implies $\del(γ, f'_D) = \del(γ, f'_\Iw)$. If $r\leq 0$, then $\dOrb(γ, f'_\Iw) = 0$. If $r > 0$, it is given by
\begin{equation}\label{eq:derivative_main}
\del(γ, f'_\Iw) = 4q\log(q)\,\Orb(γ, f'_\Par) + \log(q)\,\begin{cases}
r & \text{if $L$ ramified}\\
2r & \text{if $L$ inert}\\
0 & \text{if $L$ split}.
\end{cases}
\end{equation}
\end{prop}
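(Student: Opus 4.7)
My plan is to handle the odd-$r$ case by the functional equation, reduce the even-$r$ case to the Iwahori function, and then carry out the main computation via the germ expansion, using the $L = F \times F$ hyperbolic formula as a bootstrap.

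For $r$ odd, Lemma \ref{lem:sign_alternative} gives $\varepsilon_D(\gamma) = (-1)^r\,\varepsilon'_D$; since $n\lambda = 1/2$ for the division algebras of Hasse invariants $1/4$ and $3/4$, $\varepsilon'_D = -1$ and hence $\varepsilon_D(\gamma) = +1$. Proposition \ref{prop:functional_equation} then yields $\Orb(\gamma, f'_D, s) = \Orb(\gamma, f'_D, -s)$, so $\del(\gamma, f'_D) = 0$. Translating via \eqref{eq:norm_orb_int_Iw} gives $\del(\gamma, f'_\Iw) = \log(q)\,\Orb(\gamma, f'_\Iw)$, which is \eqref{eq:derivative_trivial_case}. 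For $r$ even, Theorem \ref{thm:FL} gives $\Orb(\gamma, f'_\Iw) = 0$, so \eqref{eq:norm_orb_int_Iw} yields $\del(\gamma, f'_D) = \del(\gamma, f'_\Iw)$; if in addition $r \leq 0$, Lemma \ref{lem:lattice_combinatorics}(3) shows $\mcL(\gamma) = \emptyset$ and hence $\Orb(\gamma, f'_\Iw, s) \equiv 0$. It then remains to establish \eqref{eq:derivative_main} for $r > 0$ even.

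For this, I apply the germ expansion from Proposition \ref{prop:germ_exp}:
$$\Orb(\gamma, f'_\Iw, s) = q^{-rs}\bigl[P(L, r, d, s) + i(L)\,U(r, d, s)\bigr],$$
with $i(L) \in \{q+1,\, q,\, q-1\}$ for $L$ inert, ramified, split. Because $\Orb(\gamma, f'_\Iw) = 0$, differentiation at $s = 0$ collapses to $\del(\gamma, f'_\Iw) = P'(L, r, d, 0) + i(L)\,U'(r, d, 0)$. The pivotal input is Proposition \ref{prop:germ_independence}: $U(r, d, s)$ is independent of $L$. So I fix an auxiliary split element $\gamma_0$ of numerical invariants $(F \times F, r, d)$ and use the hyperbolic formula \eqref{eq:Iw_orb_int_hyper}, $\Orb(\gamma_0, f'_\Iw, s) = 2q(X+1)\Orb(\gamma_0, f'_\Par, s)$ with $X = -q^{-2s}$. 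Since $(X+1)|_0 = 0$ and $(X+1)'|_0 = 2\log(q)$, differentiation gives $\del(\gamma_0, f'_\Iw) = 4q\log(q)\,\Orb(\gamma_0, f'_\Par)$, which is the $L$ split case of \eqref{eq:derivative_main}. Matching this with the germ expansion for $\gamma_0$ produces the universal identity
$$(q-1)\,U'(r, d, 0) \;=\; 4q\log(q)\,\Orb(\gamma_0, f'_\Par) - P'(F\times F, r, d, 0).$$

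Substituting this back, the $L$ inert case reduces to $\del(\gamma, f'_\Iw) = (q+1)\,U'(r, d, 0)$ (using $P(L, r, d, s) \equiv 0$ from Proposition \ref{prop:principal_germ}(1)), and the $L$ ramified case reduces to $\del(\gamma, f'_\Iw) = P'(L, r, d, 0) + q\,U'(r, d, 0)$. The proposed formula \eqref{eq:derivative_main} is then equivalent to the two closed-form identities
\begin{align*}
\tfrac{q+1}{q-1}\bigl[4q\log(q)\,\Orb_\Par^{\mr{split}} - P'(F\!\times\! F, r, d, 0)\bigr] \;&=\; 4q\log(q)\,\Orb_\Par^{\mr{inert}} + 2r\log(q),\\
P'(L, r, d, 0) + \tfrac{q}{q-1}\bigl[4q\log(q)\,\Orb_\Par^{\mr{split}} - P'(F\!\times\! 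F, r, d, 0)\bigr] \;&=\; 4q\log(q)\,\Orb_\Par^{\mr{ram}} + r\log(q).
\end{align*}
The main obstacle is this last algebraic verification: each $P'(\cdot, r, d, 0)$ is an alternating sum of the quantities $\Xi_k(a, b)$ from \eqref{eq:xi} times $\log(q)$-multiples of powers of $-1$, and the parahoric orbital integrals on the right are geometric sums from Proposition \ref{prop:orb_int_para}. One must split along the subcases $d = 0$ vs $d \geq 1$ (so that different pieces of the piecewise formulas for $\Orb_\Par$ and $\Xi_k$ engage) and verify the identities as elementary manipulations of finite geometric series in $q$; the appearance of the linear-in-$r$ terms $r\log(q)$ and $2r\log(q)$ arises, in each case, from the telescoping of $P'(L, r, d, 0) - \tfrac{i(L)}{q-1}P'(F \times F, r, d, 0)$.
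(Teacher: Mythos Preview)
Your approach is essentially the same as the paper's: handle odd $r$ by the functional equation, settle the split case directly from the hyperbolic identity \eqref{eq:Iw_orb_int_hyper}, then use the germ expansion together with the $L$-independence of $U(r,d,s)$ to extract $U'(r,d,0)$ from the split case and feed it into the inert and ramified cases. One minor correction: the natural case split for the final algebraic verification is $r\in 4\mbZ$ versus $r\in 2+4\mbZ$ (this is how both $\Orb(\gamma,f'_\Par)$ in Proposition~\ref{prop:orb_int_para} and the explicit $\partial U(r,d)$ branch), not $d=0$ versus $d\geq 1$; the paper carries this out by computing $\partial P(F\times F,r,d)$ from \eqref{eq:princ_germ_split}, using $P(L_{\mathrm{ram}},r,d,s)=\tfrac{1}{2}P(F\times F,r,d,s)$ for the ramified case, and then matching the resulting sums against \eqref{eq:orb_int_para}.
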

\begin{proof}
Identity \eqref{eq:derivative_trivial_case} follows immediately from the functional equation (Proposition \ref{prop:functional_equation}): If its sign $ε_D(γ)$ is positive, then $\Orb(γ, f'_D, s)$ has an even functional equation, so $\del (γ, f'_D) = 0$. Applying \eqref{eq:norm_orb_int_Iw}, we also have the functional equation
$$\Orb(γ, f'_\Iw, -s) = q^{-2s}ε_D(γ)\Orb(γ, f'_\Iw, s).$$
Taking the derivative of both sides at $s = 0$ and assuming $ε_D(γ) = 1$ gives the other identity in \eqref{eq:derivative_trivial_case}.

Moreover, if $r \leq 0$, then $z_γ$ is not topologically nilpotent, so $\mcL(γ) = \emptyset$, and hence $\Orb(γ, f'_\Iw, s) = 0$ by Lemma \ref{lem:lattice_combinatorics}. From now on we assume that $r$ is even and that $r>0$. In particular, $ε_D(γ) = -1$ and hence $\Orb(γ, f'_D) = 0$. Then \eqref{eq:norm_orb_int_Iw} shows that $\del(γ, f'_D) = \del(γ, f'_\Iw)$, as claimed in the proposition. We now come to the main part of the proof.

Consider first the case that $L\iso F\times F$ is split. The factor $(X+1)$ in \eqref{eq:Iw_orb_int_hyper} has the property that $(X+1)\vert_{s = 0} = 0$ and $(d/ds)_{s = 0}(X + 1) = 2\log(q)$. Thus the derivative of \eqref{eq:Iw_orb_int_hyper} at $s = 0$ is given by
\begin{equation}\label{eq:deriv_easy}
\dOrb(γ, f'_\Iw) = 4q \Orb(γ, f'_\Par) \log(q)
\end{equation}
which proves \eqref{eq:derivative_main} when $L$ is split.

Our next aim is to compute the central derivatives
$$\partial P(L, r, d) = \left.\frac{d}{ds}\right\vert_{s = 0} P(L, r, d, s)\quand \partial U(r, d) = \left.\frac{d}{ds}\right\vert_{s = 0} U(r, d, s).$$
We are ultimately interested in the case of a field extension $L$, and here $r$ even implies $d\geq 0$. So we only compute $\partial P$ and $\partial U$ with this restriction. Directly from \eqref{eq:princ_germ_split}, we find
\begin{equation}\label{eq:princ_germ_split_deriv}
\partial P(F\times F, r, d) = 4[r/2 - (r-2)q + (r-4)q^2 - \ldots + 2(-q)^{r/2 - 1}] \log(q).
\end{equation}
Let $\wt{γ}$ be an auxiliary hyperbolic element with numerical invariants $(r,d)$. We obtain from the germ expansion \eqref{eq:germ_exp} and our previous result \eqref{eq:deriv_easy} that
\begin{equation}\label{eq:uni_germ_deriv}
\partial U(r, d) = (q-1)^{-1}[4q \Orb(\wt{γ}, f'_\Par)\log(q) - \partial P(F\times F, r, d)].
\end{equation}
The orbital integral $\Orb(\wt{γ}, f'_\Par)$ here is either $0$ or $q^{r/2+d-1}$ which depends on whether $r \in 4\mbZ$ or $r\in 2+4\mbZ$, see Proposition \ref{prop:orb_int_para}. Substituting this in \eqref{eq:uni_germ_deriv} yields
\begin{equation}\label{eq:uni_germ_deriv_explicit}
\partial U(r,d) = \begin{cases}
4 \big[\frac r2 - (\frac r2 -2)q + (\frac r2 -2)q^2 -\ldots + 2(-q)^{r/2-3} + 2(-q)^{r/2-2}\big]\log(q)\\
4 \big[\frac r2 - (\frac r2 -2)q + (\frac r2 -2)q^2 -\ldots + 3(-q)^{r/2-4} + 3(-q)^{r/2-3}\\
\phantom{4\big[\frac r2 - (\frac r2 -2)q + (\frac r2 -2)q^2 -\ldots} - q^{r/2-2} + q^{r/2-1}+\ldots+q^{r/2+d-1}\big]\log(q).
\end{cases}
\end{equation}
Here, the first line occurs if $r\in 4\mbZ$ and the second if $r\in 2 + 4\mbZ$. It is left to evaluate the expression
$$\dOrb(γ, f'_\Iw) = \partial P(L, r, d) + i(L)\partial U(r,d).$$
If $L$ is ramified, then $i(L) = q$ and
$$P(L, r, d, s) = P(F\times F, r, d, s)/2$$
by Proposition \ref{prop:principal_germ}. Thus we may reuse \eqref{eq:princ_germ_split_deriv} and we obtain
\begin{equation}\label{eq:deriv_finale_ram}
\dOrb(γ, f'_\Iw) = \log(q) \begin{cases}
r + 4q + 4q^3 + \ldots + 4q^{r/2 - 1}\\
r + 4q + 4q^3 + \ldots + 4q^{r/2 - 1} + 4q^{r/2} + \ldots + 4q^{r/2 + d},
\end{cases}
\end{equation}
where the first case is for $r\in 4\mbZ$ and the second for $r\in 2+4\mbZ$.

Consider now the case where $L$ is an unramified field extension of $F$. Then $i(L) = q+1$ and $P(L, r, d, s) = 0$ by Proposition \ref{prop:principal_germ}, so simply $\dOrb(γ, f'_\Iw) = (q+1)\partial U(r, d)$. This equals
\begin{equation}\label{eq:deriv_finale_inert}
\log(q) \begin{cases}
2r + 8q + 8q^3 + \ldots + 8q^{r/2 - 1}\\
2r + 8q + 8q^3 + \ldots + 8q^{r/2} + 8q^{r/2 + 1} + \ldots + 8q^{r/2 + d - 1} + 4q^{r/2 + d -1}.
\end{cases}
\end{equation}
Here, again, the first case is for $r\in 4\mbZ$ and the second for $r\in 2 + 4\mbZ$.

Comparing \eqref{eq:deriv_finale_ram} and \eqref{eq:deriv_finale_inert} with \eqref{eq:orb_int_para} shows \eqref{eq:derivative_main}, and the proof of the proposition is complete.
\end{proof}

\part{Intersection numbers on $\mcM_{1/4}$ and $\mcM_{3/4}$}

In this third part, we establish AT for the two division algebras of Hasse invariant $1/4$ and $3/4$. This is the main result of our paper and we formulate it upfront, cf. Theorem \ref{thm:main_ATC}. The proof will be completed in \S\ref{ss:intersection_numbers_1_4} for invariant $1/4$ and in \S\ref{ss:intersection_numbers_3_4} for invariant $3/4$.

The layout is as follows. After formulating the result in \S\ref{s:main_results}, there will be two short sections that equally concern both Hasse invariants. The first (\S\ref{s:surface_intersection}) provides a formula for intersection numbers of regular surfaces in regular $4$-space. The second (\S\ref{s:multiplicity_functions}) provides a description of certain multiplicity functions on the Bruhat--Tits tree of $PGL_{2,E}$. These will later describe the multiplicities of the $1$-dimensional components in the intersection loci $\mcI(g)$.

Subsequently, we will first complete the proof of AT for invariant $1/4$ in \S\ref{s:1_4}. The key point here is that Drinfeld's theorem \cite{Drinfeld} provides an explicit linear algebra description of $\mcM_D$ for $D = D_{1/4}$. So the proofs in \S\ref{s:1_4} will, in fact, not involve any $π$-divisible groups.

In section \S\ref{s:3_4}, we will use deformation-theoretic arguments to extend the results from Hasse invariant $1/4$ to invariant $3/4$.

\section{Main Results}
\label{s:main_results}

The notation will be the same as in \S\ref{s:ATC}. We assume, however, that $n = 2$ and that $D$ is a division algebra of Hasse invariant $λ\in \{1/4, 3/4\}$. The centralizer $C = \mr{Cent}_D(E)$ is then a quaternion division algebra over $E$. We also denote by $B = D_{1/2}$ a quaternion division algebra over $F$. Recall that $O_D\subset D$ denotes a maximal order such that $O_C = C\cap O_D$ is again maximal.

The set $B(H, μ_H)$ has a single element $[b]$ in this situation (Example \ref{ex:isogeny_classes} (3)). The corresponding $C$-isocrystal $\bN_{b,+}$ is of height $8$, dimension $2$ and isoclinic of slope $1/4$. We choose framing objects: Let $(\mbY,ι)$ denote a special $O_C$-module over $\Spec \mbF$ and put $(\mbX, κ) = O_D\tensor_{O_C} (\mbY, ι)$. We identify the isocrystal of $(\mbY, ι)$ with $\bN_{b,+}$. In particular, we view $C_b$ and $D_b$ as the groups of quasi-automorphisms of $(\mbY, ι)$ and $(\mbX, κ)$. Then
\begin{equation}\label{eq:endo_rings_CDA_deg_4}
C_b \iso M_2(E)\quand D_b \iso \begin{cases} M_4(F) & \text{if } λ = 1/4\\
M_2(B) & \text{if } λ = 3/4.
\end{cases}
\end{equation}
As before, we put $H_b = C_b^\times$ and $G_b = D_b^\times$. These act from the right on the moduli spaces $\mcM_C$ and $\mcM_D$ whose definitions we briefly recall. First, $\mcM_C$ is the formal scheme over $\Spf O_{\breve F}$ with functor of points
\begin{equation}\label{eq:def_cycle_GL4}
\mcM_C(S) = \left\{(Y, ι, ρ) \left\vert \text{\begin{varwidth}{\textwidth}\centering $(Y,ι)/S$ a special $O_C$-module\\
$ρ:\ob{S} \times_{\Spec \mbF} \mbY \to \ob{S}\times_S Y$ an $O_C$-linear quasi-isogeny\end{varwidth}}\right\}\right..
\end{equation}
This is the (base chang to $O_{\breve F}$) of the Drinfeld half-plane for $O_E$. It is a two-dimensional, regular, $π$-adic formal scheme whose description will be recalled in \S\ref{ss:Drinfeld_Theorem} below. Second, $\mcM_D$ is the formal scheme over $\Spf O_{\breve F}$ with functor of points
\begin{equation}\label{eq:def_ambient_CDA_deg_4}
\mcM_D(S) = \left\{(X, κ, ρ) \left\vert \text{\begin{varwidth}{\textwidth}\centering $(X,κ)/S$ a strict $O_D$-module\\
$ρ:\ob{S} \times_{\Spec \mbF} \mbX \to \ob{S}\times_S X$ an $O_D$-linear quasi-isogeny\end{varwidth}}\right\}\right..
\end{equation}
If $λ = 1/4$, then $\mcM_D$ is Drinfeld's $4$-space and, in particular, a $π$-adic formal scheme. Its description will also be given in \S\ref{ss:Drinfeld_Theorem}. If $λ = 3/4$ however, then there is no known explicit description of $\mcM_D$.

For every regular semi-simple $g\in G_{b, \mr{rs}}$, we have defined the intersection locus $\mcI(g) = \mcM_C \cap g \mcM_C$ and an intersection number $\Int(g)\in \mbZ$ in \S\ref{ss:intersection_numbers}. We formulate our main result:

\begin{thm}\label{thm:main_ATC}
The AT conjecture holds for $D$. More precisely, let $f'_{\mr{corr}}$ be given by
\begin{equation}\label{eq:corr_function}
f'_{\mr{corr}} = \begin{cases}
-4q\log(q)\cdot f'_\Par & \text{if $λ = 1/4$}\\
0 & \text{if $λ = 3/4$.}\end{cases}
\end{equation}
Then, for every regular semi-simple $γ\in G'_{\mr{rs}}$,
\begin{equation}\label{eq:AT}
\del(γ, f'_D) + \Orb(γ, f'_{\mr{corr}}) = \begin{cases} 2\,\Int(g)\log(q) & \text{if there exists a $g\in G_b$ that matches $γ$}\\
0 & \text{otherwise.}
\end{cases}
\end{equation}
\end{thm}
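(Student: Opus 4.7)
The plan is to match both sides of \eqref{eq:AT} case-by-case in terms of the numerical invariant $(L, r, d)$ of $γ$ (Definition \ref{def:orbit_invariants}), using the analytic formulas from Proposition \ref{prop:derivative_teaser} and computing $\Int(g)$ geometrically via Proposition \ref{prop:intro_int_formula}. First I would dispose of the trivial cases. Since $nλ \equiv -1$ for both $λ = 1/4$ and $λ = 3/4$, Lemma \ref{lem:sign_alternative} and Proposition \ref{prop:char_isogeny_class} show that a matching $g \in G_b$ exists if and only if $r$ is even. When $r$ is odd, $\del(γ, f'_D) = 0$ and $\Orb(γ, f'_\Par) = 0$ by Propositions \ref{prop:derivative_teaser} and \ref{prop:orb_int_para_teaser}, so both sides of \eqref{eq:AT} vanish. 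When $r$ is even but $r \le 0$, the element $z_g$ fails to be topologically nilpotent; since $\bN_b$ is isoclinic of slope $1/4$ and hence has no étale part, Proposition \ref{prop:nilpotent_reduction}(1) forces $\mcI(g) = \emptyset$, so $\Int(g) = 0$, while the analytic side vanishes as well.

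Assume from now on that $r > 0$ is even. The target identity then reads
\begin{equation*}
4q\log(q)\,\Orb(γ, f'_\Par)\cdot \mathbf{1}[λ = 3/4] + \log(q)\cdot\varepsilon(L,r,d) = 2\,\Int(g)\log(q),
\end{equation*}
where $\varepsilon(L,r,d)$ equals $r$, $2r$, $0$ when $L$ is ramified, inert, split. To compute $\Int(g)$, I apply Proposition \ref{prop:intro_int_formula} to $Y_1 = Γ\backslash \mcM_C$, $Y_2 = Γ\backslash g\mcM_C$ inside $X = Γ\backslash \mcM_D$, which decomposes $\Int(g)$ into three pieces: the length of the artinian locus $Γ\backslash \mcI(g)^{\mr{art}}$, minus the conormal degree $\deg(\det\mcC\vert_{Γ\backslash\mcI(g)^{\mr{pure}}})$, minus a self-intersection $\langle Γ\backslash\mcI(g)^{\mr{pure}}, Γ\backslash\mcI(g)^{\mr{pure}}\rangle_{Γ\backslash\mcM_C}$. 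Writing $Γ\backslash\mcI(g)^{\mr{pure}} = \sum_Λ m(g,Λ)[P_Λ]$ as an $O_E$-lattice-indexed sum over Bruhat--Tits strata, Proposition \ref{prop:intro_conormal} turns the conormal term into $(q^2-1)\sum_Λ m(g,Λ)$, while the self-intersection reduces, via Drinfeld's description of $\mcM_C$ and the combinatorics of the $(q^2+1)$-regular tree $\mcB$, to a local expression at each vertex and edge of $\mcB$ in terms of the multiplicity function studied in \S\ref{s:multiplicity_functions}. Together these two subtraction terms depend only on $(L, r, d)$ and contribute precisely $\varepsilon(L, r, d)/2$ once the classification of $m(g,Λ)$ (Theorem \ref{thm:classification_multiplicity_function}) is inserted.

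The distinction between $λ = 1/4$ and $λ = 3/4$ enters only through $\mr{len}(\mcO_{Γ\backslash\mcI(g)^{\mr{art}}})$. For $λ = 1/4$, Drinfeld's theorem provides a linear-algebra model for \emph{both} $\mcM_C$ and $\mcM_D$, and a direct check (carried out in \S\ref{s:1_4}) shows that the artinian locus inside each connected component $\mcM_C^0$ is at most a single reduced point; its total length matches the prediction $0$ once the correction $-4q\log(q)\cdot f'_\Par$ is absorbed into $\del(γ,f'_D) + \Orb(γ,f'_\mr{corr})$. For $λ = 3/4$, every stratum $P_Λ$ that appears in \eqref{eq:intro_I_pure} contributes embedded components of total length exactly $q$, with positions listed in Table \ref{table:cases_I}; summing over $Γ\backslash\mcB$ yields $\mr{len}(\mcO_{Γ\backslash\mcI(g)^{\mr{art}}}) = 4q\,\Orb(γ,f'_\Par)$, exactly compensating for the vanishing of $f'_{\mr{corr}}$.

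The main obstacle is the $λ = 3/4$ case, where no explicit coordinate description of $\mcM_D$ is available. Here I would bootstrap from $λ = 1/4$ by transferring set-theoretic information along the common framing construction (\S\ref{ss:inter_set_theoretic}) and then analyzing the formal neighborhoods of superspecial points via a mix of Dieudonné theory, Cartier theory, and $O_F$-displays (\S\ref{ss:superspecial}, Proposition \ref{prop:structure_3_4}). Once the artinian locus is understood, matching the resulting expression for $\Int(g)$ with Proposition \ref{prop:derivative_teaser} in each of the three cases $L$ ramified, inert, split becomes a direct algebraic verification.
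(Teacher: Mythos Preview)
Your overall strategy matches the paper's: apply Corollary \ref{cor:intersection_simplified}, use Propositions \ref{prop:conormal_1_4} and \ref{prop:conormal_3/4} for the conormal term, Theorem \ref{thm:classification_multiplicity_function} for the multiplicity function, and for $λ=3/4$ bootstrap from $λ=1/4$ via Cartier and display theory. However, the specific accounting you propose is incorrect. Already for $λ=1/4$ the artinian locus is not always trivial: Proposition \ref{prop:embedded_components_1_4} shows it contributes a single reduced point when $L$ is a field and $r \equiv 2 \pmod 4$, and this $+1$ is essential for $\Int_0(g)=r/2$ in those cases (see the case split in Proposition \ref{prop:main_1_4_aux}). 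So the two subtraction terms alone do not give $\varepsilon(L,r,d)/2$; the decomposition ``pure $= \varepsilon/2$, artinian $= 0$'' fails half the time.

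The more substantial gap is for $λ=3/4$. There $\mcI(g)^{\mr{pure}}$ has multiplicities $m(w(y),Λ)$ with $w(y)^2 = π^{-1}\varpi y^2$ (see \eqref{eq:squares_w}), so the multiplicity function is governed by the \emph{shifted} invariant $(L, r-2, d)$, not $(L,r,d)$: the pure locus for $λ=3/4$ at invariant $(L,r,d)$ coincides with that of a $λ=1/4$ element $\wt g$ at invariant $(L,r-2,d)$ (equation \eqref{eq:divisors_equal}). Hence the conormal-plus-self-intersection contribution is tied to $\Int_0(\wt g)$, not to $\varepsilon(L,r,d)/2$. The paper therefore does not attempt your direct decomposition but instead compares $\Int_0(g)$ to $\Int_0(\wt g)$ (Lemma \ref{lem:strata_contribution}), obtaining the difference as $q\cdot\#\{Λ : m(y,Λ)\geq 0\}/Γ_0$ plus a $+1$ when $L$ is a field, and then matches the lattice count with $\Orb(γ, f'_\Par)$ via Lemma \ref{lem:strata}. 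Your claimed identity $\mr{len}(\mcO_{Γ\backslash\mcI(g)^{\mr{art}}}) = 4q\,\Orb(γ,f'_\Par)$ omits this $+1$ and also the connected-component doubling $\Int(g)=2\,\Int_0(g)$ for $L$ inert (see \eqref{eq:rel_conn_comp_int}); the shift $r\to r-2$, the $+1$, and the doubling all conspire to give the clean final formula and cannot simply be dropped.
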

This theorem will be proved as Theorems \ref{thm:main_1_4} and \ref{thm:main_3_4} at the end of sections \S\ref{s:1_4} and \S\ref{s:3_4}. We mention here that our proof of Theorem \ref{thm:main_ATC} is not fully complete when $F$ is of equal characteristic and $λ = 3/4$. The reason is that \S\ref{ss:displays} relies on the $O_F$-display theory of \cite{ACZ} which was only developed for $p$-adic local fields. Completing the proof requires a duplication of \S\ref{ss:displays} but for local shtuka. We will not carry out these arguments in order to keep the article at a reasonable length.

\section{Surface Intersections}
\label{s:surface_intersection}

The aim of this section is to derive a general formula for intersection numbers of regular surfaces in a regular $4$-dimensional space.
Let first $Y$ be a regular formal scheme, pure of dimension $2$ and let $Z = V(\mcI)\subseteq Y$ be a closed formal subscheme of dimension $\leq 1$.

\begin{defn}\label{def:purification}
(1) Let $Z^{\mr{pure}}\subseteq Z$ be the maximal effective Cartier divisor on $Y$ that is contained in $Z$. More precisely, we define $Z^{\mr{pure}} = V(\mcI^{\mr{pure}})$ where for every affine open $U = \Spf A$ of $Y$, say $U\cap Z = \Spf A/I$,
$$\mcI^{\mr{pure}}(U) = \left\{f\in A \left\vert \text{\begin{varwidth}{\textwidth} \centering $f = 0$ in $(A/I)_\eta$ for all generic points $η$ of\\
$1$-dimensional irreducible components of $\Spec A/I$\end{varwidth}}\right\}\right..$$
(2) Set $Z^{\mr{art}} = V(\mcI^{\mr{art}})$ with
$\mcI^{\mr{art}} = \{f\in \mcO_X\mid f\mcI^{\mr{pure}}\subseteq \mcI\}.$
Note that $\mcO_{Z^{\mr{art}}}$ is isomorphic to the quotient $\mcI^{\mr{pure}}/\mcI.$
\end{defn}

Let $X$ be a regular formal scheme, pure of dimension $4$, and let $Y_1,Y_2\subseteq X$ be regular closed formal subschemes, both pure of dimension $2$. Then $Y_1$ and $Y_2$ are locally defined by a regular sequence of length $2$ in $\mcO_X$. Put $Z = Y_1\cap Y_2$ and assume $\dim Z \leq 1$.
The conormal bundle of $Y_i$ in $X$ is $\mcC_i = (\mcI_i/\mcI_i^2)\vert_{Y_i}$, where $Y_i = V(\mcI_i)$. Let $D = Z^{\mr{pure}}$ be the purely $1$-dimensional locus of $Z$ as in Definition \ref{def:purification}; it is independent of whether it is defined with respect to $Z\subseteq Y_1$ or $Z\subseteq Y_2$.
\begin{prop}\label{prop:tor_terms_for_surfaces}
The following identities hold for the Tor-terms $T_i := Tor_i^{\mcO_X}(\mcO_{Y_1}, \mcO_{Y_2})$.
\begin{enumerate}[wide, labelindent=0pt, labelwidth=!, label=(\arabic*), topsep=2pt, itemsep=2pt]
\item $T_0 = \mcO_Z$.
\item $T_1 = (\det \mcC_1)\vert_D \tensor_{\mcO_D} \mcO_{Y_2}(D)\vert_D$.
\item $T_2 = 0$.
\end{enumerate}
\end{prop}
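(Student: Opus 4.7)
The plan is to argue locally via the Koszul resolution of $\mcO_{Y_1}$. At a closed point $x \in Z$, choose a regular sequence $(f_1, f_2)$ generating $\mcI_1$, which is possible because $Y_1 \subseteq X$ is a regular immersion of codimension $2$. The Koszul complex
\begin{equation*}
0 \to \mcO_X \xrightarrow{d_2} \mcO_X^2 \xrightarrow{d_1} \mcO_X, \qquad d_1 = (f_1, f_2),\ d_2(c) = (-cf_2, cf_1),
\end{equation*}
is a locally free resolution of $\mcO_{Y_1}$. Tensoring with $\mcO_{Y_2}$ identifies $T_i$ with the degree $-i$ homology of the resulting three-term complex $\mcO_{Y_2} \to \mcO_{Y_2}^2 \to \mcO_{Y_2}$.

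Parts (1) and (3) fall out quickly. For (1), $T_0 = \mr{coker}(d_1|_{Y_2}) = \mcO_Z$. For (3), $T_2$ is the common annihilator in $\mcO_{Y_2}$ of $f_1|_{Y_2}$ and $f_2|_{Y_2}$; since $Y_2$ is regular its stalks are domains, and if both $f_i|_{Y_2}$ vanished in some stalk then $Y_1 \supseteq Y_2$ there, forcing $\dim Z \geq 2$ and contradicting the hypothesis.

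The heart of the argument is (2). First I would observe that $D = Z^{\mr{pure}}$ is Cartier on $Y_2$: it is pure of codimension $1$ with no embedded points in the regular surface $Y_2$, hence locally principal. Writing $\mcI_D|_{Y_2} = (h)$ with $h$ a nonzerodivisor and using $\mcI_Z|_{Y_2} \subseteq (h)$, we obtain unique $a_i \in \mcO_{Y_2}$ with $f_i|_{Y_2} = h a_i$. The key point is that $V(a_1, a_2) = Z^{\mr{art}}$ is artinian, so $(a_1, a_2)$ is a regular sequence in the regular local ring $\mcO_{Y_2, x}$. By the Koszul theorem its syzygies --- equivalently, since $h$ is a nonzerodivisor, those of $(f_1, f_2)|_{Y_2}$ --- are freely generated by $(a_2, -a_1)$. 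Hence $\ker d_1 = \mcO_{Y_2} \cdot (a_2, -a_1)$ and $\mr{im}\, d_2 = h \mcO_{Y_2} \cdot (a_2, -a_1)$, giving $T_1 \cong \mcO_{Y_2}/(h) = \mcO_D$ as abstract $\mcO_D$-modules.

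The final step is to upgrade this to the canonical identification with $(\det \mcC_1)|_D \tensor_{\mcO_D} \mcO_{Y_2}(D)|_D$. The observation is that the generator $(a_2, -a_1)$ of $T_1$ morally equals ``$d_2(e_1 \wedge e_2)/h$'', and the element $e_1 \wedge e_2$ restricts on $Y_1$ to the generator $f_1 \wedge f_2$ of $\det \mcC_1$. I would therefore define the map by $(f_1 \wedge f_2) \tensor h^{-1} \mapsto (a_2, -a_1)$ and verify two compatibilities: under $(f_1, f_2) \mapsto (f_1, f_2) M$ with $M \in GL_2(\mcO_X)$ both sides transform by $\det(M)$ after transport via the chain map between the two Koszul resolutions, while under $h \mapsto uh$ with $u \in \mcO_{Y_2}^\times$ both sides scale by $u^{-1}$. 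These compatibilities glue the local maps into a canonical global isomorphism. The main obstacle is this canonicity check --- the underlying algebra is routine Koszul theory, but tracking the transformation rules cleanly requires some care.
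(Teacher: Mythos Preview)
Your proposal is correct and follows essentially the same approach as the paper: resolve $\mcO_{Y_1}$ by the Koszul complex on a local regular sequence $(f_1,f_2)$, factor $f_i|_{Y_2} = h a_i$ through a local equation $h$ of $D$, use that $(a_1,a_2)$ is a regular sequence on $Y_2$ (since $V(a_1,a_2)=Z^{\mr{art}}$ is artinian) to identify $T_1$ locally with $\mcO_D$ via the generator $(a_2,-a_1)$, and then check that the assignment $(f_1\wedge f_2)\otimes h^{-1}\mapsto (a_2,-a_1)$ is compatible with $GL_2$ changes of $(f_1,f_2)$ and unit changes of $h$. Your argument for $T_2=0$ via stalks of the domain $\mcO_{Y_2}$ is slightly more direct than the paper's (which obtains it from the regularity of $(a_1,a_2)$), but this is a cosmetic difference.
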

\begin{proof}
The claim on $T_0$ is immediate. To prove the statements about the higher Tor-terms, we first assume that $Y_1 = V(f_1,f_2)$ is the vanishing locus of two elements and that $D = V(g)$ for some $g\in \mcO_{Y_2}$. Then the Koszul complex
$$K_{(f_1,f_2)} := \left[
\xymatrix{
\mcO_X \ar[r]^{\left(\begin{smallmatrix} f_2 \\ -f_1\end{smallmatrix}\right)}
& \mcO_X^{\oplus 2} \ar[r]^{(f_1,f_2)}
& \mcO_X}\right]$$
is quasi-isomorphic to $\mcO_{Y_1}$ and
$$T_i = H^{-i}(\mcO_{Y_2}\tensor_{\mcO_X} K_{(f_1,f_2)}).$$
Let $\overbar{f}_i$ denote the image of $f_i$ in $\mcO_{Y_2}$. Then
$$\mcO_{Y_2}\tensor_{\mcO_X} K_{(f_1,f_2)} = \left[
\xymatrix{\mcO_{Y_2} \ar[r]^{\left(\begin{smallmatrix} \overbar f_2 \\ -\overbar f_1\end{smallmatrix}\right)}
& \mcO_{Y_2}^{\oplus 2} \ar[r]^{(\overbar f_1,\overbar f_2)}
& \mcO_{Y_2}}\right].$$
Set $f'_i = g^{-1} \overbar f_i \in \mcO_{Y_2}$. Then $(f'_1,f'_2)$ forms a regular sequence in $\mcO_{Y_2}$ because $V(f'_1,f'_2)$ is artinian by definition of $D$. In particular,
$$T_2 = \ker \left(\begin{smallmatrix} \overbar f_2 \\ -\overbar f_1\end{smallmatrix}\right) = \ker \left(\begin{smallmatrix} f'_2 \\ -f'_1\end{smallmatrix}\right) = 0.$$
Moreover,
$$T_1 = H^{-1}(\mcO_{Y_2}\tensor_{\mcO_X} K_{(f_1,f_2)}) = \mcO_{Y_2} \cdot \left(\begin{smallmatrix} f'_2 \\ -f'_1\end{smallmatrix}\right) / \mcO_{Y_2}\cdot \left(\begin{smallmatrix} \overbar f_2 \\ -\overbar f_1\end{smallmatrix}\right).$$
is a line bundle on $D$ and our choices provide the specific generator
$$c_{f_1,f_2,g} := \left(\begin{smallmatrix} f'_2 \\ -f'_1\end{smallmatrix}\right) \text{ mod }\mcO_{Y_2}\cdot \left(\begin{smallmatrix} \overbar f_2 \\ -\overbar f_1\end{smallmatrix}\right).$$ 

Now we turn to the general situation. The given local argument already implies that $T_2 = 0$. We claim that the above construction glues to a map (and hence isomorphism)
\begin{equation}\label{eq:iso_tor_1}
\begin{aligned}
(\det \mcC_1)\vert_D \tensor_{\mcO_D} \mcO_{Y_2}(D)\vert_D & \lr & T_1\\
\left(\overbar f_1 \wedge \overbar f_2\right) \tensor g^{-1} & \longmapsto & c_{f_1,f_2,g}.
\end{aligned}
\end{equation}
(Here, $f_1$, $f_2$ and $g$ denote any local generators as before.) It is clear that if $g$ is replaced by $ug$ with $u\in \mcO_{Y_2}^\times$, then $f'_i$ gets replaced by $u^{-1}f'_i$. We find that $c_{f_1,f_2,ug} = u^{-1}c_{f_1,f_2,g}$ which shows the independence of \eqref{eq:iso_tor_1} from the chosen trivialization $g^{-1}\in \mcO_{Y_2}(D)$.

Now assume $(h_1, h_2) = (f_1, f_2) A$ for some $A\in GL_2(\mcO_X)$. Then $A$ defines an isomorphism of complexes
\begin{equation}
\xymatrix{
\mcO_{Y_2} \ar[r]^{\left(\begin{smallmatrix} \overbar f_2 \\ -\overbar f_1\end{smallmatrix}\right)} & \mcO_{Y_2}^{\oplus 2} \ar[r]^{(\overbar f_1,\overbar f_2)} & \mcO_{Y_2}\\
\mcO_{Y_2} \ar[r]^{\left(\begin{smallmatrix} \overbar h_2 \\ -\overbar h_1\end{smallmatrix}\right)} \ar[u]^{\det A} & \mcO_{Y_2}^{\oplus 2} \ar[r]^{(\overbar h_1,\overbar h_2)} \ar[u]^A & \mcO_{Y_2} \ar@{=}[u]}
\end{equation}
and one easily checks the relation
$$A \cdot c_{h_1,h_2,g} = \det A \cdot c_{f_1, f_2, g}.$$
Since $h_1\wedge h_2 = \det A \cdot f_1\wedge f_2$, this precisely says that \eqref{eq:iso_tor_1} is also independent of the choice of trivialization of $\mcC_1$.
\end{proof}
Assume now that $X$ is a $\Spf W$-scheme of locally formally finite type where $W$ is a complete DVR, and that $Z\to \Spec W$ is a proper scheme with empty generic fiber. Then we define the intersection number of $Y_1$ and $Y_2$ as
$$\langle Y_1, Y_2\rangle_{X} = χ(\mcO_{Y_1}\tensor^{\mbL}_{\mcO_X} \mcO_{Y_2})\in \mbZ.$$
As a corollary to Proposition \ref{prop:tor_terms_for_surfaces}, this has the following more concrete description.
\begin{cor}\label{cor:intersection_simplified}
With all notation as before,
$$\begin{aligned}
\langle Y_1, Y_2\rangle_X =\ & \chi(\mcO_{Z^{\mr{art}}}) + \chi(\mcO_D) - \chi\big(\det \mcC_1\vert_D \tensor_{\mcO_D} \mcO_{Y_2}(D)\vert_D\big)\\
=\ & \mr{len} (\mcO_{Z^{\mr{art}}})  - \deg (\det \mcC_1 \vert_D) - \langle D,D\rangle_{Y_2}.
\end{aligned}$$
\end{cor}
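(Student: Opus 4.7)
The corollary is a direct unpacking of Proposition \ref{prop:tor_terms_for_surfaces}, so my plan is to simply chain together the definitions and the explicit computation of the Tor-terms. Since $\mcO_X$ is regular of dimension $4$ and $Y_1$, $Y_2$ are each locally cut out by a regular sequence of length $2$, only $T_0$, $T_1$, $T_2$ can be non-zero, and the proposition shows $T_2 = 0$. Thus
\[
\langle Y_1, Y_2\rangle_X \;=\; \chi(T_0) - \chi(T_1) \;=\; \chi(\mcO_Z) - \chi\!\left(\det\mcC_1\vert_D \tensor_{\mcO_D} \mcO_{Y_2}(D)\vert_D\right).
\]
All Euler characteristics are well-defined because $Z$ is proper over $\Spec W$ with empty generic fiber, and $D = Z^{\mr{pure}}$ inherits these properties.

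For the first line of the claimed identity, I would use the tautological short exact sequence
\[
0 \lr \mcI^{\mr{pure}}/\mcI \lr \mcO_X/\mcI \lr \mcO_X/\mcI^{\mr{pure}} \lr 0,
\]
which, in the notation of Definition \ref{def:purification}, reads $0 \to \mcO_{Z^{\mr{art}}} \to \mcO_Z \to \mcO_D \to 0$. Additivity of the Euler characteristic then gives $\chi(\mcO_Z) = \chi(\mcO_{Z^{\mr{art}}}) + \chi(\mcO_D)$, which yields the first equality.

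For the second line, I would observe that $\mcO_{Z^{\mr{art}}}$ is supported in dimension $0$, so $\chi(\mcO_{Z^{\mr{art}}}) = \mr{len}(\mcO_{Z^{\mr{art}}})$. For the remaining terms, $D$ is a proper (possibly non-reduced, non-equidimensional) curve over the residue field of $W$, and on such a curve the Euler characteristic of any line bundle $\mcL$ satisfies $\chi(\mcL) = \chi(\mcO_D) + \deg(\mcL)$ (this is the standard definition of degree via Riemann--Roch, extended additively over the irreducible components). Applying this to the line bundle $\det\mcC_1\vert_D \tensor \mcO_{Y_2}(D)\vert_D$ and using that $\deg(\mcO_{Y_2}(D)\vert_D) = \langle D, D\rangle_{Y_2}$ by the very definition of the self-intersection of the Cartier divisor $D$ on the regular surface $Y_2$, one obtains the desired formula.

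There is no real obstacle here; the only mild subtlety is making sure the formula $\chi(\mcL) = \chi(\mcO_D) + \deg(\mcL)$ is available on a possibly singular/non-reduced proper curve $D$, which I would justify by filtering $\mcO_D$ by ideals supported on the irreducible components and reducing to the case of an integral proper curve over a field.
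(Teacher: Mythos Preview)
Your proof is correct and follows essentially the same approach as the paper: both derive the first line from Proposition~\ref{prop:tor_terms_for_surfaces} (with the decomposition $\chi(\mcO_Z) = \chi(\mcO_{Z^{\mr{art}}}) + \chi(\mcO_D)$), and both obtain the second line from the Riemann--Roch identity $\chi(\mcL) - \chi(\mcO_D) = \deg(\mcL)$ together with $\deg(\mcO_{Y_2}(D)\vert_D) = \langle D, D\rangle_{Y_2}$. The paper handles the possibly non-reduced $D$ by citing \cite[Tag 0AYQ]{Stacks}, which is exactly the subtlety you flag at the end.
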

\begin{proof}
The first equality is Proposition \ref{prop:tor_terms_for_surfaces}. To obtain the second, we applied the Riemann--Roch identity
$$\chi(\mcO_D) - \chi(\mcL) = - \deg(\mcL),\quad \mcL \in \mr{Pic}(D),$$
and rewrote $\deg \mcO_{Y_2}(D)\vert_D$ as the self-intersection number of $D$ on $Y_2$. We refer to \cite[Tag 0AYQ]{Stacks} for the notion of degree in this possibly non-reduced context.
\end{proof}

\section{Multiplicity functions}
\label{s:multiplicity_functions}

Let $W$ be a $2$-dimensional $E$-vector space and let $\mcB$ denote the Bruhat--Tits building of the projective linear group $PGL_E(W)$. Recall that $\mcB$ is a $(q^2+1)$-regular tree whose vertices are the homothety classes of $O_E$-lattices in $W$. Two vertices are connected by an edge if and only if the two homothety classes have representatives $Λ_0$ and $Λ_1$ with $πΛ_0\subset Λ_1 \subset Λ_0$.

Let $z\in GL_F(W)$ be an $E$-conjugate linear endomorphism. The aim of this section is to give a precise description of the shape of the function
\begin{equation}\label{eq:def_multiplicity_function}
n(z,-):\mr{Vert}(\mcB)\lr \mbZ,\quad Λ\longmapsto \max\{k \in \mbZ\mid zΛ \subseteq π^kΛ\}.
\end{equation}
More precisely, we give a description for all $z$ such that $1+z$ lies in $GL_F(W)$ and is regular semi-simple with respect to $E\subseteq \End_F(W)$. We begin with a simple classification lemma over the residue field whose proof we omit.
\begin{lem}\label{lem:classif_emanating_edges}
Denote by $σ$ the Galois conjugation of $\mbF_{q^2}/\mbF_q$. Let $\bar{Λ}$ be a $2$-dimensional $\mbF_{q^2}$-vector space and let $0\neq \bar{z}\in \End_{\mbF_q}(\bar{Λ})$ be a $σ$-linear endomorphism. Precisely one of the following six statements applies to $\bar{z}$:
\begin{enumerate}[wide, labelindent=0pt, labelwidth=!, label=(\arabic*), topsep=2pt, itemsep=2pt]
\item It is nilpotent, i.e. $\bar{z}^2 = 0$. In this case, there is a unique line $\ell \subset \bar{Λ}$ such that $\bar{z} \ell \subseteq \ell$, namely $\ell = \bar{z}\bar{Λ}$. In a suitable basis we have
$$\bar{z} = \left(\begin{matrix} & 1\\ &  \end{matrix}\right)σ.$$
\item It is neither invertible nor nilpotent. Then there are precisely two lines $\ell_1,\ell_2 \subseteq \bar{Λ}$ such that $\bar{z}\ell_i \subset \ell_i$, namely $\bar{z}\bar{Λ}$ and $\ker(\bar{z})$. In a suitable basis we have
$$\bar{z} = \left(\begin{matrix}λ & \\ &  \end{matrix}\right)σ$$
for some scalar $0\neq λ\in \mbF_{q^2}$.
\item It is invertible and there is precisely one line $\ell \subset \bar{Λ}$ with $\bar{z}\ell = \ell$. Then there are $λ,μ\in \mbF_{q^2}^\times$ and a basis of $\bar{Λ}$ such that $λμ^q + λ^qμ \neq 0$ and such that $\bar{z}$ is given by
$$\bar{z} = \left(\begin{matrix}λ & μ\\ & λ  \end{matrix}\right)σ.$$
\item It is invertible and there are precisely two lines $\ell_1, \ell_2\subset W$ with $\bar{z}\ell_i = \ell_i$. Let $0 \neq v_i\in \ell_i$ be any two vectors and define $λ_i$ by $\bar{z}v_i = λ_iv$. Then $λ_1^{q+1} \neq λ_2^{q+1}$ and $\bar{z}$ is given in that basis by
$$\bar{z} = \left(\begin{matrix}λ_1 & \\ & λ_2 \end{matrix}\right)σ.$$
\item It is invertible and there are precisely $q+1$ lines $\ell\subset V$ with $\bar{z}\ell = \ell$. In a suitable basis and for a suitable scalar $0\neq λ\in \mbF_{q^2}$, we have
$$\bar{z} = \left(\begin{matrix}λ & \\ & λ\end{matrix}\right)σ.$$
\item It is invertible and there is no $\bar{z}$-stable line.\qed
\end{enumerate}
\end{lem}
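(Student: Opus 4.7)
The strategy is to reduce the classification of $\bar z$ to the Jordan structure of the $\mbF_{q^2}$-linear endomorphism $\bar z^2$, and then for each Jordan type to determine which of the $\bar z^2$-stable lines are also $\bar z$-stable.

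First I would establish the key structural fact: writing $\bar z = A\sigma$ for a matrix $A\in M_2(\mbF_{q^2})$, a direct computation gives $\bar z^2 = A\sigma(A)$ with $\mr{tr}(A\sigma(A)) \in \mbF_q$ and $\det(A\sigma(A)) = N_{\mbF_{q^2}/\mbF_q}(\det A) \in \mbF_q$. Hence the characteristic polynomial of $\bar z^2$ lies in $\mbF_q[T]$. Next I would observe: if $\ell = \mbF_{q^2}v$ is $\bar z$-stable, then $\bar zv = λv$ for some $λ\in \mbF_{q^2}$ and $\bar z^2 v = λ^{q+1}v$, so the eigenvalue $μ$ of $\bar z^2$ on a stable line must be a norm, hence lie in $\mbF_q$. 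Conversely, a $\bar z^2$-eigenvector $v$ with eigenvalue $μ\in \mbF_q$ satisfies $\bar z^2(\bar zv) = \bar z(μv) = μ^q\bar zv = μ\bar zv$, so $\bar z$ preserves each $\mbF_q$-rational eigenspace of $\bar z^2$; the stability of the line $\mbF_{q^2}v$ itself is then automatic whenever the ambient eigenspace is $1$-dimensional.

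I would then split according to the Jordan type of $\bar z^2$. If the eigenvalues of $\bar z^2$ form a Galois-conjugate pair in $\mbF_{q^2}\setminus \mbF_q$, no stable line exists and we get case~(6). If $\bar z^2$ has two distinct eigenvalues in $\mbF_q$, both $1$-dimensional eigenspaces are stable: this produces case~(4) if both are nonzero (two stable lines, with the diagonal normal form by choosing eigenvector bases, and the condition $λ_1^{q+1}\neq λ_2^{q+1}$ is precisely the distinctness of the $\bar z^2$-eigenvalues), and cases~(2) or~(1) if one eigenvalue is zero. If $\bar z^2$ has a single eigenvalue $μ\in \mbF_q$ with only a $1$-dimensional eigenspace (single Jordan block), that kernel is the unique stable line; when $μ\neq 0$ this is case~(3), and when $μ=0$ we have $\bar z$ of rank~$1$ with $\bar z^2\neq 0$, i.e.\ case~(2). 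The remaining possibility is $\bar z^2 = μ I$ with $μ\in \mbF_q$; when $μ=0$ we get case~(1), and when $μ\neq 0$ we get case~(5).

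The two technically nontrivial points are cases~(3) and~(5). For case~(5), pick $λ_0\in \mbF_{q^2}^\times$ with $λ_0^{q+1}=μ$; then $(λ_0^{-1}\bar z)^2 = \mr{id}$, so $λ_0^{-1}\bar z$ is a $σ$-semilinear involution. By Galois descent (Hilbert~90) the fixed space is an $\mbF_q$-form of $\bar Λ$, giving a basis in which $\bar z = λ_0σ$; counting stable lines $\mbF_{q^2}(a,b)$ reduces to the equation $b^q = b$, hence to $\mbP^1(\mbF_q)$, and produces exactly $q+1$ lines. For case~(3), choose $v_1$ spanning the stable line with $\bar zv_1=λv_1$, complete to a basis $\{v_1,v_2\}$, and arrange via an $\mbF_{q^2}$-scaling of $v_2$ that $\bar zv_2 = μv_1+λv_2$; then a second line $\mbF_{q^2}(αv_1+v_2)$ is $\bar z$-stable iff $α^q-α = -μ/λ$, which by the Artin--Schreier/trace description of the image of $x\mapsto x^q-x$ has a solution iff $\mr{tr}_{\mbF_{q^2}/\mbF_q}(-μ/λ)=0$, i.e.\ iff $λμ^q+λ^qμ=0$. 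Uniqueness therefore corresponds to the stated inequality. The normal forms in cases~(1) and~(2) are immediate from a basis built out of $\bar zv$ and a vector in $\ker\bar z$.

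The main obstacle, beyond the bookkeeping, is the descent argument in case~(5): one must genuinely use Lang's theorem (via the semilinear involution trick) to trivialize $\bar z$ into the scalar form $λσ$ before the stable-line count becomes visible. Everything else reduces to linear algebra over $\mbF_{q^2}$ together with the trace computation for case~(3).
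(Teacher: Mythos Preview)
The paper omits the proof of this lemma entirely (it calls it ``a simple classification lemma over the residue field whose proof we omit'' and closes with a \qed), so there is nothing to compare against. Your approach via the $\mbF_{q^2}$-linear endomorphism $\bar z^2$ is the natural one and your treatment of the two nontrivial cases is correct: the Hilbert~90 / descent argument for case~(5) and the Artin--Schreier trace computation for case~(3) are exactly right, and the latter condition $\lambda\mu^q+\lambda^q\mu\neq 0$ neatly coincides with the requirement that $\bar z^2$ be a genuine Jordan block rather than scalar.

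There is a small bookkeeping slip in your case split. The situation ``$\bar z^2$ is a single Jordan block with eigenvalue $0$'' is in fact vacuous: if $\bar z$ has $\mbF_{q^2}$-rank~$1$ and $\bar z^2\neq 0$, then $\mr{Im}(\bar z)$ and $\ker(\bar z)$ are distinct lines and $\bar z^2$ is diagonalizable with eigenvalues $\lambda^{q+1}\neq 0$ and $0$, so this already falls under ``two distinct eigenvalues in $\mbF_q$''. Likewise, case~(1) does not belong under ``two distinct eigenvalues'' but under ``$\bar z^2=\mu I$ with $\mu=0$'', which you also state correctly elsewhere. These are harmless redundancies rather than gaps, but you should clean them up so that each Jordan type maps to exactly one case of the lemma.
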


We return to $O_E$-lattices in $W$ and the function $n(-,-)$.
\begin{lem}\label{lem:properties_multiplicity_function}
The function $n(-,-)$ enjoys the following properties.
\begin{enumerate}[wide, labelindent=0pt, labelwidth=!, label=(\arabic*), topsep=2pt, itemsep=2pt]
\item $n(πz, Λ) = n(z, Λ) + 1$
\item $|n(z, Λ) - n(z, Λ')| \leq 1$ whenever $Λ$ and $Λ'$ are neighbors in $\mcB$.
\item The function $n(z,-)$ is bounded above by $v(\det_E(z^2))/4$ and, in particular, takes a maximum.
\item Let $Λ'' \in [Λ,Λ']$ be a lattice on the unique shortest path connecting $Λ$ and $Λ'$ in $\mcB$. Then
$$n(z, Λ'') \geq \min\{n(z, Λ),\ n(z,Λ')\}.$$
\end{enumerate}
\end{lem}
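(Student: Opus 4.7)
All four statements follow from manipulations with lattice inclusions, and I expect no real obstacle. For part (1), since $\pi$ lies in the fixed field $F$ it commutes with $z$, so $\pi z \Lambda \subseteq \pi^k \Lambda$ is equivalent to $z\Lambda \subseteq \pi^{k-1}\Lambda$, giving the identity immediately. For part (2), if $\Lambda$ and $\Lambda'$ are neighbors, then after rescaling I may assume $\pi\Lambda \subset \Lambda' \subset \Lambda$. Then $z\Lambda \subseteq \pi^k\Lambda$ yields $z\Lambda' \subseteq z\Lambda \subseteq \pi^k\Lambda \subseteq \pi^{k-1}\Lambda'$, and conversely $z\Lambda' \subseteq \pi^k\Lambda'$ gives $\pi z\Lambda = z(\pi\Lambda) \subseteq z\Lambda' \subseteq \pi^k\Lambda' \subseteq \pi^k \Lambda$, so $z\Lambda \subseteq \pi^{k-1}\Lambda$. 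Both parts are well-defined on homothety classes because rescaling $\Lambda$ by $\pi$ does not change either side of the defining inclusion.

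For part (3), I will use that $z^2$ is $E$-linear (since $z$ is $E$-conjugate linear). If $z\Lambda \subseteq \pi^k \Lambda$ then $z^2\Lambda \subseteq \pi^{2k}\Lambda$, so we can write $z^2|_\Lambda = \pi^{2k}T$ for some $E$-linear $T:\Lambda \to \Lambda$. Taking $E$-determinants on the rank-$2$ $O_E$-module $\Lambda$ gives $v(\det_E(z^2)) \geq 4k$, which yields the bound. The element $\det_E(z^2)$ is nonzero because the regular semi-simpleness of $1+z$ forces $z$ to be invertible (as $(1+z)_- = z$ must lie in $GL_F(W)$), so the upper bound is finite and the maximum is attained.

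For part (4), the key input is the standard explicit description of geodesics in the Bruhat--Tits tree. After rescaling, I may choose representatives such that $\pi^d\Lambda \subseteq \Lambda' \subseteq \Lambda$, where $d = d(\Lambda,\Lambda')$. The vertices of the geodesic are then represented by the lattices $\Lambda_i := \Lambda' + \pi^i \Lambda$ for $i = 0, \ldots, d$ (with $\Lambda_0 = \Lambda$ and $\Lambda_d = \Lambda'$). Setting $m = \min\{n(z,\Lambda), n(z,\Lambda')\}$, the assumption gives $z\Lambda \subseteq \pi^m\Lambda$ and $z\Lambda' \subseteq \pi^m\Lambda'$, and $\pi$ commuting with $z$ yields
\[
z\Lambda_i = z\Lambda' + \pi^i z\Lambda \subseteq \pi^m \Lambda' + \pi^{m+i}\Lambda = \pi^m(\Lambda' + \pi^i \Lambda) = \pi^m \Lambda_i,
\]
so $n(z,\Lambda_i) \geq m$, as required. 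The only mildly nontrivial ingredient is the geodesic description, which is classical; the rest is bookkeeping.
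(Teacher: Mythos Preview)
Your proof is correct and follows the same approach as the paper. The paper's own proof is terser---it simply declares that (1)--(3) ``follow directly from the definition'' and, for (4), gives precisely the geodesic description $\Lambda'' = \Lambda' + \pi^i\Lambda$ that you use---but your expanded version fills in exactly the details one would write if asked to make that sketch explicit.
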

\begin{proof}
The first three claims follow directly from the definition. For the last one, choose $Λ$ and $Λ'$ in their homothety classes such that $Λ'\subseteq Λ$ with $Λ/Λ'$ cyclic. Then $Λ''$ is the homothety class of one of the lattices $Λ' + π^iΛ$, $i\geq 0$, and the claim follows from the definition of $n(-,-)$.
\end{proof}

We next analyze the local properties of $n(z,-)$ in conjunction with Lemma \ref{lem:classif_emanating_edges}. Given a lattice $Λ$, we obtain a non-zero $σ$-linear endomorphism $\bar{z}_Λ := (π^{-n(z,Λ)}z$ mod $πΛ)$ in $\End_{\mbF_q}(\bar{Λ})$, where $\bar{Λ} = Λ/πΛ$. Let $\ell = Λ/Λ' \subseteq \bar{Λ}$ be the line corresponding to the neighbor lattice $πΛ\subset Λ'\subset Λ$.
\begin{lem}\label{lem:local_properties_multiplicities}
The following cases occur:
$$n(z, Λ') = \begin{cases}
n(z,Λ)-1 & \text{if $\bar{z}_Λ(\ell) \not\subseteq \ell$}\\[8pt]
n(z,Λ)+1 & \text{\begin{varwidth}{\textwidth}if $\bar{z}_Λ(\ell)\subseteq \ell$, if $\bar{z}_Λ$ falls into case (1) of Lemma \ref{lem:classif_emanating_edges},\\
\leavevmode\phantom{if $\bar{z}_Λ(\ell)\subseteq \ell$,} and if $v(\det_E(π^{-2n(z, Λ)}z^2)) \geq 4$\end{varwidth}}\\[8pt]
n(z,Λ) & \text{if $\bar{z}_Λ(\ell)\subseteq \ell$, but $\bar{z}_Λ$ does not satisfy the previous further two conditions.}
\end{cases}$$
\end{lem}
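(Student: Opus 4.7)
The plan is to pick a basis of $\Lambda$ adapted to the edge $\{\Lambda,\Lambda'\}$ and carry out a direct coordinate computation. Set $n := n(z,\Lambda)$ and choose an $O_E$-basis $(e_1,e_2)$ of $\Lambda$ with $\bar e_1$ spanning $\ell$, so that $\Lambda' = O_E e_1 + \pi O_E e_2$. Writing $\pi^{-n}z(e_i) = c_{i,1}e_1 + c_{i,2}e_2$ with $c_{i,j}\in O_E$, the matrix of $\bar z_\Lambda$ in the basis $(\bar e_1,\bar e_2)$ is $\left(\begin{smallmatrix}\bar c_{1,1} & \bar c_{2,1}\\ \bar c_{1,2} & \bar c_{2,2}\end{smallmatrix}\right)$, so that the condition $\bar z_\Lambda(\ell)\subseteq\ell$ reads $c_{1,2}\in\pi O_E$.

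By Lemma \ref{lem:properties_multiplicity_function}(2) we already know $n(z,\Lambda')\in\{n-1,n,n+1\}$, and the lower bound $n(z,\Lambda')\ge n-1$ is automatic from $z\Lambda'\subseteq z\Lambda \subseteq \pi^n\Lambda \subseteq \pi^{n-1}\Lambda'$. Checking $n(z,\Lambda')\ge n$ is equivalent to $\pi^{-n}z(\Lambda')\subseteq\Lambda'$; evaluating on the generators $e_1$ and $\pi e_2$ of $\Lambda'$, this reduces to the single condition $c_{1,2}\in\pi O_E$. Hence $\bar z_\Lambda(\ell)\not\subseteq\ell$ forces $n(z,\Lambda')=n-1$, settling the first case.

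Assume henceforth $\bar z_\Lambda(\ell)\subseteq\ell$. Unpacking the condition $n(z,\Lambda')\ge n+1$ via evaluation of $\pi^{-n-1}z$ on $e_1$ and $\pi e_2$ gives the three requirements $c_{1,1}\in\pi O_E$, $c_{2,2}\in\pi O_E$ and $c_{1,2}\in\pi^2 O_E$. The first two, together with $\bar c_{1,2}=0$, force the matrix of $\bar z_\Lambda$ to be strictly upper triangular with the upper-right entry non-zero, which is exactly case (1) of Lemma \ref{lem:classif_emanating_edges}. A routine expansion yields $\det_E(\pi^{-2n}z^2) = N_{E/F}(c_{1,1}c_{2,2}-c_{1,2}c_{2,1})$; under case (1) we have $v_E(c_{1,1}c_{2,2})\ge 2$ and $v_E(c_{1,2}c_{2,1})=v_E(c_{1,2})$, so the inequality $v(\det_E(\pi^{-2n}z^2))\ge 4$ becomes equivalent to $c_{1,2}\in\pi^2 O_E$. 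The three displayed cases of the lemma now follow: if $\bar z_\Lambda$ falls into one of the remaining cases (2)--(5) of Lemma \ref{lem:classif_emanating_edges}, inspection of the normal forms shows that at least one of $\bar c_{1,1},\bar c_{2,2}$ is non-zero, so $n(z,\Lambda')=n$; in case (1), the value is $n+1$ or $n$ according to whether $v(\det_E(\pi^{-2n}z^2))\ge 4$ or not.

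The computations themselves are elementary; the main bookkeeping obstacle is pinning down the $\sigma$-linear matrix conventions and carefully matching the thresholds ``$c\in\pi O_E$'' and ``$c\in\pi^2 O_E$'' to membership in $\Lambda'$ versus $\pi\Lambda'$, as well as unravelling the equivalence between the normal forms of Lemma \ref{lem:classif_emanating_edges} and the vanishing pattern of the entries $\bar c_{i,j}$ in the basis we chose.
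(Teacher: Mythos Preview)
Your proof is correct and follows essentially the same coordinate-computation approach as the paper: pick a basis adapted to the edge, write $\pi^{-n}z$ as a matrix, and read off the valuation conditions on the entries. The paper organizes things a bit differently---it first dispatches cases (3)--(6) in one stroke via the observation that $\bar z_\Lambda$ invertible forces $v_F(\det_F(z))=0$ and hence $n(z,\Lambda')=n(z,\Lambda)$, then handles cases (1) and (2) by passing to the normal-form basis of Lemma~\ref{lem:classif_emanating_edges}---whereas you extract the uniform criterion $c_{1,1},c_{2,2}\in\pi O_E$, $c_{1,2}\in\pi^2 O_E$ first and then check it against the cases. One small point: your phrase ``inspection of the normal forms'' in cases (2)--(5) is slightly imprecise, since your basis $(e_1,e_2)$ need not be the one in which those normal forms are written; the clean argument is that in cases (3)--(5) invertibility of $\bar z_\Lambda$ together with $\bar c_{1,2}=0$ forces $\bar c_{1,1}\bar c_{2,2}\neq 0$, while in case (2) one uses that the image and kernel lines are distinct to see that whichever of them equals $\ell$, one of $\bar c_{1,1},\bar c_{2,2}$ survives.
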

\begin{proof}
By the scaling invariance from Lemma \ref{lem:properties_multiplicity_function} (1), it suffices to consider the case $n(z,Λ) = 0$. We also put $\bar{z} = \bar{z}_Λ$. It is clear that $n(z, Λ') \geq 0$ if and only if $\bar{z}\ell \subseteq \ell$. If $v_F(\det_F(z)) = 0$, then necessarily also $n(z,Λ') = 0$. This happens if and only if $\bar{z}$ does not fall into cases (1) and (2) of Lemma \ref{lem:classif_emanating_edges}. In the remaining two cases, we may find an $\mbF_{q^2}$-basis $(e_1,e_2)$ of $\bar{Λ}$ such that
\begin{equation}\label{eq:mod_pi_cases_z}
\bar{z} = \left(\begin{matrix} & 1\\ &  \end{matrix}\right)σ\quad \text{or}\quad\bar{z} = \left(\begin{matrix}λ & \\ &  \end{matrix}\right)σ,\ \ λ\neq 0,
\end{equation}
and $\ell = \mbF_{q^2}e_1$ in the first case or $\ell \in \{\mbF_{q^2}e_1,\ \mbF_{q^2}e_2\}$ in the second. Lifting $(e_1,e_2)$ to an $O_E$-basis of $Λ$, we obtain a matrix presentation
$$z = \left(\begin{matrix}a & b\\c & d \end{matrix}\right)σ$$
that reduces modulo $πΛ$ to \eqref{eq:mod_pi_cases_z}. Depending on whether $\ell = \mbF_{q^2}e_1$ or $\ell = \mbF_{q^2}e_2$, we obtain that $z\vert_{Λ'}$ has a matrix presentation as
\begin{equation}\label{eq:mod_pi_cases_z_II}
z = \left(\begin{matrix}a & πb\\π^{-1}c & d \end{matrix}\right)σ\ \ \ \text{or}\ \ \ y = \left(\begin{matrix}a & π^{-1}b\\πc & d \end{matrix}\right)σ.
\end{equation}
Then $n(z, Λ') = n(z, Λ) + 1$ occurs if and only if all four entries in \eqref{eq:mod_pi_cases_z_II} have valuation $\geq 1$. This never happens in the second case of \eqref{eq:mod_pi_cases_z} because here $v(a) = 0$. It is left with the first case in \eqref{eq:mod_pi_cases_z} and find that $n(z, Λ') = n(z, Λ) + 1$ if and only if $v(π^{-1}c) \geq 1$, which is equivalent to the stated condition $v(\det_E(z^2)) \geq 4$.
\end{proof}

\begin{defn}\label{def:Tz}
Let $\mcT(z)$ denote the set of homothety classes of $O_E$-lattices in which $n(z,-)$ takes its maximum.
\end{defn}

Property (4) of Lemma \ref{lem:properties_multiplicity_function} shows that $\mcT(z)$ is connected. Lemmas \ref{lem:classif_emanating_edges} and \ref{lem:local_properties_multiplicities} imply that each of its vertices has valency $0$, $1$, $2$ or $q+1$.
\begin{prop}\label{prop:multiplicities}
Denote by $d(Λ, \mcT(z))$ the distance of $Λ$ from $\mcT(z)$. Then
$$n(z,Λ) = \max n(z,-) - d(Λ, \mcT(z)).$$
\end{prop}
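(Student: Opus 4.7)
The inequality $n(z,\Lambda) \geq M - d(\Lambda, \mcT(z))$, where $M = \max n(z,-)$, is immediate from the Lipschitz property of Lemma \ref{lem:properties_multiplicity_function}(2) applied along a geodesic from $\Lambda$ to any nearest point of $\mcT(z)$. The content of the proposition is therefore the reverse inequality. I plan to prove this by establishing that $\mcT(z)$ is a convex subtree and that, moving outward from $\mcT(z)$, the function $n(z,-)$ strictly decreases by exactly one at each step.

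Convexity of $\mcT(z)$ follows at once from Lemma \ref{lem:properties_multiplicity_function}(4): for $\Lambda, \Lambda' \in \mcT(z)$ and $\Lambda''$ on the unique geodesic between them, $n(z,\Lambda'') \geq \min\{M, M\} = M$, whence $\Lambda'' \in \mcT(z)$. Consequently each nested neighborhood $T_j = \{\Lambda' : d(\Lambda', \mcT(z)) \leq j\}$ is itself a subtree.

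The main body of the proof is an induction on $d \geq 1$, establishing the following strengthened claim for every $\Lambda$ with $d(\Lambda, \mcT(z)) = d$: (i) $n(z,\Lambda) = M - d$; (ii) among the $q^2+1$ neighbors of $\Lambda$, exactly one lies in $T_{d-1}$, and it is the neighbor $\Lambda^+$ on a geodesic to $\mcT(z)$; (iii) $\bar z_\Lambda$ falls into case (1) of Lemma \ref{lem:classif_emanating_edges}, its divisibility condition $v(\det_E(\pi^{-2n(z,\Lambda)}z^2)) \geq 4$ holds, and its unique invariant line is the one corresponding to $\Lambda^+$. The base case $d = 1$ is handled directly: convexity of $\mcT(z)$ forces uniqueness of $\Lambda^+ \in \mcT(z)$, and since $n$ strictly increases from $\Lambda$ to $\Lambda^+$, Lemma \ref{lem:local_properties_multiplicities} gives (iii), which in turn forces all $q^2$ remaining neighbors to satisfy $n = M-2$. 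The inductive step uses (iii) at a neighbor $\Lambda_{d-1}$ of $\Lambda$ in $T_{d-1}$: by IH, $\Lambda_{d-1}$ has $q^2$ neighbors of $n$-value $M-d$ (the non-invariant directions for $\bar z_{\Lambda_{d-1}}$), and $\Lambda$ must be one of them since it is not the unique up-neighbor of $\Lambda_{d-1}$. Convexity of $T_{d-1}$ then yields uniqueness of $\Lambda^+$, and Lemma \ref{lem:local_properties_multiplicities} gives (iii) for $\Lambda$.

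The only delicate step is the passage from the trichotomy of Lemma \ref{lem:local_properties_multiplicities} (where $n$ can increase, decrease, or stay constant) to the rigid behavior claimed in (iii). The point that unlocks this is the observation that an increase of $n$ along an edge forces $\bar z_\Lambda$ to be in case (1) of Lemma \ref{lem:classif_emanating_edges}, and case (1) admits a \emph{unique} invariant line. Thus at any $\Lambda \notin \mcT(z)$ with a strictly-increasing edge, all $q^2$ remaining edges must strictly decrease $n$, leaving no room for the "stay constant" case to propagate outward from $\mcT(z)$. This rigidity, combined with the tree geometry, is what makes the induction work.
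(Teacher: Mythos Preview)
Your proposal is correct and follows essentially the same approach as the paper: induction on $d(\Lambda,\mcT(z))$, using that an increase of $n$ along some edge at $\Lambda'$ forces $\bar z_{\Lambda'}$ into case (1) of Lemma~\ref{lem:classif_emanating_edges}, whose unique invariant line then makes $n$ drop by one along every other edge. The paper's proof is terser---it does not separately record convexity of $\mcT(z)$ or package (ii) and (iii) into the inductive hypothesis---but the key mechanism is identical.
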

\begin{proof}
The claim is tautologically true for $Λ\in \mcT(z)$. For $Λ$ with $d(Λ, \mcT(z)) = 1$, it follows from Statement (2) of Lemma \ref{lem:properties_multiplicity_function}.

Now assume $d(Λ, \mcT(z)) \geq 2$. Let $Λ'$ denote the unique neighbor of $Λ$ on the shortest path towards $\mcT(z)$. By induction on $d(-, \mcT(z))$, we find that $Λ'$ has a neighbor $Λ''$ with $n(z,Λ'') = n(z,Λ') + 1$, namely the subsequent lattice on the shortest path towards $\mcT(z)$. This implies by Lemma \ref{lem:local_properties_multiplicities} that $\bar{z}_{Λ'}$ falls into Case (1) of Lemma \ref{lem:classif_emanating_edges}, and hence that $n(z,Λ) = n(z,Λ') - 1$ as claimed.
\end{proof}

This reduces us to describe $\mcT(z)$. We assume from now on that $1+z$ is regular semi-simple in the sense of \S\ref{s:invariants}. By definition, this means that $\Inv(1+z; T) = \mr{char}_E(z^2;T)$ is a separable polynomial with $\Inv(1+z; 0)\Inv(1+z; 1) \neq 0$. The description of $\mcT(z)$ will be in terms of the numerical invariant $(L, r, d)$ of $1+z$ from Definition \ref{def:orbit_invariants}:
$$L = F[z^2],\quad r = v(N_{L/F}(z^2)),\quad d = \mr{cond}(O_F[π^kz^2]) - k - r/2,\ \ k\gg 0.$$
Note that $L$ is an étale quadratic extension of $F$.

\begin{lem}\label{lem:maximum_of_multiplicity_function}
The maximum of $n(z,-)$ is given by
$$\max \left\{k \in \mbZ \mid (π^{-k}z)^2 \in O_L\right\} = \begin{cases}
\lfloor r/4\rfloor & \text{if $d \geq 0$}\\
\lfloor r/4 + d/2 \rfloor & \text{if $d < 0$}.
\end{cases}$$
\end{lem}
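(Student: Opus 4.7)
The plan is to prove the lemma in two steps. First I will establish the middle equality $\max_\Lambda n(z,\Lambda) = \max\{k\in\mbZ : (\pi^{-k}z)^2 \in O_L\}$, and then evaluate this quantity in terms of the numerical invariant $(L,r,d)$.

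For the inequality ``$\leq$'' in the first step: if $z\Lambda \subseteq \pi^k\Lambda$, then $z^2\Lambda \subseteq \pi^{2k}\Lambda$, so the element $\pi^{-2k}z^2 \in L$ (acting on $W$ through the embedding $L = F[z^2]\hookrightarrow \End_E(W)$) preserves $\Lambda$. The set of all $\alpha\in L$ with $\alpha\Lambda\subseteq\Lambda$ is an $O_F$-order in $L$, hence contained in $O_L$, which gives $(\pi^{-k}z)^2 \in O_L$. For ``$\geq$'': since $1+z$ is regular semi-simple, $L = F[z^2]$ is an étale quadratic $F$-algebra, and the commuting $E$- and $L$-actions make $W$ a free $E\otimes_F L$-module of rank one (decompose $W$ along the primitive idempotents of $E\otimes_F L$; each factor is forced to be one-dimensional by comparing $E$-dimensions). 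Pick any $(O_E\otimes_{O_F} O_L)$-stable $O_E$-lattice $\Lambda_0$ and, given $k$ with $\pi^{-2k}z^2 \in O_L$, set $\Lambda = \Lambda_0 + (\pi^{-k}z)\Lambda_0$. Conjugate-linearity of $\pi^{-k}z$ forces $(\pi^{-k}z)\Lambda_0$ to be $O_E$-stable, so $\Lambda$ is an $O_E$-lattice. Then
\begin{equation*}
(\pi^{-k}z)\Lambda \,=\, (\pi^{-k}z)\Lambda_0 + \pi^{-2k}z^2\Lambda_0 \,\subseteq\, (\pi^{-k}z)\Lambda_0 + \Lambda_0 \,=\, \Lambda,
\end{equation*}
using the $O_L$-stability of $\Lambda_0$ for the middle summand, so $n(z,\Lambda)\geq k$.

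The second step is a direct computation using the normalized valuation $v_L$ extending $v_F$. When $L$ is an unramified field extension, $v_L(\pi)=1$ and $v_L(z^2) = \tfrac{1}{2}v_F(N_{L/F}(z^2)) = r/2$, so $(\pi^{-k}z)^2\in O_L$ iff $k\leq r/4$; Lemma \ref{lem:std_form_r_d} forces $d\geq 0$ and $r$ even in this case, so the maximum is $\lfloor r/4\rfloor$. When $L$ is ramified, $v_L(\pi)=2$ and $v_L(z^2) = r$, again yielding $\lfloor r/4\rfloor$; in the sub-case $d = -1/2$, $r$ odd, one has $\lfloor r/4 \rfloor = \lfloor r/4 + d/2\rfloor$ since for $r$ odd the shift by $-1/4$ does not cross an integer. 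When $L = F\times F$, writing $z^2 = (x,y)$, the condition $\pi^{-2k}z^2\in O_F\times O_F$ reads $2k\leq \min(v_F(x),v_F(y))$; by Lemma \ref{lem:std_form_r_d}(4), this minimum equals $r/2$ when $d\geq 0$ and $r/2+d$ when $d<0$, giving $\lfloor r/4\rfloor$ and $\lfloor r/4+d/2\rfloor$ respectively.

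The only mildly subtle point is the existence of the $(O_E\otimes_{O_F} O_L)$-stable lattice $\Lambda_0$ used in step one; the rank-one freeness of $W$ over $E\otimes_F L$ is ultimately forced by regular semi-simpleness, but it must be checked in the degenerate situations where $E\otimes_F L$ is not a field. Everything else is formal bookkeeping with $v_L$ and the case dichotomy already established in Lemma \ref{lem:std_form_r_d}.
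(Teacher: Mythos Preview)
Your proof is correct and follows essentially the same approach as the paper: the key step is the equivalence ``there exists an $O_E$-lattice $\Lambda$ with $z\Lambda\subseteq\Lambda$ if and only if $z^2\in O_L$'', proved via the same construction $\Lambda=\Lambda_0+z\Lambda_0$ for an $(O_E\otimes_{O_F}O_L)$-stable lattice $\Lambda_0$, with freeness of $W$ over $E\otimes_F L$ supplied by Proposition~\ref{prop:quaternion_algebra}. The paper's proof omits the explicit evaluation of the right-hand side in terms of $(r,d)$ that you carry out in your second step, treating it as immediate from the definitions.
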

\begin{proof}
Considering all multiples $π^\mbZ z$, the claim is equivalent to the following statement: There exists a lattice $Λ$ with $zΛ \subseteq Λ$ if and only if $z^2\in O_L$.

The ``only if'' direction is clear because $zΛ\subseteq Λ$ implies that $z^2$ has an integral characteristic polynomial. To prove the ``if'' direction, observe that $W$ is a free module of rank $1$ over $E\tensor_FL$ by Proposition \ref{prop:quaternion_algebra} (2). Pick any lattice $Λ' \subseteq W$ that is stable under $O_E\tensor_{O_F} O_L$. If $z^2\in O_L$, then $Λ = Λ' + zΛ'$ is preserved by $z$.
\end{proof}

Since $\mcT(π^kz) = \mcT(z)$ for all $k\in \mbZ$, it suffices to describe $\mcT(z)$ whenever $z^2\in O_L\setminus π^2O_L$, and in this case $\mcT(z) = \{Λ \mid zΛ\subseteq Λ\}$. We first treat the case of units.

\begin{prop}\label{prop:lattic_set_center_classification_unit}
Assume that $z^2 \in O_L^\times$. Then there exists an $L$-linear, $E$-conjugate linear involution $τ$ on $W$ that commutes with $z$ such that
$$O_E[z] = O_E[τ, z^2].$$
In particular, $\mcT(z)$ is the set of $O_E$-scalar extensions of $z^2$-stable $O_F$-lattices in $W^{τ = \mr{id}}$.
\end{prop}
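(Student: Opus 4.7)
The plan is to work inside the quaternion $L$-algebra $B = B_{1+z} = EL[z]$ from Proposition~\ref{prop:quaternion_algebra}, which contains $O_E[z]$ as an $O_E$-order. Since $z\in B$ lies in the non-trivial coset of $EL$ (as $ze = \bar e z$ for $e\in E$) and $z^2\in L^\times$, one has direct sum decompositions of $O_E[z^2]$-modules
$$O_E[z] = O_E[z^2] \oplus O_E[z^2]\cdot z,$$
and, for any $L$-linear $E$-conjugate linear involution $\tau\in B^\times$,
$$O_E[\tau, z^2] = O_E[z^2] \oplus O_E[z^2]\cdot\tau,$$
the first summand in each case being the $E$-linear part. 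Writing $R := O_E[z^2] = O_E\otimes_{O_F} O_F[z^2]$, the desired equality $O_E[z] = O_E[\tau, z^2]$ is therefore equivalent to $\tau\cdot z^{-1}\in R^\times$. Setting $\tau = \alpha z$ with $\alpha\in EL^\times$, the involution relation $\tau^2 = 1$ unravels (using $z\alpha = \bar\alpha z$) to $N_{EL/L}(\alpha)\cdot z^2 = 1$, so the existence of $\tau$ reduces to the following key claim: there exists $\alpha\in R^\times$ with $N_{EL/L}(\alpha) = z^{-2}$.

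For the key claim, first observe that $z^2\in O_L^\times$ reduces to a unit modulo the maximal ideal of $O_F[z^2]$ (which contains $\pi$), so $z^{-2}\in O_F[z^2]^\times$. The ring $R$ is a finite \'etale quadratic extension of $O_F[z^2]$, obtained by base change from the unramified $O_E/O_F$. Even when the conductor $d = \mr{cond}(O_F[z^2])$ is positive and $R$ is a proper sub-order of $O_{EL}$, I would prove that the norm map $N: R^\times \to O_F[z^2]^\times$ is surjective: one checks surjectivity modulo the maximal ideal, where it reduces to the standard norm surjectivity for the quadratic extension $\mbF_{q^2}/\mbF_q$ (or a product thereof in the split case), and then lifts through the $\pi$-adic filtration via Hensel's lemma at each step, using that $R/O_F[z^2]$ remains \'etale after every reduction $R/\mfm^k R$. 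Applying this with $z^{-2}$ produces $\alpha\in R^\times$, and $\tau := \alpha z$ is the sought involution.

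For the description of $\mcT(z)$: since $z^2\in O_L\setminus\pi^2 O_L$, the paragraph preceding the proposition gives $\mcT(z) = \{\Lambda \mid z\Lambda \subseteq \Lambda\}$, and by the order equality these are the $O_E$-lattices that are simultaneously $\tau$-stable and $z^2$-stable. Since $\tau$ is an $L$-linear, $E$-conjugate linear involution and $E/F$ is \'etale, Galois descent for the finite \'etale cover $O_F\to O_E$ shows that the natural map
$$O_E\otimes_{O_F} W^{\tau=\id}\lr W,\qquad e\otimes w\longmapsto ew,$$
is an isomorphism of $O_E$-modules, under which $\tau$-stable $O_E$-lattices $\Lambda$ correspond bijectively to $O_F$-lattices $\Lambda_0 := \Lambda\cap W^{\tau=\id}$ in the $L$-line $W^{\tau=\id}$ (the inverse sends $\Lambda_0$ to $O_E\cdot \Lambda_0$). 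Since $z^2\in L$ preserves $W^{\tau=\id}$, the lattice $O_E\cdot\Lambda_0$ is $z^2$-stable iff $\Lambda_0$ is, which gives the claimed description. The main obstacle in this plan is verifying the norm surjectivity onto the non-maximal order $O_F[z^2]^\times$; once that is established, the remainder is bookkeeping within $B$ together with a standard Galois-descent argument.
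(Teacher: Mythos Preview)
Your proposal is correct and follows essentially the same route as the paper: both reduce the construction of $\tau$ to finding a unit in $O_E\otimes_{O_F}O_F[z^2]$ of prescribed norm $z^{\pm 2}$, and both establish the needed norm surjectivity by checking it on the residue field and lifting via Hensel/smoothness of the norm map for the unramified extension $O_E/O_F$. The paper phrases the lifting step as smoothness of $\mr{Res}_{O_E/O_F}\mbG_m\to\mbG_m$ and leaves the ``In particular'' clause implicit, whereas you spell out the Galois descent for $\tau$-stable lattices, but the substance is the same.
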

\begin{proof}
Let $R = O_F[z^2]$ and denote by $\mfm$ its Jacobson radical. The norm map
$$N_{E/F}:\big(O_E\tensor_{O_F} (R/\mfm)\big)^\times \lr (R/\mfm)^\times$$
is surjective because $O_E/O_F$ is étale. It equals the map on $(R/\mfm)$-points of the smooth morphism $N_{E/F}:\mr{Res}_{O_E/O_F} \mbG_m \to \mbG_m$ of smooth $O_F$-group schemes. (The norm morphism is smooth because $O_E/O_F$ is étale.) Using completeness and a deformation argument, it follows that the map
$$N_{E/F}:\big(O_E\tensor_{O_F} R\big)^\times \lr R^\times$$
is surjective. Hence there exists an element $t\in (O_E\tensor_{O_F} R)^\times$ with $N_{E/F}(t) = z^2$. Then $τ := t^{-1}z$ lies in $O_E[z]$ and satisfies $τ^2 = \mr{id}$. The identity $O_E[z] = O_E[τ, z^2]$ follows directly.
\end{proof}

\begin{prop}\label{prop:lattice_set_center_classification_nonunit}
Assume that $z^2\in O_L \setminus π^2O_L$ is not a unit. Then $\mcT(z)$ takes the following shape.
\begin{enumerate}[wide, labelindent=0pt, labelwidth=!, label=(\arabic*), topsep=2pt, itemsep=2pt]
\item If $L$ is a field extension, then $\mcT(z)$ consists of a single edge.
\item If $L \iso F\times F$, then $\mcT(z)$ consists of an apartment.
\end{enumerate}
\end{prop}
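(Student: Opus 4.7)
The plan is to use the structure of $W$ as a free rank-one module over $EL := E \otimes_F L$ provided by Proposition \ref{prop:quaternion_algebra}. Fixing an isomorphism $W \cong EL$, the operator $z$ takes the form $z(v) = \omega\,\sigma(v)$, where $\sigma$ is the nontrivial $L$-automorphism of $EL$ extending the $E$-conjugation, and $\omega \in (EL)^\times$ satisfies $N_{EL/L}(\omega) = z^2$. There are three sub-cases, determined by the structure of $EL$: $L$ a ramified field (so $EL$ is a field, ramified quadratic over $E$), $L$ an unramified field (so $E \cong L$ as $F$-algebras, giving $EL \cong L \times L$), and $L$ split (so $EL \cong E \times E$). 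The $E$-conjugation $\sigma$ acts very differently in these three settings: nontrivially on a single field extension in the ramified case, as a swap composed with componentwise $\sigma_L$ in the unramified-field case, and preserving each factor in the split case.

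For Case (1), I would treat the two subcases in parallel. If $L$ is ramified, then $EL/L$ is unramified quadratic, so $v_{EL}(\omega) = v_L(z^2)/2$ must be an integer; this forces $v_L(z^2) = 2$ (ruling out $v_L(z^2) = 3$), and hence $\omega$ is a uniformizer of $EL$. In the $O_E$-basis $\{1, \varpi_L\}$ of $O_{EL}$, one writes $\omega = \pi a' + b\varpi_L$ with $a' \in O_E$ and $b \in O_E^\times$, and checks directly that $\Lambda_0 = O_E \oplus O_E\varpi_L$ and $\Lambda_1 = \pi O_E \oplus O_E\varpi_L$ both lie in $\mcT(z)$ and form an edge. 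If instead $L$ is unramified, then $z$ \emph{interchanges} the two factors of $W \cong L \times L$: one writes $z(x,y) = (\alpha\,\sigma_L(y),\, \beta\,\sigma_L(x))$ with $v_L(\alpha) + v_L(\beta) = 1$, takes $v_L(\alpha) = 0$ and $v_L(\beta) = 1$ WLOG, and verifies similarly that $\{O_L \oplus O_L,\ O_L \oplus \pi O_L\}$ is an edge in $\mcT(z)$. To rule out any further elements, observe that at each of the two identified vertices the reduction $\bar z_\Lambda$ is nilpotent of rank one (case (1) of Lemma \ref{lem:classif_emanating_edges}), yielding a unique $\bar z$-stable line and hence a unique candidate neighbor with $n \geq 0$. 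A computation giving $v(\det_E z^2) = 2 < 4$ then shows, via Lemma \ref{lem:local_properties_multiplicities}, that this neighbor satisfies $n = 0$ rather than $+1$. Combined with the connectivity of $\mcT(z)$ from Proposition \ref{prop:multiplicities}, this yields $\mcT(z) = \{\Lambda_0, \Lambda_1\}$.

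For Case (2), with $L \cong F \times F$: the idempotents of $L$ decompose $W = W_1 \oplus W_2$ with $\dim_E W_i = 1$, and the $L$-linearity of $z$ forces $z = z_1 \oplus z_2$ preserving the decomposition. Writing $z_i(f_i) = c_i f_i$ on each one-dimensional $E$-factor $W_i = E\cdot f_i$, one has $z^2 = (N_{E/F}(c_1),\, N_{E/F}(c_2))$. Since $N_{E/F}$ has even-valued image valuation, the hypothesis ``$z^2 \in O_L \setminus \pi^2 O_L$ is not a unit'' forces WLOG $c_1 \in O_E^\times$ and $c_2 \in \pi O_E$. Then every decomposable lattice $\Lambda = O_E\pi^a f_1 \oplus O_E\pi^b f_2$ automatically satisfies $n(z,\Lambda) = 0$, and these lattices form an apartment of $\mcB$ parameterized by $a - b \in \mbZ$. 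For a non-decomposable lattice, the $\bar z$ induced on the nontrivial gluing quotient would have to intertwine the bijection $\bar z_1$ (because $c_1 \in O_E^\times$) with the degenerate map $\bar z_2$ (because $c_2 \in \pi O_E$), which is impossible; hence no non-decomposable lattice lies in $\mcT(z)$, so $\mcT(z)$ coincides with this apartment.

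The principal difficulty lies in correctly identifying how the $E$-conjugation $\sigma$ sits inside $EL$ in each subcase, since this controls the behavior of $z$ and hence which $O_E$-lattices can be $z$-stable; the unramified-$L$ subcase in particular is counter-intuitive because $z$ swaps the two factors even though it commutes with $L$. A secondary subtlety in Case (1) is the valuation inequality $v(\det_E z^2) = 2 < 4$, which is precisely what prevents the identified edge from extending to a vertex of valency $q+1$ and thus distinguishes the ``non-unit'' case from Proposition \ref{prop:lattic_set_center_classification_unit}.
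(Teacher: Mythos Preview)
Your proof is correct and reaches the same conclusion, but it proceeds by a noticeably more explicit route than the paper's. The paper never fixes coordinates on $W$: it picks an arbitrary $\Lambda \in \mcT(z)$, observes that $\bar z_\Lambda$ must be non-invertible (since $z^2$ is a non-unit), hence falls into Case (1) or (2) of Lemma \ref{lem:classif_emanating_edges}, and then argues purely by valency. In the field case (Case (1)), every vertex of $\mcT(z)$ has valency $1$, so connectivity forces $\mcT(z)$ to be a single edge; in the split case (Case (2)), the paper builds $\ell_1 = \bigcap_i z^i\Lambda$ and $\ell_2 = \{v : z^iv \to 0\}$, shows they are complementary rank-one summands of $\Lambda$, and then uses valency $2$ to conclude $\mcT(z)$ is exactly the resulting apartment.

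Your approach instead realizes $W \cong EL$ explicitly and writes down candidate lattices by hand. This costs you the subdivision of Case (1) into ramified and unramified $L$ and some care with how $\sigma$ sits inside $EL$ (your formula $z(x,y) = (\alpha\,\sigma_L(y), \beta\,\sigma_L(x))$ in the unramified sub-case seems to carry an extra $\sigma_L$ compared to the natural identification, though this is harmless for the lattice conclusions since $\sigma_L$ preserves all $O_L$-lattices). The payoff is a concrete description of the edge. Two minor remarks: your check that $v(\det_E z^2) = 2 < 4$ is correct but redundant, since Lemma \ref{lem:maximum_of_multiplicity_function} already pins the maximum at $0$; and in your split case, the ``gluing'' obstruction is right but could be sharpened to the one-line observation that $z^2 = (a,b)$ with $a$ a unit and $b$ a non-unit forces $O_F[z^2] = O_L$, so any $z$-stable $O_E$-lattice is already $O_L$-stable and hence decomposable. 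This is in effect what the paper's $\ell_1,\ell_2$ construction encodes.
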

\begin{proof}
Let $Λ$ be any lattice with $zΛ \subseteq Λ$, existence being ensured by Proposition \ref{prop:multiplicities}. Then $\ob{z}_Λ$ has to fall into Case (1) or (2) of the local classification Lemma \ref{lem:classif_emanating_edges}. Case (1) occurs precisely if $z$ is topologically nilpotent, which under the assumption $z^2\in O_L\setminus (π^2O_L \cup O_L^\times)$ is equivalent to $L$ being a field. (If $L$ is a field and $z^2\in O_L\setminus O_L^\times$, then $z^2$ is topologically nilpotent. Conversely, assume $L = F\times F$ and write $z^2 = (x,y)$. The quaternion algebra $(E\tensor L)[z]$ embeds into $\End_F(W)$ because this is the current setting, so is isomorphic to $M_2(L)$. Thus $z^2$ lies in the image of the norm map $E\tensor_FL\to L$ which means $v(x),v(y) \in 2\mbZ$. Hence $z^2 \notin π^2O_L$ implies that $z^2$ is not topologically nilpotent.)

Consider first Case (1). Then $Λ$ has precisely one neighbor in $\mcT(z)$, say $Λ'$. Then $\ob{z}_{Λ'}$ is again of Case (1) because the property of $L$ being a field (or $z$ being topologically nilpotent) is independent of the lattice. Thus $Λ'$ also has a unique neighbor in $\mcT(z)$ and hence $\mcT(z) = \{Λ, Λ'\}$ as claimed.

Consider now Case (2). Then $\ell_1 = \bigcap_{i\geq 0} z^iΛ$ and
$$\ell_2 = \{λ\in Λ \vert z^iλ \to 0\text{ as }i\to \infty\}$$
are complementary $z$-stable direct summand $O_E$-modules of $Λ$ of rank $1$. Picking non-zero $e_i\in \ell_i$, we see that every lattice $π^aO_Ee_1 \oplus π^bO_Ee_2$ is stable under $z$. These provide all elements of $\mcT(z)$ because any lattice in $\mcT(z)$ has exactly two neighbors in $\mcT(z)$ by the local classification Lemma \ref{lem:classif_emanating_edges} (2).
\end{proof}

\begin{rmk}\label{rmk:occuring_cases_GL4}
We observe that not all triples $(L, r, d)$ may occur. Namely the cyclic $L$-algebra $L[E,z]$ has center $L$ and embeds into $\End_F(W)$, so has to be isomorphic to $M_2(L)$. It follows that $z^2\in L$ is always a norm from $E\tensor_FL$:
\begin{enumerate}[wide, labelindent=0pt, labelwidth=!, label=(\arabic*), topsep=2pt, itemsep=2pt]
\item If $L$ is ramified, then this means that $r = v_L(z^2) \in 2\mbZ$. In particular, it will always be the case that $d \geq 0$ by Lemma \ref{lem:std_form_r_d} (3).

\item If $L = F\times F$ is split with, say, $z^2 = (z_1,z_2)$, then $v(z_1),\, v(z_2)\in 2\mbZ$. In particular, $r = v(z_1) + v(z_2) \in 4\mbZ$.

\item If $L$ is inert, then there is no such restriction on $z^2$.
\end{enumerate}
These possibilities are the ones that lead to rows 1 and 3 in Table \ref{table:matching} (take $δ = \Inv(1+z;T)$ which equals $\mr{char}_{L/F}(z;T)$ and gives $B_δ \iso L[E, z]$).
\end{rmk}

\begin{thm}\label{thm:classification_multiplicity_function}
The set $\mcT(z)$ takes the following shape, depending on the numercial invariant $(L, r, d)$ of $z$:
\begin{enumerate}[wide, labelindent=0pt, labelwidth=!, label=(\arabic*), topsep=2pt, itemsep=2pt]
\item If $L$ is inert and $r\equiv 0$ mod $4$, then $\mcT(z)$ is a $(q+1)$-regular ball of radius $d$ around a vertex.
\item If $L$ is inert and $r\equiv 2$ mod $4$, then $\mcT(z)$ is an edge.
\item If $L$ is ramified and $r\equiv 0$ mod $4$, then $\mcT(z)$ is a $(q+1)$-regular ball of radius $d$ around an edge.
\item If $L$ is ramified and $r\equiv 2$ mod $4$, then $\mcT(z)$ is an edge.
\item If $L \iso F\times F$, and if $z^2 = (z_1, z_2)$ has the property $v_F(z_1) = v_F(z_2)$, then $\mcT(z)$ is a $(q+1)$-regular ball of radius $d$ around an apartment.
\item If $L \iso F\times F$, and if $z^2 = (z_1, z_2)$ has the property $v_F(z_1) \neq v_F(z_2)$, then $\mcT(z)$ is an apartment.
\end{enumerate}
\end{thm}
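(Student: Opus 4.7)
The proof proceeds by combining the scale-invariance of $\mcT(z)$ with the preceding propositions, and a case-by-case analysis.

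\textbf{Step 1 (Reduction).} By Lemma~\ref{lem:properties_multiplicity_function}(1), $\mcT(z) = \mcT(\pi^{-k}z)$ for any integer $k$. Set $k = \max n(z,-)$ and replace $z$ by $\pi^{-k}z$. By Lemma~\ref{lem:maximum_of_multiplicity_function} this forces $z^2 \in O_L \setminus \pi^2 O_L$, and a direct computation shows that the numerical data $(L,d)$ is scale-invariant (while $r$ changes to $0$ or $2$, in the sense of $v_L$). After this reduction, $\mcT(z) = \{[\Lambda] : z\Lambda \subseteq \Lambda\}$.

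\textbf{Step 2 (Unit vs.\ non-unit split).} Using the formulas $v_L(z^2) = r/2$ (inert), $v_L(z^2) = r$ (ramified), and $v_L(z^2) = (v_F(z_1), v_F(z_2))$ (split), one checks that after the rescaling of Step~1, $z^2 \in O_L^\times$ in cases (1), (3), (5), whereas $z^2 \in \pi O_L \setminus \pi^2 O_L$ in cases (2), (4), (6). In the latter three cases, Proposition~\ref{prop:lattice_set_center_classification_nonunit} immediately delivers the claim: $\mcT(z)$ is an edge when $L$ is a field (cases (2), (4)) and an apartment when $L$ is split (case (6)).

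\textbf{Step 3 (Unit cases via the involution).} In cases (1), (3), (5), Proposition~\ref{prop:lattic_set_center_classification_unit} provides the involution $\tau$, and identifies $\mcT(z)$ with the set of $O_E$-scalar extensions of $z^2$-stable $O_F$-lattices in $V := W^{\tau=\mr{id}}$, a $2$-dimensional $F$-vector space on which $z^2$ acts $F$-linearly. The subalgebra $R = O_F[z^2] \subseteq \End_F(V)$ lies in $O_L$, and by the definition of $d$ (specialized to the unit case, where one takes $k=0$ in \eqref{eq:num_inv_I}) has conductor exactly $d$ in $O_L$. The assignment $\Lambda_0 \mapsto O_E\Lambda_0$ is injective because $V \cap O_E\Lambda_0 = \Lambda_0$, and it carries adjacency in the $(q+1)$-regular tree $\mcB_F(V)$ of $PGL_F(V)$ to adjacency in $\mcB_E(W)$ (an index-$q$ inclusion in $V$ gives an index-$q^2$ inclusion in $W$). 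Hence $\mcT(z)$ is identified with the subtree $S \subseteq \mcB_F(V)$ of $R$-stable homothety classes.

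\textbf{Step 4 (Shape of $S$).} One now analyzes $S$ according to the splitting behaviour of $L$:
\begin{itemize}
\item Case (1), $L$ inert: $V$ is a $1$-dimensional $L$-space. Every $R$-stable lattice has stabilizer $R_c = O_F + \pi^c O_L$ for a unique $c \in \{0,1,\ldots,d\}$; stratifying by $c$, one obtains $1 + \sum_{c=1}^d q^{c-1}(q+1)$ classes, matching the size of a $(q+1)$-regular ball of radius $d$ around the unique $O_L$-stable class. The local valencies from Lemma~\ref{lem:local_properties_multiplicities} (applied to the lifted $z$) show that each lattice at distance $<d$ from the center has the full $(q+1)$ neighbours in $\mcT(z)$, while lattices at distance $d$ have exactly one neighbour in $\mcT(z)$. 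This precisely identifies $S$ with a ball.
\item Case (3), $L$ ramified: here there are \emph{two} $O_L$-stable classes connected by an edge, and the same stratification by conductor of the stabilizer produces a $(q+1)$-regular ball of radius $d$ around this edge.
\item Case (5), $L = F\times F$ with equal valuations: $V$ decomposes under the idempotents of $L$ into lines on which $z^2$ acts by scalars, and the $L$-stable homothety classes form an apartment of $\mcB_F(V)$; the $R$-stable classes are the $(q+1)$-regular ball of radius $d$ around this apartment.
\end{itemize}

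The main obstacle I anticipate is in Step~4: matching the combinatorial stratification of $R$-stable lattices by the conductor of their stabilizer to the geometric shell stratification of the proposed ball. One has to check that, for every $0 \le c \le d$, the classes with stabilizer exactly $R_c$ are precisely those at combinatorial distance $c$ from the center (vertex, edge, or apartment), and that the local valencies agree with the ball's shape. The other pieces (reduction to $z^2 \in O_L \setminus \pi^2 O_L$, the unit vs.\ non-unit trichotomy, and the invocation of Propositions~\ref{prop:lattice_set_center_classification_nonunit} and \ref{prop:lattic_set_center_classification_unit}) are essentially routine.
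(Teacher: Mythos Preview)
Your proposal is correct and follows essentially the same approach as the paper: rescale so that $z^2 \in O_L \setminus \pi^2 O_L$, split into unit versus non-unit, apply Proposition~\ref{prop:lattice_set_center_classification_nonunit} in the non-unit cases and Proposition~\ref{prop:lattic_set_center_classification_unit} in the unit cases, then identify the set of $O_F[z^2]$-stable lattices in a one-dimensional $L$-space with the claimed ball. The paper simply declares this last identification ``well-known'' and moves on; your Step~4 supplies the conductor-stratification argument that actually verifies it, so you are being more thorough where the paper is terse.

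One minor imprecision: in Step~2 you write that after rescaling $z^2 \in \pi O_L \setminus \pi^2 O_L$ in case~(6). This is not quite right when $L \iso F\times F$: if $v_F(z_1) \neq v_F(z_2)$ then after rescaling one component is a unit and the other is not, so $z^2$ lies in $O_L \setminus (O_L^\times \cup \pi^2 O_L)$ but not in $\pi O_L$. This does not affect the argument, since Proposition~\ref{prop:lattice_set_center_classification_nonunit} only requires $z^2 \in O_L \setminus \pi^2 O_L$ to be a non-unit.
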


\begin{figure}[h!]
\centering
\begin{minipage}{6.5cm}
\hspace{1cm}
\begin{tikzpicture}[scale=0.3]
  \node[ draw,circle,fill=black] (center) at (0,0) {}; 
  \foreach \i/\label in {1/1, 2/2, 3/3, 4/4, 5/5}{
    \pgfmathsetmacro\anglei{-90+72*\i}
    \pgfmathsetmacro\xcoordi{5*cos(\anglei)}
    \pgfmathsetmacro\ycoordi{5*sin(\anglei)}
  \ifnum\i=2\relax
      \draw[black,ultra thick] (center) -- (\xcoordi, \ycoordi);
      \node[draw,circle,fill=black] (node\i) at (\xcoordi,\ycoordi) {};
    \else\ifnum\i=3\relax
      \draw[black,ultra thick] (center) -- (\xcoordi, \ycoordi);
      \node[draw,circle,fill=black] (node\i) at (\xcoordi,\ycoordi) {};
    \else\ifnum\i=5\relax
      \draw[black,ultra thick] (center) -- (\xcoordi, \ycoordi);
      \node[draw,circle,fill=black] (node\i) at (\xcoordi,\ycoordi) {};
    \else\iftrue
      \draw[black] (center) -- (\xcoordi, \ycoordi);
      \node[draw,circle,fill=white] (node\i) at (\xcoordi,\ycoordi) {};
    \fi\fi\fi\fi
    \foreach \j/\labelj in {1/1, 2/2, 3/3, 4/4}{
      \pgfmathsetmacro\anglej{180+\anglei+72*\j}
      \pgfmathsetmacro\xcoordj{\xcoordi+2*cos(\anglej)}
      \pgfmathsetmacro\ycoordj{\ycoordi+2*sin(\anglej)}
      \node[draw, circle] (node\i\j) at (\xcoordj, \ycoordj) {};
      \draw (node\i) -- (node\i\j);
    } 
  }
\end{tikzpicture}
\end{minipage}
\begin{minipage}{8cm}
\hspace{2mm}
\begin{tikzpicture}[scale=0.8]
  \node (center) at (0,0) {}; 
  
  \foreach \i/\label in {1/1, 2/2}{
    \pgfmathsetmacro\anglei{180*\i}
    \pgfmathsetmacro\xcoordi{1.8*cos(\anglei)}
    \pgfmathsetmacro\ycoordi{1.8*sin(\anglei)}
    \node[draw,circle,fill=black] (node\i) at (\xcoordi,\ycoordi) {};
    \foreach \j/\labelj in {1/1, 2/2, 3/3, 4/4}{
      \pgfmathsetmacro\anglej{180+\anglei+72*\j}
      \pgfmathsetmacro\xcoordj{\xcoordi+2*cos(\anglej)}
      \pgfmathsetmacro\ycoordj{\ycoordi+2*sin(\anglej)}
     \ifnum\j=2\relax
      \node[draw, circle,fill=black] (node\i\j) at (\xcoordj, \ycoordj) {};
      \draw[black,ultra thick] (node\i) -- (node\i\j);
     \else\ifnum\j=3\relax
      \node[draw, circle,fill=black] (node\i\j) at (\xcoordj, \ycoordj) {};
      \draw[black,ultra thick] (node\i) -- (node\i\j);
     \else\iftrue
      \node[draw, circle] (node\i\j) at (\xcoordj, \ycoordj) {};
      \draw (node\i) -- (node\i\j);
     \fi\fi\fi

      \foreach \k/\labelk in {1/1, 2/2, 3/3, 4/4}{
        \pgfmathsetmacro\anglek{180+\anglej+72*\k}
        \pgfmathsetmacro\xcoordk{\xcoordj + 0.7*cos(\anglek)}
        \pgfmathsetmacro\ycoordk{\ycoordj + 0.7*sin(\anglek)}
        \node[draw,circle] (node\i\j\k) at (\xcoordk, \ycoordk) {};
        \draw (node\i\j) -- (node\i\j\k);
      }

    } 
  }
 \draw[black,ultra thick] (node1) -- (node2);
\end{tikzpicture}
\end{minipage}
\caption{Left: Case (1) of Theorem \ref{thm:classification_multiplicity_function} for $d = 1$ and $q = 2$. The set $\mcT(z)$ consists of a single vertex of valency $q+1$ and $q+1$ vertices of valency $1$ (black vertices). The ambient $(q^2+1)$-regular tree $\mcB$ is sketched (white vertices). Right: Similar sketch for case (3) of Theorem \ref{thm:classification_multiplicity_function} for $d = 1$ and $q = 2$.}
\label{fig:T}
\end{figure}
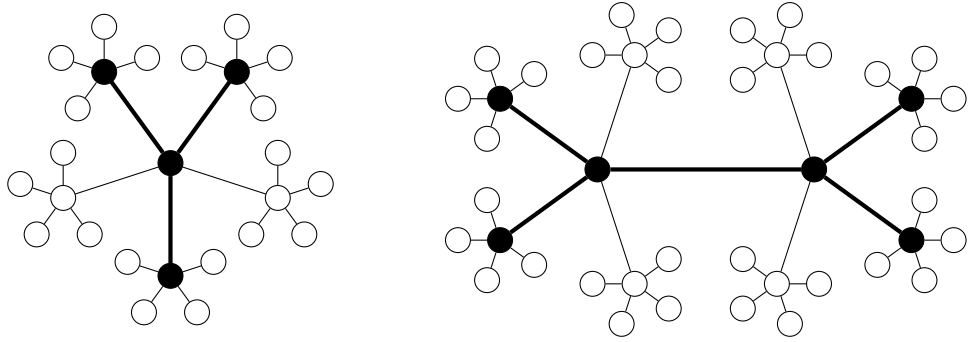
\begin{proof}
First consider the two cases when $L$ is a field and $r\equiv 0$ mod $4$. Then $(π^{-r/4}z)^2 \in O_L^\times$ and Proposition \ref{prop:lattic_set_center_classification_unit} states that $\mcT(z)$ is the set of homothety classes of $π^{-r/2}z^2$-stable $O_F$-lattices in a $1$-dimensional $L$-vector space. This set is well-known to be a $(q+1)$-regular ball around a vertex (resp. a ball around an edge) of radius equal to the conductor of $O_F[π^{-r/2}z^2]$. This conductor equals $d$ so (1) and (3) are proven.

Next, stick with the case that $L$ is a field but assume $r\equiv 2$ mod $4$. Then $(π^{(2-r)/4}z)^2$ is not a unit but lies in $O_L \setminus π^2O_L$. In this situation $\mcT(z)$ is edge by Proposition \ref{prop:lattice_set_center_classification_nonunit} (1). This proves (2) and (4).

Finally, assume $L \iso F\times F$ and define $k\in \mbZ$ through $(π^kz)^2 \in O_L \setminus π^2O_L$. Write $z^2 = (z_1, z_2)$ as in the proposition. If $v_F(z_1) = v_F(z_2)$, then $(π^kz)^2 \in O_L^\times$ and Proposition \ref{prop:lattic_set_center_classification_unit} states that $\mcT(z)$ is the set of homothety classes of $(π^kz)^2$-stable $O_F$-lattices in a free $L$-module of rank $1$. This set is well-known to be a $(q+1)$-regular ball around an apartment of radius equal to the conductor $d$ of $O_F[π^{-r/2}z^2]$ as claimed. This settles (5).

If, however, $v_F(z_1)\neq v_F(z_2)$, then $(π^kz)^2 \notin O_L^\times$ and Proposition \ref{prop:lattice_set_center_classification_nonunit} (2) states that $\mcT(z)$ is an apartment which proves (6).
\end{proof}

\section{Invariant $1/4$}
\label{s:1_4}

The aim of this section is to prove Theorem \ref{thm:main_ATC} for $λ = 1/4$. To this end, we first recall Drinfeld's linear algebra description of the map $\mcM_C \to \mcM_D$ in \S\ref{ss:Drinfeld_Theorem} and \S\ref{ss:basic_construction}. We will subsequently use this to compute all intersection numbers in question, our final result being the simple formulas in Theorem \ref{thm:main_1_4}.

\subsection{Drinfeld's Theorem}
\label{ss:Drinfeld_Theorem}
Let, for a moment, $D$ be a CDA over $F$ of Hasse invariant $1/n$. The main result of Drinfeld's paper \cite{Drinfeld} states that each connected component of the RZ space $\mcM_D$ from Definition \ref{def:RZ} is isomorphic to Deligne's formal scheme $\bOmega_F^{n-1}$. We will now formulate this result in more detail. Our main reference is \cite[\S3.54]{RZ1}, to which we also refer for more background.

Let $W$ be an $n$-dimensional $F$-vector space. By lattice chain in $W$, we mean a non-empty set $\mcL$ of $O_F$-lattices in $W$ that satisfies the following two conditions:
\begin{enumerate}[wide, labelindent=0pt, labelwidth=!, label=(\arabic*), topsep=2pt, itemsep=2pt]
\item $Λ,Λ'\in \mcL$ implies $Λ\subseteq Λ'$ or $Λ'\subseteq Λ$.
\item $Λ\in \mcL$ implies $π^\mbZ Λ\subseteq \mcL$.
\end{enumerate}
Denote the set of lattice chains in $W$ by $\mcW$. Given $\mcL \in \mcW$ and any lattice $Λ_0\in \mcL$, we may consider all lattices of $\mcL$ that are contained in $Λ_0$ and contain $πΛ_0$, say these are
\begin{equation}\label{eq:representing_chain}
πΛ_0 \subset Λ_k \subset \ldots \subset Λ_1 \subset Λ_0.
\end{equation}
Then $\mcL = \{π^\mbZ Λ_i,\ i = 0,\ldots,k\}$, so we call $Λ_k\subset \ldots \subset Λ_0$ a representing chain for $\mcL$. We next define a $π$-adic affine formal scheme $U_\mcL$ over $\Spf O_F$. Choosing a practical approach, we give the less canonical definition in terms of a representing chain \eqref{eq:representing_chain}. With this convention, the points $U_\mcL(S)$ for a $\Spf O_F$-scheme $S$ are the commutative diagrams of line bundle quotients
\begin{equation}\label{eq:U_L}
\xymatrix{
\mcO_S\tensor_{O_F} Λ_0 \ar[r]^{\mr{id}\tensor π} \ar@{->>}[d]^{φ_0} &
\mcO_S\tensor_{O_F} Λ_k \ar[r] \ar@{->>}[d]^{φ_k} &
\ldots \ar[r] &
\mcO_S\tensor_{O_F} Λ_1 \ar[r] \ar@{->>}[d]^{φ_1} &
\mcO_S\tensor_{O_F} Λ_0 \ar@{->>}[d]^{φ_0} \\
\mcL_0 \ar[r]^{α_k} &
\mcL_k \ar[r]^{α_{k-1}} &
\ldots \ar[r]^{α_1} &
\mcL_1 \ar[r]^{α_0} &
\mcL_0
}
\end{equation}
up to isomorphism in the pairs $(\mcL_i, α_i)$, and such that the following condition holds: The section $φ_i(λ_i)$ is invertible whenever $λ_i \in Λ_i \setminus Λ_{i+1}$ (for $i = 0,1,\ldots,k-1$) resp. $λ_k\in Λ_k\setminus πΛ_0$ (for $i = k$). A diagram of the form \eqref{eq:U_L} may be extended in a natural way to the full chain $\mcL$ and, in this way, the definition becomes independent of the chosen representing chain. The resulting $U_\mcL$ is isomorphic to a principal open subset of $\Spf O_F\langle T_0,\ldots, T_{n-1}\rangle/(T_0\cdots T_{n-1} - π)$. In particular, it is $π$-adic, $n$-dimensional, and regular with semi-stable reduction over $\Spf O_F$.

There are open immersions $U_{\mcL'} \subseteq U_\mcL$ for all inclusions of lattice chains $\mcL' \subseteq \mcL$. Their definition is based on the following simple observation. Assume that $Λ_2\subset Λ_1 \subset Λ_0$ are lattices and that we are given $(\mcL_2, φ_2)$, $(\mcL_0,φ_0)$ and $α_0\circ α_1$ in the following diagram
\begin{equation}\label{eq:U_L_chart_inclusion}
\xymatrix{
\mcO_S\tensor_{O_F} Λ_2 \ar[r] \ar@{->>}[d]^{φ_2} &
\mcO_S\tensor_{O_F} Λ_1 \ar[r] \ar@{-->>}[d]^{φ_1} &
\mcO_S\tensor_{O_F} Λ_0 \ar@{->>}[d]^{φ_0}\\
\mcL_2 \ar[r]^{α_1} &
\mcL_1 \ar[r]^{α_0} &
\mcL_0.
}
\end{equation}
Assume further that the outer square commutes and that $φ_0(λ)$ is invertible for all $λ\in Λ_0\setminus Λ_2$. Then there is a unique way (up to isomorphism) to fill in $(\mcL_1,φ_1)$ and to factor $α_0\circ α_1$ as depicted. Namely let $λ\in Λ_1\setminus Λ_2$. Then $α_0(φ_1(λ)) = φ_0(λ)$ has to be invertible, so $α_0$ has to be an isomorphism. Thus we may put $\mcL_1 = \mcL_0$, $α_0 = \mr{id}$ and $φ_1 = φ_0\vert_{\mcO_S\tensor Λ_1}$.
We leave it to the reader to extend this construction to lattice chains and diagrams as in \eqref{eq:U_L}.

Assume that $Λ_k\subset \ldots \subset Λ_0$ represents $\mcL$ as above and that $\mcL'\subseteq \mcL$ is a subchain. Let $I\subseteq \{0,\ldots, k\}$ be such that $Λ_i\in \mcL'$ if and only if $i\in I$. The above-constructed map $U_{\mcL'}\to U_\mcL$ identifies $U_{\mcL'}$ with the subfunctor of all those diagrams \eqref{eq:U_L} that have the property that $α_{i-1}$ is an isomorphism if $i\notin I$. The maps $U_{\mcL'} \to U_\mcL$ are hence open immersions. Uniqueness of the construction in \eqref{eq:U_L_chart_inclusion} ensures that the family $(U_{\mcL'}\to U_\mcL)_{\mcL'\subseteq \mcL}$ satisfies the cocycle condition.

Furthermore, every isomorphism $φ:W\to W'$ of $F$-vector spaces provides a compatible family of isomorphisms $φ:U_{\mcL}\overset{\sim}{\rightarrow} U_{φ(\mcL)}$. In particular, an element $g\in GL_F(W)$ defines a compatible family of isomorphisms
\begin{equation}\label{eq:action_on_Drinfeld_charts}
g:U_{\mcL} \overset{\sim}{\lr} U_{g\mcL}.
\end{equation}
In the following we write $GL_F(W)^0 = \{g\in GL_F(W)\mid v(\det(g)) = 0\}$. If, for example, $g\in GL_F(W)^0$ and $g\mcL = \mcL$, then this means that every lattice of $\mcL$ is $g$-stable. In this case, the $g$-action on $U_{\mcL}$ is the natural action of $g$ on diagrams of the form \eqref{eq:U_L}.

\begin{defn}\label{def:Deligne_formal_scheme}
Let $\Omega_F(W)$ denote the formal scheme that is obtained from the gluing datum $(U_{\mcL'}\to U_\mcL)_{\mcL'\subseteq \mcL}$. Let $GL_F(W)$ act on $\Omega_F(W)$ by the action that is chart-wise given by \eqref{eq:action_on_Drinfeld_charts}. We also write $\Omega_F^{n-1} := \Omega_F(F^n)$ in the case $W = F^n$.
\end{defn}

Let $\mcM_D^i \subset \mcM_D$ denote the open and closed formal subscheme of triples $(X, κ, ρ)$ such that the height of $ρ$ is $i$. Note that $(\mbX, κ, \mr{id})\in \mcM_D(\mbF)$, so $\mcM_D^0\neq \emptyset$. Furthermore, an element $g\in G_b \iso GL_n(F)$ provides an isomorphism
\begin{equation}\label{eq:connected_comp_Drinfeld}
g:\mcM_D^i \overset{\sim}{\lr} \mcM_D^{i + 4v_F(\det(g))}.
\end{equation}
Finally, a simple Dieudonné module argument shows that $\mcM_D^{i}= \emptyset$ if $i\notin 4\mbZ$. In this way, the following result provides a complete description of $\mcM_D$.

\begin{thm}[Drinfeld \cite{Drinfeld}]\label{thm:Drinfeld}
There is a $GL_n(F)^0$-equivariant isomorphism
\begin{equation}\label{eq:Drinfeld_iso}
O_{\breve F}\widehat{\tensor}_{O_F} \Omega_F^{n-1} \overset{\iso}{\lr} \mcM_D^{0}.
\end{equation}
Here, we let $G_b$ act from the left of $\mcM_D$ (instead of as from the right) by $g\mapsto g^{-1}$. In particular, $M_D^0$ is connected.
\end{thm}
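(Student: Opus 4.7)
The plan is to construct a morphism $\Phi: O_{\breve F} \widehat\tensor_{O_F} \Omega_F^{n-1} \to \mcM_D^0$ by producing a universal special formal $O_D$-module over the left-hand side, and then to show $\Phi$ is an isomorphism by combining a bijection on $\mbF$-points with a deformation-theoretic comparison of completed local rings. To set up the linear algebra, fix the presentation $\breve O_D = \bigoplus_{i=0}^{n-1} O_{\breve F}\Pi^i$ from \eqref{eq:max_order_presentation} and use it to identify $G_b$ with $GL_F(W)$ for a concrete $n$-dimensional $F$-vector space $W$: on the rational Dieudonn\'e module $(N, \bF)$ of $(\mbX, \kappa)$, the operator $\Pi^{-1}\bF$ is $\breve F$-linear of slope $0$, and its invariants $W := N^{\Pi^{-1}\bF = \mr{id}}$ form an $F$-form of $N$ on which $G_b^0$ acts by $F$-linear automorphisms. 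This identifies $\Omega_F^{n-1}$ with $\Omega_F(W)$.

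For each lattice chain $\mcL$ in $W$, represented as in \eqref{eq:representing_chain}, and each $S$-point of $U_\mcL$ encoded by a diagram \eqref{eq:U_L}, I would construct a strict $O_F$-module $X_\mcL/S$ whose Dieudonn\'e module is a twist of $\mcO_S \widehat\tensor_{O_F} \breve O_D$ as a right $O_D$-module, with Hodge filtration singled out by the surjections $\phi_i$. Via Cartier theory (or $O_F$-display theory \cite{ACZ} when $F$ is $p$-adic), this datum deforms $(\mbX, \kappa)$ canonically to a special formal $O_D$-module over $S$ equipped with an $O_D$-linear quasi-isogeny of height $0$ from the framing. Canonicity in $\mcL$ and compatibility with chain inclusions $\mcL' \subseteq \mcL$ allow these local objects to glue to a global morphism $\Phi$. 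By transport of structure, the $GL_F(W)^0$-action on $\Omega_F(W)$ via \eqref{eq:action_on_Drinfeld_charts} intertwines under $\Phi$ with the right $G_b^0$-action on $\mcM_D^0$ by precomposition of the framing, with the orientation reversal $g \mapsto g^{-1}$ built into the choice $W = N^{\Pi^{-1}\bF = \mr{id}}$; combined with \eqref{eq:connected_comp_Drinfeld} this yields the required $GL_n(F)^0$-equivariance.

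To prove $\Phi$ is an isomorphism, I would exploit that both sides are regular, $(n-1)$-dimensional, $\pi$-adic formal schemes over $\Spf O_{\breve F}$ with semi-stable reduction: the left side is locally of the form $U_\mcL$, while the right side is regular by Proposition \ref{prop:representability}. It then suffices to verify (i) bijectivity on $\mbF$-points and (ii) that $\Phi$ induces isomorphisms on completed local rings. For (i), a point of $\mcM_D^0(\mbF)$ corresponds to a Dieudonn\'e lattice $M \subset N$ that is $\breve O_D$-stable, of height $0$ relative to $M(\mbX)$, and satisfies the specialness condition \eqref{eq:special_D}; from $M$ one extracts the $\pi$-periodic chain $\mcL(M) := \{\Pi^{-i}M \cap W\}_{i \in \mbZ}$ of $O_F$-lattices in $W$, and one verifies that $M \mapsto \mcL(M)$ is a bijection with the set of $\pi$-periodic lattice chains in $W$, matching the stratification of the special fiber of $\Omega_F(W)$ by simplices of the Bruhat--Tits building of $PGL(W)$. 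For (ii), Grothendieck--Messing (respectively $O_F$-display) deformation theory identifies the completed local ring of $\mcM_D^0$ at a closed point corresponding to $\mcL$ with the space of rank-$1$ lifts of the Hodge filtration, which is exactly the local model presentation of $U_\mcL$ at that point; both are isomorphic to $O_{\breve F}[\![T_0, \ldots, T_{n-1}]\!]/(T_0 \cdots T_{n-1} - \pi)$.

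The main obstacle is the construction step: producing a genuine strict formal $O_D$-module from the linear-algebra datum \eqref{eq:U_L} and verifying that its Lie algebra carries the special $O_D$-action required by \eqref{eq:special_D}. This demands a concrete Cartier (or display) datum lifting $\mbX$ whose Hodge filtration is parametrized precisely by the quotients $\phi_i$, after which the characteristic polynomial of the induced $O_D$-action on $\mr{Lie}(X_\mcL)$ must be computed and matched with $\mr{charred}_{D/F}$. This is the content of Drinfeld's original construction in \cite{Drinfeld} and the moduli-theoretic treatment in \cite[\S3.54]{RZ1}; in the present notation it reduces to writing down the $\Pi$-twisted Frobenius on $\mcO_S \widehat\tensor \breve O_D$ explicitly in terms of \eqref{eq:U_L} and tracking its compatibility with the grading $\breve O_D = \bigoplus_{i=0}^{n-1} O_{\breve F}\Pi^i$.
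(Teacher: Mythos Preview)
The paper does not give its own proof of this theorem. It is stated as a citation of Drinfeld's result \cite{Drinfeld} (with \cite[\S3.54]{RZ1} and \cite{BC} as further references for the moduli-theoretic formulation) and is used as a black box throughout \S\ref{s:1_4}. There is therefore no proof in the paper against which to compare your proposal.

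That said, your outline follows the standard architecture of Drinfeld's argument and is broadly correct. A few remarks. First, the paper records $U_\mcL$ as $n$-dimensional (total Krull dimension), not $(n-1)$-dimensional; your phrasing suggests relative dimension, which is harmless but worth aligning with the paper's convention. Second, your slope-zero operator $\Pi^{-1}\bF$ is not quite the one that appears in the standard treatments: in the paper's own later use of this circle of ideas (see \S\ref{ss:inter_set_theoretic}, in the $n=2$ setting with $\varpi = \Pi^2$), the relevant operator is $\tau = V^{-2}\varpi$, which is $\sigma^2$-linear rather than $\breve F$-linear, and the $F$-structure $W$ arises as its fixed points on a graded piece $N_0$, not on all of $N$. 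For general $n$ one similarly works with a $\sigma^{\pm 1}$-twisted operator built from $V$ and $\Pi$, and the resulting $W$ is an $F$-form of a single graded piece of $N$. This does not affect the strategy, but the linearity claim as written is inaccurate. Third, your identification of the main obstacle is exactly right: the substance of Drinfeld's theorem lies in the explicit Cartier-theoretic construction of the universal special $O_D$-module over each $U_\mcL$ and the verification of \eqref{eq:special_D}, which you correctly defer to \cite{Drinfeld} and \cite{RZ1}.
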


The special fiber $\mbF_q\tensor_{O_F} \Omega_F(W)$ is a reduced scheme. Its set of irreducible components is in bijection with the homothety classes of lattices $Λ \subset W$. The irreducible component associated to $Λ$ is a blow up of the projective projective spaces $\mbP(\bar{Λ})$ centered in the union of all $\mbF_q$-rational hyperplanes of $\mbP(\bar{Λ})$. In the case $n = 2$, since a blow up does not affect smooth curves, the irreducible components of $\mbF_q\tensor_{O_F}\Omega^1_F$ are of the form $\mbP(\bar{Λ}) \iso \mbP^1_{\mbF_q}$.

In light of \eqref{eq:Drinfeld_iso}, we will mostly be interested in the base change of $\Omega_F(W)$ to $O_{\breve F}$. For this reason, we introduce the notation
\begin{equation}\label{eq:notation_Drinfeld}
\breve{\Omega}_F(W) := O_{\breve F}\widehat{\tensor}_{O_F} \Omega_F(W),\quad \breve U_{\mcL} := O_{\breve F}\widehat{\tensor}_{O_F} U_{\mcL}.
\end{equation}

\subsection{The Basic Construction}
\label{ss:basic_construction}
We now specialize to the situation of a $2$-dimensional $E$-vector space $W$. It is simultaneously a $4$-dimensional $F$-vector space. If $\mcL$ is a chain of $O_E$-lattices in $W$, then we write $U_{E,\mcL}\subseteq \Omega_E(W)$ for the corresponding chart. We also put
$$
\breve{\Omega}_E(W) := O_{\breve F}\widehat{\tensor}_{O_E} \Omega_E(W),\quad \breve U_{E,\mcL} := O_{\breve F}\widehat{\tensor}_{O_E} U_{E,\mcL}.
$$
Let $ζ\in O^\times_E$ be some fixed generator. It may be viewed as an element of $GL_F(W)^0$ and hence defines an automorphism of $\Omega_F(W)$. The isomorphism in Theorem \ref{thm:Drinfeld} is $GL_F(W)^0$-equivariant, so restricts to an isomorphism of $ζ$-fixed points
$$\mcM_D^{0, ζ} \iso \breve{\Omega}_F(W)^ζ.$$
By Proposition \ref{prop:closed_immersion}, $\mcM_C^{0}$ is contained in the fixed points $\mcM_D^{0, ζ}$. Our aim is to describe its image in $\breve{\Omega}_F(W)^ζ$.

\begin{prop}\label{prop:Drinfeld_fixed_points}
Precisely two of the connected components of $\breve{\Omega}_F(W)^ζ$ are flat over $\Spf O_{\breve F}$. Each of these is isomorphic to $\breve{\Omega}_E(W)$. The image of $\mcM_C^{0}$ along \eqref{eq:Drinfeld_iso} equals one of them.
\end{prop}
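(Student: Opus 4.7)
My plan is to analyze the $\zeta$-fixed locus chart-by-chart in Drinfeld's presentation of $\breve{\Omega}_F(W)$ and then to identify the image of $\mcM_C^0$ using the eigenspace characterization from Proposition \ref{prop:closed_immersion}. The key observation is that every $O_E$-lattice chain $\mcL$ in $W$ is, in particular, an $O_F$-lattice chain, so its chart $\breve U_\mcL$ is $\zeta$-stable. Because $E/F$ is unramified and $S$ is an $O_{\breve F}$-scheme, the algebra $O_E \tensor_{O_F} \mcO_S$ splits as $\mcO_S \times \mcO_S$, with idempotents $e_+, e_-$ corresponding to the fixed embedding $E \subset \breve F$ and to its conjugate. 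At a $\zeta$-fixed $S$-point of $\breve U_\mcL$, each line bundle quotient $\mcL_i$ of $\mcO_S \tensor_{O_F} \Lambda_i$ inherits a compatible $O_E$-action; being a line bundle, it is supported on exactly one of $e_\pm$ on each connected component of $S$, and the transition maps $\alpha_i$ in \eqref{eq:U_L} (which are $O_E$-linear since each $\Lambda_i$ is $O_E$-stable) force a single polarity across the whole chain.

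Using this, I would construct two closed immersions $\iota_\pm \colon \breve{\Omega}_E(W) \hookrightarrow \breve{\Omega}_F(W)^\zeta$: on each chart $\breve U_{E,\mcL}$, I extend a rank-one $\mcO_S$-linear quotient $\varphi_i \colon \mcO_S \tensor_{O_E} \Lambda_i \twoheadrightarrow \mcL_i$ to $\mcO_S \tensor_{O_F} \Lambda_i \twoheadrightarrow \mcL_i$ by pre-composing with $e_\pm$. The resulting data glue coherently across the covering of $\breve{\Omega}_E(W)$, and the two constructions take values in distinct connected components of the fixed locus by the polarity discussion above. Since $\breve{\Omega}_E(W)$ is regular and flat over $\Spf O_{\breve F}$, the two images $\iota_\pm(\breve{\Omega}_E(W))$ are flat components of $\breve{\Omega}_F(W)^\zeta$.

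I would then verify that these are the only flat components: every flat component has a non-empty generic fiber, and over that generic fiber the $\zeta$-action on any line bundle quotient is diagonalizable, producing exactly two possible polarities chart-by-chart; by flatness the component is the closure of one of these generic pieces, hence equals $\iota_+(\breve{\Omega}_E(W))$ or $\iota_-(\breve{\Omega}_E(W))$. Finally, to identify the image of $\mcM_C^0$, I invoke Proposition \ref{prop:closed_immersion}, which cuts out this image inside $\mcM_D^0$ by the condition that the eigenspace $X_+$ (where the ambient and framing $O_E$-actions coincide) is a special $O_C$-module. This condition singles out precisely one polarity under Drinfeld's isomorphism of Theorem \ref{thm:Drinfeld}, say the $+$-component, finishing the identification.

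The main technical hurdle will be the last step: one must track through the translation between the Serre-tensor framing $(\mbX,\kappa) = O_D \tensor_{O_C} (\mbY,\iota)$ and Drinfeld's linear-algebra parameterization to confirm that the specialness condition of Proposition \ref{prop:closed_immersion} corresponds to the $e_+$-polarity (rather than $e_-$) under Theorem \ref{thm:Drinfeld}. This amounts to a careful compatibility check on Dieudonné modules; once established, the rest follows formally from the chart-by-chart analysis.
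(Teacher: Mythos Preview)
Your approach is essentially the same as the paper's---a chart-by-chart polarity analysis on the $O_E$-lattice charts---but there is a genuine gap in your first paragraph. The assertion that ``the transition maps $\alpha_i$ \ldots\ force a single polarity across the whole chain'' is false over the special fiber. The composite of the $\alpha_i$ around a period of the chain equals multiplication by $\pi$, so on any $S$ with $\pi\mcO_S = 0$ some $\alpha_i$ vanishes and nothing prevents $\mcL_0$ from having polarity $+$ while $\mcL_1$ has polarity $-$. Consequently you have not shown that $\iota_\pm(\breve{\Omega}_E(W))$ are open (hence connected components) in $\breve{\Omega}_F(W)^\zeta$, only that they are closed. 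Your later generic-fiber argument is correct as far as it goes, but it presupposes that the $\iota_\pm$ images are components; without that, the closure of a polarity-$+$ generic piece could a priori contain mixed-polarity special-fiber points and be strictly larger than $\iota_+(\breve{\Omega}_E(W))$.

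The paper closes this gap by defining a threefold open-and-closed decomposition $\breve{\Omega}_F(W)^\zeta = \breve{\Omega}_F(W)^{\zeta,+} \sqcup \breve{\Omega}_F(W)^{\zeta,-} \sqcup \breve{\Omega}_F(W)^{\zeta,\neq}$, where the third piece is the mixed-polarity locus, and then checking directly that $\pi = 0$ on $\breve{\Omega}_F(W)^{\zeta,\neq}$ (so it is nowhere flat). Once this is done, the $\pm$ pieces are honest components and your isomorphisms $\iota_\pm$ identify them with $\breve{\Omega}_E(W)$. Finally, your ``main technical hurdle'' is unnecessary for the proposition as stated: since $\mcM_C^0$ is flat and connected over $\Spf O_{\breve F}$, its image is automatically one of the two flat components; the proposition does not claim which one, and indeed the paper remarks that this depends on the choice of isomorphism in Theorem~\ref{thm:Drinfeld}.
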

\begin{proof}
The fixed points $\Omega_F(W)^ζ$ are contained in the union of the charts $U_\mcL$ for $\mcL$ that satisfy $ζ\mcL = \mcL$. Since $v(\det(ζ)) = 0$, the condition $ζ\mcL = \mcL$ means that $ζ$ fixes each lattices of $\mcL$ individually, i.e. that $\mcL$ is a chain of $O_E$-lattices. Given an $O_E$-lattice $Λ$, there is a natural decomposition
\begin{equation}\label{eq:def_Lambda_pm}
O_E\tensor_{O_F} Λ = Λ^+ \oplus Λ^-
\end{equation}
because $E/F$ is unramified. Here, the notation is such that $Λ^+$ (resp. $Λ^-$) is the set of elements on which the two $O_E$-actions coincide (resp. differ by Galois conjugation). For a $\Spf O_E$-scheme $S$, a quotient line bundle
\begin{equation}\label{eq:line_bundle_quot}
\xymatrix{ \mcO_S\tensor_{O_F} Λ \ar@{->>}[r]^-{φ} & \mcL}
\end{equation}
is $ζ$-stable if and only if the quotient map factors over the projection to $\mcO_S\tensor_{O_E} Λ^+$ or over the projection to $\mcO_S\tensor_{O_E} Λ^-$.

Let $\mcL\in \mcW$ be a chain of $O_E$-lattices represented by $πΛ_0\subset Λ_1 \subset Λ_0$. Let $S$ be a $\Spf O_E$-scheme and consider a point of $U_\mcL^ζ(S)$ represented by
\begin{equation}\label{eq:U_L_over_E_to_F_I}
\xymatrix{
\mcO_S\tensor_{O_E} (Λ^+_0\oplus Λ^-_0) \ar[r]^{\mr{id}\tensor π} \ar@{->>}[d]^{(φ_0^+, φ_0^-)} &
\mcO_S\tensor_{O_E} (Λ^+_1\oplus Λ^-_1) \ar[r] \ar@{->>}[d]^{(φ_1^+, φ_1^-)} &
\mcO_S\tensor_{O_E} (Λ^+_0 \oplus Λ^-_0) \ar@{->>}[d]^{(φ_0^+, φ_0^-)}\\
\mcL_0 \ar[r]^{α_1} &
\mcL_1 \ar[r]^{α_0} &
\mcL_0.
}
\end{equation}
Then one can define a decomposition $S = S^+ \sqcup S^- \sqcup S^{\neq}$ into open and closed subschemes in the following way: $S^+$ is the locus where both $φ_0^-$ and $φ_1^-$ vanish. Similarly, $S^-$ is the locus where both $φ_0^+$ and $φ_1^+$ vanish. Finally, $S^{\neq}$ is the complement. This decomposition is functorial and hence defines a decomposition
$$O_E \widehat{\tensor}_{O_F} U_{\mcL}^ζ \ =\ U_{\mcL}^{ζ,+}\, \sqcup\, U_{\mcL}^{ζ,-}\, \sqcup\, U_{\mcL}^{ζ, \neq}.$$
It is furthermore compatible with gluing maps and stable under the $GL_E(W)$-action, and in particular defines a decomposition
$$O_E \widehat{\tensor}_{O_F} \Omega_F(W)^ζ \ = \ \Omega_F(W)^{ζ,+}\, \sqcup\, \Omega_F(W)^{ζ,-}\, \sqcup\, \Omega_F(W)^{ζ, \neq}.$$
The subscheme $\Omega_F(W)^{ζ, \neq}$ lies above the special point $\Spec \mbF_{q^2} \subset \Spf O_E$ and is hence nowhere flat. Indeed, assume for example that $φ_0^- = 0$ and $φ_1^+ = 0$. Then $φ_1^-$ is both a surjection onto a line bundle and $πφ_0^- = 0$ is divided by $φ_0^- = 0$. It follows that $π = 0$. The symmetric argument applies if $φ_0^+ = 0$ and $φ_1^- = 0$.

Recall from Theorem \ref{thm:Drinfeld} that $\mcM^0_C$ is a flat and connected $O_{\breve F}$-scheme. We conclude that the proof of the proposition will be complete if we can show that the two formal schemes $\Omega_F(W)^{ζ, \pm}$ are both isomorphic to $\Omega_E(W)$. To this end, first note that every $E$-conjugate linear element $τ\in GL_F(W)$ defines an isomorphism
\begin{equation}\label{eq:auto_Drinfeld}
τ:\Omega_F(W)^{ζ, +}\overset{\iso}{\lr} \Omega_F(W)^{ζ, -}.
\end{equation}
It hence suffices to describe an isomorphism of $\Omega_E(W)$ with $\Omega_F(W)^{ζ, +}$, say. Let $\mcL$ be a chain of $O_E$-lattices that is represented by $πΛ_0\subset Λ_1 \subset Λ_0$. Let $S$ be a $\Spf O_E$-scheme and consider an $S$-valued point of the chart $U_{E,\mcL}(S) \subset \Omega_E(W)(S)$ represented by
\begin{equation}\label{eq:U_L_over_E}
\xymatrix{
\mcO_S\tensor_{O_E} Λ^+_0 \ar[r]^{\mr{id}\tensor π} \ar@{->>}[d]^{φ_0} &
\mcO_S\tensor_{O_E} Λ^+_1 \ar[r] \ar@{->>}[d]^{φ_1} &
\mcO_S\tensor_{O_E} Λ^+_0 \ar@{->>}[d]^{φ_0}\\
\mcL_0 \ar[r]^{α_1} &
\mcL_1 \ar[r]^{α_0} &
\mcL_0.
}
\end{equation}
Map this datum to the following point of $U_{\mcL}^{ζ,+}(S)$:
\begin{equation}\label{eq:U_L_over_E_to_F_II}
\xymatrix{
\mcO_S\tensor_{O_E} (Λ^+_0\oplus Λ^-_0) \ar[r]^{\mr{id}\tensor π} \ar@{->>}[d]^{(φ_0,0)} &
\mcO_S\tensor_{O_E} (Λ^+_1\oplus Λ^-_1) \ar[r] \ar@{->>}[d]^{(φ_1,0)} &
\mcO_S\tensor_{O_E} (Λ^+_0 \oplus Λ^-_0) \ar@{->>}[d]^{(φ_0,0)}\\
\mcL_0 \ar[r]^{α_1} &
\mcL_1 \ar[r]^{α_0} &
\mcL_0.
}
\end{equation}
It is not difficult to check that this definition is compatible with gluing maps and defines an $GL_E(W)$-equivariant isomorphism $\Omega_E(W) \overset{\iso}{\to} \Omega_F(W)^{ζ, +}$; we omit these details. The proof of the proposition is now complete.
\end{proof}

Which of the two flat components of $\breve{\Omega}_F(W)^ζ$ the cycle $\mcM_C^{0}$ gets identified with depends on the choice of the comparison isomorphism in Theorem \ref{thm:Drinfeld}. We do not need to be more precise about this identification, however, because the definitions of $\mcI(g)$ and $\Int(g)$ in \S\ref{ss:intersection_numbers} are purely in terms of spaces with group actions and because \eqref{eq:auto_Drinfeld} allows to interchange the two flat components. So we will henceforth assume that the map $\mcM_C^{0} \to \mcM_D^{0}$ is given by the morphism from \eqref{eq:U_L_over_E} and \eqref{eq:U_L_over_E_to_F_II}.

\begin{rmk}\label{rmk:Drinfeld_iso}
In fact, this is also the result one would obtain from Drinfeld's construction during his proof of Theorem \ref{thm:Drinfeld}. Namely, his construction is such that the line bundles $\mcL_0$ and $\mcL_1$ in \eqref{eq:U_L_over_E_to_F_II} occur as direct summands of the Lie algebra of the corresponding special $O_D$-module. Demanding that $ζ$ acts strictly on the Lie algebra in the sense of Definition \ref{def:pi_div_group} precisely means to single out the component $\Omega_F(W)^{ζ, +}$.
\end{rmk}

\begin{rmk}\label{rmk:basic_construction}
The map $\Omega_E(W) \to \Omega_F(W)$ from Proposition \ref{prop:Drinfeld_fixed_points} was already considered by Drinfeld and called by him the ``basic construction''. His \cite[Proposition 3.1 (1)]{Drinfeld} is similar to our Proposition \ref{prop:Drinfeld_fixed_points}. It seems, however, that the flatness condition in Proposition \ref{prop:Drinfeld_fixed_points} cannot be omitted.
\end{rmk}

\subsection{Conormal Bundle}
\label{ss:conormal_Drinfeld_1_4}

Let $Λ = Λ_0 \subset W$ be an $O_E$-lattice. We write $U_Λ$ instead of $U_{π^\mbZ Λ}$. A similar convention will apply to $U_{Λ_1\subset Λ_0}$, $U_{E, Λ}$ and $U_{E, Λ_0\subset Λ_1}$. The special fiber $\mbF \tensor_{O_{\breve F}} \breve U_{E,Λ}$ of $\breve U_{E,Λ}$ is $GL_{O_E}(Λ)$-equivariantly isomorphic to
$$\mbF\tensor_{O_E} \mbP(Λ) \setminus \mbP(Λ)(\mbF_{q^2}) \iso \mbP^1_{\mbF} \setminus \mbP^1(\mbF_{q^2}).$$
Let $P_Λ$ denote its closure in $\breve{\Omega}_E(W)$. It is isomorphic to the projective line $\mbF\tensor_{O_E} \mbP(Λ)$.

\begin{prop}\label{prop:conormal_1_4}
Let $\mcC$ denote the conormal bundle of $\breve{\Omega}_E(W)\subset \breve{\Omega}_F(W)$. Then
$$\deg(\det \mcC\vert_{P_Λ}) = q^2 - 1.$$
\end{prop}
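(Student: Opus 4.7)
The approach is to work chartwise using Drinfeld's description (Theorem \ref{thm:Drinfeld}) and the explicit form of the basic construction (Proposition \ref{prop:Drinfeld_fixed_points}), then to use an adjunction/self-intersection argument on the semistable surface $\breve{\Omega}_E(W)$. First I would pick the chart $\breve U_\Lambda$ of $\breve{\Omega}_F(W)$ attached to the $O_F$-lattice underlying $\Lambda$ (an $O_F$-module of rank $4$). By \eqref{eq:U_L_over_E_to_F_II}, the closed immersion $\breve U_{E,\Lambda}\hookrightarrow \breve U_\Lambda$ is cut out by the vanishing of the $E$-conjugate-linear component $\phi^-$ of the universal quotient $\mcO_S\otimes_{O_F}\Lambda\twoheadrightarrow \mcL$, under the decomposition $O_{\breve F}\otimes_{O_F}\Lambda=\Lambda^+\oplus\Lambda^-$ with both summands free of rank $2$ over $O_{\breve F}$.

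Next, since $\phi^-$ is a regular section of the rank-$2$ vector bundle $(\Lambda^-)^\vee\otimes\mcL$ on $\breve U_\Lambda$, the Koszul resolution yields $\mcC|_{\breve U_{E,\Lambda}}\iso \Lambda^-\otimes\mcL^{-1}$, and so $\det\mcC|_{\breve U_{E,\Lambda}}\iso \mcL^{-2}$ up to the trivial factor $\det\Lambda^-$. Restricted to the open part $P_\Lambda\cap\breve U_{E,\Lambda}=\mbP^1_\mbF\setminus\mbP^1(\mbF_{q^2})$ of $P_\Lambda$ the universal line bundle is $\mcO_{\mbP^1}(1)$, giving $\det\mcC\iso \mcO_{\mbP^1}(-2)$ on the smooth open locus of $P_\Lambda$. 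This does \emph{not} determine the global degree of $\det\mcC|_{P_\Lambda}$, because the $q^2+1$ missing $\mbF_{q^2}$-points are precisely the nodes where $\breve{\Omega}_E(W)$ meets the neighbouring components $P_{\Lambda'}$; locally there, $\breve{\Omega}_E(W)$ has the standard semistable model $O_{\breve F}[[c,\beta]]/(c\beta-\pi)$, and an extra boundary twist must be recorded.

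The cleanest way to pin down the boundary contribution is via the adjunction formula for the codimension-$2$ regular embedding of regular schemes:
\[\det\mcC\ \iso\ \omega_{\breve{\Omega}_F(W)/O_{\breve F}}|_{\breve{\Omega}_E(W)}\otimes\omega_{\breve{\Omega}_E(W)/O_{\breve F}}^{-1}.\]
Restricting to $P_\Lambda$, I would compute the two factors. Adjunction for $P_\Lambda\subset\breve{\Omega}_E(W)$ gives $\deg\omega_{\breve{\Omega}_E(W)/O_{\breve F}}|_{P_\Lambda}=\deg K_{\mbP^1}-P_\Lambda^2=-2+(q^2+1)=q^2-1$, using that the special fiber $\sum_{\Lambda'} P_{\Lambda'}=\mathrm{div}(\pi)\sim 0$ and that $P_\Lambda$ meets its $q^2+1$ neighbours (determined by the $(q^2+1)$-regular Bruhat--Tits tree of $PGL_{2,E}$) transversely in one point each, so that $P_\Lambda^2=-(q^2+1)$. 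The remaining task is to show $\deg\omega_{\breve{\Omega}_F(W)/O_{\breve F}}|_{P_\Lambda}=2(q^2-1)$, whence $\deg\det\mcC|_{P_\Lambda}=2(q^2-1)-(q^2-1)=q^2-1$.

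I expect the main obstacle to be exactly this computation of $\omega_{\breve{\Omega}_F(W)/O_{\breve F}}|_{P_\Lambda}$. There are two natural routes: (a) a parallel adjunction calculation on the $3$-dimensional irreducible component $\tilde P_\Lambda$ of the special fiber of $\breve{\Omega}_F(W)$, which is a blow-up of $\mbP^3_\mbF$ along the $\mbF_q$-rational flats, together with the induced restriction $\tilde P_\Lambda\to P_\Lambda$; or (b) a direct chart-wise computation at each missing point by passing to a chart $\breve U_{\Lambda\supset \Lambda_1}$ (for an intermediate $O_F$-lattice $\Lambda\supset\Lambda_1\supset\Lambda'$ of index $1$ each), in which the semistable coordinate relations $\alpha_{\Lambda}\alpha_{\Lambda_1}=\pi$ force the universal line bundle to change by a transition factor $\alpha$ that vanishes on a component; tracking these transitions determines the local twist (expected to be $+2$ per missing point in $\omega_{\breve{\Omega}_F}$, matching $2(q^2+1)$ total). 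Either route is substantial, but the linear-algebra description of Proposition \ref{prop:Drinfeld_fixed_points} makes (b) entirely explicit and avoids any $\pi$-divisible groups, as promised in the introduction to Part~3.
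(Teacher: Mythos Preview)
Your adjunction strategy is sound in principle and the computation of $\deg\omega_{\breve\Omega_E(W)/O_{\breve F}}|_{P_\Lambda}=q^2-1$ via the self-intersection $P_\Lambda^2=-(q^2+1)$ is correct. However, you have not actually proved the statement: everything hinges on $\deg\omega_{\breve\Omega_F(W)/O_{\breve F}}|_{P_\Lambda}=2(q^2-1)$, which you only sketch. Your route (b) would require analyzing how the superspecial point $P_\Lambda\cap P_{\Lambda'}$ sits inside the chart $\breve U_{\mcL}$ for the $O_F$-lattice chain generated by the two $O_E$-lattices $\Lambda\supset\Lambda'$ (index $2$ as $O_F$-modules), and tracking the dualizing sheaf there; this is at least as involved as a direct attack. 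Your route (a) through the blow-up geometry of $\tilde P_\Lambda\subset\breve\Omega_F(W)$ is even less developed.

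The paper bypasses adjunction entirely and works directly with $\det\mcC$. Choosing an $O_E$-basis $(e_1,e_2)$ of $\Lambda$ and writing $e_i^\pm\in\Lambda^\pm$ for the induced bases, the two functions $\varphi(e_i^-)/\varphi(e_i^+)$ generate the ideal of $\breve U_{E,\Lambda}\subset\breve U_\Lambda$, so their wedge $c_{(e_1,e_2)}$ trivializes $\det\mcC$ over $\breve U_{E,\Lambda}$. A short change-of-basis computation gives $c_{(f_1,f_2)}=\det(\bar A)\cdot\dfrac{\varphi(e_1^+)\varphi(e_2^+)}{\varphi(f_1^+)\varphi(f_2^+)}\cdot c_{(e_1,e_2)}$. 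One then checks on the two-term chart $\breve U_{E,\Lambda_1\subset\Lambda}$ (with $\Lambda_1=\langle\pi e_1,e_2\rangle$) that $c_{(e_1,e_2)}$ extends to a generator there, hence has no zero or pole at the two coordinate points $\langle e_1\rangle,\langle e_2\rangle\in P_\Lambda(\mbF_{q^2})$. For any other $\mbF_{q^2}$-point $\langle f\rangle$, comparing $c_{(e_1,e_2)}$ with the local generator $c_{(f,e_2)}$ via the transformation law shows a simple zero at $\langle f\rangle$. The divisor of $c_{(e_1,e_2)}$ on $P_\Lambda$ is therefore the $q^2-1$ remaining $\mbF_{q^2}$-points, each with multiplicity one, and the degree follows.

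This direct-section approach is both shorter and more explicit than completing your adjunction argument; in particular it never requires knowing $\omega_{\breve\Omega_F(W)}$ at all. If you wanted to salvage your method, the cleanest completion would probably be to prove the analogue of the $q^2-1$ formula for $\omega_{\breve\Omega_F^{n-1}}$ restricted to a minimal stratum (degree $(n-1)(q-1)$ for $GL_n$ over $F$, appropriately interpreted), but that is a separate result.
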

\begin{proof}
Our strategy is to choose a suitable generator of $(\det \mcC)\vert_{\breve U_{E,Λ}}$ and to determine the divisor of its meromorphic extension to $P_Λ$.

(1) Fix an $O_E$-basis $e_1,e_2$ for $Λ$. Write $O_E\tensor_{O_F} Λ = Λ^+ \oplus Λ^-$ as before. For an element $e\in Λ$, put
\begin{equation}\label{eq:basis_splitting}
e^+ = ζ\tensor e - 1 \tensor \ob{ζ} e\in Λ^+,\ \ \ e^- = ζ\tensor e - 1 \tensor ζe\in Λ^-.
\end{equation}
Then $(e_1^\pm,e_2^\pm)$ forms an $O_E$-basis of $Λ^\pm$. Let $φ:\mcO_{U_Λ} \tensor_{O_F} Λ \to \mcL$ be the universal quotient. Using that we are working over $O_{\breve F}$ which contains $O_E$, write $φ = (φ^+, φ^-)$ as in \eqref{eq:U_L_over_E_to_F_I}. Comparing \eqref{eq:U_L_over_E_to_F_I} with \eqref{eq:U_L_over_E_to_F_II}, we see that $\breve U_{E, Λ}\subset \breve U_Λ$ is defined by the condition $φ^- = 0$. Since $e_1^-$, $e_2^-$ is a basis of $Λ^-$, this is the same as the two conditions $φ(e_1^-) = φ(e_2^-) = 0$.

For every $λ\in Λ\setminus πΛ$, the image $φ(λ)\in \mcL$ is a generator. In particular,
$$φ(e_i^+ - e_i^-) = φ((ζ-\ob{ζ})\tensor e_i) \neq 0.$$
Since $φ(e_i^-)$ vanishes along $\breve U_{E, Λ}$ as seen before, $φ(e_i^+)$ is invertible near $\breve U_{E, Λ}$. Thus the two functions $φ(e_1^-)/φ(e_1^+)$ and $φ(e_2^-)/φ(e_2^+)$ are defined on a Zariski open neighborhood of $\breve U_{E,Λ}$ and generate the ideal defining $\breve U_{E,Λ}\subset \breve U_Λ$. Their wedge product
$$c_{(e_1,e_2)} := \frac{φ(e_1^-)}{φ(e_1^+)} \wedge \frac{φ(e_2^-)}{φ(e_2^+)}$$
is then a generator of $\det \mcC\vert_{\breve U_{E,Λ}}$.

(2) We next determine the behaviour of $c_{(e_1,e_2)}$ under change of basis. Let $f_1 = ae_1 + ce_2$ and $f_2 = be_1 + de_2$ for some $A = \left(\begin{smallmatrix} a & b \\ c & d\end{smallmatrix}\right) \in GL_2(O_E)$. Then
\begin{equation}\label{eq:trafo_gen_conormal}
\begin{aligned}
c_{(f_1,f_2)} & = \frac{φ(e_1^+)φ(e_2^+)}{φ(f_1)^+φ(f_2)^+} \cdot \frac{\big(\ob{a} φ(e_1^-) + \ob{c}φ(e_2^-)\big) \wedge \big(\ob{b}φ(e_1^-) + \ob{d}φ(e_2^-)\big)}{φ(e_1^+)φ(e_2^+)}\\
& = \det(\bar {A})\,\frac{φ(e_1^+)φ(e_2^+)}{φ(f^+_1)φ(f^+_2)}\, c_{(e_1,e_2)}.
\end{aligned}
\end{equation}
Poles and zeroes of the proportionality factor (when restricted to $P_Λ$) are described as follows. For two elements $e, f\in Λ$, the ratio $φ(e^+)/φ(f^+)$ is a scalar if and only if $e^+ \equiv f^+$ mod $π$. Otherwise, it is the rational function with simple zero at the line $\langle e^+\rangle$ and simple pole at $\langle f^+\rangle$.

(3) Let $Λ_1 \subset Λ$ be the $O_E$-lattice generated by $πe_1, e_2$. We claim that $c_{(e_1,e_2)}$ extends to a generator of $\det \mcC\vert_{\breve U_{E, Λ_1\subset Λ}}$. Consider for this the universal point of $\breve U_{Λ_1\subset Λ}$, say
\begin{equation}\label{eq:U_L_universal}
\xymatrix{
\mcO_{\breve U_{Λ_1\subset Λ}}\tensor_{O_F} Λ \ar[r]^{\mr{id}\tensor π} \ar@{->>}[d]^{φ} &
\mcO_{\breve U_{Λ_1\subset Λ}}\tensor_{O_F} Λ_1 \ar[r] \ar@{->>}[d]^{φ_1} &
\mcO_{\breve U_{Λ_1\subset Λ}}\tensor_{O_F} Λ \ar@{->>}[d]^{φ}\\
\mcL \ar[r]^{α_1} &
\mcL_1 \ar[r]^{α_0} &
\mcL.
}
\end{equation}
Since $φ(λ)$ is a generator of $\mcL$ for every $λ\in Λ\setminus Λ_1$ and since similarly $φ_1(λ_1)$ is a generator of $\mcL_1$ for every $λ_1\in Λ_1\setminus πΛ_0$, the ideal defining $\breve U_{E, Λ_1\subset Λ}\subset \breve U_{Λ_1\subset Λ}$ is generated by
$$\frac{φ(e_1^-)}{φ(e_1^+)} \quand \frac{φ_1(e_2^-)}{φ_1(e_2^+)}$$
on a Zariski open neighborhood of $\breve U_{E, Λ_1\subset Λ}$. The map $α_0$ becomes an isomorphism when restricting the diagram \eqref{eq:U_L_universal} to the open subset $\breve U_Λ\subset \breve U_{Λ_1\subset Λ}$. Since also $φ = α_0\circ φ_1$, we see that
$$\left.\frac{φ(e_1^-)}{φ(e_1^+)}\wedge\frac{φ_1(e_2^-)}{φ_1(e_2^+)} \right\vert_{\breve U_{E, Λ_1\subset Λ}} = c_{(e_1,e_2)}$$
as claimed.

This argument applies symmetrically to the lattice $\langle e_1, πe_2\rangle \subset Λ$. So we have shown that the element $c_{(e_1,e_2)}$, which is a meromorphic section of the line bundle $\det \mcC \vert_{P_Λ}$, has neither a zero nor a pole at the points $\langle e_1\rangle, \langle e_2\rangle \in \mbP(Λ)(\mbF_{q^2})$.

(4) It is left to show that $c_{(e_1, e_2)}$ extends with a simple zero over all other $\mbF_{q^2}$-rational points $\langle e_1\rangle, \langle e_2\rangle \neq \langle f\rangle \in \mbP(Λ)(\mbF_{q^2})$. We know from Step (3) that $c_{(f, e_2)}$ is a generator of $\det \mcC\vert_{P_Λ}$ at $\langle f\rangle$. From Step (2), we have that
$$c_{(e_1, e_2)} / c_{(f, e_2)}  \in O_E^\times \cdot φ(f^+)/φ(e_1^+).$$
Moreover, the function $φ(f^+)\vert_{P_Λ}$ vanishes with simple zero at $\langle f\rangle$ while $φ(e_1^+)$ is regular in $\langle f\rangle$ because $\langle f\rangle \neq \langle e_1 \rangle$. Thus we have proved that
$$\mr{div}(c_{(e_1,e_2)}) = P_Λ(\mbF_{q^2}) \setminus \{ \langle e_1 \rangle, \langle e_2 \rangle\}$$
and obtain the claimed identity $\deg(\det \mcC\vert_{P_Λ}) = q^2 - 1.$
\end{proof}

\subsection{Intersection numbers}
\label{ss:intersection_numbers_1_4}

Let $g = 1 + z_g \in GL_F(W)$ be a regular semi-simple element; set $z = z_g$. Recall that $\mcI(g) \neq \emptyset$ only for topologically nilpotent $z$ (Proposition \ref{prop:nilpotent_reduction}), so we also impose this condition on $z$. Then $g$ lies in $GL_F(W)^0$. Let $(L = F[z^2], r, d)$ be the numerical invariant of $z$. Let $\bOm_E(W) \to \bOm_F(W)$ be the closed immersion defined by \eqref{eq:U_L_over_E} and \eqref{eq:U_L_over_E_to_F_II}. Our aim is to determine the intersection locus
$$\bOmega_E(W) \cap g\cdot \bOmega_E(W).$$
Let $Λ\subseteq W$ be an $O_E$-lattice such that $zΛ\subseteq Λ$. Define $O_E\tensor_{O_F} Λ = Λ^+ \oplus Λ^-$ as in \eqref{eq:def_Lambda_pm}. Then $z$ satisfies $zΛ^+ \subseteq Λ^-$ and $zΛ^- \subseteq Λ^+$ because it is $E$-conjugate linear.
\begin{defn}\label{def:fixed_points_U_E_y}
Let $\mcL$ be a chain of $O_E$ lattices in $W$, represented by a single lattice $Λ$ or a pair $πΛ_0\subset Λ_1 \subset Λ_0$. We define $\breve U_{E, \mcL}^z \subseteq \breve U_{E, \mcL}$ as the closed formal subscheme of all those $S$-valued points
$$\begin{minipage}{4cm}
\xymatrix{\mcO_S\tensor_{O_E} Λ^+ \ar@{->>}[d]^{φ} \\ \mcL}
\end{minipage}
\ \ \ \ \ \mr{resp.} \ \ \ \ \ 
\begin{minipage}{15cm}
\xymatrix{
\mcO_S\tensor_{O_E} Λ^+_0 \ar[r]^{\mr{id}\tensor π} \ar@{->>}[d]^{φ_0} &
\mcO_S\tensor_{O_E} Λ^+_1 \ar[r] \ar@{->>}[d]^{φ_1} &
\mcO_S\tensor_{O_E} Λ^+_0 \ar@{->>}[d]^{φ_0}\\
\mcL_0 \ar[r]^{α_1} &
\mcL_1 \ar[r]^{α_0} &
\mcL_0
}
\end{minipage}
$$
that satisfy $[φ\circ z: Λ^- \to \mcL] = 0$, resp. $[φ_i\circ z: Λ_i^- \to \mcL_i] = 0$ for both $ i = 0,1$.
\end{defn}

\begin{prop}\label{prop:stratif_intersection_Drinfeld}
Let $\mcW^g$ denote the set of $g$-stable chains of $O_E$-lattices. Then
$$\bOmega_E(W) \cap g\cdot \bOmega_E(W) = \bigcup_{\mcL \in \mcW^g} \breve U_{E, \mcL}^z.$$
\end{prop}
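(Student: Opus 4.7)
The plan is to verify the equality of formal subschemes chart by chart. Both sides of the asserted identity are closed formal subschemes of $\bOmega_F(W)$ set-theoretically supported inside $\bOmega_E(W)$, and $\bOmega_E(W)$ is covered by the charts $\breve U_{E,\mcL}$ as $\mcL$ ranges over $O_E$-lattice chains in $W$. Hence it suffices to show, for each $O_E$-chain $\mcL$, that the intersection of $g\cdot \bOmega_E(W)$ with $\breve U_{E,\mcL}$ equals $\breve U_{E,\mcL}^z$ whenever $\mcL\in \mcW^g$, and is empty otherwise.

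The core step is to translate the condition through the group action. The automorphism $g^{-1}$ of $\bOmega_F(W)$ is given chart-wise by \eqref{eq:action_on_Drinfeld_charts} and maps $\breve U_\mcL$ isomorphically onto $\breve U_{g^{-1}\mcL}$. Thus a point $x\in \breve U_{E,\mcL}(S)\subseteq \breve U_\mcL(S)$ lies in $g\cdot \bOmega_E(W)$ if and only if $g^{-1}x\in \breve U_{g^{-1}\mcL}$ lies in $\bOmega_E(W)$; by the proof of Proposition \ref{prop:Drinfeld_fixed_points}, this in particular forces $g^{-1}\mcL$ to be an $O_E$-chain. Combined with the $O_E$-stability of $\mcL$ itself, this requires each lattice $Λ\in \mcL$ to be stable under both $O_E$ and $gO_Eg^{-1}$. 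By the topological nilpotence of $z=z_g$ together with the Moebius identity \eqref{eq:get_to_linear}, the subring of $\End_F(W)$ generated by $O_E$ and $gO_Eg^{-1}$ equals $O_E[z]$, so this condition simplifies to $z\cdot Λ\subseteq Λ$ for every $Λ\in \mcL$, that is $\mcL\in \mcW^g$. Granted $\mcL\in \mcW^g$, so that $g^{-1}\mcL=\mcL$, the translated point $g^{-1}x\in \breve U_\mcL$ is represented by the composite quotients $φ_i\circ g$, where the $φ_i$ are the defining quotients of $x$ extended by zero on $Λ_i^-$ via \eqref{eq:U_L_over_E_to_F_II}. It lies in $\breve U_{E,\mcL}$ precisely when $(φ_i\circ g)|_{Λ_i^-}=0$ for each $i$, and using $g=1+z$ together with $φ_i|_{Λ_i^-}=0$ this reduces to $(φ_i\circ z)|_{Λ_i^-}=0$, which is exactly the defining condition of $\breve U_{E,\mcL}^z$ from Definition \ref{def:fixed_points_U_E_y}.

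The main technical obstacle I anticipate lies in the chart bookkeeping. A single point $x$ may lie in many compatible charts, and upgrading the analysis from a set-theoretic to a scheme-theoretic identification requires showing that if $g^{-1}x$ lies in $\breve U_{E,\mcL'}$ for a possibly different $O_E$-chain $\mcL'$, then $\mcL$ can be refined so that $g\mcL'$ appears as one of its subchains (and conversely). This should follow from the open-immersion relations $\breve U_{\mcL_1}\subseteq \breve U_{\mcL_2}$ for subchain inclusions $\mcL_1\subseteq \mcL_2$ and from the observation that any two charts containing $x$ admit a common enlargement by taking the union of the underlying lattice chains.
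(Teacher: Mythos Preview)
Your proposal is correct and follows essentially the same approach as the paper's proof: cover $\bOmega_E(W)$ by the charts $\breve U_{E,\mcL}$, observe that a non-empty contribution from $\mcL$ forces $g\mcL$ (equivalently $g^{-1}\mcL$) to again be an $O_E$-chain via Lemma \ref{lem:lattice_combinatorics}(1), hence $g\mcL=\mcL$, and then compute the intersection in such a chart as the vanishing locus $\varphi_i\circ z|_{\Lambda_i^-}=0$. The only cosmetic difference is that you translate by $g^{-1}$ while the paper translates by $g$; the resulting vanishing condition is the same because $(1+z)^{-1}-1=-z(1+z^2)^{-1}$ on $\Lambda^-$ and $(1+z^2)^{-1}$ is an automorphism.

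Your final paragraph flags exactly the point that the paper's one-line assertion ``$\breve U_{g\mcL}$ can only intersect $\bOmega_E(W)$ non-trivially if $g\mcL$ is a chain of $O_E$-lattices'' leaves implicit. This is not a gap unique to your argument; it is the same bookkeeping both proofs elide. One clean way to close it: note that if an $O_E$-chain $\mcL$ is not $g$-stable, then some $\Lambda_i\in\mcL$ has $z\Lambda_i\not\subseteq\Lambda_i$, whence $z(\Lambda_i^-)$ generates $\Lambda_i^+$ modulo $\pi$, so the condition $\varphi_i\circ z|_{\Lambda_i^-}=0$ already forces $\varphi_i$ to vanish on all of $\Lambda_i^+/\pi\Lambda_i^+$ and is therefore empty in $\breve U_{E,\mcL}$. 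This shows directly that the contribution of any such chart is contained in the sub-charts $\breve U_{E,\mcL'}$ with $\mcL'\subsetneq\mcL$ and $\mcL'\in\mcW^g$, which are already accounted for on the right-hand side.
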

\begin{proof}
Consider a chart $\breve U_{E, \mcL}\subset \bOmega_E(W)$. Its image under $g$ is contained in $\breve U_{g\mcL}$, which can only intersect $\bOmega_E(W)$ non-trivially if $g\mcL$ is again a chain of $O_E$-lattices. This is equivalent to $zΛ\subseteq Λ$ because $z$ is topologically nilpotent by assumption (compare Lemma \ref{lem:lattice_combinatorics} (1)). Then we obtain that $g\mcL = \mcL$. Thus we find
$$\bOmega_E(W) \cap g\cdot\bOmega_E(W) = \bigcup_{\mcL \in \mcW^g} \breve U_{E, \mcL} \cap g\cdot \breve U_{E, \mcL}.$$
Recall that $zΛ^\pm \subseteq Λ^\mp$. So given an $S$-valued point $(\mcL, (φ,0))$ resp. $(\mcL_i, (φ_i,0))_{i = 0,1}$ of $\breve U_{E, \mcL}$ as in \eqref{eq:U_L_over_E_to_F_II}, we obtain that
$$g(\mcL, (φ, 0)) = (\mcL, (φ, φ\circ z)),\quad \text{resp.}\quad g\cdot (\mcL_i, (φ_i, 0))_{i = 0,1} = (\mcL_i, (φ_i, φ_i \circ z))_{i = 0,1}.$$
This point lies again in $\breve U_{E, \mcL}$ if and only if $φ\circ z$ vanishes, resp. $φ_i\circ z$ for both $i = 0,1$ vanishes.
\end{proof}

Recall that we defined the function $n(z, Λ) = \max\{k\in \mbZ \mid zΛ\subseteq π^kΛ\}$ in \S\ref{s:multiplicity_functions}. Denote by $m(z, Λ) := \max\{0, n(z,Λ)\}$ its non-negative cut-off.
\begin{prop}\label{prop:multiplicities_1_4}
Let $g = 1+z_g \in GL_F(W)$ be regular semi-simple with $z = z_g$ topologically nilpotent. Then $m(z, Λ)$ equals the multiplicity of $P_Λ$ in $\bOm_E(W)\cap g\cdot \bOm_E(W)$ in the sense that
$$(\bOmega_E(W) \cap g\cdot \bOmega_E(W))^{\mr{pure}} \ =\  \sum_{\{Λ \subset W\text{ $O_E$-lattice}\}/E^\times} m(z,Λ) \cdot [P_Λ]$$
as $1$-cycles on $\bOm_E(W)$. Here, the pure locus is meant in the sense of Definition \ref{def:purification}.
\end{prop}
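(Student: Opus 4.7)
The plan is to use the stratification of Proposition~\ref{prop:stratif_intersection_Drinfeld} and to compute the multiplicity of $P_Λ$ at its generic point $ξ_Λ$ by working in the one-step chart $\breve U_{E,Λ}$. The key geometric observation is that $ξ_Λ$ always lies in $\breve U_{E,Λ}$: any two-step chain chart $\breve U_{E,\{Λ_0,Λ_1\}}$ is still a two-dimensional open of $\bOmega_E(W)$, but meets any given $P_Λ$ only in a proper (non-dense) subscheme. It therefore suffices to compute the stalk at $ξ_Λ$ of the defining ideal of $\breve U_{E,Λ}^z$. Note that $π^\mbZ Λ$ is $g$-stable precisely when $zΛ\subseteq Λ$, i.e., when $n(z,Λ)\geq 0$; if $n(z,Λ)<0$, the chart $\breve U_{E,Λ}$ does not appear in the stratification at all, and $P_Λ$ trivially carries multiplicity $0=m(z,Λ)$.

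Now assume $n(z,Λ)\geq 0$. Fix an $O_E$-basis $e_1,e_2$ of $Λ$ and write $ze_j = c_{1j}e_1+c_{2j}e_2$ with $c_{ij}\in O_E$. A short computation using the $E$-conjugate linearity of $z$ together with the identity $(cλ)^+ = c\cdot λ^+$ for $c\in O_E$, $λ\in Λ$ (which follows from~\eqref{eq:basis_splitting} and the fact that the two $O_E$-actions on $Λ^+$ coincide) shows that $(1\tensor z)(e_j^-) = c_{1j}e_1^+ + c_{2j}e_2^+$. By Definition~\ref{def:fixed_points_U_E_y}, the closed subscheme $\breve U_{E,Λ}^z \subseteq \breve U_{E,Λ}$ is therefore cut out by the two sections of the universal line bundle $\mcL$
\[
s_j \;:=\; c_{1j}\,\varphi(e_1^+) + c_{2j}\,\varphi(e_2^+), \qquad j=1,2.
\]
By the very definition of $n(z,Λ)$, the integer $k:=n(z,Λ)$ equals $\min_{i,j} v_E(c_{ij})$, so one may write $c_{ij} = π^k c_{ij}'$ with at least one $c_{ij}'\in O_E^\times$; hence $s_j = π^k s_j'$ with $s_j' := c_{1j}'\,\varphi(e_1^+) + c_{2j}'\,\varphi(e_2^+)$.

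To conclude, I would compute the stalk of $(s_1,s_2)$ at $ξ_Λ$. The local ring $\mcO_{\breve U_{E,Λ},\,ξ_Λ}$ is a DVR with uniformizer $π$, and the restrictions $\varphi(e_1^+)\vert_{P_Λ}$, $\varphi(e_2^+)\vert_{P_Λ}$ form a basis of $H^0(P_Λ,\mcL\vert_{P_Λ})$ under the identification $P_Λ\iso \mbP(Λ/πΛ)$ built into $\bOmega_E(W)$. Hence $c_{1j}'\varphi(e_1^+)+c_{2j}'\varphi(e_2^+)$ vanishes identically on $P_Λ$ only if both $c_{1j}',c_{2j}'$ reduce to zero in $\mbF$. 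Since at least one of the four $c_{ij}'$ is a unit, at least one $s_j'$ restricts to a non-zero section of $\mcL\vert_{P_Λ}$, and is therefore a unit in $\mcO_{\breve U_{E,Λ},\,ξ_Λ}$. Consequently $(s_1,s_2)\cdot\mcO_{\breve U_{E,Λ},\,ξ_Λ} = (π^k)$, giving multiplicity $k$ for $P_Λ$ in the pure locus whenever $k\geq 1$. When $k=0$, the same argument shows that the localized ideal equals the unit ideal, so $ξ_Λ$ is not contained in the intersection at all; the two sections $s_1,s_2$ then cut out a zero-dimensional subscheme near $ξ_Λ$ (the matrix $(c_{ij})$ has non-zero determinant because $z$ is invertible, by regular semi-simplicity of $g=1+z$), so $P_Λ$ contributes nothing to the pure part, matching $m(z,Λ)=0$. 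In both cases the multiplicity equals $\max\{0,n(z,Λ)\} = m(z,Λ)$. The main technical step is translating Definition~\ref{def:fixed_points_U_E_y} into the explicit local equations $s_1,s_2$; once this is in place, everything reduces to a routine valuation calculation in the DVR $\mcO_{\breve U_{E,Λ},\,ξ_Λ}$.
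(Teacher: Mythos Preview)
Your approach is the same as the paper's: use Proposition~\ref{prop:stratif_intersection_Drinfeld} to reduce to the chart $\breve U_{E,Λ}$, then compute the order of $π$ dividing the map $φ\circ z:Λ^-\to\mcL$ at the generic point of $P_Λ$. The paper does this in three sentences; you have spelled out the matrix of $z$ explicitly, which is fine.

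One remark: your ``key geometric observation'' is misstated. In Drinfeld's gluing, the one-step chart $\breve U_{E,Λ}$ is an \emph{open subset} of each two-step chart $\breve U_{E,\{Λ,Λ'\}}$ containing $Λ$, not the other way around; so a two-step chart whose chain contains $Λ$ does contain $ξ_Λ$. This does not harm your argument, since all you actually need is that $ξ_Λ\in\breve U_{E,Λ}$, which lets you compute there. For the case $n(z,Λ)<0$ the correct justification is that every chart containing $ξ_Λ$ has $Λ$ in its chain, and no $g$-stable chain can contain $Λ$ when $zΛ\not\subseteq Λ$; hence $ξ_Λ$ lies in no $\breve U_{E,\mcL}^z$ and the multiplicity is zero. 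With that small correction your proof is complete and matches the paper's.
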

\begin{proof}
By Proposition \ref{prop:stratif_intersection_Drinfeld}, the multiplicity of $P_Λ$ can only be positive if $zΛ\subseteq Λ$. In this situation, it equals the maximal integer $k$ such that
\begin{equation}\label{eq:vanishing_condition_open_chart}
π^k \mid [φ\circ z: Λ^- \lr \mcL],
\end{equation}
where $(\mcL, φ)$ denotes the universal point over $\breve U_{E, Λ}$. This integer is evidently equal to $n(z, Λ)$.
\end{proof}

Definition \ref{def:purification} also provides a definition of the artinian locus $(\bOm_E(W)\cap g\cdot \bOm_E(W))^{\mr{art}}$. Furthermore, recall that we defined $\mcT(z)$ as the set of homothety classes of $O_E$-lattices in which $n(z,-)$ takes its maximum (Definition \ref{def:Tz}). Also recall the following terminology for points on $\Omega_E(W)$:

\begin{defn}\label{def:superspecial}
A closed point of $\bOmega_E(W)$ is called
superspecial if it is defined over $\mbF_{q^2}$. The superspecial points are hence precisely the intersection points $P_Λ\cap P_{Λ'}$ for lattice chains $πΛ\subset Λ' \subset Λ$ and in bijection with the edges of $\mcB$.
\end{defn}

\begin{prop}\label{prop:embedded_components_1_4}
The artinian part $(\bOmega_E(W) \cap g\cdot\bOmega_E(W))^{\mr{art}}$ is non-empty if and only if $\mcT(z)$ is an edge and $r \in 4\mbZ + 2$. In this case, the artinian part is of length one and located in the superspecial point of that edge.
\end{prop}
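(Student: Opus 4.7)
The approach is to analyze $\mcI(g) = \bOm_E(W) \cap g\cdot\bOm_E(W)$ locally in each chart from the cover in Proposition \ref{prop:stratif_intersection_Drinfeld}, factoring out the divisorial contribution $V(\pi^{n(z,\Lambda)})$ (which realizes the pure part by Proposition \ref{prop:multiplicities_1_4}) and identifying any artinian remainder. Embedded components can only appear at closed points of the special fiber, so the analysis splits into single-lattice charts $\breve U_{E,\Lambda}$ and two-lattice (superspecial) charts $\breve U_{E,\Lambda_1\subset \Lambda_0}$.

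First I would rule out embedded components in the single-lattice charts. Setting $k = n(z,\Lambda)$ and $z = \pi^k z'$ with $n(z',\Lambda) = 0$, the defining ideal of $\breve U_{E,\Lambda}^z$ equals $\pi^k J_\Lambda$, where $J_\Lambda$ is generated by the two sections $\varphi(z'(e_i^-))$ for an $O_E$-basis $(e_1,e_2)$ of $\Lambda$. On the affine piece where $\varphi(e_1^+)$ is invertible, both become linear forms $\alpha_{1i} + \alpha_{2i} u$ in the coordinate $u = \varphi(e_2^+)/\varphi(e_1^+)$, whose joint zeros modulo $\pi$ correspond exactly to $\bar z'_\Lambda$-stable lines in $\Lambda/\pi\Lambda$. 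By Lemma \ref{lem:classif_emanating_edges}, these lines are all $\mbF_{q^2}$-rational, and they are precisely the points excluded from $\breve U_{E,\Lambda}$. Nakayama's lemma then forces $J_\Lambda = (1)$ on $\breve U_{E,\Lambda}$, so no embedded components arise here.

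Consequently, any artinian component lies at a superspecial point $P_{\Lambda_0}\cap P_{\Lambda_1}$ inside a chart $\breve U_{E,\Lambda_1\subset \Lambda_0}$. Using Drinfeld-type coordinates $(u_0,u_1)$ with $u_0u_1 = \pi$ at the node, and a basis $(e_1,e_2)$ of $\Lambda_0$ with $\Lambda_1 = O_E e_1 + \pi O_E e_2$, the combined defining ideal can be written down in terms of the matrix of $z$ in this basis. A case analysis governed by the shape of $\mcT(z)$ from Theorem \ref{thm:classification_multiplicity_function} then proceeds as follows: in the ``ball'' and ``apartment'' cases (1), (3), (5), (6), either the superspecial point is interior to $\mcT(z)$, forcing at least one $\bar z'_{\Lambda_i}$ to admit multiple (hence $\mbF_{q^2}$-rational) stable lines and reducing to the previous argument, or the edge is boundary with $n(z,\Lambda_0) \neq n(z,\Lambda_1)$ and the contribution reduces to a single-lattice analysis on the smaller-multiplicity endpoint; in either subcase no embedded component appears. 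The remaining cases (2) and (4) are precisely those with $r\equiv 2 \pmod 4$, where $\mcT(z) = \{\Lambda_0,\Lambda_1\}$; since $v(\det_E(z'^2)) = r - 4\lfloor r/4\rfloor = 2$, the reductions $\bar z'_{\Lambda_i}$ cannot be invertible, excluding Cases (3)--(6) of Lemma \ref{lem:classif_emanating_edges}, and Case (2) would give two stable lines (so strictly more $\mcT(z)$-neighbors), hence both $\bar z'_{\Lambda_i}$ fall into Case (1).

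In this remaining edge case I would carry out the explicit local computation: normalizing $\bar z'_{\Lambda_0} = \left(\begin{smallmatrix} 0 & 1 \\ 0 & 0\end{smallmatrix}\right)\sigma$, $\varphi_0(e_2^+) = \ell_0$ and $\varphi_1(e_1^+) = \ell_1$ at the node (so $\varphi_0(e_1^+) = u_0\ell_0$, $\varphi_1(\pi e_2^+) = u_1\ell_1$), the four defining equations $\varphi_i(z(\text{basis})^+) = 0$ translate, after absorbing units, into the ideal $I = (\pi^{k+1},\pi^k u_0,\pi^k u_1) = \pi^k\mfm$ inside $R = O_{\breve F}\{u_0,u_1\}/(u_0u_1 - \pi)$, where $\mfm = (u_0,u_1)$ is the maximal ideal. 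A direct computation gives $\mcI^{\mr{pure}} = (u_0^k)\cap(u_1^k) = (\pi^k)$, hence $\mcO_{Z^{\mr{art}}} \cong (\pi^k)/\pi^k\mfm \cong R/\mfm$, which has length $1$ and is supported exactly at the superspecial point of $\mcT(z)$. The main technical obstacle will be managing the bookkeeping of the eigenspace projections $(-)^+$ and the line-bundle compatibility maps $\alpha_0,\alpha_1$ to identify which matrix entries of $z$ contribute units versus factors of $u_0, u_1,\pi$; the crucial input is $v(\det_E(z'^2)) = 2$, which ensures that the off-diagonal ``$\gamma$'' entry of the matrix of $z'$ is a unit, making the ideal $I$ take the clean form $\pi^k\mfm$ rather than a proper sub-ideal.
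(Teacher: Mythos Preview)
Your overall structure---first rule out embedded components on single-lattice charts, then analyze superspecial points---matches the paper's, and your final local computation at the distinguished edge (obtaining $I = \pi^k\mfm$, hence an artinian part of length $1$) agrees with the paper's result. However, the intermediate case analysis you propose for the ``ball'' and ``apartment'' shapes has a gap, and is also more laborious than the paper's route.

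The gap: your claim that interior superspecial points of $\mcT(z)$ ``reduce to the previous argument'' is not justified. The single-lattice argument worked because $\bar z'$-stable lines are $\mbF_{q^2}$-rational and therefore \emph{excluded} from $\breve U_{E,\Lambda}$---but a superspecial point \emph{is} such a rational point, so that reasoning does not transfer. Similarly, ``$\bar z'_{\Lambda_i}$ admits multiple stable lines'' is not the right criterion; a valency-$1$ leaf of $\mcT(z)$ in cases (1), (3), (5) gives an invertible $\bar z'$ with possibly a \emph{unique} stable line (Case (3) of Lemma \ref{lem:classif_emanating_edges}), so your hypothesis can fail even though no embedded component appears. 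What actually matters is whether the diagonal entries $a',d'$ of $\pi^{-m}z$ in a compatible basis are units.

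The paper sidesteps all of this by computing the defining ideal \emph{uniformly} at every $z$-stable superspecial chart. Writing $z = \left(\begin{smallmatrix} a & b \\ \pi c & d\end{smallmatrix}\right)\sigma$ in a basis adapted to $\Lambda_1\subset\Lambda_0$, the four vanishing conditions of Definition \ref{def:fixed_points_U_E_y} collapse to just two, $a + c\bv = 0$ and $b\bu + d = 0$, in $R = O_{\breve F}[\![\bu,\bv]\!]/(\bu\bv - \pi)$. Each generator lies in $\pi^r R^\times$ or $\pi^r\bv R^\times$ (resp.\ $\pi^s R^\times$ or $\pi^s\bu R^\times$), so the ideal fails to be principal exactly when both land in the second form with $r = s = m$; this forces $a',d'\in\pi O_E$, $b',c'\in O_E^\times$, whence $v(\det_E(\pi^{-2m}z^2)) = 2$ and $r\in 4\mbZ+2$. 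One calculation handles all shapes of $\mcT(z)$ at once and delivers both the location and the length-$1$ statement, whereas your plan would require separate principality checks across several subcases.
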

\begin{proof}
We first reconsider the situation from \eqref{eq:vanishing_condition_open_chart}. Write $z = π^{m(z, Λ)}z_0$. Then, by definition of the artinian part, $φ\circ z_0$ is a defining equation for $(\bOm_E(W)\cap g\cdot \bOm_E(W))^{\mr{art}}$ on $\breve U_{E, Λ}$. If the kernel of $z_0$ is non-zero, however, then it defines an $\mbF_{q^2}$-point of $P_Λ$ which, in particular, does not lie in $\breve U_{E, Λ}$. It follows that the support of $(\bOmega_E(W) \cap g\cdot \bOmega_E(W))^{\mr{art}}$ is contained in the superspecial points. We next compute the local equations in such a point with Proposition \ref{prop:stratif_intersection_Drinfeld}.

Let $πΛ_0\subset Λ_1\subset Λ_0$ be a representative of a chain of $O_E$-lattices. Assume that $zΛ_i\subseteq Λ_i$, otherwise $\breve U_{E, \mcL}^z = \emptyset$.
Pick a compatible basis, say $Λ_0 = O_Ee_1 + O_Ee_2$ and $Λ_1 = πO_Ee_1 + O_Ee_2$. Then $Λ_0^{\pm}$ and $Λ_1^{\pm}$ have the bases $(e_1^\pm,e_2^\pm)$ and $(πe_1^\pm, e_2^\pm)$, see \eqref{eq:basis_splitting}. In these coordinates, the universal point over $\breve U_{E, \mcL}$ may be written as
\begin{equation}\label{eq:U_L_universal_2}
\xymatrix{
\mcO_S\tensor_{O_E} Λ^+_0
\ar[r]^{\left(\begin{smallmatrix} 1 & \\ & π\end{smallmatrix}\right)}
\ar@{->>}[d]^{φ_0 = (1\ \bu)} &
\mcO_S\tensor_{O_E} Λ^+_1
\ar[r]^{\left(\begin{smallmatrix} π & \\ & 1\end{smallmatrix}\right)}
\ar@{->>}[d]^{φ_1 = (\bv\ 1)} &
\mcO_S\tensor_{O_E} Λ^+_0 \ar@{->>}[d]^{φ_0 = (1\ \bu)}\\
\mcO_{U_{E,\mcL}} \ar[r]^{\bv} &
\mcO_{U_{E,\mcL}} \ar[r]^{\bu} &
\mcO_{U_{E,\mcL}}
}
\end{equation}
where $\breve U_{E,\mcL} \subset \Spf O_E\langle \bu, \bv\rangle/(\bu\bv-π)$ is an open that contains the superspecial point $P_{Λ_0}\cap P_{Λ_1} = V(\bu, \bv)$. We have already seen that $(\bOm_E(W)\cap g\cdot \bOm_E(W))^{\mr{art}}$ is supported in superspecial points. So we henceforth work over the formal completion $\Spf O_E[\![\bu, \bv]\!]/(\bu\bv - π)$. Write $z = \left(\begin{smallmatrix} a & b\\ πc & d\end{smallmatrix}\right)σ \in M_2(O_E)σ$ with respect to the basis $(e_1,e_2)$. Here, $σ\in \mr{Gal}(E/F)$ denotes the Galois conjugation. Note that $σ(e_i^+) = e_i^-$ and $σ(e_i^-) = e_i^+$. Thus the map from $Λ_0^-$ to $Λ_0^+$ defined by $z$ is given by $\left(\begin{smallmatrix} a & b\\ πc & d\end{smallmatrix}\right)$ with respect to the bases $(e_1^+, e_2^+)$ and $(e_1^-, e_2^-)$. The vanishing conditions defining $\breve U_{E, \mcL}^z \cap \Spf O_E[\![\bu, \bv]\!]/(\bu\bv-π)$ then become
\begin{equation}\label{eq:embedded_comp_1_4}
\left(\begin{matrix} \bu & 1\end{matrix}\right)
\left(\begin{matrix} a & b\\ πc & d \end{matrix}\right) = 0,\quad\mr{and}\quad
\left(\begin{matrix} 1 & \bv\end{matrix}\right)
\left(\begin{matrix} a & πb\\ c & d \end{matrix}\right) = 0.
\end{equation}
Note that
$$
\left(\begin{matrix} 1 & \bv\end{matrix}\right)
\left(\begin{matrix} a \\ c \end{matrix}\right) = 0 \ \ \implies \ \ 
\left(\begin{matrix} \bu & 1\end{matrix}\right)
\left(\begin{matrix} a \\ πc\end{matrix}\right) = 0.
$$
and
$$
\left(\begin{matrix} \bu & 1\end{matrix}\right)
\left(\begin{matrix} b \\ d \end{matrix}\right) = 0 \ \ \implies \ \ 
\left(\begin{matrix} 1 & \bv\end{matrix}\right)
\left(\begin{matrix} πb \\ d \end{matrix}\right) = 0.
$$
Therefore, \eqref{eq:embedded_comp_1_4} is equivalent to just
\begin{equation}\label{eq:local_large_open_Drinfeld}
\begin{cases}
a+c \bv & = 0 \\
b\bu+d & =0.
\end{cases}
\end{equation}
Write
$$
\left(\begin{matrix} a & b\\ πc & d \end{matrix}\right) =
\pi^m \left(\begin{matrix} a' & b' \\ πc' & d'\end{matrix}\right),$$
where $m = m(z,Λ_0)$ is chosen maximally. We claim that the ideal $(a+c\bv, b\bu+d)$ is principal unless $a',d' \in πO_E$ and $b',c' \in O_E^\times$. We furthermore claim that if the ideal is not principal, then it equals $π^m(\bu, \bv)$.

Note that $(a+c\bv, b\bu+d)$ is principal if and only if $P_{Λ_0}\cap P_{Λ_1} \notin (\bOm_E(W)\cap g\cdot \bOm_E(W))^{\mr{art}}$. Moreover, if it equals $π^m(\bu, \bv)$, then
$$(\bOm_E(W)\cap g\cdot \bOm_E(W))^{\mr{art}} \cap \Spf O_E[\![\bu, \bv]\!]/(\bu\bv - π) = V(\bu, \bv).$$
In order to prove the claim, observe that
$$a + c\bv \in π^rR^\times\ \ \ \text{or}\ \ \ a + c\bv\in π^r\bv R^\times$$
and
$$b\bu + d \in π^sR^\times\ \ \ \text{or}\ \ \ b\bu + d\in π^s\bu R^\times$$
for a uniquely determined pair of integers $(r,s)$. The only possibility for \eqref{eq:local_large_open_Drinfeld} giving a non-principal ideal is $r = s$, in which case $r = s = m$ and
$$\begin{cases}
a + c\bv \in π^m \bv R^\times\\
b\bu + d \in π^m \bu R^\times.
\end{cases}$$
This is equivalent to $a',d'\in πO_E$ and $b',c' \in O_E^\times$, which proves our claim. The property $a',d'\in πO_E$ and $b',c' \in O_E^\times$ implies that $m = \max n(z,-)$ and that $r - 4m = v(\det_E(π^{-2m}z^2)) = 2$. Since $r\in 4\mbZ$ whenever $L$ is split (see Remark \ref{rmk:occuring_cases_GL4}), this shows that we are in cases (2) or (4) of Theorem \ref{thm:classification_multiplicity_function} as claimed. (Being in one of these two cases is equivalent to $r\in 4\mbZ + 2$ and $\mcT(z)$ being an edge.)

Conversely, assume that $r\in 4\mbZ +2$ and that $\mcT(z)$ is an edge. Let $πΛ_0\subset Λ_1\subset Λ_0$ be the lattices representing that edge. Choose a compatible basis $e_1, e_2$ of $Λ_0$ as above. We have $m(z, Λ_0) = m(z, Λ_1) =: m$ because $\mcT(z) = \{Λ_0, Λ_1\}$ by assumption. In other words there are $a', b', c', d'\in O_E$ such that $π^{-m}z$ is given by the matrices
$$\begin{pmatrix}
a' & b'\\ πc' & d'
\end{pmatrix}\quad \text{and}\quad \begin{pmatrix}
a' & πb'\\ c' & d'
\end{pmatrix}$$
with respect to the bases $e_1,e_2\in Λ_0$ and $πe_1, e_2 \in Λ_1$. Each of these two matrices has an invertible entry because $m$ was chosen maximally. Furthermore, both matrices are still topologically nilpotent because $v(\det_E(π^{-2m}z^2)) = 2$. Thus $a', d' \in πO_E$ and $b', c'\in O_E^\times$. The previous calculation now shows that $P_{Λ_0}\cap P_{Λ_1} \in (\bOm_E(W)\cap g\cdot \bOm_E(W))^{\mr{art}}$ with local ring of length $1$ as claimed.
\end{proof}

We define the following auxiliary intersection number. Write $L^\times = Γ\times O_L^\times$ for some subgroup $Γ\subset L^\times$ as in Definition \ref{def:intersection_number} and let $Γ_0 = Γ\cap GL_F(W)^0$. Then $Γ_0 = \{1\}$ if $L$ is a field or $Γ_0 \iso \mbZ$ if $L\iso F\times F$. Note that the action of $L^\times$ preserves both $\bOm_E(W)$ and $g\cdot \bOm_E(W)$, so we can define
$$\Int_0(g) = \langle Γ_0\backslash \bOmega_E(W),\ Γ_0\backslash g\cdot \bOmega_E(W)\rangle_{Γ_0\backslash \bOmega_F(W)}.$$
\begin{prop}\label{prop:main_1_4_aux}
This intersection number is given by the following formula.
\begin{enumerate}[wide, labelindent=0pt, labelwidth=!, label=(\arabic*), topsep=2pt, itemsep=2pt]
\item If $L$ is a field, then $\Int_0(g) = r/2$.
\item If $L \iso F\times F$, then $\Int_0(g) = 0$.
\end{enumerate}
\end{prop}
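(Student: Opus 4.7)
The plan is to apply Corollary~\ref{cor:intersection_simplified} to the pair $Y_1 = Γ_0\backslash\bOmega_E(W)$, $Y_2 = Γ_0\backslash g\cdot\bOmega_E(W)$ inside $X = Γ_0\backslash\bOmega_F(W)$. By Proposition~\ref{prop:multiplicities_1_4}, the pure divisor equals $D = \sum_{[Λ]} m(z,Λ)\,[P_Λ]$, and Proposition~\ref{prop:embedded_components_1_4} determines the length of the artinian part. The conormal contribution is $\deg(\det\mcC\vert_D) = (q^2-1)\sum_{[Λ]} m(z,Λ)$ by Proposition~\ref{prop:conormal_1_4}, while for the self-intersection on Drinfeld's half-plane I will use the standard identities $\langle P_Λ, P_Λ\rangle = -(q^2+1)$ and $\langle P_Λ, P_{Λ'}\rangle = 1$ for neighbouring lattices (both following from the fact that the special fibre is the principal Cartier divisor of $π$, so that $\langle P_Λ,P_Λ\rangle + \sum_{Λ'\sim Λ}\langle P_{Λ'},P_Λ\rangle = 0$). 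Combined with Lemma~\ref{lem:properties_multiplicity_function}(2), which ensures $|m(z,Λ) - m(z,Λ')|\leq 1$ for neighbours, this yields $\langle D, D\rangle_{Y_2} = -\#\{\text{edges of } Γ_0\backslash\mcB \text{ across which } m(z,-) \text{ jumps}\}$.

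The second step is the layer-cake decomposition $m(z,Λ) = \sum_{i\geq 1}\mathbf{1}_{[Λ]\in S_i}$, with $S_i := \{[Λ] : n(z,Λ)\geq i\}$. Proposition~\ref{prop:multiplicities} identifies $S_i$ with the ball of radius $m_0 - i$ around $\mcT(z)$ in $\mcB$, where $m_0 = \max n(z,-)$; in particular, $S_i$ is a connected subtree of $\mcB$. The $Γ_0$-action on $S_i$ is free: since $L = F[z^2]$ is generated by $z^2$, every element of $Γ_0\subset L^\times$ commutes with $z$ and thus preserves $\mcT(z)$, and no non-trivial element of $Γ_0$ stabilises a homothety class of $O_E$-lattice. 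Writing $v_i = |Γ_0\backslash S_i|$ and $e_i = |E(Γ_0\backslash S_i)|$ and using the standard edge-boundary formula $(q^2+1)v_i - 2e_i$ in the $(q^2+1)$-regular graph $Γ_0\backslash\mcB$, the $(q^2-1)$-terms arising from the conormal and self-intersection contributions will cancel, leaving the clean expression
$$\Int_0(g) \,=\, \mr{len}\,\mcO_{Z^{\mr{art}}} + 2\sum_{i=1}^{m_0}\bigl(v_i - e_i\bigr).$$

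The two cases of the proposition then correspond to the topology of $Γ_0\backslash S_i$. If $L$ is a field then $Γ_0 = \{1\}$ and each $S_i$ is a finite tree, so $v_i - e_i = 1$ and $\Int_0(g) = \mr{len}\,\mcO_{Z^{\mr{art}}} + 2m_0$; combining the value $m_0 = \lfloor r/4\rfloor$ from Lemma~\ref{lem:maximum_of_multiplicity_function} (using that $r\in 2\mbZ$ in the field case by Remark~\ref{rmk:occuring_cases_GL4}) with the artinian length from Proposition~\ref{prop:embedded_components_1_4} gives $r/2$ in both sub-cases $r\equiv 0, 2\pmod 4$. If $L\iso F\times F$, the artinian contribution vanishes because $\mcT(z)$ is an apartment or a thickened apartment, never an edge; and $Γ_0\iso\mbZ$ acts on the apartment contained in $\mcT(z)$ by a non-trivial translation, so every quotient $Γ_0\backslash S_i$ is a finite connected graph of first Betti number one, yielding $v_i = e_i$ and $\Int_0(g) = 0$. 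The main technical hurdle will be confirming the Betti-number claim in the split case, which reduces to the concrete statement that the generator of $Γ_0$ acts on the apartment in $\mcT(z)$ by a non-zero lattice shift; this in turn follows directly from the description of the $L^\times$-action on the $L$-eigenspace decomposition $W = W^0\oplus W^1$.
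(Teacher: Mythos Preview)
Your proof is correct and reaches the same conclusion as the paper, but the organization is genuinely different. The paper computes a per-vertex contribution
\[
p_Λ = -m(z,Λ)\big[(q^2-1) + \langle P_Λ, D\rangle\big],
\]
shows directly that $p_Λ = 0$ for $Λ\notin \mcT(z)$ and $p_Λ = (2 - v_Λ)\,m_0$ for $Λ\in \mcT(z)$ with valency $v_Λ$, and then runs through the six shapes of $\mcT(z)$ from Theorem~\ref{thm:classification_multiplicity_function} one by one, checking in each case that the alternating vertex count $\sum_{Λ\in\mcT}(2-v_Λ)$ gives $2$ (field) or $0$ (split). Your layer-cake decomposition $D = \sum_i D_i$ instead exploits the nesting $S_1\supseteq S_2\supseteq\cdots$ together with the neighbour bound to diagonalize $\langle D,D\rangle = -\sum_{\{Λ,Λ'\}}(m_Λ-m_{Λ'})^2 = -\sum_i b_i$, and then reads off $\Int_0(g) = \mathrm{len}\,\mcO_{Z^{\mathrm{art}}} + 2\sum_i\chi(Γ_0\backslash S_i)$ directly from the Euler characteristics of the level sets. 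This is cleaner: it replaces the six-case check by a single topological dichotomy (tree quotient versus $\mbZ$-quotient), and it makes transparent \emph{why} the answer depends only on whether $L$ is a field. The paper's approach, on the other hand, keeps the computation entirely at the level of $\mcT(z)$ and never needs to discuss the larger sets $S_i$ or the freeness of the $Γ_0$-action on edges, so it is slightly more self-contained. Your verification of that freeness (no vertex or edge of $\mcB$ is fixed by the generator $(\varpi_1,\varpi_2^{-1})$ of $Γ_0$) is the one point that deserves a sentence more of justification in a final write-up, but the argument you sketch via projection to the apartment is sound.
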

\begin{proof}
Proposition \ref{prop:multiplicities_1_4} states that the multiplicity of $P_Λ$ in $\bOmega_E(W)\cap g\cdot \bOmega_E(W)$ is $m(z, Λ)$. Define
$$p_Λ := -m(z, Λ)\left[(q^2 - 1) + \langle P_Λ,\ (\bOmega_E(W)\cap g\cdot \bOmega_E(W))^{\mr{pure}}\rangle\right].$$
Here, the term $q^2-1$ is the degree of the conormal bundle (Proposition \ref{prop:conormal_1_4}) and the intersection pairing is that of divisors on $\bOmega_E(W)$. By Corollary \ref{cor:intersection_simplified}, we have
\begin{equation}\label{eq:intersection_number_1_4_aux_formula}
\Int_0(g) = \mr{len}\left(\mcO_{Γ_0\backslash \left(\bOmega_E(W) \cap g\cdot \bOmega_E(W)\right)^{\mr{art}}}\right) \ + \ \sum_{Λ\ \in\ Γ_0\backslash \{\text{$O_E$-lattices in $W$}\}/E^\times} p_Λ.
\end{equation}
We next compute the summands $p_Λ$ for all $Λ$ with $m(z, Λ) \geq 1$. Put $m = \max m(z, -)$ and $\mcT = \mcT(z)$; assume that $m\geq 1$. By \cite[Lemma 4.7]{KR}, the intersection numbers of the curves $P_Λ$ are given by
$$\langle P_\Lambda, P_{\Lambda'}\rangle_{\bOmega_E(W)} = \begin{cases}0 & P_\Lambda \cap P_{\Lambda'} = \emptyset\\
1 & P_\Lambda \cap P_{\Lambda'} = \{\mr{pt}\}\\
-(q^2+1) & P_\Lambda = P_{\Lambda'}.\end{cases}$$
\begin{enumerate}[wide, labelindent=0pt, labelwidth=!, label=(\roman*), topsep=2pt, itemsep=2pt]
\item First assume that $Λ\notin \mcT$. Then $Λ$ has some multiplicity $i=m(z, Λ)$ with $1\leq i < m$. Precisely $q^2$ of its neighbors have multiplicity $i-1$ and a single neighbor has multiplicity $i+1$ (Proposition \ref{prop:multiplicities}). Thus
$$p_Λ = -i\left[(q^2-1) - i (q^2+1) + q^2(i-1) + (i+1)\right] = 0.$$
\item Now assume that $Λ$ has multiplicity $m$, i.e. lies in $\mcT$. By Theorem \ref{thm:classification_multiplicity_function}, the valency $v_Λ$ of $Λ$ in $\mcT$ is $0$, $1$, $2$ or $q+1$. Then $Λ$ has $v_Λ$ many neighbors of multiplicity $m$ and $q^2 + 1 - v_Λ$ many neighbors with multiplicity $m-1$. It follows that
$$p_Λ = -m\left[(q^2-1) - m(q^2+1) + mv_Λ + (m-1)(q^2+1 - v_Λ)\right] = (2-v_Λ)m.$$
\end{enumerate}
We now evaluate \eqref{eq:intersection_number_1_4_aux_formula} for the six possible shapes of $\mcT$ from Theorem \ref{thm:classification_multiplicity_function}. An observation that applies in all cases is that $p_Λ \neq 0$ only for $Λ\in \mcT$, see (i) above, so the discussion will only involve the set $\mcT$. Moreover, Proposition \ref{prop:embedded_components_1_4} states that the artinian part $(\bOm_E(W)\cap g\cdot \bOm_E(W))^{\mr{art}}$ is of length $1$ precisely in cases (2) and (4), and $0$ otherwise. We will also use Lemma \ref{lem:maximum_of_multiplicity_function} in every case to relate $m$ with $r$.

\begin{enumerate}[wide, labelindent=0pt, labelwidth=!, label=(\arabic*), topsep=2pt, itemsep=2pt]
\item Assume that $L/F$ is inert and that $r \in 4\mbZ$. Then $\mcT$ is a $(q+1)$-regular ball of radius $d$ around a single vertex, $Γ_0 = \{1\}$, there are no embedded components, and $4m = r$. If $d = 0$, then (ii) above shows that
$$\Int_0(g) = 2m = r/2.$$
For $d >1$, let $A$ be the number of vertices of $\mcT$ with valency $1$ and let $B$ be the number of those with valency $q+1$. It is easy to check that $A - (q-1)B = 2$ for every $d\geq 1$. Applying (ii) again, we find
$$\Int_0(g) = m (A - (q-1)B) = 2m = r/2.$$

\item Assume that $L/F$ is inert and that $r \in 4\mbZ + 2$. Then $\mcT$ is an edge, $Γ_0 = \{1\}$, there is a single embedded component of length $1$, and $4m + 2 = r$. We obtain from (ii) that
$$\Int_0(g) = 1 + 2m = r/2.$$

\item Assume that $L/F$ is ramified and that $r \in 4\mbZ$. Then $\mcT$ is a $(q+1)$-regular ball of radius $d$ around an edge, $Γ_0 = \{1\}$, there are no embedded components, and $4m = r$. If $d = 0$, then (ii) immediately shows
$$\Int_0(g) = 2m = r/2.$$
For $d\geq 1$, let again $A$ be the number of vertices of $\mcT$ with valency $1$ and let $B$ be the number of those with valency $q+1$. It is again checked that $A - (q-1)B = 2$ for every $d\geq 1$. Applying (ii) again, we find
$$\Int_0(g) = m (A - (q-1)B) = 2m = r/2.$$

\item Assume that $L/F$ ramified and that $r \in 4 \mbZ + 2$. Just like in case (2), we obtain
$$\Int_0(g) = r/2.$$

\item Assume that $L = F\times F$ and that $z^2 = (z_1, z_2)$ has the property $v(z_1) = v(z_2)$. Then $\mcT$ is a $(q+1)$-regular ball of radius $d$ around an apartment. The action of the group $Γ_0 \iso \mbZ$ on this apartment is by a translation with two orbits. Moreover, there is no artinian contribution.

Assume first that $d = 0$. Then every $Λ\in \mcT$ has valency $2$ and hence $p_Λ = 0$ by (ii) above. It follows that
$$\Int_0(g) = 0.$$
Assume now that $d\geq 1$. Let $A$ be the number of vertices of $Γ_0\backslash \mcT$ of valency $1$ and let $B$ denote those of valency $q+1$. One checks that $A - (q-1)B = 0$ for all $d \geq 1$, so
$$\Int_0(g) = m[A - (q-1)B] = 0.$$

\item Assume finally that $L \iso F\times F$ and that $z^2 = (z_1, z_2)$ has the property $v(z_1) \neq v(z_2)$. Then $\mcT$ is an apartment on which $Γ_0$ acts with two orbits. There is no artinian contribution and one obtains just as before that
$$\Int_0(g) = 0.$$
\end{enumerate}
\end{proof}

We can now determine the intersection numbers $\Int(g)$ for $D = D_{1/4}$ and prove our arithmetic transfer conjecture (Conjecture \ref{conj:ATC_explicit}) in this situation. Let the notation be as in \S\ref{s:main_results}; in particular, $G' = GL_4(F)$ and $G_b$ denote the two groups that intervene in the formulation of the AT conjecture. Let $f'_\Par$ and $f'_\Iw$ denote the two test functions from \S\ref{s:main_analytic}.

\begin{thm}\label{thm:main_1_4}
Let $g\in G_{b, \mr{rs}}$ be a regular semi-simple element with numerical invariants $(L, r, d)$. The intersection number $\Int(g)$ is non-zero only if $r > 0$. In this case, it is given by
\begin{equation}\label{eq:int_number_formula_1_4}
\Int(g) = \begin{cases} r & \text{$L/F$ unramified}\\
r/2 & \text{$L/F$ ramified}\\
0 & \text{$L/F$ split}.\end{cases}
\end{equation}
In particular, Conjecture \ref{conj:ATC_explicit} holds for $D = D_{1/4}$ with correction function $- 4q\log(q) \cdot f'_\Par.$ In other words, for every regular semi-simple $γ\in G'_{\mr{rs}}$,
\begin{equation}\label{eq:AT_1_4_main}
\partial O(γ,\, f'_D) - 4q \Orb(γ,\, f'_\Par)\log(q) = \begin{cases} 2\,\Int(g)\log(q) & \text{if there exists a matching $g\in G_b$}\\
0 & \text{otherwise.}
\end{cases}
\end{equation}
\end{thm}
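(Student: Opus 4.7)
The plan is to deduce \eqref{eq:int_number_formula_1_4} from Proposition \ref{prop:main_1_4_aux} by decomposing $\mcI(g)$ along connected components of $\mcM_C$, and then combine with Proposition \ref{prop:derivative} to verify \eqref{eq:AT_1_4_main}.

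After replacing $g$ by an $(H_b\times H_b)$-orbit representative I may assume $g = 1+z$ with $z = z_g$. If $z$ is not topologically nilpotent, then $\mcI(g)=\emptyset$ by Proposition \ref{prop:nilpotent_reduction}(1) and both sides of \eqref{eq:int_number_formula_1_4} vanish (in each corresponding case the theorem's formula evaluates to $0$, either because $r\leq 0$ or because $L$ is split). Hence I may assume $z$ topologically nilpotent. A short determinant calculation (reducing to $\det(1 - z^2)$ on an $E\otimes_F \bar F$-eigenspace after base change to $\bar F$) shows $\det_F(g)\in O_F^\times$, so $g$ preserves every connected component of $\mcM_D$. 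Since the embedding $\mcM_C\hookrightarrow \mcM_D$ is injective on connected components, this forces $\mcI(g) = \bigsqcup_i \mcM_C^i\cap g\cdot\mcM_C^i$, one summand per component of $\mcM_C$.

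Next, for each $i$ I choose $x_i\in L^\times$ with $x_i\cdot\mcM_C^0 = \mcM_C^i$. Since $L = L_g$ is the center of $B_g$, $x_i$ commutes with $g$, and hence $\mcM_C^i\cap g\cdot\mcM_C^i = x_i\cdot(\mcM_C^0\cap g\cdot\mcM_C^0)$. On the $\Gamma$-quotient each $\Gamma$-orbit of components of $\mcM_C$ contributes exactly one copy of $\Gamma_0\backslash(\mcM_C^0\cap g\cdot\mcM_C^0)$, giving $\Int(g) = N(L)\cdot\Int_0(g)$, where $N(L)$ counts $\Gamma$-orbits on the components of $\mcM_C$. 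A direct valuation computation, using that $x\in L^\times$ shifts the component index (an element of $2\mbZ$) by $2v_E(\det_E(x\vert_W))$, yields $N(L) = 1$ for $L$ ramified, $N(L) = 2$ for $L$ inert (because $\varpi_L = \pi_F$ acts with $v_E(\det_E) = 2$, advancing the index by two rather than one), and $N(L) = 1$ for $L$ split. Combined with Proposition \ref{prop:main_1_4_aux} this establishes \eqref{eq:int_number_formula_1_4}.

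For \eqref{eq:AT_1_4_main}, a matching $g\in G_b$ exists for $\gamma\in G'_{\mr{rs}}$ precisely in the cases allowed by Remark \ref{rmk:occuring_cases_GL4}, in which case $r$ is forced to be even. Then $\del(\gamma,f'_D) = \del(\gamma,f'_\Iw)$ by \eqref{eq:norm_orb_int_Iw}, and Proposition \ref{prop:derivative} gives
\[
\del(\gamma,f'_D) - 4q\log(q)\,\Orb(\gamma,f'_\Par) = \log(q)\cdot(r,\ 2r,\ 0)
\]
for $L$ ramified, inert, and split respectively, which matches $2\Int(g)\log(q)$ by \eqref{eq:int_number_formula_1_4}. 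In the non-matching cases ($L$ ramified with $r$ odd; $L$ split with $r\in 4\mbZ+2$) the left-hand side vanishes: in the former by combining the $r$-odd case of Proposition \ref{prop:derivative} with the vanishing $\Orb(\gamma,f'_\Par)=0$ of Proposition \ref{prop:orb_int_para}; in the latter because Proposition \ref{prop:derivative} gives a zero contribution beyond the $\Orb(\gamma,f'_\Par)$ term. The main subtle point is the orbit count $N(L)$, whose factor-of-two discrepancy between the ramified and inert cases precisely mirrors the corresponding factor in the coefficient of $r$ in Proposition \ref{prop:derivative} and thus provides the geometric-to-analytic bridge.
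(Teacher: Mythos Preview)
Your overall strategy matches the paper's: reduce to $\Int_0(g)$ via Proposition \ref{prop:main_1_4_aux} and count $\Gamma$-orbits of connected components. However, there is a genuine gap in the inert case.

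You claim that for every component index $i$ there exists $x_i \in L^\times$ with $x_i\cdot\mcM_C^0 = \mcM_C^i$. But the whole point of $N(L) = 2$ in the inert case is precisely that $L^\times$ does \emph{not} act transitively on $\pi_0(\mcM_C)$: the components form two $L^\times$-orbits (equivalently two $\Gamma$-orbits), represented by $\mcM_C^0$ and $\mcM_C^4$, and no element of $L^\times$ carries one to the other. Your translation argument therefore only identifies the contribution of the orbit of $\mcM_C^0$ with $\Int_0(g)$; it gives no reason why the second orbit contributes the same amount. The commutation with $g$, which you rely on, is exactly what fails once you leave $L^\times$.

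The paper closes this gap by translating via some $h \in H_b$ with $v_E(\det_E(h)) = 1$ instead of via $L^\times$. Such an $h$ maps $\mcM_C^0$ to $\mcM_C^4$, but need not commute with $g$: one obtains $h^{-1}(\mcM_C^4 \cap g\cdot\mcM_C^4) = \mcM_C^0 \cap (hgh^{-1})\cdot\mcM_C^0$, so the second orbit contributes $\Int_0(hgh^{-1})$. The missing step is then to observe that $hgh^{-1}$ lies in the same $(H_b\times H_b)$-double coset as $g$, hence has the same numerical invariant $(L,r,d)$, and that Proposition \ref{prop:main_1_4_aux} depends only on this invariant. This is what makes the factor $2$ work, and it is not supplied by your argument.

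Two minor points: your component indexing is off by a uniform factor of $2$ (components of $\mcM_C$ lie in $4\mbZ$ and $x\in H_b$ shifts by $4v_E(\det_E(x))$), though this does not affect the orbit count; and your enumeration of non-matching cases is incomplete, but harmlessly so since for $L$ split both sides of \eqref{eq:AT_1_4_main} vanish identically by Proposition \ref{prop:derivative}.
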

\begin{proof}[Proof of Identity \eqref{eq:int_number_formula_1_4}.]
The statement about the vanishing of $\Int(g)$ for $r \leq 0$ follows from Proposition \ref{prop:nilpotent_reduction}. We henceforth assume that $r > 0$ and even that $z_g$ is topologically nilpotent.

Recall that $\mcM_D^i\subset \mcM_D$ and $\mcM_C^i \subset \mcM_C$ denote the connected components triples $(Y, ι, ρ)$ resp. $(X, κ, ρ)$ where the height of $ρ$ is $i$. Also recall that $\mcM_D^i$ and $\mcM_C^i$ non-empty precisely if $i\in 4\mbZ$. Moreover, an element $h\in G_b \iso GL_4(F)$, resp. $h\in H_b \iso GL_2(E)$, has the property
$$h:\mcM_D^i \overset{\iso}{\lr} \mcM_D^{i + 4v(\det_F(h))},\quad \mr{resp.}\quad h:\mcM_C^i \overset{\iso}{\lr} \mcM_C^{i + 4v(\det_E(h))}.$$
By definition, the Serre tensor construction doubles the height, i.e. is such that $\mcM_C^i = \mcM_C \cap \mcM_D^{2i}$.

Recall that we wrote $L^\times = Γ \times O_L^\times$ and $Γ_0 = Γ\cap GL_F(W)^0$ before. Let $Γ_1\subseteq Γ$ be a complement to $Γ_0$ and let $θ\in Γ_1 \cap O_L$ be a generator. Then
$$θ\mcM_D^i = \begin{cases} \mcM_D^{i + 8} & \text{if $L/F$ is ramified or split}\\
\mcM_D^{i + 16} & \text{if $L/F$ is unramified.}\end{cases}$$
In other words, $Γ_1 \backslash π_0(\mcM_C) = \{0\}$ or $\{0,8\}$, depending on the case. Thus if $L$ is ramified or split, then
$$\Int(g) = \Int_0(g)$$
and we are done by Theorem \ref{prop:main_1_4_aux}. If $L$ is inert however, then we obtain
$$\Int(g) = \Int_0(g) + \langle Γ_0 \backslash \mcM_C^4,\ Γ_0\backslash (g\cdot \mcM_C^4)\rangle_{Γ_0\backslash \mcM_D^8}.$$
Let $h\in H_b$ be any with $v_E(\det(h)) = 1$. Then $h:\mcM_D^0\overset{\sim}{\to} \mcM_D^8$ as well as $h^{-1}(\mcM_C^4) = \mcM_C^0$ and $h^{-1}(g\mcM_C^4) = hgh^{-1} \mcM_C^0$. (Recall that the $G_b$-action is a right action.) We obtain that
$$\Int(g) = \Int_0(g) + \Int_0(hgh^{-1}).$$
But $hgh^{-1}$ and $g$ lie in the same $H_b$ double coset, so have the same numerical invariant $(L, r, d)$. Proposition \ref{prop:main_1_4_aux} shows that $\Int_0(g)$ only depends on the numerical invariant, so we obtain $\Int_0(hgh^{-1}) = \Int_0(g)$ and then $\Int(g) = r$ as claimed.
\end{proof}

\begin{proof}[Proof of Identity \eqref{eq:AT_1_4_main}.]
Let $γ\in G'_{\mr{rs}}$ be a regular semi-simple element with numerical invariant $(L, r, d)$. First consider the case that $r$ is odd. Then there is no matching element $g\in G_b$, see rows 2 and 5 of Table \ref{table:matching}, so we need to show that the left hand side of \eqref{eq:AT_1_4_main} vanishes.

The sign of the functional equation of $f'_\Par$ is $(-1)^r$ and hence negative if $r$ is odd. This shows $\Orb(γ, f'_\Par) = 0$. Proposition \ref{prop:derivative} for odd $r$ moreover states that $\del(γ, f'_D) = 0$ which is the desired vanishing.

Now we consider the case where $r$ is even. There exists a matching element $g\in G_b$ for $γ$ if and only if $L$ is a field or if $L\iso F\times F$ and $r\in 4\mbZ$, see rows 1 and 3 of Table \ref{table:matching}. No matter which case, \eqref{eq:derivative_main_teaser} shows the equality of the two sides in \eqref{eq:AT_1_4_main}.
\end{proof}

\section{Invariant $3/4$}
\label{s:3_4}
The aim of this section is to prove Theorem \ref{thm:main_ATC} for Hasse invariant $3/4$. It will turn out, however, that the geometry for invariant $3/4$ is closely related to the one for invariant $1/4$. So we will, in fact, consider the two intersection problems for $D\in \{D_{1/4}, D_{3/4}\}$ simultaneously. For this reason we introduce the following notation: We write $\mcM_λ$, $λ\in \{1/4, 3/4\}$, for the RZ space for $D = D_λ$. We similarly write $G_λ = D_λ^{\mr{op}, \times}$ for the group $G$ and $G_{λ, b}$ for the group $G_b$.

We also choose compatible presentations of $D_{1/4}$ and $D_{3/4}$: Let $F_4/F$ denote an unramified field extension of degree $4$ and let $σ\in \mr{Gal}(F_4/F)$ be its Frobenius. For both choices of $λ$, we fix an embedding $F_4 \to D_λ$ and a uniformizer $Π\in D_λ$ that normalizes $F_4$ and satisfies $Π^4 = π \in F$. Then $Πa = σ(a)Π$ if $λ = 1/4$ and $Πa = σ^3(a)Π$ if $λ = 3/4$, for $a\in F_4$. We assume that the embedding $E\to D$ is such that $E\subset F_4$. Then $\varpi = Π^2$ is a uniformizer of $O_C$ and we obtain the presentation $C = F_4[\varpi]$.

\subsection{Conormal Bundle}
\label{ss:conormal_3_4}

\begin{prop}\label{prop:conormal_3/4}
Let $P\subseteq \mcM_C$ be any irreducible component of the special fiber. The degree of the conormal bundle $\mcC$ of $\mcM_C \to \mcM_{3/4}$ on $P$ is the same as in the case of invariant $1/4$,
$$\deg(\det \mcC\vert_P) = q^2 - 1.$$
\end{prop}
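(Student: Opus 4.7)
The plan is to adapt the proof of Proposition \ref{prop:conormal_1_4}, replacing the global Drinfeld-type description of $\mcM_{1/4}$ by a local, display-theoretic description of $\mcM_{3/4}$ around $\mcM_C$. First I would reduce to a single standard component: since $H_b \iso GL_2(E)$ acts transitively on the vertices of the Bruhat--Tits tree $\mcB$, it acts transitively on the set of irreducible components $P_Λ$ of the special fiber of $\mcM_C$, so it suffices to compute $\deg(\det\mcC|_{P_Λ})$ for one fixed lattice $Λ$.

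Next I would set up the local deformation theory via $O_F$-displays (as developed in \S\ref{ss:displays} following \cite{ACZ}). Let $x \in P_Λ$ be a superspecial point with associated special $O_C$-module $(Y,ι)$ and Serre tensor $(X,κ) = O_D\tensor_{O_C}(Y,ι)$. Using the presentation $O_D = O_C \oplus O_C\cdot Π$ from \eqref{eq:presentation_Serre_tensor_I} and the matrix form \eqref{eq:O_D_action_explicit} of the induced $O_D$-action, a deformation of $(X,κ)$ over an Artin local $O_{\breve F}$-algebra decomposes into a deformation of the $O_C$-action on $Y$ (which is the tangent direction along $\mcM_C$) together with a deformation of the $Π$-action (which accounts for the normal directions). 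The key observation is that, although the $Π$-conjugation on $F_4\subset O_C$ differs between $λ = 1/4$ (namely $σ$) and $λ = 3/4$ (namely $σ^3$), both restrict to the non-trivial Galois element on the center $E\subset O_C$, because $σ^2$ fixes $E$. Consequently the $O_E$-eigenspace decomposition that drove the computation in \S\ref{ss:conormal_Drinfeld_1_4} is available verbatim.

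With this in hand, I would mimic the four steps of Proposition \ref{prop:conormal_1_4}. Fix an $O_E$-basis $(e_1,e_2)$ of $Λ$, let $e_i^{\pm}$ be as in \eqref{eq:basis_splitting}, and denote by $φ$ the quotient map on the Lie algebra of the universal deformation. Using the display-theoretic model established in \S\ref{ss:inter_set_theoretic}--\S\ref{ss:superspecial} (culminating in Proposition \ref{prop:structure_3_4}), one shows that the ideal cutting out $\mcM_C$ in a chart of $\mcM_{3/4}$ near $x$ is generated by $φ(e_1^-)/φ(e_1^+)$ and $φ(e_2^-)/φ(e_2^+)$: these two ratios record precisely the vanishing of the $E$-conjugate-linear component of the $Π$-induced endomorphism on the Hodge filtration, and the shape is forced by the $O_E$-action alone. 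Then the transformation rule \eqref{eq:trafo_gen_conormal} and the divisor computation of the generator $c_{(e_1,e_2)}$ of $\det\mcC|_{P_Λ}$ carry over unchanged, yielding the same divisor $P_Λ(\mbF_{q^2})\setminus\{\langle e_1\rangle,\langle e_2\rangle\}$ of degree $q^2-1$.

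The main obstacle is the second paragraph: without an explicit Drinfeld-type global model of $\mcM_{3/4}$, one has to work locally via displays and verify that the defining equations of $\mcM_C \subset \mcM_{3/4}$ really take the stated form and really extend meromorphically to $P_Λ$ with the same pattern of zeroes and poles as in the $1/4$ case. This is precisely what the structural results of \S\ref{ss:inter_set_theoretic}--\S\ref{ss:superspecial} are designed to provide; once available, the conormal degree calculation is a direct transcription of the argument already given for $λ=1/4$.
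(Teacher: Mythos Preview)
Your approach has a genuine gap. The explicit generators $φ(e_i^-)/φ(e_i^+)$ in Proposition \ref{prop:conormal_1_4} come from Drinfeld's description of $\mcM_{1/4}$ as $\bOm_F(W)$: the map $φ$ is the universal line-bundle quotient on a chart $\breve U_Λ$ of $\bOm_F(W)$, and the divisor computation (Steps (3)--(4) there) hinges on being able to pass to the adjacent chart $\breve U_{Λ_1\subset Λ}$ of the ambient space. For $λ = 3/4$ there is no such explicit model of $\mcM_{3/4}$, so there is no candidate for $φ$ and no chart in which to track the meromorphic extension of $c_{(e_1,e_2)}$ across the superspecial points. The sections you invoke, \S\ref{ss:inter_set_theoretic}--\S\ref{ss:superspecial} and Proposition \ref{prop:structure_3_4}, concern the intersection locus $\mcZ(y) = \mcM_C\cap g\cdot\mcM_C$ \emph{inside $\mcM_C$}, not the embedding $\mcM_C\hookrightarrow \mcM_{3/4}$; they give no equations for that ideal sheaf and hence no generators of the conormal bundle. (They also come after \S\ref{ss:conormal_3_4} in the paper, so invoking them here would at best reorder the logic.)

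The paper avoids this obstacle by a different, more intrinsic route: instead of recomputing the degree for $λ=3/4$, it shows that $\det\mcC|_{V(π)}$ is \emph{the same line bundle on $\mcM_C$} for both invariants. This uses Grothendieck--Messing theory applied to the universal special $O_C$-module over $\mcM_C$, evaluated at the thickening $V(\mcI^2)$. The Serre tensor description $\mcP = O_D\tensor_{O_C}\mcD$ yields an exact sequence $0\to\mcK\to\mcE\to\mcI/\mcI^2\to 0$ of vector bundles on $\mcM_C$, where $\mcE = \ob{\mcD_1\tensor\mcL_1^{-1}}\oplus\ob{\mcD_3\tensor\mcL_3^{-1}}$. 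One then checks that $\det\mcE$ and $\det(\mcK|_{V(π)})$ are each expressible solely in terms of the $\mcD_i$, the Lie algebra of $Y$, and $\varpi$ --- data intrinsic to $\mcM_C$ and independent of $λ$. The identification $Π:\ob{\mcL_{i+4λ}}\iso\ob{\mcL_i}$ is what absorbs the $λ$-dependence. No explicit model of $\mcM_{3/4}$ is ever needed; the $1/4$ computation (Proposition \ref{prop:conormal_1_4}) then supplies the common value $q^2-1$.
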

\begin{proof}
Our proof is by showing that the degrees of the conormal bundles for $\mcM_C\to \mcM_{1/4}$ and $\mcM_C \to \mcM_{3/4}$ agree. Then Proposition \ref{prop:conormal_1_4} yields that the degree is $q^2-1$ in both cases. So let $\mcM = \mcM_{1/4}$ or $\mcM = \mcM_{3/4}$ and let $\mcI\subseteq \mcO_{\mcM}$ be the ideal sheaf such that $\mcM_C = V(\mcI)$. The conormal bundle is $\mcI/\mcI^2$.

Let $(Y, ι, ρ)$ be the universal point over $\mcM_C$ and let $\mcD$ be the covariant $O_F$-Grothendieck--Messing crystal of $Y$ evaluated at the thickening $V(\mcI^2)$, viewed with trivial PD-structure. It is endowed with an $O_C = O_{F_4}[\varpi]$-action $ι$ by functoriality. This provides a $\mbZ/4$-grading $\mcD = \bigoplus \mcD_i$ where
$$\mcD_i = \{x\in \mcD \mid ι(a)(x) = σ^i(a)x,\ \ a\in O_{F_4}\}.$$
Then $\varpi$ is homogeneous of degree $2$ and each graded piece is a vector bundle of rank $2$.

Write $\ob{\mcD} = \mcD/\mcI\mcD$, $\ob{\mcD_i} = \mcD_i/\mcI \mcD_i$ and denote by $\ob{\mcF_i} \subset \ob{\mcD_i}$ the Hodge filtration of $Y$. Recall that $\mcD/\mcF = \Lie(Y)$ is the Lie algebra. The special condition (see Definition \ref{def:special}) in particular requires that $O_E \subset O_{F_4}$ acts via the natural map $O_E\to \mcO_{M_C}$ on $\Lie(Y)$ which implies that $\ob{\mcF_i} = \ob{\mcD_i}$ for $i = 1,3$.

Next, consider $X := O_D \tensor_{O_C} Y$ with its natural $O_D$-action. The evaluation of its $O_F$-Grothendieck--Messing crystal at $V(\mcI^2)$ is $\mcP := O_D\tensor_{O_C} \mcD$ by functoriality. The action of $O_{F_4}\subset O_D$ again provides a $\mbZ/4$-grading $\mcP = \bigoplus \mcP_i$. It may be refined as follows: Write $O_D = O_B \oplus ΠO_B$, where $Π$ is the previously chosen uniformizer of $D$. We denote by $ΠY$, $Π\mcD$ etc. the summands $Π\tensor Y$, $Π\tensor \mcD$ etc. Then $\mcP$ is a direct sum of eight terms:
\begin{equation}\label{eq:Grothendieck_Messing_3/4}
\mcP = \begin{array}{ccccccc}
\mcD_0 & \oplus & \mcD_1 & \oplus & \mcD_2 & \oplus & \mcD_3 \\
\oplus &        & \oplus &        & \oplus &        & \oplus \\
Π\mcD_{4λ} & \oplus & Π\mcD_{1 + 4λ} & \oplus & Π\mcD_{2 + 4λ} & \oplus & Π\mcD_{3 + 4λ},
\end{array}
\end{equation}
where $\mcP_i = \mcD_i \oplus Π\mcD_{i+4λ}$. The operator $Π$ acts homogeneously of degree $-4λ$. Let $\mcQ\subset \mcP$ denote the Hodge filtration of the restriction to $V(\mcI^2)$ of the universal point of $\mcM$. It is $O_D$-stable, meaning it is $Π$-stable and graded ($\mcQ = \bigoplus \mcQ_i$ with $\mcQ_i \subset \mcP_i$).

The ideal $\mcI/\mcI^2$ tautologically defines the closed subscheme $\mcM_C\subset V(\mcI^2)$. This subscheme is also characterized by the three properties from Proposition \ref{prop:closed_immersion}. Consider the first one, $\mcM_C\subseteq \mcZ(ι(O_E))$. The vertical grading $\mcP = \mcD\oplus Π\mcD$ in \eqref{eq:Grothendieck_Messing_3/4} also equals the decomposition into the two eigenspaces of $\mcP$ under the $κ(O_E)\tensor_{O_F} ρι(O_E)ρ^{-1}$-action. (This action exists on $O_D\tensor_{O_C}Y$ and lifts to the crystal evaluated at $V(\mcI^2)$.) Thus the intersection $V(\mcI^2) \cap \mcZ(ι(O_E))$ as closed subscheme of $V(\mcI^2)$ is defined by the condition that $\mcQ$ is vertically graded in the sense $\mcQ = (\mcQ\cap \mcD) \oplus (\mcQ\cap Π \mcD)$. As $\mcQ$ is already $\mbZ/4$-graded, this is equivalent to
\begin{equation}\label{eq:grading_Q}
\mcQ_i = (\mcQ_i\cap \mcD_i) \oplus (\mcQ_i \cap Π\mcD_{i+4λ})\quad \forall i = 0,\ldots,3.
\end{equation}
We claim that in fact $V(\mcI) = V(\mcI^2) \cap \mcZ(ι(O_E))$. For this we need to check that the further conditions (1) and (2) from Proposition \ref{prop:closed_immersion} are implied by \eqref{eq:grading_Q}.

Condition (1) just says that the rank of $\mcQ_i \cap \mcD_i$ is $1$ for $i = 0,2$ and $2$ for $i = 1,3$. This already holds on $V(\mcI)$ and extends to any infinitesimal thickening. (The rank of a locally free module is locally constant.)

Condition (2) states that $κ(Π)$ defines an isomorphism $Π:X_+\simto X_-$, where $X = X_+\oplus X_-$ is the decomposition into eigenspaces of $X$ defined on $V(\mcI^2)\cap \mcZ(ι(O_E))$. Just like (1) above, this condition can be checked over $V(\mcI)$.

In summary, we see that $V(\mcI)\subset V(\mcI^2)$ is defined by \eqref{eq:grading_Q}. This condition is further equivalent to $\mcD_i \subset \mcQ_i$ and $Π\mcD_{i+1+4λ} \subset \mcQ_{i+1}$ for $i = 1,3$, because these inclusions hold over $V(\mcI)$. Since $\mcQ$ is $Π$-stable, it is equivalent to only require $\mcD_i\subset \mcQ_i$ for $i = 1,3$. So we see that $\mcI/\mcI^2$ is defined by the vanishing of the two maps
$$\mcD_1 \lr \mcL_1,\quad \mcD_3 \lr \mcL_3,\quad\quad \mcL_i := \mcP_i/\mcQ_i.$$
These two maps are known to vanish modulo $\mcI$, so they factor over $\ob{\mcD_1}$ and $\ob{\mcD_3}$. We thus obtain an exact sequence of vector bundles on $\mcM_C$,
\begin{equation}\label{eq:ex_seq_conormal}
0 \lr \mcK \lr \ob{\mcD_1 \tensor \mcL_1^{-1}} \oplus \ob{\mcD_3 \tensor \mcL_3^{-1}} \lr \mcI/\mcI^2 \lr 0.
\end{equation}
Denote its middle term by $\mcE$. Note that $Π:\ob{\mcL_{i+4λ}} \iso \ob{\mcL_i}$ for $i = 1,3$ and that $\ob{\mcL_{1+4λ}} \oplus\ob{\mcL_{3+4λ}}$ is the Lie algebra of $Y$. It follows that the determinant of $\mcE$ is independent of whether $λ = 1/4$ or $3/4$. What is left to show is that the determinant of $\mcK\vert_{V(π)}$ is also independent. (Here, $V(π)$ denotes the special fiber of $\mcM_C$.) This relies on the commutative diagram
\begin{equation}\label{eq:diagram_conormal}
\begin{minipage}{5cm}
\xymatrix{
\ob{\mcD_1} \ar[r]^{\varpi} \ar[d]_{φ_1} & \ob{\mcD_3} \ar[r]^{\varpi} \ar[d]_{φ_3} & \ob{\mcD_1} \ar[d]_{φ_1}\\
\ob{\mcL_1} \ar[r]^{\varpi} & \ob{\mcL_3} \ar[r]^{\varpi} & \ob{\mcL_3}.
}
\end{minipage}
\end{equation}
Write $\varpi^{\vee}:\ob{\mcL_{i+2}}{}^{-1}\to \ob{\mcL_i}{}^{-1}$ for the dual map on inverse line bundles. We claim that $\mcK$ is generated by all sections of the form
\begin{equation}\label{eq:generators_conormal}
(u_1 \tensor \varpi^{\vee} s_3, - \varpi u_1 \tensor s_3),\ (\varpi u_3\tensor s_1, - u_3 \tensor \varpi^{\vee}s_1),\ \ \ u_i \in \ob{\mcD_i},\ s_{i+2}\in \ob{\mcL_{i+2}}{}^{-1}.
\end{equation}
To prove this, it is sufficient to locally exhibit elements of the form \eqref{eq:generators_conormal} that generate a rank $2$ direct summand of $\mcE$. The top row of \eqref{eq:diagram_conormal} has a normal form, meaning there locally exist bases $e_1,f_1$ of $\mcD_1$ and $e_3,f_3$ of $\mcD_3$ such that $\varpi$ is given by
$$\varpi \begin{pmatrix} e_1 \\ f_1 \end{pmatrix} = \begin{pmatrix}e_3 \\ πf_3\end{pmatrix} \quad \text{and}\quad
\varpi \begin{pmatrix}
e_3 \\ f_3
\end{pmatrix} = \begin{pmatrix}
πe_1 \\ f_1
\end{pmatrix}.$$
In particular, $\varpi e_1$ and $\varpi f_3$ are nowhere vanishing sections. Also assume that $s_i\in \ob{\mcL_i}{}^{-1}$, with $i = 1,3$, are local generators. Then
$$(e_1 \tensor \varpi^\vee s_3, -\varpi e_1 \tensor s_3)\quad \text{and}\quad (\varpi f_3 \tensor s_1, -f_3 \tensor \varpi^\vee s_1)$$
lie in $\mcK$ and are fiberwise linearly indpendent, hence generate $\mcK$.

From now on we restrict to the special fiber $V(π)\subset \mcM_C$. The above normal form statement implies that the outer terms in the following canonical exact sequences are line bundles:
$$0 \lr \ker(\varpi\vert_{\mcD_i}) \lr \mcD_i \lr \mr{Im}(\varpi\vert_{\mcD_i})\lr 0.$$
The previous computation specializes to the fact that
\begin{equation}\label{eq:determinant_key}
\mcK/π\mcK = \ker(\varpi\vert_{\mcD_1})\tensor \mcL_1^{-1} \oplus \ker(\varpi\vert_{\mcD_3})\tensor \mcL_3^{-1}
\end{equation}
as subsheaf of $\mcE/π\mcE$. The determinant of $\mcK/π\mcK$ is then the tensor product of all four line bundles that occur on the right hand side of \eqref{eq:determinant_key}. This product is independent of $λ$, as was to be shown.
\end{proof}

\subsection{Intersection Locus (Simplified Formulation)}
\label{ss:simplified_intersection_3_4}

Let $g\in G_{λ, b, \mr{rs}}$ be a regular semi-simple element. Write $z = z_g$. Our next aim is to rephrase the definition of $\mcI(g) = \mcM_C\cap g\cdot \mcM_C$ in a simpler way.

Since the framing object $(\mbY, ι)$ that goes into the definition of $\mcM_C$ has no étale part, Lemma \ref{lem:nilpotent_reduction} states that $\mcM_C\cap g\cdot \mcM_C = \emptyset$ unless $z$ is toplogically nilpotent. So we assume for the following discussion that $z$ is topologically nilpotent. Then Proposition \ref{prop:nilpotent_reduction} states that
\begin{equation}\label{eq:intersection_locus_description}
\mcM_C \cap g\cdot \mcM_C = \mcM_C \cap \mcZ(z).
\end{equation}
(We recall that $\mcZ(z)$ denotes all those $(X, κ, ρ)\in \mcM_λ$ such that $ρzρ^{-1} \in \End(X)$, see \eqref{eq:def_cycle}.) In terms of the element $Π\in D_λ$ we chose at the beginning of \S\ref{s:3_4}, we have $O_{D_λ} = O_C \oplus Π O_C$ and obtain a presentation of $(\mbX, κ)$ as
\begin{equation}\label{eq:presentation_Serre_tensor}
\mbX = \mbY \oplus Π \mbY,\quad
κ(Π) = \left(\begin{matrix} & ι(\varpi)\\ 1 & \end{matrix}\right),\quad
κ(c) = \left(\begin{matrix} ι(c) & \\ & ι(Π^{-1}cΠ) \end{matrix}\right)\ \ \ c\in C.
\end{equation}
The endomorphism ring of $(\mbX, κ)$ is then
\begin{equation}\label{eq:endomorphism_ring_3/4}
\End^0_D(\mbX, κ) = \left.\left\{
\left(\begin{matrix} x & y\varpi \\ y & x \end{matrix}\right)\right\vert\ x\in \End^0_C(\mbY),\ y\in \End_F^0(\mbY) \text{ s.th. } yc = Π^{-1}cΠy \text{ for all }c\in C
\right\}.
\end{equation}
Description \eqref{eq:presentation_Serre_tensor} applies to every Serre tensor construction, not just to framing objects. Thus, writing $z = \left(\begin{smallmatrix} & y\varpi \\ y & \end{smallmatrix}\right)$, we obtain that
\begin{equation}\label{eq:final_description_intersection_locus}
\mcM_C \cap \mcZ(z) = \mcZ(y) = \{(Y, ι, ρ)\in \mcM_C \mid ρyρ^{-1} \in \End(Y)\}.
\end{equation}
The automorphism $c\mapsto Π^{-1}cΠ$ of $C$ satisfies $Π^{-1}\varpi Π = \varpi$ but its effect on $F_4$ depends on $λ$: It is given by $Π^{-1}a Π = σ^{-1}(a)$ if $λ = 1/4$ and by $Π^{-1}aΠ = σ^{-3}(a)$ if $λ = 3/4$. For both choices of $λ$ we define, with $Π = Π_λ\in D_λ$,
\begin{equation}\label{eq:def_set_S_of_endomorphisms}
S_λ = \{ y\in \End^0_F(\mbY)^\times\ \vert\ ycy^{-1} = Π_λ^{-1}cΠ_λ\ \text{for}\ c\in C\}.
\end{equation}
Let $S$ be the union $S_{1/4}\sqcup S_{3/4}$. Then $\varpi S_λ = S_{λ + 1/2}$ and, for every $y\in S$, we have inclusions of closed subschemes of $\mcM_C$
$$\mcZ(\varpi^{-1} y) \subseteq \mcZ(y) \subseteq \mcZ(\varpi y).$$
This relates the intersection loci for the two different invariants. Note that for every $y\in S$, the element $\varpi y^2$ lies in the centralizer $\End^0_C(\mbY)$ which is isomorphic to $M_2(E)$. Moreover, if $z =\left(\begin{smallmatrix} & y \varpi \\ y & \end{smallmatrix}\right)$ with $y\in S$ such that $1 + z\in G_λ$, then the following relation of invariant polynomials holds:
\begin{equation}\label{eq:rel_z_y_invariant}
\Inv(1 + z; T) = \Inv(y; T) := \mr{charred}_{\End^0_C(\mbY)/E}(\varpi y^2; T).
\end{equation}
(Here, the right hand side will always lie in $F[T]$.) We call $y\in S$ regular semi-simple if $\Inv(y;T)$ is regular semi-simple in the sense of \S\ref{s:invariants}. Let $S_{\mr{rs}}$ and $S_{λ,\mr{rs}}$ denote the regular semi-simple elements of $S$ and $S_λ$. The main task in the following sections is to determine the formal scheme $\mcZ(y)\subseteq \mcM_C$ for $y\in S_{3/4, \mr{rs}}$.

\subsection{Intersection Locus (Set-theoretic Support)}
\label{ss:inter_set_theoretic}
Given $y\in S_{\mr{rs}}$, our first result describes the support $\mcZ(y)(\mbF)$ in terms of Dieudonné theory. To this end, we first recall from \cite{BC} some more details on Drinfeld's isomorphism.

\begin{construction}
Let $(M, F, V, ι)$ be the covariant $O_F$-Dieudonné module of a special $O_C$-module $(Y, ι)$ over $\mbF$. Fix an embedding $F_4\to \breve F$. Then the contained ring of integers $O_{F_4}\subset O_C$ induces a $\mbZ/4\mbZ$-grading $M = M_0\oplus \ldots \oplus M_3$. Each summand is free of rank $2$ as $O_{\breve F}$-module and the operators $F$, $\varpi$ and $V$ are all homogeneous of degrees
$$\deg F = -1,\ \ \deg V = 1, \ \ \deg \varpi = 2.$$
It follows from the special condition that
\begin{equation}\label{eq:degree_V}
[M_i: VM_{i-1}] = \begin{cases} 1 & \text{if }i \equiv 0 \text{ mod }2\\
0 & \text{if }i \equiv 1 \text{ mod }2\end{cases}
\end{equation}
and that $[M_i:\varpi M_{i-2}] = 1$ for all $i$. Since $\varpi^2 = π$ vanishes on $\Lie(Y) = M/VM$, there exists an index $i\in \{0,2\}$ such that $\varpi M_i = V^2M_i$. Such indices are called critical.

The existence of critical indices implies that the operator $τ = V^{-2}\varpi$ is homogeneous of degree $0$ and $σ^2$-linear with all slopes $0$. We put $Λ_i = M_i^{τ = \mr{id}}$, which is a free rank $2$ module over $O_E = O_{\breve F}^{σ^2 = \mr{id}}$. There are two cases:
\begin{equation}\label{eq:cases_tau_invariants}
O_{\breve F} \tensor_{O_E} Λ_i = \begin{cases} M_i & \text{if $i$ critical}\\
\varpi M_{i-2} & \text{if $i$ not critical}.
\end{cases}
\end{equation}
Assume that $i$ is critical. We obtain a line $\ell = \varpi M_{i-2}/πM_i \in \mbP(Λ_i/πΛ_i)(\mbF)$ and the triple $(i, Λ_i, \ell)$ allows for a unique (up to isomorphism) reconstruction of $(M, F, V, ι)$.

Let us now bring the framing object $(\mbY, ι)$ into play. Denote its isocrystal by $(N, F, V, ι)$. We have seen that $τ := V^{-2}\varpi$ is of degree $0$ and $σ^2$-linear with all slopes $0$. Put $W = N_0^{τ = \mr{id}}$, which is a $2$-dimensional $E$-vector space. (Recall that this is a general statement: If $N$ is an $n$-dimensional $\breve F$-vector space and $τ:N\to N$ a $σ^t$-linear automorphism with all slopes $0$, then $N^{τ = \mr{id}}$ is an $n$-dimensional $F_t$-vector space where $F_t = \breve F^{σ^t = \mr{id}}$ is the degree $t$ unramified field extension of $F$. Moreover, we have $(\breve F\tensor_{F_t} N^{τ = \mr{id}}, σ^t \tensor \mr{id}) \simto (N, τ)$.)

We may define a map $\mcM_C(\mbF)\to \bOmega_E(W)(\mbF)$ by the following construction: An $ι(O_C)$-stable and special Dieudonné lattice $M\subseteq N$ with $Λ_i = M_i^{τ = \mr{id}}$ as above is sent to
\begin{equation}\label{eq:Drinfeld_iso_explicit}
\begin{cases} \varpi M_2 / πM_0 \in \mbP(Λ_0/πΛ_0)(\mbF) & \text{if $0$ is critical}\\
M_0 / \varpi^{-1}(πM_2) \in \mbP(\varpi^{-1}Λ_2/\varpi^{-1}(πΛ_2))(\mbF) & \text{if $2$ is critical}.\end{cases}
\end{equation}
It may happen, of course, that both indices are critical. In this case, the two lines in \eqref{eq:Drinfeld_iso_explicit} coincide as points of $\bOmega_E(W)$ and correspond to the diagram
$$\xymatrix{
\varpi Λ_2 \ar[r] \ar@{->>}[d] & Λ_0 \ar[r] \ar@{->>}[d] & \varpi^{-1} Λ_2 \ar@{->>}[d]\\
\varpi M_2 / πM_0 \ar[r]^0 & M_0/\varpi M_2 \ar[r]^0 & \varpi^{-1}M_2 / M_0,
}$$
where the lower outer terms have to be identified along $π:\varpi^{-1}M_2/M_0 \iso \varpi M_2/πM_0$. The restriction of the map $\mcM_C(\mbF)\to \bOmega_E(W)(\mbF)$ to the height $0$ connected component $\mcM_C^0(\mbF)$ agrees with the map from Drinfeld's isomorphism in Theorem \ref{thm:Drinfeld}.
\end{construction}
\begin{defn}\label{def:operator_w}
Given $y\in S_λ$, consider its action on the isocrystal $(N, F, V, ι)$ of $(\mbY, ι)$. Then $y$ is homogeneous of degree
$$\deg y = \begin{cases} -1 & \text{if } λ = 1/4\\
1 & \text{if } λ = 3/4.\end{cases}$$
Define $w(y) = Vy$ if $λ = 1/4$ and $w(y) = V^{-1}y$ if $λ = 3/4$. Then $w(y)$ is of degree $0$ and commutes with $V$ and $\varpi$. It hence commutes with $τ$ and acts as a $E$-conjugate linear endomorphism on $W = N_0^{τ = \mr{id}}$.
\end{defn}

In the following, we will also formulate some results for the invariant $1/4$ case. We will not use these again but hope that they clarify why and in which sense the two possibilities for $λ$ are different.

\begin{lem}\label{lem:descript_Z_y_Dieudonne}
Let $(Y, ι, ρ) \in \mcM_C(\mbF)$ be a point with Dieudonné lattice $M = M_0\oplus\ldots \oplus M_3 \subset N$. Let $i\in \{0,2\}$ be a critical index of $(Y,ι)$ and let $Λ = Λ_0$ (if $i = 0$) or $Λ = \varpi^{-1}Λ_2$ (if $i = 2$) be the resulting lattice $Λ\subset W$. Let $\ell \in P_Λ(\mbF)$ be the line defined by \eqref{eq:Drinfeld_iso_explicit}. Then $(Y,ι,ρ)\in \mcZ(y)$ if and only if
\begin{equation}\label{eq:properties_Dieudonne}
\begin{cases}
\text{$w(y)Λ\subseteq Λ$ and $w(y)\ell = 0$ and $\mr{Im}(w(y))\subseteq \ell$} & \text{if $λ = 1/4$}\\
\text{$w(y)Λ\subseteq Λ$ and $w(y)\ell \subseteq \ell$} & \text{if $λ = 3/4$}.
\end{cases}
\end{equation}
\end{lem}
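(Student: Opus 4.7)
The plan is to translate $(Y,ι,ρ) \in \mcZ(y)$, which asks that $ρ y ρ^{-1}$ define an endomorphism of $Y$, into the Dieudonné-theoretic statement $y M \subseteq M$ on the $O_F$-Dieudonné lattice $M = \bigoplus_{j \in \mbZ/4\mbZ} M_j$ of $Y$ inside the isocrystal of $\mbY$. Since $y \in S_λ$ is homogeneous of degree $\pm 1$ with respect to the $O_{F_4}$-grading (sign depending on $λ$), the inclusion $y M \subseteq M$ decomposes into the four graded conditions $y M_j \subseteq M_{j + \deg y}$.

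The first reduction uses that $y$ commutes with $V$ on the isocrystal, combined with the special condition \eqref{eq:degree_V} which gives $V M_{j-1} = M_j$ for $j$ odd. Consequently, the two graded conditions indexed by odd $j$ are automatic consequences of those indexed by even $j$: for example $y M_1 = y V M_0 = V y M_0 \subseteq V M_1 \subseteq M_2$ as soon as $y M_0 \subseteq M_1$, and similarly for the step from $M_2$ to $M_3$. So $y M \subseteq M$ is equivalent to the pair of inclusions on the even graded pieces $M_0$ and $M_2$ alone.

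The second step rewrites these two remaining conditions via $w(y) = V^{\mp 1} y$, which is of degree zero and commutes with $τ$ and $\varpi$. For $λ = 3/4$ the inclusions become $w(y) M_0 \subseteq M_0$ and $w(y) M_2 \subseteq M_2$. Taking $τ$-invariants and using the identification $M_0 = O_{\breve F} \tensor_{O_E} Λ_0$ (when $i = 0$ is critical), the first translates to $w(y) Λ \subseteq Λ$. The second, combined with the fact that $\varpi M_2 \subseteq M_0$ has index one and $π M_0 \subseteq \varpi M_2$, is equivalent via $w(y)\varpi = \varpi w(y)$ to stability of $\varpi M_2$ modulo $π M_0$, i.e.\ to $w(y) \ell \subseteq \ell$ for the distinguished line $\ell = \varpi M_2/π M_0$. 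For $λ = 1/4$ the analogous rewriting with $w(y) = V y$ produces $w(y) M_0 \subseteq V M_3 = V^2 M_2$ and $w(y) M_2 \subseteq V M_1 = V^2 M_0$; using the critical-index identity $V^2 M_0 = \varpi M_0$ (hypothesis that $0$ is critical) and the symmetric identity $V^2 M_2 = \varpi M_2$, these become $w(y) M_0 \subseteq \varpi M_2$ and $w(y) M_2 \subseteq \varpi M_0$. Reduction modulo $π$ then recovers exactly the two conditions $\mr{Im}(w(y)) \subseteq \ell$ and $w(y) \ell = 0$. The case where $i = 2$ is critical is handled symmetrically after conjugation by $\varpi$.

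The main delicacy is the bookkeeping of the index-one sublattices of $M_0$ and $M_2$ that appear on each side of the translation, and in particular the interplay between the two critical-index identities $V^2 M_0 = \varpi M_0$ and $V^2 M_2 = \varpi M_2$ in the $λ = 1/4$ case, where one must check that the integrality condition $yM \subseteq M$ is compatible with (and in fact leverages) the full Drinfeld datum $(Λ, \ell)$. This asymmetry between the two Hasse invariants is the local counterpart of the qualitatively different artinian contributions to the intersection locus for $λ = 1/4$ and $λ = 3/4$ (cf.\ Proposition \ref{prop:embedded_components_1_4}).
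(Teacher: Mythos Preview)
Your overall strategy matches the paper's: translate $(Y,\iota,\rho)\in\mcZ(y)$ into $yM\subseteq M$, break into the four graded inclusions $yM_j\subseteq M_{j+\deg y}$, and rewrite via $w(y)=V^{\mp 1}y$. Your observation that the odd-index conditions are redundant (using $M_1=VM_0$, $M_3=VM_2$ and $[y,V]=0$) is correct and in fact slightly sharper than the paper, which lists all four for $\lambda=1/4$ and only notes redundancies for $\lambda=3/4$.

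There is, however, a genuine gap in your $\lambda=1/4$ argument. You invoke the ``symmetric identity'' $V^2M_2=\varpi M_2$ to pass from $w(y)M_0\subseteq V^2M_2$ to $w(y)M_0\subseteq\varpi M_2$. But this identity says exactly that index $2$ is critical, and the hypothesis is only that $i=0$ is critical; at a non-superspecial point the two index-one sublattices $V^2M_2$ and $\varpi M_2$ of $M_0$ are \emph{different}. The paper circumvents this by multiplying $yM_i\subseteq M_{i+3}=VM_{i+2}$ through by $\varpi$ and using $[y,\varpi]=0$ together with the single critical identity $\varpi M_i=V^2M_i$; this lands directly on $VyM_i\subseteq\varpi M_{i+2}$ without ever touching $V^2M_{i+2}$.

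Your argument is salvageable: since $w(y)$ commutes with $\tau=V^{-2}\varpi$ and $\tau M_0=M_0$ (criticality of $0$), the inclusion $w(y)M_0\subseteq V^2M_2$ implies $w(y)M_0=\tau(w(y)M_0)\subseteq\tau(V^2M_2)=\varpi M_2$, and conversely. So the conclusion is correct, but the stated reason (``symmetric identity'') is not.
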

\begin{proof}
Assume first that $λ = 1/4$ so that $\deg y = -1$. Using that $VM_0 = M_1$ and $VM_2 = M_3$, see \eqref{eq:degree_V}, we check that
\begin{enumerate}[wide, labelindent=0pt, labelwidth=!, label=(\arabic*), topsep=2pt, itemsep=2pt]
\item $yM_{i+1} \subseteq M_i$ if and only if $yVM_i\subseteq M_i$, i.e. $w(y) Λ \subseteq Λ$,
\item $yM_{i+2}\subseteq M_{i+1}$ if and only if $y \varpi M_{i+2} \subseteq \varpi V M_i$, meaning $Vy \varpi M_{i+2} \subseteq πM_i$, i.e. $w(y) \ell = 0$,
\item $yM_{i+3}\subseteq M_{i+2}$ if and only if $Vy \varpi M_{i+2} \subseteq \varpi M_{i+2}$, i.e. $w(y)\ell \subseteq \ell$, and
\item $yM_i \subseteq M_{i+3}$ if and only if $Vy V^{-2}\varpi M_i \subseteq \varpi M_{i+2}$, i.e. $\mr{Im}(w(y)) \subseteq \ell$.
\end{enumerate}
If $λ = 3/4$ however, then $\deg y = 1$ and we obtain slightly different conditions:
\begin{enumerate}[wide, labelindent=0pt, labelwidth=!, label=(\arabic*), topsep=2pt, itemsep=2pt]
\item $yM_i \subseteq M_{i+1}$ if and only if $V^{-1}yM_i \subseteq M_i$, i.e. $w(y) Λ\subseteq Λ$,
\item $yM_{i+1}\subseteq M_{i+2}$ if and only if $V^{-1}y M_i \subseteq V^{-2}M_{i+2}$ which is redundant after (1),
\item $yM_{i+2} \subseteq M_{i+3}$ if and only if $V^{-1}y \varpi M_{i+2} \subseteq \varpi M_{i+2}$, i.e. $w(y) \ell \subseteq \ell$, and
\item $yM_{i+3} \subseteq M_i$ if and only if $V^{-1}y \varpi M_{i+2} \subseteq V^{-2}\varpi M_i = M_i$ which is redundant after (3).
\end{enumerate}
These two lists of properties are precisely what was claimed in \eqref{eq:properties_Dieudonne}.
\end{proof}

For a homothety class of lattices $Λ\subset W$ and $y\in S$, we define
\begin{equation}\label{eq:def_nm}
n(y, Λ) := n(w(y), Λ),\ \ \ m(y, Λ) := \max\{0, n(y, Λ)\}.
\end{equation}
Also let $\mcZ(y)^0 := \mcZ(y)\cap \mcM^0_C$. Lemma \ref{lem:descript_Z_y_Dieudonne} shows that, under the isomorphism $\mcM_C^{0}\iso \bOmega_E(W)$,
$$\mcZ(y)^0(\mbF)\subseteq \bigcup_{Λ\subseteq W,\ n(y, Λ)\geq 0} P_Λ(\mbF).$$
Recall that $\mcT(w(y)) \subset \mcB$ denotes the set of those homothety classes of lattices $Λ\subseteq W$ in which $n(y, Λ)$ takes its maximum and that the shape of $\mcT(w(y))$ was described in Theorem \ref{thm:classification_multiplicity_function}. The next corollary combines this result with Lemma \ref{lem:descript_Z_y_Dieudonne}.

\begin{cor}\label{cor:support_intersection_set_theoretic}
Assume that $y\in S_{\mr{rs}}$ is regular semi-simple. The set $\mcZ(y)^0(\mbF)$ has the following description, in dependence on $λ$ and the maximum $n(y) = \max_{Λ\subseteq W} n(y, Λ)$ of the multiplicity function.
\begin{enumerate}[wide, labelindent=0pt, labelwidth=!, label=(\arabic*), topsep=2pt, itemsep=2pt]
\item Assume $n(y)<0$. Then $\mcZ(y)^0 = \emptyset$.
\item Assume $n(y) = 0$ and $λ = 1/4$. Then $\mcZ(y)^0 \neq \emptyset$ if and only if $w(y)$ is topologically nilpotent. In this case, $\mcT(w(y))$ is an edge and $\mcZ(y)^0(\mbF)$ the corresponding superspecial point.
\item Assume $n(y) = 0$ as well as $λ = 3/4$ and $\det(w(y)^2) \in O_E^\times$. Then $\mcZ(y)^0 \cap P_Λ(\mbF) \neq \emptyset$ if and only if $Λ\in \mcT(w(y))$. In the non-empty case,
\begin{equation}\label{eq:counting_fps}
|\mcZ(y)^0\cap P_Λ(\mbF)| = q+1.
\end{equation}
Moreover, for every edge of $\mcT(w(y))$, the corresponding superspecial point lies in $\mcZ(y)^0(\mbF)$.
\item Assume $n(y) = 0$ as well as $λ = 3/4$ and $\det(w(y)^2) \in O_E\setminus O_E^\times$. Then $\mcZ(y)^0(\mbF)$ consists of the superspecial points that correspond to edges of $\mcT(w(y))$.
\item Assume $n(y) \geq 1$. Then
$$\mcZ(y)^0(\mbF) = \bigcup_{Λ \in \{ Λ \mid n(y, Λ)\geq 1\}} P_Λ(\mbF).$$
\end{enumerate}
\end{cor}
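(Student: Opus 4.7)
The plan is to combine the Dieudonné-theoretic description of $\mcZ(y)(\mbF)$ from Lemma \ref{lem:descript_Z_y_Dieudonne} with the Bruhat--Tits classification of the function $n(w(y),-)$ provided by Lemma \ref{lem:classif_emanating_edges} and Theorem \ref{thm:classification_multiplicity_function}. Every $\mbF$-point of $\mcM_C^0$ corresponds via Drinfeld's isomorphism to a pair $(\Lambda,\ell)$, where $\Lambda$ is a homothety class of $O_E$-lattices in $W$ and $\ell$ is a line in the $2$-dimensional $\mbF$-vector space $(\Lambda/\pi\Lambda)\otimes_{\mbF_{q^2}}\mbF$; the superspecial points are those for which $\ell$ is $\mbF_{q^2}$-rational, in which case they correspond to edges of the Bruhat--Tits tree through $\Lambda$. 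Membership in $\mcZ(y)^0$ forces $w(y)\Lambda\subseteq \Lambda$, i.e. $n(y,\Lambda)\geq 0$, together with the condition \eqref{eq:properties_Dieudonne} on the pair $(\bar w_\Lambda,\ell)$, where $\bar w_\Lambda$ denotes the reduction of $\pi^{-n(y,\Lambda)}w(y)$ modulo $\pi$.

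Part (1) is then vacuous. For part (5), the inclusion $\bigcup P_\Lambda(\mbF)\subseteq \mcZ(y)^0(\mbF)$ follows at once because $n(y,\Lambda)\geq 1$ forces $\bar w_\Lambda=0$, so that \eqref{eq:properties_Dieudonne} is automatic for every $\ell$ regardless of $\lambda$. For the reverse inclusion, at a vertex $\Lambda$ with $n(y,\Lambda)=0$ but $n(y)\geq 1$, Lemma \ref{lem:local_properties_multiplicities} forces $\bar w_\Lambda$ to be nilpotent of rank one (otherwise no neighbor of $\Lambda$ could attain a higher $n$-value); a direct inspection of type (1) of Lemma \ref{lem:classif_emanating_edges} then shows that the only line over $\mbF$ satisfying the conditions is the $\mbF_{q^2}$-rational line $\ell=\Im \bar w_\Lambda$, which corresponds to the unique neighbor $\Lambda'$ of $\Lambda$ having $n(y,\Lambda')\geq 1$, so the resulting superspecial point already lies on $P_{\Lambda'}$.

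The substantive content is in parts (2), (3), (4), where the maximum $n(y)=0$ is attained exactly on $\mcT(w(y))$. For $\Lambda\in \mcT(w(y))$ no neighbor has larger $n$-value, so $\bar w_\Lambda$ is of type (2)--(6) in Lemma \ref{lem:classif_emanating_edges}, and its valency in $\mcT(w(y))$, dictated by Theorem \ref{thm:classification_multiplicity_function}, equals the number of $\bar w_\Lambda$-stable $\mbF_{q^2}$-rational lines. For $\lambda=3/4$, writing $\bar w_\Lambda=A\cdot\sigma$ and extending to $\mbF^2$, the condition $\bar w_\Lambda \ell\subseteq \ell$ becomes a Frobenius-semilinear eigenvector equation; chasing through the normal forms of $A$ provided by Lemma \ref{lem:classif_emanating_edges} one verifies, via a short Artin--Schreier-type computation, that in each invertible type (3), (4), (5) the equation admits exactly $q+1$ projective solutions over $\mbF$, with the $\mbF_{q^2}$-rational ones matching the neighbors of $\Lambda$ lying in $\mcT(w(y))$. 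This yields part (3). When $\det(w(y)^2)\notin O_E^\times$ (part (4)), $\bar w_\Lambda$ is no longer invertible and falls into type (1) or (2), for which the semilinear equation admits only the $\mbF_{q^2}$-rational stable lines, matching the edges of $\mcT(w(y))$. For $\lambda=1/4$ in part (2), the more restrictive conditions $\bar w_\Lambda \ell=0$ and $\Im \bar w_\Lambda\subseteq \ell$ force $\bar w_\Lambda$ to be nilpotent of rank one (type (1)) with $\ell=\Im \bar w_\Lambda$; combined with Theorem \ref{thm:classification_multiplicity_function}, this happens precisely when $w(y)$ is topologically nilpotent and $\mcT(w(y))$ is a single edge, and the lone resulting point is the superspecial point of that edge.

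The principal technical obstacle is the Frobenius-semilinear line count over $\mbF$ in each of the types (3), (4), (5) of Lemma \ref{lem:classif_emanating_edges}: one must verify separability of the degree-$q$ polynomial produced by the matrix $A$ and track which roots correspond to $\mbF_{q^2}$-rational lines in order to identify them with the neighbors of $\Lambda$ inside $\mcT(w(y))$. Once this counting is in place, the remaining statements follow quickly either by specialization to the non-invertible situation or by the nilpotency constraint of Lemma \ref{lem:descript_Z_y_Dieudonne}.
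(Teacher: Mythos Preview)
Your approach is the same as the paper's: reduce to Lemma \ref{lem:descript_Z_y_Dieudonne} and analyze the resulting conditions on $(\Lambda,\ell)$ via Lemma \ref{lem:classif_emanating_edges}. Two points need attention.

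First, a notational slip. You define $\bar w_\Lambda$ as the reduction of $\pi^{-n(y,\Lambda)}w(y)$, which is nonzero by construction, yet the conditions in \eqref{eq:properties_Dieudonne} concern the \emph{unnormalized} reduction $w(y)\bmod\pi\Lambda$. These coincide only when $n(y,\Lambda)=0$. So your claim ``$\bar w_\Lambda=0$'' in part (5) is false as stated; what you need (and what makes the argument go through) is that $w(y)\bmod\pi\Lambda=0$ whenever $n(y,\Lambda)\geq 1$.

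Second, in part (3) your case-by-case count runs over the invertible types (3), (4), (5) of Lemma \ref{lem:classif_emanating_edges}, but you have omitted type (6), which is also invertible and does occur: when $L/F$ is inert, $r\equiv 0\pmod 4$, and $d=0$, the set $\mcT(w(y))$ is a single vertex of valency zero, so $\bar w_\Lambda$ has no $\mbF_{q^2}$-rational stable line at all. The paper avoids the case split entirely by citing the single fact that every $\sigma$-linear automorphism of a two-dimensional $\mbF$-vector space has exactly $q+1$ stable lines (Lang's theorem: all such automorphisms are $\sigma$-conjugate to $\sigma$ itself, whose fixed lines are $\mbP^1(\mbF_q)$). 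Your Artin--Schreier approach, once extended to type (6), would also work, but is more effort than necessary.
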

\begin{figure}[h!]
\centering
\begin{tikzpicture}[scale=0.5]
    \draw[ultra thick] (-7,0) -- (7,0) node[right]{$\mathbb P^1$}; 
    \foreach \a in {-4,0,4} {
        \draw[ultra thick] plot[domain=-4:4, samples=100] (0.03*\a*\x*\x + \a, \x); 
	\pgfmathtruncatemacro\labelcurve{\a/4+2}
	\draw (0.03*\a*4*4+\a,4) node[above] {};
	\foreach \y in {-2,0,2}{
            \filldraw[black] (0.03*\a*\y*\y + \a,\y) circle (6pt); 
	}
	\foreach \y in {-3,-1,...,3}{
		\draw[black,thick,dashed] (0.03*\a*\y*\y + \a-0.8,\y)--(0.03*\a*\y*\y + \a+0.8,\y);
	}
    }
    \foreach \a in {-2,2} {
        \draw[dashed,black, thick] plot[domain=-3.5:3.5, samples=100] (0.03*\a*\x*\x + \a, \x); 
    }
\end{tikzpicture}
\caption{Illustration of case (3) of Corollary \ref{cor:support_intersection_set_theoretic} for $L/F$ inert and $q = 2$. Each line represents a curve of the special fiber of $\mcM^0_C$. The four thick lines correspond to the homothety classes $Λ$ with $n(y, Λ) = 0$. Their dual graph is depicted on the left in Figure \ref{fig:T}. The scheme $\mcZ(y)^0$ consists of $q+1$ points on each thick line. For the central curve, these points are all superspecial. For the remaining $q+1$ curves, one point is superspecial and the other $q$ are non-superspecial.}
\label{fig:embedded}
\end{figure}
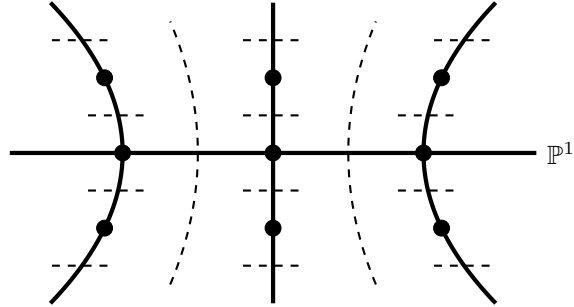
\begin{proof}
Cases (1) and (5) follow immediately from Lemma \ref{lem:descript_Z_y_Dieudonne}. For case (2), observe that \eqref{eq:properties_Dieudonne} implies that $\mcZ(y)^0\neq \emptyset$ can only hold if $w(y)$ is topologically nilpotent. Then Proposition \ref{prop:lattice_set_center_classification_nonunit} (1) ensures that $\mcT(w(y))$ is an edge and Lemma \ref{lem:classif_emanating_edges} shows that the unique $w(y)$-stable line in $P_Λ(\mbF)$ is the corresponding superspecial point. By Lemma \ref{lem:descript_Z_y_Dieudonne}, this point lies in $\mcZ(y)^0(\mbF)$ .

We turn to cases (3) and (4). Lemma \ref{lem:descript_Z_y_Dieudonne} states that $\mcZ(y)^0\cap P_Λ(\mbF)$ is non-empty only if $w(y)Λ \subseteq Λ$. Under the assumption $n(y) = 0$, this is equivalent to $Λ\in \mcT(w(y))$. Moreover, if $w(y)Λ \subseteq Λ$, then it states that $\mcZ(y)^0\cap P_Λ(\mbF)$ equals the set of $w(y)$-stable lines $\ell \in P_Λ(\mbF)$.

In case (3), if $Λ\in \mcT(w(y))$, then $w(y)$ defines a $σ$-linear automorphism of $\mbF \tensor_{\mbF_{q^2}}Λ$. It is a simple fact that every $σ$-linear automorphism of a two-dimensional $\mbF$-vector space has precisely $q+1$ fixed lines, so we obtain Identity \eqref{eq:counting_fps}. Moreover, edges emanating from $Λ$ in $\mcT(w(y))$ are in bijection with the $\mbF_{q^2}$-rational $w(y)$-fixed points in $P_Λ(\mbF)$. In particular, each such edge defines a point of $\mcZ(y)^0\cap P_Λ(\mbF)$.

The arguments for case (4) are the same. The only difference is that $w(y)\vert_Λ$, for $Λ\in \mcT(w(y))$, is not invertible anymore. This implies that every $w(y)$-stable line $\ell \in P_Λ(\mbF)$ is defined over $\mbF_{q^2}$ and hence comes from an edge of $Λ$ in $\mcT(w(y))$ as claimed.
\end{proof}

\subsection{Cartier Theory}
Let $R$ be an $O_F$-algebra in which $π$ is nilpotent. Then formal $π$-divisible groups over $R$ are equivalent to reduced $O_F$-Cartier modules over $R$, cf. \cite[\S1]{Drinfeld}. We will use this equivalence to compute the scheme structure of $\mcZ(y)$ and, to this end, collect some general results in this section.

We denote by $W_{O_F}(R)$ the ring of $O_F$-Witt vectors of $R$ with respect to the chosen uniformizer $π$ as in \cite[\S1]{Drinfeld}. For $x\in W_{O_F}(R)$, we write ${}^Fx$ and ${}^Vx$ for Frobenius and Verschiebung. They satisfy the relations
$${}^F [r] = [r^q],\quad {}^{FV}x = πx,\quad \left({}^Vx\right) \cdot y = {}^V\left(x\cdot {}^Fy\right).$$
We denote by $E(R) = W_{O_F}(R)[F, V]$ the $O_F$-Cartier ring over $R$. It is the non-commutative ring generated over $W_{O_F}(R)$ by two elements $F$ and $V$ that satisfy the relations (where $x\in W_{O_F}(R)$)
$$VxF = {}^Vx,\quad Fx = {}^FxF,\quad xV = V{}^Fx,\quad FV = π.$$
The third relation implies that if $M$ is an $E(R)$-left module, then the action of $W_{O_F}(R)$ on $M/VM$ factors through $W(R)/\,{}^VW(R) = R$.
\begin{defn}\label{def:Cartier}
A reduced $O_F$-Cartier module over $R$ is an $E(R)$-left module $M$ such that $V:M\to M$ is injective, such that $M = \lim_{i\geq 0} M/V^iM$ and such that $M/VM$ finite locally free over $R$.
\end{defn}
In the following, we will simply say ``Witt vectors'' and ``Cartier module'' instead of ``$O_F$-Witt vectors'' and ``reduced $O_F$-Cartier module''. This shall never lead to confusion.

Let $M$ be a Cartier module over $R$ such that $M/VM$ is free over $R$. Let $γ_1,\ldots,γ_d \in M$ be elements that reduce to an $R$-basis of $M/VM$. Such a tuple is called a $V$-basis for $M$. Then for every element $m\in M$ there are unique coefficients $c_{n,i}\in R$, $n\geq 0$, $i = 1,\ldots, d$, such that
\begin{equation}\label{eq:Cartier_unique}
m = \sum_{n\geq 0}\,\sum_{i = 1}^d V^n[c_{n,i}] γ_i.
\end{equation}
Conversely, by the $V$-completeness of $M$, every tuple of coefficients $c_{n,i}\in R$ defines an element of $M$ by \eqref{eq:Cartier_unique}. In particular, there are unique elements $r_{i, j, n}\in R$ such that
\begin{equation}\label{eq:Cartier_Frobenius}
Fγ_i = \sum_{n\geq 0}\,\sum_{j = 1}^d V^n[r_{i, j, n}] γ_j.
\end{equation}
These $r_{i,j,n}$ are called the structure constants of $(M, γ_1,\ldots, γ_d)$ and they uniquely determine the $E(R)$-module structure on $M$. More precisely, by \cite[Theorem 4.39]{Z_Cartier}, the $E(R)$-linear map $E(R)^d\to M$, $e_i \mapsto γ_i$ is surjective with kernel generated by the elements
\begin{equation}\label{eq:Cartier_mod_relation_general}
Fe_i - \sum_{n\geq 0}\, \sum_{j = 1}^d V^n [r_{i, j, n}] e_j,\qquad i = 1,\ldots, d.
\end{equation}

By \eqref{eq:Cartier_unique}, an $E(R)$-linear endomorphism $f:M\to M$ is uniquely determined by the images $m_i = f(γ_i)$. These may be (non-uniquely) lifted to elements $\wt m_i \in E(R)^d$. Conversely, let $(\wt m_1,\ldots,\wt m_d)\in E(R)^d$ be a tuple of elements with images $(m_1,\ldots,m_d)\in M^d$. Then the $E(R)$-linear map $E(R)^d\to E(R)^d$, $e_i \mapsto \wt m_i$ descends to $M$ if and only if it preserves the relations \eqref{eq:Cartier_mod_relation_general}, meaning that for all $i = 1, \ldots, d$, the following relation holds in $M$:
\begin{equation}\label{eq:Cartier_mod_endo_general_F}
F m_i = \sum_{n\geq 0}\,\sum_{j = 1}^d V^n [r_{i,j,n}] m_i.
\end{equation}
Recall that $π - [π] = {}^Vξ$ for a unit $ξ\in W_{O_F}(O_F)^\times$. Using the relation ${}^Vξ = VξF$ in $E(R)$, we can multiply \eqref{eq:Cartier_Frobenius} with $Vξ$ to obtain a description of multiplication by $π$, say
\begin{equation}\label{eq:Cartier_pi}
π γ_i = [π]γ_i + \sum_{n \geq 1}\,\sum_{j = 1}^d V^n[s_{i,j,n}]γ_j. 
\end{equation}
Multiplication by $V$ on a Cartier module is injective by definition so the coefficients $s_{i,j,n}$ determine the structure constants $r_{i,j,n}$ uniquely. Moreover, the relation \eqref{eq:Cartier_mod_endo_general_F} holds for a tuple $(m_1, \ldots, m_d)$ if and only if the analogous relation for multiplication by $π$ holds,
\begin{equation}\label{eq:Cartier_mod_endo_general}
π m_i = [π] m_i + \sum_{n\geq 1}\,\sum_{j = 1}^d V^n [s_{i,j,n}] m_i.
\end{equation}

\begin{prop}\label{prop:endos_general}
Let $M$ be a Cartier module over $R$ such that $M/VM$ is free over $R$. Let $γ_1,\ldots,γ_d \in M$ be a $V$-basis and let $(m_1,\ldots,m_d)\in M^d$ be a tuple of elements. Let $f:M\to M$ be the map of sets defined by
\begin{equation}\label{eq:set_map}
\sum_{n\geq 0}\,\sum_{i = 1}^d V^n[c_{n,i}] γ_i \longmapsto \sum_{n\geq 0}\,\sum_{i = 1}^d V^n[c_{n,i}] m_i.
\end{equation}
Then $f$ defines a Cartier module endomorphism of $M$ if and only if $(f\circ π)(γ_i) = (π\circ f)(γ_i)$ for all $i = 1,\ldots,d$.
\end{prop}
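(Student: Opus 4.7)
The ``only if'' direction is trivial, since any $E(R)$-linear map commutes with the element $π = FV \in E(R)$. So the content lies entirely in the converse, and the plan is to reduce $E(R)$-linearity of the set map $f$ to the single condition that $f$ commutes with $π$ on the $V$-basis.

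First I would record the two automatic properties of $f$ that follow directly from its definition \eqref{eq:set_map}. Applying $V$ shifts the index $n \mapsto n+1$ in the expansion \eqref{eq:Cartier_unique}, and the definition of $f$ is coordinate-wise in that same indexing, so $f(Vm) = V f(m)$. Left multiplication by a Teichmüller lift $[c]$ is handled using $[c]\, V = V\,[c^q]$, which gives $[c]\cdot V^n[c_{n,i}]γ_i = V^n[c^{q^n}c_{n,i}]γ_i$, and the same identity with $γ_i$ replaced by $m_i$; comparing expansions yields $f([c]\,m) = [c]\,f(m)$.

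Next I would upgrade these two facts to full additivity and $W_{O_F}(R)$-linearity of $f$. Every Witt vector is built, via universal polynomial formulas in the Teichmüller coordinates, out of Teichmüller lifts and the operator $V$; since $f$ respects both, and since those universal formulas do not involve the specific basis, $f$ respects Witt-vector addition and multiplication. In particular, applying $f$ to any expression of the form $\sum V^n [c_n] \cdot γ_i$ produces the corresponding expression in the $m_i$, and this is compatible with the $W_{O_F}(R)$-algebra structure. The only somewhat delicate point is being careful with this Witt-vector bookkeeping; this is the main technical obstacle, though it is fully formal.

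Finally, I would check the remaining relation, namely compatibility with $F$. By the presentation of $M$ via the kernel generators \eqref{eq:Cartier_mod_relation_general}, an $E(R)$-linear map $M \to M$ sending $γ_i \mapsto m_i$ exists if and only if the tuple $(m_i)$ satisfies the Frobenius relation \eqref{eq:Cartier_mod_endo_general_F}. By the discussion preceding the proposition (injectivity of $V$ and the substitution $π - [π] = V ξ F$ applied to \eqref{eq:Cartier_Frobenius}), that Frobenius relation is equivalent to the $π$-multiplication relation \eqref{eq:Cartier_mod_endo_general}. Applying the already-established $W_{O_F}(R)$- and $V$-linearity of $f$ to the defining identity \eqref{eq:Cartier_pi} for the structure constants $s_{i,j,n}$ gives
\[
f(π\, γ_i) \;=\; [π]\, m_i + \sum_{n \geq 1}\sum_{j=1}^{d} V^n [s_{i,j,n}]\, m_j,
\]
so the condition $(f\circ π)(γ_i) = (π\circ f)(γ_i) = π\, m_i$ is exactly the relation \eqref{eq:Cartier_mod_endo_general} for $(m_1,\ldots,m_d)$. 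This produces an $E(R)$-linear endomorphism of $M$ that agrees with the set map $f$ on the $V$-basis, and hence agrees with $f$ everywhere by uniqueness of the expansion \eqref{eq:Cartier_unique}.
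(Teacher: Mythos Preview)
Your middle paragraph contains a genuine gap. You claim that additivity of $f$ follows from its compatibility with $V$ and with Teichm\"uller lifts, because ``those universal formulas do not involve the specific basis.'' But they do. When you add two Teichm\"uller lifts in $W_{O_F}(R)$ you get $[c]+[c'] = [c+c'] + {}^V[S_1] + {}^{V^2}[S_2] + \cdots$, and inside the Cartier ring the Witt Verschiebung is ${}^V x = VxF$. So computing the $V$-series expansion of $[c]\gamma_i + [c']\gamma_i$ already forces you to expand $F\gamma_i$ via the structure constants $r_{i,j,n}$. Additivity of the substitution map $f$ is therefore \emph{equivalent} to the Frobenius relation \eqref{eq:Cartier_mod_endo_general_F}, not a free consequence of $V$- and $[c]$-compatibility.

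The good news is that you do not need that paragraph at all. Your final paragraph is the whole proof, and it does not actually rely on additivity: the identity \eqref{eq:Cartier_pi} is \emph{already} the canonical $V$-series expansion of $\pi\gamma_i$, so the value $f(\pi\gamma_i) = [\pi]m_i + \sum_{n\geq 1}\sum_j V^n[s_{i,j,n}]m_j$ comes straight from the definition \eqref{eq:set_map}, not from any linearity. The hypothesis $(f\circ\pi)(\gamma_i) = \pi m_i$ is then literally \eqref{eq:Cartier_mod_endo_general}, which by the preceding discussion is equivalent to the existence of an $E(R)$-linear endomorphism $g$ with $g(\gamma_i)=m_i$; and $g=f$ since both agree on each $V^n[c]\gamma_i$ and $g$ is $V$-adically continuous. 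This is exactly the paper's one-line proof (``a reformulation of what was said in conjunction with \eqref{eq:Cartier_mod_endo_general}''). Delete the second paragraph and replace ``Applying the already-established $W_{O_F}(R)$- and $V$-linearity'' with ``By definition of $f$,'' and your argument is complete.
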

\begin{proof}
This is a reformulation of what was said in conjunction with \eqref{eq:Cartier_mod_endo_general}.
\end{proof}

\begin{defn}\label{def:V_series_substitution}
Let $γ_1,\ldots, γ_d$ be a $V$-basis of a Cartier module $M$ and let $(m_1,\ldots, m_d)\in M^d$ be a tuple of elements. We call the map in \eqref{eq:set_map} the $V$-series substitution map defined by $γ_i\mapsto m_i$, $i = 1,\ldots, d$.
\end{defn}

Assume that $R$ is equipped with the structure of an $O_{\breve F}$-algebra and let $M$ be the Cartier module of a special $O_C$-module $(Y, ι)$ over $R$. Then the $O_C$-action provides a $\mbZ/4\mbZ$-grading
\begin{equation}\label{eq:Cartier_mod_grading}
M = M_0 \oplus M_1 \oplus M_2 \oplus M_3,\ \ \ \deg V = 1,\ \deg F = -1,\ \deg \varpi = 2.
\end{equation}
We assume that each component of $\Lie(Y) = M_0/VM_3 \oplus M_2/VM_1$ is free as $R$-module and fix a homogeneous $V$-basis $γ_0\in M_0$ and $γ_2\in M_2$. Then the general descriptions in \eqref{eq:Cartier_pi} and \eqref{eq:Cartier_mod_endo_general} simplify drastically because of the grading. Namely, a homogeneous element $m\in M_i$ has a $V$-series expansion of the form
\begin{equation}\label{eq:generic_element_Cartier_module}
m = \begin{cases} [r_0]γ_i + V^2[r_2]γ_{i+2} + V^4[r_4]γ_i + \ldots & \text{if $i$ is even}\\
V[r_1]γ_{i-1} + V^3[r_3]γ_{i+1} + V^5[r_5]γ_{i-1} + \ldots & \text{if $i$ is odd}
\end{cases}
\end{equation}
with unique coefficients $r_i\in R$. So an endomorphism $f$ of $M$ that is homogeneous of some odd degree $i$, say, is uniquely described by two $V$-series
\begin{equation}\label{eq:generic_odd_degree_endo}
f:\begin{cases}
γ_0 \longmapsto V[a_1]γ_{i-1} + V^3[a_3]γ_{i+1} + V^5[a_5]γ_{i-1} + \ldots,\\
γ_2 \longmapsto V[b_1]γ_{i+1} + V^3[b_3]γ_{i-1} + V^5[b_5]γ_{i+1} + \ldots.
\end{cases}
\end{equation}
We ultimately care about the endomorphisms $y\in S_{3/4}$. These are precisely those endomorphisms that are homogeneous of degree $i = 1$ and commute with $\varpi$. Let
\begin{equation}\label{eq:expansion_varpi}
\varpi:\begin{cases}
γ_0 \longmapsto [x_0]γ_2 + V^2[x_2]γ_0 + V^4[x_4]γ_2 + \ldots,\\
γ_2 \longmapsto [y_0]γ_0 + V^2[y_2]γ_2 + V^4[y_4]γ_0 + \ldots
\end{cases}
\end{equation}
be the $V$-series expansion of $ι(\varpi)$. Note that $x_0y_0 = π$ holds because $\varpi^2 = π$ acts as $π$ on $\Lie(M) = M/VM$.
\begin{cor}\label{cor:char_endos}
Let $i \in \{1, 3\}$ and let $a_1,a_3,a_5,\ldots,b_1,b_3,b_5,\ldots \in R$ be any elements. Let $f:M\to M$ be the $V$-series substitution map defined by \eqref{eq:generic_odd_degree_endo}. Then $f$ defines a $\varpi$-linear endomorphism of $M$ if and only if $(\varpi \circ f)(γ_j) = (f\circ \varpi)(γ_j)$ for both $j = 0,2$.
\end{cor}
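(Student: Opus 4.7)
The plan is to deduce the corollary from Proposition \ref{prop:endos_general} by a direct reduction. The ``only if'' direction is immediate since a $\varpi$-linear endomorphism must in particular commute with $\varpi$ on the $V$-basis. For the ``if'' direction, I will argue in two steps: first, bootstrap the hypothesized $\varpi$-commutativity from the basis elements $\gamma_0,\gamma_2$ to all of $M$; second, use this to verify the $\pi$-commutativity hypothesis of Proposition \ref{prop:endos_general}.

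For the first step, recall that the $V$-series substitution map $f$ is, by Definition \ref{def:V_series_substitution}, additive and satisfies $f(V^n[c]\gamma_j) = V^n[c]\,f(\gamma_j)$ for all $c\in R$, $n\geq 0$ and $j\in\{0,2\}$. On the other hand, the operator $\varpi$ comes from the $O_C$-action $\iota$, hence is $E(R)$-linear and in particular commutes with $V$ and with multiplication by Teichm\"uller elements $[c]$. Thus for any $m\in M$ with $V$-series expansion $m = \sum_{n\geq 0} V^n[c_n]\gamma_{i(n)}$ (with $i(n)\in\{0,2\}$), one has
\begin{equation*}
f(\varpi m) \;=\; \sum_{n\geq 0} V^n[c_n]\,f(\varpi\gamma_{i(n)}) \;=\; \sum_{n\geq 0} V^n[c_n]\,\varpi f(\gamma_{i(n)}) \;=\; \varpi f(m),
\end{equation*}
where the first equality uses $V$-series linearity of $f$ applied to the expansion of $\varpi m$, the second uses the hypothesis on the basis, and the third uses $E(R)$-linearity of $\varpi$. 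Hence $f\circ\varpi = \varpi\circ f$ holds on all of $M$.

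For the second step, apply this extended commutativity to the (non-basis) elements $\varpi\gamma_j\in M_{j+2}$: for $j\in\{0,2\}$,
\begin{equation*}
f(\pi\gamma_j) \;=\; f(\varpi\cdot\varpi\gamma_j) \;=\; \varpi\,f(\varpi\gamma_j) \;=\; \varpi\cdot\varpi f(\gamma_j) \;=\; \pi\,f(\gamma_j),
\end{equation*}
using the hypothesis once more at the final step. Proposition \ref{prop:endos_general} then yields that $f$ is a Cartier module endomorphism, and combined with the first step this endomorphism commutes with $\varpi$, i.e.\ it is $\varpi$-linear.

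The argument is essentially formal, and the only mild subtlety is that the hypothesized $\varpi$-commutativity is given only at $\gamma_0,\gamma_2$, whereas verifying the $\pi$-commutativity via Proposition \ref{prop:endos_general} requires evaluating $f$ at $\varpi\gamma_j$, which are not basis elements; the first step is precisely what is needed to bridge this gap.
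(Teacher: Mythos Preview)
Your overall strategy matches the paper's: deduce the $\pi$-commutativity on the $V$-basis from the $\varpi$-commutativity via $\varpi^2=\pi$, then invoke Proposition~\ref{prop:endos_general}. The paper does this in one line (``because $\varpi^2=\pi$''); you try to supply the missing detail by first extending $\varpi$-commutativity from $\gamma_0,\gamma_2$ to all of $M$. That is exactly the right subtlety to isolate, but the way you bridge it has a genuine gap.

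The problem is your claim that the $V$-series substitution map $f$ is additive. Definition~\ref{def:V_series_substitution} only defines $f$ as a \emph{map of sets}: given $m\in M$, pass to its unique $V$-series $\sum_n V^n[c_{n}]\gamma_{j(n)}$ and replace each $\gamma_j$ by $m_j$. It is true that $f(V^n[c]\,X)=V^n[c]\,f(X)$ for every $X\in M$ (left multiplication by $V^n[c]$ preserves $V$-series form), but additivity fails in general: for $X,X'\in M$ the element $X+X'$ has coefficients $[c_n]+[c'_n]\in W_{O_F}(R)$, which are not Teichm\"uller lifts. Rewriting them as such produces ``carry'' terms of the shape $V^{n+1}w\,F\gamma_{j}$, so the $V$-series of $X+X'$ depends on the Frobenius structure constants of $M$. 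One checks that $f(X+X')-f(X)-f(X')=\psi(\text{element of }\ker\phi)$, where $\psi\colon E(R)^2\to M$ is the $E(R)$-linear map $e_j\mapsto m_j$ and $\phi$ is the presentation $e_j\mapsto\gamma_j$; so $f$ is additive \emph{precisely when} $\psi$ kills $\ker\phi$, i.e.\ when $f$ is already a Cartier module endomorphism. This makes your first displayed equality
\[
f(\varpi m)\;=\;\sum_n V^n[c_n]\,f(\varpi\gamma_{i(n)})
\]
circular: the expression $\sum_n V^n[c_n]\,\varpi\gamma_{i(n)}$ is not the $V$-series of $\varpi m$ in the basis $\gamma_0,\gamma_2$, and turning it into one is exactly where the unproven endomorphism property is needed.

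In short, your proof has the same skeleton as the paper's, but the additional step you insert to justify the passage from $\varpi$-commutativity on the basis to $\pi$-commutativity on the basis assumes what is to be proved. (One may note that the paper's one-line justification ``because $\varpi^2=\pi$'' skates over the very point you tried to address; a fully rigorous argument would need to show, for instance, that the elements $(\tilde\varpi^2-\pi)(e_j)$ already generate $\ker\phi$, where $\tilde\varpi$ is the $E(R)$-linear lift of $\varpi$ to $E(R)^2$.)
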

\begin{proof}
The ``only if'' direction is clear. Conversely, the assumption implies that $(π\circ f)(γ_j) = (f\circ π)(γ_j)$ for both $j = 0,2$ because $\varpi^2 = π$. Then apply Proposition \ref{prop:endos_general}.
\end{proof}

\subsection{Intersection Locus (Superspecial Points)}
\label{ss:superspecial}
Throughout this section, let $y\in S_{3/4,\mr{rs}}$ be a regular semi-simple element. Our aim is to determine the scheme structure of $\mcZ(y)\subset \mcM_C$. Let $L = F[\varpi y^2]$ be the quadratic étale extension of $F$ defined by $\Inv(y; T)$.

Recall from Definition \ref{def:operator_w} that $w(y) = V^{-1}y$ was defined as a $σ$-linear endomorphism of the isocrystal of $\mbY$. We now change this notation slightly and only consider the restriction $w(y)\vert_{W}$ which we still denote by $w(y)$. Here, $W = N_0^{V^2 = \varpi}$ as in \S\ref{ss:inter_set_theoretic}. The same change of notation applies to $w(\varpi y)$. Then there are the identities
\begin{equation}\label{eq:squares_w}
w(\varpi y) = πw(y),\quad w(y)^2 = π^{-1}\cdot \varpi y^2\quand w(\varpi y)^2 = π\cdot \varpi y^2.
\end{equation}
In particular, we can view both $w(y)^2$ and $w(\varpi y)^2$ as elements of $L$. By Lemma \ref{lem:descript_Z_y_Dieudonne}, $\mcZ(y) \neq \emptyset$ only if $w(y)^2 \in O_L$. We from now on impose this assumption.

The arguments in this section will exploit the inclusions
$$\mcZ(\varpi^{-1} y) \subseteq \mcZ(y) \subseteq \mcZ(\varpi y).$$
Here, $\varpi y$ and $\varpi^{-1}y$ both lie in $S_{1/4}$, and $\varpi y$ is regular semi-simple under the assumption $w(y)\in O_L$. In terms of \eqref{eq:endo_rings_CDA_deg_4}, $\varpi y$ defines the element $g = 1 + \left(\begin{smallmatrix} & \varpi\cdot \varpi y \\ \varpi y\end{smallmatrix}\right) \in G_{1/4, b, \mr{rs}}$. By \eqref{eq:squares_w}, its invariant is
\begin{equation}\label{eq:ident_invariants}
\Inv(g; T) = \mr{charred}_{M_2(E)/E}(π\cdot\varpi y^2) = \Inv(1 + w(\varpi y); T).
\end{equation}
All results of \S\ref{s:1_4} apply to the elements $g\in G_{1/4, b, \mr{rs}}$ and $1 + w(\varpi y) \in GL_F(W)_{\mr{rs}}$. In particular, the equality of invariants \eqref{eq:ident_invariants} shows that the following three schemes are all isomorphic
\begin{equation}\label{eq:isom_1_4}
\mcZ(\varpi y)^i \ \ \iso\ \ \mcM_C^i\cap g\cdot \mcM_C^i\ \ \underset{\text{\S\ref{s:1_4}}}{\iso}\ \ \bOmega_E(W)\cap (1 + w(\varpi y))\cdot \bOmega_E(W).
\end{equation}
Here, $i\in 4\mbZ$ and $\mcZ(\varpi y)^i$ as well as $\mcM_C^i$ again denote the loci where the height of $ρ$ equals $i$.

After these preparations, we now formulate and prove our results. The following three propositions will respectively concern the case where $w(y)^2$ lies in $O_L^\times$, in $O_L\setminus (O_L^\times \cup π^2O_L)$, or in $π^2O_L$. This matches the three cases (3), (4) and (5) in Corollary \ref{cor:support_intersection_set_theoretic}.

\begin{prop}\label{prop:artinian_case_3}
Let $y\in S_{3/4, \mr{rs}}$ be regular semi-simple with $w(y)^2 \in O_L^\times$. Then $\mcZ(y)$ is artinian and each connected component is of length $1$.
\end{prop}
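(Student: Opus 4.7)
The plan is to combine the set-theoretic description from Corollary \ref{cor:support_intersection_set_theoretic}(3) with an explicit Cartier-theoretic computation at each closed point. First, I would observe that the hypothesis $w(y)^2 \in O_L^\times$ places us in case (3) of that corollary: indeed, $\det_E(w(y)^2) = N_{(E\tensor_F L)/E}(w(y)^2)$ is a unit, and $n(y) = 0$ by Lemma \ref{lem:maximum_of_multiplicity_function}. Hence $\mcZ(y)^0(\mbF)$ consists, for each vertex $Λ\in \mcT(w(y))$, of exactly $q+1$ lines in $P_Λ(\mbF)$ fixed by the $σ$-linear automorphism $w(y)\vert_Λ$. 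Modulo the action of $L^\times$ (a discrete group of finite covolume in $\mcM_C$), the set $\mcT(w(y))$ is finite by Theorem \ref{thm:classification_multiplicity_function}, so $\mcZ(y)$ has finite underlying set and is therefore a finite disjoint union of artinian local schemes. The remaining task is to show each of them has length one.

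For the local length computation, I would fix a closed point $(Y,ι,ρ)\in \mcZ(y)(\mbF)$ with underlying lattice $Λ \in \mcT(w(y))$ and fixed line $\ell \subset Λ/πΛ$, and work in the universal deformation ring $R$ of $(Y,ι)$ along $\mcM_C$. Choose a homogeneous $V$-basis $γ_0\in M_0$, $γ_2\in M_2$ of the associated Cartier module over $R$ such that the action of $\varpi$ admits the normal form \eqref{eq:expansion_varpi}, with $x_0,y_0$ chosen to reflect the position of $\ell$ with respect to the critical index. By Corollary \ref{cor:char_endos}, lifting $y$ to an endomorphism of the universal deformation is encoded by two $V$-series $f(γ_0)$ and $f(γ_2)$ as in \eqref{eq:generic_odd_degree_endo} (of degree $i=1$), subject to the two commutation relations $(\varpi\circ f)(γ_j) = (f\circ \varpi)(γ_j)$, $j=0,2$. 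Reducing these relations modulo the maximal ideal $\mfm_R$ and comparing the leading $V$-coefficients, the hypothesis $w(y)^2 \in O_L^\times$ together with the condition $w(y)\ell \subseteq \ell$ at the closed point forces the two relations to cut out an ideal of $R$ that contains a regular system of parameters of $\mcM_C$, up to a unit scalar.

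The cleanest way to verify the last step is to exploit the squeeze $\mcZ(\varpi y) \subseteq \mcZ(y) \subseteq \mcZ(\varpi^{-1}y)$ from \S\ref{ss:simplified_intersection_3_4}, combined with \eqref{eq:isom_1_4}: both outer cycles are described by the $λ=1/4$ theory of \S\ref{s:1_4} via intersections in $\bOmega_E(W)$, where the operators $w(\varpi y) = πw(y)$ and $w(\varpi^{-1} y) = π^{-1}w(y)$ have numerical invariants that can be read off from that of $w(y)$. Specifically, $\mcZ(\varpi^{-1}y)^0$ (for which $n(\varpi^{-1} y)\geq 1$) contains each $P_Λ$ with $Λ\in\mcT(w(y))$ with positive multiplicity, while $\mcZ(\varpi y)^0$ (for which $n(\varpi y) \leq -1$) sees only the superspecial points corresponding to edges of $\mcT(w(y))$. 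Combined with the Cartier-theoretic analysis, this squeeze identifies the Zariski-tangent directions at each $\mbF$-point of $\mcZ(y)^0$ as being transverse to the two Cartier-theoretic defining relations, so the ideal is maximal and the length equals one. The main obstacle will be the explicit bookkeeping of the leading terms of the two $V$-series relations at the $q$ non-superspecial points on each $P_Λ$: at a superspecial point the operator $\varpi$ has a symmetric normal form and the transversality is manifest, but at a non-superspecial but $\mbF_{q^2}$-rational fixed line $\ell$ one needs to leverage the fact that $w(y)^2 \in O_L^\times$ acts as a unit to ensure that both of the equations genuinely contribute, yielding length exactly one rather than length $q$ or $q+1$.
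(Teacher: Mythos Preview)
Your proposal contains a concrete error in the squeeze argument and misses the key idea that makes this proposition much easier than its neighbors.

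First, the multiplicities are swapped. By \eqref{eq:squares_w} one has $w(\varpi y) = \pi\,w(y)$, hence $n(\varpi y,\Lambda) = n(y,\Lambda)+1$, so under the hypothesis $n(y)=0$ one gets $n(\varpi y)=1$: it is $\mcZ(\varpi y)$, not $\mcZ(\varpi^{-1}y)$, that contains each $P_\Lambda$ with $\Lambda\in\mcT(w(y))$. Conversely $w(\varpi^{-1}y)=w(y)$ on $W$ (use $\varpi=V^2$ on the $\tau$-invariants), so $n(\varpi^{-1}y)=0$, and since $w(y)^2\in O_L^\times$ is not topologically nilpotent, Corollary \ref{cor:support_intersection_set_theoretic}(2) gives $\mcZ(\varpi^{-1}y)=\emptyset$. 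Thus the corrected squeeze only yields $\mcZ(y)\subseteq\mcZ(\varpi y)=V(\pi)$ near $z$, i.e.\ $\pi\in I$; the lower bound is vacuous. This leaves the entire tangent direction along $P_\Lambda$ unaccounted for, and your ``transversality'' assertion is precisely the content that remains to be proved.

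Second, the paper does \emph{not} use Cartier theory here. Instead it gives a two-line Grothendieck--Messing argument: once $\pi\in I$, it suffices to show there is at most one $y$-stable special Hodge filtration $\mcF\subset\mcD$ over any square-zero thickening $S\twoheadrightarrow\mbF$. One already has $\mcF_i=\mcD_i$ for $i=1,3$ by the special condition, and the hypothesis $w(y)^2\in O_L^\times$ forces $y:\mcD_{i-1}\to\mcD_i$ to have rank exactly $1$ modulo $\mfm$ for $i=0,2$; hence the only candidate is $\mcF_i=y\,\mcD_{i-1}$. This uniqueness gives formal unramifiedness and hence length $1$ directly, at superspecial and non-superspecial points alike, with no case distinction.

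Your proposed route via the $V$-series relations of Corollary \ref{cor:char_endos} is what the paper uses for the genuinely harder Propositions \ref{prop:artinian_case_4} and \ref{prop:artinian_case_5}, where no such rigidity is available. But the normal form of Lemma \ref{lem:Cartier_module_special_point} is specific to superspecial points; at the non-superspecial fixed lines you would need something like the display-theoretic analysis of \S\ref{ss:displays}, which is considerably heavier than the one-paragraph crystal argument the situation actually demands.
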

\begin{proof}
Let $z\in \mcZ(y)(\mbF)$ be any point. Using the action of $H_b$, we may assume $z\in \mcZ(y)^0$. Let $R$ be the complete local ring $\widehat{\mcO}_{\mcM_C,z}$, let $\mfm$ be its maximal ideal, and let $I\subset R$ be the ideal defining $\mcO_{\mcZ(y),z}$.

By Corollary \ref{cor:support_intersection_set_theoretic}, if $z\in P_Λ(\mbF)$, then $n(w(y), Λ) = 0$. It follows that $n(w(\varpi y), Λ) = 1$. Moreover, by \eqref{eq:squares_w}, the assumption $w(y)^2 \in O_L^\times$ implies that $w(\varpi y)^2 \in π^2O_L^\times$. By Propositions \ref{prop:multiplicities_1_4} and \ref{prop:embedded_components_1_4} as well as \eqref{eq:isom_1_4}, $\mcZ(\varpi y)$ equals the vanishing locus $V(π)$ in a neighborhood of $z$. Since $\mcZ(y)\subseteq \mcZ(\varpi y)$, this implies $π\in I$ and it is left to show that the structure map $O_{\breve F} \to R/I$ is formally unramified.

Consider for this a square-zero thickening $S\twoheadrightarrow R/\mfm$ endowed with trivial PD-structure. Denote by $\mcD = \mcD_0\oplus \ldots \oplus \mcD_3$ the evaluation of the Grothendieck--Messing $O_F$-crystal of the special $O_C$-module $(Y, ι)$ over $R/\mfm$. Then $y$ lifts to a degree $1$ homomorphism $y:\mcD\to \mcD$. We claim that if $w(y)^2\in O_L^\times$, then there is at most one possibility for a $y$-stable and $O_C$-stable Hodge filtration $\mcF\subset \mcD$. Namely any such filtration would be graded $\mcF = \mcF_0\oplus\ldots\oplus \mcF_3$ with $\mcF_i\subseteq \mcD_i$ and
$$\mr{rk}_S(\mcF_i) = \begin{cases} 1 & \text{if $i = 0,2$}\\
2 & \text{if $i = 1,3$}.\end{cases}$$
So we already have $\mcF_i = \mcD_i$ if $i = 1,3$. Furthermore, $w(y)^2\in O_L^\times$ implies that the map $y:\mcD_{i-1}\to \mcD_i$ has rank $1$ mod $\mfm$ if $i = 0,2$. (This can be read off from the Dieudonné module.) Then we have at most the possibility $\mcF_i = y\mcD_{i-1}$ for $i = 0,2$, proving both the claim and the proposition. (The images $y\mcD_{i-1}$ need not be line bundles in general which provides an additional obstruction to deforming $(Y, ι, y)$. This does not matter for the argument however.)
\end{proof}

\begin{prop}\label{prop:artinian_case_4}
Let $y\in S_{3/4, \mr{rs}}$ be regular semi-simple with $w(y)^2\in O_L \setminus (O_L^\times \cup π^2O_L)$. Then $\mcZ(y)$ is artinian and has the following properties:
\begin{enumerate}[wide, labelindent=0pt, labelwidth=!, label=(\arabic*), topsep=2pt, itemsep=2pt]
\item If $L$ is a field, then each connected component of $\mcZ(y)$ has length $2 + 2q$.
\item If $L \iso F\times F$ is split, then each connected component of $\mcZ(y)$ has length $q$.
\end{enumerate}
\end{prop}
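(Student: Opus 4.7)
Plan. The hypothesis forces $\det_E(w(y)^2) \in O_E \setminus O_E^\times$, so Corollary \ref{cor:support_intersection_set_theoretic}(4) applies: the set $\mcZ(y)(\mbF)$ is discrete and consists exactly of the superspecial points of $\mcM_C$ that correspond to edges of $\mcT(w(y))$. Theorem \ref{thm:classification_multiplicity_function} says $\mcT(w(y))$ is an edge when $L$ is a field and an apartment when $L \iso F\times F$, and in either case each connected component of $\mcZ(y)$ is concentrated at a single such superspecial point $z$. The $H_b$-action is transitive on superspecial points (up to translation between connected components of $\mcM_C$), so the local length $\ell := \mr{len}\bigl(\widehat{\mcO}_{\mcZ(y), z}\bigr)$ is independent of $z$. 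Proving $\ell < \infty$ at one chosen point will simultaneously establish artinianness and give the length of every connected component.

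I would fix $z \in \mcM_C^0(\mbF)$ and work in $R := \widehat{\mcO}_{\mcM_C, z} \iso O_{\breve F}[\![u,v]\!]/(uv-\pi)$, using the presentation from Drinfeld's theorem recalled in \S\ref{ss:Drinfeld_Theorem}. The sandwich $\mcZ(\varpi y) \subseteq \mcZ(y) \subseteq \mcZ(\varpi^{-1}y)$ from \S\ref{ss:simplified_intersection_3_4} is the key organizational tool. Via \eqref{eq:isom_1_4}, both outer terms are intersection loci on the invariant-$1/4$ side, whose local structure at $z$ was already determined in \S\ref{s:1_4} (Propositions \ref{prop:multiplicities_1_4} and \ref{prop:embedded_components_1_4}, in particular via the local equations \eqref{eq:local_large_open_Drinfeld}). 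The resulting ideals $I_{\varpi y} \supseteq I_y \supseteq I_{\varpi^{-1}y}$ in $R$ sandwich $\ell$ and already force $\pi \in I_y$.

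To pin down $I_y$ precisely I would use the Cartier-theoretic setup of \S\ref{ss:superspecial}. Let $M$ be the Cartier module over $R$ of the universal special $O_C$-module, with a homogeneous $V$-basis $\gamma_0 \in M_0$, $\gamma_2 \in M_2$, and with the $V$-series expansion \eqref{eq:expansion_varpi} of $\varpi$. The quasi-isogeny $\rho$ specifies target elements $y\gamma_j \in M[1/\pi]$ of the shape \eqref{eq:generic_odd_degree_endo}, whose leading $V$-series coefficients are read off from the matrix of $w(y)$ in an $O_E$-basis of $W$ adapted to the edge corresponding to $z$. By Corollary \ref{cor:char_endos}, $y$ extends to an endomorphism of $M$ modulo $I$ iff the $V$-series substitution map $\tilde y$ defined by $\gamma_j \mapsto y\gamma_j$ commutes with $\varpi$ modulo $I$. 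Writing out $(\varpi \circ \tilde y - \tilde y \circ \varpi)(\gamma_j)$ for $j = 0, 2$ produces an explicit finite list of generators of $I_y$ as polynomials in $u, v$ and the entries of $w(y)$.

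The endgame is then a direct length computation in $R/I_y$. In the field case the minimal polynomial of $w(y)^2$ is irreducible over $F$, and the two relations generating $I_y/(\pi)$ combine to give length $2 + 2q$: the $2$ matches the two endpoints of the edge (already visible in $\mcZ(\varpi y)$), while the $2q$ comes from embedded structure along the two $\mbP^1$-directions at $z$. In the split case $L \iso F \times F$ aligns the two relations along a single eigendirection, leaving only length $q$. The main obstacle is carrying the $V$-series expansion of $\varpi \circ \tilde y$ far enough to reliably identify every generator of $I_y$; the sandwich with the $1/4$-side gives useful consistency checks but does not by itself pin down $\ell$.
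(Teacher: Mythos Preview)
Your plan matches the paper's approach: localize at a superspecial point, use the sandwich $\mcZ(\varpi y) \subseteq \mcZ(y)$ together with the invariant-$1/4$ results to get $\pi(\bu,\bv) \subseteq I_y$, and then analyze the commutation $\varpi \circ \tilde y = \tilde y \circ \varpi$ via Cartier theory and Corollary~\ref{cor:char_endos}.

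The gap is in the endgame. You write that the obstruction $(\varpi \circ \tilde y - \tilde y \circ \varpi)(\gamma_j)$ ``produces an explicit finite list of generators of $I_y$'', but this is not immediate: the $V$-series are infinite, and the resulting ideal depends on the chosen lift $\tilde y$ of $y$ from $\End(\mbM)$ to a $V$-series substitution on $M$. The ideal $I_y$ is the \emph{smallest} ideal $J$ for which some lift becomes an honest endomorphism modulo $J$, so you need a two-sided argument. The paper handles this as follows.
For the upper bound on length, pass to $\ob R = R/(\mfm^{q^3} + \pi(\bu,\bv))$; there $[\bu]V^n = [\bv]V^n = 0$ for $n \geq 3$, so the $V$-series for $\varpi \circ \tilde y$ and $\tilde y \circ \varpi$ collapse to the explicit formulas \eqref{eq:y_varpi}--\eqref{eq:varpi_y}. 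Comparing $V$- and $V^3$-coefficients gives four relations (Lemma~\ref{lem:leading_term_solution}) which, after simplification using Lemma~\ref{lem:coefficients_of_y} on the residue-field values of $a_1,b_1,a_3,b_3$, already force length $\leq 2+2q$ (field case) or $\leq q$ (split case).
For the lower bound, one must exhibit a lift $\tilde y$ for which the commutation holds \emph{exactly} over the candidate quotient $\ob R/J_{a_3,b_3}$. This is Lemma~\ref{lem:lifting_coefficients}: the higher coefficients $a_5,b_5,a_7,\ldots$ are determined inductively by the system \eqref{eq:main_etale}, which is \'etale in those variables once $a_1,b_1$ are fixed as in Definition~\ref{def:a1}. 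Your proposal acknowledges the obstacle (``carrying the $V$-series expansion far enough'') but does not state this resolution; without the truncation trick and the \'etale lifting argument, the computation does not close.

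One smaller point: your appeal to $H_b$-transitivity on superspecial points to conclude uniformity of the length is unnecessary and slightly delicate (the action conjugates $y$). The paper simply computes at an arbitrary $z$; uniformity falls out because the normal form for $\varpi$ (Lemma~\ref{lem:Cartier_module_special_point}) and the constraints on the leading coefficients (Lemma~\ref{lem:coefficients_of_y}) are the same at every such point.
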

\begin{proof}
Let $z\in \mcZ(y)$ be any point. Using the action of $H_b$, we may assume that $z\in \mcZ(y)^0$. Corollary \ref{cor:support_intersection_set_theoretic} (4) states that $\mcZ(y)^0(\mbF)$ consists only of superspecial points, so $z$ is superspecial and the complete local ring $\widehat{\mcO}_{\mcZ(y),z}$ artinian. 

Let $R = \widehat{\mcO}_{\mcM_C,z}$ be the complete local ring of $\mcM_C$ in $z$. It is isomorphic to $O_{\breve F}[\![\bu, \bv]\!]/(\bu\bv-π)$. The elements $\bu,\bv\in R$ are uniquely determined up to interchanging them and/or scaling them in the way $(\bu, \bv)\mapsto (t\bu, t^{-1}\bv)$ for a unit $t\in R^\times$. The following auxiliary result allows to make a matching choice of coordinates for both $R$ and the Cartier module $M$ of the universal special $O_C$-module over $R$.

\begin{lem}\label{lem:Cartier_module_special_point}
For a suitable choice of coordinates $\bu, \bv\in R$ and a suitable $V$-basis $γ_0\in M_0$, $γ_2\in M_2$, the $V$-series presentation of $\varpi$ is
\begin{equation}\label{eq:varpi_simple_special}
\varpi:\begin{cases}
γ_0 \longmapsto [\bu]γ_2 + V^2γ_0\\
γ_2 \longmapsto [\bv]γ_0 + V^2γ_2.
\end{cases}
\end{equation}
\end{lem}
\begin{proof}
Given a $V$-basis $γ_0\in M_0$ and $γ_2\in M_2$ the $V$-series presentation of $\varpi$ has the form
\begin{equation}
\varpi:\begin{cases}
γ_0 \longmapsto [a_0]γ_2 + V^2[a_2]γ_0 + V^4[a_4]γ_2 + \ldots\\
γ_2 \longmapsto [b_0]γ_0 + V^2[b_2]γ_2 + V^4[b_4]γ_0 + \ldots
\end{cases}
\end{equation}
with $a_0b_0 = π$. If $a_2 = b_2 = 1$ and if all higher coefficients vanish, then we may put $\bu = a_0$ and $\bv = b_0$, and are done. So our aim is to arrange this situation for all coefficients in degree $\geq 2$.

Write $M^{(n)}$ for the Cartier module obtained by base change to $R/\mfm^n$. Then $M^{(0)}$ is precisely the Dieudonné module of the $π$-divisible group over the closed point. The point $z$ in question is superspecial, meaning both indices are critical, so $V^2M^{(0)} = \varpi M^{(0)}$. It follows that there is a $V$-basis $γ^{(0)}_0\in M^{(0)}_0$, $γ^{(0)}_2\in M^{(0)}_2$ with $\varpi γ^{(0)}_i = V^2γ^{(0)}_{i+2}$. We prove by induction on $n$, using the $\mfm$-adic completeness of $R$, that such a $V$-basis may be lifted to one as required. This is quite standard:

Assume we already found a $V$-basis $γ^{(n)}_0$, $γ^{(n)}_2$ of $M^{(n)}$ such that $\varpi γ^{(n)}_0 = [a_0]γ^{(n)}_2 + V^2 γ^{(n)}_0$ and $\varpi γ^{(n)}_2 = [b_0]γ^{(n)}_0 + V^2 γ^{(n)}_2$. Let $\wt{γ}^{(n)}_i \in M^{(n+1)}_i$ be any lifts and write
$$\varpi:\begin{cases}
\wt{γ}^{(n)}_0 \longmapsto [a_0]\wt{γ}^{(n)}_2 + V^2\wt{γ}^{(n)}_0 + V^2 δ_0\\
\wt{γ}^{(n)}_2 \longmapsto [b_0]\wt{γ}^{(n)}_0 + V^2\wt{γ}^{(n)}_2 + V^2 δ_2
\end{cases}$$
with $δ_i \in \mr{ker}(M^{(n+1)} \to M^{(n)})$. Any element $ε$ in this kernel satisfies $\varpi ε = 0$ and $[r]ε = 0$, for $r\in \mfm$. So we obtain our desired lifting as $γ^{(n+1)}_i = \wt{γ}^{(n)}_i + δ_i$.
\end{proof}

Let $\mfm\subset R$ be the maximal ideal and let $I\subseteq R$ be the ideal with $R/I = \mcO_{\mcZ(y), z}$. Our aim is to prove that the length of $R/I$ is $2+2q$ if $L$ is a field and $q$ if $L$ is split. Since $2+2q < q^3$, it suffices to prove that $R/(I + \mfm^{q^3})$ is of the desired length. Moreover, by Theorem \ref{thm:main_1_4}, the ideal in $R$ that defines $\mcO_{\mcZ(\varpi y),z}$ is $π(\bu, \bv)$ if $L$ is a field or $(π)$ if $L$ is split. So we know a priori that $π(\bu,\bv) \subseteq I$. 

Let $\ob{R} := R/(\mfm^{q^3}+π(\bu, \bv))$ and let $\ob I = I \ob R$ be the ideal that defines $V(\mfm^{q^3}) \cap \mcZ(y)$. We have reduced to the problem to showing that the length of $\ob{R}/\ob{I}$ is $2 + 2q$ resp. $q$. Let $\ob M = E(\ob R)\tensor_{E(R)} M$ be the reduction of $M$ to $\ob R$.

We assume from now on that $\bu,\bv\in R$ and $γ_0,γ_2\in M$ are chosen as in Lemma \ref{lem:Cartier_module_special_point}. The special fiber $\mbM := E(R/\mfm)\tensor_{E(R)} M$ of the Cartier module is the Dieudonné module of the special fiber of the special $O_C$-module over $R$. In particular, it has the property that $V^4\mbM = π\mbM$ because, for a superspecial point, both indices are critical. Let $a_1,b_1,a_3,b_3,\ldots \in \mbF$ be the coefficients of the $V$-series that define $y\in \End(\mbM)$ in the sense of \eqref{eq:generic_odd_degree_endo}, i.e.
\begin{equation}\label{eq:V_series_y_generic_residue_field}
y:\begin{cases}
γ_0 \longmapsto V[a_1]γ_0 + V^3[a_3]γ_2 + V^5[a_5]γ_0 + \ldots,\\
γ_2 \longmapsto V[b_1]γ_2 + V^3[b_3]γ_0 + V^5[b_5]γ_2 + \ldots.
\end{cases}
\end{equation}

\begin{lem}\label{lem:coefficients_of_y}
The coefficients in \eqref{eq:V_series_y_generic_residue_field} have the following properties.
\begin{enumerate}[wide, labelindent=0pt, labelwidth=!, label=(\arabic*), topsep=2pt, itemsep=2pt]
\item If $L$ is a field, then $a_1, b_1 = 0$ while $a_3, b_3 \in \mbF^\times$.
\item If $L = F\times F$, then one out of $a_1$, $b_1$ vanishes and the other lies in $\mbF^\times$.
\end{enumerate}
\end{lem}
\begin{proof}
Assume first that $L$ is a field. Then $w(y)^2\in O_L\setminus O_L^\times$ implies that $w(y) = V^{-1}y$ is topologically nilpotent. This provides $a_1 = b_1 = 0$. As $V^2\mbM = \varpi \mbM$ because $z$ is superspecial, we have that $γ_i, V^2γ_{i-2}$ provide an $O_{\breve F}$-basis for $\mbM_i/π\mbM_i$. Since by assumption $w(y)^2\notin π^2O_L$, the maps $y:\mbM_i/π\mbM_i \to \mbM_{i+1}/π\mbM_{i+1}$, for $i = 0,2$, are non-zero. So we find that at least one out of the pair $a_1, a_3$, resp. $b_1, b_3$ is non-zero. We have already seen that $a_1 = b_1 = 0$, so we necessarily have $a_3, b_3\in \mbF^\times$ as claimed.

Assume now that $L \iso F\times F$ and write $w(y)^2 = (y_1, y_2)$. Then $v(y_1)$ and $v(y_2)$ both lie in $2\mbZ$ because $1 + w(y) \in GL_4(F)$, compare with row 3 of Table \ref{table:matching}. The assumption $w(y)^2 \notin π^2O_L$ thus implies that $w(y) = V^{-1}y$ is not topologically nilpotent. Equivalently, at least one out of $\{a_1,b_1\}$ is invertible. It cannot happen that both coefficients are invertible, however, because this would imply $w(y)^2\in O_L^\times$ which is excluded by assumption. This finishes the proof the lemma.
\end{proof}

Consider now any sequence of elements $a_1,b_1,a_3,b_3,\ldots \in \ob R$ that lift the coefficients in \eqref{eq:V_series_y_generic_residue_field}. (It will not lead to confusion that we denote them by the same symbols.) Let $\wt y: \ob M \to \ob M$ be the map of sets that is given by the $V$-series substitution
\begin{equation}\label{eq:V_series_y_generic}
\wt y:\begin{cases}
γ_0 \longmapsto V[a_1]γ_0 + V^3[a_3]γ_2 + V^5[a_5]γ_0 + \ldots,\\
γ_2 \longmapsto V[b_1]γ_2 + V^3[b_3]γ_0 + V^5[b_5]γ_2 + \ldots.
\end{cases}
\end{equation}
It lifts the map $y\in \End(\mbM)$ by definition. By Corollary \ref{cor:char_endos}, $\wt y \in \End(E(\ob{R}/J)\tensor_{E(R)} M)$ for the ideal $J\subseteq \ob{R}$ that is generated by all coefficients of the two $V$-series $(\wt y\circ \varpi)(γ_0) - (\varpi \circ \wt y)(γ_0)$ and $(\wt y\circ \varpi)(γ_2) - (\varpi \circ \wt y)(γ_2)$.

We next make these $V$-series more explicit. Here it will pay off that we are working with $\ob R$ instead of $R$: In $E(\ob R)$, it holds that $[\bu]V^n = V[\bu^{q^n}] = 0$ and $[\bv]V^n = V^n[\bv^{q^n}] = 0$ whenever $n \geq 3$. The $V$-series expansions of $\wt y\circ \varpi$ and $\varpi \circ \wt y$ then become
\begin{equation}\label{eq:y_varpi}
\wt y\circ \varpi:\begin{cases}
γ_0 \longmapsto V[b_1\bu^q]γ_2 + V^3[a_1]γ_0 + V^5[a_3]γ_2 + V^7[a_5]γ_0 + \ldots,\\
γ_2 \longmapsto V[a_1\bv^q]γ_0 + V^3[b_1]γ_2 + V^5[b_3]γ_0 + V^7[b_5]γ_2 + \ldots
\end{cases}
\end{equation}
and
\begin{equation}\label{eq:varpi_y}
\varpi \circ \wt y:\begin{cases}
\begin{array}{lllll}
γ_0 \longmapsto V[a_1\bu]γ_2 & +V^3[a_3\bv]γ_0 & +V^5[a_5\bu]γ_2 & +V^7[a_7\bv]γ_0 & +\ldots\\
& +V^3[a_1^{q^2}]γ_0 & +V^5[a_3^{q^2}]γ_2 & +V^7[a_5^{q^2}]γ_0 & +\ldots,\\
γ_2 \longmapsto V[b_1\bv]γ_0 & +V^3[b_3\bu]γ_2 & +V^5[b_5\bv]γ_0 & +V^7[b_7\bu]γ_2 & +\ldots\\
& +V^3[b_1^{q^2}]γ_2& +V^5[b_3^{q^2}]γ_0 & +V^7[b_5^{q^2}]γ_2 & +\ldots.
\end{array}
\end{cases}
\end{equation}

\begin{lem}\label{lem:less_equal}
Let $J\subset \ob R$ be an ideal such that $y\in \End(E(\ob R/J)\tensor_{E(R)}M)$. Then the length of $\ob R/J$ is $\leq 2 + 2q$ if $L$ is a field and $\leq q$ if $L$ is split.
\end{lem}
\begin{proof}
Note the following two properties of sums in the ring of Witt vectors and in the Cartier ring:
\begin{enumerate}[wide, labelindent=0pt, labelwidth=!, label=(\alph*), topsep=2pt, itemsep=2pt]
\item In $W_{O_F}(R)$, a sum of Teichmüller lifts $[a] + [b]$ lies in $[a + b] + W_{O_F}(Ra + Rb)$.

\item In the Cartier ring $E(R)$, we have
$$V^n({}^{V^k}ε) = V^{n+k} ε F^k \in V^{n+k}E(εR).$$
\end{enumerate}
These two properties imply that
$$\begin{aligned}
V^3([a_3\bv] + [a_1^{q^2}])\ &\in V^3[a_3\bv + a_1^{q^2}] + V^5E(R)\\
V^3([b_3\bu] + [b_1^{q^2}])\ &\in V^3[b_3\bu + b_1^{q^2}] + V^5E(R).
\end{aligned}$$
Thus, comparing the $V$-coefficients and $V^3$-coefficients of \eqref{eq:y_varpi} and \eqref{eq:varpi_y}, we obtain that the following identities hold in $\ob R/J$:
\begin{equation}\label{eq:leading_terms_original}
\begin{array}{cc}
\begin{aligned} a_1\bu & = b_1\bu^q\\
b_1\bv & = a_1\bv^q
\end{aligned} &
\begin{aligned}
\hspace{5mm}a_1 & = a_3\bv + a_1^{q^2}\\
b_1 & = b_3\bu + b_1^{q^2}.
\end{aligned}
\end{array}
\end{equation}
The next lemma simplifies these identities. If $L$ is split, then we will only consider the case $a_1\in \ob R^\times$ and $b_1\in \mfm$ from now on (see Lemma \ref{lem:coefficients_of_y} above). The reverse situation is the same by symmetry.
\begin{lem}\label{lem:leading_term_solution}
Consider the two situations from Lemma \ref{lem:coefficients_of_y}.
\begin{enumerate}[wide, labelindent=0pt, labelwidth=!, label=(\arabic*), topsep=2pt, itemsep=2pt]
\item If $a_1$, $b_1\in \mfm$ and $a_3$, $b_3\in \ob{R}^\times$, then \eqref{eq:leading_terms_original} is equivalent to
\begin{equation}\label{eq:leading_terms_simplified}
\begin{aligned}
π & = b_3^{-1}a_3\bv^{q+1}\\
π & = a_3^{-1}b_3\bu^{q+1}
\end{aligned}
\begin{aligned}
\hspace{10mm}a_1 & = a_3\bv\\
b_1 & = b_3\bu.
\end{aligned}
\end{equation}

\item If $a_1 \in \ob{R}^\times$ and $b_1 \in \mfm$, then \eqref{eq:leading_terms_original} is equivalent to
\begin{equation}\label{eq:leading_terms_simplified_x}
\bu = \bv^q = b_1 = 0,\quad\quad a_1 = a_3\bv+a_1^{q^2}.
\end{equation}
\end{enumerate}
\end{lem}
\begin{proof}
We begin with the case $a_1$, $b_1\in \mfm$ and $a_3$, $b_3\in \ob{R}^\times$. If the two identities on the right hand side of \eqref{eq:leading_terms_original} hold, then $a_1 = s\bv$ and $b_1 = t\bu$ for units $s,t\in \ob{R}^\times$. The left hand side identities then imply $t^{-1}sπ = \bu^{q+1}$ and $s^{-1}tπ = \bv^{q+1}$. This forces
$$\bu^{q+2} = \bv^{q+2} = a_1^{q+2} = b_1^{q+2} = 0$$
because we are working modulo $π(\bu, \bv)$. Then we obtain $a_1 = a_3\bv$ and $b_1 = b_3\bu$, meaning $s = a_3$ and $t = b_3$. It follows that\eqref{eq:leading_terms_original} implies \eqref{eq:leading_terms_simplified}. The converse direction is immediate.

We consider the case $a_1 \in \ob R^\times$ and $b_1\in \mfm$. First, $a_1\bu = b_1\bu^q$ is equivalent to $\bu = 0$. Then $b_1 = b_3\bu + b_1^{q^2}$ is equivalent to $b_1 = 0$. Then $b_1\bv = a_1\bv^q$ is equivalent to $\bv^q = 0$, and we have arrived at \eqref{eq:leading_terms_simplified_x}.
\end{proof}

Let $J' \subset \ob R$ be the ideal generated by the relations in \eqref{eq:leading_terms_simplified} and \eqref{eq:leading_terms_simplified_x}. Then $J\subseteq J'$ and $\ob R/J'$ has length $\leq 2 + 2q$ resp. $\leq q$, proving Lemma \ref{lem:less_equal}.
\end{proof}

In particular, $\ob R/\ob I$ has length $\leq 2 + 2q$ resp. $\leq q$ because Lemma \ref{lem:less_equal} applies to $\ob I$ and the deformation of $y$ to $\End(E(\ob R/\ob I)\tensor_{E(R)} M)$. It remains to prove the converse inequality.

\begin{defn}\label{def:a1}
Given any two $a_3,b_3\in \ob{R}$ as in Lemma \ref{lem:leading_term_solution}, we from now on choose $a_1, b_1\in \ob{R}$ and the ideal $J_{a_3,b_3}\subset \ob{R}$ in the following way.
\begin{enumerate}[wide, labelindent=0pt, labelwidth=!, label=(\arabic*), topsep=2pt, itemsep=2pt]
\item If $L$ is a field, then $a_3$, $b_3\in \ob{R}^\times$ are units. We set $a_1 = a_3\bv$ and $b_1 = b_3\bu$, as well as
$$J_{a_3, b_3} = (π-b_3^{-1}a_3 \bv^{q+1},\ π - a_3^{-1}b_3 \bu^{q+1}).$$
\item If $L \iso F\times F$ and if $a_3$, $b_3 \in \ob{R}$ are any two elements, then we set $b_1 = 0$ and we let $a_1$ be the unique solution to the equation $a_1 = a_3\bv+a_1^{q^2}$ that lifts the given solution mod $\mfm$. (Existence and uniqueness follow from the fact that this equation is étale.) We define $J_{a_3, b_3} = (\bu, \bv^q)$.
\end{enumerate}
\end{defn}

With choices as in Definition \ref{def:a1}, the quotient $\ob{R}/J_{a_3,b_3}$ has length $2+2q$ if $L$ is a field and length $q$ if $L\iso F\times F$. Moreover, the elements $a_1, b_1, a_3$ and $b_3$ satisfy Identity \eqref{eq:leading_terms_simplified} resp. Identity \eqref{eq:leading_terms_simplified_x} modulo $J_{a_3, b_3}$.

\begin{lem}\label{lem:lifting_coefficients}
There exist choices for the coefficients $a_3, b_3, a_5, b_5, \ldots \in \ob{R}$ in \eqref{eq:V_series_y_generic} such that, with $a_1,b_1$ and $J_{a_3,b_3}$ as in Definition \ref{def:a1}, $\wt y\circ \varpi = \varpi \circ \wt y$ over $\ob{R}/J_{a_3,b_3}$.
\end{lem}
\begin{proof}
Using the previous properties (a) and (b) of addition in $E(R)$, we see that the $V^{2k+1}$-coefficients of \eqref{eq:varpi_y} take the form
$$\begin{cases}
a_{2k+1} + a_{2k-1}^{q^2} + p_{2k+1}(a_3\bv, a_5\bu, a_7\bv, \ldots, a_{2k-1}\bs, a_1^{q^2}, a_3^{q^2},\ldots, a_{2k-3}^{q^2})\\
b_{2k+1} + b_{2k-1}^{q^2} + q_{2k+1}(b_3\bu, b_5\bv, b_7\bu, \ldots, b_{2k-1}\bt, b_1^{q^2}, b_3^{q^2},\ldots, b_{2k-3}^{q^2})
\end{cases}$$
for certain polynomials $p_{2k+1}, q_{2k+1}$ with coefficients in $R$. Here, $(\bs,\bt) = (\bu, \bv)$ or $(\bs, \bt) = (\bv, \bu)$ in dependence on the parity of $k$. Thus \eqref{eq:y_varpi} equals \eqref{eq:varpi_y} if and only if the following identities hold:
\begin{equation}\label{eq:main_etale}
\begin{aligned}
a_3 & = a_5\bu + a_3^{q^2} + p_3(a_3\bv, a_1^{q^2})\\
a_5 & = a_7\bv + a_5^{q^2} + p_5(a_3\bv, a_5\bu, a_1^{q^2}, a_3^{q^2})\\
a_7 & = a_9\bu + a_7^{q^2} + p_7(a_3\bv, a_5\bu, a_7\bv, a_1^{q^2}, a_3^{q^2}, a_5^{q^2})\\
&\cdots&\\
&&\\
b_3 & = b_5\bv + b_3^{q^2} + q_3(b_3\bu, b_1^{q^2})\\
b_5 & = b_7\bu + b_5^{q^2} + q_5(b_3\bu, b_5\bv, b_1^{q^2}, b_3^{q^2})\\
b_7 & = b_9\bv + b_7^{q^2} + q_7(b_3\bu, b_5\bv, b_7\bu, b_1^{q^2}, b_3^{q^2}, b_5^{q^2})\\
&\cdots&.
\end{aligned}
\end{equation}
Assume we have shown that these two systems of equations have a unique solution in $\ob{R}/\mfm^n\ob{R}$ that lifts the coefficients of $y \in \End(\mbM)$. We claim that this solution lifts uniquely to $\ob{R}/\mfm^{n+1}\ob{R}$. Consider for this the truncations of the above two systems of equations in some degree $m = 2k+1$. The summands $a_{2k+3}\bs$ resp. $b_{2k+3}\bt$ (with $\bs, \bt\in \{\bu,\bv\}$ in dependence on $k$) in the two last lines are already uniquely determined by the given solution over $\ob{R}/\mfm^n\ob{R}$. In all remaining variables, the two truncated systems of equations are étale, because their Jacobian matrices are upper triangular up to nilpotent entries. The given solution thus lifts uniquely to a solution over $\ob{R}/\mfm^{n+1}\ob{R}$, proving the lemma.
\end{proof}

The proof of Proposition \ref{prop:artinian_case_4} is now complete. Namely, Lemma \ref{lem:lifting_coefficients} shows the existence of an ideal $J = J_{a_3, b_3}\subset \ob{R}$ such that $\ob I\subseteq J$ and such that the length of $\ob R/J$ is $2 + 2q$ if $L$ is a field and $q$ if $L$ is split. Lemma \ref{lem:leading_term_solution} on the other hand showed that the length of $R/I$ is $\leq 2 + 2q$ resp. $\leq q$. These two statements together imply the proposition.
\end{proof}

The third type of embedded component arises in superspecial points whenever $\max_{Λ\subset W} n(y, Λ) \geq 1$.

\begin{prop}\label{prop:artinian_case_5}
Let $y\in S_{3/4, \mr{rs}}$ be regular semi-simple and let $\{z\} = P_1\cap P_2$ be a superspecial point on $\mcM_C$, where $P_1,P_2\subseteq \mcM_C$ denote two irreducible components of the special fiber. Assume that $P_1\not\subseteq \mcZ(y)$ but $P_2\subseteq \mcZ(y)$. Then $z\in \mcZ(y)^{\mr{art}}(\mbF)$ and the length of $\mcO_{\mcZ(y)^{\mr{art}}, z}$ is $q$.
\end{prop}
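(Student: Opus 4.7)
My plan is to adapt the Cartier-theoretic computation from the proof of Proposition \ref{prop:artinian_case_4} to the present asymmetric situation, in which the full component $P_2$ lies in $\mcZ(y)$. Passing via the action of $H_b$, I may assume $z \in \mcM_C^0$. Set $R = \widehat\mcO_{\mcM_C, z} \cong O_{\breve F}[\![\bu, \bv]\!]/(\bu\bv - \pi)$ and choose a $V$-basis $\gamma_0 \in M_0$, $\gamma_2 \in M_2$ of the universal Cartier module $M$ so that $\varpi$ has the normal form \eqref{eq:varpi_simple_special} of Lemma \ref{lem:Cartier_module_special_point}. After possibly interchanging $\bu$ and $\bv$, I may arrange $P_2 = V(\bu)$ and $P_1 = V(\bv)$; the hypothesis $P_2 \subseteq \mcZ(y)$ then translates to $I \subseteq (\bu)$, where $I \subset R$ is the defining ideal of $\mcZ(y)$, while $P_1 \not\subseteq \mcZ(y)$ gives $I \not\subseteq (\bv)$. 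Since $R$ is a $2$-dimensional regular local domain, I can write $I = \bu J$ uniquely for some ideal $J \subseteq R$, and
\begin{equation*}
\mcO_{\mcZ(y)^{\mr{art}}, z} \cong I^{\mr{pure}}/I = (\bu)/I \cong R/J.
\end{equation*}
The proposition thus reduces to showing $R/J \cong \mbF[\bv]/(\bv^q)$, equivalently $J = (\bu, \bv^q)$.

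Next, I would pin down the leading behavior of $y$ near $z$. The lattices $\Lambda_1, \Lambda_2$ are neighbors in $\mcB$, so Lemma \ref{lem:properties_multiplicity_function} (2) yields $|n(y, \Lambda_1) - n(y, \Lambda_2)| \leq 1$. Combined with Corollary \ref{cor:support_intersection_set_theoretic} (5) and the hypotheses of the proposition, this forces $n(y, \Lambda_1) = 0$ and $n(y, \Lambda_2) = 1$. Writing $y$ as a degree-$1$ $V$-series substitution as in \eqref{eq:generic_odd_degree_endo}, with unknown coefficients $a_{2k+1}, b_{2k+1}$, Lemma \ref{lem:descript_Z_y_Dieudonne} translates these multiplicity conditions into concrete constraints on the leading coefficients of $y|_\mbF$: the coefficient capturing the $\Lambda_2$-direction vanishes (expressing $w(y)\Lambda_2 \subseteq \pi\Lambda_2$), while the one capturing the $\Lambda_1$-direction is a unit.

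I would then lift these coefficients to generic elements of $\ob R := R/\mfm^N$ for $N \gg 0$ and impose the commutation $\wt y \circ \varpi = \varpi \circ \wt y$ via Corollary \ref{cor:char_endos}. Expanding both sides as $V$-series along the lines of \eqref{eq:y_varpi}--\eqref{eq:varpi_y} and comparing leading terms would yield relations in the spirit of \eqref{eq:leading_terms_original}. Under the constraints from the previous step, these should simplify to identities forcing both $\bu \in J$ and $\bv^q \in J$; the exponent $q$ enters through the Witt-vector identity $[r]V^n = V^n[r^{q^n}]$ applied at $n = 2$ to the middle terms of \eqref{eq:varpi_y}, precisely as in the proof of Lemma \ref{lem:less_equal}. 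The étale-lifting argument of Lemma \ref{lem:lifting_coefficients} would then uniquely determine the higher-order coefficients $a_{2k+1}, b_{2k+1}$ ($k \geq 2$) modulo $J$, confirming that no extra relations arise and $J = (\bu, \bv^q)$ exactly.

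The main obstacle will be the bookkeeping in the third step: among the infinite list of $V$-coefficient comparisons in $\wt y \circ \varpi = \varpi \circ \wt y$, I must identify the finite subset that produces the relation $\bv^q \equiv 0 \pmod J$ (rather than $\bv^{q^2}$ or a smaller power) and verify that no further independent relation is imposed. This is the analogue of Lemma \ref{lem:leading_term_solution}, carried out in the asymmetric setting where one of the two Cartier directions is entirely killed in $\mcZ(y)$. Granting this, the length-$q$ conclusion is immediate from $R/(\bu, \bv^q) \cong \mbF[\bv]/(\bv^q)$.
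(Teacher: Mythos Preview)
Your overall strategy matches the paper's: pass to the superspecial normal form of Lemma \ref{lem:Cartier_module_special_point}, impose $\wt y\circ\varpi = \varpi\circ\wt y$ via Corollary \ref{cor:char_endos}, extract leading $V$-series relations, and close with an étale lift in the style of Lemma \ref{lem:lifting_coefficients}. Two concrete points, however, separate the sketch from a complete argument.

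First, your target $J$ is oriented the wrong way. In your convention $P_2 = V(\bu)$, the correct ideal is $I = (\pi, \bu^{q+1})$ and hence $J = I^{\mr{art}} = (\bv, \bu^q)$; the embedded component $\Spec R/J \cong \Spec\,\mbF[\bu]/(\bu^q)$ lies along $P_1 = V(\bv)$, not along $P_2$. Geometrically, $\mcZ(y)$ contains all of $P_2$ and in addition a length-$(q{+}1)$ finger protruding onto $P_1$ at $z$; after stripping off the pure part $P_2$, what remains is a length-$q$ piece inside $P_1$. Carried out correctly, the leading-term equations force $\bv \in J$ and $\bu^q \in J$, not the swapped pair. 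The final length is still $q$, so the proposition's statement is unaffected, but expecting the wrong form of $J$ will make the bookkeeping confusing.

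Second, and more substantively, working modulo $\mfm^N$ for unspecified $N$ does not by itself determine $I$: here $R/I$ has infinite length (it contains the curve $P_2$), so no length-bounding argument is available, and the truncated Cartier calculation only sees $I$ modulo $\mfm^N$. The paper closes this gap with input from the invariant-$1/4$ side. Since $\varpi y \in S_{1/4}$ with $n(\varpi y, \Lambda_i) = n(y, \Lambda_i) + 1 \in \{1, 2\}$, Propositions \ref{prop:multiplicities_1_4} and \ref{prop:embedded_components_1_4} give $I(\varpi y) = (\pi\bu)$ in your convention, whence the a priori bracket $(\pi\bu) \subseteq I \subseteq (\bu)$. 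A short commutative-algebra lemma (Lemma \ref{lem:ideal_juggling} in the paper) then shows that, under these two inclusions, knowing $I$ modulo $\mfm^{q^2}$ already forces $I = (\pi, \bu^{q+1})$. This is precisely what justifies working over $\ob R = R/(\mfm^{q^2} + (\pi\bu))$, where the Witt identity $[r]V^2 = V^2[r^{q^2}]$ kills the higher cross terms in \eqref{eq:y_varpi}--\eqref{eq:varpi_y}. Without this bridge, your étale lift only yields $I \subseteq (\pi, \bu^{q+1})$ and does not exclude a strictly smaller $I$.
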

\begin{proof}
The proof is analogous to the one of Proposition \ref{prop:artinian_case_4}. Let again $R$ be the complete local ring $\widehat{\mcO}_{\mcM_C, z} = O_{\breve F}[\![\bu, \bv]\!]/(\bu\bv - π)$ be the complete local ring in $z$. Let $\mfm\subset R$ be the maximal ideal and let $I\subset R$ be the ideal defining $\widehat{\mcO}_{\mcZ(y),z}$. Assume that $\bv$ corresponds to $P_2$, i.e. assume that $I\subseteq (\bv)$.

\emph{Claim: The ideal $I$ is given by $I = (π, \bv^{q+1})$.} This claim immediately implies the proposition: The maximal Cartier divisor dividing $I$ is $\bv$, so we obtain $\mcO_{\mcZ(y)^{\mr{art}}, z} = R/(\bu, \bv^q)$ which has length $q$.

In order to prove the claim, we first use our results about the case of invariant $1/4$. Let $Λ_1, Λ_2\subseteq W$ be the two lattices defined by $P_1$ and $P_2$ in \eqref{eq:Drinfeld_iso_explicit}. Then $n(y, Λ_1) = 0$ and $n(y, Λ_2) = 1$ by our assumptions on $P_1$ and $P_2$. It follows from $w(\varpi y) = πw(y)$ that $n(\varpi, Λ_1) = 1$ and $n(\varpi, Λ_2) = 2$. Thus $\widehat{\mcO}_{\mcZ(\varpi y), z}$ is defined by the ideal $(π\bv)$ and we obtain
$$(π\bv) \subseteq I \subseteq (\bv).$$
\begin{lem}\label{lem:ideal_juggling}
Let $I\subseteq R$ be an ideal such that $(π\bv) \subseteq I \subseteq (\bv)$ and such that $I + \mfm^{q^2} = (π, \bv^{q+1})$. Then $I = (π, \bv^{q+1})$.
\end{lem}
\begin{proof}
Since $(π\bv) \subseteq I$ by assumption, $I + \mfm^{q^2} = I + (\bu^{q^2}, \bv \bu^{q^2-1})$. Since $I \subseteq (\bv)$ and $π\in I + \mfm^{q^2}$ by assumption, we may thus write
$$π = a\bv + b\bu^{q^2} + c\bv \bu^{q^2-1},\quad a\in \mfm,\ a\bv\in I,\ b,c\in R.$$
Then $b$ has to be divisible by $\bv$. So after modifying $c$, we may assume $b = 0$. Then we obtain that $a\bv = π(1-c\bu^{q^2-2}) = (\mr{unit})\cdot π \in I$. It follows that $π\in I$ and hence, in particular, that $\bv\bu^{q^2-1} \in I$.

Since $\bv^{q+1}\in I + \mfm^{q^2}$ by assumption, we can now write
$$\bv^{q+1} = a\bv + b\bu^{q^2},\quad a\bv\in I,\ b\in R.$$
Then $b$ is divisible by $\bv$, so $b\bu^{q^2}\in I$ by the previous results, and hence $\bv^{q+1} \in I$. This finishes the proof.
\end{proof}

Let $\ob{R} = R/(\mfm^{q^2} + (π\bv))$ and let $\ob I = I\ob{R}$. By Lemma \ref{lem:ideal_juggling}, it suffices to show that $(\Spec \ob R)\cap \mcZ(y) = V(\ob I)$. Let $M$ be the Cartier module of the universal point over $R$, and assume from now on that $\bu,\bv\in R$ as well as $γ_0,γ_2\in M$ are chosen as in Lemma \ref{lem:Cartier_module_special_point}. Let $\mbM = E(R/\mfm)\tensor_{E(R)} M$ be the Dieudonné module of the special fiber. Let $a_1, b_1, a_3, b_3, \ldots \in \ob{R}$ be coefficients such that the $V$-series datum $\wt y$ from \eqref{eq:V_series_y_generic} lifts the given endomorphism $y \in \End(\mbM)$.

Just as in the proof of Proposition \ref{prop:artinian_case_4}, we now extract information on the coefficients from our assumptions. By way of symmetry, we assume that $0$ is the index corresponding to $Λ_1$, meaning that $V^{-1}y \mbM_0 \not\subset π\mbM_0$ while $V^{-1}y\mbM_2 \subseteq π\mbM_2$. The second condition means that $a_1$, $b_1$ and $b_3$ all lie in $\mfm \ob R$. The first then implies that $a_3$ lies in $\ob R^\times$.

The compositions $\varpi\circ \wt y$ and $\wt y\circ \varpi$ are again given by the identities \eqref{eq:y_varpi} and \eqref{eq:varpi_y}. This time, since we are working over $\ob{R}$, we even obtain that $a_1^{q^2} = b_1^{q^2} = b_3^{q^2} = 0$. The system of leading term identities of $\varpi \circ \wt y = \wt y\circ \varpi$ is then given by
\begin{equation}\label{eq:leading_terms_II}
\begin{aligned}
a_1\bu & = b_1\bu^q\phantom{a_3^{q^2}}\\
b_1\bv & = a_1\bv^q
\end{aligned}
\begin{aligned}
\hspace{10mm}a_1 & = a_3\bv\phantom{a_3^{q^2}}\\
b_1 & = b_3\bu
\end{aligned}
\begin{aligned}
\hspace{10mm}a_3 & = a_5\bu + a_3^{q^2}\\
b_3 & = b_5\bv.
\end{aligned}
\end{equation}
Performing the direct substitutions for $a_1$, $b_1$ and $b_3$ leaves the three equations
$$
\begin{aligned}
a_3π & = b_5π\bu^q\phantom{a_3^{q^2}}\\
b_5π\bv & = a_3\bv^{q+1}
\end{aligned}
\begin{aligned}
\hspace{10mm}a_3 & = a_5\bu + a_3^{q^2}\\
\phantom{b_3} & 
\end{aligned}
$$
Since $a_3\in \ob R^\times$, the upper left identity is equivalent to $π = 0$. Now for given $a_3\in \ob R^\times$ and $b_3\in \ob R$, we define $a_1$ and $b_1$ by \eqref{eq:leading_terms_II} and let $J$ be the ideal $(π, \bv^{q+1})$. Then we argue precisely as in Lemma \ref{lem:lifting_coefficients} and obtain that $\ob I = J$ as claimed. The proof is now complete.
\end{proof}

We may now extend Propositions \ref{prop:artinian_case_3}, \ref{prop:artinian_case_4} and \ref{prop:artinian_case_5} by a general argument.

\begin{prop}\label{prop:simple_extension}
Let $y\in S_{3/4, \mr{rs}}$ be any element and let $z \in \mcZ(y)$ be a closed point. Let $R = \widehat{\mcO}_{\mcM_C, z}$ be the complete local ring in $z$ and let $I \subset R$ be the ideal such that $R/I = \widehat{\mcO}_{\mcZ(y),z}$. Then
$$(\Spf R)\cap \mcZ(πy) = \Spf (R/πI).$$
\end{prop}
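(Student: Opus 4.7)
The statement is local on $\mcM_C$, so I would reduce to working at the complete local ring $R = \widehat{\mcO}_{\mcM_C, z}$ for the fixed closed point $z$. Let $M$ denote the Cartier module of the universal special $O_C$-module over $R$, graded by the $O_{F_4}$-action. The quasi-endomorphism $y$ extends via the framing $\rho$ to a $W_{O_F}(R)$-linear, homogeneous degree-$+1$ map $\tilde y\colon M \to M[\pi^{-1}]$, and the ideal $I$ is characterized as the smallest ideal in $R$ such that $\tilde y$ restricts to an endomorphism of $M \otimes_R R/I$, i.e.\ such that $\tilde y(M) \subseteq M + I\cdot M[\pi^{-1}]$. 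Since $\pi \in O_F$ commutes with $\tilde y$, the identity $\widetilde{\pi y} = \pi \cdot \tilde y$ holds; let $I'$ denote the analogously defined ideal for $\mcZ(\pi y)$. The goal is to prove $I' = \pi I$.

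The inclusion $I' \subseteq \pi I$ is immediate: from $\tilde y(M) \subseteq M + I\cdot M[\pi^{-1}]$ we obtain $\pi \tilde y(M) \subseteq \pi M + \pi I\cdot M[\pi^{-1}] \subseteq M + \pi I\cdot M[\pi^{-1}]$, proving that $\widetilde{\pi y}$ descends modulo $\pi I$. For the reverse inclusion $\pi I \subseteq I'$, I would pass to the $V$-series framework of \S\ref{ss:superspecial}. Choose a homogeneous $V$-basis $\gamma_0 \in M_0$, $\gamma_2 \in M_2$ and expand
\begin{equation*}
\tilde y(\gamma_0) = \sum_{k \geq 0} V^{2k+1}[a_{2k+1}]\gamma_{?_k}, \qquad \tilde y(\gamma_2) = \sum_{k \geq 0} V^{2k+1}[b_{2k+1}]\gamma_{?_k}
\end{equation*}
with coefficients $a_{2k+1}, b_{2k+1} \in R[\pi^{-1}]$. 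By Corollary \ref{cor:char_endos}, $\tilde y$ descends modulo $J$ precisely when all coefficients lie in $R/J$ and the two compatibility conditions $(\tilde y \circ \varpi)(\gamma_i) = (\varpi \circ \tilde y)(\gamma_i)$ hold modulo $J$. As in the proofs of Propositions \ref{prop:artinian_case_4} and \ref{prop:artinian_case_5}, these conditions combine to give an explicit finite set of generators for $I$, extracted from the ``polar parts'' of the $a_{2k+1}, b_{2k+1}$. The $V$-series for $\widetilde{\pi y}(\gamma_i) = \pi \tilde y(\gamma_i)$ has coefficients which, after rewriting $\pi \cdot V^{2k+1}[c]\gamma$ using $\pi = [\pi] + V^2 \xi F \cdot (\cdots)$ from \eqref{eq:Cartier_pi}, differ from $\pi a_{2k+1}, \pi b_{2k+1}$ only by higher-order Witt-vector corrections that already lie in $R$. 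The corresponding compatibility conditions become $\pi$ times the old ones (modulo corrections in $\pi I$), so the finite presentation produces exactly the ideal $\pi I$, giving $\pi I \subseteq I'$.

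The main obstacle is the careful bookkeeping needed for the second inclusion: one must verify that multiplying the defining equations by $\pi$ does not produce spurious additional generators from cross-terms in the Witt-vector expansion of $\pi$, and that no relations of the form $\pi\cdot g_j \equiv 0$ among generators $g_j$ of $I$ can occur that would further shrink $\pi I$. Both issues are handled by invoking the $\pi$-regularity of $R$ (from the regularity of $\mcM_C$, Proposition \ref{prop:representability}) together with the explicit normal-form choice for $\varpi$ from Lemma \ref{lem:Cartier_module_special_point}, exactly as in the superspecial case analyses. A more conceptual route, which I would sketch as verification, is to interpret $\mcZ(y)$ via the $O_F$-Grothen\-dieck--Messing crystal: the obstruction to deforming $\tilde y$ across a square-zero thickening is an $R$-linear map whose coefficients are linear in the entries of $\tilde y$, so passing from $y$ to $\pi y$ scales this obstruction map by $\pi$, and the universal vanishing locus is correspondingly defined by $\pi I$.
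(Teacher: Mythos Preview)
Your setup contains a genuine gap: there is no canonical $W_{O_F}(R)$-linear map $\tilde y\colon M \to M[\pi^{-1}]$ over $R$. The framing $\rho$ lives only over $R/\pi R$, and while rigidity extends quasi-isogenies across nilpotent thickenings, $R$ is not a nilpotent thickening of $R/\pi R$. In the Cartier formalism the only thing one has over $R$ is a \emph{choice} of lift of the $V$-series coefficients, giving a $V$-series substitution map $\tilde y$ that is not $E(R)$-linear. Your ``immediate'' inclusion $I'\subseteq \pi I$ relies on the false containment $\tilde y(M)\subseteq M+I\cdot M[\pi^{-1}]$ and does not survive once $\tilde y$ is understood correctly. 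In the paper neither inclusion is free: one chooses a lift of the universal $y\in\End(M\otimes R/I(y))$, computes the obstruction $\mathrm{ob}=\varpi\circ\tilde y-\tilde y\circ\varpi$, notes $I(y)=(c_1,d_1,c_3,d_3,\dots)$, and then observes that the obstruction for $\pi\tilde y$ is $\pi\circ\mathrm{ob}$, whose $V$-series coefficients are $\pi c_i,\pi d_i$ \emph{only modulo $I(y)^{q^2}$} (coming from $[c]V^{2k}=V^{2k}[c^{q^{2k}}]$). This yields only the congruence $I(\pi y)\equiv \pi I(y)\pmod{I(y)^{q^2}}$.

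Upgrading this congruence to an equality is where the real content lies, and your proposal does not supply it. The paper's mechanism is not ``$\pi$-regularity of $R$'' but rather the chain $I(\pi\varpi y)\subseteq I(\pi y)\subseteq I(\varpi y)\subseteq I(y)$ together with explicit knowledge of all the surrounding ideals: $I(\varpi y)$ and $I(\pi\varpi y)=\pi I(\varpi y)$ are known from the invariant $1/4$ computations (Propositions \ref{prop:multiplicities_1_4} and \ref{prop:embedded_components_1_4}), and $I(y)$ itself is known from the case-by-case results of Propositions \ref{prop:artinian_case_3}--\ref{prop:artinian_case_5} (summarized in Table~\ref{table:cases_I}). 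One then verifies, case by case, the containment $I(y)^{q^2}\cap I(\varpi y)\subseteq \pi\mfm I(y)$, after which Nakayama's lemma closes the argument. Your Grothendieck--Messing sketch has the same defect: the obstruction map is not $R$-linear in the entries of $y$ because Frobenius intervenes, so ``scales by $\pi$'' is only true up to the same $q^2$-th-power corrections.
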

\begin{proof}
For $y'\in S$ such that $z\in \mcZ(y')$, we denote by $I(y')\subseteq R$ the ideal with
$$(\Spf R) \cap \mcZ(y') = \Spf (R/I(y')).$$
In this notation, our aim is to prove that $I(πy) = πI(y)$ for the given element $y$. Consider the Cartier module $M$ of the universal special $O_C$-module over $R$ and write
\begin{equation}\label{eq:expansion_pi}
π:\begin{cases}
γ_0 \longmapsto [π]γ_0 + V^2[u_2]γ_2 + V^4[u_4]γ_0 + \ldots,\\
γ_2 \longmapsto [π]γ_2 + V^2[v_2]γ_0 + V^4[v_4]γ_2 + \ldots
\end{cases}
\end{equation}
for the $V$-series expansion of multiplication by $π$ on $M$. Write $y \in \End(E(R/I(y))\tensor_{E(R)} M)$ as in \eqref{eq:generic_odd_degree_endo}. Choose a lifting of all its coefficients to $R$, say $a_1, b_1, a_3, b_3, \ldots\in R$, and denote by $\wt y$ the resulting $V$-series substitution map $\wt y:M\to M$. Define the obstruction $\mr{ob} := \mr{ob}(\wt y) := \varpi\circ \wt y - \wt y \circ \varpi$. (Recall that this is just an endomorphism of $M$ as set and need not come by $E(R)$-linear extension from $γ_i\mapsto \mr{ob}(γ_i)$, $i = 0,2$.) Still, by Proposition \ref{prop:endos_general}, $\wt y$ defines an endomorphism modulo some ideal $J\subset R$ if and only if $\mr{ob}(γ_i) = 0$ mod $J$. Write
\begin{equation}\label{eq:obstruction}
\mr{ob}:\begin{cases}
γ_0 \longmapsto V[c_1]γ_2 + V^3[c_3]γ_0 + V^5[c_5]γ_2 + \ldots,\\
γ_2 \longmapsto V[d_1]γ_0 + V^3[d_3]γ_2 + V^5[d_5]γ_0 + \ldots.
\end{cases} 
\end{equation}
By definition of $I(y)$ and by Proposition \ref{prop:endos_general}, $I(y)$ is precisely the ideal $(c_1,c_3,\ldots,d_1,d_3,\ldots)$. Now $π\circ \wt y$ defines a lift to $R$ of the $V$-series expression for $π\circ y$. Its obstruction, evaluated on $γ_0,γ_2$, is given by
\begin{equation}\label{eq:ob_times_pi}
\begin{aligned}
\mr{ob}(π\circ \wt y) & = \varpi \circ (π\circ \wt y) - (π\circ \wt y) \circ \varpi\\
& = π \circ \mr{ob}
\end{aligned}
\end{equation}
because $π = \varpi^2$ and $\varpi$ commute as (set-theoretic) endomorphisms of $M$. The crucial observation now is that
\begin{equation}\label{eq:obstruction_times_pi}
π \circ \mr{ob}:\begin{cases}
γ_0 \longmapsto V[c_1π]γ_2 + V^3[c_3π]γ_0 + V^5[c_5π]γ_2 + \ldots,\\
γ_2 \longmapsto V[d_1π]γ_0 + V^3[d_3π]γ_2 + V^5[d_5π]γ_0 + \ldots
\end{cases}
\end{equation}
modulo $I(y)^{q^2}$. Namely, $[a]V^{2k} = V^{2k}[a^{q^{2k}}]$. So when substituting \eqref{eq:expansion_pi} into \eqref{eq:obstruction}, all the higher terms
$$V^i[c_i]V^{2k}[u_{2k}]γ_ε,\quad V^i[d_i]V^{2k}[u_{2k}]γ_ε$$
vanish modulo $I(y)^{q^2}$. Thus we obtain that
\begin{equation}\label{eq:ideal_mod_power_m}
I(πy) = πI(y) \mod I(y)^{q^2}.
\end{equation}
It is left to show that this already implies $I(πy) = πI(y)$.

\emph{The case that $I(y) \subseteq (π)$.} Let $n\geq 1$ be such that $I(y) \subseteq (π)^n$ but $I(y)\not\subseteq (π)^{n+1}$. This integer can also be characterized by
$$n = \max_{z\in P_Λ} n(y, Λ).$$
Here, there are either one or two curves $P_Λ$ that contain $z$. Since $n(\varpi y, Λ) = n(y, Λ) + 1$ for every $y\in S_{3/4}$,
$$\max_{z \in P_Λ} n(\varpi y, Λ) = n + 1.$$
By Propositions \ref{prop:multiplicities_1_4} and \ref{prop:embedded_components_1_4}, the only possibilities for $I(y)$ are $(π)^n, π^n\bu, π^n\bv$ or $π^n(\bu, \bv)$ where the last three are meant for a superspecial point. Moreover, these propositions also show $I(π^2y) = π^2I(y)$. So we find that
$$(π)^{n+3} \subset I(\varpi π y) \subseteq I(π y).$$
It follows that $I(y)^{q^2} \subseteq (π)^{nq^2} \subseteq I(πy)$. The inclusion $I(y)^{q^2} \subseteq πI(y)$ is immediately clear, so we deduce from \eqref{eq:ideal_mod_power_m} that $I(πy) = πI(y)$ as desired.

\emph{The case that $I(y)\not\subseteq (π)$.} In this situation, the point $z$ is of one of the types considered in Propositions \ref{prop:artinian_case_3}, \ref{prop:artinian_case_4} and \ref{prop:artinian_case_5}. Also taking into account Propositions \ref{prop:multiplicities_1_4} and \ref{prop:embedded_components_1_4} to determine $I(\varpi y)$, the possibilities up to isomorphism are given by the following table:
\begin{table}[h!]
\centering
\def\arraystretch{1.3}
\begin{tabular}{|c|c|c|c|}
\hline
& $R$ & $I(y)$ & $I(\varpi y)$\\
\hline
(1) & $O_{\breve F}[\![t]\!]$ & $(π, t)$ & $(π)$\\
\hline
(2) & \multirow{4}{*}{$O_{\breve F}[\![\bu, \bv]\!]/(\bu\bv - π)$} & $(\bu, \bv)$ & $(π)$\\
\cline{1-1} \cline{3-4}
(3) & & $(\bu, \bv^q)$ & $(π)$\\
\cline{1-1} \cline{3-4}
(4) & & $(π - \bu^{q+1}, π - \bv^{q+1})$ & $π(\bu, \bv)$\\
\cline{1-1} \cline{3-4}
(5) & & $(π, \bv^{q+1})$ & $(π\bv)$\\
\hline
\end{tabular}
\medskip
\caption{The possible embedded components for Hasse invariant $3/4$.}
\label{table:cases_I}
\end{table}

We furthermore have the inclusions
$$I(π \varpi y) \subseteq I(π y) \subseteq I(\varpi y) \subseteq I(y).$$
By Propositions \ref{prop:multiplicities_1_4} and \ref{prop:embedded_components_1_4}, we know that $I(π\varpi y) = πI(\varpi y)$. It can also be verified from the above table that $πI(y) \subseteq I(\varpi y)$ in each case, so both $I(πy)$ and $πI(y)$ are contained in $I(\varpi y)$. It then follows from \eqref{eq:ideal_mod_power_m} that
$$I(πy) = πI(y) \mod I(y)^{q^2} \cap I(\varpi y).$$
By Nakayama's Lemma, the identity $I(πy) = πI(y)$ follows if we can show that
\begin{equation}\label{eq:ideal_identity_to_show}
I(y)^{q^2}\cap I(\varpi y) \subseteq π\mfm I(y).
\end{equation}
The reader will have no difficulty checking this relation in cases (1), (2) and (3) of Table \ref{table:cases_I} above and we only treat the cases (4) and (5):

\begin{enumerate}[wide, labelindent=0pt, labelwidth=!, label={Case (\arabic*)}, topsep=2pt, itemsep=2pt]
\item[Case (4).] First observe that
$$(π-\bu^{q+1})(π-\bv^{q+1}) \in π(\bu,\bv) \subseteq I(y).$$
A general element
$$\sum_{n = 0}^{q^2} a_n (π - \bu^{q+1})^n(π - \bv^{q+1})^{q^2-n} \in I(y)^{q^2}$$
thus lies in $π(\bu,\bv)$ if and only if $\bu\mid a_0$ and $\bv\mid a_{q^2}$. In this situation, already $a_0(π - \bv^{q+1})$ and $a_{q^2}(π - \bu^{q+1})$ lie in $π(\bu,\bv)$. This shows that
$$I(y)^{q^2}\cap π(\bu,\bv) \subseteq π(\bu,\bv)I(y)^{q^2-2} \subseteq π(\bu, \bv)\mfm I(y)^{q^2 - 3}$$
which veries \eqref{eq:ideal_identity_to_show} since $q^2 \geq 4$.

\item[Case (5).] Here we can directly verify \eqref{eq:ideal_identity_to_show} by
$$(π,\bv^{q+1})^{q^2} \cap (\bv π) \subseteq π I(y)^{q^2-2} \subseteq π\mfm I(y)^{q^2-3}.$$
\end{enumerate}
The proof of Proposition \ref{prop:simple_extension} is now complete.
\end{proof}

\subsection{Intersection Locus (Non-special Points)}
\label{ss:displays}

We come to our final argument. The following proposition is essentially a converse to Proposition \ref{prop:simple_extension} and shows that there are no embedded components beyond the ones found in the previous section.

\begin{prop}\label{prop:non_special_embedded}
Assume that $z\in \mcZ(y)$ is a closed point that is not superspecial. If $π\mcO_{\mcZ(y),z} \neq 0$, then $z\in \mcZ(π^{-1}y)$.
\end{prop}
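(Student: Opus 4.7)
The plan is to leverage the theory of $O_F$-displays from \cite{ACZ} to get a tractable local description of the universal special $O_C$-module at $z$. Since $z$ is a non-superspecial closed point of $\mcM_C$, the complete local ring $R = \widehat{\mcO}_{\mcM_C,z}$ is isomorphic to $O_{\breve F}[\![t]\!]$, and exactly one of the two indices $i \in \{0,2\}$ of the universal Cartier module $M$ is critical on the special fiber. I would assume without loss of generality that $0$ is critical. In that setting, the arguments of Lemma \ref{lem:Cartier_module_special_point} can be adapted (here I would invoke display theory to make the normalization rigid on a one-dimensional base rather than an artinian local ring) to produce a $V$-basis $\gamma_0 \in M_0$, $\gamma_2 \in M_2$ and a local parameter $t \in R$ such that
\begin{equation*}
\varpi \gamma_0 = [\pi f]\, \gamma_2 + V^2 \gamma_0, \qquad \varpi \gamma_2 = \gamma_0 + V^2 \gamma_2
\end{equation*}
modulo higher $V^{2k}$-terms with $k \geq 2$, where $f \in R^\times$ is a unit. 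The essential feature is the explicit factor of $\pi$ in the first equation: it records that $0$ is critical, and it is the formal reflection of the fact that on the superspecial chart $O_{\breve F}[\![\bu,\bv]\!]/(\bu\bv-\pi)$ one has $\bu \in (\pi)$ along the non-superspecial locus $\bv \in R^\times$.

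Next, I would analyze a $\varpi$-commuting degree-$1$ endomorphism $y$ that extends to $\End(M \otimes_R R/I)$, writing its $V$-series as in \eqref{eq:V_series_y_generic}, and expand the two obstructions $\varpi \circ \widetilde y(\gamma_i) - \widetilde y \circ \varpi(\gamma_i)$ for $i = 0,2$. Because of the factor of $\pi$ appearing in the normal form of $\varpi \gamma_0$, the $V$-coefficient of the obstruction on $\gamma_0$ takes the schematic form $a_1 - b_1 \cdot (\pi f)^q$, and the $V$-coefficient of the obstruction on $\gamma_2$ takes the form $b_1 - a_1 \cdot (\pi f)^q$ (with a possible Frobenius twist of $f$). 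Vanishing of these in $R/I$ forces $(1 - \pi^{2}(\cdot)) a_1 \in I$ and similarly for $b_1$, and since $1 - \pi^2(\cdot)$ is a unit in $R/I$ we deduce $a_1, b_1 \in \pi R + I$. An inductive argument on the $V^{2k+1}$-coefficient, of the same étale-lifting type as in Lemma \ref{lem:lifting_coefficients}, then propagates this divisibility: each coefficient $a_{2k+1}$, $b_{2k+1}$ lies in $\pi R + I$.

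This divisibility says precisely that $y \gamma_0 \in \pi M_1 + IM$ and $y\gamma_2 \in \pi M_3 + IM$, i.e.\ $y \in \pi \cdot \End(M\otimes_R R/I)$. Consequently the quasi-endomorphism $\pi^{-1}y$ acts integrally on the universal special $O_C$-module over $R/I$, which is the statement $z \in \mcZ(\pi^{-1}y)$.

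The main obstacle is the first step: producing the normal form cleanly at the non-superspecial point. The existing normalization (Lemma \ref{lem:Cartier_module_special_point}) is formulated at superspecial points, where both critical indices are present, and the proof is an inductive lifting along the maximal ideal of an artinian local ring. At a non-superspecial point the base is one-dimensional regular, so one cannot simply deform along nilpotent thickenings; this is where $O_F$-display theory becomes indispensable, as it isolates the relevant linear-algebra data over $R$ and makes the choice of basis adapted to the unique critical index unambiguous. Once the normal form is in place, the obstruction calculation and inductive lifting in the subsequent two paragraphs proceed along the lines already established in \S\ref{ss:superspecial}.
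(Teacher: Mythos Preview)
Your outline has a genuine gap in the inductive step. Even granting your normal form $\varpi\gamma_0 = [\pi f]\gamma_2 + V^2\gamma_0$, $\varpi\gamma_2 = \gamma_0 + V^2\gamma_2$ and the conclusion $a_1,b_1\in\mfm$ from the $V$-level obstruction, the $V^3$-level equations (the analogues of \eqref{eq:leading_terms_original}) read $a_1 = a_3\cdot 1 + a_1^{q^2}$ and $b_1 = b_3\cdot(\pi f) + b_1^{q^2}$ in $R/I$. The first gives $a_3\in\mfm$, but the second only says $b_3\pi f = b_1 - b_1^{q^2}\in\mfm$, which is vacuous since $\pi\in\mfm$ already. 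So you get no control over $b_3$, and the same asymmetry recurs at every other step ($a_5$, $b_7$, \ldots). The ``étale-lifting type'' induction from Lemma \ref{lem:lifting_coefficients} does not transfer: there the goal was to \emph{construct} a lift, whereas here you must \emph{constrain} all coefficients, and half the equations degenerate because one leading coefficient of $\varpi$ carries a factor of $\pi$. (Your stated $V$-level obstruction ``$a_1 - b_1(\pi f)^q$'' is also off by an overall factor of $\pi f$, though this is not the main issue.)

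The paper's proof avoids this by two moves you do not make. First, it reformulates the hypothesis $\pi\mcO_{\mcZ(y),z}\neq 0$ as the existence of an $O_{\breve F}$-algebra map $R/I\to O_{\breve F}/(\pi)^2$, and then works entirely over $\bar R = O_{\breve F}/(\pi)^2$ rather than over $R/I$. Second, it uses the display (not Cartier) description of the universal deformation, which packages the endomorphism $y$ into sixteen Witt-vector coefficients satisfying the $\dot F$-commutation relations \eqref{eq:display_final_identities}. The specialization to $\bar R$ is essential: the key step for the last two coefficients ($c_{21}=d_{11}$) uses that $\sigma([u_0]) = [u_0^q] = 0$ in $W_{O_F}(\bar R)$ whenever $u_0$ is a non-unit, a Frobenius-kills-nilpotents phenomenon that has no analogue over a general $R/I$. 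The paper in fact remarks just before the proof that the Cartier $V$-series at non-superspecial points are ``more complicated than in Lemma \ref{lem:Cartier_module_special_point}'', which is exactly the obstacle you are running into.
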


We would have liked to prove this with Cartier theory as before, but the $V$-series expressions for non-superspecial points are more complicated than in Lemma \ref{lem:Cartier_module_special_point}. Instead, we use the $O_F$-display theory from \cite{ACZ} which requires us to restrict to the $p$-adic setting. The proof in the function field setting would be analogous but in terms of local $O_F$-shtukas. These are equivalent to strict $O_F$-modules by \cite[Theorem 8.3]{HS}.

\begin{proof}[Proof for $p$-adic $F$.] By assumption, $z$ is a smooth point of $\mcM_C$ whose complete local ring $\widehat{\mcO}_{\mcM_C,z}$ is isomorphic to $R = O_{\breve F}[\![t]\!]$. Let $I(y)\subseteq R$ be the ideal defining $\widehat{\mcO}_{\mcZ(y),z}$. The assumption $I(y) \varsubsetneq (π)$ is equivalent to the statement that for all continuous rings maps $φ:R \to O_{\breve F}$, equivalently one such map $φ$, it holds that $φ(I(y)) = (π)^n$ with $n\geq 2$. Thus we need to see
\begin{equation}\label{eq:pi_square_point}
\Hom_{O_{\breve F}}(\widehat{\mcO}_{\mcZ(y),z},\ O_{\breve F}/(π)^2)\neq \emptyset\quad \Longrightarrow\quad z\in \mcZ(π^{-1}y).
\end{equation}
Let $(Y, ι, ρ)$ be the triple defining the point $z\in \mcM_C(\mbF)$. Recall that we have the unramified quadratic extension $E\subset F_4\subset C$ which is normalized by $\varpi$. It will suffice for our arguments to consider the coarser datum $(Y, j) := (Y, ι\vert _{O_{F_4}})$. Our first aim is to compute the $O_F$-display of a universal deformation of $(Y, j)$. Let $(M, F, V)$ be the $O_F$-Dieudonné module of $(Y, ι)$. We choose the grading on $M$ such that $0$ is the critical index. Let
$$e_0,f_0 \in M_0^{\varpi = V^{-2}}$$
be an $O_E$-basis. One out of $V^2e_0$, $V^2f_0$ does not lie in $πM_2$. We choose our ordering such that this holds for $e_2 := V^2e_0$. Pick a complementary basis vector $f_2\in M_2$ that can be written as
\begin{equation}\label{eq:intermediate}
f_2 = π^{-1}(λe_2 + V^2f_0)
\end{equation}
for a suitable $λ\in O_{\breve F}$. Rewriting \eqref{eq:intermediate} also provides
$$V^2f_0 = -λe_2 + πf_2.$$
In summary, $\varpi$ and $V$ are now given by the following identities:
\begin{equation}\label{eq:varpi_V_in_basis}
\begin{array}{ccccccc}
\varpi e_0 &=& e_2 &\quad& \varpi e_2 &=& πe_0\\
\varpi f_0 &=& -λe_2 + πf_2 &\quad& \varpi f_2 &=& λe_0 + f_0,\\
&&&&&&\\
V^2 e_0 &=& e_2 &\quad& V^2 e_2 &=& πe_0\\
V^2 f_0 &=& -λe_2 + πf_2 &\quad& V^2 f_2 &=& σ^{-2}(λ)e_0 + f_0.
\end{array}
\end{equation}
We next rewrite this in the terminology of $O_F$-displays. Consider the following eight elements
\begin{equation}\label{eq:basis_display}
\begin{array}{rclcrclcrclcrcl}
t_0 &=& e_0 &\quad& m_1 &=& Vt_0 &\quad& t_2 &=& f_2 &\quad& m_3 &=& Vt_2\\ 
l_0 &=& σ^{-2}(λ)e_0 + f_0 &\quad& n_1 &=& Vl_0 &\quad& l_2 &=& e_2 &\quad& n_3 &=& Vl_2.
\end{array}
\end{equation}
Then $M = L\oplus T$ where $L = \mr{span}\{l_0, m_1, n_1, l_2, m_3, n_3\}$ and $T = \mr{span}\{t_0, t_2\}$. This is a normal decomposition of $M$ meaning $VM = L \oplus πT$. Let $\dot F := V^{-1}\vert_{L\oplus πT}$ be the display variant of the Verschiebung. Then $F\vert_T$ and $\dot F\vert_L$ are given by the following identities:
$$\begin{array}{rclcrcl}
\dot F(l_0) &=& m_3 &\quad& \dot F(l_2) &=& m_1\\
\dot F(m_1) &=& t_0 &\quad& \dot F(m_3) &=& t_2\\
\dot F(n_1) &=& l_0 &\quad& \dot F(n_3) &=& l_2.
\end{array}$$
$$\begin{array}{rcl}
F(t_0) &=& πV^{-1}(e_0) = V\varpi e_0 = Ve_2 = n_3,\\
F(t_2) &=& πV^{-1}(f_2) = V^{-1}(λe_2 + V^2f_0)\\
 &=& σ(λ)Vt_0 + V(l_0 - σ^{-2}(λ)t_0) = (σ(λ) - σ^{-3}(λ))m_1 + n_1.
\end{array}$$
Write $μ_1 = σ(λ) - σ^{-3}(λ)$. Order the chosen basis as $(t_0, l_0, m_1, n_1, t_2, l_2, m_3, n_3)$. The structure matrix of $M$ as $O_F$-display is then
\begin{equation}
S = \left[\begin{matrix}
&&1&&&&&\\
&&&1&&&&\\
&&&&μ_1&1&&\\
&&&&1&&&\\
&&&&&&1&\\
&&&&&&&1\\
&1&&&&&&\\
1&&&&&&&
\end{matrix}\right].
\end{equation}
It is known that a universal deformation of $M$ as $O_F$-display can be defined as follows, cf. \cite[Equation (87)]{Zink}.\footnote{The reference applies directly if $O_F = \mbZ_p$. The general statement can be reduced to that case because $O_F$-displays are equivalent to $\mbZ_p$-displays with strict $O_F$-action, cf. the functor $Γ(\mcO, \mcO')$ in \cite[(1.1)]{ACZ}.} Let $μ\in W_{O_F}(O_{\breve F})$ be any lift of $μ_1$. Consider the ring $A = O_{\breve F}[\![s_{01},\ldots,s_{06},s_{21},\ldots,s_{26}]\!]$. (The strict $O_F$-module $Y$ is of height $8$ and dimension $2$, so one knows a priori that $A$ is isomorphic to its universal deformation ring.) Put $L = W_{O_F}(A)^{\oplus 6}$, $T = W_{O_F}(A)^{\oplus 2}$ and $P = L\oplus T$. Denote and order their basis vectors just as before. Then a universal deformation of $M$ can be defined by declaring $P = L \oplus T$ to be a normal decomposition and by taking the $O_F$-display for the structure matrix
\begin{equation}\label{eq:display_main}
\arraycolsep=3pt
\left[\begin{array}{cccccccc}
1&[s_{01}]&[s_{02}]&[s_{03}]&&[s_{04}]&[s_{05}]&[s_{06}]\\
&1&&&&&&\\
&&1&&&&&\\
&&&1&&&&\\
&[s_{21}]&[s_{22}]&[s_{23}]&1&[s_{24}]&[s_{25}]&[s_{26}]\\
&&&&&1&&\\
&&&&&&1&\\
&&&&&&&1\\
\end{array}\right]
\cdot S =
\left[\begin{array}{cccccccc}
[s_{06}]&[s_{05}]&1&[s_{01}]&μ[s_{02}]+[s_{03}]&[s_{02}]&&[s_{04}]\\
&&&1&&&&\\
&&&&μ&1&&\\
&&&&1&&&\\
{[s_{26}]}&[s_{25}]&&[s_{21}]&μ[s_{22}] + [s_{23}]&[s_{22}]&1&[s_{24}]\\
&&&&&&&1\\
&1&&&&&&\\
1&&&&&&&
\end{array}\right]
\end{equation}
Consider now the universal deformation of $(M,j)$, i.e. as $O_F$-display with action by $O_{F_4}$. It is not difficult to check that this deformation space is described by the quotient
$$O_{\breve F}[\![s_{01}, s_{24}]\!] \iso \ob A = A/(s_{ij},\ (i,j) \notin \{(0,1), (2,4)\}).$$
Namely, $O_{F_4}$-actions on a display over an $O_{\breve F}$-algebra are equivalent to $\mbZ/4$-gradings such that $F$ and $\dot F$ are homogeneous of degree $-1$. At this point, one could go even further and also determine the relation between $s_{01}$ and $s_{24}$ that defines the deformation space of $M$ with special $O_C$-action $ι$, but this will not be necessary for our arguments.

We explained at the beginning, see \eqref{eq:pi_square_point}, that we only care about $\ob R$-points of $\mcZ(y)$, where $\ob R = O_{\breve F}/(π)^2$. So we pick any specialization map $φ:O_{\breve F}[\![s_{01}, s_{24}]\!] \to \ob R$. We put $s_0 = φ(s_{01})$ and $s_2 = φ(s_{24})$. Base changing the $O_F$-display over $\ob A$ given by \eqref{eq:display_main} along $φ$ defines an $O_F$-display $(\ob P, \ob Q, F, \dot F)$ over $\ob R$. It is defined by the normal decomposition $\ob P = \ob L \oplus \ob T$ obtained by base change from the previous normal decomposition, and the structure matrix 
\begin{equation}
\arraycolsep=3pt
\left[\begin{array}{cccccccc}
&&1&[s_0]&&&&\\
&&&1&&&&\\
&&&&μ&1&&\\
&&&&1&&&\\
&&&&&&1&[s_2]\\
&&&&&&&1\\
&1&&&&&&\\
1&&&&&&&
\end{array}\right].
\end{equation}
The claim is that every homogeneous degree $1$ endomorphism $y$ of $\ob P$ is divisible by $π$ after base change to $\ob R/(π)$. (Homogeneity is meant with respect to the $j(O_{F_4})$-grading.) Any such endomorphism is, in particular, a $W_{O_F}(\ob R)$-linear endomorphism of the $W_{O_F}(\ob R)$-module $\ob P$ that preserves the submodule $\ob Q = L\oplus {}^VW_{O_F}(\ob R)\cdot T$. It is hence given by four matrices of the form
\begin{equation}\label{eq:generic_endo_display}
\xymatrixcolsep{23mm}
\xymatrix{
y:\quad \ob P_0 \ar[r]^{\begin{pmatrix} a_{11} & a_{12} \\ a_{21} & a_{22}\end{pmatrix}} &
\ob P_1 \ar[r]^{\begin{pmatrix} {}^Vb_{11} & {}^Vb_{12} \\ b_{21} & b_{22}\end{pmatrix}}
&
\ob P_2 \ar[r]^{\begin{pmatrix} c_{11} & c_{12} \\ c_{21} & c_{22}\end{pmatrix}}
&
\ob P_3 \ar[r]^{\begin{pmatrix} {}^Vd_{11} & {}^Vd_{12} \\ d_{21} & b_{22}\end{pmatrix}}
&
\ob P_0}
\end{equation}
where the $16$ coefficients lie in $W_{O_F}(\ob R)$. (The matrix presentation is meant with respect to the above basis $(t_0, l_0, m_1, n_1, t_2, l_2, m_3, n_3)$.) Then $y$ being an endomorphism of $(\ob P, \ob Q, F, \dot F)$ is equivalent to $\dot F \circ y = y \circ \dot F$. We now express this condition in terms of the $16$ coefficients. We write $a$, $b$, $c$ and $d$ for the four matrices in \eqref{eq:generic_endo_display} and let $ξ\in W_{O_F}(\ob R)$ be any element. The compositions $\dot F \circ x$ and $x\circ \dot F$, for $x\in \{a, b, c, d\}$, are given as follows:
\begin{equation}
\begin{array}{lrcl}
(0)\ &\dot F(a({}^Vξt_0)) &=& πξ\{σ(a_{11})t_0 + σ(a_{21})[s_0]t_0 + σ(a_{21})l_0\}\\ 
&d(\dot F({}^Vξt_0)) &=& ξ\{{}^Vd_{12}t_0 + d_{22}l_0\} \\ 
&\dot F(a(l_0)) &=& σ(a_{12})t_0 + σ(a_{22})[s_0]t_0 + σ(a_{22})l_0\\ 
&d(\dot F(l_0)) &=& {}^Vd_{11}t_0 + d_{21}l_0\\ 
\\
(1)\ &\dot F(b(m_1)) &=& μb_{11}m_1 + σ(b_{21})m_1 + b_{11}n_1\\ 
&a(\dot F(m_1)) &=& a_{11}m_1 + a_{21}n_1\\ 
&\dot F(b(n_1)) &=& μb_{12}m_1 + σ(b_{22})m_1 + b_{12}n_1\\ 
&a(\dot F(n_1)) &=& [s_0]a_{11}m_1 + a_{12}m_1 + [t_0]a_{21}n_1 + a_{22}n_1\\ 
\\
(2)\ &\dot F(c({}^Vξt_2)) &=& πξ\{σ(c_{11})t_2 + [s_2]σ(c_{21})t_2 + σ(c_{21})l_2\}\\
&b(\dot F({}^Vξt_2)) &=& ξ\{μ{}^Vb_{11}t_2 + {}^Vb_{12}t_2 + μb_{21}l_2 + b_{22}l_2\}\\
&\dot F(c(l_2)) &=& σ(c_{12})t_2 + [s_2]σ(c_{22})t_2 + σ(c_{22})l_2\\
&b(\dot F(l_2)) &=& {}^Vb_{11}t_2 + b_{21}l_2\\
\\
(3)\ &\dot F(d(m_3)) &=& σ(d_{21})m_3 + d_{11} n_3\\
&c(\dot F(m_3)) &=& c_{11}m_3 + c_{21}n_3\\
&\dot F(d(n_3)) &=& σ(d_{22})m_3 + d_{12}n_3\\
&c(\dot F(n_3)) &=&  [s_2]c_{11}m_3 + c_{12}m_3 + [s_2]c_{21}n_3 + c_{22}n_3\\
\end{array}
\end{equation}
Thus $\dot F \circ y = y \circ \dot F$ if and only if the following identities hold.
\begin{equation}\label{eq:display_final_identities}
\begin{array}{lrclcrcl}
(α)\ & a_{11} &=& μb_{11} + σ(b_{21}) &\quad& a_{12} &=& μ b_{12} + σ(b_{22}) - [s_0]a_{11}\\
& a_{21} &=& b_{11} &\quad& a_{22} &=& b_{12} - [s_0]a_{21}\\
\\
(β)\ &{}^Vb_{11} &=& σ(c_{12}) + [s_2] σ(c_{22}) &\quad& {}^Vb_{12} &=& π(σ(c_{11}) + [s_2]σ(c_{21})) - μ{}^Vb_{11}\\
& b_{21} &=& σ(c_{22}) &\quad& b_{22} &=& πσ(c_{21}) - μb_{21}\\
\\
(γ)\ &c_{11} &=& σ(d_{21}) &\quad& c_{12} &=& σ(d_{22}) - [s_2]c_{11}\\
& c_{21} &=& d_{11} &\quad& c_{22} &=& d_{12} - [s_2]c_{21}\\
\\
(δ)\ &{}^Vd_{11} &=& σ(a_{12}) + [s_0]σ(a_{22})&\quad& {}^Vd_{12} &=& π(σ(a_{11}) + [s_0]σ(a_{21}))\\
& d_{21} &=& σ(a_{22}) &\quad& d_{22} &=& σ(a_{21})\\
\end{array}
\end{equation}
Assuming that all these relations hold, we claim that none of the $16$ variables is a unit. (This is equivalent to claiming that all $16$ coefficients are divisible by $π$ after base change to $W_{O_F}(R/π) = O_{\breve F}$ which means that $π^{-1}y$ defines an endomorphism of the Dieudonné module of the closed point. This precisely means that $z\in \mcZ(π^{-1}y)$.) The proof of the claim is as follows. We use the matrix notation $(α_{ij})$, $(β_{ij})$ etc. to refer to the individual identities in \eqref{eq:display_final_identities}.
\begin{enumerate}[wide, labelindent=0pt, labelwidth=!, label=(\arabic*), topsep=2pt, itemsep=2pt]
\item First, $(δ_{11})$ and $ (β_{11})$ imply that $a_{12}$ and $c_{12}$ cannot be units. Then $(γ_{12})$ implies that $d_{22}$ is no unit. Then $(δ_{22})$ shows that $a_{21}$ is no unit. By $(α_{21})$, also $b_{11}$ is no unit.

\item Next, specializing $(δ_{12})$ along the projection map $s:W_{O_F}(\ob R) \to \ob R$, we obtain that $0 = π s(σ(a_{11})) + 0$ because $πs_0 = 0$ in $\ob R$. Since $π\neq 0$ in $\ob R$, it follows that $s(σ(a_{11}))$ is no unit and hence that $a_{11}$ is no unit. The same argument but for $(β_{12})$ shows that $c_{11}$ is no unit.

\item An easy chain of substitutions now shows that all remaining variables, except for possibly $c_{21} = d_{11}$ cannot be units.

\item For the remaining two variables, we consider identity $(δ_{11})$. Consider the leading terms of the Witt vector expressions for $a_{12}$ and $a_{22}$:
$$a_{12} = [u_0] + {}^Vu_1 \quand a_{22} = [v_0] + {}^Vv_1.$$
We already know that $u_0,v_0\notin \ob R^\times$. In particular, $σ([u_0]) = [u_0]^q = 0$ and $σ([v_0]) = [v_0^q] = 0$. Recall that $σ({}^V(x)) = πx$ for every $x\in W_{O_F}(\ob R)$. Thus we obtain from $(δ_{11})$ that
$${}^Vd_{11} = σ(a_{12}) + [s_0]σ(a_{22}) = πu_1 + π[s_0]v_1.$$
Looking at the image of this expression under $s:W_{O_F}(\ob R)\to \ob R$ and using that $π\neq 0$ in $\ob R$, it follows that $u_1 \notin W_{O_F}(\ob R)^\times$.

Let $\bar a_{12}$ and $\bar d_{11}$ be the images of $a_{12}$ and $d_{11}$ under the reduction map $W_{O_F}(\ob R) \to W_{O_F}(\mbF) = O_{\breve F}$. Identity $(δ_{11})$ implies that ${}^V\bar d_{11} = σ(\bar a_{12})$. The above showed that $\bar a_{12}$ is divisible by $π^2$, so we obtain that $\bar d_{11}$ is divisible by $π$. We deduce that $d_{11}$ is not a unit. The proof of the proposition is now complete.
\end{enumerate}
\end{proof}

\subsection{Intersection Numbers}
\label{ss:intersection_numbers_3_4}

We first summarize the results of the previous sections. For a lattice $Λ\subset W$, we have previously defined $m(y, Λ) = \max \{0, n(y, Λ)\}$, see \eqref{eq:def_nm}.

\begin{prop}\label{prop:structure_3_4}
Let $y\in S_{3/4, \mr{rs}}$ be a regular semi-simple element and let $m = \max_{Λ\subset W} n(y, Λ)$. Put $\mcC(y) = \mcZ(π^{-m}y)$.
\begin{enumerate}[wide, labelindent=0pt, labelwidth=!, label=(\arabic*), topsep=2pt, itemsep=2pt]
\item The formal scheme $\mcZ(y)$ is non-empty if and only if $m\geq 0$. In particular, $\mcC(y) \neq \emptyset$.
\item For a stratum $P\subseteq \mcM_C$, let $Λ(P)\subset W$ be the lattice defined by it. The pure locus of $\mcZ(y)$ is given by
$$\mcZ(y)^{\mr{pure}} = \sum_{P \subseteq \mcM_{C, \mr{red}}} m(y, Λ(P))\cdot [P].$$
\item The formal scheme $\mcC(y)$ is artinian. Moreover,
\begin{equation}\label{eq:artinian_identity}
\mcZ(y)^{\mr{art}} = \mcC(y) \sqcup \coprod_{z \in |\mcZ(y)|\setminus |\mcC(y)|,\ z\ \text{superspecial}} \mcO_{\mcZ(y)^{\mr{art}}, z}
\end{equation}
and each local ring in the disjoint union on the right hand side has length $q$.
\end{enumerate}
\end{prop}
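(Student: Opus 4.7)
The plan is to assemble this proposition from the main results of Sections~\ref{ss:superspecial} and~\ref{ss:displays}, together with the multiplicity-function calculus of Section~\ref{s:multiplicity_functions}. All constructions are $H_b$-equivariant, so it suffices to argue on a single connected component of $\mcM_C$.

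Part~(1) should follow immediately from Lemma~\ref{lem:descript_Z_y_Dieudonne}: a closed point of $\mcZ(y)$ exists if and only if some lattice $\Lambda \subseteq W$ satisfies $w(y)\Lambda \subseteq \Lambda$, i.e.\ $m \geq 0$. The scaling identity $w(\pi^{-m}y) = \pi^{-m} w(y)$ forces $\max_\Lambda n(\pi^{-m}y,\Lambda) = 0$, so $\mcC(y) \neq \emptyset$. Furthermore, Lemma~\ref{lem:maximum_of_multiplicity_function} forces $w(\pi^{-m}y)^2 \in O_L \setminus \pi^2 O_L$, so one of Propositions~\ref{prop:artinian_case_3} or~\ref{prop:artinian_case_4} applies to conclude that $\mcC(y)$ is artinian.

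For Part~(2), the set-theoretic identity of supports is Corollary~\ref{cor:support_intersection_set_theoretic}(5). To compute the multiplicity of $P_\Lambda$ at its generic point $\eta$, I would induct on $k := n(y,\Lambda) \geq 1$. The base case $k = 1$ uses Theorem~\ref{thm:classification_multiplicity_function} and Proposition~\ref{prop:multiplicities} to produce a neighbor $\Lambda'$ of $\Lambda$ with $n(y,\Lambda') = 0$; then at the superspecial point $z = P_\Lambda \cap P_{\Lambda'}$, Proposition~\ref{prop:artinian_case_5} yields $I(y) = (\pi, \mathbf{v}^{q+1})$ locally, whose maximal Cartier divisor is $(\mathbf{v})$, giving $P_\Lambda$ multiplicity $1 = m(y,\Lambda)$. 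The inductive step uses Proposition~\ref{prop:simple_extension} applied to $\pi^{-1}y \in S_{3/4,\mr{rs}}$: the identity $I(y) = \pi \cdot I(\pi^{-1}y)$ in $\widehat{\mcO}_{\mcM_C,z}$ localizes to $v_\eta(I(y)) = v_\eta(I(\pi^{-1}y)) + 1$ (using semi-stability of $\mcM_C$ in Proposition~\ref{prop:representability} to see that $\pi$ is a uniformizer of $\mcO_{\mcM_C,\eta}$), and the induction hypothesis closes the argument.

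For Part~(3), the inclusion $\mcC(y) \subseteq \mcZ(y)^{\mr{art}}$ is immediate from Part~(1). At a superspecial point $z \in |\mcZ(y)| \setminus |\mcC(y)|$, I would combine Lemma~\ref{lem:properties_multiplicity_function}(2) with the structure of $\mcT(w(y))$ (Theorem~\ref{thm:classification_multiplicity_function}) to show that exactly one of the two curves through $z$ lies in $\mcZ(y)^{\mr{pure}}$, whence Proposition~\ref{prop:artinian_case_5} furnishes the local length $q$. At a non-superspecial $z \notin |\mcC(y)|$, iterating Propositions~\ref{prop:non_special_embedded} and~\ref{prop:simple_extension} shows that any embedded contribution at $z$ would propagate along the chain $\mcZ(y) \supseteq \mcZ(\pi^{-1}y) \supseteq \cdots \supseteq \mcZ(\pi^{-m}y) = \mcC(y)$, forcing $z \in |\mcC(y)|$ and yielding a contradiction. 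The hardest step will be the combinatorial/scheme-theoretic bookkeeping at points of $|\mcC(y)|$: verifying that the local structure of $\mcZ(y)^{\mr{art}}$ on $|\mcC(y)|$ matches $\mcC(y)$ exactly, via the factorization $I(y) = I^{\mr{pure}}(y) \cdot I(\pi^{-m}y)$ in complete local rings, which itself is a consequence of Part~(2) combined with iterated Proposition~\ref{prop:simple_extension}; carefully distinguishing superspecial points inside $|\mcC(y)|$ from those outside (which must not be double-counted) is the subtle point.
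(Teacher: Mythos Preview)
Your overall architecture matches the paper's proof closely: Parts (1)--(3) are assembled from Corollary~\ref{cor:support_intersection_set_theoretic} together with Propositions~\ref{prop:artinian_case_3}--\ref{prop:artinian_case_5}, \ref{prop:simple_extension}, \ref{prop:non_special_embedded}, and this is exactly the right toolkit. Your treatment of Part~(2) by induction on $k = n(y,\Lambda)$ with base case $k=1$ read off from (the proof of) Proposition~\ref{prop:artinian_case_5} is a minor variant of the paper's argument, which instead anchors at $k=0$ using that every $\sigma$-linear endomorphism of $\bar\Lambda$ stabilizes some line and then applies Proposition~\ref{prop:simple_extension} upward; both routes work.

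There is, however, a genuine gap in your Part~(3). You assert that at a superspecial point $z = P_{\Lambda_1}\cap P_{\Lambda_2}$ in $|\mcZ(y)|\setminus|\mcC(y)|$, \emph{exactly one} of the two curves lies in $\mcZ(y)^{\mr{pure}}$. That is false in general. What the tree combinatorics give (via Proposition~\ref{prop:multiplicities} and the connectedness of $\mcT(w(y))$) is only that $n(y,\Lambda_1)$ and $n(y,\Lambda_2)$ differ by exactly~$1$ whenever $z\notin|\mcC(y)|$; but if both values are $\geq 1$ then \emph{both} curves lie in the pure locus, with distinct positive multiplicities, and Proposition~\ref{prop:artinian_case_5} does not apply as stated. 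The fix---which is what the paper does---is to first iterate Proposition~\ref{prop:simple_extension} to replace $y$ by $\pi^{-k}y$ with $k=\min(m(y,\Lambda_1),m(y,\Lambda_2))$, reducing to the boundary situation where one curve has multiplicity~$0$ and the other multiplicity~$1$; then Proposition~\ref{prop:artinian_case_5} gives the length-$q$ embedded component, and Proposition~\ref{prop:simple_extension} (which preserves $\mcZ(\cdot)^{\mr{art}}$ locally) transports this length back to $y$. The remaining pieces of your argument for non-superspecial points and for points on $|\mcC(y)|$ are essentially correct.
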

\begin{proof}
(1) This follows directly from Corollary \ref{cor:support_intersection_set_theoretic}.

(2) Corollary \ref{cor:support_intersection_set_theoretic} shows that the multiplicity of $P$ in $\mcZ(y)$ is indeed $0$ if $n(y,Λ(P))\leq 0$. By the same corollary, if $n(y, Λ(P)) = 0$, then there exists a point $z\in \mcZ(y)\cap P(\mbF)$ because every $σ$-linear endomorphism of a $2$-dimensional $\mbF$-vector preserves some point of $P(\mbF)$. Proposition \ref{prop:simple_extension} applies to that point and shows that the multiplicity of $P$ in $\mcZ(π^ay)$ equals $a = m(π^ay, Λ(P))$ for every $a\geq 0$. This reasoning applies to all pairs $(π^\mbZ y,Λ(P))$, and statement (2) follows.

(3) For all $y$, by Proposition \ref{prop:simple_extension}, if $z\in \mcZ(y)^{\mr{art}}$ then $z\in \mcZ(πy)^{\mr{art}}$ and there is an equality of local rings
$$\mcO_{\mcZ(πy)^{\mr{art}}, z} = \mcO_{\mcZ(y)^{\mr{art}}, z}.$$
We know from Corollary \ref{cor:support_intersection_set_theoretic} that $\mcC(y)$ is artinian, so this shows $\mcC(y) \subseteq \mcZ(y)^{\mr{art}}$. Moreover, by Proposition \ref{prop:artinian_case_5} combined with (again) Proposition \ref{prop:simple_extension}, every superspecial point $z = P \cap P'$ such that $m(y, Λ(P)) > m(y, Λ(P')) \geq 0$ lies in $\mcZ(y)^{\mr{art}}$ and has a local ring of length $q$. This shows that the right hand side in \eqref{eq:artinian_identity} is an open and closed subscheme of the left hand side.

By Corollary \ref{cor:support_intersection_set_theoretic}, every superspecial point of $\mcZ(y)$ already lies in the right hand side of \eqref{eq:artinian_identity}. Let $z\in \mcZ(y)^{\mr{art}} \cap P(\mbF)$ be a non-superspecial point. The multiplicity of $P$ in $\mcZ(π^{-m(y, Λ(P))+1}y)$ is $1$. By Proposition \ref{prop:non_special_embedded}, $z$ even lies in $\mcZ(π^{-m(y, Λ(P))}y)$. By Corollary \ref{cor:support_intersection_set_theoretic}, the only possibility is $m(y, Λ(P)) = m$ and $z\in \mcC(y)$, and the proof of (3) is complete.
\end{proof}

Let $G^0_{3/4, b}\subset G_{3/4, b}$ denote the subgroup of elements $g$ with reduced norm $\mr{Nrd}(g) \in O_F^\times$. Then $g\mcM_{3/4}^i = \mcM_{3/4}^i$ for every $i\in \mbZ$ and we may define the connected component intersection number
$$\Int_0(g) := \langle \mcM_C^0,\ g\cdot \mcM_C^0\rangle_{\mcM_{3/4}}.$$

\begin{thm}\label{thm:main_3_4}
Let $g\in G^0_{3/4, b, \mr{rs}}$ be a regular semi-simple element with numerical invariant $(L, r, d)$. The intersection number $\Int_0(g)$ only depends on the triple $(L, r, d)$. It is related to $\Int(g)$ by
\begin{equation}\label{eq:rel_conn_comp_int}
\Int(g) = \begin{cases}
\Int_0(g) & \text{if $L/F$ is split or ramified}\\
2\,\Int_0(g) & \text{if $L/F$ is an unramified field extension.}
\end{cases}
\end{equation}
Moreover, the arithmetic transfer conjecture (Conjecture \ref{conj:ATC_explicit}) holds with correction function $f'_{\mr{corr}} = 0$. That is, for every $γ\in G'_{\mr{rs}}$,
\begin{equation}\label{eq:ATC_3_4}
\del (γ, f'_D) = \begin{cases} 2\,\Int(g)\log(q) & \text{if there is a matching $g\in G_{3/4, b}$}\\
0 & \text{otherwise.}
\end{cases}
\end{equation}
\end{thm}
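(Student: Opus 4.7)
The plan is to apply the surface intersection formula of Corollary \ref{cor:intersection_simplified} to the quotient by $\Gamma_0$ of $(\mcM_D, \mcM_C, g\mcM_C)$ and evaluate each term. By Proposition \ref{prop:nilpotent_reduction} we may assume $z_g$ is topologically nilpotent, in which case $\mcI(g) = \mcZ(y)$ by \eqref{eq:final_description_intersection_locus}, where $y \in S_{3/4, \mr{rs}}$ is obtained from $z_g$ via \eqref{eq:presentation_Serre_tensor}. The conormal bundle degree on each irreducible component of the special fiber of $\mcM_C$ is $q^2-1$ by Proposition \ref{prop:conormal_3/4}, and the pure multiplicities on $P_\Lambda$ are $m(y, \Lambda) = m(w(y), \Lambda)$ by Proposition \ref{prop:structure_3_4}(2). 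Since $w(y)^2 = \pi^{-1}\varpi y^2$, the tree $\mcT(w(y))$ has shape described by Theorem \ref{thm:classification_multiplicity_function} applied to the shifted numerical invariants $(L, r-2, d)$.

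Next, we handle the two ``geometric'' contributions -- conormal degree times total multiplicity, plus self-intersection of the pure cycle on $g\mcM_C$. These depend only on the multiplicity function and the ambient tree combinatorics, so they coincide with the analogous sum $\sum p_\Lambda$ from the proof of Proposition \ref{prop:main_1_4_aux}, but evaluated at the shifted invariants $(L, r-2, d)$. Recycling that case-by-case calculation is then immediate.

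The genuinely new ingredient is the artinian length, governed by Proposition \ref{prop:structure_3_4}(3): it splits as $\mr{len}(\Gamma_0\backslash\mcC(y)) + q\cdot N(y)$, where $N(y)$ is the count of superspecial points of $|\mcZ(y)|\setminus|\mcC(y)|$ modulo $\Gamma_0$. The length of $\mcC(y) = \mcZ(\pi^{-m}y)$ is read off from Proposition \ref{prop:artinian_case_3} or \ref{prop:artinian_case_4} according to whether $v_L(w(\pi^{-m}y)^2)$ equals $0$ or $1$, equivalently according to the residue of $r$ modulo $4$. The count $N(y)$ is obtained by enumerating superspecial points on the strata $P_\Lambda$ for $\Lambda$ in $\mcT(w(y))$ or at its one-step boundary, and subtracting those absorbed by $\mcC(y)$; the local classification in Lemma \ref{lem:classif_emanating_edges} separates the $q+1$ fixed lines of $\bar w$ on each such $P_\Lambda$ into superspecial (rational over $\mbF_{q^2}$) versus merely special.

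Summing all contributions, we expect $\Int_0(g) = q\Orb(\gamma, f'_\Par) + r/2$ for $L$ inert, $\Int_0(g) = 2q\Orb(\gamma, f'_\Par) + r/2$ for $L$ ramified, and $\Int_0(g) = 2q\Orb(\gamma, f'_\Par)$ for $L$ split, each depending only on $(L, r, d)$. Combining with the formula for $\del(\gamma, f'_D)$ from Proposition \ref{prop:derivative_teaser}, and with the relation between $\Int(g)$ and $\Int_0(g)$ established by the same $\Gamma_1$-action argument on $\pi_0(\mcM_C)$ used in the proof of Theorem \ref{thm:main_1_4}, then produces the AT identity with $f'_{\mr{corr}} = 0$. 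The main obstacle is the bookkeeping in the third paragraph: verifying across all shapes of $\mcT(w(y))$ (ball around a vertex or edge, apartment, or their degenerate cases) that $\mr{len}(\mcC(y)) + q\cdot N(y)$ assembles to exactly the expected total, i.e.\ that the excess over the $1/4$ artinian length equals $4q\Orb(\gamma, f'_\Par)$, the precise quantity needed to absorb the correction term $f'_{\mr{corr}}$ that was present in the $\lambda = 1/4$ case.
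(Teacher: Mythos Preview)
Your proposal follows essentially the same route as the paper's proof. Both arguments compute $\Int_0(g)$ via Corollary~\ref{cor:intersection_simplified}, observe that the pure cycle $\mcZ(y)^{\mr{pure}}$ and the conormal degree agree with those of a $1/4$-element $\tilde g$ of shifted invariant $(L,r-2,d)$ (the paper makes this explicit via $\tilde y=\varpi^{-1}y$ in Lemma~\ref{lem:strata_contribution}), and then isolate the difference as a purely artinian contribution governed by Proposition~\ref{prop:structure_3_4}. Your expected formulas for $\Int_0(g)$ are correct.

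Two points where your outline is looser than what the paper actually does. First, your closing sentence asserts that the artinian excess over the $1/4$ case equals $4q\,\Orb(\gamma,f'_\Par)$; this is not literally right at the level of $\Int_0$. The paper's Lemma~\ref{lem:strata_contribution} shows the excess is $q\cdot\#\big(\Gamma_0E^\times\backslash\{\Lambda:m(y,\Lambda)\geq 0\}\big)+c$ with $c\in\{0,1\}$, and the translation of that lattice count into $\Orb(\gamma,f'_\Par)$ carries an $L$-dependent factor (Lemma~\ref{lem:strata}), which is then absorbed by the $\Int_0\mapsto\Int$ doubling in the inert case. The identification with $4q\,\Orb$ only emerges at the level of $2\Int(g)$, not at the artinian-length comparison you describe. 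Second, you take the equality between the lattice count and $\Orb(\gamma,f'_\Par)$ for granted; the paper proves this separately (Lemma~\ref{lem:strata}) by a case-by-case enumeration over the six shapes of $\mcT(w(y))$ from Theorem~\ref{thm:classification_multiplicity_function}, matched against the formulas in Proposition~\ref{prop:orb_int_para_teaser}. This is exactly the bookkeeping you anticipate, but it is not automatic and requires checking each case.
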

\begin{proof}
We first determine the intersection number $\Int(g)$ for $g\in G_{3/4, b, \mr{rs}}$. We may assume that $g = 1 + z$ with $z = z_g$. Since $\mcM_C\cap g\cdot \mcM_C = \emptyset$ whenever $z$ is not topologically nilpotent by Lemma \ref{lem:nilpotent_reduction}, we may assume that $z$ is topologically nilpotent.

We work in the coordinates of \S\ref{ss:simplified_intersection_3_4}. Let $y\in S_{3/4, \mr{rs}}$ be such that $z = \left(\begin{smallmatrix} & y\varpi \\ y & \end{smallmatrix}\right)$. In particular, $L = F[\varpi y^2]$ with $\varpi y^2 \in O_L$ and
$$r = v(N_{L/F}(\varpi y^2)),\quad d = [O_L : O_F[\varpi y^2]] - r/2.$$
The element $\wt{y} = \varpi^{-1}y$ lies in $S_{1/4}$. Since $\varpi \wt y^2 = π^{-1} \varpi y^2$, its numerical invariant is given by
\begin{equation}\label{eq:numerical_triples}
(\wt L, \wt r, \wt d) = (L, r - 2, d).
\end{equation}
Let $\wt z = \left(\begin{smallmatrix} & \wt y\varpi \\ \wt y & \end{smallmatrix}\right)$ and define $\wt g = 1 + \wt z \in \End^0_F(\mbY)$. Then $\wt g$ lies in $G_{1/4, b, \mr{rs}}$ unless the following exceptional case occurs: $L \iso F\times F$ and one of the eigenvalues of $\varpi y^2$ is $π$. All statements that follow also apply in this exceptional case if one sets $\mcI(\wt g) = \emptyset$ and $\mr{Int}_0(\wt g) = 0$.

Let $Γ\subset L^\times$ be a subgroup such that $L^\times = Γ\times O_L^\times$ and set $Γ_0 = Γ \cap G^0_{3/4, b}$.

\begin{lem}\label{lem:strata_contribution}
The following identity of intersection numbers holds:
$$\Int_0(g) = \Int_0(\wt g) + q\cdot\#\big(Γ_0E^\times \backslash \{Λ \subseteq W \mid m(y, Λ) \geq 0\}\big) + \begin{cases} 1 & \text{if $L$ is a field}\\
0 & \text{if $L\iso F\times F$.}\end{cases}$$
\end{lem}
\begin{proof}
First consider the divisors $\mcZ(y)^{0, \mr{pure}},\, \mcZ(\wt y)^{0, \mr{pure}} \subseteq \mcM^0_C$. Because
$$w(\varpi^{-1} y) = V\cdot \varpi^{-1}y = (V^2\varpi^{-1})\cdot V^{-1}y = (V^2\varpi^{-1})\cdot w(y)$$
by Definition \ref{def:operator_w}, the restrictions of $w(\varpi^{-1}y)$ and $w(y)$ to the $V^2\varpi^{-1}$-invariants $W = N_0^{τ = \mr{id}}$ agree. The multiplicity formula for the invariant $1/4$ case (Proposition \ref{prop:multiplicities_1_4}) and the analogous formula for invariant $3/4$ (Proposition \ref{prop:structure_3_4}) hence give that
\begin{equation}\label{eq:divisors_equal}
\mcZ(y)^{0, \mr{pure}} = \mcZ(\wt y)^{0, \mr{pure}}.
\end{equation}
By Proposition \ref{prop:conormal_3/4}, the degree $\deg(\mcC\vert_P)$ of the restriction of the conormal bundle for $\mcM_C\to \mcM_D$ to an irreducible component $P\subseteq \mcM_{C, \mr{red}}$ equals $q^2-1$ in all situations. The equality in \eqref{eq:divisors_equal} and the general intersection number formula from Corollary \ref{cor:intersection_simplified} then imply that
\begin{equation}\label{eq:difference_int_numbers}
\Int_0(g) - \Int_0(\wt g) = \mr{len}(\mcO_{Γ_0\backslash \mcZ(y)^{0, \mr{art}}}) - \mr{len}(\mcO_{Γ_0\backslash \mcZ(\wt y)^{0,\mr{art}}}).
\end{equation}
By Proposition \ref{prop:embedded_components_1_4}, the length of $Γ_0\backslash \mcZ(\wt y)^{0, \mr{art}}$ is given by
\begin{equation}\label{eq:embed_comp_1_4}
\mr{len}(\mcO_{Γ_0\backslash \mcZ(\wt y)^{0, \mr{art}}}) = \begin{cases} 1 & \text{if $L$ is a field and $\wt r\in 2 + 4\mbZ_{\geq 0}$}\\
0 & \text{in all other cases.}
\end{cases}
\end{equation}
We determine the length of $Γ_0\backslash \mcZ(y)^{0, \mr{art}}$: By \eqref{eq:artinian_identity}, it is given as
\begin{equation}\label{eq:eq1}
\mr{len}(\mcO_{Γ_0 \backslash \mcC(y)^0}) + q \cdot \#\big(Γ_0E^\times \backslash \{Λ\subset W \mid m > m(y, Λ)\geq 0\}\big).
\end{equation}
First assume that $\wt r \in 4\mbZ_{\geq 1}$. Then we are in case (3) of Corollary \ref{cor:support_intersection_set_theoretic} and obtain from Proposition \ref{prop:artinian_case_3} that
\begin{equation}\label{eq:eq2}
\mr{len}(\mcO_{Γ_0 \backslash \mcC(y)^0}) = q\cdot \#Γ_0\backslash\mcT(w(y)) + \begin{cases}1 & \text{if $L$ is a field}\\
0 & \text{if $L$ is split.}
\end{cases}
\end{equation}
Now assume that $\wt r \in 2 + 4\mbZ_{\geq 0}$. Then we are in case (4) of Corollary \ref{cor:support_intersection_set_theoretic} and Proposition \ref{prop:artinian_case_4} shows that
\begin{equation}\label{eq:eq3}
\mr{len}(\mcO_{Γ_0 \backslash \mcC(y)^0}) = q\cdot \#Γ_0\backslash\mcT(w(y)) + \begin{cases}2 & \text{if $L$ is a field}\\
0 & \text{if $L$ is split.}
\end{cases}
\end{equation}
In both cases, the set $\mcT(w(y))$ is precisely the set $\{Λ\subseteq W\mid m(y, Λ) = m\}/E^\times$. So the three identities \eqref{eq:eq1}, \eqref{eq:eq2} and \eqref{eq:eq3} together give
$$
\mr{len}(\mcO_{Γ_0\backslash \mcZ(y)^{0, \mr{art}}}) = q\cdot \#\big(Γ_0 E^\times \backslash \{Λ\subset W \mid m(y, Λ)\geq 0\}\big) + \begin{cases} 1 & \text{if $L$ is a field and $\wt r\in 4\mbZ_{\geq 2}$}\\
2 & \text{if $L$ is a field and $\wt r\in 2+4\mbZ_{\geq 0}$}\\
0 & \text{if $L$ is split.}
\end{cases}
$$
Substituting this result and Identity \eqref{eq:embed_comp_1_4} in \eqref{eq:difference_int_numbers} proves the lemma.
\end{proof}

We can now prove the first part of Theorem \ref{thm:main_3_4}: We already know from Proposition \ref{prop:main_1_4_aux} that $\Int_0(\wt g)$ only depends on $(L, r, d)$. (Recall that the numerical invariant of $\wt g$ is $(L, r - 2, d)$.) By Theorem \ref{thm:classification_multiplicity_function} and Proposition \ref{prop:multiplicities}, the number of lattices $\#(Γ_0E^\times\backslash \{Λ\subset W\mid m(y, Λ) \geq 0\})$ in Lemma \ref{lem:strata_contribution}  also only depends on $(L, r, d)$. So we obtain that $\Int_0(g)$ only depends on $(L, r, d)$ as claimed. The claimed identity
$$\Int(g) = \begin{cases} \Int_0(g) & \text{if $L/F$ ramified or split}\\
2\,\Int_0(g) & \text{if $L/F$ inert}\end{cases}$$
follows by the same argument as in the case of invariant $1/4$, see the first part of the proof of Theorem \ref{thm:main_1_4}. By the intersection number formula in the invariant $1/4$ case \eqref{eq:int_number_formula_1_4}, which we apply to $\wt r = r -2$, we obtain from Lemma \ref{lem:strata_contribution} that
\begin{equation}\label{eq:int_number_formula_3_4}
\Int(g) = δ \cdot q\cdot \#\big(Γ_0E^\times \backslash\{Λ\subset W \mid n(y, Λ) \geq 0\}\big) + \begin{cases}
r & \text{$L/F$ inert}\\
r/2 & \text{$L/F$ ramified}\\
0 & \text{$L/F$ split}\end{cases}
\end{equation}
where $δ = 2$ if $L/F$ is inert and $δ = 1$ otherwise. It is left to verify the arithmetic transfer identity \eqref{eq:ATC_3_4}.

Let $γ\in G'_{\mr{rs}}$ be a regular semi-simple element with numerical invariant $(L, r, d)$. If the sign of the functional equation $ε_D(γ)$ is positive, equivalently $r$ odd, then the left hand side of \eqref{eq:ATC_3_4} vanishes by Proposition \ref{prop:derivative_teaser}. There is no matching element $g\in G_{3/4, b}$ by Proposition \ref{prop:char_isogeny_class}, so the right hand side vanishes as well.

Assume from now on that the sign $ε_D(γ)$ is negative, equivalently that $r$ is even. Then $\Orb(γ, f'_D) = 0$, so the left hand side of \eqref{eq:ATC_3_4} equals (by Proposition \ref{prop:derivative_teaser})
\begin{equation}\label{eq:derivative_recap}
\del(γ, f'_D) = 4q\log(q)\, \Orb(γ, f'_\Par) + \log(q) \begin{cases}
2r & \text{if $L$ inert}\\
r & \text{if $L$ ramified}\\
0 & \text{if $L$ split.}
\end{cases}
\end{equation}
By Table \ref{table:matching}, there exists no matching element $g\in G_{3/4, b}$ if and only if $L\iso F\times F$ and $z_γ = (z_1, z_2)$ with $v(z_1), v(z_2)$ both even. (The components $z_1$ and $z_2$ always have the same parity because $r = v(z_1) + v(z_2)$ was assumed even.) In this case, the derivative \eqref{eq:derivative_recap} vanishes by Proposition \ref{prop:orb_int_para_teaser} (5).

It remains to consider the case where there exists a matching element $g\in G_{3/4, b, \mr{rs}}$. Then, $(L, r, d)$ is the numerical invariant of both $g$ and $γ$. We assume that $r >0$ because otherwise all involved terms vanish. The desired equality of \eqref{eq:derivative_recap} and the $2\log(q)$-multiple of \eqref{eq:int_number_formula_3_4} is then a direct consequence of the following lemma:

\begin{lem}\label{lem:strata}
Assume that $γ\in G'_{\mr{rs}}$ matches $g\in G_{3/4, b, \mr{rs}}$. Then
\begin{equation}\label{eq:formula_lattice_count}
2 \Orb(γ, f'_\Par) = \#\big(Γ_0E^\times \backslash\{Λ\subset W \mid n(y, Λ) \geq 0\}\big)\cdot \begin{cases} 2 & \text{$L/F$ inert}\\
1 & \text{$L/F$ ramified or split.}
\end{cases}
\end{equation}
\end{lem}
\begin{proof}
The left hand side only depends on the triple $(L, r, d)$ and is given by Proposition \ref{prop:orb_int_para_teaser}. Let $m = \max_{Λ\subset W} n(y, Λ)$ and assume that $m \geq 0$. The right hand side of the lemma is described by Proposition \ref{prop:multiplicities} and our proof is by going through the cases of that proposition. (Every $g\in G_{3/4,b,\mr{rs}}$ has a matching element $γ$, so this condition will not come up anymore.)

Proposition \ref{prop:multiplicities} describes the right hand side of \eqref{eq:formula_lattice_count} in terms of a ball of radius $m$ around the center $\mcT(w(y))$,
$$B(\mcT(w(y)), m) = \{Λ \in \mcB \mid d(Λ, \mcT(w(y))) \leq m\}.$$
The center $\mcT(w(y))$ in turn has been determined in Theorem \ref{thm:classification_multiplicity_function}. Note that $w(y)^2 = π^{-1} \varpi y^2$, so the numerical invariant of $w(y)$ is $(L, \wt r = r-2, d)$. We now count $Γ_0\backslash B(m, \mcT(w(y)))$ in dependence on $(L, r, d)$. The group $Γ_0$ is trivial if $L$ is a field and will only come up for $L = F\times F$.

\begin{enumerate}[wide, labelindent=0pt, labelwidth=!, label=(\arabic*), topsep=2pt, itemsep=2pt]
\item Assume that $L/F$ is inert or ramified, and that $r \in 4\mbZ_{\geq 1}$. Then $\wt r \in 2 + 4\mbZ_{\geq 0}$ and Theorem \ref{thm:classification_multiplicity_function} states that $\mcT(w(y))$ is a single edge. Then
\begin{equation}\label{eq:ball_inert}
\# B(\mcT(w(y)), m) = 2(1 + q^2 + \ldots + q^{2m}).
\end{equation}
Lemma \ref{lem:maximum_of_multiplicity_function} applied to $\wt r$ states that $2m = r/2 - 2$. The expression in \eqref{eq:ball_inert} equals $\Orb(γ, f'_\Par)$ if $L$ is inert or $2\,\Orb(γ, f'_\Par)$ if $L$ is ramified which follows from Proposition \ref{prop:orb_int_para_teaser} (1) and (3). This proves \eqref{eq:formula_lattice_count} in these two cases.

\item Assume that $L/F$ is inert and that $r\in 2 + 4\mbZ_{\geq 0}$. Then $\wt r \in 4\mbZ_{\geq 0}$ and Theorem \ref{thm:classification_multiplicity_function} states that $\mcT(w(y))$ is a $(q+1)$-regular ball of radius $d$ around a vertex. If $d = 0$, then we obtain
$$\# B(\mcT(w(y)), m) = 1 + (1+q^2)(1 + q^2 + \ldots + q^{2(m-1)}) = 2(1 + q^2 + \ldots + q^{2(m-1)}) + q^{2m}.$$
By Lemma \ref{lem:maximum_of_multiplicity_function} applied to $\wt r$, we find $2m = r/2-1$. The equality $\# B(\mcT(w(y)), m) = \Orb(γ, f'_\Par)$ follows from Proposition \ref{prop:orb_int_para_teaser} (4). Assume now that $d \geq 1$. Let $A$ be the number of vertices of valency $q+1$ of $\mcT(w(y))$ and let $B$ be the number of vertices of valency $1$. Then
$$\# B(\mcT(w(y)), m) = (1 + (q^2-q)(1 + q^2 + \ldots + q^{2m-2}))A + (1 + q^2(1 + q^2 + \ldots + q^{2m-2}))B.$$
Evaluating this with $d \geq 1$ and $2m = r/2 -1$, one obtains the expression for $\Orb(γ, f'_\Par)$ in Proposition \ref{prop:orb_int_para_teaser} (4).
\end{enumerate}
The verification in the remaining cases works in exactly the same way. We do not spell this out in full detail but say here which further cases one has to consider precisely.
\begin{enumerate}[wide, resume, labelindent=0pt, labelwidth=!, label=(\arabic*), topsep=2pt, itemsep=2pt]
\item Assume that $L/F$ is ramified and that $r \in 2 + 4\mbZ_{\geq 0}$. Then $\wt r \in 4\mbZ_{\geq 0}$ and Theorem \ref{thm:classification_multiplicity_function} states that $\mcT(w(y))$ is a $q+1$-regular ball of radius $d$ around an edge. Moreover, $2m = r/2 - 1$ as in (2) above. Direct inspection shows that the cardinality of $B(\mcT(w(y)), m)$ is given by the expression for $2\Orb(γ, f'_\Par)$ in Proposition \ref{prop:orb_int_para_teaser} (2).

\item Assume that $L/F$ is split which implies that $r \in 2 + 4\mbZ_{\geq 0}$ and hence $\wt r \in 4\mbZ_{\geq 0}$. First assume that $d < 0$. Then Theorem \ref{thm:classification_multiplicity_function} states that $\mcT(w(y))$ is an apartment. The quotient $Γ_0\backslash \mcT(w(y))$ has two elements because $Γ_0$ is generated by an element of the form $(π_1, π_2^{-1})$ with $v(π_1) = v(π_2) = 1$. It follows that
\begin{equation}\label{eq:formula_split}
\# (Γ_0\backslash B(\mcT(w(y)), m)) = 2[1 + (q^2-1)(1 + q^2 + \ldots + q^{2m-2})] = 2q^{2m}.
\end{equation}
Lemma \ref{lem:maximum_of_multiplicity_function} states that $2m = r/2 +d-1$.
Proposition \ref{prop:orb_int_para_teaser} (6) states that $2\Orb(γ, f'_\Par) = 2q^{r/2 + d -1}$ which equals \eqref{eq:formula_split}. In the case $d\geq 0$, we have that $2m = r/2 -1$ and that $\mcT(w(y))$ is a $(q+1)$-regular ball of radius $m$ around an apartment. The verification of $\#\big(Γ_0\backslash B(\mcT(w(y)), m)\big) = 2\Orb(γ, f'_\Par)$ works just as before.
\end{enumerate}
\end{proof}
The proof of Theorem \ref{thm:main_3_4} is now complete.
\end{proof}

\begin{rmk}\label{rmk:better_counting}
We have proved Lemma \ref{lem:strata} by explicitly comparing its two sides. There is, however, a more conceptual explanation for this identity: The maximal reduced subscheme of $\mcM_{3/4}$ can be shown to admit a Bruhat--Tits type stratification indexed by $O_C$-lattices in $C^2$ which, under the Serre--Tensor construction, translates the counting problem in Lemma \ref{lem:strata} into an orbital integral on $GL_2(B)$. The fundamental lemma for Hasse invariant $1/2$ from \cite{HM} expresses this as $O(γ, 1_{\mr{Par}})$.
\end{rmk}


\begin{thebibliography}{99}

\bibitem{ACZ}
T. Ahsendorf, C. Cheng, T. Zink, \emph{$O$-displays and $π$-divisible formal $O$-modules}, J. Algebra \textbf{457} (2016), 129--193. 

\bibitem{AH}
E. Arasteh Rad, U. Hartl, \emph{Local $\mbP$-shtukas and their relation to global $\mfG$-shtukas}, Münster J. Math. \textbf{7} (2014), no.2, 623--670.

\bibitem{Beilinson}
A. Beĭlinson, \emph{Height pairing between algebraic cycles} in \emph{Current trends in arithmetical algebraic geometry (Arcata, Calif., 1985)}, 1--24,
Contemp. Math., 67, Amer. Math. Soc., Providence, RI, 1987. 

\bibitem{Bloch}
S. Bloch, \emph{Algebraic cycles and values of $L$-functions}, J. Reine Angew. Math. \textbf{350} (1984), 94--108. 

\bibitem{BC}
J.-F. Boutot, H. Carayol, \emph{Uniformisation $p$-adique des courbes de Shimura: les théorèmes de Čerednik et de Drinfeld.}, Astérisque \textbf{196-197} (1991), 7, 45--158.

\bibitem{CS}
A. Caraiani, P. Scholze, \emph{On the generic part of the cohomology of compact unitary Shimura varieties}, Ann. of Math. (2) \textbf{186} (2017), no. 3, 649--766. 

%

\bibitem{DM}
Y. W. Ding, Y. Ouyang, \emph{A simple proof of Dieudonné--Manin classification theorem}, Acta Math. Sin. \textbf{28} (2012), no. 8, 1553--1558. 

\bibitem{DZ}
D. Disegni, W. Zhang, \emph{Gan--Gross--Prasad cycles and derivatives of $p$-adic $L$-functions}, Preprint (2023), \href{http://disegni-daniel.perso.math.cnrs.fr/prtf-ggp.pdf}{http://disegni-daniel.perso.math.cnrs.fr/prtf-ggp.pdf}.

\bibitem{Draxl}
P. K. Draxl, \emph{Skew fields}, London Mathematical Society Lecture Note Series \textbf{81}, Cambridge University Press, Cambridge, 1983. 

\bibitem{Drinfeld}
V. G. Drinfeld, \emph{Coverings of $p$-adic symmetric domains}, Funkcional. Anal. i Priložen. \textbf{10} (1976), no. 2, 29--40.

\bibitem{FJ}
S. Friedberg, H. Jacquet, \emph{Linear periods}, J. Reine Angew. Math. \textbf{443} (1993), 91--139.

\bibitem{GGP}
W. T. Gan, B. Gross, D. Prasad, \emph{Symplectic local root numbers, central critical $L$-values, and restriction problems in the representation theory of classical groups}, Sur les conjectures de Gross et Prasad. I, Astérisque \textbf{346} (2012), 1--109. 


\bibitem{Guo}
J. Guo, \emph{On a generalization of a result of Waldspurger}, Canad. J. Math. \textbf{48} (1996), no. 1, 105--142.

\bibitem{GZ}
B. Gross, D. Zagier, \emph{Heegner points and derivatives of $L$-series}, Invent. Math. \textbf{84} (1986), no. 2, 225--320. 

\bibitem{HS}
U. Hartl, R. K. Singh, \emph{Local shtukas and divisible local Anderson modules}, Canad. J. Math. \textbf{71} (2019), no. 5, 1163--1207. 

\bibitem{HSY}
Q. He, Y. Shi, T. Yang, \emph{Kudla--Rapoport conjecture for Krämer models}, Compos. Math. \textbf{159} (2023), no. 8, 1673--1740.

\bibitem{HLSY}
Q. He, C. Li, Y. Shi, T. Yang, \emph{A proof of the Kudla--Rapoport conjecture for Krämer models}, Invent. Math. \textbf{234} (2023), no. 2, 721--817.

\bibitem{HPR}
X. He, G. Pappas, M. Rapoport, \emph{Good and semi-stable reductions of Shimura varieties}, J. Éc. polytech. Math. \textbf{7} (2020), 497--571. 

\bibitem{HL}
B. Howard, Q. Li, \emph{Intersections in Lubin--Tate space and biquadratic Fundamental Lemmas}, Preprint (2020), \href{https://arxiv.org/abs/2010.07365}{arXiv:2010.07365}.

\bibitem{HM}
N. Hultberg, A. Mihatsch, \emph{A linear AFL for quaternion algebras}, Preprint (2023), \href{https://arxiv.org/abs/2308.02458}{arXiv:2308.02458}.

\bibitem{JR}
H. Jacquet, S. Rallis, \emph{Uniqueness of linear periods}, Compositio Math. \textbf{102} (1996), no. 1, 65--123. 

\bibitem{Kottwitz}
R. Kottwitz, \emph{Tamagawa numbers}, Ann. of Math. \textbf{127} (1988), no. 3, 629--646. 

\bibitem{KR}
S. Kudla, M. Rapoport, \emph{Height pairings on Shimura curves and $p$-adic uniformization}, Invent. Math. \textbf{142} (2000), no. 1, 153--223. 
%

%

\bibitem{LXZ}
S. Leslie, J. Xiao, W. Zhang, \emph{Periods and heights for unitary symmetric pairs}, in preparation.

\bibitem{LL1}
C. Li, Y. Liu, \emph{Chow groups and $L$-derivatives of automorphic motives for unitary groups}, Ann. of Math. \textbf{194} (2021), no. 3, 817--901.

\bibitem{LL2}
C. Li, Y. Liu, \emph{Chow groups and $L$-derivatives of automorphic motives for unitary groups, II}, Forum Math. Pi \textbf{10} (2022), Paper No. e5, 71 pp. 
%

\bibitem{Li}
Q. Li, \emph{An intersection number formula for CM cycles in Lubin--Tate towers}, Duke Math. J.\textbf{171} (2022), no. 9, 1923--2011.

\bibitem{Li_GL4}
Q. Li, \emph{A computational proof of the linear Arithmetic Fundamental Lemma for $GL_4$}, Preprint (2019), \href{https://arxiv.org/abs/1907.00090}{arXiv:1907.00090}, to appear in Canad. J. Math.

\bibitem{Li_future}
Q. Li, \emph{A computational proof for the biquadratic Linear AFL for $GL_4$}, in preparation.

\bibitem{LM}
Q. Li, A. Mihatsch, \emph{On the Linear AFL: The Non-Basic Case}, Preprint (2022), \href{https://arxiv.org/abs/2208.10144}{arxiv:2208.10144}.


%
%
\bibitem{M_loc_const}
A. Mihatsch, \emph{Local Constancy of Intersection Numbers}, Algebra Number Theory, \textbf{16} (2022), no. 2, 505--519.
%
\bibitem{MZ}
A. Mihatsch, W. Zhang, \emph{On the Arithmetic Fundamental Lemma Conjecture over a general $p$-adic field}, J. Eur. Math. Soc. \textbf{26} (2024), no. 12, 4831--4901.

\bibitem{Qiu}
C. Qiu, \emph{The Gross--Zagier--Zhang formula over function fields}, Math. Ann. \textbf{384} (2022), no. 1--2, 625--731. 

\bibitem{RSZ1}
M. Rapoport, B. Smithling, W. Zhang, \emph{On the arithmetic transfer conjecture for exotic smooth formal moduli spaces}, Duke Math. J. \textbf{166} (2017), no. 12, 2183--2336.

\bibitem{RSZ2}
M. Rapoport, B. Smithling, W. Zhang, \emph{Regular formal moduli spaces and arithmetic transfer conjectures}, Math. Ann. \textbf{370} (2018), no. 3-4, 1079--1175.

\bibitem{RSZ3}
M. Rapoport, B. Smithling, W. Zhang, \emph{Arithmetic diagonal cycles on unitary Shimura varieties}, Compos. Math. \textbf{156} (2020), no. 9, 1745--1824.
  

\bibitem{RV}
M. Rapoport, E. Viehmann, \emph{Towards a theory of local Shimura varieties},
Münster J. Math. \textbf{7} (2014), no. 1, 273--326. 

\bibitem{RZ1}
M. Rapoport, Th. Zink, \emph{Period Spaces for $p$-divisible Groups}, Ann. of Math. Stud. \textbf{141}, Princeton University Press, Princeton, NJ, 1996.
%
%

\bibitem{Serre_lcft}
J.-P. Serre, \emph{Local class field theory}, in \emph{Algebraic Number Theory (Proc. Instructional Conf., Brighton, 1965)}, 128--161, Thompson, Washington, D.C., 1967. 

\bibitem{Stacks}
The Stacks Project Authors, \emph{Stacks Project} (2018), \href{https://stacks.math.columbia.edu}{https://stacks.math.columbia.edu}.


%


\bibitem{Xue1}
H. Xue, \emph{Epsilon dichotomy for linear models}, Algebra Number Theory \textbf{15} (2021), no. 1, 173--215.

\bibitem{Xue2}
H. Xue, \emph{Orbital integrals on $GL_n\times GL_n \backslash GL_{2n}$}, Canad. J. Math. \textbf{74} (2022), no. 3, 858--886. 

\bibitem{YZZ}
X. Yuan, S.-W. Zhang, W. Zhang, \emph{The Gross--Zagier formula on Shimura curves}, Annals of Mathematics Studies \textbf{184}, Princeton University Press, Princeton, NJ, 2013. 

\bibitem{C_Zhang}
C. Zhang, \emph{On the smooth transfer for Guo--Jacquet relative trace formulae},
Compos. Math. \textbf{151} (2015), no. 10, 1821--1877. 

\bibitem{Z}
W. Zhang, \emph{On arithmetic fundamental lemmas}, Invent. Math. \textbf{188} (2012), no. 1, 197--252.
%

\bibitem{Zhang_FT}
W. Zhang, \emph{Fourier transform and the global Gan-Gross-Prasad conjecture for unitary groups},
Ann. of Math. \textbf{180} (2014), no. 3, 971--1049.

\bibitem{Z_survey_18}
W. Zhang, \emph{Periods, cycles, and L-functions: a relative trace formula approach}, Proceedings of the International Congress of Mathematicians---Rio de Janeiro 2018. Vol. II. Invited lectures, 487--521, World Sci. Publ., Hackensack, NJ, 2018.
 
\bibitem{Z19}
W. Zhang, \emph{Weil representation and arithmetic fundamental lemma}, Ann. of Math. (2) \textbf{193} (2021), no. 3, 863--978.


\bibitem{ZZ}
Z. Zhang, \emph{Maximal parahoric arithmetic transfers, resolutions and modularity}, Preprint (2021), \href{https://arxiv.org/abs/2112.11994}{arXiv:2112.11994}.

\bibitem{Z_Cartier}
T. Zink, \emph{Cartiertheorie kommutativer formaler Gruppen}, Teubner-Texte zur Mathematik \textbf{68}, Teubner Verlagsgesellschaft, Leipzig, 1984. See \href{https://perso.univ-rennes1.fr/matthieu.romagny/articles/zink.pdf}{https://perso.univ-rennes1.fr/matthieu.romagny/articles/zink.pdf} for an English translation.

 
\bibitem{Zink}
T. Zink, \emph{The display of a formal $p$-divisible group}, in \emph{Cohomologies $p$-adiques et applications arithmétiques, I}, Astérisque \textbf{278} (2002), 127--248.

\end{thebibliography}
\end{document}